\newcommand{\iso}{\cong}
\newcommand{\then}{\ensuremath{\Longrightarrow}}
\renewcommand{\iff}{\ensuremath{\Longleftrightarrow}}
\DeclareMathAlphabet{\mathpzc}{OT1}{pzc}{m}{it}
\DeclareMathOperator{\modules}{mod} \renewcommand{\mod}{\modules}
\DeclareMathOperator{\proj}{proj}
\DeclareMathOperator{\add}{add}
\DeclareMathOperator{\stabmod}{\underline{mod}}
\DeclareMathOperator{\costmod}{\overline{mod}}
\DeclareMathOperator{\End}{End}
\DeclareMathOperator{\Hom}{Hom}
\DeclareMathOperator{\Ext}{Ext}
\DeclareMathOperator{\Rad}{Rad}
\DeclareMathOperator{\Soc}{Soc}
\DeclareMathOperator{\Cok}{Cok}
\DeclareMathOperator{\Cone}{Cone}
\DeclareMathOperator{\gld}{gl.\!dim}
\newenvironment{smallpmatrix}
	       {\left( \! \begin{smallmatrix}}
	       {\end{smallmatrix} \! \right)}
\def\mathclap{\mathpalette\mathclapinternal}
\def\mathclapinternal#1#2{\clap{$\mathsurround=0pt#1{#2}$}}
\def\clap#1{\hbox to 0pt{\hss#1\hss}}
\newcommand{\leftsub}[2]{{\vphantom{#2}}_{#1}{#2}}
\newcommand{\leftsup}[2]{{\vphantom{#2}}^{#1}{#2}}
\tikzset{>=stealth',
         vertex/.style={circle,draw=black,inner sep=1.5pt,outer sep=2pt},
         tvertex/.style={inner sep=1pt,font=\scriptsize},
         gap/.style={fill=white,inner sep=1pt}}
\newcommand{\arrow}[2][20]
 {
  \hspace{-5pt}
  \begin{tikzpicture}
   \node (A) at (0,0) {};
   \node (B) at (#1pt,0) {};
   \draw [#2] (A) -- (B);
  \end{tikzpicture}
  \hspace{-5pt}
 }
\newcommand{\arrowl}[3][20]
 {
  \hspace{-5pt}
  \begin{tikzpicture}
   \node (A) at (0,0) {};
   \node (B) at (#1pt,0) {};
   \draw [#2] (A) -- node [above] {$#3$} (B);
  \end{tikzpicture}
  \hspace{-5pt}
 }
\renewcommand{\to}[1][20]{\arrow[#1]{->}}
\newcommand{\tol}[2][20]{\arrowl[#1]{->}{#2}}
\newcommand{\epi}[1][20]{\arrow[#1]{->>}}
\newcommand{\mono}[1][20]{\arrow[#1]{>->}}
\newcommand{\sub}[1][20]{\arrow[#1]{right hook->}}
\newtheorem{theorem}{Theorem}[section]
\newtheorem{corollary}[theorem]{Corollary}
\newtheorem{lemma}[theorem]{Lemma}
\newtheorem{proposition}[theorem]{Proposition}
\theoremstyle{definition}
\newtheorem{definition}[theorem]{Definition}
\newtheorem{remark}[theorem]{Remark}
\newtheorem{example}[theorem]{Example}
\newtheorem{construction}[theorem]{Construction}
\newtheorem{observation}[theorem]{Observation}
\newtheorem{notation}[theorem]{Notation}
\newcommand{\Index}[2][d]{\mathbf{I}^{#1}_{#2}}
\newcommand{\IndexC}[2][d]{\leftsup{\circlearrowleft}{\mathbf{I}}^{#1}_{#2}}
\newtheorem{theoconst}[theorem]{Theorem / Construction}
\renewcommand{\L}{{\mathcal L}}
\newcommand{\sort}{{\operatorname{sort}}}
\newcommand{\nwr}{\mbox{$\not\wr\,$}}
\title[Higher dimensional clusters]{Higher dimensional cluster combinatorics and representation theory}
\author{Steffen Oppermann}
\address{Steffen Oppermann, Institutt for matematiske fag, NTNU, 7491 Trondheim, Norway}
\email{steffen.oppermann@math.ntnu.no}
\author{Hugh Thomas}
\address{Hugh Thomas, Department of Mathematics and Statisics, Univeristy of New Brunswick, Fredericton NB, E3B~1J4 Canada}
\email{hthomas@unb.ca}
\begin{document}

\begin{abstract}
Higher Auslander algebras were introduced by Iyama generalizing classical concepts from representation theory of finite dimensional algebras. Recently these higher analogues of classical representation theory have been increasingly studied.
Cyclic polytopes are classical objects of study in convex geometry.  In particular, their triangulations have been studied with a view towards generalizing the rich combinatorial structure of triangulations of polygons. In this paper, we demonstrate a connection between these two seemingly unrelated subjects.

We study triangulations of even-dimensional cyclic polytopes and tilting modules for higher Auslander algebras of linearly oriented type $A$ which are summands of the cluster tilting module. We show that such tilting modules correspond bijectively to triangulations. Moreover mutations of tilting modules correspond to bistellar flips of triangulations.

For any $d$-representation finite algebra we introduce a certain $d$-dimensional cluster category and study its cluster tilting objects. For higher Auslander algebras of linearly oriented type $A$ we obtain a similar correspondence between cluster tilting objects and triangulations of a certain cyclic polytope.

Finally we study certain functions on generalized laminations in cyclic polytopes, and show that they satisfy analogues of tropical cluster exchange relations. Moreover we observe that the terms of these exchange relations are closely related to the terms occuring in the mutation of cluster tilting objects.
\end{abstract}

\maketitle

\section{Introduction}

Cluster algebras have been the subject of intensive research since their introduction some ten years ago by Fomin and Zelevinsky \cite{FZ}.  The two best-understood families of cluster algebras are those which admit a categorification (as in \cite{BMRRT, GLS}, or, generalizing both, \cite{CC,C_PhD}), and those which arise from a surface with boundary (\cite{FST}, building on \cite{GSV,FG}).  Both these families of cluster algebras have a significant ``2-dimensional'' quality.  In the case of the cluster algebras with a categorification, this is present in a certain 2-Calabi-Yau property in the associated category.  In the case of cluster algebras arising from surfaces, there is the 2-dimensionality of the surface.  It is natural to ask if similar constructions exist in higher dimensions.

In order to pursue this question, it is natural to begin by focussing attention on the cluster algebras of type $A_n$, which fit within both the families of cluster algebras mentioned above.  Their categorification is based on the representation theory of the path algebra of an $A_n$ quiver; the surface to which they correspond is a disk with $n+3$ marked points on the boundary.  In seeking a higher dimensional cluster theory, one would expect to replace the path algebra by an algebra of higher global dimension, and the disk by some higher dimensional space.  The difficulty is to determine appropriate candidates for these roles.  A reasonable criterion by which to justify such choices would be evidence of similar structures arising in the higher dimensional algebra and the higher dimensional geometry.  
 
In the present paper, we exhibit links along these lines.  Our replacement for the path algebra of $A_n$ is the $(d-1)$-fold higher Auslander algebra of the (linearly oriented) path algebra of type $A_n$. (These higher Auslander algebras 
were introduced in \cite{Iy_n-Auslander}).  This is a prototypical $d$-representation finite algebra. Our work can therefore be viewed as a part of the recent development of the theory of $d$-representation finite algebras. This class of algebras, which has been introduced in \cite{IO}, is a natural generalization of representation finite hereditary algebras. Many higher dimensional analogues of classical results for representation finite hereditary algebras have been shown to hold for this class of algebras (see \cite{Iy_n-Auslander,IO,IO2}).  Indeed, part of the present paper (Section 5) is carried out for general $d$-representation finite algebras.  There we introduce for such algebras a $d$-dimensional analogue of the classical cluster categories.  It shares many properties of the classical cluster categories, and in particular, there is a well behaved notion of cluster tilting. 

Our replacement for the disk is a $2d$-dimensional cyclic polytope. 
The cyclic polytope $C(m,\delta)$ is a certain polytope in 
$\mathbb R^\delta$ with $m$ vertices.  
Cyclic 
polytopes have been extensively studied in convex geometry, going back 
to \cite{Car} in 1911.  For an
introduction, see \cite[Chapter VI]{bar}.
A \emph{triangulation} of $C(m,\delta)$ is a subdivision
of $C(m,\delta)$ into $\delta$-dimensional 
simplices, each of whose vertices is a vertex of the
polytope. Triangulations of cyclic polytopes have been studied with a
view to extending to that setting some of the rich structure of triangulations of 
convex polygons (see \cite{KV,ER}) and as a testing-ground for more general
convex-geometric conjectures (see \cite{ERR,RSa}).
We shall be concerned only with the case $\delta=2d$ is even.  
We refer to the $d$-dimensional simplices of $C(m,2d)$ which do not 
lie on the boundary of $C(m,2d)$ as the {\em internal $d$-simplices} of
$C(m,2d)$.  We will give a new combinatorial description of
the set of triangulations of $C(m,2d)$ by characterizing the sets of 
internal $d$-simplices of $C(m,2d)$ which can arise as the set of
internal $d$-faces of a triangulation.  By a result of Dey \cite{dey}, the
$d$-dimensional faces of the triangulation uniquely determine the triangulation.

\subsection{\texorpdfstring{$d$}{d}-representation finite algebras}
Iyama has introduced higher dimesional analogues of Auslander-Reiten theory and Auslander algebras (see for instance \cite{Iy_n-Auslander}), generalizing these classical concepts from the representation theory of finite dimensional algebras. In \cite{IO} the notion of $d$-representation finiteness was introduced as an ideal setup for studying these concepts. (See also \cite{Iy_n-Auslander, IO2, HI} for further results illustrating this philosophy.)
In this paper we will mostly focus on the currently best-understood class of $d$-representation finite algebras: the $(d-1)$-st higher Auslander algebras of linearly oriented $A_n$ (see \cite{Iy_n-Auslander,IO}); these algebras will be called $A_n^d$.

Inside the module category of $A_n^d$, there is a unique 
$d$-cluster tilting module, $\leftsub{A_n^d}{M}$.  We have the following result:
\begin{theorem}[see Theorems~\ref{theo.indexing} and \ref{central}] \label{thm.intro_corr_mod}
There is a bijection
\[ \left\{ \begin{matrix} \text{internal } d\text{-simplices} \\ \text{of } C(n+2d,2d) \end{matrix} \right\} \arrow[30]{<->} \left\{ \begin{matrix} \text{indecomposable} \\ \text{non-projective-injective} \\ \text{direct summands of } \leftsub{A^d_n}{M} \end{matrix} \right\} \]
which induces a bijection 
\[\left\{ \begin{matrix}\text{triangulations of}\\C(n+2d,2d) \end{matrix}\right\} 
\arrow[30]{<->}
\left\{\begin{matrix}\text{basic tilting modules for $A_n^d$}\\ \text{contained in }\add \leftsub{A_n^d}M\end{matrix}\right\} \]
\end{theorem}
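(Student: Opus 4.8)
Both displayed bijections are packaged out of the two finer statements cited in the theorem --- Theorem~\ref{theo.indexing} (the correspondence at the level of indecomposables) and Theorem~\ref{central} (its promotion to triangulations and tilting modules) --- so the plan is to describe how I would establish those.

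First I would fix a purely combinatorial model for $\add \leftsub{A_n^d}{M}$. Starting from Iyama's explicit description of the $d$-cluster tilting module, its indecomposable summands are parametrised by increasing $(d+1)$-tuples $\vec a = (a_0 < a_1 < \dots < a_d)$ from a fixed range, and the projective-injective summands form an explicitly identifiable subfamily; pinning this down is the first task. Then I would analyse the assignment sending a non-projective-injective summand $M_{\vec a}$ to the $(d+1)$-subset $\{a_0,\, a_1+1,\, \dots,\, a_d+d\}$ of $\{1,\dots,n+2d\}$, and check, using Gale's evenness condition, that a $(d+1)$-subset is an internal $d$-simplex of $C(n+2d,2d)$ precisely when its consecutive entries and its wrap-around pair are each separated by at least one unused vertex --- exactly the image of this assignment --- and that the assignment is injective. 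This yields Theorem~\ref{theo.indexing}.

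Next I would set up the homological dictionary. As $A_n^d$ is $d$-representation finite we have $\gld A_n^d \le d$, so $\pd_{A_n^d} X \le d$ for all $X$, and as $M$ is $d$-cluster tilting we have $\Ext^i_{A_n^d}(M,M)=0$ for $0<i<d$; hence for $T \in \add M$ rigidity collapses to the single condition $\Ext^d_{A_n^d}(T,T)=0$. I would then compute $\Ext^d_{A_n^d}(M_{\vec a},M_{\vec b})$ for all pairs of summands from the combinatorial model and show it is nonzero exactly when the $d$-simplices attached to $\vec a$ and $\vec b$ cross, i.e.\ alternate along the boundary of $C(n+2d,2d)$ and so meet in their relative interiors --- equivalently, exactly when they cannot both occur in a single triangulation. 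Since the projective-injective summands carry no positive self- or mutual extensions and must occur in any tilting module inside $\add M$, it follows that a basic $T \in \add M$ containing all the projective-injective summands is rigid if and only if the internal $d$-simplices of its other summands are pairwise non-crossing.

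Finally I would match maximal rigid objects with triangulations and upgrade them to tilting modules. On the geometric side, Dey's theorem says a triangulation of $C(m,2d)$ is determined by its internal $d$-faces, and I would combine this with the characterisation of which families of internal $d$-simplices are realised by a triangulation to identify the maximal pairwise non-crossing families with the internal-$d$-face sets of triangulations, bijectively; with the previous paragraph this already matches maximal rigid objects of $\add M$ with triangulations. The crux, which I expect to be the main obstacle, is to show that the module $T$ attached to a triangulation is not merely maximal rigid but genuinely tilting: one must construct a coresolution $0 \to A_n^d \to T^0 \to \dots \to T^d \to 0$ with each $T^i \in \add T$, and conversely verify that a basic tilting module contained in $\add M$ is maximal rigid there, so that its number of summands is forced. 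I would attack the coresolution by an explicit induction --- for instance, moving along bistellar flips using connectivity of the flip graph of $C(n+2d,2d)$ and tracking how the coresolution changes under a flip, or equivalently passing to the $d$-cluster category of Section~5 where mutation is manifestly well behaved --- so that flips turn into tilting mutations and the numerics fall out. Putting the three steps together gives the module-level bijection of Theorem~\ref{theo.indexing} and its promotion to the triangulation/tilting bijection of Theorem~\ref{central}, hence the statement.
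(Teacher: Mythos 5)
Your outline is, in substance, the paper's own route: index the summands of $\leftsub{A_n^d}{M}$ by separated $(d+1)$-tuples, identify $\Ext^d\neq 0$ with the intertwining relation, match triangulations with non-intertwining families by combining Dey with a converse, and then upgrade rigid summands to tilting modules by inducting along bistellar flips via Rambau's connectivity theorem and the explicit mutation of Theorem~\ref{theo.tilt-exchange}. One slip is worth correcting because it touches a phenomenon the paper explicitly addresses: in your third step you propose to identify triangulations with ``maximal pairwise non-crossing families'' of internal $d$-simplices, but for $d\ge 3$ this fails if ``maximal'' is read as inclusion-maximal --- Proposition~\ref{prop:higherd} exhibits inclusion-maximal non-intertwining subsets of $\IndexC{2d+3}$ of strictly smaller cardinality, which are \emph{not} $e(S)$ for any triangulation and do not extend to tilting modules. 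The correct statement, which is Theorems~\ref{th1} and~\ref{th2}, is that the non-intertwining families of cardinality exactly $\binom{m-d-1}{d}$ are precisely the sets $e(S)$; and the existence half of this is not a corollary of Dey's theorem (Dey gives uniqueness only), but rather the nontrivial induction of Theorem~\ref{th2} using the contraction and link operations $S/1$ and $S\setminus\{1,2\}$. Once the count is available the rest goes through as you describe, but note that the reason a basic tilting module in $\add\leftsub{A_n^d}{M}$ has $\binom{n+d-1}{d}$ indecomposable summands is simply that this is the number of simple $A_n^d$-modules, not that tilting modules are maximal rigid in $\add\leftsub{A_n^d}{M}$ --- for $d\ge 3$ they need not be, as Section~\ref{higherd} shows.
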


The case when $d=1$ was already understood.  
Here, the algebra $A_n^1$ is the path algebra of linearly ordered $A_n$.  
The $1$-cluster tilting module $\leftsub{A^1_n}{M}$ is an additive generator of the whole module category $\mod A_n^1$.  
So the $d=1$ case of the previous result is the
fact that the non-projective-injective indecomposable modules 
of $A_n^1$ can be identified
with the diagonals of an $(n+2)$-gon in such a way that tilting modules for
$A_n^1$ correspond to triangulations.

\subsection{A cluster category} 
We consider another representation-theoretic setup which is similar but in some ways preferable to the one discussed above. We introduce a higher dimensional cluster category called $\mathscr{O}_{\Lambda}$ for any $d$-representation finite algebra $\Lambda$. 

Note that a different notion of higher cluster category, the $d$-cluster category $\mathscr{C}_H^d$, for $H$ a hereditary algebra, has been studied (see \cite{BM} and references therein). However $\mathscr{C}_H^d$ is not higher dimensional in the sense of this paper: $\mathscr{C}_H^d$ is defined for hereditary algebras, it has a two-dimensional Auslander-Reiten quiver (see \cite{Iy_n-Auslander})
and the combinatorics of its cluster tilting objects is modelled by subdivisions of polygons in the plane.

Our cluster category $\mathscr{O}_{\Lambda}$ is constructed as a subcategory of the $2d$-Amiot cluster category $\mathscr{C}_{\Lambda}^{2d}$. $2d$-Amiot cluster categories are a generalization of classical $2d$-cluster categories to not necessarily hereditary algebras $\Lambda$; in particular the categories $\mathscr{C}_{\Lambda}^{2d}$ are $2d$-Calabi-Yau and triangulated. These properties of $\mathscr{C}_{\Lambda}^{2d}$ will be used to show that $\mathscr{O}_{\Lambda}$ is $(d+2)$-angulated, and also satisfies a certain Calabi-Yau property. It should be noted that for $d=1$ the two categories $\mathscr{O}_{\Lambda}$ and $\mathscr{C}_{\Lambda}^2$ coincide, and also coincide with the classical cluster category of the hereditary representation finite algebra $\Lambda$. For all $d$-representation finite algebras $\Lambda$, the category $\mathscr{O}_{\Lambda}$ contains only finitely many indecomposable objects, which can be arranged in a $d$-dimensional analogue of an Auslander-Reiten quiver.

For the case $\Lambda = A_n^d$ we obtain the following analogue of Theorem~\ref{thm.intro_corr_mod}:
\begin{theorem}[see Proposition~\ref{prop.indexing_O}(1) and Theorem~\ref{central_cluster}] \label{thm.intro_corr_cluster}
There is a bijection
\[ \left\{ \begin{matrix} \text{internal } d\text{-simplices} \\ \text{of } C(n+2d+1,2d) \end{matrix} \right\} \arrow[30]{<->} \left\{ \begin{matrix} \text{indecomposable} \\ \text{objects in }\mathscr{O}_{A_n^d} \end{matrix} \right\} \]
which induces a bijection
\[\left\{\begin{matrix} \text{triangulations of} \\
C(n+2d+1,2d)\end{matrix}\right\} \arrow[30]{<->} 
\left\{\begin{matrix} \text{basic cluster tilting}\\ \text{objects in }\mathscr{O}_{A^d_n}
\end{matrix}\right\}\]
\end{theorem}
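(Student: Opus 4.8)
The strategy is to deduce both bijections from the module-category picture of Theorem~\ref{thm.intro_corr_mod} together with an explicit description of the indecomposable objects of $\mathscr{O}_{A_n^d}$.

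\emph{The bijection on objects.} First I would exhibit a combinatorial index set for the indecomposable objects of $\mathscr{O}_{A_n^d}$, in the same spirit as the indexing of the summands of $\leftsub{A_n^d}{M}$ behind Theorem~\ref{thm.intro_corr_mod}. The model is the classical case $d=1$, where $\mathscr{O}_{A_n^1}=\mathscr{C}_{A_n^1}^{2}$ is the usual cluster category, obtained from $\mod A_n^1$ by adjoining the shifts $P_i[1]$ of the indecomposable projectives; the $n$ new indecomposables are matched precisely with the $n$ diagonals of the $(n+3)$-gon through the extra vertex. The same pattern should persist for general $d$: realised inside the $2d$-Amiot cluster category $\mathscr{C}_{A_n^d}^{2d}$, the category $\mathscr{O}_{A_n^d}$ has as indecomposables the images of the non-projective-injective summands of $\leftsub{A_n^d}{M}$ together with one further ``layer'' of objects built from shifted projectives, and these are naturally indexed by the same combinatorial data that indexes the internal $d$-simplices of $C(n+2d+1,2d)$, the new objects corresponding to the simplices through the new vertex $n+2d+1$. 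I would prove this by transporting the explicit bijection of Theorem~\ref{thm.intro_corr_mod} along the inclusion $\mod A_n^d\hookrightarrow\mathscr{C}_{A_n^d}^{2d}$ and identifying the extra objects by hand; this is Proposition~\ref{prop.indexing_O}(1).

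\emph{The bijection on triangulations.} With the indexing fixed, the statement to prove is that a set $T$ of internal $d$-simplices of $C(n+2d+1,2d)$ is the set of internal $d$-faces of a triangulation if and only if the corresponding set of indecomposables of $\mathscr{O}_{A_n^d}$ is the set of indecomposable summands of a basic cluster tilting object. I would argue in three steps. (i) On the geometric side, by Dey's theorem the internal $d$-faces determine the triangulation, so I would use the combinatorial characterisation announced in the introduction: $T$ arises this way exactly when its simplices are pairwise non-intersecting in the appropriate sense and $T$ is maximal with that property (equivalently, has the correct cardinality). (ii) On the algebraic side, cluster tilting objects in the $(d+2)$-angulated category $\mathscr{O}_{A_n^d}$ are characterised by the vanishing of the relevant middle-degree extension groups together with maximality. (iii) The core of the proof is to match (i) and (ii): for indecomposables $X,Y$ of $\mathscr{O}_{A_n^d}$ I would compute the relevant $\Ext$-type group in terms of the index data of $X$ and $Y$, using the $2d$-Calabi--Yau property of $\mathscr{C}_{A_n^d}^{2d}$ to reduce, as far as possible, to computations in $\mod A_n^d$ (where the analogous statement is the module-side input behind Theorem~\ref{thm.intro_corr_mod}), and then show this group vanishes precisely when the associated $d$-simplices do not meet in the interior of $C(n+2d+1,2d)$. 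A cardinality count then closes the equivalence of ``maximal non-intersecting'' with ``cluster tilting''. This is Theorem~\ref{central_cluster}.

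\emph{Main obstacle.} The real work is step (iii): controlling the ``new'' indecomposables coming from shifted projectives, and the wrap-around produced by the Calabi--Yau/$(d+2)$-angulated structure, so that the intersection combinatorics of $d$-simplices in the $2d$-dimensional polytope $C(n+2d+1,2d)$ is reproduced exactly by extension groups in $\mathscr{O}_{A_n^d}$, in particular for pairs in which one or both objects are new -- a case with no direct analogue in $\mod A_n^d$. Verifying that a maximal pairwise non-intersecting family has exactly the cardinality forced by cluster tilting, rather than merely being contained in such a family, is the other point that needs genuine care.
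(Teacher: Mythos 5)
Your plan for the object-level bijection (index the indecomposables of $\mathscr{O}_{A_n^d}$ by transporting the module-side indexing through the orbit construction and accounting for the extra ``shifted'' layer) and for computing $\Hom_{\mathscr{O}_{A_n^d}}(O_A,O_B[d])$ in terms of intertwining tuples is in line with what the paper does (Construction~\ref{const.indexing.U}, Lemma~\ref{lemma.S2d}, Proposition~\ref{prop.indexing_O}, Proposition~\ref{prop.cluster_ext=intertw}). The gap is in how you propose to close the second bijection.

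Your step (ii) asserts that cluster tilting objects in $\mathscr{O}_{A_n^d}$ are ``characterised by the vanishing of the relevant middle-degree extension groups together with maximality,'' and step (iii) says ``a cardinality count then closes the equivalence.'' This is exactly the statement that a maximal rigid object is cluster tilting, and that claim is not a formal consequence of the $\Ext$-computation. In fact the paper explicitly shows it can fail to hold set-theoretically: Proposition~\ref{prop:higherd} exhibits, for $d\geq 3$, maximal (by inclusion) non-intertwining subsets of $\IndexC{2d+3}$ that do not have the full cardinality, i.e.\ maximal rigid objects in $\mathscr{O}_{A_2^d}$ that are not cluster tilting. So ``maximal'' must mean ``of maximal cardinality,'' and then the hard direction --- that every rigid object of the correct cardinality (equivalently, by Theorems~\ref{th1} and \ref{th2}, every configuration coming from a triangulation) really is cluster tilting in the sense of Definition~\ref{def.cto} --- is a substantive theorem that a cardinality count cannot deliver, since Definition~\ref{def.cto}(2) demands the existence of approximating $(d+2)$-angles, not merely rigidity plus a summand count.

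The paper gets this direction by a connectivity argument you never invoke: it starts from one known cluster tilting object (the image of the tilting module $A_n^d$, cluster tilting by Theorem~\ref{theo.cta_is_trivext}), then uses Rambau's theorem (Theorem~\ref{rambau}) that any two triangulations of a cyclic polytope are connected by bistellar flips, together with the explicit exchange $(d+2)$-angles of Theorem~\ref{theo.exch_angles_A}, to show inductively that every triangulation produces a cluster tilting object. The converse (every cluster tilting object gives a triangulation) is the direction your cardinality count actually handles, via rigidity $\Rightarrow$ non-intertwining (Proposition~\ref{prop.cluster_ext=intertw}) plus the cardinality of a cluster tilting object plus Theorem~\ref{th2}. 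You need both directions, and without Rambau-plus-mutation (or some substitute) the first one is missing.
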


\subsection{Local moves}
There are notions of local moves for 
all of the setups above, that is for triangulations, tilting modules, and cluster tilting objects.

For triangulations, the local move
is called a \emph{bistellar flip}.  It generalizes the operation
on triangulations of polygons which removes one edge of the triangulation and replaces
it by the other diagonal of the resulting quadrilateral.  
We will show that two triangulations $S$ and $T$ of
a cyclic polytope $C(n+2d,2d)$ are related by a bistellar flip if and
only if the collections of $d$-faces of $S$ and $T$ coincide except that
there is one $d$-face present in $S$ which is not in $T$ and vice versa.

The local move for cluster tilting objects is called \emph{mutation}; if $A$ and $B$ are distinct indecomposable objects in $\mathscr{O}_{A_n^d}$, such that $A\oplus \overline T$ and $B\oplus \overline T$ are basic cluster tilting objects in $\mathscr{O}_{A_n^d}$, then there are sequences
of objects and morphisms

\begin{equation}\label{ex1}
A \to[30] E_d \to[30] \cdots \to[30] E_1 \to[30] B
\end{equation}
and
\begin{equation}\label{ex2}
B \to[30] F_1 \to[30] \cdots \to[30] F_d \to[30] A
\end{equation}
\eqref{ex1} and \eqref{ex2} are called \emph{exchange $(d+2)$-angles}; they are distinguished $(d+2)$-angles in the $(d+2)$-angulated structure of $\mathscr{O}_{A_n^d}$. In the $d=1$ case these sequences are the usual exchange triangles for classical cluster categories.

Similarly there is the notion of mutation of tilting modules 
contained in $\add \leftsub{A_n^d}{M}$. In that case one has an exact exchange sequence similar to one of the sequences \eqref{ex1} or \eqref{ex2} (but one has only one sequence, not two).

One main result of this paper is that all these notions of local moves coincide in the following way:
\begin{theorem}[see Theorems~\ref{central_cluster} and \ref{central}]
Under the bijections of Theorems~\ref{thm.intro_corr_cluster} and \ref{thm.intro_corr_mod} bistellar flips of triangulations correspond to mutations of cluster tilting objects and mutations of tilting modules, respectively.
\end{theorem}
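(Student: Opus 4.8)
The plan is to deduce the statement from three ingredients, two of which are already in hand and the third being the combinatorial core of Theorems~\ref{central} and~\ref{central_cluster}. The first ingredient is that both bijections of Theorems~\ref{thm.intro_corr_mod} and~\ref{thm.intro_corr_cluster} are \emph{induced} by the underlying bijection between internal $d$-simplices and indecomposable objects (respectively, indecomposable non-projective-injective summands of $\leftsub{A_n^d}{M}$): a triangulation is sent to the direct sum of the indecomposables indexed by its internal $d$-faces, together, in the module case, with the projective-injective summands of $\leftsub{A_n^d}{M}$. Hence two triangulations whose sets of internal $d$-faces agree except for one face removed and one face added correspond to two (cluster) tilting objects that agree as direct sums except that one indecomposable summand is replaced by another, and conversely.

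The second ingredient is the combinatorial characterization of bistellar flips recalled above: $S$ and $T$ are related by a flip exactly when their collections of internal $d$-faces differ by removing one face of $S$ and adding one face of $T$; here one uses Dey's theorem, so that such $S$ and $T$ are automatically distinct triangulations, each determined by its set of $d$-faces. The third ingredient is that, on the representation-theoretic side, ``differing as direct sums in exactly one indecomposable summand'' is precisely the relation of being related by a single mutation: in $\mathscr{O}_{A_n^d}$ this is the defining feature of mutation of cluster tilting objects, witnessed by the exchange $(d+2)$-angles~\eqref{ex1} and~\eqref{ex2}, and in $\add\leftsub{A_n^d}{M}$ it is the defining feature of mutation of tilting modules, witnessed by the single exact exchange sequence. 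Combining the three, a bistellar flip translates under either bijection into the replacement of exactly one indecomposable summand, i.e.\ into a mutation, and vice versa; this gives the theorem.

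What makes this a genuine part of Theorems~\ref{central} and~\ref{central_cluster} rather than a formality is that one must actually verify that the indexing matches mutation: the two complements of an almost complete (cluster) tilting object $\overline T$ must be identified with the two indecomposables indexed by the two ``diagonals'' of the corresponding bistellar flip, and, for the sharper form of the theorems, the middle terms $E_i$, $F_i$ of the exchange $(d+2)$-angles (resp.\ the middle term of the exchange sequence) must be matched with the $d$-simplices forced by the local geometry of the flip. I expect this explicit matching to be the main obstacle: it requires computing the relevant $\Hom$- and $\Ext$-spaces in the combinatorial model of $\mod A_n^d$ furnished by the indexing of Theorem~\ref{theo.indexing}, in tandem with an analysis of which internal $d$-faces of the cyclic polytope are flippable and how their flips look locally --- this being exactly what pins down which summands are mutable, and to what. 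Since a tilting module can be viewed as corresponding to a triangulation of $C(n+2d+1,2d)$ that avoids one distinguished vertex --- equivalently, a cluster tilting object of $\mathscr{O}_{A_n^d}$ not involving one distinguished indecomposable --- it is natural to settle the cluster case first and read off the module case afterwards.
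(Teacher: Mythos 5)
Your proposal is correct and matches the paper's route: once the bijections of Theorems~\ref{thm.intro_corr_mod} and~\ref{thm.intro_corr_cluster} are in hand (these are established via Rambau's connectedness theorem together with Theorem~\ref{theo.tilt-exchange}, respectively Theorem~\ref{theo.exch_angles_A}), the flip--mutation correspondence follows exactly as you say, by combining the simplex-level bijection with Theorem~\ref{th3} and the one-summand-replacement characterization of mutation. The ``explicit matching'' you flag at the end as the likely main obstacle is not actually needed for this statement: the identification of the two mutation complements with the two internal $d$-faces exchanged by the flip is automatic once the bijection is known to be induced object-by-object, and the matching of the middle terms $E_i$, $F_i$ with the $d$-faces forced by the local geometry is a separate refinement (Proposition~\ref{mixes} and parts (4)--(5) of Theorem~\ref{theo.tilt-exchange}) that the present theorem does not require.
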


\subsection{Tropical cluster exchange relations}
One motivation for this paper is
the fact that, in the $d=1$ 
case, cluster tilting objects of $\mathscr{O}_{A_n^1}$ ($ = \mathscr{C}_{A_n^1}^2$), or equivalently 
triangulations of an $(n+3)$-gon, form a model for the combinatorics of
the $A_n$ cluster algebra in the sense that diagonals of the $(n+3)$-gon,
or equivalently the indecomposamble objects of the cluster category, are in bijection
with the cluster variables in the $A_n$ cluster algebra.  

We might hope that the internal $d$-simplices of $C(n+2d+1,2d)$, which are in bijection with the indecomposable objects in the cluster category $\mathscr{O}_{A_n^d}$,
also correspond to ``cluster variables'' 
in some analogue of a cluster algebra.  At present,
we do not know how this should be interpreted.  
However, we are able to exhibit an analogue of the
tropical cluster exchange relations of \cite{GSV,FT} in our setting.

Let us very briefly recall the tropical cluster algebra of functions on laminations, 
in the rather
special case which is of interest to us.  Fix an $(n+3)$-gon.  A \emph{lamination}
is a collection of lines in the polygon, which do not intersect, and 
which begin and end on the 
boundary of the polygon, and not on any vertex.  Let $\L$ be the set of
laminations.  For any lamination $L\in \L$, and $E$ 
any boundary edge or diagonal of
the polygon, 
there is a well-defined 
number of points of intersection between $L$ and $E$.  

Encode this information by associating to each edge or diagonal $A$ of the
polygon, a function $I_A \colon \L \to \mathbb N$, where $I_A(L)$ is 
the number of intersections between $A$ and $L$.  

These functions satisfy a certain \emph{tropical exchange relation}, 
namely, if $E,F,G,H$ are four sides of a 
quadrilateral in cyclic order, and $A,B$ are the two diagonals, then the
relation is:

$$I_A+I_B=\max(I_E+I_G,I_F+I_H)$$
This relation is the tropicalization of the usual cluster relation in type
$A$ (in the sense that $(\times,+)$ have been replaced by $(+,\max)$).  
Using this relation, and supposing that the functions corresponding to the
edges of a given starting triangulation (including the boundary edges) are
known, one can determine the function corresponding to an arbitrary diagonal
of the polygon.

For general $d$, we define a similar collection of laminations, again denoted
$\L$, and define functions $I_A \colon \L \to \mathbb N$ for each $A$ a 
$d$-simplex of $C(n+2d+1,2d)$ (including boundary $d$-simplices).  
These functions satisfy an exchange relation similar to the tropical
exchange relation above. 
The exchange relation is closely related to the representation-theoretic sequences \eqref{ex1} and \eqref{ex2}:

\begin{theorem}[see Theorem~\ref{th4}]
Let $A$ and $B$ be internal $d$-simplices of $C(n+2d+1,2d)$ such that there exist two triangulations whose $d$-simplices consists of $\overline T \cup \{A\}$ and $\overline T\cup \{B\}$, respectively, for some set $\overline T$. 

Then, if we write $I_{E_i}$ for the sum of the $I_X$ with $X$ a summand of $E_i$ (and similar for $F_i$),
\[ (-1)^{d+1} I_A + I_B = \max \Big(\sum (-1)^{i+1} I_{E_i} + \mbox{boundary terms}, \sum (-1)^{i+1} I_{F_i} + \mbox{boundary terms} \Big). \]
\end{theorem}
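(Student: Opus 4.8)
The plan is to reduce the exchange relation to a statement about intersection numbers of laminations with the $d$-simplices appearing in a bistellar flip, and then to verify that statement combinatorially using the explicit structure of flips in $C(n+2d+1,2d)$. First I would recall, from the sections preceding this theorem, the precise description of the two triangulations $S$ and $T$ whose $d$-simplex sets are $\overline T\cup\{A\}$ and $\overline T\cup\{B\}$: by the characterization of bistellar flips in even-dimensional cyclic polytopes, such a flip is supported on a \emph{circuit}, i.e.\ a minimal affinely dependent set of $2d+2$ vertices, split by Radon's partition into two subsets of sizes $d+1$ and $d+1$. The simplices $E_i$ (resp.\ $F_i$) occurring in the exchange $(d+2)$-angles \eqref{ex1}, \eqref{ex2} are exactly the simplices of the two triangulations of this circuit's convex hull, read off from the two sides of the Radon partition; the ``boundary terms'' account for the $d$-faces of the flip region that lie in both triangulations (the faces of the boundary of the circuit polytope). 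So I would begin by making this dictionary between the representation-theoretic sequences and the geometry of the flip completely explicit, using Theorem~\ref{thm.intro_corr_cluster} (or its module analogue).

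Next, fix a lamination $L\in\L$ and evaluate both sides of the claimed identity at $L$. The key geometric input is a \emph{local} statement: for a single circuit $Z$ of $2d+2$ vertices with Radon partition $Z=Z_+\sqcup Z_-$, and for any line $\ell$ of the lamination, the alternating sums $\sum(-1)^{i+1}I_{E_i}(\ell)$ and $\sum(-1)^{i+1}I_{F_i}(\ell)$ differ by a quantity controlled only by which ``side'' of the circuit $\ell$ passes through, and the identity
\[ (-1)^{d+1}I_A(\ell)+I_B(\ell)=\max\Big(\textstyle\sum(-1)^{i+1}I_{E_i}(\ell)+(\text{bdry}),\ \sum(-1)^{i+1}I_{F_i}(\ell)+(\text{bdry})\Big) \]
holds line-by-line. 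Summing over the lines of $L$ then gives the theorem, since every $I_X$ is additive over the lines of a lamination. The heart of the matter is therefore the single-circuit, single-line case. Here I would parametrize the line $\ell$ by the two boundary facets it meets (equivalently, by its endpoints on the moment-curve boundary), and compute each $I_X(\ell)$ as a signed count of lattice-point-type intersections using the explicit coordinates of the cyclic polytope. Because a line is one-dimensional and the simplices are $d$-dimensional inside a $2d$-dimensional polytope, intersection is generically transverse and the count $I_X(\ell)$ becomes a combinatorial function of the relative position of $\ell$'s endpoints among the vertices of $X$; I expect the alternating sums to collapse, via an inclusion–exclusion / telescoping identity on the two triangulations of the circuit polytope, to $\pm I_A(\ell)$, $\pm I_B(\ell)$ plus terms supported on the boundary of the circuit region, with the sign $(-1)^{d+1}$ emerging from the parity of the Radon partition.

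The main obstacle is this single-circuit computation: controlling the alternating sums $\sum(-1)^{i+1}I_{E_i}$ and $\sum(-1)^{i+1}I_{F_i}$ and showing that their difference is exactly $(-1)^{d+1}I_A-I_B$ up to boundary terms, and moreover that the $\max$ on the right is attained correctly — i.e.\ that whichever of the two alternating sums is larger depends on which side of the circuit the line passes, matching the way the tropical relation in the $d=1$ case picks out $\max(I_E+I_G, I_F+I_H)$. Two auxiliary facts I would isolate as lemmas to tame this: (i) a ``no internal $d$-face is crossed an unexpected number of times'' lemma, giving a clean formula for $I_X(\ell)$ in terms of the separation vector of $\ell$'s endpoints by the vertices of $X$ (this is where Dey's theorem and the combinatorial model of $d$-faces from the earlier sections are used); and (ii) a parity lemma identifying the sign of $\sum(-1)^{i+1}I_{E_i}(\ell)-\sum(-1)^{i+1}I_{F_i}(\ell)$ with the orientation datum of the Radon partition relative to $\ell$. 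Granting (i) and (ii), the $\max$ identity for a single line reduces to the elementary fact that for integers $a,b$ with $a-b=\pm c$ one has $a = \max(a, b) + (\text{correction})$ with the correction determined by the sign — the same mechanism underlying the classical $d=1$ tropical exchange relation, now bootstrapped through the higher-dimensional circuit.
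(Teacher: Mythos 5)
Your overall strategy --- evaluate both sides on a single line $\ell$, analyze the alternating sums locally in the flip region, and then aggregate over the lamination --- is the same as the paper's (Propositions~\ref{prop2} and~\ref{prop3}, then the final argument). But two gaps need to be closed before this is a proof.

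First, the aggregation step. You write ``Summing over the lines of $L$ then gives the theorem, since every $I_X$ is additive over the lines of a lamination.'' Additivity of each $I_X$ is correct, but $\max$ does not commute with sums, and your own description (``differ by a quantity controlled only by which side the line passes'') allows different lines to favor different arguments of the $\max$. What actually saves the argument, and what you would need to prove, is the following non-mixing property: the lines where the two arguments of the $\max$ disagree come in exactly two flavors ($m$-special and $n$-special, in the paper's terminology), and a lamination cannot contain one of each, because any $m$-special tuple intertwines any $n$-special tuple. Only then does the line-by-line identity sum up, because every nontrivial contribution lands in the same argument of the $\max$.

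Second, the single-line computation is only conjectured (``I expect the alternating sums to collapse, via an inclusion--exclusion / telescoping identity''). This is exactly the content of Propositions~\ref{prop2} and~\ref{prop3}, and it is the heart of the matter: fixing $\ell$, one shows that the set of $X\subseteq\{0,\dots,d\}$ with $I_{m_X(A,B)}(\ell)\neq 0$ is an interval $[X_{\min},X_{\max}]$ in the Boolean lattice (by a case analysis on where $\ell_k$ sits relative to $a_{k-1},b_{k-1},a_k,b_k$), hence the alternating sum vanishes unless $X_{\min}=X_{\max}$, and this forced equality can occur only when $d$ is odd and $\ell$ is in special position. This isn't hard but it is not an automatic telescope, and your sketch does not identify the mechanism (the Boolean interval structure) that makes it work. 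One more small point: the phrase ``signed count of lattice-point-type intersections'' does not match the setup; for a single $(d+1)$-tuple $\ell$, $I_X(\ell)\in\{0,1\}$, equal to $1$ precisely when $\ell$ and $X$ intertwine, so no signed counting is involved until you form the alternating sum over $X$.
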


Here, ``boundary terms'' refers to a sum of terms $\pm I_X$ with $X$ a boundary $d$-simplex.  Such terms $I_X$ should be thought of as coefficients; 
they are $d$-faces of every triangulation and, as in
the $d=1$ case, there are no corresponding objects in the
cluster category, so they cannot be seen in that setup.

\subsection{Outline} In Section~\ref{cyclic}, we discuss cyclic polytopes and provide
a new combinatorial description of their triangulations in the even-dimensional
case.  In Section~\ref{section.higher_Aus_An}, we discuss the higher Auslander algebras of linearly
oriented $A_n$ and their tilting modules.  In Section~\ref{local}, we compare the 
local moves for triangulations (bistellar flip) and tilting modules (tilting mutation), and show that they
agree.  In Section~\ref{section.cluster_cat}, a cluster category is constructed for any
$d$-representation finite algebra.  We apply the 
construction from Section~\ref{section.cluster_cat} to the higher Auslander algebras of linearly
oriented $A_n$ in Section~\ref{section.cluster_cat_A}.  In Section~\ref{sect.tropical}, we exhibit higher-dimensional 
tropical exchange relations.  In Section~\ref{higherd}, we discuss certain classical
($d=1$) phenomena which do not persist in higher dimensions.  

The initial subsection of each section contains the statements of 
the main results of that section.  Readers who are not interested in
the details of the proofs in a particular section can safely skip all
subsequent subsections.

\subsection*{Acknowledgements}
The authors would like to thank Jesus De Loera,
Osamu Iyama, Vic Reiner, and Francisco Santos for helpful conversations. S.\ O.\ was supported by NFR Storforsk grant no.\ 167130. H.\ T.\ was supported by an NSERC Discovery Grant.  Computer
calculations were performed using the facilities of ACEnet, using
Sage \cite{sage} and the NetworkX package \cite{hss}.   Much of this
work was done during two visits by H.\ T.\ to NTNU; he thanks
the Institutt for Matematiske Fag, and his hosts Idun Reiten and Aslak Bakke Buan.  The paper was completed during a visit by S.\ O.\ to UNB funded by a Harrison McCain Young Scholar award; he thanks the math department there for their hospitality.

\section{Cyclic polytopes}\label{cyclic}

The moment curve is the curve defined by $p_t=(t,t^2,\dots,t^{\delta})
\subset \mathbb R^{\delta}$, for $t\in\mathbb R$.  Choose $m$ distinct real values,
$t_1<t_2<\dots<t_m$.  The convex hull of $p_{t_1},\dots,p_{t_m}$ is 
a cyclic polytope.  (We will take this as our definition of cyclic polytope,
though sometimes a somewhat more general definition is used.)

We will be interested in triangulations of $C(m,\delta)$.  A triangulation of 
$C(m,\delta)$ is a subdivision of $C(m,\delta)$ into $\delta$-dimensional simplices
whose vertices are vertices of $C(m,\delta)$.  We write $S(m,\delta)$ for the set
of all triangulations of $C(m,\delta)$.  
A triangulation can be specified
by giving the collection of $(\delta+1)$-subsets of $\{1,\dots,m\}$ corresponding to
the $\delta$-simplices of the triangulation.  It turns out that whether or 
not a collection of $(\delta+1)$-subsets of $\{1,\dots,m\}$ forms a triangulation
is independent of the values $t_1<\dots<t_m$ chosen, so, for convenience,
we set $t_i=i$. 
Combinatorial descriptions of
the set of triangulations of $C(m,\delta)$ appear in the literature 
\cite{R,T}, but for our purposes a new description is required.

We will mainly be interested in the case where $\delta=2d$ is even.  
In $\mathbb R^{2d}$, we will refer to \emph{upper} and \emph{lower} 
with respect to the $2d$-th coordinate.  The upper facets of $C(m,2d)$ are
those which divide $C(m,2d)$ from points above it, while the lower facets
of $C(m,2d)$ are those which divide it from points below it.  Each facet
of $C(m,2d)$ is either upper or lower.  

We will be particularly interested in $d$-dimensional simplices whose
vertices are vertices of $C(m,2d)$.  We refer to such $d$-dimensional
simplices as $d$-dimensional simplices in $C(m,2d)$ (leaving unstated
the assumption that their vertices are vertices of $C(m,2d)$).   
By convention, we record such simplices as increasing $(d+1)$-tuples from
$[1,m]= \{1,2,\dots,m\}$.

\begin{lemma}\label{dfaceclass}
 Let $A=(a_0,\dots,a_d)$ be a $d$-simplex in $C(m,2d)$.  
\begin{enumerate}
\item $A$ lies within a lower boundary facet of $C(m,2d)$ iff 
$A$ contains $i$ and $i+1$ for some $i$.  
\item $A$ lies within an upper boundary facet of $C(m,2d)$ and not within
any lower boundary facet iff
$A$ does not contain $i$ and $i+1$ for any $i$,  and contains both 1 and $m$.  
\item Otherwise, the relative interior of $A$ lies in the interior of
$C(m,2d)$.  We refer to such $d$-faces as {\em internal}.
\end{enumerate}
\end{lemma}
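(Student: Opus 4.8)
The plan is to first pin down, via the moment curve, exactly which $2d$-element subsets of $[1,m]$ are vertex sets of lower, respectively upper, facets of $C(m,2d)$, and then to read off the three cases by elementary combinatorics.

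A facet of $C(m,2d)$ is $C(m,2d)\cap H$ for a supporting hyperplane $H=\{x:\langle c,x\rangle=\beta\}$, and its vertex set is the set of $p_t$ lying on $H$, i.e.\ the roots of $f(t)=\sum_{k=1}^{2d}c_kt^k-\beta$, a polynomial of degree $\le 2d$. If $F=\{i_1<\dots<i_{2d}\}$ is the vertex set of a facet, then the $p_{i_k}$ span a hyperplane (points on the moment curve are in general position), so $\deg f=2d$, $f(t)=c_{2d}\prod_k(t-i_k)$ with $c_{2d}\ne 0$, and $\frac1{c_{2d}}(\langle c,p_t\rangle-\beta)=\prod_k(t-i_k)$ is, up to a positive factor, the signed height of $p_t$ above $H$. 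Hence $F$ is a facet iff $\prod_k(j-i_k)$ has constant sign over all $j\in[1,m]\setminus F$, and it is a lower (resp.\ upper) facet when that sign is positive (resp.\ negative). Since $\prod_k(j-i_k)>0$ for $j<i_1$ or $j>i_{2d}$, and since for $i_\ell<j<i_{\ell+1}$ this sign equals $(-1)^{2d-\ell}$, reading off the constraints shows: the lower facets are exactly the sets $\{b_1,b_1+1\}\cup\dots\cup\{b_d,b_d+1\}$ with $b_{r+1}\ge b_r+2$ and $b_d\le m-1$; the upper facets are exactly the sets $\{1\}\cup\{c_1,c_1+1\}\cup\dots\cup\{c_{d-1},c_{d-1}+1\}\cup\{m\}$ with $1<c_1$, $c_{r+1}\ge c_r+2$ and $c_{d-1}+1<m$. (This is Gale's evenness condition in the even-dimensional case.) I would include the short verification of this description.

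Next I would use that $A$ lies within a boundary facet $G$ if and only if $\{a_0,\dots,a_d\}\subseteq G$ --- if a relative-interior point of $A$ lies on the supporting hyperplane of $G$, the standard convexity argument forces every vertex of $A$ onto that hyperplane --- so that the problem reduces to extending $\{a_0,\dots,a_d\}$ to a lower or an upper facet. The forward implications are pigeonhole: if $A$ lies in a lower facet, its $d+1$ elements lie in only $d$ consecutive pairs, so some pair $\{b_r,b_r+1\}$ is contained in $A$, which is case (1); and if $A$ lies in an upper facet but does not contain both $1$ and $m$, then at least $d$ elements of $A$ lie in the $d-1$ middle pairs, again producing a consecutive pair in $A$ --- so an $A$ lying in an upper facet but in no lower facet must contain $1$ and $m$, which is the forward direction of (2). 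For the reverse direction of (2), if $A=\{1=a_0<\dots<a_d=m\}$ has no consecutive pair then $a_1\ge 3$, $a_{d-1}\le m-2$ and $a_{r+1}\ge a_r+2$, so $\{1\}\cup\bigcup_{r=1}^{d-1}\{a_r,a_r+1\}\cup\{m\}$ is an upper facet containing $A$. Case (3) then follows formally: an $A$ with no consecutive pair and not containing both $1$ and $m$ lies in no lower facet (by (1)) and in no upper facet (by the reverse direction of (2), it would otherwise contain $1$ and $m$), hence in no facet, so its relative interior lies in the interior of $C(m,2d)$; and the combinatorial conditions of (1), (2), (3) clearly partition the $d$-simplices.

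The step that will require real care --- and the main obstacle --- is the reverse direction of (1): that a set $\{a_0,\dots,a_d\}$ containing two consecutive integers can be covered by $d$ consecutive pairs $\{b_r,b_r+1\}$ with $b_{r+1}\ge b_r+2$ and $b_d\le m-1$. I would cover the maximal runs of consecutive elements of $A$ one at a time, a run of length $\ell$ by $\lceil\ell/2\rceil$ consecutive pairs obtained by pairing its elements from the left (the last pair of an odd-length run overhanging the run by one step); since $A$ contains a consecutive pair, these total at most $\sum_i\lceil\ell_i/2\rceil\le d$ pairs, and one pads with arbitrary further pairs to reach exactly $d$. Between consecutive runs there is a gap of at least two integers, which accommodates the overhang of an odd run while preserving $b_{r+1}\ge b_r+2$; the only genuine difficulty is that if $a_d=m$ and the last run has odd length, its overhang would exceed $m$. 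There one reverses that run's overhang (and, cascading leftward, the overhang of any immediately preceding odd run separated from it by a gap of exactly two) so that it points left instead; using $m\ge 2d+1$ (so $A\subsetneq[1,m]$) together with the presence of a consecutive pair, one checks that this cascade resolves before reaching position $1$, so both $b_{r+1}\ge b_r+2$ and $b_d\le m-1$ can be maintained. This yields a lower facet containing $A$, completing the proof.
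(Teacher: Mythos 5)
Your proof is correct, but it does substantially more work than the paper's, and it is worth being explicit about why. The paper's proof of this lemma is one sentence: it cites the Gale evenness description of the lower and upper facets of $C(m,2d)$ from Edelman--Reiner (\cite[Lemma 2.3]{ER}) and asserts the lemma follows immediately. You instead re-derive the Gale condition from the moment-curve polynomial argument (which is fine, and is the standard proof), reduce ``lies within a facet'' to vertex containment by convexity, and then --- and this is the part the paper glosses over --- you explicitly prove the only nontrivial implication, namely the reverse direction of (1): that a $(d+1)$-set containing a consecutive pair can always be completed to a disjoint union of $d$ pairs $\{b_r,b_r+1\}\subseteq[1,m]$. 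Your identification of this as the genuine content of the lemma is exactly right. The pigeonhole directions and the construction in (2) are routine, and (3) follows formally. So the two proofs differ in that yours is self-contained and fills in a combinatorial covering argument that the paper treats as immediate from the facet description.

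On the covering argument itself: the plan (cover each maximal run of $A$ by $\lceil\ell/2\rceil$ pairs from the left, flip the rightmost run to pair from the right if it is odd and ends at $m$, and cascade this flip leftward through any preceding odd runs at distance exactly $2$) is sound, and your termination argument via $m\geq 2d+1$ together with the run count $k\leq d$ (forced by the presence of a pair) is the right invariant. One small imprecision: you write that ``between consecutive runs there is a gap of at least two integers.'' Between maximal runs there is in fact only guaranteed to be at least \emph{one} integer (if $R_i$ ends at $e$ and $R_{i+1}$ starts at $s$ then $s\geq e+2$, so the gap $\{e+1,\dots,s-1\}$ may be a singleton). This does not break the argument --- a right-overhanging pair $\{e,e+1\}$ is still disjoint from the next run's leftmost pair $\{s,s+1\}$ because $s\geq e+2$, which is all the Gale condition requires --- but it is worth stating the bound accurately, especially since the correct distinction (gap exactly one integer versus at least two) is precisely what governs when the cascade propagates. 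You should also record, when you invoke $\sum_i\lceil\ell_i/2\rceil\leq d$, that this is equivalent to there being at most $d-1$ odd runs, which follows from $\sum\ell_i=d+1$ and the existence of at least one run of length $\geq 2$.
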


We define index sets as follows:

\begin{definition}\label{def:index}
\begin{align*}
\Index{m} &= \{ (i_0, \ldots, i_d) \in \{1, \ldots, m\}^{d+1} \mid \forall x 
\in \{0,1,\dots,d-1\} \colon i_x + 2 \leq i_{x+1}\} \\
\IndexC{m} & = \{ (i_0, \ldots, i_d) \in \Index{m} \mid  i_d + 2 \leq i_0 + m\}.
\end{align*} 
\end{definition}

Now Lemma~\ref{dfaceclass} can be rephrased as saying that 
$\IndexC{m}$ indexes the internal $d$-simplices of $C(m,2d)$, while 
$\Index{m}$ indexes the $d$-simplices in $C(m,2d)$ which do not lie on a 
lower boundary facet.  

Let $S$ be a triangulation of $C(m,2d)$.  
Denote by $e(S)$ the set of $d$-simplices in $C(m,2d)$ which appear as 
a face of some simplex in $S$, and which do not lie on any lower 
boundary facet of $C(m,2d)$.  



Let $X$ and $Y$ be increasing 
$(d+1)$-tuples of real numbers.  We say that $X=(x_0,\dots,x_{d})$ intertwines 
$Y=(y_0,\dots,y_{d})$  if $x_0<y_0<x_1<y_1\dots<x_{d}<y_{d}$.  
We write $X \wr Y$ for this relation.  
A collection of increasing $(d+1)$-tuples
is called \emph{non-intertwining} if
no pair of the elements intertwine (in either order).  

\begin{theorem}\label{th1}
For any $S \in S(m,2d)$ the set $e(S)$ consists of exactly $m-d-1\choose d$ 
elements of $\Index{m}$, and  is non-intertwining. 
\end{theorem}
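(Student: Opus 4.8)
The plan is to prove the two assertions in Theorem~\ref{th1} — the cardinality of $e(S)$ and the non-intertwining property — somewhat separately, using in both cases the inductive structure of triangulations of cyclic polytopes obtained by decomposing along the lower facets. First I would establish the non-intertwining property. Suppose for contradiction that $X \wr Y$ with $X, Y \in e(S)$, so $x_0 < y_0 < x_1 < y_1 < \cdots < x_d < y_d$. Since $X$ and $Y$ are faces of simplices of the triangulation $S$, their relative interiors are disjoint from, or contained in, each face of $S$; in particular, by the defining property of a triangulation, the relative interiors of any two simplices (of any dimension) that occur as faces of $S$ either coincide or are disjoint. The key geometric fact I would use is the following \emph{Radon-type} statement for points on the moment curve: if $x_0 < y_0 < x_1 < \cdots < x_d < y_d$ are $2d+2$ real numbers, then the simplices $\conv\{p_{x_0},\dots,p_{x_d}\}$ and $\conv\{p_{y_0},\dots,p_{y_d}\}$ intersect in their relative interiors in exactly one point. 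This is the standard fact that an alternating $2d+2$-point configuration on the moment curve in $\mathbb{R}^{2d}$ is a circuit whose unique Radon partition is precisely $\{x_\bullet\}$ versus $\{y_\bullet\}$ (it follows from the sign pattern of the relevant Vandermonde-type determinants; see \cite{bar}). Given this, $X$ and $Y$ would be two faces of $S$ whose relative interiors meet in a single interior point but are not equal, contradicting that $S$ is a triangulation. Hence $e(S)$ is non-intertwining.

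For the cardinality statement, I would argue by induction on $m$ (for fixed $d$), peeling off the lower facets. Concretely, the lower boundary of $C(m,2d)$ has a canonical triangulation into the $d$-fold ``staircase'' simplices, and every triangulation $S$ of $C(m,2d)$ restricts on the lower boundary to this canonical one; equivalently, the simplices of $S$ that contain a lower boundary $d$-face are forced. The combinatorial content of Lemma~\ref{dfaceclass} is that $\Index{m}$ indexes exactly the $d$-simplices of $C(m,2d)$ \emph{not} lying on a lower facet, and $e(S) \subseteq \Index{m}$ by definition. So I must show $|e(S)| = \binom{m-d-1}{d}$. One clean way: use that any two triangulations of $C(m,2d)$ are connected by bistellar flips (Rambau \cite{R}), that a bistellar flip in even dimension $2d$ removes exactly one $d$-simplex from $e(S)$ and inserts exactly one (this is essentially the statement advertised in the introduction, and can be read off from the local picture of a flip supported on $2d+2$ vertices), and that therefore $|e(S)|$ is a flip-invariant; then compute it on one explicit triangulation — e.g.\ the ``snake'' triangulation coming from iteratively coning — where one counts the non-lower-facet $d$-faces directly and gets $\binom{m-d-1}{d}$. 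Alternatively, and perhaps more self-containedly, one can induct: deleting the vertex $m$ gives a triangulation-compatible decomposition $C(m,2d) = C(m-1,2d) \cup (\text{star of } m)$, the star of $m$ being a cone over a triangulation of $C(m-1, 2d-1)$, and a bookkeeping of which new $d$-faces through $m$ are internal versus boundary yields the Pascal recursion $\binom{m-d-1}{d} = \binom{m-d-2}{d} + \binom{m-d-2}{d-1}$.

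I expect the cardinality count to be the main obstacle, for two reasons. First, the flip-connectivity route relies on the nontrivial fact that the flip graph of $C(m,2d)$ is connected (true, but a cited black box) together with a careful local analysis of what a single even-dimensional flip does to the set of non-lower-boundary $d$-faces — one must check that the ``old'' and ``new'' $d$-simplices are indeed the only $d$-faces that change and that both lie in $\Index{m}$, which requires unwinding the $2d+2$-vertex local model. Second, the inductive route requires a clean statement of how a triangulation of $C(m,2d)$ decomposes under deletion of the top vertex $m$ and a precise matching of the $d$-faces through $m$ with internal $(d-1)$-faces of a lower-dimensional cyclic polytope; the dimension parity shift ($2d \leadsto 2d-1$) makes the boundary/interior bookkeeping a little delicate, since odd-dimensional cyclic polytopes behave differently from even-dimensional ones. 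The non-intertwining half, by contrast, is essentially immediate once the Radon-partition fact for alternating moment-curve configurations is in hand, so I would present that first and spend the bulk of the proof on the count.
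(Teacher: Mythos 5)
Your treatment of the non-intertwining half is correct and is essentially the paper's argument: Lemma~\ref{affdep} shows that an intertwining pair of $(d+1)$-tuples gives simplices meeting in a single interior point, which is impossible for two $d$-faces of a triangulation. That part is fine and could be written up directly.

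The cardinality half is where the proposal comes apart. The paper's argument is quite different from either of your two routes and is worth knowing: first it proves (Lemma~\ref{lemnum}) that \emph{every} triangulation $S$ of $C(m,2d)$ has exactly $\binom{m-d-1}{d}$ maximal simplices, via a vertex-collapsing argument --- for each separated $d$-tuple $A=(a_1,\dots,a_d)$ in $[2,m-1]$, collapsing the blocks of vertices between consecutive $a_i$'s yields a copy of $C(2d+1,2d)$, which is itself a simplex, so $S$ has exactly one maximal simplex of the form $(b_0,a_1,b_1,\dots,a_d,b_d)$; and every maximal simplex arises this way for a unique $A$. Then it shows (Lemmas~\ref{below}, \ref{lemma.below_onlyif_e}, Proposition~\ref{propa}, Lemma~\ref{cor1}) that the ``even-entry'' map $A \mapsto e(A)$ is a bijection from maximal simplices of $S$ onto $e(S)$, so $|e(S)| = |S|$. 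Neither of these two ideas --- the collapsing count, or the bijection $A\mapsto e(A)$ --- appears in your proposal, and they are really the content of the theorem.

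Of your two proposed routes for the count, Route~A (flip-connectivity plus flip-invariance of $|e(S)|$) is logically sound in outline but you have not actually established the flip-invariance; that is the content of Theorem~\ref{th3}, whose proof in the paper itself relies on the non-intertwining machinery and on Dey's reconstruction (Lemma~\ref{deylem}), so carrying it out here would mean front-loading a substantial part of Section~\ref{local}. Route~B has a concrete error: the decomposition $C(m,2d) = C(m-1,2d) \cup (\text{star of }m)$ is \emph{not} triangulation-compatible in general. Already for $d=1$, $m=4$, the triangulation of the quadrilateral $1234$ into $\{124,234\}$ has every maximal simplex containing the vertex $4$, so the simplices not through $4$ do not triangulate $C(3,2)$. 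The correct inductive operations are the Rambau--Santos ones used in the paper for Theorem~\ref{th2}, namely the contraction $S/1$ and the vertex figure $S\setminus 1$, but even those are deployed for uniqueness/realizability, not for the count in Theorem~\ref{th1}.
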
 

We also have a converse result:

\begin{theorem}\label{th2} Any non-intertwining collection of
$m-d-1\choose d$  
elements of $\Index{m}$ is $e(S)$ for a unique $S\in S(m,2d)$.
\end{theorem}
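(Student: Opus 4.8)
The plan is to construct the triangulation $S$ from a given non-intertwining collection $T \subseteq \Index{m}$ of size $\binom{m-d-1}{d}$ by induction on $m$, peeling off the vertex $m$ (or, dually, the vertex $1$) in the manner standard for cyclic polytopes. Recall that $C(m,2d)$ is obtained from $C(m-1,2d)$ by adding the new vertex $m$, which sees exactly the upper facets of $C(m-1,2d)$; any triangulation of $C(m,2d)$ thus restricts to a triangulation of $C(m-1,2d)$ on the part not involving $m$, together with cones from $m$ over a triangulation of the upper boundary of $C(m-1,2d)$ (which is itself combinatorially a stack of lower cyclic polytopes, by the Gale evenness / beneath-beyond description). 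So the first step is to split $T$ into $T' = \{X \in T \mid m \notin X\}$ and $T'' = \{X \in T \mid m \in X\}$, show that $T'$ is a non-intertwining subset of $\Index{m-1}$ of the correct cardinality $\binom{m-d-2}{d}$, and that $T''$ (with $m$ stripped off) records a non-intertwining family of $d'$-simplices of the appropriate lower-dimensional cyclic polytope structure on the upper boundary, of the right size $\binom{m-d-2}{d-1}$ — using the Vandermonde identity $\binom{m-d-1}{d} = \binom{m-d-2}{d} + \binom{m-d-2}{d-1}$ to match counts. Applying the induction hypothesis to each piece produces a candidate $S$, and one checks the pieces glue to a genuine triangulation of $C(m,2d)$ whose $d$-faces off the lower boundary are exactly $T$; uniqueness follows because Theorem~\ref{th1} forces $e(S) = T$, and by Dey's theorem \cite{dey} the $d$-faces determine the triangulation.

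The main obstacle is the base of the peeling and the bookkeeping at the boundary: the inductive decomposition above requires understanding triangulations of the \emph{upper} boundary of a cyclic polytope, which is not a single cyclic polytope but a union of lower facets, each a $(2d-1)$-simplex's worth — in fact the relevant object is best handled via a secondary induction or via the known structure (following \cite{R,T,RSa}) that triangulations of $C(m,\delta)$ correspond to certain "separated" families in $C(m,\delta-1)$. I would therefore likely set up the induction not on $m$ alone but run it simultaneously with a statement about the two boundary cyclic polytopes $C(\{1,\dots,m-1\},2d)$ and the link structure, or alternatively phrase the whole argument in terms of Rambau's lifting: encode $T$ as a consistent family of "snugness" conditions and show these conditions are exactly the ones cutting out a flag (chain) in the higher Stasheff–Tamari poset. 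Either way the combinatorial content is: the non-intertwining condition on $\Index{m}$ is the exact translation, across the bijection between $d$-faces and their "submersion sets", of the compatibility condition for simplices to coexist in a triangulation of $C(m,2d)$.

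An alternative, more self-contained route — which I would actually prefer to write up — avoids invoking the external classifications and instead proves Theorems~\ref{th1} and~\ref{th2} together as a single bijection statement by exhibiting explicit mutually inverse maps. In one direction $S \mapsto e(S)$ is Theorem~\ref{th1}. For the reverse, given non-intertwining $T$, define the full simplicial complex $\widehat T$ whose $2d$-simplices are all $(2d+1)$-subsets $\sigma$ of $[1,m]$ such that every $d$-subface of $\sigma$ lying off the lower boundary belongs to $T \cup \{\text{boundary }d\text{-faces}\}$; then show (i) $\widehat T$ covers $C(m,2d)$, i.e. every point lies in some $\sigma$, by a parity/winding-number argument counting how many $\sigma$ contain a generic point; (ii) distinct top simplices of $\widehat T$ have disjoint interiors, again by an intertwining obstruction — two $2d$-simplices $\sigma,\tau$ overlap iff some $d$-face of $\sigma$ intertwines some $d$-face of $\tau$, which is excluded; and (iii) the cardinality count $|T| = \binom{m-d-1}{d}$ is exactly what is needed for the Dehn–Sommerville / $f$-vector identities of a triangulation of $C(m,2d)$ to hold, forcing $\widehat T$ to be a triangulation rather than a proper subcomplex. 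Uniqueness is then immediate from Dey \cite{dey}. The hard step in this approach is (i)–(ii), the overlap-and-covering analysis; I expect it to reduce to the one-dimensional combinatorics already packaged in Lemma~\ref{dfaceclass} together with the elementary observation that for increasing tuples "not intertwining in either order" is equivalent to one tuple being pointwise-dominated-with-slack by the other on a suitable cyclic interval, which is precisely the condition for the corresponding simplices to be comparable rather than crossing.
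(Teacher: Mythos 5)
Your first approach is closer in spirit to the paper's proof than your second, but both are unfinished at precisely the point where the real work lies. The paper also inducts by peeling off a vertex, but it peels vertex~$1$ via two simultaneous operations: a contraction $X/1$ (merge vertices $1$ and $2$, staying in dimension $2d$ with one fewer vertex) and a \emph{double} link $X\setminus\{1,2\}$ (drop two vertices, land in dimension $2d-2$). This specific pair is chosen because it stays in even dimension — so the separated-$(d+1)$-tuple / non-intertwining machinery applies verbatim to both pieces — and because it matches exactly the hypotheses of the Rambau--Santos extension lemma (Proposition~\ref{err}), which supplies both existence and uniqueness of the glued triangulation once a compatibility condition $W \prec T\setminus 2$ is verified. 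Your choice to peel vertex~$m$ and decompose as $\{X : m\notin X\}\sqcup\{X: m\in X\}$ lands the link piece in an \emph{odd}-dimensional cyclic polytope $C(m-1,2d-1)$, where $\Index{}$, separatedness, and intertwining are not defined; you would have to build a parallel odd-dimensional theory or go through two peels at once, at which point you essentially recover the paper's construction.

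The substantive gap in both of your sketches is the same: the compatibility/gluing step. In approach~1 you note ``one checks the pieces glue'' and in approach~2 you invoke a ``parity/winding-number argument'' for covering — but this is where all the content is. The paper's proof of Theorem~\ref{th2} devotes a full lemma (showing $W\prec T\setminus 2$) to exactly this; it is a delicate case analysis on the intertwining relation that is not a routine consequence of the setup. Your step~(iii) in approach~2 — that the cardinality $\binom{m-d-1}{d}$ ``forces $\widehat T$ to be a triangulation rather than a proper subcomplex'' via Dehn--Sommerville — is not a valid inference: a union of pairwise-non-overlapping full-dimensional simplices with the expected number of $d$-faces need not cover the polytope, and the $f$-vector identities constrain but do not characterize triangulations among subcomplexes. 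You need a direct covering argument (the paper gets this from Proposition~\ref{err}, which in turn rests on Rambau--Santos). Finally, uniqueness via Dey is correct and is also how the paper concludes, so that part is fine.
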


\begin{example} We consider the above theorems in the case $d=1$. 
If $S$ is a triangulation of 
$C(m,2)$, then $e(S)$ consists of the internal edges of the triangulation
together with the edge $1m$.  The theorems are clear in this case.
\end{example}

\subsection{Proof of Theorem~\texorpdfstring{\ref{th1}}{2.3}}
We recall Radon's Theorem, which can be found, for example, as \cite[Theorem I.4.1]{bar}:

\begin{theorem}
Given $e+2$ points in $\mathbb R^{e}$, they can
be partitioned into two disjoint sets $C$ and $D$ such that the convex
hulls of $C$ and $D$ intersect.\end{theorem}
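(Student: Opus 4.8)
The plan is to run the classical affine-dependence argument. Write the given points as $p_1,\dots,p_{e+2}\in\mathbb R^{e}$. The first step is to observe that any $e+2$ points in $\mathbb R^{e}$ are affinely dependent: the $e+2$ vectors $(p_i,1)\in\mathbb R^{e+1}$ cannot be linearly independent, so there exist scalars $\lambda_1,\dots,\lambda_{e+2}$, not all zero, with
\[ \sum_{i=1}^{e+2}\lambda_i p_i = 0 \qquad\text{and}\qquad \sum_{i=1}^{e+2}\lambda_i = 0. \]

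The second step is to split the indices according to the sign of $\lambda_i$. Set $P=\{\,i:\lambda_i>0\,\}$ and $N=\{\,i:\lambda_i<0\,\}$. Since the $\lambda_i$ are not all zero and sum to zero, both $P$ and $N$ are nonempty, and $s:=\sum_{i\in P}\lambda_i = -\sum_{i\in N}\lambda_i>0$. Then the point
\[ x \;:=\; \sum_{i\in P}\frac{\lambda_i}{s}\,p_i \;=\; \sum_{i\in N}\frac{-\lambda_i}{s}\,p_i \]
is well defined, the two expressions agreeing because $\sum_i\lambda_i p_i=0$; in each expression the coefficients are nonnegative and sum to $1$. Hence $x$ lies in the convex hull of $\{p_i:i\in P\}$ and simultaneously in the convex hull of $\{p_i:i\in N\}$.

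Finally, put $C=\{p_i:i\in P\}$ and let $D$ consist of the remaining points, i.e. those with $\lambda_i\le 0$ (the indices with $\lambda_i=0$ are assigned to $D$ arbitrarily, which only enlarges its convex hull and still contains $x$). Then $C$ and $D$ form a partition of the point set into two disjoint nonempty subsets whose convex hulls both contain $x$, which is exactly the assertion. There is no serious obstacle in this argument; the one point that must not be glossed over is that we use the \emph{affine} dependence — the homogeneity condition $\sum_i\lambda_i=0$ — rather than a mere linear dependence, since it is precisely this that turns both sides of the displayed equation into convex (not just conical) combinations. The only bookkeeping nuisance is the treatment of coefficients $\lambda_i=0$ (and, if one reads $C,D$ as sets of points rather than of indices, of coincident points), which is handled by the arbitrary assignment above.
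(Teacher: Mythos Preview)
Your proof is correct and is the standard affine-dependence argument for Radon's Theorem. The paper does not supply its own proof of this statement; it simply recalls Radon's Theorem and cites \cite[Theorem I.4.1]{bar}, so there is nothing further to compare.
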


An \emph{affine dependency} among vectors $\{v_1,\dots,v_{r}\}$ 
in $\mathbb R^{e}$ is a relation
of the form $\sum a_iv_i=0$ where $\sum a_i=0$, but the coefficients are
not all zero.  

We can make Radon's Theorem more specific if we begin with $2d+2$
distinct points on the moment curve in $\mathbb R^{2d}$.  
The result below is essential for us, so we provide a proof; a different proof can be found in \cite{ER}.

\begin{lemma}\label{affdep} Let $a_1<\dots<a_{2d+2}$. Among the points 
$p_{a_1},\dots,p_{a_{2d+2}}$ there is a unique affine dependency, which 
can be expressed in the form 
$$\sum_{i \textrm { even}} c_ip_{a_i} = \sum_{i \textrm { odd}} c_ip_{a_i}$$
where the $c_i$ are all positive and 
$$\sum_{ i \textrm { even}} c_i=1=\sum_{i \textrm { odd}} c_i$$
\end{lemma}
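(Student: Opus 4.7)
The plan is to reformulate affine dependencies as linear dependencies by homogenizing, then produce a concrete null vector via Cramer-style cofactors and read off positivity of its coordinates from the Vandermonde determinants that appear.

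First I would pass to the homogenized points $\tilde p_t = (1, t, t^2, \ldots, t^{2d}) \in \mathbb{R}^{2d+1}$: an affine dependency $\sum \lambda_i p_{a_i} = 0$ with $\sum \lambda_i = 0$ is equivalent to a linear dependency $\sum \lambda_i \tilde p_{a_i} = 0$. Let $M$ denote the $(2d+1) \times (2d+2)$ matrix whose $i$-th column is $\tilde p_{a_i}$. Since any $2d+1$ columns of $M$ form a nonsingular Vandermonde block, $M$ has rank $2d+1$ and its kernel is one-dimensional; this gives the uniqueness of the affine dependency up to a global scalar.

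Next I would exhibit an explicit null vector by the standard cofactor recipe: set $\lambda_i = (-1)^{i+1} \det M_i$, where $M_i$ is the $(2d+1) \times (2d+1)$ matrix obtained from $M$ by deleting the $i$-th column. That this vector lies in $\ker M$ is the usual identity: duplicate any row of $M$ to form a $(2d+2) \times (2d+2)$ matrix with zero determinant, and expand along the duplicated row. Now $M_i$ is (the transpose of) the Vandermonde matrix on the strictly increasing sequence $a_1 < \cdots < \hat a_i < \cdots < a_{2d+2}$, so
\[ \det M_i \;=\; \prod_{\substack{j<k \\ j,k\neq i}}(a_k - a_j) \;>\; 0. \]

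Finally I would extract the statement. The positivity of $\det M_i$ forces the $\lambda_i$ to strictly alternate in sign with $i$. Setting $c_i = |\lambda_i|$, the relation becomes $\sum_{i \text{ odd}} c_i \tilde p_{a_i} = \sum_{i \text{ even}} c_i \tilde p_{a_i}$ with all $c_i > 0$; comparing first coordinates (the homogenizing $1$'s) yields $\sum_{i \text{ odd}} c_i = \sum_{i \text{ even}} c_i$ automatically, and dividing by this common value gives the asserted normalization to $1$. The only delicate point is bookkeeping of signs through the cofactor expansion, but once the Vandermonde sign convention is fixed this is purely mechanical, so I do not expect a genuine obstacle.
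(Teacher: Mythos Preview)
Your proof is correct and takes a genuinely different route from the paper's. The paper argues uniqueness geometrically (a degree-$2d$ curve meets a hyperplane in at most $2d$ points, so any $2d+1$ of the $p_{a_i}$ are affinely independent) and then obtains the alternating sign pattern by a continuity argument: assuming two consecutive indices lie on the same side of the Radon partition, it slides $a_{i+1}$ toward $a_i$ along the moment curve and derives a contradiction when the two points coalesce. Your argument, by contrast, is purely algebraic: homogenize, note the columns are Vandermonde, write down the cofactor null vector explicitly, and read off the alternating signs from the positivity of Vandermonde determinants on increasing nodes.

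Your approach is shorter and yields an explicit formula $c_i = \det M_i / \sum_j \det M_j$ for the coefficients, which is a bonus. The paper's deformation argument trades this explicitness for a more geometric picture of why consecutive points cannot sit on the same side of the partition; it also foreshadows the kind of continuity reasoning used elsewhere in the paper (collapsing vertices of cyclic polytopes together). Either proof is perfectly adequate here; yours is the more self-contained of the two.
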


\begin{proof}
Because the moment curve is degree
$2d$, it can have at most $2d$ intersections with any (affine) hyperplane. 
Thus, the $2d+2$ points which we consider do not all lie on any 
hyperplane, so there must be exactly one affine dependency among them.  

Express the affine dependency as 
$\sum_{i\in I} c_i p_{a_i} = \sum_{i\not\in I} c_i p_{a_i}$, with 
$c_i\geq 0$ and $\sum_{i \in I} c_i = \sum_{i \not\in I} c_i$.  
Since no
$2d+1$ points lie in a hyperplane, any proper subset of  the $\{p_{a_i}\}$
is affinely independent, and thus we must have all $c_i>0$.  If the
affine dependency is not of the form given in the statement of the lemma, 
we must have that either $I$
or $I^c=\{1,\dots,2d+2\}\setminus I$ contains two consecutive integers, so without
loss of generality suppose that $\{i,i+1\}\subset I$.  

More geometrically, the affine dependency implies that the convex hull of $\{p_{a_i}\}_{i\in I}$
intersects the convex hull of $\{p_{a_i}\}_{i \in I^c}$.  Deform this
configuration by moving $a_{i+1}$ towards $a_i$. 
The point of intersection necessarily moves continuously as $a_{i+1}$ is 
deformed.  As $a_{i+1}$ moves, 
the point of intersection cannot hit the boundary of 
the convex hull of $\{p_{a_i}\}_{i\in I^c}$, because, if it did, that
would amount to an affine dependency omitting some $p_{a_j}$, which
we have already said is impossible.  

Thus, by continuity, we will still have an affine dependency when 
$a_{i+1}$ reaches $a_i$.  But that is impossible, since now we would
have an affine dependency among $2d+1$ points not all on a hyperplane.  
\end{proof}


The previous lemma 
can also be expressed as saying that if $X$ and $Y$ are intertwining
$(d+1)$-tuples, then the corresponding $d$-simplices intersect in a
single interior point
of both, while if $X$ and $Y$ are distinct $(d+1)$-tuples which do
not intertwine, the relative interiors of their corresponding
simplices are disjoint.

\begin{lemma} If $S\in S(m,2d)$, then $e(S)$ is non-intertwining.
\end{lemma}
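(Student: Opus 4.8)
The plan is to argue by contradiction, converting an intertwining pair in $e(S)$ into a face of a $2d$-simplex that is forced to have too many vertices. The only real inputs will be the geometric reformulation of Lemma~\ref{affdep} recorded just above, and the fact that a triangulation is a genuine (face-to-face) simplicial subdivision, so that the intersection of any two of its cells is a common face of each.

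So suppose for contradiction that $X=(x_0,\dots,x_d)$ and $Y=(y_0,\dots,y_d)$ both lie in $e(S)$ and that $X\wr Y$, i.e.\ $x_0<y_0<x_1<y_1<\dots<x_d<y_d$. In particular the $2d+2$ indices $x_0,y_0,\dots,x_d,y_d$ are pairwise distinct, and correspond to $2d+2$ distinct vertices of $C(m,2d)$. By the remark following Lemma~\ref{affdep}, the $d$-simplices $\Delta_X$ and $\Delta_Y$ spanned by these two vertex sets meet in a single point $p$ which lies in the relative interior of $\Delta_X$ and in the relative interior of $\Delta_Y$. Since $X\in e(S)$, by definition $\Delta_X$ is a face of some $2d$-simplex $\sigma\in S$; likewise $\Delta_Y$ is a face of some $2d$-simplex $\tau\in S$. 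Note that $p\in\sigma\cap\tau$.

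Next I would invoke the defining property of a triangulation: $\rho:=\sigma\cap\tau$ is a common face of $\sigma$ and of $\tau$. Because $\rho$ is a face of $\sigma$ and contains the point $p$, which lies in the relative interior of the face $\Delta_X$ of $\sigma$, the elementary fact that a face of a polytope containing a relative-interior point of another face must contain that whole face gives $\Delta_X\subseteq\rho$; symmetrically $\Delta_Y\subseteq\rho$. Now $\Delta_X$ and $\Delta_Y$ are faces of $\sigma$ contained in the face $\rho$ of $\sigma$, hence are faces of $\rho$ as well (the faces of a simplex contained in a given face are exactly the faces of that face). Therefore all $2d+2$ of the vertices $p_{x_0},p_{y_0},\dots,p_{x_d},p_{y_d}$ are vertices of $\rho$. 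But $\rho$ is a face of the $2d$-dimensional simplex $\sigma$, so $\rho$ is itself a simplex with at most $2d+1$ vertices --- a contradiction. Hence no two elements of $e(S)$ intertwine.

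I do not expect a serious obstacle here. The case $\sigma=\tau$ is not special: then $\Delta_X$ and $\Delta_Y$ are both faces of a single $2d$-simplex, whose $2d+1$ vertices cannot include the $2d+2$ distinct vertices we listed, and one concludes immediately. The one step deserving care is the identification of $\sigma\cap\tau$ as a common face of $\sigma$ and $\tau$ --- this is precisely where the hypothesis that $S$ is an honest triangulation (as opposed to an arbitrary dissection into simplices meeting non-face-to-face) is used; if that convention is not already fixed in the ambient setup, it should be made explicit. Everything else is either Lemma~\ref{affdep}, already proved, or standard polytope combinatorics.
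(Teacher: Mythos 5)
Your proof is correct and takes essentially the same approach as the paper: the paper's own argument is the three-line observation that elements of $e(S)$ are $d$-faces of the triangulation and therefore cannot meet in a common relative-interior point, which combined with the geometric reading of Lemma~\ref{affdep} gives non-intertwining. You have simply unpacked the polytope-combinatorics step (two faces of a simplicial complex meet in a common face, and a face containing a relative-interior point of another face must contain it) that the paper leaves implicit.
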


\begin{proof}
The elements of $e(S)$ are faces of simplices
in the triangulation.  Thus, they cannot intersect in a single point 
in both their interiors.  It follows that $e(S)$ is non-intertwining.  
\end{proof}

\begin{proof}[Proof of Lemma~\ref{dfaceclass}]
This follows immediately from the description of the upper and lower
boundary facets of $C(m,2d)$ given in \cite[Lemma 2.3]{ER}: the lower
boundary facets of $C(m,2d)$ are precisely those simplices whose vertices are $2d$-subsets consisting of
a union of $d$ pairs of the form $\{i,i+1\}$, while the upper boundary
facets are precisely those simplices whose vertices are $2d$-subsets consisting of a union of 
$d-1$ pairs of the form $\{i,i+1\}$ together with $\{1,m\}$.  
\end{proof}




We next show that if $S\in S(m,2d)$, then $|e(S)|={m-d-1\choose d}$. 
We do this in two steps, first showing that the number of simplices in
$S$ is $m-d-1\choose d$, and then showing that there is a way to 
assign each element of $e(S)$ to a simplex of $S$ in a one-to-one way.

\begin{definition} We say that  $(i_0,i_1,\dots,i_k)$ is 
\emph{separated} if $i_{x+1}\geq i_x +2$ for all $0\leq x \leq k-1$.  
\end{definition}

Using this term, we can rephrase the definition of $\Index{m}$ as 
the set of separated $(d+1)$-tuples from $\{1,2,\dots,m\}$.  

\begin{lemma}\label{lemnum} For $S\in S(m,2d)$, the triangulation 
$S$ contains $m-d-1\choose d$ simplices.  \end{lemma}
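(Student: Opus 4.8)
The plan is to prove by induction on $m$ that every triangulation $S$ of $C(m,2d)$ uses exactly $\binom{m-d-1}{d}$ simplices; the base case $m = 2d+1$ is trivial since $C(2d+1,2d)$ is a single simplex and $\binom{d}{d} = 1$. For the inductive step I would exploit the combinatorial fact that in any triangulation of $C(m,2d)$ the vertex $m$ (the last point on the moment curve) behaves predictably: the simplices of $S$ split into those containing $m$ and those not containing $m$. The simplices not containing $m$ form a triangulation of $C(m-1,2d)$ — more precisely, they triangulate the ``lower'' part of $C(m,2d)$ obtained by deleting the vertex $m$, which is combinatorially $C(m-1,2d)$ — and by induction there are $\binom{m-d-2}{d}$ of them. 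It then remains to count the simplices containing $m$ and show that there are $\binom{m-d-2}{d-1}$ of them, so that Pascal's identity $\binom{m-d-2}{d} + \binom{m-d-2}{d-1} = \binom{m-d-1}{d}$ finishes the count.

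The key step is therefore the analysis of the simplices containing $m$. Each such simplex is the join of $m$ with a $(2d-1)$-simplex lying in an upper facet of $C(m-1,2d)$ (equivalently, in the ``link'' of $m$). The collection of these $(2d-1)$-simplices forms a triangulation of the upper boundary of $C(m-1,2d)$, which is a triangulation of a $(2d-1)$-dimensional cyclic polytope on the $m-1$ vertices $\{1,\dots,m-1\}$ of the appropriate combinatorial type. I would invoke (or reprove in this special odd-dimensional case) the standard enumeration that a triangulation of $C(m-1,2d-1)$ has a determined number of top simplices; in fact for cyclic polytopes the number of maximal simplices in any triangulation depends only on the number of vertices and the dimension. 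Concretely, one can use Lemma~\ref{affdep}: the upper facets of $C(m-1,2d)$ are indexed by $(d-1)$ pairs $\{i,i+1\}$ together with $\{1,m-1\}$, and any triangulation of this sphere-minus-a-point again admits an inductive peeling of the largest vertex. Unwinding the recursion gives the count $\binom{m-d-2}{d-1}$.

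The main obstacle I anticipate is making precise the claim that deleting vertex $m$ from a triangulation of $C(m,2d)$ yields a triangulation of $C(m-1,2d)$ together with a ``cone over the link'' decomposition, and that the link triangulation is itself a genuine triangulation of a lower-dimensional cyclic polytope. This requires the structural result (due to Rambau, cf.\ \cite{R}, or derivable from the pushing/pulling theory of triangulations) that every triangulation of a cyclic polytope is obtained by such a placing construction, or at least that the star of the last vertex has this product-like form. An alternative route that avoids this subtlety is to count instead by a global Euler-characteristic or $h$-vector argument: the number of maximal simplices in any triangulation of a $2d$-polytope with $m$ vertices is the leading coefficient of its $h$-vector, and for cyclic polytopes all triangulations are ``equidecomposable'' in the sense that this number is forced; one can then simply compute $\binom{m-d-1}{d}$ directly for the placing triangulation and conclude it holds for all. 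I would present the inductive argument as the main line and remark that the Dey-type rigidity makes the count triangulation-independent.
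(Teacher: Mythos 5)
The main line of your argument has a genuine gap that I do not think can be patched without essentially changing the approach. Your inductive step asserts that the simplices of $S$ not containing $m$ ``form a triangulation of $C(m-1,2d)$.'' This is false in general, and it already fails for $d=1$: in the triangulation of the pentagon $C(5,2)$ whose three triangles are $\{1,2,5\},\{2,3,5\},\{3,4,5\}$ (all incident to vertex $5$), there are no triangles avoiding $5$ at all, whereas a triangulation of $C(4,2)$ must have two. The deletion-of-a-vertex decomposition you want only works when $S$ happens to be a placing/pushing triangulation with respect to the last vertex, which is an additional structural hypothesis that an arbitrary $S\in S(m,2d)$ need not satisfy. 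You acknowledge this obstacle, but neither of your suggested workarounds resolves it: invoking a Rambau-type structure theorem presupposes machinery well beyond what is needed, and the $h$-vector / Euler-characteristic suggestion is circular, since the claim that the number of maximal simplices is a triangulation-invariant of $C(m,2d)$ is precisely the content of the lemma, not a fact one may import for free.

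The paper's proof proceeds by a different and more economical device: a direct bijection between the simplices of $S$ and the separated $d$-tuples $A=(a_1,\dots,a_d)$ from $[2,m-1]$. For each such $A$ one collapses the vertices of $C(m,2d)$ lying strictly between (and outside) consecutive $a_i$'s along the moment curve; the triangulation $S$ deforms to a triangulation of the resulting cyclic polytope $C(2d+1,2d)$, which is itself a simplex and therefore admits exactly one triangulation with one maximal cell. That unique cell comes from exactly one simplex of $S$, necessarily of the shape $(b_0,a_1,b_1,\dots,a_d,b_d)$; conversely every simplex of $S$ has this shape for a unique separated $d$-tuple $A$. This avoids any vertex-deletion claim and is self-contained. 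If you want to salvage an inductive route, you would need to first reduce to a specific triangulation (say the lower or placing triangulation), count there, and then independently prove the invariance of the count over bistellar flips --- but that is a longer path than the paper's bijection, and the invariance-under-flips step is not trivial.
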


\begin{proof} Consider some separated $d$-tuple from $[2,m-1]$, say 
$A=(a_1,\dots,a_d)$.  Collapse together the vertices of $C(m,2d)$ less
than $a_1$, then those greater than $a_1$ but less than $a_2$, etc.  
(That is to say, move the given sets of vertices along the moment curve
until they coincide.)
A triangulation of $C(m,2d)$ will yield a triangulation of the smaller
polytope resulting from this process: deform the triangulation along
with the polytope, and throw away any simplices which degenerate.  In this case, 
the
result is a cyclic polytope with $2d+1$ vertices.  This polytope
is itself a simplex, so it has only one triangulation.  The unique simplex of
this triangulation
must have come from some simplex of $C(m,2d)$.  
Therefore, there must be 
exactly one 
simplex of $S$ of the form $(b_0,a_1,b_1,\dots, b_{d-1},a_d,b_d)$ 
(for the specified values of $a_i$ and some $b_i$, such that the $(2d+1)$-tuple
is increasing as listed).  Clearly, any 
simplex of $S$ satisfies this property for exactly one choice of 
$d$-subset $A$, so there must be as many simplices in $S$ as there are 
separated $d$-tuples in $[2,m-1]$, that is, $m-d-1\choose d$.  
\end{proof}

For $A=(a_0,\dots,a_{2d})$ an increasing $(2d+1)$-tuple from $[1,m]$, define 
the $(d+1)$-tuple $e(A)=(a_0,a_2,\dots a_{2d})$ by taking the even-index
terms from $A$. Similarly we set $o(A) = (a_1, a_3, \ldots, a_{2d-1})$.

Given a simplex $C$ of dimension less than $2d$, 
the points \emph{immediately below} it are those points which
are a small distance below some point
in the relative interior of $C$.

\begin{lemma} \label{below}
If $A$ is a $2d$-simplex of some triangulation in $S(m,2d)$, then 
$A$ contains the points immediately below $e(A)$.  
\end{lemma}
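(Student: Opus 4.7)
The plan is to fix a point $q$ in the relative interior of $e(A)$ and exhibit, for all sufficiently small $t>0$, the point $q - t e_{2d}$ (with $e_{2d}$ the last coordinate vector of $\mathbb{R}^{2d}$) as a convex combination of the vertices of $A$. Since $A$ is a $2d$-simplex spanning all of $\mathbb{R}^{2d}$, the vertices $p_{a_0}, \ldots, p_{a_{2d}}$ are affinely independent. Write $q = \sum_{i \text{ even}} \lambda_i p_{a_i}$ with $\lambda_i > 0$ and $\sum \lambda_i = 1$, and let $(\nu_0, \ldots, \nu_{2d})$ be the unique solution to the $(2d{+}1)$ linear equations $\sum_i \nu_i = 0$ and $\sum_i \nu_i p_{a_i} = -e_{2d}$. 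Then $q - t e_{2d} = \sum_i (\lambda_i + t \nu_i) p_{a_i}$ with the new coefficients still summing to $1$, so the task reduces to showing $\nu_j \geq 0$ whenever $j$ is odd; for even $i$, $\lambda_i > 0$ already ensures $\lambda_i + t\nu_i > 0$ for $t$ small.

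To compute $\nu_j$, I would interpret the defining conditions as saying that the linear functional $L(p) := \sum_i \nu_i p(a_i)$ on polynomials of degree $\leq 2d$ satisfies $L(x^k) = 0$ for $k < 2d$ and $L(x^{2d}) = -1$, i.e.\ $L$ returns the negative of the leading coefficient. Applying $L$ to the Lagrange basis polynomial $\ell_j(x) := \prod_{i \neq j}(x - a_i)$, which has degree $2d$, leading coefficient $1$, and vanishes at every $a_i$ with $i \neq j$, gives $\nu_j \cdot \prod_{i \neq j}(a_j - a_i) = L(\ell_j) = -1$, so
\[ \nu_j \;=\; \frac{-1}{\prod_{i \neq j}(a_j - a_i)}. \]

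A sign count finishes the argument: among the $2d$ factors $(a_j - a_i)$, exactly $2d - j$ of them (those with $i > j$) are negative, so the denominator has sign $(-1)^{2d - j} = (-1)^j$, giving $\nu_j$ the sign $(-1)^{j+1}$, which is positive precisely when $j$ is odd. Thus $\nu_j > 0$ for $j \in \{1, 3, \ldots, 2d-1\}$, as required. The only place where anything beyond routine linear algebra enters is the alternating sign pattern of $\nu_j$, and this is the geometric heart of the statement: it is a direct manifestation of the convexity of the moment curve (the odd-indexed vertices sit on the ``lower'' side of the $d$-simplex spanned by the even-indexed ones). Once one thinks of $L$ via Lagrange interpolation, the computation is a one-line determinant-free calculation, so I expect no serious obstacle.
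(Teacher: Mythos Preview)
Your proof is correct and takes a genuinely different route from the paper's. The paper argues geometrically: it adds a $(2d{+}2)$-nd point $p_t$ on the moment curve, invokes the alternating affine dependency of Lemma~\ref{affdep} to obtain a common point of $e(A)$ and $\langle o(A), p_t\rangle$, and then lets $t\to\infty$ so that the direction of $p_t$ becomes vertical; in the limit this exhibits a point of $o(A)$ directly below a point of $e(A)$, and convexity of $A$ finishes the argument. Your approach is instead a direct computation in barycentric coordinates: you solve for the affine representation of the downward direction $-e_{2d}$ in terms of the vertices of $A$ and read off the signs via Lagrange interpolation.

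Your argument is more elementary in that it does not rely on Lemma~\ref{affdep} or any limiting process, and it gives the explicit formula $\nu_j = -\prod_{i\ne j}(a_j-a_i)^{-1}$, from which the alternating sign pattern is immediate. It also handles \emph{every} point $q$ in the relative interior of $e(A)$ uniformly, whereas the paper's limiting argument produces a specific point below $e(A)$ and then implicitly relies on convexity to cover the rest. The paper's approach, on the other hand, makes the geometric picture (the even-indexed vertices sit above the odd-indexed ones) more vivid and ties the lemma to the Radon-type structure already established. Both proofs ultimately hinge on the same Vandermonde sign alternation; yours isolates it cleanly.
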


\begin{proof} Consider $2d+2$ points on the moment curve, with
the first $2d+1$ corresponding to the vertices of $A$, in order, and the
last being $p_t$, with $t$ varying. Consider the effect as $t \to
\infty$.  The vector $p_t$ approaches vertical.  Thus, the point in
common between $e(A)$ and $\langle o(A), p_t\rangle$ approaches (as
$t \to \infty$) a point which lies in $e(A)$ and which has a 
point in $o(A)$ vertically below it. It follows that any point between
these two will be in $A$.  
\end{proof}

\begin{lemma} \label{lemma.below_onlyif_e}
If $A$ is a $2d$-simplex of some triangulation in $S(m,2d)$, and $E$ a $d$-face of $A$,
with $e(A)\ne E$, then $A$ does not contain the points immediately
below $E$.
\end{lemma}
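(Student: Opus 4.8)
The plan is to argue by a dimension/genericity count on the affine dependency governing the $2d$-simplex $A$, combined with the explicit description of $e(A)$ via even-index terms. Write $A = (a_0, \ldots, a_{2d})$ and let $E = (a_{j_0}, \ldots, a_{j_d})$ be a $d$-face with $E \ne e(A) = (a_0, a_2, \ldots, a_{2d})$. The points immediately below $E$ lie in $A$ if and only if, for a point $q$ in the relative interior of $E$ and $\varepsilon > 0$ small, the point $q - \varepsilon e_{2d}$ (moving down in the last coordinate) lies in the simplex $A$. Since $A$ is a full-dimensional simplex, membership is governed by the barycentric coordinates of $q - \varepsilon e_{2d}$ with respect to the vertices $p_{a_0}, \ldots, p_{a_{2d}}$: the point lies in $A$ (for small $\varepsilon$) precisely when, writing $q$ in barycentric coordinates supported on $\{j_0, \ldots, j_d\}$, the downward perturbation does not push any coordinate negative, i.e. the perturbation direction $-e_{2d}$, expressed in barycentric (affine) coordinates, is nonnegative on every index $k \notin \{j_0, \ldots, j_d\}$.

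Next I would make this perturbation direction explicit. The direction $-e_{2d}$ decomposes against the $2d+1$ vertices $p_{a_0}, \ldots, p_{a_{2d}}$ as an affine combination; the relevant question is the sign pattern of the coefficients on the $d$ vertices \emph{not} in $E$. Here is where I invoke Lemma~\ref{affdep} (the unique affine dependency among $2d+2$ points on the moment curve, with alternating signs): adding a vertex $p_t$ with $t \to +\infty$ makes $p_t$ approach the vertical direction, so the unique affine dependency among $p_{a_0}, \ldots, p_{a_{2d}}, p_t$ expresses $-e_{2d}$ (the limiting direction of $p_t$ up to scaling and a horizontal affine shift) as $\sum (-1)^i c_i p_{a_i}$ with all $c_i > 0$ — exactly as in the proof of Lemma~\ref{below}. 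Thus, in barycentric terms, the downward direction has coefficient of sign $(-1)^i$ on vertex $p_{a_i}$. For the points immediately below $E$ to lie in $A$ we need these coefficients to be $\geq 0$ on all indices $i \notin \{j_0, \ldots, j_d\}$, i.e. all the \emph{omitted} indices must be even. Since there are exactly $d$ omitted indices among $\{0, 1, \ldots, 2d\}$, and the only way for all $d$ omitted indices to be even is for the kept indices to be $\{0, 2, 4, \ldots, 2d\}$ — forcing $E = e(A)$ — we get a contradiction with $E \ne e(A)$. Hence $A$ does not contain the points immediately below $E$.

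I would need to handle one technical point carefully: the argument above shows that if \emph{some} omitted index is odd, then moving straight down from the relative interior of $E$ immediately exits $A$ through the facet opposite that odd-indexed vertex. One must check that this genuinely means "immediately below $E$ is not in $A$" and not merely "some points immediately below $E$ are not in $A$" — but since $q$ ranges over the (open) relative interior of $E$ and the barycentric coefficient on the odd-indexed omitted vertex is strictly negative in the perturbation direction (as $c_i > 0$), for \emph{every} such $q$ and every sufficiently small $\varepsilon$ the perturbed point fails to lie in $A$. This uniformity is the main obstacle: making precise that the limiting/perturbation argument gives a strict sign on the omitted odd index uniformly over the relative interior of $E$. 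This can be done by noting that the barycentric coordinate functions are affine-linear, so the coefficient of the perturbation $-e_{2d}$ in each barycentric direction is a \emph{constant} (independent of $q$), namely $(-1)^i c_i$ up to normalization, and strict negativity for odd $i$ is then immediate from $c_i > 0$. Once this is in place the lemma follows.
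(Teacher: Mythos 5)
Your plan is sound and gives a self-contained variant of the paper's argument, which simply cites the Edelman--Reiner description of the lower facets of $C(2d+1,2d)$ (they are obtained by deleting an even-indexed vertex; since $E\ne e(A)$, $E$ omits some $a_{2j}$ and hence lies in a lower facet, so points just below $E$ are outside $A$). You instead derive the relevant sign pattern from Lemma~\ref{affdep}, which is a reasonable alternative --- but as written there is a sign error, and it propagates through a second, compensating counting error, so the decisive step is actually false as stated.

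The limiting direction of $p_t$ as $t\to+\infty$ is $+e_{2d}$, not $-e_{2d}$, since the last coordinate $t^{2d}$ tends to $+\infty$. The affine dependency of Lemma~\ref{affdep} (with $p_t$ in position $2d+2$) expresses $p_t$ as an affine combination $\sum_{i=0}^{2d}\mu_i^{(t)}p_{a_i}$ in which $\mu_i^{(t)}$ has sign $(-1)^i$; dividing by $t^{2d}$ and letting $t\to\infty$ therefore gives $e_{2d}=\sum_{i}(-1)^ic_ip_{a_i}$ with $c_i>0$ (strictness because no $2d$ moment-curve points lie in a vertical hyperplane --- that would give a nonzero polynomial of degree at most $2d-1$ with $2d$ roots). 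Hence $-e_{2d}=\sum_i(-1)^{i+1}c_ip_{a_i}$, and the requirement for a point just below $\mathrm{relint}(E)$ to remain in $A$ is that every \emph{omitted} index be \emph{odd}, not even. Your next sentence then claims that ``all $d$ omitted indices even'' forces the kept set to be $\{0,2,\dots,2d\}$; this is false on its own terms, since there are $d+1$ even indices in $\{0,\dots,2d\}$, so omitting $d$ of them keeps one even index plus all $d$ odd ones. With the corrected parity the count does close: there are exactly $d$ odd indices, so requiring all $d$ omitted indices to be odd forces the omitted set to be precisely the odds, i.e.\ $E=(a_0,a_2,\dots,a_{2d})=e(A)$, the desired contradiction. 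Your uniformity discussion at the end is correct and is handled exactly as you propose.
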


\begin{proof}
Let $A=(a_0,\dots,a_{2d})$, which we can think of as a realization of
$C(2d+1,2d)$.
The lower facets of $A$ 
are those obtained by deleting some 
$a_{2j}$ (again by \cite[Lemma 2.3]{ER}).  Knowing that $E\ne e(A)$, we know that $E$ lies inside
at least one lower facet.  Thus the points immediately below $E$ lie outside
$A$.  
\end{proof}

\begin{proposition}\label{propa} For $S\in S(m,2d)$, 

$$e(S)=\{e(A)\mid A\in S\}$$

\end{proposition}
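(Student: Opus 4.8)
The plan is to prove the two inclusions separately. The inclusion $\{e(A) \mid A \in S\} \subseteq e(S)$ is the easy direction: if $A$ is a $2d$-simplex of $S$, then $e(A)$ is by definition a $d$-face of $A$, hence a $d$-simplex appearing as a face of a simplex in $S$; and by Lemma~\ref{below}, $A$ contains the points immediately below $e(A)$, so $e(A)$ cannot lie on a lower boundary facet of $C(m,2d)$ (such a facet has nothing of $C(m,2d)$ below it, yet the points immediately below $e(A)$ lie in $A \subseteq C(m,2d)$). Hence $e(A) \in e(S)$.

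For the reverse inclusion $e(S) \subseteq \{e(A) \mid A \in S\}$, take $E \in e(S)$, so $E$ is a $d$-face of some $2d$-simplex of $S$ and $E$ does not lie on any lower boundary facet of $C(m,2d)$. Since $E$ is not on a lower boundary facet, the points immediately below $E$ lie in the interior of $C(m,2d)$, hence belong to some $2d$-simplex $A$ of the triangulation $S$. First I would argue that $E$ is actually a face of this particular $A$: the points immediately below $E$ are a small perturbation of points in the relative interior of $E$, and since $E$ is a face of at least one simplex of $S$ whose relative interior meets (the closure of) the region just below, a dimension/locality argument forces $E \subseteq A$, i.e.\ $E$ is a $d$-face of $A$. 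Now apply the contrapositive of Lemma~\ref{lemma.below_onlyif_e}: since $A$ contains the points immediately below its $d$-face $E$, we must have $E = e(A)$. Therefore $E \in \{e(A) \mid A \in S\}$.

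The one point requiring care — and the main obstacle — is the claim in the previous paragraph that the simplex $A$ of $S$ containing the points immediately below $E$ actually has $E$ as a face. The subtlety is that a priori the points immediately below $E$ could lie in a simplex $A$ that meets $E$ only along a proper subface. To rule this out I would use that the relative interior of $E$ lies in a single simplex-closure configuration of the triangulation (since $S$ is a simplicial subdivision, each point of $C(m,2d)$ lies in the relative interior of a unique face of $S$), together with the fact that a point immediately below a relative-interior point of $E$ lies in the star of $E$; as the perturbation is arbitrarily small, such a point lies in $\overline{\mathrm{star}}(E) = \bigcup \{A \in S : E \text{ is a face of } A\}$. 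Combining this with Lemmas~\ref{below} and~\ref{lemma.below_onlyif_e} pins down $E = e(A)$ and completes the argument. Finally, combined with Lemma~\ref{lemnum}, this shows $|e(S)| \le {m-d-1 \choose d}$, with equality once one checks the map $A \mapsto e(A)$ is injective on $S$ — which is precisely the content of Lemma~\ref{lemma.below_onlyif_e} read the other way, since distinct simplices $A, A'$ with $e(A) = e(A')$ would both contain the points immediately below that common $d$-face, contradicting the fact that interiors of distinct simplices of a triangulation are disjoint.
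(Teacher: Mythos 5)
Your proof is correct and follows essentially the same strategy as the paper's: use the points immediately below $E$ together with Lemmas~\ref{below} and~\ref{lemma.below_onlyif_e}. You are also right to flag and carefully fill the step that the paper leaves implicit — namely, that the simplex $A$ of $S$ containing the points immediately below $E$ must actually have $E$ as a face — which your open-star argument handles correctly.
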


\begin{proof}
Clearly, if $A$ is a simplex of $S$, then $e(A)$ is a face of $S$, and
it is also automatic that it is separated.  It follows that for $A\in S$,
we have that $e(A)\in e(S)$.

Let $E$ be a $d$-face of $S$ which is separated.  
By Lemma~\ref{dfaceclass}, $E$ does not lie in the union of the lower 
facets of $C(m,2d)$.  
This means that there are points
immediately below $E$ which lie inside $C(m,2d)$.  
 These points must lie in some simplex $A$ of $S$. By Lemma~\ref{lemma.below_onlyif_e} this can only happen if the face is $e(A)$.
\end{proof}

\begin{lemma}\label{cor1}
For $S\in S(m,2d)$ and $A,B$ distinct simplices in $S$, $e(A)\ne e(B)$.
\end{lemma}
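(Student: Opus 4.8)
The goal is to show that the map $A \mapsto e(A)$ from the simplices of a triangulation $S$ to their even-index $d$-faces is injective. The plan is to combine the three preceding lemmas. First I would observe that Lemma~\ref{below} tells us that if $A \in S$, then $A$ contains the points immediately below $e(A)$; conversely, Lemma~\ref{lemma.below_onlyif_e} tells us that $e(A)$ is the \emph{only} $d$-face of $A$ below which $A$ extends. So $e(A)$ is intrinsically characterized by $A$ together with the local geometry just below it. Now suppose $A, B \in S$ with $e(A) = e(B) =: E$. Since $E$ is separated (being of the form $e(A)$ for a simplex $A$, or directly: the even-index entries of an increasing $(2d+1)$-tuple, any two consecutive of which differ by at least $2$), Lemma~\ref{dfaceclass} guarantees that the points immediately below $E$ lie in the interior of $C(m,2d)$, hence in some simplex of the triangulation.

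The key step is then a uniqueness argument at the level of the triangulation: the points immediately below a fixed point $x$ in the relative interior of $E$ form a small segment which (for $x$ generic in $E$ and the distance small enough) meets the interior of exactly one simplex of $S$, because $S$ is a triangulation and its simplices have disjoint interiors. Both $A$ and $B$ contain points immediately below $E$ by Lemma~\ref{below}, so picking such a point $x$ in the relative interior of $E$ not lying on any lower-dimensional face of $A$ or of $B$ (possible since there are only finitely many such faces and each is lower-dimensional in $E$, while the points immediately below $E$ near $x$ form a full-dimensional-in-the-relevant-sense neighborhood), we get a point immediately below $E$ lying in the interior of $A$ and in the interior of $B$. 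Disjointness of interiors of distinct simplices in a triangulation forces $A = B$.

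The main obstacle, such as it is, is the bookkeeping needed to make ``the points immediately below $E$ lie in the interior of a unique simplex'' precise: one must check that a generic point $x$ in the relative interior of $E$, pushed down by a sufficiently small amount, lands in the open interior of a $2d$-simplex of $S$ rather than on a shared facet, and that this same point witnesses membership in both $A$ and $B$. This is handled by choosing $x$ to avoid the finitely many proper faces of $A$ and $B$ (all of dimension $< 2d$, hence of measure zero within the relevant slice) and shrinking the downward distance; Lemma~\ref{below} applied to both $A$ and $B$ then does the rest. Alternatively, and perhaps more cleanly, one can simply invoke Proposition~\ref{propa} in reverse: each separated $d$-face $E \in e(S)$ determines, via the simplex of $S$ containing the points immediately below it, a well-defined simplex $A(E) \in S$ with $e(A(E)) = E$; since $A \mapsto e(A)$ and $E \mapsto A(E)$ are mutually inverse on the relevant sets, $A \mapsto e(A)$ is injective. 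Given Lemma~\ref{lemnum} and Proposition~\ref{propa}, this also immediately yields $|e(S)| = |S| = \binom{m-d-1}{d}$, completing the counting half of Theorem~\ref{th1}.
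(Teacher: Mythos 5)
Your proof is correct and follows essentially the same route as the paper. The paper's own proof of this lemma is a one-liner: if $e(A)=e(B)$, then by Lemma~\ref{below} the points immediately below $e(A)=e(B)$ lie in both $A$ and $B$, forcing $A=B$ since distinct $2d$-simplices of a triangulation can share only boundary points. Your argument is the same in substance; you make explicit the point the paper leaves implicit, namely that the points immediately below $e(A)$ lie in the \emph{interior} of $A$. This is indeed needed for the conclusion (a point merely in $A\cap B$ need not force $A=B$), and it follows because $e(A)$ and $o(A)$ are complementary faces whose vertices together exhaust the vertices of $A$, so an interior point of the vertical segment joining a relative-interior point of $e(A)$ to one of $o(A)$ (which is where the points immediately below $e(A)$ live, by the proof of Lemma~\ref{below}) has all barycentric coordinates positive. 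Your additional genericity bookkeeping is therefore not strictly necessary, though harmless. The closing ``alternatively'' remark invoking Proposition~\ref{propa} is also sound: once one knows that the points immediately below a separated $d$-face $E$ lie in the interior of a unique simplex $A(E)$ with $e(A(E))=E$, then $A\mapsto e(A)$ and $E\mapsto A(E)$ are mutually inverse and injectivity follows, which is just a repackaging of the same geometric fact.
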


\begin{proof} If $e(A)=e(B)$, the points immediately below $e(A)=e(B)$ 
must lie in both $A$ and $B$, so $A=B$.  
\end{proof}

Proposition \ref{propa} and Lemma \ref{cor1} together imply that the
number of elements of $e(S)$ equals the number of simplices of 
$S$, which, by Lemma \ref{lemnum}, is $m-d-1\choose d$.  This 
completes the proof of Theorem \ref{th1}.  

\subsection{Proof of Theorem~\texorpdfstring{\ref{th2}}{2.4}}

\begin{lemma} \label{deylem}
For $S\in S(m,2d)$, 
the faces of $S$ of dimension at least $d$ consist of exactly those 
simplices whose $d$-faces are either not separated or are contained in
$e(S)$. \end{lemma}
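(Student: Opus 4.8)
The plan is to prove both inclusions separately, using the result of Dey \cite{dey} (mentioned in the introduction) that a triangulation of $C(m,2d)$ is uniquely determined by its set of $d$-faces, together with the geometric characterization of lower facets of a simplex from \cite[Lemma 2.3]{ER} that was already invoked in the proofs of Lemmas~\ref{below}--\ref{lemma.below_onlyif_e}. For the easy inclusion: if $F$ is a face of $S$ of dimension at least $d$, then every $d$-face of $F$ is a $d$-face of $S$; such a $d$-face either fails to be separated (i.e.\ contains some $i,i+1$) or, if it is separated, it lies in no lower boundary facet of $C(m,2d)$ by Lemma~\ref{dfaceclass}, and being a face of the simplex $F\in S$ it must by Proposition~\ref{propa} and Lemma~\ref{lemma.below_onlyif_e} be of the form $e(A)$ for the unique $A\in S$ containing the points immediately below it — but that $A$ must be a simplex having $F$ as a face, hence $A$ is the maximal simplex containing $F$ — and in particular the $d$-face lies in $e(S)$. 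So every $d$-face of $F$ is either non-separated or in $e(S)$, which is the stated condition.

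The substantive direction is the converse: if $G$ is a simplex (with vertices among the vertices of $C(m,2d)$) of dimension $k\ge d$, all of whose $d$-faces are either non-separated or contained in $e(S)$, then $G$ is a face of $S$. The strategy I would use is to locate a point in the relative interior of $G$ that also lies in the relative interior of some simplex $A\in S$ of the same dimension structure, and then bootstrap. Concretely: among the $d$-faces of $G$, consider the ``bottom'' one in the sense of Lemma~\ref{below} — pick a $d$-face $E$ of $G$ such that $G$ contains the points immediately below $E$ (one should check, by the lower-facet description of \cite[Lemma 2.3]{ER} applied to the simplex $G$ viewed as a realization of $C(k+1,k)$, that such an $E$ exists and equals the ``even part'' $e(G)$ when $G$ is written as an increasing tuple). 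Then $E$ is a $d$-face of $G$; if $E$ were non-separated it would lie in a lower boundary facet of $C(m,2d)$, but then the points immediately below $E$ — hence points of the interior of $G$ — would lie outside $C(m,2d)$, a contradiction once $k\ge d$ and $G$ is nondegenerate. Therefore $E$ is separated, so $E\in e(S)$, so by Proposition~\ref{propa} there is a (unique) $A\in S$ with $e(A)=E$, and $A$ contains the points immediately below $E$ (Lemma~\ref{below}). Now both $G$ and $A$ contain the points immediately below $E$; I would argue that the smallest face of the triangulation $S$ containing such a point is uniquely determined, and that $G$, being a simplex all of whose $d$-faces are ``legal'' (non-separated or in $e(S)$), must coincide with a face of $A$ — using an induction on $k-d$, slicing $G$ by the hyperplane through a well-chosen facet and invoking the inductive hypothesis on the lower-dimensional pieces, or alternatively using Dey's theorem to identify the full face poset of $S$ from its $d$-faces.

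I expect the main obstacle to be precisely this last step: upgrading ``$G$ has the right $d$-faces and shares an interior-adjacent point with a simplex $A$ of $S$'' to ``$G$ is actually a face of $A$.'' The difficulty is that a priori $G$ could be a simplex whose $d$-faces happen to all lie in $e(S)$ or be non-separated by accident, without $G$ itself being geometrically compatible with the triangulation. The cleanest route is probably to run an induction on dimension: Dey's theorem \cite{dey} says the $d$-faces determine $S$, so in particular they determine the set of faces of $S$ of each dimension $\ge d$; one then shows that the set described in the lemma is closed under the operations that generate faces from $d$-faces in Dey's reconstruction, and has the right cardinality/closure properties, forcing equality. I would also need to double-check the boundary bookkeeping — the condition ``$d$-face in $e(S)$'' excludes $d$-faces lying on lower boundary facets but those are exactly the non-separated ones, so the two alternatives in the statement are genuinely exhaustive and the argument is consistent with Lemma~\ref{dfaceclass}.
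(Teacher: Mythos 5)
Your easy direction, though needlessly indirect, reaches the right conclusion: a separated $d$-face of a face of $S$ is a separated $d$-face of $S$, and by definition $e(S)$ is exactly the set of those. (The parenthetical claim that the simplex $A$ containing the points immediately below $E$ must have $F$ as a face is false, but it is also unneeded.)

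The converse has a genuine gap, which you yourself flag. The fact that $G$ and some $A\in S$ both contain the points immediately below a common $d$-face $E$ does not yield ``$G$ is a face of $A$'': they share a thin neighbourhood near $E$, but the rest of $G$ could cross into other simplices of $S$. The proposed induction on $k-d$ is never set up, and invoking Dey's reconstruction theorem is not the same as carrying out the argument. The key tool missing from your sketch is Dey's \cite[Lemma~3.1]{dey} (not merely the reconstruction statement): in $\mathbb R^{2d}$, if a $k$-simplex and a $j$-simplex with $k+j\geq 2d$ meet in their relative interiors, they have faces of total dimension at most $2d$ meeting in their relative interiors. The paper argues the contrapositive: if $G$ of dimension $k\geq d$ is not a face of $S$, then its relative interior meets the relative interior of some $j$-simplex $B$ of $S$ with $j\geq k$ (otherwise $G$ would lie in the $(k-1)$-skeleton of $S$, which has dimension too small). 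Dey's Lemma~3.1 gives a face $A'$ of $G$ and a face $B'$ of $B$ meeting in their relative interiors with $\dim A'+\dim B'\leq 2d$; the resulting affine dependency among at most $2d+2$ moment-curve points must, by Lemma~\ref{affdep}, involve exactly $2d+2$ points arranged as two intertwining $(d+1)$-tuples. Hence $A'$ and $B'$ are both $d$-simplices with $A'\wr B'$ (in some order), so $A'$ is separated, and since $A'$ meets the $d$-face $B'$ of $S$ in their relative interiors, $A'$ cannot itself be a $d$-face of $S$ and in particular $A'\notin e(S)$. This supplies the required offending $d$-face of $G$. Without this geometric step your argument does not close.
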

\begin{proof}
Dey \cite{dey} shows that, for any triangulation $T$ of a point configuration in $\mathbb R^{\delta}$, it is possible to reconstruct $T$ on the basis
of knowing only its $\lfloor \frac {\delta}2 \rfloor$-faces.  We follow Dey's approach,
but specialize to our setting, where it is possible to give a somewhat simpler
description of the reconstructed triangulation.  

Let $S\in S(m,2d)$, and let $A$ be a $k$-dimensional simplex of $S$ with
$k\geq d$.  Clearly, the $d$-dimensional faces of $A$ are also $d$-simplices
of $S$.  The $d$-simplices of $S$ correspond to the $(d+1)$-tuples in
$e(S)$ and those increasing $(d+1)$-tuples from $[1,m]$ 
which are not separated, so one direction of the lemma
is shown.  

For the other direction, suppose that we have a $k$-simplex $A$ in 
$C(m,2d)$, which
does not belong to $S$.  The relative interior of $A$ must intersect
the relative interior of some $j$-simplex $B$ of $S$ with $j\geq k$.    
Dey \cite[Lemma 3.1]{dey} points out that in $\mathbb R^{2d}$, if a $k$-simplex $A$ and a 
$j$-simplex $B$ intersect 
in their relative interiors, with $k+j\geq 2d$, then there must be a
$k'$-face $A'$ of $A$ and a $j'$-face $B'$ of $B$ which intersect in their 
relative
interiors, with $k' + j' \leq 2d$.  That is to say, among the at most $2d+2$ vertices
of $A'$ and $B'$, there must be an affine dependency.  By Lemma~\ref{affdep},
the form of this affine dependency implies that 
$A'\wr B'$ or the reverse.  In particular $A'$ and $B'$ must both 
be $d$-faces.  $B'$ belongs to $S$  
since $B$ does.  $A'$ and $B'$ intersect in
their relative interiors, so $A'$ cannot be a face of $S$.  We have shown
that there is a $d$-face of $A$ which is not a $d$-face of $S$, as desired.
\end{proof}

We now define two operations on triangulations, following \cite[Section 3]{RSa}.

\begin{definition}   Let $S\in S(m,2d)$. \begin{enumerate}  
\item We define $S/1$ to be the triangulation of 
$C([2,m],2d)$ which is obtained from $S$ by moving the vertices
1 and 2 together and throwing away the simplices that degenerate.  

\item We define $S\setminus 1$ to be the triangulation of $C([2,m],2d-1)$ obtained
by taking only the simplices of $S$ that contain 1, and then removing
1 from them.  This clearly defines a triangulation of the vertex figure of 
$C(m,2d)$ at $1$, that is to say, of the $(2d-1)$-dimensional polytope obtained by intersecting
$C(m,2d)$ with a hyperplane which cuts off the vertex 1.  This vertex figure
is not a cyclic polytope according to our definition, but its vertices 
determine
the same oriented matroid as the vertices of a cyclic polytope, 
which is sufficient to 
imply that a triangulation of the vertex figure also determines a triangulation
of $C([2,m],2d-1)$, and conversely (see \cite[Lemma 3.1]{RSa} for details). We write $S\setminus\{1,2\}$ for
$(S\setminus 1) \setminus 2$.
\end{enumerate} \end{definition}

We next define two operations on subsets of $\Index{m}$.
  We will
eventually relate these to the operations we have already defined
on $S(m,2d)$, but for now, they are separate.

For an $e$-tuple $A = (a_1, \ldots, a_e)$ with $a_1 > 1$ we denote by $1 \star A$ the $(e+1)$-tuple $(1, a_1, \ldots, a_e)$. For a set $X$ of $e$-tuples with this property we denote by $1 \star X$ the set $\{1 \star A \mid A \in X\}$. Similarly we define $2 \star A$ and $2 \star X$.

\begin{definition} Let $X\subset \Index{m}$.\begin{enumerate}\item
$X/1$ is obtained from $X$ by 
replacing all 1's by 2's, and removing any resulting tuples which are not
separated.  

\item
$X\setminus \{1,2\}$ consists of
all $d$-tuples $A$ from $[3,m]$ 
such that 
$1\star A$ is in $X$ and either $2\star A$ is in $X$ or 
$3\in A$.  (These two possibilities are mutually exclusive, since if 
$3\in A$, then $2\star A$ is not separated, and so it cannot be 
in $X$.)  
\end{enumerate}\end{definition}

Note that for $X\subset \Index{m}$, we do not define $X\setminus 1$; instead,
we define $X\setminus\{1,2\}$ in one step.

\begin{lemma} If $X$ is a non-intertwining subset of $\Index{m}$,
so are $X/1$ and $X\setminus \{1,2\}$.  
\end{lemma}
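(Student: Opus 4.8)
The plan is to verify the non-intertwining property for $X/1$ and $X\setminus\{1,2\}$ separately, in each case by a direct argument: assume two tuples in the new set intertwine, trace back to the tuples in $X$ from which they came, and derive that these intertwine (or are equal, or otherwise contradict hypotheses), contradicting the assumption on $X$.

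First I would treat $X/1$. An element of $X/1$ is either an element of $X$ not containing $1$, or is obtained from some $A\in X$ with $1\in A$ by replacing the $1$ by $2$ (and only those replacements which stay separated are kept). Suppose $B\wr B'$ in $X/1$, say $B=(b_0,\dots,b_d)$ and $B'=(b'_0,\dots,b'_d)$ with $b_0<b'_0<b_1<\dots<b_d<b'_d$. Since intertwining requires $b_0<b'_0$, the tuple $B'$ has smallest entry at least $2$, so $B'$ either lies in $X$ already or came from replacing a leading $1$ by $2$ in some $\tilde B'\in X$; in the latter case $\tilde B'$ has leading entry $1$ and all other entries equal to those of $B'$. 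Similarly $B$ came from some $\tilde B\in X$ which is either $B$ itself or $B$ with its leading entry (which is $\geq 2$, so in particular not $1$ unless $b_0=1$, but then $b_0$ would not have been changed) — more carefully: the map $X\to X/1$ only alters entries equal to $1$, and only in the first coordinate, lowering nothing and changing $1\mapsto 2$. Undoing this on $B$ and $B'$ can only decrease their first entries (by at most $1$, from $2$ to $1$), and leaves all other entries unchanged. One checks that decreasing $b_0$ and/or $b'_0$ by at most one, while preserving the strict interleaving $b_0<b'_0<b_1<\cdots$, still yields intertwining tuples $\tilde B\wr\tilde B'$ in $X$ — the only case needing care is when the two first entries were equal after the operation, i.e. both were $2$ coming from $1$; but then before the operation both first entries are $1$, which cannot happen since $B$ and $B'$ would have had the same first entry $2$, contradicting $b_0<b'_0$. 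So $\tilde B\wr\tilde B'$, contradicting that $X$ is non-intertwining.

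Next I would treat $X\setminus\{1,2\}$. Its elements are $d$-tuples $A$ from $[3,m]$ with $1\star A\in X$ and (either $2\star A\in X$ or $3\in A$). Suppose $A\wr A'$ in $X\setminus\{1,2\}$, with $A=(a_1,\dots,a_d)$, $A'=(a'_1,\dots,a'_d)$, $a_1<a'_1<a_2<\cdots<a_d<a'_d$. Then $1\star A=(1,a_1,\dots,a_d)$ and $1\star A'=(1,a'_1,\dots,a'_d)$ are both in $X$. These two $(d+1)$-tuples share the leading entry $1$, so they do not intertwine in either order, and that gives no contradiction yet; I must use the extra data. Since $A$ contributes to $X\setminus\{1,2\}$, either $3\in A$ or $2\star A\in X$; likewise for $A'$. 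The idea is to produce from $A,A'$ a genuinely intertwining pair among $\{1\star A,\,2\star A,\,1\star A',\,2\star A'\}$. Consider $2\star A'=(2,a'_1,\dots,a'_d)$ (available if $3\notin A'$, i.e. $a'_1\geq 4$, since $A'$ is separated and from $[3,m]$ the only way $3\in A'$ is $a'_1=3$). If $2\star A'\in X$, then compare it with $1\star A=(1,a_1,\dots,a_d)$: we have $1<2<a_1$? No — $a_1\geq 3>2$, and $a_1<a'_1$, $a'_1<a_2<a'_2<\cdots$, giving $1<2<a_1<a'_1<a_2<a'_2<\cdots<a_d<a'_d$, hence $1\star A\wr 2\star A'$, contradiction. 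The remaining case is $3\in A'$, i.e. $a'_1=3$; then $a_1<3$ forces $a_1$ impossible since $a_1\in[3,m]$ — wait, $a_1<a'_1=3$ is impossible, so this case cannot occur at all when $A\wr A'$. Hence we always reach a contradiction, and $X\setminus\{1,2\}$ is non-intertwining.

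The main obstacle I anticipate is the bookkeeping in the $X\setminus\{1,2\}$ case: the definition of $X\setminus\{1,2\}$ is a disjunction, and one must check that in every combination of the disjuncts (for $A$ and for $A'$) the hypothesized intertwining $A\wr A'$ either is itself impossible on arithmetic grounds (e.g. $a'_1=3$ forces $a_1<3$, absurd) or lifts to an honest intertwining of two tuples in $X$. I would organize this as a short case analysis on whether $a'_1=3$ and whether $2\star A\in X$, $2\star A'\in X$, and in each surviving case exhibit the explicit intertwining pair as above; the arithmetic is routine once the cases are laid out. For $X/1$ the only subtlety is the coincidence case where an entry $2$ in the image came from $1$, which is quickly dispatched since it would force equal leading entries.
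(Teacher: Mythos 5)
Your argument is correct and takes essentially the same route as the paper's proof: lift the hypothesized intertwining pair back to $X$ and derive a contradiction, using in the $X\setminus\{1,2\}$ case that the minimal entry of the larger tuple is at least $4$ (so the $3\in A'$ disjunct is impossible and $2\star A'\in X$). The paper streamlines the $X/1$ case by noting at once that all entries of $X/1$ are $\geq 2$, so intertwining forces $b'_0\geq 3$ and hence $\widehat{B'}=B'$, which collapses your case analysis to a single line.
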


\begin{proof} $X/1$ is separated by definition.  Suppose that $A\wr B$ in
$X/1$.  Write $\widehat A, \widehat B$ for elements of $X$ which witness
the presence of $A,B$ in $X/1$.   
The minimal entry of $B$ is at least 3, so $\widehat B=B$, and thus $\widehat A
\wr \widehat B$, a contradiction.

It is immediate that $X\setminus\{1,2\}$ is separated.  Suppose that
$A\wr B$ in $X\setminus \{1,2\}$.  Then $1\star A\in X$.  Since 
$A\wr B$, the minimal element of $B$ is at least 4, so $2\star B
\in X$.  But $(1\star A) \wr (2\star B)$, a contradiction.  
\end{proof}

\begin{lemma} $|X/1|+|X\setminus\{1,2\}|=|X|$.\end{lemma}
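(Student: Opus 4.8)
The plan is to establish the identity by a counting argument that sorts the tuples of $X$ according to whether they involve the entry $1$. Since every element of $X$ is separated, the entry $1$ --- if it occurs --- must be the first entry; so I would write $X'' = \{T \in X \mid 1 \in T\}$ and $X' = X \setminus X''$, giving $|X| = |X'| + |X''|$, and note that $A \mapsto 1 \star A$ is a bijection from $\{A \mid 1 \star A \in X\}$ onto $X''$. I would also record once and for all the trivial fact that $2 \star A \in \Index{m}$ forces the first entry of $A$ to be at least $4$, so in particular $3 \notin A$; this gets used several times below.

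First I would describe $X/1$ as a set. Replacing all $1$'s by $2$'s fixes each element of $X'$, and carries $1 \star A \in X''$ to $2 \star A$, which is separated precisely when $3 \notin A$ and is otherwise discarded; hence
\[ X/1 \;=\; X' \,\cup\, \{\, 2 \star A \mid 1 \star A \in X,\ 3 \notin A \,\}. \]
The two sets on the right overlap exactly in the $2 \star A$ with both $1 \star A \in X$ and $2 \star A \in X$ (the condition $3 \notin A$ being automatic there), and $A \mapsto 2 \star A$ is injective, so inclusion--exclusion yields
\[ |X/1| \;=\; |X'| \;+\; \bigl|\{A \mid 1 \star A \in X,\ 3 \notin A\}\bigr| \;-\; \bigl|\{A \mid 1 \star A \in X,\ 2 \star A \in X\}\bigr|. \]

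Next I would read $|X \setminus \{1,2\}|$ off the definition: its members are the $d$-tuples $A$ from $[3,m]$ with $1 \star A \in X$ for which either $2 \star A \in X$ or $3 \in A$, and --- as the paper already observes --- these alternatives exclude one another, so
\[ |X \setminus \{1,2\}| \;=\; \bigl|\{A \mid 1 \star A \in X,\ 2 \star A \in X\}\bigr| \;+\; \bigl|\{A \mid 1 \star A \in X,\ 3 \in A\}\bigr|. \]
Adding the two displayed formulas, the $\pm\bigl|\{A \mid 1 \star A \in X,\ 2 \star A \in X\}\bigr|$ terms cancel, while $\{A \mid 1 \star A \in X,\ 3 \notin A\}$ and $\{A \mid 1 \star A \in X,\ 3 \in A\}$ partition $\{A \mid 1 \star A \in X\}$, whose size is $|X''|$. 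Thus $|X/1| + |X \setminus \{1,2\}| = |X'| + |X''| = |X|$, as claimed.

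The one genuinely delicate point --- and the step I would check most carefully --- is the collision bookkeeping for $X/1$: namely that ``replace $1$ by $2$'' is injective on $X'$ and on $X''$ separately, that its only coincidences between the two parts are the overlaps identified above, and that the tuples it renders non-separated are exactly the images of those $1 \star A$ with $3 \in A$. Each of these reduces to the single observation that in a separated tuple the value $1$ can only sit in the first slot and is then followed by a value $\geq 3$. The same analysis could instead be packaged as an explicit bijection $X \to X/1 \,\sqcup\, (X \setminus \{1,2\})$ --- send a tuple not containing $1$ to itself, send $1 \star A$ to $A$ when $2 \star A \in X$ or $3 \in A$, and send $1 \star A$ to $2 \star A$ otherwise --- and verifying injectivity and surjectivity comes down to exactly the same case distinctions.
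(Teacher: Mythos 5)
Your proof is correct and takes essentially the same approach as the paper's: both rest on the observation that the shortfall $|X|-|X/1|$ is produced exactly by those $1\star A\in X$ with either $2\star A\in X$ (collision) or $3\in A$ (image not separated), and these are by definition in bijection with $X\setminus\{1,2\}$. You merely spell out the bookkeeping more explicitly via inclusion--exclusion, where the paper compresses it into a one-sentence accounting of the difference.
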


\begin{proof} The difference $|X|-|X/1|$ is accounted for by 
elements $1\star A$ in $X$ such that either $2\star A$ is in $X$ or
 $3\in A$.  These exactly correspond to the 
elements of $X\setminus\{1,2\}$.  
\end{proof}

\begin{lemma}\label{size} The maximal size of a non-intertwining
subset of $\Index{m}$
is $m-d-1\choose d$.  
Also, if $X$ is a set of that size, $|X/1|={m-d-2\choose d}$, and 
$|X\setminus\{1,2\}|={m-d-2\choose d-1}$.  
\end{lemma}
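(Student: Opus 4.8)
The plan is to prove both assertions by induction, exploiting the two reduction operations $X \mapsto X/1$ and $X \mapsto X\setminus\{1,2\}$ on subsets of $\Index{m}$ together with the two facts just established: that both operations preserve the non-intertwining property, and that $|X/1| + |X\setminus\{1,2\}| = |X|$. The key observation is that, after the evident order-preserving relabelling of the ground set, $X/1$ is a non-intertwining subset of $\Index{m-1}$ (the separated $(d+1)$-tuples from $[2,m]$), and $X\setminus\{1,2\}$ is a non-intertwining subset of $\Index[d-1]{m-2}$ (the separated $d$-tuples from $[3,m]$, which are $d$-tuples, hence indexed by $d-1$).

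For the upper bound I would argue by induction on $d$, and, for each fixed $d$, by a secondary induction on $m$. The base case $d=0$ is immediate: a non-intertwining family of $1$-tuples has at most one member, and $\binom{m-1}{0}=1$. For fixed $d\ge 1$ the inner base case is $m\le 2d$, where $\Index{m}=\varnothing$ and $\binom{m-d-1}{d}=0$. For the inductive step, any non-intertwining $X\subseteq\Index{m}$ satisfies
\[ |X| = |X/1| + |X\setminus\{1,2\}| \le \binom{(m-1)-d-1}{d} + \binom{(m-2)-(d-1)-1}{d-1} = \binom{m-d-2}{d} + \binom{m-d-2}{d-1} = \binom{m-d-1}{d}, \]
where the first inequality uses the induction hypothesis in $m$ applied to $X/1\subseteq\Index{m-1}$ and the induction hypothesis in $d$ applied to $X\setminus\{1,2\}\subseteq\Index[d-1]{m-2}$, and the last equality is Pascal's rule.

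For the lower bound I would exhibit an explicit extremal family: let $X$ consist of all $(d+1)$-tuples $1\star A=(1,a_1,\dots,a_d)$ with $(a_1,\dots,a_d)$ a separated $d$-tuple from $[3,m]$. Each such tuple is separated, hence lies in $\Index{m}$; there are exactly $\binom{m-d-1}{d}$ of them (separated $d$-subsets of a set of $m-2$ consecutive integers); and no two of them intertwine, since they all begin with $1$. (Alternatively one may invoke Theorem~\ref{th1} together with the elementary fact that $C(m,2d)$ admits a triangulation.) Combined with the upper bound, this proves the first assertion.

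Finally, suppose $|X|=\binom{m-d-1}{d}$. Since $X/1$ and $X\setminus\{1,2\}$ are non-intertwining subsets of $\Index{m-1}$ and $\Index[d-1]{m-2}$, the first assertion gives $|X/1|\le\binom{m-d-2}{d}$ and $|X\setminus\{1,2\}|\le\binom{m-d-2}{d-1}$; as these bounds add up to $\binom{m-d-1}{d}=|X|=|X/1|+|X\setminus\{1,2\}|$, both inequalities must be equalities, which is exactly the second assertion. This lemma is not deep; the only thing to get right is the bookkeeping that identifies $X/1$ and $X\setminus\{1,2\}$ with non-intertwining subsets of $\Index{m-1}$ and $\Index[d-1]{m-2}$ respectively (separatedness and the intertwining relation are invariant under order-preserving reindexing of $\{1,\dots,m\}$) and organizing the double induction so that both reductions are covered — so the ``main obstacle'' is really just setting up the induction cleanly.
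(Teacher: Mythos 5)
Your proposal is correct and follows the same route as the paper: the upper bound comes from a double induction on $m$ and $d$ via the identity $|X|=|X/1|+|X\setminus\{1,2\}|$ and Pascal's rule, and the last assertion is read off from equality in that estimate exactly as the paper does. The only difference is cosmetic: you construct the extremal family $\{1\star A \mid A \text{ a separated $d$-tuple in } [3,m]\}$ directly, which makes the lower bound self-contained, whereas the paper obtains existence by quoting Theorem~\ref{th1} (that $e(S)$ has this cardinality for any triangulation $S$) --- both are fine.
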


\begin{proof}
We know that there do exist non-intertwining subsets of 
$\Index{m}$ of cardinality $m-d-1\choose d$,
because, by Theorem \ref{th1}, 
$e(S)$ is of this form for any $S\in C(m,2d)$.  

The proof that this is the maximum possible size 
is by induction on $m$ and $d$. Let $X$ be such a set.  
$X/1$ is a set of
non-intertwining separated $(d+1)$-tuples in $[2,m]$, and thus by induction
its size is at most $m-d-2\choose d$.  $X\setminus\{1,2\}$ is a collection
of non-intertwining separated $d$-tuples in $[3,m]$; its size is therefore
at most $m-d-2\choose d-1$.  
Thus 
\begin{align*}
|X|&=|X/1|+|X\setminus \{1,2\}| \\&\leq {m-d-2\choose d}+{m-d-2\choose d-1}
={m-d-1\choose d}.
\end{align*}

Also, if $X$ achieves equality, the corresponding equalities
for $X/1$ and $X\setminus\{1,2\}$ must also hold, 
which establishes the second point.  
\end{proof}

\begin{lemma}\label{equality} If $X$ and $Y$ are non-intertwining
subsets of $\Index{m}$ of cardinality $m-d-1\choose d$
such that $X/1=Y/1$ and $X\setminus \{1,2\}=Y\setminus\{1,2\}$, then
$X=Y$.  \end{lemma}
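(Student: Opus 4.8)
The plan is to localize the question near the vertices $1$ and $2$. First decompose $X = X_\emptyset \sqcup X_1 \sqcup X_2$, where $X_j$ is the set of tuples in $X$ containing $j$ (separatedness forbids a tuple containing both $1$ and $2$). Write $X_1 = 1 \star \widetilde X_1$ and $X_2 = 2 \star \widetilde X_2$, where $\widetilde X_1, \widetilde X_2$ are sets of separated $d$-tuples from $[3,m]$, and write $\widetilde X_1^{=3}$, $\widetilde X_1^{\ge 4}$ for the parts with smallest entry $=3$, $\ge 4$. A direct check against the definitions of $/1$ and $\setminus\{1,2\}$ gives $X/1 = X_\emptyset \sqcup 2\star\bigl(\widetilde X_2 \cup \widetilde X_1^{\ge 4}\bigr)$ and $X\setminus\{1,2\} = \bigl(\widetilde X_1 \cap \widetilde X_2\bigr) \sqcup \widetilde X_1^{=3}$ (the two pieces on the right being disjoint since $2\star A$ separated forces $\min A\ge 4$). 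Hence $X_\emptyset$, the set $U := \widetilde X_2 \cup \widetilde X_1^{\ge 4}$, the set $V := \widetilde X_1 \cap \widetilde X_2 \subseteq U$, and $\widetilde X_1^{=3}$ are each determined by the pair $(X/1,\,X\setminus\{1,2\})$, so they coincide for $X$ and $Y$. For every $A$ in $P := U\setminus V$ exactly one of $1\star A$, $2\star A$ lies in $X$; let $c_X(A)\in\{1,2\}$ record which. Then $X$ is reconstructed from the shared data together with $c_X$, so it suffices to prove $c_X = c_Y$.

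\textbf{Forcing rules from non-intertwining.} The arithmetic one needs is: for $d$-tuples $A,B$ with $\min A\ge 3$ one has $(1\star A)\wr(2\star B)\iff A\wr B$, while $(2\star B)\wr(1\star A)$ never holds; likewise $(1\star C)\wr(2\star A)\iff C\wr A$ when $\min C=3$; and two tuples with equal first entry never intertwine. Since $1\star C$ and $2\star C$ lie in $X$ for every $C\in V$, and $1\star C\in X$ for every $C\in\widetilde X_1^{=3}$, non-intertwining of $X$ yields: (i) if $A\in P$ and $C\wr A$ for some $C\in V\cup\widetilde X_1^{=3}$ then $c_X(A)=1$; (ii) if $A\in P$ and $A\wr C$ for some $C\in V$ then $c_X(A)=2$; (iii) for $A,B\in P$ with $A\wr B$ one cannot have $c_X(A)=1$ and $c_X(B)=2$. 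Let $F_1\subseteq P$ be the smallest set containing $\{A\in P : C\wr A \text{ for some } C\in V\cup\widetilde X_1^{=3}\}$ and closed under the rule ``$C\in F_1$ and $C\wr A$ imply $A\in F_1$'', and let $F_2\subseteq P$ be the analogous downward closure of $\{A\in P : A\wr C \text{ for some } C\in V\}$. By (i)--(iii), $F_1\subseteq c_X^{-1}(1)$ and $F_2\subseteq c_X^{-1}(2)$; and $F_1,F_2$ depend only on the shared data.

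\textbf{Closing the argument via maximality.} It remains to prove $c_X^{-1}(1)=F_1$ and $c_X^{-1}(2)=F_2$; then $c_X$ is pinned down by the shared data and $X=Y$. Suppose instead there is $A\in c_X^{-1}(1)\setminus F_1$. Since $\wr$ has no cycles (a cycle would produce a strictly decreasing cyclic chain of first entries), we may pick such an $A$ that is $\wr$-minimal, i.e.\ with no $B\in c_X^{-1}(1)\setminus F_1$ satisfying $B\wr A$. I claim $X\cup\{2\star A\}$ is non-intertwining, contradicting the maximality of $|X|$ from Lemma~\ref{size}. Indeed $2\star A\notin X$ because $c_X(A)=1$; any $T\in X$ with $(2\star A)\wr T$ would have first entry $\ge 3$, hence also satisfy $(1\star A)\wr T$, impossible since $1\star A\in X$; and any $T\in X$ with $T\wr(2\star A)$ must be $T=1\star B$ with $B\wr A$ and $1\star B\in X$, and then $B\notin V\cup\widetilde X_1^{=3}$ (else $A\in F_1$ by (i)) and $B\notin F_1$ (else $A\in F_1$ by upward closure), so $B\in\widetilde X_1^{\ge 4}\setminus V=c_X^{-1}(1)$, hence $B\in c_X^{-1}(1)\setminus F_1$ with $B\wr A$, contradicting minimality of $A$. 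The case $A\in c_X^{-1}(2)\setminus F_2$ is symmetric: take a $\wr$-maximal such $A$ and check that $X\cup\{1\star A\}$ is non-intertwining. This finishes the proof.

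\textbf{The main obstacle.} The genuinely substantive point is the last step: maximality is exactly what prevents ``free'' ambiguous tuples, and one must set up the forcing relations (i)--(iii) precisely and verify carefully that the enlarged set remains non-intertwining. Everything else is bookkeeping against the definitions of $/1$ and $\setminus\{1,2\}$. The mechanism is already visible when $d=1$: there an unforced ambiguous tuple simply cannot occur, which one can see either by enlarging $X$ as above or directly by a cardinality count (a maximal set with no tuple through $1$ or $2$ would be too small).
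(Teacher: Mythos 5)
Your proof is correct, and it reaches the goal by a genuinely different (though related) route from the paper.

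The paper also considers, within a fixed maximal $X$ (resp.\ $Y$), discrepancies among the tuples containing $1$, but it runs a single lexicographic extremal argument: it takes the lexicographically final element $A$ of $X$ containing $1$ and not in $Y$, shows $2\star(A\setminus 1)$ must be in $Y$ but not $X$, then uses the maximality of $Y$ to produce $B'\in Y$ with $A\wr B'$ whose first entry is forced to be $2$, and concludes that $1\star(B'\setminus 2)$ is a lexicographically later discrepancy, a contradiction. A short second paragraph then handles the tuples containing $2$. Your proof reorganises the same underlying facts — the splitting into $X_\emptyset$, $\widetilde X_1$, $\widetilde X_2$, the identification of $V$, $U$, $P$, and the constraints from non-intertwining — into an explicit ``forced colouring'' formalism, and stays entirely inside $X$: you show $c_X$ is determined by the shared data, so $c_X=c_Y$ and $X=Y$. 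Both arguments exploit the maximality from Lemma~\ref{size} at exactly one crucial spot: the paper uses it to manufacture an intertwining partner $B'$ in $Y$; you use it to forbid an undetermined $A\in P$, since otherwise $X\cup\{2\star A\}$ or $X\cup\{1\star A\}$ would still be non-intertwining and too large.

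What your version buys: the closures $F_1,F_2$ isolate precisely what non-intertwining alone forces, so the residual claim ($c_X^{-1}(j)=F_j$) is a single clean statement that maximality leaves no freedom; the same logic is in the paper's proof but threaded implicitly through the lexicographic induction. One small slip in your write-up: in the clause ``$B\notin V\cup\widetilde X_1^{=3}$ (else $A\in F_1$ by (i))'', the justification should read ``by the definition of the base set of $F_1$'' — your (i) is a statement about $c_X$, not about membership in $F_1$ — but the intended argument is unambiguous and correct.
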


\begin{proof} The tuples of $X$ and $Y$ in which neither 1 nor 2 appears
must be the same, 
using only the fact that $X/1=Y/1$.  

We next consider the elements of $X$ and $Y$ which contain 1.  
Let $A$ be the lexicographically final element of $X$, containing 1,
which is not contained in $Y$.  Let $A'$ be obtained from $A$ by 
replacing $1$ by $2$.  

If $A'\in X$, then $A\setminus 1 \in X\setminus\{1,2\}$, which then implies
that $A\in Y$. So $A' \not \in X$.  

Also, if $3\in A$, then $A\setminus 1 \in X\setminus\{1,2\}$, which we already
saw is false.  So $3\not\in A$.  

Since $A/1 \in X/1=Y/1$, there must be some element of $Y$ which implies that
$A/1\in Y/1$.  Since we know $A\not\in Y$, it must be that $A'\in Y$.  

So $X$ contains $A$ and not $A'$, while $Y$ contains $A'$ and not $A$.  
Since $A\not\in Y$, there
must be some element $B'$ of $Y$ satisfying $A\wr B'$.  Since we do not
have $A'\wr B'$, the minimum element of $B'$ must be 2.  Since $A\in X$,
we know that $B'$ is not in $X$.  Since $B'/1\in Y/1=X/1$, 
we must have that $B\in X$, where $B$
is obtained from $B'$ by replacing 2 by 1.  Since $B'\not\in X$, we have 
$B\setminus 1 = B' \setminus 2 \not\in X\setminus\{1,2\}=Y\setminus\{1,2\}$, which implies
that $B\not\in Y$.  Thus $B$ is in $X$ but not $Y$.  

Since $A\wr
B'$, we have that $B$ lexicographically follows $A$, contradicting our
choice of $A$.  
Thus $X$ and $Y$ have the same elements which contain 1.  

Now consider the elements of $X$ and $Y$ which contain 2.  Let $A'$ be 
an element of $X$ containing 2, and let $A$ be the same element with
2 replaced by 1.  If $A\in X$ as well, then $A\setminus 1\in X\setminus\{1,2\}=
Y\setminus\{1,2\}$, and thus $A'\in Y$.  If $A\not\in X$, then $A\not\in Y$
(since $A$ contains 1, it falls in the case already considered),
but $A'\in X/1=Y/1$, which forces $A'\in Y$.  
\end{proof}

For a triangulation $Q$ of $C(p,\delta)$ and a 
triangulation $P$ of $C(p,\delta-1)$, we write that 
$P\prec Q$ if each simplex of $P$ is a facet of 
at least one simplex of $Q$.  In this case, the simplices of $Q$
are divided into two classes, those above $P$ and those below $P$.

We have the following proposition, which we cite in a convenient form,
restricted to the case which is of interest to us.  (As it appears in
\cite{RSa}, it treats subdivisions of cyclic polytopes which are more 
general than triangulations.)

\begin{proposition}[{\cite[Lemma 4.7(1)]{RSa}}]\label{err}
Let $T$ be a triangulation of $C([2,m],2d)$, and let $W$ be a 
triangulation of $C([3,m],2d-2)$.  Then there exists a triangulation
$S$ of $C(m,2d)$ with $S/1=T$ and $S\setminus\{1,2\}=W$ iff 
$W\prec T\setminus 2$.

In this case, the triangulation $S$ is unique, and can be described as follows.
Let $T^\circ$ denote those simplices of $T$ which
do not contain $2$. Let $(T\setminus 2)^+$ denote the simplices of 
$T\setminus 2$ which
lie above $W$, and $(T\setminus 2)^-$ denote the simplices of $T\setminus 2$ 
which lie
below $W$. 

Then $$S=T^\circ \cup (1 \star 2 \star W) \cup
1 \star (T\setminus 2)^+ \cup 2 \star (T\setminus 2)^-.$$
\end{proposition}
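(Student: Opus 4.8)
The plan is to realise the operations $-/1$ and $-\setminus\{1,2\}$ geometrically, in terms of the vertex $p_1$ and the edge $\{p_1,p_2\}$ of $C(m,2d)$, and to view the passage from the pair $(T,W)$ to $S$ as re-inserting the vertex $p_1$ immediately before $p_2$. (This is \cite[Lemma~4.7(1)]{RSa}, but let me indicate the steps.)

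First I would dispose of the formal part: assuming $W\prec T\setminus 2$ and taking $S$ to be the displayed collection, check directly that $S/1=T$ and $S\setminus\{1,2\}=W$, using nothing about $S$ being a triangulation. Since $W\prec T\setminus 2$ forces $T\setminus 2=(T\setminus 2)^+\sqcup(T\setminus 2)^-$, replacing every $1$ by $2$ in $S$ and discarding the degenerate tuples annihilates $1\star 2\star W$ and turns the rest into $T^\circ\cup 2\star(T\setminus 2)^+\cup 2\star(T\setminus 2)^-=T^\circ\cup 2\star(T\setminus 2)=T$; and the simplices of $S$ containing $1$ are exactly $1\star 2\star W$ and $1\star(T\setminus 2)^+$, so deleting $1$ and then deleting $2$ from these leaves precisely $W$.

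Next I would treat necessity and uniqueness together. Suppose $S\in S(m,2d)$ satisfies $S/1=T$ and $S\setminus\{1,2\}=W$. Consecutive vertices of a cyclic polytope span an edge, so $\{1,2\}$ is an edge of $C(m,2d)$ and $S\setminus\{1,2\}$ is the link of that edge in $S$; hence the simplices of $S$ containing both $1$ and $2$ must be exactly $1\star 2\star W$. For $w\in W$ the facet $\{2\}\star w$ of the simplex $\{1,2\}\star w$ is a $(2d-1)$-face of $S$ not containing $1$, hence a face of $T$ through the vertex $2$; so $w$ is a facet of some simplex of $T\setminus 2$, which is exactly the condition $W\prec T\setminus 2$. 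The remaining simplices of $S$, those avoiding the edge $\{1,2\}$, are carried bijectively by contracting $p_1$ onto $p_2$ onto the simplices of $T$: a simplex of $T$ avoiding $2$ lifts to itself, while $\{2\}\star\tau\in T$ with $\tau\in T\setminus 2$ lifts to either $\{1\}\star\tau$ or $\{2\}\star\tau$; and since $S$ is a triangulation the $\tau$ lifting with apex $p_1$ must be precisely those on the $p_1$-side of the wall $W$ inside the triangulated polytope $(C([3,m],2d-1),\,T\setminus 2)$, namely $(T\setminus 2)^+$. This pins $S$ down to the displayed formula.

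The substantive part, and the step I expect to be the main obstacle, is the converse existence statement: that when $W\prec T\setminus 2$ the displayed collection $S$ really is a triangulation of $C(m,2d)$. Each listed tuple is a genuine $2d$-simplex because any at most $2d+1$ points on the moment curve in $\mathbb R^{2d}$ are affinely independent. Pairwise disjointness of the relative interiors I would obtain by a case analysis on which of $1,2$ each of the two simplices contains, invoking Lemma~\ref{affdep} (distinct non-intertwining increasing tuples span simplices with disjoint relative interiors) together with the hypotheses that $T$ and $W$ are triangulations. For covering I would realise $C(m,2d)$ as $\operatorname{conv}(p_t,p_2,\dots,p_m)$ with $t\nearrow 2$: the simplices of $S$ off the edge $\{1,2\}$ limit onto those of $T$, which cover $C([2,m],2d)$, the block $1\star 2\star W$ sweeps out exactly the sliver near $p_1$ vacated in the limit, and a continuity-and-volume bookkeeping argument — equivalently, checking that $S$ has $\binom{m-d-1}{d}$ simplices, as in Lemma~\ref{lemnum} — closes the gap. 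The hypothesis $W\prec T\setminus 2$ is used precisely here: it is what makes the boundary of the sliver $\operatorname{conv}(p_1,p_2,\operatorname{conv}W)$ match simultaneously the $p_1$-coned family $1\star(T\setminus 2)^+$ and the $p_2$-coned family $2\star(T\setminus 2)^-$. A careful treatment needs the local geometry of the vertex figure of a cyclic polytope at an extreme vertex, for which I would follow \cite[\S3--4]{RSa}.
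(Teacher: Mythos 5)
The paper does not give a proof of this proposition: it is cited verbatim as \cite[Lemma 4.7(1)]{RSa} (specialized from polyhedral subdivisions to triangulations), so there is no in-paper argument to compare against. Your proposal is therefore an independent sketch, and I'll assess it on those terms.

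Your outline of the formal/uniqueness direction is sound: extracting $1\star 2\star W$ as the simplices of $S$ containing the edge $\{1,2\}$, and pinning down the $1$-versus-$2$ apex choice for the remaining simplices by which side of the wall $W$ they lie on, is exactly the right decomposition. One detail worth tightening: you derive $W\prec T\setminus 2$ by asserting that $\{2\}\star w$ is a $(2d-1)$-face of $S$ shared with a second $2d$-simplex not containing $1$; this is clear when $\{2\}\star w$ is interior to $C(m,2d)$, but when it is a lower boundary facet (which can happen, e.g.\ if $3\in w$ and $w$ is suitably contiguous) the neighbor on the other side does not exist in $S$, and you must instead argue directly in $T$ that $\{2\}\star w$, being a boundary facet of $C([2,m],2d)$, lies in exactly one simplex of $T$, and then lift that simplex back to $S$.

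The genuine gap is the existence direction, which you correctly flag as the obstacle but do not close. You establish pairwise interior-disjointness and the count $|S| = \binom{m-d-2}{d} + \binom{m-d-2}{d-1} = \binom{m-d-1}{d}$, but a collection of that many pairwise-interior-disjoint $2d$-simplices need not cover $C(m,2d)$ without a volume identity, and the appeal to ``continuity-and-volume bookkeeping'' is a placeholder rather than an argument: you would need either to show that the signed volumes sum to $\mathrm{vol}\,C(m,2d)$, or to run the degeneration $p_1\to p_2$ carefully enough to see that the sliver near $p_1$ is triangulated precisely by $1\star 2\star W$ with its boundary matching both coned families, and that no gaps open up in the limit. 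That is precisely the content of the vertex-figure analysis in \cite[\S 3--4]{RSa} which you defer to. So the proposal is a reasonable roadmap, but the step that carries the mathematical weight is not actually done.
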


\begin{lemma}
Let $S \in S(m, 2d)$. Then
$e(S/1)=e(S)/1$.
\end{lemma}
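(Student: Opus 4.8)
The plan is to reduce everything to Proposition~\ref{propa}. That proposition identifies $e(S)$ with $\{e(A)\mid A\in S\}$ and, applied to the $2d$-dimensional cyclic polytope on the vertex set $[2,m]$, identifies $e(S/1)$ with $\{e(A')\mid A'\in S/1\}$. By the very definition of $S/1$, a simplex $A\in S$ degenerates under the collapse of the vertices $1$ and $2$ exactly when $\{1,2\}\subseteq A$, and otherwise it becomes the simplex $\overline A$ obtained from $A$ by changing its entry $1$ (if present) to $2$; thus $S/1=\{\overline A\mid A\in S,\ \{1,2\}\not\subseteq A\}$. The first routine step is to check that taking even-indexed entries commutes with this renaming, that is $e(\overline A)=\overline{e(A)}$: if $1\notin A$ nothing changes, and if $1\in A$ (so $1=a_0$) then $1$ is the minimal entry of $e(A)=(a_0,a_2,\dots,a_{2d})$, which is precisely the entry that gets renamed, all others being unchanged.

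Putting these together gives $e(S/1)=\{\overline{e(A)}\mid A\in S,\ \{1,2\}\not\subseteq A\}$. To compare with $e(S)/1=\{\overline{E}\mid E\in e(S),\ \overline E\text{ separated}\}$ (using again $e(S)=\{e(A)\mid A\in S\}$), I would next observe that for $A\in S$ with $\{1,2\}\not\subseteq A$ the tuple $\overline{e(A)}$ is automatically separated: this is clear if $1\notin A$, and if $1\in A$ but $2\notin A$ then $a_1\geq 3$, hence $a_2\geq 4$, so $\overline{e(A)}=(2,a_2,a_4,\dots,a_{2d})$ is separated. This yields the inclusion $e(S/1)\subseteq e(S)/1$.

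For the reverse inclusion I would finish by a cardinality count rather than a second geometric argument. By Theorem~\ref{th1} applied to the cyclic polytope on the $m-1$ vertices $[2,m]$, $|e(S/1)|=\binom{(m-1)-d-1}{d}=\binom{m-d-2}{d}$. On the other hand, by Theorem~\ref{th1} again, $e(S)$ is a non-intertwining subset of $\Index{m}$ of cardinality $\binom{m-d-1}{d}$, so the second assertion of Lemma~\ref{size} gives $|e(S)/1|=\binom{m-d-2}{d}$ as well. A containment of finite sets of equal cardinality is an equality, so $e(S/1)=e(S)/1$.

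The only points requiring care are the two bookkeeping facts $S/1=\{\overline A\mid A\in S,\ \{1,2\}\not\subseteq A\}$ and $e(\overline A)=\overline{e(A)}$; once these are in place the argument is just the two short verifications above plus the count, so I anticipate no real obstacle. (If one prefers to avoid the cardinality step, the reverse inclusion can be obtained directly: when $\{1,2\}\subseteq A\in S$, the facet $A\setminus\{1\}$ of $A$ is never a boundary facet of $C(m,2d)$ — it omits $1$ so is not an upper facet, and it contains $2$, which cannot lie in a consecutive pair, so it is not a lower facet — hence $A\setminus\{1\}$ remains a face of $S/1$, being a facet of the $2d$-simplex on its other side, and all of its $d$-subfaces, in particular $\overline{e(A)}$, are faces of $S/1$; but the counting argument is shorter.)
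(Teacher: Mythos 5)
Your main argument is correct, and for the reverse inclusion it takes a genuinely different route from the paper. The paper proves both inclusions directly: for $E\in e(S)/1$ it picks $\widehat C\in S$ with $e(\widehat C)/1 = E$, notes that the image $C$ of $\widehat C$ under the collapse is a ($2d$- or $(2d-1)$-dimensional) face of $S/1$, and concludes that $E$ is a face of $S/1$. You instead establish only the inclusion $e(S/1)\subseteq e(S)/1$ by chasing the renaming through $e(\cdot)$, and then close the gap by counting: $|e(S/1)|=\binom{m-d-2}{d}$ by Theorem~\ref{th1} applied to $C([2,m],2d)$, and $|e(S)/1|=\binom{m-d-2}{d}$ by Theorem~\ref{th1} plus the second assertion of Lemma~\ref{size}. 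Both ingredients are available at this point in the paper, so there is no circularity, and the count cleanly sidesteps the paper's bookkeeping about what happens to degenerating simplices and their faces under the collapse.

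One remark on the parenthetical alternative you offer: the assertion that for $\{1,2\}\subseteq A$ the facet $A\setminus\{1\}$ ``is never a boundary facet of $C(m,2d)$'' is not right. That facet contains $2$ and omits $1$, so the only consecutive pair through $2$ available to it is $\{2,3\}$; but nothing prevents $3\in A$, in which case $A\setminus\{1\}$ may well be a union of $d$ consecutive pairs and hence a lower boundary facet. (One still gets that $A\setminus\{1\}$, being a boundary facet of $C([2,m],2d)$ after the collapse, is a facet of some surviving simplex, so the \emph{conclusion} of the parenthetical survives, but the stated justification does not.) Since this is an aside and your actual proof uses the cardinality count, the flaw does not affect the validity of the submission.
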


\begin{proof}
If $E\in e(S/1)$, then it is $e(C)$ for some simplex
$C$ in $S/1$, and that simplex comes from some simplex $\widehat C$ in
$S$.  Now $e(\widehat C)/1=E$.  

On the other hand, suppose $E\in e(S)/1$.  
Let $\widehat C$ be a simplex
from $S$ such that $e(\widehat C)/1=E$.  The image $C$ of $\widehat C$ in $S / 1$ is a face of $S / 1$ (which is $2d$- or $(2d-1)$-dimensional). Now $E$ is a face of $C$, and hence of $S / 1$.
\end{proof}

\begin{lemma} 
Let $S \in S(m, 2d)$. Then
$e(S\setminus\{1,2\})=e(S)\setminus \{1,2\}$.
\end{lemma}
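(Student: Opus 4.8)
The plan is to identify both $e(S\setminus\{1,2\})$ and $e(S)\setminus\{1,2\}$ with the same set of simplices by passing through the $(d+1)$-simplex $G_{A'}:=(1,2,a_1,\dots,a_d)$ attached to a $d$-tuple $A'=(a_1,\dots,a_d)$. First I would dispose of the trivialities: both sets consist of $d$-tuples from $[3,m]$, and if such a tuple $A'$ is not separated then neither $1\star A'\in e(S)$ (since $e(S)$ consists of separated tuples, by Lemma~\ref{dfaceclass}) nor $A'\in e(S\setminus\{1,2\})$ (by Lemma~\ref{dfaceclass} applied to the even-dimensional polytope $C([3,m],2d-2)$, the set $e$ of any triangulation consists exactly of its separated faces of the appropriate dimension). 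So it suffices to fix a separated $d$-tuple $A'$ from $[3,m]$ — for which $G_{A'}$ is a genuine increasing $(d+2)$-tuple, as $a_1\geq 3$ — and prove
\[ A'\in e(S)\setminus\{1,2\} \iff A'\in e(S\setminus\{1,2\}). \]

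The argument rests on two observations. The first is a face correspondence: the faces of $S\setminus 1$ are exactly the sets $B\setminus 1$ with $B$ a face of $S$ containing $1$, and iterating, the faces of $S\setminus\{1,2\}$ are exactly the sets $B\setminus\{1,2\}$ with $B$ a face of $S$ containing $\{1,2\}$; hence $A'$ is a face of $S\setminus\{1,2\}$ if and only if $G_{A'}=\{1,2\}\cup A'$ is a face of $S$. The second is Dey reconstruction (Lemma~\ref{deylem}): $G_{A'}$ is a face of $S$ exactly when each of its $d$-faces is either non-separated or lies in $e(S)$. Here I would simply enumerate the $d$-faces of $G_{A'}$: they are $1\star A'$, which is always separated; $2\star A'$, which is separated precisely when $a_1\geq 4$, i.e.\ precisely when $3\notin A'$; and the remaining $d$-faces, all of which contain the consecutive pair $\{1,2\}$ and are therefore non-separated. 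Comparing with Lemma~\ref{deylem}, $G_{A'}$ is a face of $S$ if and only if $1\star A'\in e(S)$ and either $2\star A'\in e(S)$ or $3\in A'$ — which is verbatim the definition of $A'\in e(S)\setminus\{1,2\}$.

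Stringing these together yields $A'\in e(S)\setminus\{1,2\} \iff G_{A'}$ is a face of $S \iff A'$ is a face of $S\setminus\{1,2\}$, and since $A'$ is separated, Lemma~\ref{dfaceclass} (for $C([3,m],2d-2)$) upgrades ``$A'$ is a face of $S\setminus\{1,2\}$'' to ``$A'\in e(S\setminus\{1,2\})$'', completing the equivalence. The real content is carried by Lemma~\ref{deylem}; the only thing requiring care is the bookkeeping — matching the clauses of the combinatorial operation $X\setminus\{1,2\}$ against the list of $d$-faces of $G_{A'}$ and their separatedness, and checking that the passage from faces of $S$ to faces of $S\setminus 1$ and then of $S\setminus\{1,2\}$ is the exact bijection used above. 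I do not expect a genuine obstacle beyond this.
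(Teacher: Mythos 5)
Your proof is correct, and it takes a genuinely different route from the one in the paper. The paper argues asymmetrically: the inclusion $e(S\setminus\{1,2\}) \subseteq e(S)\setminus\{1,2\}$ is obtained by lifting $E$ to a maximal simplex $1\star 2\star A$ of $S$ via Proposition~\ref{propa}, while the reverse inclusion is argued by contradiction using the Rambau--Santos decomposition (Proposition~\ref{err}) together with the ``points immediately below'' technique of Lemma~\ref{below}. Your argument replaces both directions with one symmetric chain of equivalences, for a fixed separated $d$-tuple $A'$ from $[3,m]$: $A'\in e(S)\setminus\{1,2\}$ iff the $(d+1)$-simplex $G_{A'}=\{1,2\}\cup A'$ is a face of $S$ iff $A'$ is a face of $S\setminus\{1,2\}$. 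The first equivalence is Dey reconstruction (Lemma~\ref{deylem}) applied to $G_{A'}$ — you enumerate its $d$-faces, observe that those containing $\{1,2\}$ are automatically non-separated, that $1\star A'$ is always separated, and that $2\star A'$ is separated exactly when $3\notin A'$, and see that the resulting membership criterion is verbatim the defining clause of $X\setminus\{1,2\}$. The second equivalence is the elementary fact that faces of $(S\setminus 1)\setminus 2$ are precisely $B\setminus\{1,2\}$ for $B$ a face of $S$ containing $\{1,2\}$, which follows from unwinding the definition of $S\setminus 1$ twice. This buys a cleaner, unified proof that dispenses with Proposition~\ref{err} here (that machinery is then reserved for the proof of Theorem~\ref{th2} itself) and makes the role of Dey's theorem more transparent. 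I see no gaps; the only points requiring care — that $G_{A'}$ is a bona fide $(d+1)$-simplex (its $d+2\le 2d+1$ vertices on the moment curve are automatically affinely independent), and that Lemma~\ref{dfaceclass} applies equally to $C([3,m],2d-2)$ so that a separated $(d-1)$-face of $S\setminus\{1,2\}$ automatically lies in $e(S\setminus\{1,2\})$ — are both handled correctly.
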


\begin{proof} 
Let $E\in e(S\setminus\{1,2\})$.  So $S$ contains a simplex 
$1\star 2 \star A$ with $e(A)=E$.  Now $1\star E$ is a face of $S$, so
$1\star E\in e(S)$.  We also have that $2\star E$ is a face of $S$, so 
either $2 \star E\in e(S)$, or $2\star E$ lies on a lower boundary facet.  
In the former case, we have that $E\in e(S)\setminus\{1,2\}$, and we are
done.  In the latter case, by Lemma \ref{dfaceclass}, $2\star E$ is
not separated, which must be because $3\in E$, so, again, we conclude
that $E\in e(S)\setminus\{1,2\}$, as desired.

On the other hand, let $E\in e(S)\setminus\{1,2\}$. 
Let $T=S/1$, $W=S\setminus\{1,2\}$,
and apply Proposition~\ref{err}.

We know that $1 \star E\in e(S)$.  Say it is $e(\widehat C)$ for some 
simplex $\widehat C$ of $S$.  If $\widehat C$ is an element of $1\star 2\star W$, then we are done,
because $e(\widehat C\setminus \{1,2\})=E$.  Suppose otherwise, so 
$\widehat C$ is of the form $1\star X$ with $X\in (T\setminus 2)^+$.  
Since the points immediately below $1\star E$ lie in $\widehat C$, we know that 
$E$ lies above 
$W$ inside $T\setminus 2$.  But this means that $2\star E$ is not a face 
of $S$, so it 
cannot be 
$e(\widehat D)$ for $\widehat D$ a simplex of $S$ and it  cannot be
non-separated, by Lemma \ref{dfaceclass}.  Thus $E$ does not
lie in $e(S)\setminus\{1,2\}$, contrary to our assumption.    
\end{proof}

\begin{proof}[Proof of Theorem \ref{th2}]
Suppose that we have a non-intertwining set $X\subset \Index{m}$ of
cardinality $m-d-1\choose d$. 
We want to show that it defines a unique
triangulation.  The proof is by
induction on $d$ and $m$.

By Lemma \ref{size}, $|X/1| ={m-d-2\choose d}$ and $|X\setminus\{1,2\}|= 
{m-d-2\choose d-1}$.  
It follows by induction that $X/1$ and $X\setminus\{1,2\}$ define unique
triangulations, of $C([2,m],2d)$ and $C([3,m],2d-2)$, respectively, which
we can denote $T$ and $W$.  

\begin{lemma} $W\prec T\setminus 2$. \end{lemma}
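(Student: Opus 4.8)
The plan is to reduce $W\prec T\setminus 2$ to Dey's $d$-face criterion (Lemma~\ref{deylem}) and then verify that criterion directly. By the inductive hypothesis (the applications of Theorems~\ref{th1} and \ref{th2} in lower dimension and for fewer vertices) we have $e(T)=X/1$ and $e(W)=X\setminus\{1,2\}$. Since $W$ and $T\setminus 2$ are triangulations of $C([3,m],2d-2)$ and $C([3,m],2d-1)$ respectively, the relation $W\prec T\setminus 2$ is equivalent to the assertion that every maximal (i.e.\ $(2d-2)$-dimensional) simplex $V$ of $W$ is a face of $T\setminus 2$, which by the definition of $T\setminus 2$ says exactly that $2\star V$ is a face of $T$. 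So I would fix such a $V$ and aim to show $2\star V\in T$.

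Since $\dim(2\star V)=2d-1\ge d$, Lemma~\ref{deylem} applied to $T$ says that $2\star V$ is a face of $T$ if and only if every $d$-face of $2\star V$ is non-separated or lies in $e(T)=X/1$. Those $d$-faces are of two kinds: the ones through the apex, namely $2\star F$ with $F$ a $(d-1)$-face of $V$, and the $d$-faces $E'$ of $V$ itself (the latter only occur when $d\ge2$). The apex faces are harmless: $V$ is a face of $W$, so by Lemma~\ref{deylem} each $(d-1)$-face $F$ of $V$ is non-separated — then so is $2\star F$ — or lies in $e(W)=X\setminus\{1,2\}$, in which case the definition of $X\setminus\{1,2\}$ forces either $2\star F\in X$, so that $2\star F$, having no entry equal to $1$, lies in $X/1$, or $3\in F$, so that $2\star F=(2,3,\dots)$ is non-separated.

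The substance lies in the non-apex faces, where I would prove: if $E'$ is a $d$-face of $V$ which is separated, then $E'\in X$, and hence $E'\in X/1$ since $E'\subseteq[3,m]$. Each $(d-1)$-face of $E'$ is a separated $(d-1)$-face of $V$, hence lies in $e(W)=X\setminus\{1,2\}$; so for every $(d-1)$-face $F$ of $E'$ we get $1\star F\in X$, and also $2\star F\in X$ whenever $\min F\ne3$. Now suppose $E'=(e'_0,\dots,e'_d)\notin X$. Since $X$ has the maximal cardinality of a non-intertwining subset of $\Index{m}$ (Lemma~\ref{size}), $X\cup\{E'\}$ is not non-intertwining, so $E'\wr B$ or $B\wr E'$ for some $B\in X$. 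If $E'\wr B$, then $1\star(E'\setminus\{e'_0\})\in X$, and since $\min B>e'_0\ge3$ one checks $1\star(E'\setminus\{e'_0\})\wr B$, contradicting non-intertwining of $X$. If $B\wr E'$ with $B=(b_0,\dots,b_d)$, then $b_0<e'_0$ forces $b_0\in\{1,2\}$: when $b_0=1$, the tuple $2\star(E'\setminus\{e'_0\})$ is in $X$ (its minimum $e'_1$ exceeds $3$) and $B\wr 2\star(E'\setminus\{e'_0\})$; when $b_0=2$, the tuple $1\star(E'\setminus\{e'_d\})$ is in $X$ and $1\star(E'\setminus\{e'_d\})\wr B$; both contradict non-intertwining of $X$. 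Hence $E'\in X$, and together with the apex case and Lemma~\ref{deylem} this yields $2\star V\in T$, which proves the lemma.

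The step I expect to be the main obstacle is exactly this last argument: organizing the split ($E'\wr B$ versus $B\wr E'$, with two subcases according to $\min B$ in the latter) and selecting, in each case, the tuple $1\star F$ or $2\star F$ — supplied by the definition of $X\setminus\{1,2\}$ — that intertwines $B$. The rest is a routine dictionary, via Lemmas~\ref{dfaceclass} and \ref{deylem}, between $d$-faces of the cyclic polytopes in play and membership in $X/1$ and $X\setminus\{1,2\}$.
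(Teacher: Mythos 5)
Your proof follows essentially the same strategy as the paper's: reduce to Lemma~\ref{deylem}, split the $d$-faces of $2\star V$ into apex faces $2\star F$ and non-apex faces $E'\subseteq V$, dispose of the apex faces directly via the definition of $X\setminus\{1,2\}$, and handle a separated non-apex face $E'\notin X$ by producing an element of $X$ (of the form $1\star F$ or $2\star F$ supplied by $X\setminus\{1,2\}$) which intertwines the offending $B\in X$. A mild difference is that the paper first derives some of the contradictions inside $e(T)=X/1$ and only afterwards lifts to $X$ (giving a four-way case split), whereas you argue throughout in $X$, which is a bit cleaner and yields three cases; this is a reorganization rather than a different method.

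There is, however, one false step as written: in the case $B\wr E'$ you assert that ``$b_0<e'_0$ forces $b_0\in\{1,2\}$'' and then only treat $b_0=1$ and $b_0=2$. The inequality $b_0<e'_0$ certainly does not force $b_0\le 2$ (e.g.\ $e'_0$ could be $7$ and $b_0$ could be $5$), so as stated the case $b_0\ge 3$ is simply unaddressed. Fortunately this is only a presentational slip, not a structural one: the subargument you give for $b_0=2$, namely that $1\star(E'\setminus\{e'_d\})=(1,e'_0,\dots,e'_{d-1})\in X$ and $(1,e'_0,\dots,e'_{d-1})\wr B$, uses only $b_0>1$ together with the chain $b_0<e'_0<b_1<\cdots<e'_{d-1}<b_d$ coming from $B\wr E'$, so it applies verbatim to every $b_0\ge 2$. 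The correct case division is therefore $b_0=1$ versus $b_0\ge 2$; with that change your proof is complete and correct.
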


\begin{proof}
Let $A$ be a $(2d-2)$-simplex of $W$.  We wish to show that $A$ is a face
of $T\setminus 2$, or in other words, that $2 \star A$ is a face of $T$.
By Lemma \ref{deylem}, it 
suffices to show that any $d$-face of $2 \star W$ is a face of $T$.  

Such faces are of two kinds: first, faces of the form $2 \star E$, with $E$ a $(d-1)$-face of $W$,
and, second, $d$-faces of $W$. 

If $E\in e(W)$, then by definition $E\in 
X\setminus\{1,2\}$, so $1\star E$ is in $X$, and $2\star E$ is either
on the boundary of $C([2,m],d)$ or it is in $X/1$; either way, we are done.

If $E$ lies on a lower boundary facet of $W$, then it is not separated, so
neither is $2\star E$, which therefore lies on a lower boundary facet of $T$.

Now consider $F=(a_0,\dots,a_d)$ a $d$-face of $W$.  We know that the other $d$-faces of
$2 \star F$ are in $T$.  If $F$ is not, then, since $e(T)$ is maximal, there
is some $B=(b_0,\dots,b_d)$ in $e(T)$ such that $B\wr F$ or $F\wr B$.  

Suppose $F\wr B$.    Then $B$ also intertwines $(2,a_1,\dots,a_d)$, but
that contradicts the assertion that $(2,a_1,\dots,a_d)$ is in $e(T)$.  

So suppose $B\wr F$.  If $b_0>2$, then $(2,a_0,\dots,a_{d-1})\wr B$,
which is again a contradiction.  So $b_0=2$.  

Since $B$ is in $X/1=e(T)$, it lifts to (at least) one element of $X$, say 
$\widetilde B=(\widetilde b_0,b_1,\dots,b_d)$.  Suppose first that
$\widetilde b_0=2$.  Since $(a_0,\dots,a_{d-1})$ is a $(d-1)$-face of $W$,
it lifts to $\widetilde A=(1,a_0,\dots,a_{d-1})$ in $X$.  But 
$\widetilde A\wr \widetilde B$, a contradiction.  

Now suppose that $\widetilde b_0=1$.  Since $a_1\geq 4$, $(a_1,\dots,a_d)$ 
lifts to 
$\widetilde A'=(2,a_1,\dots,a_d)$.
Thus $\widetilde B\wr \widetilde A'$, a contradiction.  
\end{proof}

Proposition \ref{err} implies 
that there is a unique triangulation $S$ such that 
$S/1=T$ and $S\setminus\{1,2\}=W$.  We know that $e(S)/1=X/1$ and 
$e(S)\setminus\{1,2\}=X\setminus\{1,2\}$.  By Lemma ~\ref{equality},
$X=e(S)$. By Lemma \ref{deylem}, $S$ is the only 
triangulation with $e(S)=X$.    
\end{proof}

\section{Higher Auslander algebras of linearly oriented \texorpdfstring{$A_n$}{A\_n} and their cluster tilting modules}
\label{section.higher_Aus_An}

We begin this section by recalling some background on higher Auslander algebras (see \cite{Iy_n-Auslander}). We always assume $\Lambda$ to be a finite dimensional algebra over some field $k$.

\begin{definition} \phantomsection \label{def.d-repfin}
\begin{enumerate}
\item A \emph{$d$-cluster tilting module} (\cite{Iy_n-Auslander}) is a module $M \in \mod \Lambda$, such that
\begin{align*}
\add M & = \{ X \in \mod M \mid \Ext_{\Lambda}^i(X,M) = 0 \, \forall i \in \{1, \ldots, d-1\} \} \\
& = \{ X \in \mod M \mid \Ext_{\Lambda}^i(M,X) = 0 \, \forall i \in \{1, \ldots, d-1\} \}.
\end{align*}
\item If $\Lambda$ has a $d$-cluster tilting module, and moreover $\gld \Lambda \leq d$, then $\Lambda$ is called \emph{$d$-representation finite} (\cite{IO}).
\end{enumerate}
\end{definition}

Iyama \cite{Iy_n-Auslander} has shown that for a $d$-representation finite algebra $\Lambda$, the $d$-cluster tilting module $M$ as in the definition above is unique up to multiplicity (see Theorem~\ref{theo.ncto_from_proj} below).

\begin{definition}
Let $\Lambda$ be $d$-representation finite, and $M$ be the basic $d$-cluster tilting module. Then the \emph{$d$-Auslander algebra} of $\Lambda$ is $\End_{\Lambda}(M)$.
\end{definition}

We now focus on the case of higher Auslander algebras of linear oriented $A_n$. We denote by $A_n^1 = k[1 \to 2 \to \cdots \to n]$ the quiver algebra of a linearly oriented $A_n$ quiver. We denote by $\leftsub{A_n^1}{S}_i$ the simple module concentrated in vertex $i$, by $\leftsub{A_n^1}{P}_i$ its projective cover, and by $\leftsub{A_n^1}{I}_i$ its injective envelope. Then the numbering of the vertices is chosen in such a way that
\[ \Hom_{A_n^1}(\leftsub{A_n^1}{P}_i, \leftsub{A_n^1}{P}_j) \neq 0 \, \iff \, i \leq j. \]

\begin{theoconst}[\cite{Iy_n-Auslander}] \label{const.AnMn}
\begin{itemize}
\item $A_n^1$ is $1$-representation finite. We denote its basic $1$-cluster tilting module by $\leftsub{A_n^1}{M}$, and the $1$-Auslander algebra of $A_n^1$ by $A_n^2 = \End_{A_n^1}(\leftsub{A_n^1}{M})$.
\item $A_n^2$ is $2$-representation finite. We denote its basic $2$-cluster tilting module by $\leftsub{A_n^2}{M}$, and the $2$-Auslander algebra of $A_n^2$ by $A_n^3 = \End_{A_n^2}(\leftsub{A_n^2}{M})$. \\
\textcolor{white}{space} $\vdots$
\item This iterates. That is, $A_n^d$ is $d$-representation finite. We denote its basic $d$-cluster tilting module by $\leftsub{A_n^d}{M}$, and the $d$-Auslander algebra of $A_n^d$ by $A_n^{d+1} = \End_{A_n^d}(\leftsub{A_n^d}{M})$.
\end{itemize}
\end{theoconst}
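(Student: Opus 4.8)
The plan is to induct on $d$, essentially following Iyama \cite{Iy_n-Auslander}. The base case $d = 1$ is classical: $A_n^1$ is the path algebra of a linearly oriented Dynkin quiver of type $A_n$, hence hereditary, so $\gld A_n^1 \le 1$, and it is representation finite. Taking $\leftsub{A_n^1}{M}$ to be an additive generator of $\mod A_n^1$, the two $\Ext$-vanishing conditions in Definition~\ref{def.d-repfin}(1) are vacuous, since the index set $\{1,\dots,d-1\}$ is empty when $d=1$; so $\leftsub{A_n^1}{M}$ is a $1$-cluster tilting module and $A_n^1$ is $1$-representation finite.

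For the inductive step, suppose $A_n^d$ is $d$-representation finite with basic $d$-cluster tilting module $M := \leftsub{A_n^d}{M}$, and put $\Lambda := A_n^d$, $\Gamma := A_n^{d+1} = \End_\Lambda(M)$. Two things must be shown: $\gld\Gamma \le d+1$, and the existence of a $(d+1)$-cluster tilting module over $\Gamma$. The global dimension bound is the higher analogue of the classical fact that an Auslander algebra has global dimension at most two, and I would invoke it as proved by Iyama: one resolves an arbitrary $X \in \mod\Lambda$ by summands of $M$ (possible since $M$ is $d$-cluster tilting and $\Lambda \in \add M$) and transports the resolution through $\Hom_\Lambda(M,-)$, which is exact on such resolutions, to get a projective $\Gamma$-resolution of $\Hom_\Lambda(M,X)$ of length controlled by $d$ and $\gld\Lambda \le d$; together with $\domdim\Gamma \ge d+1$ this yields $\gld\Gamma \le d+1$.

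The substantive step is the construction of the $(d+1)$-cluster tilting module $\leftsub{\Gamma}{M'}$, and I expect the main difficulty to be precisely here: it is \emph{not} true in general that the $d$-Auslander algebra of a $d$-representation-finite algebra is again $(d+1)$-representation finite, so one must exploit the special combinatorics of linearly oriented $A_n$. The strategy is to write down $\leftsub{\Gamma}{M'}$ explicitly --- starting from $\Gamma$ (the sum of the indecomposable projective $\Gamma$-modules) and closing up under a higher analogue of the inverse Auslander--Reiten translate $\tau_{d+1}^{-1}$ --- and then to check that (i) only finitely many indecomposables are produced, (ii) the resulting module is self-orthogonal in degrees $1,\dots,d$ on both sides, and (iii) it satisfies the generation and cogeneration conditions of Definition~\ref{def.d-repfin}(1). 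For linearly oriented $A_n$ each of (i)--(iii) is checked by direct computation in a transparent combinatorial model of $\mod A_n^d$: the relevant indecomposables are indexed by tuples from an interval obeying separation inequalities of exactly the form appearing in Definition~\ref{def:index}, their minimal projective resolutions are explicit and ``linear'', and the pertinent $\Ext$-groups can be read off and shown to vanish by bookkeeping.

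In short, the base case and the global dimension bound are formal, while the crux is the explicit construction and $\Ext$-analysis of $\leftsub{\Gamma}{M'}$ over $A_n^{d+1}$; this becomes tractable only with the combinatorial description of $\mod A_n^d$ and of the summands of $\leftsub{A_n^d}{M}$ in hand. That description, and the resulting dictionary with the combinatorics of $C(n+2d,2d)$, is what Theorems~\ref{theo.indexing} and \ref{central} (hence Theorem~\ref{thm.intro_corr_mod}) will supply, and the uniqueness of $\leftsub{A_n^d}{M}$ up to multiplicity is recorded as Theorem~\ref{theo.ncto_from_proj}; accordingly the detailed verification is naturally deferred to and bundled with the indexing results of Section~\ref{section.higher_Aus_An}.
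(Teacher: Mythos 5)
The paper does not actually prove this Theorem/Construction; it is cited directly from Iyama \cite{Iy_n-Auslander} and used as a black box. What the paper proves in Section~\ref{section.higher_Aus_An} (Theorems~\ref{theo.indexing} and \ref{theo.indexing.props}, Propositions~\ref{prop.basichom} and \ref{prop.tau}) is an explicit combinatorial description of the summands of $\leftsub{A_n^d}{M}$, \emph{assuming} Theorem~\ref{const.AnMn} already holds: the proof of Theorem~\ref{theo.indexing} invokes Theorem~\ref{theo.ncto_from_proj} --- which presupposes that $A_n^d$ is $d$-representation finite --- in order to know $\add \leftsub{A_n^d}{M} = \add\{\tau_d^{-i}A_n^d \mid i \ge 0\}$, and then matches this against the explicitly constructed modules. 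So the indexing results depend on this statement rather than establish it, and the ``deferral'' you describe in your final paragraph reverses the actual logical dependence in the paper.

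That said, your sketch is a reasonable outline of what a proof (e.g.\ Iyama's) must accomplish, and your two key observations are both correct: the base case and the bound $\gld A_n^{d+1} \le d+1$ are formal consequences of the higher Auslander correspondence, while the existence of a $(d+1)$-cluster tilting $A_n^{d+1}$-module is the genuine content and is \emph{not} a consequence of $d$-representation finiteness alone --- it uses the specific combinatorics of linearly oriented $A_n$. If one wanted a proof self-contained within this paper's framework, your proposal to exhibit $\leftsub{A_n^{d+1}}{M}$ explicitly and verify rigidity and maximality directly is the right idea, and the tools of Section~\ref{section.higher_Aus_An} (especially Proposition~\ref{prop.basichom} and the $\Ext^d$ description in Theorem~\ref{theo.indexing.props}(4)) would supply the bookkeeping; but one would then have to reorganize the induction so as not to smuggle in the conclusion via Theorem~\ref{theo.ncto_from_proj}.
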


We remark that the case $d=1$ of the above theorem is classical.  An algebra is $1$-representation finite if and only if it is representation finite and hereditary.  A $1$-cluster tilting module is by definition an additive generator of the module category.  The $1$-Auslander algebra of $A_n^1$ is then the classical Auslander algebra of $A_n^1$.

Iyama \cite{Iy_n-Auslander} gives descriptions of these algebras $A_n^d$ by giving quivers and relations (see also Table~\ref{table.examples.quivers} for an idea of how these quivers look). Here we follow a slightly different approach in indexing the indecomposable summands of $\leftsub{A_n^d}{M}$, which will allow us to immediately read off when there are non-zero homomorphisms or non-zero extensions between two such summands (see Theorem~\ref{theo.indexing.props}(5 and 6)). Since our aim here is to study tilting modules (in Theorem~\ref{theo.tilt-exchange} and in Section~\ref{local}), it is particularly important to us to be able to decide whether extension groups vanish.

\begin{table}
\begin{tikzpicture}
 \node at (0,3) {$A_4^1$:};
 \node (1) at (0,0) {$1$};
 \node (2) at (1,1) {$2$};
 \node (3) at (2,2) {$3$};
 \node (4) at (3,3) {$4$};
 \draw [->] (1) -- (2);
 \draw [->] (2) -- (3);
 \draw [->] (3) -- (4);
 \pgftransformshift{\pgfpoint{-1cm}{-5cm}}
 \node at (0,3) {$A_4^2$:};
 \node (11) at (0,0) {$13$};
 \node (12) at (1,1) {$14$};
 \node (13) at (2,2) {$15$};
 \node (14) at (3,3) {$16$};
 \node (22) at (2,0) {$24$};
 \node (23) at (3,1) {$25$};
 \node (24) at (4,2) {$26$};
 \node (33) at (4,0) {$35$};
 \node (34) at (5,1) {$36$};
 \node (44) at (6,0) {$46$};
 \draw [->] (11) -- (12);
 \draw [->] (12) -- (13);
 \draw [->] (13) -- (14);
 \draw [->] (12) -- (22);
 \draw [->] (13) -- (23);
 \draw [->] (14) -- (24);
 \draw [->] (22) -- (23);
 \draw [->] (23) -- (24);
 \draw [->] (23) -- (33);
 \draw [->] (24) -- (34);
 \draw [->] (33) -- (34);
 \draw [->] (34) -- (44);
 \pgftransformshift{\pgfpoint{8cm}{5cm}}
 \node at (0,3) {$A_4^3$:};
 \node (111) at (0,0) {$135$};
 \node (112) at (1,1) {$136$};
 \node (113) at (2,2) {$137$};
 \node (114) at (3,3) {$138$};
 \node (122) at (2,0) {$146$};
 \node (123) at (3,1) {$147$};
 \node (124) at (4,2) {$148$};
 \node (133) at (4,0) {$157$};
 \node (134) at (5,1) {$158$};
 \node (144) at (6,0) {$168$};
 \node (222) at (.95,-3.5) {$246$};
 \node (223) at (1.95,-2.5) {$247$};
 \node (224) at (2.95,-1.5) {$248$};
 \node (233) at (2.95,-3.5) {$257$};
 \node (234) at (3.95,-2.5) {$258$};
 \node (244) at (4.95,-3.5) {$268$};
 \node (333) at (2.2,-6) {$357$};
 \node (334) at (3.2,-5) {$358$};
 \node (344) at (4.2,-6) {$368$};
 \node (444) at (3.75,-7.5) {$468$};
 \draw [->] (111) -- (112);
 \draw [->] (112) -- (113);
 \draw [->] (113) -- (114);
 \draw [->] (112) -- (122);
 \draw [->] (113) -- (123);
 \draw [->] (114) -- (124);
 \draw [->] (122) -- (123);
 \draw [->] (123) -- (124);
 \draw [->] (123) -- (133);
 \draw [->] (124) -- (134);
 \draw [->] (133) -- (134);
 \draw [->] (134) -- (144);
 \draw [->] (122) -- (222);
 \draw [->] (123) -- (223);
 \draw [->] (124) -- (224);
 \draw [->] (133) -- (233);
 \draw [->] (134) -- (234);
 \draw [->] (144) -- (244);
 \draw [->] (222) -- (223);
 \draw [->] (223) -- (224);
 \draw [->] (223) -- (233);
 \draw [->] (224) -- (234);
 \draw [->] (233) -- (234);
 \draw [->] (234) -- (244);
 \draw [->] (233) -- (333);
 \draw [->] (234) -- (334);
 \draw [->] (244) -- (344);
 \draw [->] (333) -- (334);
 \draw [->] (334) -- (344);
 \draw [->] (344) -- (444);
 \pgftransformshift{\pgfpoint{-8cm}{-10cm}}
 \node at (0,3) {$A_4^4$:};
 \node (1111) at (0,0) {$1357$};
 \node (1112) at (1,1) {$1358$};
 \node (1113) at (2,2) {$1359$};
 \node (1114) at (3,3) {$135$X};
 \node (1122) at (2,0) {$1368$};
 \node (1123) at (3,1) {$1369$};
 \node (1124) at (4,2) {$136$X};
 \node (1133) at (4,0) {$1379$};
 \node (1134) at (5,1) {$137$X};
 \node (1144) at (6,0) {$138$X};
 \node (1222) at (.95,-3.5) {$1468$};
 \node (1223) at (1.95,-2.5) {$1469$};
 \node (1224) at (2.95,-1.5) {$146$X};
 \node (1233) at (2.95,-3.5) {$1479$};
 \node (1234) at (3.95,-2.5) {$147$X};
 \node (1244) at (4.95,-3.5) {$148$X};
 \node (1333) at (2.2,-6) {$1579$};
 \node (1334) at (3.2,-5) {$157$X};
 \node (1344) at (4.2,-6) {$158$X};
 \node (1444) at (3.75,-7.5) {$168$X};
 \node (2222) at (6.95,-2) {$2468$};
 \node (2223) at (7.95,-1) {$2469$};
 \node (2224) at (8.95,0) {$246$X};
 \node (2233) at (8.95,-2) {$2479$};
 \node (2234) at (9.95,-1) {$247$X};
 \node (2244) at (10.95,-2) {$248$X};
 \node (2333) at (8.2,-4.5) {$2579$};
 \node (2334) at (9.2,-3.5) {$257$X};
 \node (2344) at (10.2,-4.5) {$258$X};
 \node (2444) at (9.75,-6) {$268$X};
 \node (3333) at (12.2,-3.5) {$3579$};
 \node (3334) at (13.2,-2.5) {$357$X};
 \node (3344) at (14.2,-3.5) {$358$X};
 \node (3444) at (13.75,-5) {$368$X};
 \node (4444) at (15.75,-4.5) {$468$X};
 \draw [->] (1111) -- (1112);
 \draw [->] (1112) -- (1113);
 \draw [->] (1113) -- (1114);
 \draw [->] (1112) -- (1122);
 \draw [->] (1113) -- (1123);
 \draw [->] (1114) -- (1124);
 \draw [->] (1122) -- (1123);
 \draw [->] (1123) -- (1124);
 \draw [->] (1123) -- (1133);
 \draw [->] (1124) -- (1134);
 \draw [->] (1133) -- (1134);
 \draw [->] (1134) -- (1144);
 \draw [->] (1122) -- (1222);
 \draw [->] (1123) -- (1223);
 \draw [->] (1124) -- (1224);
 \draw [->] (1133) -- (1233);
 \draw [->] (1134) -- (1234);
 \draw [->] (1144) -- (1244);
 \draw [->] (1222) -- (1223);
 \draw [->] (1223) -- (1224);
 \draw [->] (1223) -- (1233);
 \draw [->] (1224) -- (1234);
 \draw [->] (1233) -- (1234);
 \draw [->] (1234) -- (1244);
 \draw [->] (1233) -- (1333);
 \draw [->] (1234) -- (1334);
 \draw [->] (1244) -- (1344);
 \draw [->] (1333) -- (1334);
 \draw [->] (1334) -- (1344);
 \draw [->] (1344) -- (1444);
 \draw [->] (1222) -- (2222);
 \draw [->] (1223) -- (2223);
 \draw [->] (1224) -- (2224);
 \draw [->] (1233) -- (2233);
 \draw [->] (1234) -- (2234);
 \draw [->] (1244) -- (2244);
 \draw [->] (1333) -- (2333);
 \draw [->] (1334) -- (2334);
 \draw [->] (1344) -- (2344);
 \draw [->] (1444) -- (2444);
 \draw [->] (2222) -- (2223);
 \draw [->] (2223) -- (2224);
 \draw [->] (2223) -- (2233);
 \draw [->] (2224) -- (2234);
 \draw [->] (2233) -- (2234);
 \draw [->] (2234) -- (2244);
 \draw [->] (2233) -- (2333);
 \draw [->] (2234) -- (2334);
 \draw [->] (2244) -- (2344);
 \draw [->] (2333) -- (2334);
 \draw [->] (2334) -- (2344);
 \draw [->] (2344) -- (2444);
 \draw [->] (2333) -- (3333);
 \draw [->] (2334) -- (3334);
 \draw [->] (2344) -- (3344);
 \draw [->] (2444) -- (3444);
 \draw [->] (3333) -- (3334);
 \draw [->] (3334) -- (3344);
 \draw [->] (3344) -- (3444);
 \draw [->] (3444) -- (4444);
\end{tikzpicture}
\caption{Quivers of the algebras $A_4^1$, $A_4^2$, $A_4^3$, and $A_4^4$. The numbers in the vertices come from the labeling introduced in Theorem~\ref{theo.indexing}. (In the quiver of $A_4^4$ the number ``$10$'' is written ``X'', to avoid having to use commas between the indices.)} \label{table.examples.quivers}
\end{table}

In the theorem below, we use the indexing set $\Index{n+2d}$, as in
Definition \ref{def:index}.  It consists of the separated $(d+1)$-tuples
from $\{1,\dots,n+2d\}$.

Assigning labels to summands of $\leftsub{A_n^d}{M}$ we follow an inductive construction. Recall that the projective, injective, and simple $A_n^1$-modules are already labeled by elements of $\Index[0]{n}$.

\begin{theoconst} \phantomsection \label{theo.indexing}
\begin{itemize}
\item For $(i_0, i_1) \in \Index[1]{n+2}$, the module $\leftsub{A_n^1}{M}$ has a unique indecomposable summand which has composition factors
\[ \leftsub{A_n^1}{S}_j, \quad i_0-1 < j < i_1-1. \]
We denote this summand of $\leftsub{A_n^1}{M}$ by $\leftsub{A_n^1}{M}_{i_0, i_1}$. Moreover, all summands of $\leftsub{A_n^1}{M}$ are of this shape, that is
\[ \leftsub{A_n^1}{M} = \bigoplus_{(i_0, i_1) \in \Index[1]{n+2}} \leftsub{A_n^1}{M}_{i_0, i_1}. \]
We denote by $\leftsub{A_n^2}{P}_{i_0, i_1} = \Hom_{A_n^1}(\leftsub{A_n^1}{M}, \leftsub{A_n^1}{M}_{i_0,i_1})$ the corresponding indecomposable projective $A_n^2$-module, and by $\leftsub{A_n^2}{S}_{i_0, i_1}$ and $\leftsub{A_n^2}{I}_{i_0, i_1}$ the corresponding simple and indecomposable injective $A_n^2$-modules.
\item For $(i_0, i_1, i_2) \in \Index[2]{n+4}$, the module $\leftsub{A_n^2}{M}$ has a unique indecomposable summand which has composition factors
\[ \leftsub{A_n^2}{S}_{j_0, j_1}, \quad i_0-1 < j_0 < i_1-1 < j_1 < i_2-1. \]
We denote this summand of $\leftsub{A_n^2}{M}$ by $\leftsub{A_n^2}{M}_{i_0, i_1, i_2}$. All summands of $\leftsub{A_n^2}{M}$ are of this form, that is
\[ \leftsub{A_n^2}{M} = \bigoplus_{(i_0, i_1, i_2) \in \Index[2]{n+4}} \leftsub{A_n^2}{M}_{i_0, i_1, i_2}. \]
We denote by $\leftsub{A_n^3}{P}_{i_0, i_1, i_2} = \Hom_{A_n^2}(\leftsub{A_n^2}{M}, \leftsub{A_n^2}{M}_{i_0,i_1,i_2})$, $\leftsub{A_n^3}{S}_{i_0, i_1, i_2}$, and $\leftsub{A_n^3}{I}_{i_0, i_1, i_2}$ the corresponding indecomposable projective, simple, and indecomposable injective $A_n^3$-modules, respectively. \\ \textcolor{white}{spave} $\vdots$
\item This iterates. That is, for $(i_0, \ldots, i_d) \in \Index{n+2d}$, the module $\leftsub{A_n^d}{M}$ has a unique indecomposable summand which has composition factors
\[ \leftsub{A_n^d}{S}_{j_0, \ldots, j_{d-1}}, \quad i_0-1 < j_0 < i_1-1 < j_1 < \cdots < i_{d-1}-1 < j_{d-1} < i_d-1. \]
We denote this summand of $\leftsub{A_n^d}{M}$ by $\leftsub{A_n^d}{M}_{i_0, \ldots i_d}$. We have
\[ \leftsub{A_n^d}{M} = \bigoplus_{(i_0, \ldots, i_d) \in \Index{n+2d}} \leftsub{A_n^d}{M}_{i_0, \ldots, i_d}. \]
We denote by $\leftsub{A_n^{d+1}}{P}_{i_0, \ldots, i_d} = \Hom_{A_n^d}(\leftsub{A_n^d}{M}, \leftsub{A_n^d}{M}_{i_0, \ldots, i_d})$, $\leftsub{A_n^{d+1}}{S}_{i_0, \ldots, i_d}$, and $\leftsub{A_n^{d+1}}{I}_{i_0, \ldots, i_d}$ the corresponding indecomposable projective, simple, and indecomposable injective $A_n^{d+1}$-modules, respectively.
\end{itemize}
\end{theoconst}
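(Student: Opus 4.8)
The statement to prove is Theorem/Construction \ref{theo.indexing}, which asserts an inductive labeling of the indecomposable summands of $\leftsub{A_n^d}{M}$ by separated $(d+1)$-tuples from $\{1,\dots,n+2d\}$, together with the characterization of each summand by its composition factors.

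\textbf{Plan of proof.} The argument is by induction on $d$, running in parallel with Theorem/Construction~\ref{const.AnMn}. The base case $d=1$ is classical: the indecomposable modules over $A_n^1 = k[1\to 2\to\cdots\to n]$ are the intervals $[a,b]$ with $1\le a\le b\le n$, and setting $i_0 = a$, $i_1 = b+2$ puts these in bijection with pairs $(i_0,i_1)$ satisfying $i_0+2\le i_1$ and $1\le i_0$, $i_1\le n+2$, i.e.\ with $\Index[1]{n+2}$; the composition factors of $\leftsub{A_n^1}{M}_{i_0,i_1}$ are exactly $S_j$ for $i_0-1 = a-1 < j < i_1-1 = b+1$, i.e.\ $a\le j\le b$, as required. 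Since $A_n^1$ is representation finite and hereditary, $\leftsub{A_n^1}{M}$ is an additive generator, so every summand has this shape.

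For the inductive step, assume the labeling is established for $\Lambda = A_n^d$, so that the indecomposable summands of $\leftsub{A_n^d}{M}$, equivalently the indecomposable projective (and simple, and injective) $A_n^{d+1}$-modules, are labeled by $\Index{n+2d}$ as in the statement. By Theorem~\ref{const.AnMn}, $\Lambda' := A_n^{d+1}$ is $(d+1)$-representation finite, so it has a unique basic $(d+1)$-cluster tilting module $\leftsub{A_n^{d+1}}{M}$. One must first identify its indecomposable summands with separated $(d+2)$-tuples $(i_0,\dots,i_{d+1})\in\Index[d+1]{n+2(d+1)}$, and then verify the composition-factor description: the summand $M_{i_0,\dots,i_{d+1}}$ has composition factors precisely the simples $\leftsub{A_n^{d+1}}{S}_{j_0,\dots,j_{d-1}}$ with $i_0-1 < j_0 < i_1-1 < j_1 < \cdots < i_d-1 < j_{d-1} < i_{d+1}-1$. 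The natural approach is to use Iyama's explicit description of $d$-cluster tilting modules: for a $d$-representation finite algebra, the $d$-cluster tilting module is obtained from the projectives by iterated application of the higher Auslander--Reiten translate $\tau_d$ (equivalently, the summands are $\tau_d^{-\ell}(\text{projectives})$ for $\ell\ge 0$ until one reaches the injectives), and the higher AR translate has a known cohomological description via $\tau_d = D\Ext_\Lambda^d(-,\Lambda)$ on the relevant subcategory. One then computes the composition series of $\tau_d^{-1}$ applied to a module with the claimed composition factors and checks that the labels shift in the prescribed way; combinatorially this is exactly the passage from a separated $(d+1)$-tuple to the "next" separated $(d+2)$-tuple, and the counting $\lvert\Index[d+1]{n+2(d+1)}\rvert$ agrees with the number of summands because both equal $\binom{n+d}{d+1}$ — or, more cleanly, because the number of indecomposable summands of the $(d+1)$-cluster tilting module of $A_n^{d+1}$ can be read off from Iyama's combinatorial model and matched against $\lvert\Index[d+1]{n+2(d+1)}\rvert$ directly.

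\textbf{Main obstacle.} The delicate point is the bookkeeping in the inductive step: one has to track how composition factors transform under the higher Auslander--Reiten translation and verify that the "staircase" inequality on indices $i_0-1<j_0<i_1-1<\cdots$ is exactly preserved, rather than merely that the two index sets have the same cardinality. I expect the cleanest route is not to compute $\tau_d$ directly but to exploit the ladder structure visible in Table~\ref{table.examples.quivers}: the quiver of $A_n^{d+1}$ sits inside that of $A_n^{d+2}$ as the "top layer," and $\leftsub{A_n^{d+1}}{M}$ decomposes according to this layering, which lets one reduce the composition-factor computation for a summand of $\leftsub{A_n^{d+1}}{M}$ to the already-known description for $A_n^{d}$ via a recollement or via restriction along the idempotent cutting out $A_n^d\hookrightarrow A_n^{d+1}$. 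Once the composition-factor description is in hand, uniqueness of the summand with a given set of composition factors follows because the Loewy/radical structure of these modules is thin enough (the quiver of $A_n^{d+1}$ has no multiple arrows and the relations force each module in $\add\leftsub{A_n^{d+1}}{M}$ to be multiplicity-free on composition factors), and the fact that \emph{all} summands arise this way follows from the cardinality match together with the injectivity of the labeling.
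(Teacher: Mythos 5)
Your plan shares the paper's skeleton — induction on $d$, the classical base case, and the appeal to Iyama's theorem (quoted as Theorem~\ref{theo.ncto_from_proj} in the paper) that the $d$-cluster tilting module is exhausted by $\tau_d^-$-powers of the projectives — but it stops short of carrying out the step you yourself correctly identify as the crux, and the route you gesture at does not match what the paper actually does.

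The paper does not proceed by verifying a cardinality match or by restricting along an idempotent/recollement. Instead, for $i_0 = 1$ it sets $\leftsub{A_n^d}{M}_{1,i_1,\ldots,i_d} := \leftsub{A_n^d}{P}_{i_1-2,\ldots,i_d-2}$, and for $i_0 > 1$ it \emph{defines} $\leftsub{A_n^d}{M}_{i_0,\ldots,i_d}$ as the cokernel of the explicit map $(h^{i_1-2,\ldots,i_d-2}_{i_0-1,i_2-2,\ldots,i_d-2})_*$ between two projective $A_n^d$-modules indexed via the inductive hypothesis. The composition-factor description (Lemma~\ref{lemma.composition}) is then a direct computation using the Yoneda lemma and the $\Hom$-formula for $A_n^{d-1}$ (Proposition~\ref{prop.basichom} at level $d-1$): $[\leftsub{A_n^d}{M}_{i_0,\ldots,i_d} : \leftsub{A_n^d}{S}_{j_0,\ldots,j_{d-1}}] = \dim\Hom_{A_n^d}(P_{j_0,\ldots,j_{d-1}}, M_{i_0,\ldots,i_d})$, and the two presentations above reduce this to $\Hom$-spaces between summands of $\leftsub{A_n^{d-1}}{M}$. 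The proof of the theorem itself then combines the observation that the candidates $M_{1,i_1,\ldots,i_d}$ are exactly the projectives with Proposition~\ref{prop.tau} (computed from the explicit injective coresolution, Remark~\ref{rem.inj_resol}), which shows $\tau_d^- M_{i_0,\ldots,i_d} = M_{i_0+1,\ldots,i_d+1}$; no cardinality comparison is needed.

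The gap in your proposal is therefore twofold. First, the part you flag as "the delicate point" — tracking how composition factors transform under $\tau_d^-$ — is exactly the content that must be proved, and you replace it with an expectation rather than an argument; the "ladder structure"/recollement idea is not fleshed out, and it is not clear that restriction along an idempotent actually recovers the staircase inequalities on indices without essentially redoing the computation the paper does via the cokernel presentation. Second, your closing step "all summands arise this way follows from the cardinality match together with the injectivity of the labeling" presupposes you have injectivity and the count $\binom{n+d}{d+1}$ independently; neither is free, and the paper sidesteps this entirely by showing the candidate family is literally the $\tau_d^-$-orbit of the projectives and invoking Theorem~\ref{theo.ncto_from_proj}. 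Your outline is consistent with the truth, but as written it does not yet constitute a proof of the inductive step.
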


\begin{remark}
Thinking of quivers (see Table~\ref{table.examples.quivers}) the statement on the composition factors of the $M_{i_0, \ldots i_d}$ just means that $M_{i_0, \ldots, i_d}$ has a ``box-shaped'' support, which lies properly between the vertices $(i_0-1, \ldots, i_{d-1}-1)$ and $(i_1-1, \ldots, i_d-1)$. (Here the ``$-1$''s are just a result of normalizing the indexing sets, so that they start at $1$
for all $d$.)
\end{remark}

With the indexing of Theorem~\ref{theo.indexing}, we have the following result:

\begin{theorem} \label{theo.indexing.props}
For given $n$ and $d$ we have
\begin{enumerate}
\item $\leftsub{A_n^d}P_{i_0, \ldots, i_{d-1}} = \leftsub{A_n^d}{M}_{1, i_0+2, \ldots, i_{d-1}+2}$.
\item $\leftsub{A_n^d}I_{i_0, \ldots, i_{d-1}} = \leftsub{A_n^d}{M}_{i_0, \ldots, i_{d-1}, n+2d}$.
\item $\Hom_{A_n^d}(\leftsub{A_n^d}{M}_{i_0, \ldots, i_d}, \leftsub{A_n^d}{M}_{j_0, \ldots, j_d}) \neq 0 \iff i_0-1 < j_0 < i_1-1 < j_1 < \cdots < i_d-1 < j_d$, and in this case the $\Hom$-space is one-dimensional.
\item $\Ext_{A_n^d}^d(\leftsub{A_n^d}{M}_{i_0, \ldots, i_d}, \leftsub{A_n^d}{M}_{j_0, \ldots, j_d}) \neq 0 \iff (j_0, \ldots, j_d) \wr (i_0, \ldots, i_d)$ (see 
Section \ref{cyclic}), and in this case the $\Ext$-space is one-dimensional.
\end{enumerate}
\end{theorem}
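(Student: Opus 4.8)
The plan is to prove statements (1)--(4) simultaneously by induction on $d$, using throughout the equivalence $F_d := \Hom_{A_n^d}(\leftsub{A_n^d}{M},-) \colon \add \leftsub{A_n^d}{M} \xrightarrow{\ \sim\ } \proj A_n^{d+1}$ (Iyama), which sends $\leftsub{A_n^d}{M}_{i_0,\dots,i_d}$ to $\leftsub{A_n^{d+1}}{P}_{i_0,\dots,i_d}$. The base case $d=1$ is classical representation theory of the linearly oriented $A_n$ quiver: the summands $\leftsub{A_n^1}{M}_{i_0,i_1}$ are exactly the interval modules; (1) and (2) are the standard identification of the uniserial projectives and injectives; (3) is the familiar criterion for a nonzero map between interval modules; and (4) is the (equally familiar) criterion for $\Ext^1$ between interval modules, which one checks unwinds to precisely the intertwining relation $(j_0,j_1)\wr(i_0,i_1)$.

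For the inductive step, assume (1)--(4) for $A_n^d$ and deduce them for $A_n^{d+1}$. Statements (1) and (2) follow by a composition-factor comparison: for a summand $N$ of $\leftsub{A_n^d}{M}$ one has $[\,F_d(N):\leftsub{A_n^{d+1}}{S}_{j_0,\dots,j_d}\,]=\dim\Hom_{A_n^{d+1}}(\leftsub{A_n^{d+1}}{P}_{j_0,\dots,j_d},F_d(N))=\dim\Hom_{A_n^d}(\leftsub{A_n^d}{M}_{j_0,\dots,j_d},N)$, and dually $[\,D\Hom_{A_n^d}(N,\leftsub{A_n^d}{M}):\leftsub{A_n^{d+1}}{S}_{j_0,\dots,j_d}\,]=\dim\Hom_{A_n^d}(N,\leftsub{A_n^d}{M}_{j_0,\dots,j_d})$. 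Taking $N=\leftsub{A_n^d}{M}_{i_0,\dots,i_d}$ and invoking (3) at level $d$, these multiplicities are $0$ or $1$, and are nonzero exactly on the sets of tuples which, by the composition-factor description in Theorem~\ref{theo.indexing}, characterise $\leftsub{A_n^{d+1}}{M}_{1,i_0+2,\dots,i_d+2}$ and $\leftsub{A_n^{d+1}}{M}_{i_0,\dots,i_d,n+2d+2}$ respectively. Since distinct summands of $\leftsub{A_n^{d+1}}{M}$ have distinct composition-factor multisets, (1) and (2) follow.

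The heart of the argument is (3) and (4) for $A_n^{d+1}$, i.e.\ the computation of $\Hom$ and $\Ext^{d+1}$ between summands of $\leftsub{A_n^{d+1}}{M}$. Each such summand $\leftsub{A_n^{d+1}}{M}_{i_0,\dots,i_{d+1}}$ is multiplicity-free, its composition factors filling the combinatorial box $\prod_{x=0}^{d}[\,i_x,\,i_{x+1}-2\,]$, with the arrows of $A_n^{d+1}$ pointing ``downward'' in each of the $d+1$ coordinates. I would first analyse these box modules and, in particular, compute the minimal projective resolution
\[ 0 \longrightarrow \leftsub{A_n^{d+1}}{M}_{1,\,i_0+1,\,\dots,\,i_d+1} \longrightarrow Q^{d} \longrightarrow \cdots \longrightarrow Q^{0} \longrightarrow \leftsub{A_n^{d+1}}{M}_{i_0,\dots,i_{d+1}} \longrightarrow 0 \]
(of length $d+1$ when $i_0\ge 2$, and length $0$ when $i_0=1$, i.e.\ when the module is projective by (1)), in which every $Q^k$ is an explicit sum of indecomposable projectives $\leftsub{A_n^{d+1}}{P}_a$ whose indices are obtained from $(i_0,\dots,i_{d+1})$ by a ``corner-peeling'' rule: $Q^0$ is the projective cover at the top corner of the box, and each further syzygy enlarges one more coordinate of the box until it becomes admissible. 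Granting this, $\Hom$ and $\Ext^{d+1}$ are the cohomology of $\Hom_{A_n^{d+1}}(Q^\bullet,\leftsub{A_n^{d+1}}{M}_{j_0,\dots,j_{d+1}})$, where each term $\Hom_{A_n^{d+1}}(\leftsub{A_n^{d+1}}{P}_a,\leftsub{A_n^{d+1}}{M}_{j_0,\dots,j_{d+1}})$ has dimension $[\,\leftsub{A_n^{d+1}}{M}_{j_0,\dots,j_{d+1}}:\leftsub{A_n^{d+1}}{S}_a\,]$, the purely combinatorial ``$a$ lies in the box of $(j_0,\dots,j_{d+1})$'' condition of Theorem~\ref{theo.indexing}. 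Since the $(d+1)$-cluster tilting module is rigid, $\Ext^k$ vanishes for $1\le k\le d$, so this complex computes only $\Hom$ in degree $0$ and $\Ext^{d+1}$ in top degree; evaluating the resulting alternating sum of $0$'s and $1$'s, together with the bounds $\dim\Hom\le 1$ and $\dim\Ext^{d+1}\le 1$ coming from the leading term of the projective resolution and of the dual injective coresolution, yields exactly the inequality $i_0-1<j_0<i_1-1<\cdots<i_{d+1}-1<j_{d+1}$ of (3) and the intertwining relation of (4). (For (4) one may instead apply $(d+1)$-Auslander--Reiten duality $\Ext^{d+1}_{A_n^{d+1}}(X,Y)\cong D\Hom_{A_n^{d+1}}(Y,\tau_{d+1}X)$ on $\add\leftsub{A_n^{d+1}}{M}$ together with $\tau_{d+1}\leftsub{A_n^{d+1}}{M}_{i_0,\dots,i_{d+1}}=\leftsub{A_n^{d+1}}{M}_{i_0-1,\dots,i_{d+1}-1}$ on the non-projective summands, which is read off from the two ends of the resolution above, reducing (4) directly to (3).)

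I expect the main obstacle to be the structural analysis of the box modules $\leftsub{A_n^{d+1}}{M}_{i_0,\dots,i_{d+1}}$: fixing the orientation of the arrows of $A_n^{d+1}$ relative to the box coordinates, checking that the ``lower'' sub-box in a chosen coordinate is genuinely a submodule, and verifying that the successive syzygies are again box modules of the stated admissible shapes. I would handle this by a secondary induction (on $n$, or on the total width $\sum_x(i_{x+1}-i_x)$ of the box), using $F_d$ and the level-$d$ data to transport statements about $\leftsub{A_n^{d+1}}{M}$-modules back into the already-understood module category of $A_n^d$. Once the submodule/quotient structure and the syzygies are under control, the remainder of (3) and (4) is bookkeeping with the combinatorial box conditions.
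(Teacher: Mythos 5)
Your overall strategy---simultaneous induction on $d$ via the equivalence $F_d=\Hom_{A_n^d}(\leftsub{A_n^d}{M},-)$ between $\add\leftsub{A_n^d}{M}$ and $\proj A_n^{d+1}$, with the level-$d$ Hom computation feeding into the composition-factor description at level $d+1$---is precisely the paper's scheme, and your composition-factor argument for (1) and (2) is a clean variant of it (the paper gets (1) for free from the definition and derives (2) by a longer chase through the Nakayama functor, the AR formula, and the injective coresolution). The genuine divergence, and the genuine gap, is in how you propose to get (3) and (4).

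You want to read off $\Hom$ and $\Ext^{d+1}$ from the minimal projective resolution $Q^\bullet$ of the box module and the ``alternating sum of $0$'s and $1$'s'' $\sum_k(-1)^k\dim\Hom(Q^k,M_j)$. Because the intermediate $\Ext^i$ vanish for $1\le i\le d$, that alternating sum equals $\dim\Hom(M_i,M_j)+(-1)^{d+1}\dim\Ext^{d+1}(M_i,M_j)$, a \emph{single} equation in two unknowns each bounded by $1$. This does not determine the pair: when $d+1$ is odd the value $0$ is consistent with $(0,0)$ and $(1,1)$, and when $d+1$ is even the value $1$ is consistent with $(1,0)$ and $(0,1)$. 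To break the tie you need to know something about the \emph{differentials} of $\Hom(Q^\bullet,M_j)$, i.e.\ which composites of the connecting maps vanish. That is precisely what the paper's Proposition \ref{prop.basichom} supplies: it constructs compatible basis morphisms $h_{i_0,\ldots,i_d}^{j_0,\ldots,j_d}$, proves they compose multiplicatively whenever nonzero (part (3) of that proposition), and then analyses the lifting problem along the defining \emph{two-term} projective presentation in four cases according to whether $i_0=1$ and $j_0=1$. This hands-on case analysis is what yields (3), and (4) then drops out by AR duality once $\tau_d$ has been computed explicitly (Proposition \ref{prop.tau}). Your suggested shortcut---use $\Ext^{d+1}(X,Y)\cong D\Hom(Y,\tau_{d+1}X)$ to reduce (4) to (3)---is the right idea but does not plug the hole, since it converts (4) into an instance of (3), which is the thing left undetermined. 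To make your route work you would either have to prove in advance that $\Hom(M_i,M_j)$ and $\Ext^{d+1}(M_i,M_j)$ are never simultaneously nonzero (which is essentially equivalent to the statement you are trying to prove) or perform the differential analysis, at which point you have reproduced the paper's argument on the two-term presentation rather than on the full resolution. A smaller point: in the minimal projective resolution each $Q^k$ is a \emph{single} indecomposable projective, not a sum, which is exactly what makes each $\Hom(Q^k,M_j)$ have dimension $0$ or $1$; your phrasing ``an explicit sum of indecomposable projectives'' suggests you had not yet pinned this down.
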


We note that, by the previous theorem, the projective-injective 
indecomposables of $A^d_n$ are indexed by the elements of 
$\Index{n+2d}\setminus \IndexC{n+2d}$. By Lemma~\ref{dfaceclass}, these
correspond to the $d$-simplices of $C(n+2d,2d)$ which lie in an upper
boundary facet and in no lower boundary facet.  Non-projective-injective
summands correspond to internal $d$-simplices of $C(n+2d,2d)$.  

An $A^d_n$-module $X$ is called {\em rigid} if $\Ext^i(X,X)=0$ for all
$i>0$.  Note that if $X$ is a summand of $\leftsub{A_n^d}{M}$, then
$X$ is rigid iff $\Ext^d(X,X)=0$.  

The previous theorem combines with Theorems~\ref{th1} and \ref{th2} to yield the
following statement, the $d=1$ case of which is essentially contained in
\cite{BK}.  

\begin{corollary}\label{sumcor} 
For fixed $n$ and $d$, there are natural bijections between the following sets:
\begin{enumerate}
\item \label{sumone}
$\{$Sets of $n+d-1 \choose d$ non-intertwining $(d+1)$-tuples in $\Index{n+2d}\}$.
\item \label{sumtwo}
$\{$triangulations in $S(n+2d,2d)\}$.
\item \label{sumthree}
$\{$isomorphism classes of basic summands of $\leftsub{A^d_n}M$ with $n+d-1 \choose d$ indecomposable summands, which are rigid$\}$. 
\end{enumerate}
\end{corollary}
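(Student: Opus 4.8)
The plan is to establish Corollary~\ref{sumcor} by exhibiting the bijection between \eqref{sumtwo} and \eqref{sumone} and the bijection between \eqref{sumone} and \eqref{sumthree} separately, and checking that both are natural in the evident sense. The bijection \eqref{sumone} $\leftrightarrow$ \eqref{sumtwo} is precisely the content of Theorems~\ref{th1} and~\ref{th2}: the map $S \mapsto e(S)$ sends a triangulation to a non-intertwining subset of $\Index{n+2d}$ of the correct cardinality, and Theorem~\ref{th2} asserts this map is a bijection onto all such subsets. So for this half there is essentially nothing to prove beyond citing those two theorems; one only needs to observe that $\binom{n+d-1}{d} = \binom{(n+2d)-d-1}{d}$, so the cardinality appearing in Corollary~\ref{sumcor} matches the one in Theorems~\ref{th1} and~\ref{th2} with $m = n+2d$.

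For the bijection \eqref{sumone} $\leftrightarrow$ \eqref{sumthree}, I would use the indexing of Theorem~\ref{theo.indexing}: a basic summand $X$ of $\leftsub{A_n^d}{M}$ corresponds to the set $\mathcal{I}(X) \subseteq \Index{n+2d}$ of labels of its indecomposable direct summands, and this sets up a bijection between basic summands with $\binom{n+d-1}{d}$ indecomposable summands and subsets of $\Index{n+2d}$ of that size. It remains to check that under this correspondence, $X$ is rigid if and only if $\mathcal{I}(X)$ is non-intertwining. This is immediate from Theorem~\ref{theo.indexing.props}(4): since every indecomposable summand of $\leftsub{A_n^d}{M}$ has projective dimension at most $d$, rigidity of $X$ means $\Ext^d_{A_n^d}(X,X) = 0$, and part~(4) says $\Ext^d_{A_n^d}(M_{i_0,\ldots,i_d}, M_{j_0,\ldots,j_d}) \neq 0$ exactly when $(j_0,\ldots,j_d) \wr (i_0,\ldots,i_d)$; hence $\Ext^d(X,X) = 0$ is equivalent to: no pair of labels in $\mathcal{I}(X)$ intertwines in either order, which is exactly the non-intertwining condition.

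I would phrase the proof by composing the two bijections above, and remark that naturality in $n$ and $d$ refers to compatibility with the ``boundary'' inclusions and with the operations $S/1$, $S \setminus \{1,2\}$ (respectively their module-theoretic counterparts) that were used in the proof of Theorem~\ref{th2}; this compatibility can be read off directly from the explicit descriptions of all maps involved. The only genuinely new verification, and the one I expect to require the most care, is the equivalence ``$X$ rigid $\iff$ $\mathcal{I}(X)$ non-intertwining,'' and in particular the observation that for summands of $\leftsub{A_n^d}{M}$ only the degree-$d$ $\Ext$ can be nonzero (so that the a priori condition $\Ext^i(X,X) = 0$ for all $i \geq 1$ collapses to a single condition). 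That observation was already recorded in the paragraph preceding the corollary, so modulo citing it the remaining argument is a direct translation through Theorem~\ref{theo.indexing.props}(4). The main obstacle, then, is not any deep step but rather being precise about the word ``natural'' — making sure the stated bijections genuinely commute with the reduction operations used elsewhere, which is a bookkeeping matter about the labellings rather than a substantive difficulty.
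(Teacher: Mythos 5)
Your proof is correct and follows the same route as the paper: the correspondence between (1) and (2) is Theorems~\ref{th1} and \ref{th2}, while the correspondence between (1) and (3) comes from the labelling in Theorem~\ref{theo.indexing} together with Theorem~\ref{theo.indexing.props}(4) and the observation (recorded just before the corollary) that for summands of $\leftsub{A_n^d}{M}$ rigidity reduces to $\Ext^d(X,X)=0$. The paper states this more tersely but the substance is identical.
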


This is not as strong a statement as we would like to make: specifically,
we would like to replace (\ref{sumthree}) with 
\begin{enumerate}\item[($3'$)]
$\{$isomorphism classes of summands of $\leftsub{A^d_n}{M}$ which are tilting modules $\}$.
\end{enumerate}
In order to so, we need to study how to mutate tilting modules, that is, how to replace one summand of a tilting module by something else.

We need the following piece of notation: For $(i_0, \ldots, i_d)$ and $(j_0, \ldots, j_d)$ in $\mathbb{Z}^{d+1}$ and $X \subseteq \{0, \ldots, d\}$ we write $m_X((i_0, \ldots, i_d),(j_0, \ldots, j_d)) = (\ell_0, \ldots, \ell_d)$ with $\ell_x = i_x$ if $x \in X$, and $\ell_x = j_x$ if $x \not\in X$.

\begin{theorem} \label{theo.tilt-exchange}
Let $T \oplus \leftsub{A_n^d}{M}_{i_0, \ldots, i_d}$ be a tilting $A_n^d$-module, with $T \in \add \leftsub{A_n^d}{M}$. Assume $M_{j_0, \ldots, j_d} \not\in \add T \oplus \leftsub{A_n^d}{M}_{i_0, \ldots, i_d}$ such that $T \oplus \leftsub{A_n^d}{M}_{j_0, \ldots, j_d}$ is rigid. Then
\begin{enumerate}
\item  $T \oplus \leftsub{A_n^d}{M}_{j_0, \ldots, j_d}$ is a tilting $A_n^d$-module.
\item Either $\Ext_{A_n^d}^d(\leftsub{A_n^d}{M}_{i_0, \ldots, i_d}, \leftsub{A_n^d}{M}_{j_0, \ldots, j_d}) = 0$ and $\Ext_{A_n^d}^d(\leftsub{A_n^d}{M}_{j_0, \ldots, j_d}, \leftsub{A_n^d}{M}_{i_0, \ldots, i_d}) \neq 0$, or the other way around.
\item We have
\begin{align*}
& \{ M_{m_X((i_0, \ldots, i_d), (j_0, \ldots, j_d))} \mid X \subseteq \{0, \ldots, d\} \colon m_X((i_0, \ldots, i_d), (j_0, \ldots, j_d)) \in \Index{n+2d} \} \\ & \qquad \subseteq \add (T \oplus M_{i_0, \ldots, i_d} \oplus M_{j_0, \ldots, j_d}).
\end{align*}
That is, for $(\ell_0,\dots,\ell_d) \in \Index{n+2d}$ with $\ell_k$ equalling either $i_k$ or $j_k$, we have that $(\ell_0, \ldots, \ell_d)$ is the index of a summand of $T$, unless $(\ell_0, \ldots, \ell_d)$ equals $(i_0, \ldots, i_d)$ or $(j_0, \ldots, j_d)$.
\item Assume $\Ext_{A_n^d}^d(\leftsub{A_n^d}{M}_{j_0, \ldots, j_d}, \leftsub{A_n^d}{M}_{i_0, \ldots, i_d}) \neq 0$. Then there is a non-split exact sequence
\[ \leftsub{A_n^d}{M}_{i_0, \ldots, i_d} \mono[30] E_d \to[30] \cdots \to[30] E_1 \epi[30] \leftsub{A_n^d}{M}_{j_0, \ldots, j_d} \]
such that
\begin{enumerate}
\item \[ E_r = \bigoplus_{\substack{X \subseteq \{0, \ldots, d\} \\ m_X((i_0, \ldots, i_d),(j_0, \ldots, j_d)) \in \Index{n+2d} \\ |X| = r}} M_{m_X((i_0, \ldots, i_d),(j_0, \ldots, j_d))}. \]
\item This sequence is a $(T \oplus M_{i_0, \ldots, i_d})$-resolution of $M_{j_0, \ldots, j_d}$, and a $(T \oplus M_{j_0, \ldots, j_d})$-coresolution of $M_{i_0, \ldots, i_d}$.
\end{enumerate}
\item For $\Ext_{A_n^d}^d(\leftsub{A_n^d}{M}_{i_0, \ldots, i_d}, \leftsub{A_n^d}{M}_{j_0, \ldots, j_d}) \neq 0$ we have a dual version of (4).
\end{enumerate}
\end{theorem}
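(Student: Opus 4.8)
The plan is to prove all parts together, exploiting the explicit combinatorial description of $\Hom$ and $\Ext^d$ from Theorem~\ref{theo.indexing.props} together with the classical theory of tilting mutation. First I would record the setup: since $\leftsub{A_n^d}{M}$ is the $d$-cluster tilting module and $\gld A_n^d \le d$, any rigid summand $T \oplus M_{i_0,\dots,i_d}$ of it which is a tilting module has exactly $n+d-1 \choose d = \rk K_0(A_n^d)$ indecomposable summands, and conversely a rigid summand with that many non-isomorphic indecomposable direct summands is tilting (this is the standard ``maximal rigid $\Rightarrow$ tilting'' phenomenon for algebras of finite global dimension with a cluster tilting module, as used in Corollary~\ref{sumcor}). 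For part~(2), suppose for contradiction that $\Ext^d(M_I, M_J) = 0 = \Ext^d(M_J, M_I)$, where $I = (i_0,\dots,i_d)$, $J = (j_0,\dots,j_d)$. By Theorem~\ref{theo.indexing.props}(4) this means neither $J \wr I$ nor $I \wr J$, so $\{I, J\}$ is non-intertwining; then $T \oplus M_I \oplus M_J$ would be a rigid summand of $\leftsub{A_n^d}{M}$ with $n+d-1 \choose d + 1$ indecomposable summands, exceeding the maximal possible size of a non-intertwining set (Lemma~\ref{size}), a contradiction. So exactly one of the two $\Ext^d$ groups is nonzero; by symmetry assume $\Ext^d(M_J, M_I) \neq 0$, i.e. $I \wr J$.

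Next I would establish part~(3), the ``combinatorial'' heart. Given $I \wr J$, so $i_0 < j_0 < i_1 < j_1 < \cdots < i_d < j_d$, the mixed tuples $m_X(I,J)$ that happen to be separated must be shown to index summands of $T$. Here I would argue by rigidity: if $L = m_X(I,J) \in \Index{n+2d}$ with $L \neq I, J$, then $M_L$ must be a summand of $T$. To see this it suffices to show $T \oplus M_I \oplus M_J \oplus M_L$ is still rigid, because it then has $n+d-1\choose d$ indecomposable summands and hence $M_L$ is already among them — but $M_L \notin \{M_I, M_J\}$ forces $M_L \in \add T$. Rigidity of the augmented module reduces, via Theorem~\ref{theo.indexing.props}(4), to checking that no pair among $\{I, J, L\} \cup \{\text{indices of } T\}$ intertwines. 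Since each index of $T$ already fails to intertwine both $I$ and $J$, and $L$ is ``between'' $I$ and $J$ coordinatewise (each $\ell_x \in \{i_x, j_x\}$ with the chain $i_0 < j_0 < i_1 < \cdots$), one checks directly that $L$ cannot intertwine any tuple that intertwines neither $I$ nor $J$, and that $L$ intertwines neither $I$ nor $J$ (a tuple strictly between $I$ and $J$ in this sense cannot intertwine either). This is the step I expect to be the main obstacle: the verification that $L$ is ``innocuous'' with respect to intertwining is a somewhat delicate case analysis on which coordinates $X$ selects, and one must be careful that $L$ being separated is exactly the condition that makes $M_L$ exist as a summand.

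For part~(4) I would construct the exact sequence explicitly. The module $M_J$ has a minimal $\add(T \oplus M_I)$-coresolution; the claim is that it has the prescribed shape $M_I \hookrightarrow E_d \to \cdots \to E_1 \twoheadrightarrow M_J$ with $E_r = \bigoplus_{|X|=r,\, m_X(I,J) \in \Index{n+2d}} M_{m_X(I,J)}$. I would build this sequence as a ``Koszul-type'' complex indexed by subsets of $\{0,\dots,d\}$: the full cube of mixed tuples $m_X(I,J)$ (dropping the non-separated ones, which by Lemma~\ref{dfaceclass} correspond to degenerate/boundary simplices) assembles into a complex whose differentials are built from the one-dimensional $\Hom$-spaces of Theorem~\ref{theo.indexing.props}(3) — note $\Hom(M_{m_X}, M_{m_{X'}}) \neq 0$ precisely when $X' \subset X$ with $|X \setminus X'| = 1$ in the relevant range, giving the simplicial structure. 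Exactness can be checked dimension-wise using Theorem~\ref{theo.indexing.props}, or more cleanly by the $d=1$ case (short exact sequences for $A_n^1$, the classical APR-tilt / hereditary mutation picture) combined with the inductive construction of $A_n^d$ from Theorem/Construction~\ref{theo.indexing}. That the sequence is non-split follows since $\Ext^d(M_J, M_I) \neq 0$; that it is simultaneously a $(T \oplus M_I)$-resolution of $M_J$ and a $(T \oplus M_J)$-coresolution of $M_I$ follows from applying $\Hom(-, T \oplus M_I)$ and $\Hom(T \oplus M_J, -)$ and using rigidity (part~(3)) to kill the relevant $\Ext^{>0}$ terms, exactly as in Bongartz-style tilting mutation. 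Part~(1) then follows: $T \oplus M_J$ is rigid by hypothesis, it has the right number $n+d-1\choose d$ of summands (since $M_J \notin \add(T \oplus M_I)$ and we are exchanging one summand for one summand), and a rigid summand of $\leftsub{A_n^d}{M}$ of that size is tilting — alternatively, finite projective dimension is automatic as $\gld A_n^d \le d$, and the exchange sequence of part~(4)(b) together with $\pd T, \pd M_I \le d$ gives the bound on $\pd M_J$ and exhibits $A_n^d$ as an iterated extension of $\add(T \oplus M_J)$. Finally part~(5) is obtained by applying the Nakayama/duality functor, or simply by the symmetric argument starting from $\Ext^d(M_I, M_J) \neq 0$ and reversing all arrows, yielding $M_J \hookrightarrow F_d \to \cdots \to F_1 \twoheadrightarrow M_I$ with the analogous $F_r$.
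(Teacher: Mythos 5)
Your treatment of parts (2), (4a), and (5) follows essentially the same route as the paper (combinatorial $\Ext^d$ description, Koszul-shaped exchange complex, duality), but there is a genuine gap at part~(3), and the gap propagates because you use (3) to establish (4b), whereas the paper derives (3) \emph{from} (4).

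Concretely, the combinatorial claim you rely on — that $L = m_X(I,J)$ ``cannot intertwine any tuple that intertwines neither $I$ nor $J$'' — is false. Take $d = 2$, $I = (1,3,5)$, $J = (2,4,6)$, and $L = m_{\{0\}}(I,J) = (1,4,6) \in \Index{n+4}$; the tuple $K = (2,5,7)$ intertwines neither $I$ nor $J$ (every candidate chain fails at $5<5$ or $2<2$), yet $L \wr K$ since $1 < 2 < 4 < 5 < 6 < 7$. So nothing in the hypotheses you invoke excludes $K$ from being an index of a summand of $T$, and if it were, $T \oplus M_L$ would not be rigid. The way the paper rules this out is not combinatorial at all: it first builds the explicit exact sequence $\mathbb{E}$ (Proposition~\ref{prop.E_exact}), shows it is a non-split element of $\Ext^d(M_J, M_I)$ (Proposition~\ref{prop.E_nonsplit}), and then proves $\mathbb{E}$ is isomorphic to the minimal $\add(T\oplus M_I)$-resolution $\widehat{\mathbb{T}}$ of $M_J$ via approximation arguments (Observation~\ref{obs.Td_is_Mi}, Lemma~\ref{lemma.perp-approx}, and the final proposition). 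Since the terms of a minimal $\widehat{T}$-resolution lie in $\add\widehat{T}$ by construction, the $E_r$ are automatically summands of $T$, which is (3). In your draft the arrow runs the other way, so the failure of the combinatorial lemma breaks both (3) and your proof of (4b). There is also a smaller slip: you write that ``it suffices to show $T \oplus M_I \oplus M_J \oplus M_L$ is still rigid,'' but that module can never be rigid, precisely because $\Ext^d(M_J, M_I) \neq 0$; presumably you meant $T \oplus M_I \oplus M_L$, but the counterexample above shows even that rigidity is not provable by the route you describe.

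A secondary point worth flagging: at the outset you invoke ``a rigid summand of $\leftsub{A_n^d}{M}$ with $\binom{n+d-1}{d}$ indecomposables is tilting'' as a standard fact. The paper is careful \emph{not} to assume this — the remark following Corollary~\ref{sumcor} says explicitly that replacing rigid summands of that size by tilting modules is precisely what is \emph{not} yet known, and Theorem~\ref{theo.tilt-exchange} is what establishes it, using the exchange sequence together with Riedtmann--Schofield mutation (\cite{RieScho}). So using that fact to prove part~(1) is circular given how the paper is set up, although your ``alternatively'' clause — bounding $\pd M_J$ and exhibiting $A_n^d$ as an iterated extension via the exchange sequence — is the legitimate route and matches the paper's actual argument.
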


\subsection{The \texorpdfstring{$d$}{d}-Auslander-Reiten translation}

This subsection contains background on the $d$-Auslander-Reiten translation. All results here are due to Iyama, and can be found in \cite{Iy_n-Auslander}.

\begin{definition}
For $d \in \mathbb{N}$ define the \emph{$d$-Auslander-Reiten translation} and \emph{inverse $d$-Auslander-Reiten translation} by
\begin{align*}
\tau_d & := \tau \Omega^{d-1} \colon \stabmod \Lambda \to[30] \costmod \Lambda, \qquad \text{and} \\
\tau_d^- & := \tau^- \Omega^{-(d-1)} \colon \costmod \Lambda \to[30] \stabmod \Lambda, \qquad \text{respectively.}
\end{align*}
Here $\tau$ ($\tau^-$) denotes the usual (inverse) Auslander-Reiten translation, $\Omega$ denotes the syzygy and $\Omega^{-1}$ the cosyzygy functor.
\end{definition}

The following result of Iyama tells us that, for a $d$-representation finite algebra, we can actually calculate a module $M$ as in Definition~\ref{def.d-repfin} and that $M$ is unique up to multiplicity. 

\begin{theorem}[\cite{Iy_n-Auslander}] \label{theo.ncto_from_proj}
Let $\Lambda$ be $d$-representation finite, and $M$ be a $d$-cluster tilting module. Then
\[ \add M = \add \{ \tau_d^{-i} \Lambda \mid i \geq 0\} = \add \{ \tau_d^i D \Lambda \mid i \geq 0 \}. \]
\end{theorem}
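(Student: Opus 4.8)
The plan is to reduce the two outer equalities to a single one and then prove that one by two inclusions, the nontrivial one resting on Iyama's higher Auslander--Reiten theory for the $d$-cluster tilting subcategory $\mathcal M:=\add M$. First, since the two conditions defining a $d$-cluster tilting module are interchanged by the duality $D\colon\mod\Lambda\to\mod\Lambda^{\op}$, the module $DM$ is a $d$-cluster tilting $\Lambda^{\op}$-module, $\Lambda^{\op}$ is again $d$-representation finite, and one has $D(\add M)=\add DM$, $D(D\Lambda)\cong\Lambda^{\op}$ (the regular module), and $D\tau_d\cong\tau_d^-D$ (the duality swaps $\tau$ with $\tau^-$ and syzygies with cosyzygies). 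Applying $D$ to the statement $\add DM=\add\{\tau_d^{-i}\Lambda^{\op}\mid i\geq 0\}$ over $\Lambda^{\op}$ produces exactly $\add M=\add\{\tau_d^{i}D\Lambda\mid i\geq 0\}$ over $\Lambda$; so it is enough to prove, for every $d$-representation finite algebra, the single equality $\add M=\add\{\tau_d^{-i}\Lambda\mid i\geq 0\}$. I will use freely that $\Lambda$ and $D\Lambda$ both lie in $\mathcal M$ (the projectives because $\Ext^i_\Lambda(\Lambda,M)=0$, the injectives because $\Ext^i_\Lambda(M,D\Lambda)=0$, for $1\leq i\leq d-1$), that $\mathcal M$ has only finitely many indecomposable objects, and, crucially, the part of Iyama's theory (which uses $\gld\Lambda\leq d$) saying that $\tau_d$ and $\tau_d^-$ restrict to mutually inverse bijections between the indecomposable non-projective objects of $\mathcal M$ and the indecomposable non-injective objects of $\mathcal M$; in particular $\mathcal M$ is closed under $\tau_d^-$ applied to non-injective objects.

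For the inclusion $\add\{\tau_d^{-i}\Lambda\mid i\geq 0\}\subseteq\mathcal M$ I would induct on $i$: we have $\tau_d^{0}\Lambda=\Lambda\in\mathcal M$, and if $\tau_d^{-i}\Lambda\in\mathcal M$ then, decomposing it into indecomposable summands of $M$ and applying $\tau_d^-$ (which annihilates injective summands and sends non-injective ones to indecomposable summands of $M$), we get $\tau_d^{-(i+1)}\Lambda\in\mathcal M$. For the reverse inclusion, write $\mathcal B:=\add\{\tau_d^{-i}\Lambda\mid i\geq 0\}$, fix an indecomposable summand $X$ of $M$, and form the orbit $X_0:=X$, $X_{k+1}:=\tau_d X_k$ for as long as $X_k$ is non-projective. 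By the bijection above, while it is defined each $X_k$ is an indecomposable summand of $M$ with $\tau_d^-X_{k+1}\cong X_k$; so if some $X_a$ is projective then $X=\tau_d^{-a}X_a$ is a summand of $\tau_d^{-a}\Lambda$, hence $X\in\mathcal B$, and we are done (the third equality following from the reduction). It thus remains only to rule out the alternative that $X_k$ is defined and non-projective for every $k$; in that case, $\mathcal M$ being finite, $X_a\cong X_b$ for some $a<b$, and then $X_b\cong\tau_d^{\,b-a}X_b$ is a $\tau_d$-periodic indecomposable module that is neither projective nor injective.

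Excluding such $\tau_d$-periodic modules is the \emph{main obstacle}, and it is exactly here that finiteness of $\gld\Lambda$ is indispensable (for self-injective algebras $\tau_d$ is periodic on everything). I would deduce it from Iyama's structural results, in one of two equivalent ways. Via the $(d+1)$-st higher preprojective algebra $\Pi=\Pi_{d+1}(\Lambda)=T_\Lambda(\Ext^d_\Lambda(D\Lambda,\Lambda))$: for a $d$-representation finite algebra $\Pi$ is finite dimensional, and its natural $\mathbb N$-grading $\Pi=\bigoplus_{i\geq 0}\Pi_i$ satisfies $\Pi_i\cong\Hom_\Lambda(\Lambda,\tau_d^{-i}\Lambda)$; since a positive grading on a finite dimensional algebra is bounded, $\tau_d^{-i}\Lambda=0$ for $i\gg 0$, which forces the $\tau_d$-orbit of any indecomposable summand of $M$ to reach a projective after finitely many steps, contradicting periodicity. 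Alternatively, in $D^b(\mod\Lambda)$ finite global dimension provides a Serre functor $\mathbb S$, the translation $\tau_d$ is modeled by $\nu_d=\mathbb S\circ[-d]$, and the bounded natural $t$-structure is incompatible with a nonzero module $Z$ having $\nu_d^kZ\cong Z$. Either way no periodic orbit can occur, so every orbit reaches a projective, the reverse inclusion holds, and the theorem follows.
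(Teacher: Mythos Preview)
The paper does not give its own proof of this statement: it is quoted as background from Iyama's paper \cite{Iy_n-Auslander}, so there is no in-paper argument to compare against. Your overall architecture (reduce by duality, show the easy inclusion by induction, and for the hard inclusion walk back along $\tau_d$ until you reach a projective) is the natural one and matches how such results are typically set up. The problem is the crucial step, namely excluding $\tau_d$-periodic indecomposables in $\mathcal M$; both of the arguments you offer for this are defective.

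For the preprojective algebra route: even granting that $\Pi_{d+1}(\Lambda)$ is finite dimensional (which, in the standard references, is usually \emph{deduced} from the very description of $\add M$ you are proving, so there is a real risk of circularity), the conclusion you draw from it does not do what you need. Finite-dimensionality gives $\tau_d^{-i}\Lambda=0$ for $i\gg 0$, i.e.\ the forward $\tau_d^-$-orbits of the projectives are finite. But a hypothetical periodic orbit contains no projective at all and is therefore untouched by this statement; you have only re-established the easy inclusion, not the reverse one.

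For the derived-category route: the assertion that ``the bounded natural $t$-structure is incompatible with a nonzero module $Z$ having $\nu_d^{\,k}Z\cong Z$'' is false without further hypotheses. Take $d=1$ and $\Lambda$ hereditary of tame type (so $\gld\Lambda=1$ but $\Lambda$ is \emph{not} $1$-representation finite); any regular simple $Z$ at the mouth of a tube satisfies $\tau^{k}Z\cong Z$, hence $\mathbb S_1^{\,k}Z\cong Z$ in $D^b(\mod\Lambda)$. Thus the $t$-structure alone cannot rule out periodicity: the finiteness of $\mathcal M$ (i.e.\ $d$-representation finiteness) must enter the argument in an essential way, and your sketch does not say how. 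In Iyama's approach one instead proves directly that $\add\{\tau_d^{-i}\Lambda\mid i\ge 0\}$ is itself $d$-cluster tilting, and then uses that any two $d$-cluster tilting subcategories with one contained in the other must coincide; the exclusion of periodic orbits is a by-product rather than a separate lemma.
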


Finally we will need the following $d$-version of the classical Auslander-Reiten formula.

\begin{observation} \label{obs.ARformula}
For $M, N \in \mod \Lambda$ we have
\[ \Ext_{\Lambda}^d(M, N) = \Ext_{\Lambda}^1(\Omega^{d-1}M, N) = D \overline{\Hom}_{\Lambda}(N, \underbrace{\tau \Omega^{d-1}}_{= \tau_d} M), \]
where the second equality follows from the classical Auslander-Reiten formula. Similarly
\[ \Ext_{\Lambda}^d(M,N) = D \underline{\Hom}_{\Lambda}(\tau_d^- N, M). \]
\end{observation}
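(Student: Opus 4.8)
The plan is to deduce both identities from two standard facts: dimension shifting for $\Ext$, and the classical Auslander-Reiten formula. I would prove the first displayed formula by shifting in the first variable of $\Ext$, and the second by shifting (dually) in the second variable.

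\emph{First formula.} Taking a projective resolution of $M$, the module $\Omega^{d-1}M$ is a $(d-1)$-st syzygy of $M$ (when $d=1$ it is $M$ itself), so ordinary dimension shifting --- comparing the cohomology of $\Hom_\Lambda(-,N)$ on a projective resolution of $M$ with that on the induced projective resolution of $\Omega^{d-1}M$ --- gives $\Ext^i_\Lambda(\Omega^{d-1}M, N) \cong \Ext^{d-1+i}_\Lambda(M, N)$ for every $i \geq 1$; taking $i=1$ is the first equality $\Ext^d_\Lambda(M,N) \cong \Ext^1_\Lambda(\Omega^{d-1}M, N)$. I would then apply the classical Auslander-Reiten formula in the form $\Ext^1_\Lambda(X, N) \cong D\overline{\Hom}_\Lambda(N, \tau X)$ --- valid for all $X, N \in \mod\Lambda$, with $\overline{\Hom}$ the Hom-space modulo morphisms factoring through an injective and $\tau$ the usual Auslander-Reiten translate --- to $X = \Omega^{d-1}M$. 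Since $\tau_d = \tau\Omega^{d-1}$ by definition, the right-hand side equals $D\overline{\Hom}_\Lambda(N, \tau_d M)$, and composing the two isomorphisms yields the first formula.

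\emph{Second formula.} Dually, using an injective coresolution of $N$, the module $\Omega^{-(d-1)}N$ is a $(d-1)$-st cosyzygy of $N$, and the same dimension-shifting argument with $\Hom_\Lambda(M,-)$ gives $\Ext^d_\Lambda(M,N) \cong \Ext^1_\Lambda(M, \Omega^{-(d-1)}N)$. Applying the classical Auslander-Reiten formula in its companion form $\Ext^1_\Lambda(M, Y) \cong D\underline{\Hom}_\Lambda(\tau^- Y, M)$ to $Y = \Omega^{-(d-1)}N$ then gives $\Ext^d_\Lambda(M,N) \cong D\underline{\Hom}_\Lambda(\tau^-\Omega^{-(d-1)}N, M) = D\underline{\Hom}_\Lambda(\tau_d^- N, M)$, using $\tau_d^- = \tau^-\Omega^{-(d-1)}$.

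I do not expect a genuine obstacle here: this is a bookkeeping statement. The only point that needs a word of care is matching conventions --- that the quotients $\overline{\Hom}$ and $\underline{\Hom}$ in the classical Auslander-Reiten formula are respectively modulo injectives and modulo projectives (so that, consistently with the definitions in the text, $\tau_d$ takes values in $\costmod\Lambda$ and $\tau_d^-$ in $\stabmod\Lambda$), and that $\Omega^{d-1}$ and $\Omega^{-(d-1)}$ there are the stable (co)syzygy functors. Using non-minimal (co)resolutions would only change the modules involved by projective or injective summands, which are annihilated in $\underline{\Hom}$ and $\overline{\Hom}$, so the stated identities are unaffected.
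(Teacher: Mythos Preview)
Your proof is correct and follows exactly the approach indicated in the paper: the first equality is dimension shifting and the second is the classical Auslander-Reiten formula applied to $\Omega^{d-1}M$, with the dual argument for the second displayed formula. The paper presents this as an observation with the reasoning embedded in the statement itself, and your writeup simply spells out what the word ``Similarly'' covers.
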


\subsection{Proof of Theorems~\texorpdfstring{\ref{theo.indexing}}{3.4} 
and~\texorpdfstring{\ref{theo.indexing.props}}{3.6}, and 
Corollary~\texorpdfstring{\ref{sumcor}}{3.7}}

We will actually prove (and need) the following refinement of Theorem~\ref{theo.indexing.props}(3):

\begin{proposition} \label{prop.basichom}
In the setup of Theorem~\ref{theo.indexing.props}, there are maps
\[ h_{i_0, \ldots, i_d}^{j_0, \ldots, j_d} \colon \leftsub{A_n^d}{M}_{i_0, \ldots, i_d} \to[30] \leftsub{A_n^d}{M}_{j_0, \ldots, j_d} \]
such that
\begin{enumerate}
\item $h_{i_0, \ldots, i_d}^{j_0, \ldots, j_d} \neq 0 \iff i_0-1 < j_0 < i_1-1 < j_1 < \cdots < i_d-1 < j_d$,
\item $\Hom_{A_n^d}(\leftsub{A_n^d}{M}_{i_0, \ldots, i_d}, \leftsub{A_n^d}{M}_{j_0, \ldots, j_d}) = k h_{i_0, \ldots, i_d}^{j_0, \ldots, j_d}$, and
\item $h_{i_0, \ldots, i_d}^{j_0, \ldots, j_d} h_{j_0, \ldots, j_d}^{\ell_0, \ldots, \ell_d} = h_{i_0, \ldots, i_d}^{\ell_0, \ldots, \ell_d}$ whenever $h_{i_0, \ldots, i_d}^{j_0, \ldots, j_d} \neq 0 \neq h_{j_0, \ldots, j_d}^{\ell_0, \ldots, \ell_d}$.
\end{enumerate}
\end{proposition}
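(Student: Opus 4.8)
The plan is to prove Proposition~\ref{prop.basichom} by induction on $d$, exploiting the recursive construction of $A_n^d$ as the $(d-1)$-Auslander algebra of $A_n^{d-1}$ together with the fact that $\leftsub{A_n^d}{M}_{i_0,\dots,i_d}$ is by definition the indecomposable summand of $\leftsub{A_n^{d-1}}{M}$ whose composition factors $\leftsub{A_n^{d-1}}{S}_{j_0,\dots,j_{d-2}}$ are exactly those with $i_0-1<j_0<i_1-1<\cdots<i_{d-1}-1<j_{d-1}<i_d-1$. The base case $d=1$ is the explicit description of $\mod A_n^1$: the module $\leftsub{A_n^1}{M}_{i_0,i_1}$ is the interval module supported on vertices $i_0,\dots,i_1-2$ (after the normalization), and for such interval modules over linearly oriented $A_n$ the $\Hom$-space between two of them is one-dimensional exactly when their supports overlap in the prescribed staircase way; one picks $h^{j_0,j_1}_{i_0,i_1}$ to be the canonical (surjection onto a quotient followed by inclusion) map, and composability (3) is the obvious statement that the composite of two such canonical maps with compatible supports is again the canonical map, hence nonzero precisely when it is defined.

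For the inductive step I would pass through the Morita-type equivalence. Since $A_n^d=\End_{A_n^{d-1}}(\leftsub{A_n^{d-1}}{M})$ and $\leftsub{A_n^{d-1}}{M}_{i_0,\dots,i_d}$ is a summand of the cluster-tilting module, we have a fully faithful functor $\Hom_{A_n^{d-1}}(\leftsub{A_n^{d-1}}{M},-)$ from $\add\leftsub{A_n^{d-1}}{M}$ to $\proj A_n^d$ which sends $\leftsub{A_n^{d-1}}{M}_{j_0,\dots,j_d}$ to $\leftsub{A_n^d}{P}_{j_0,\dots,j_d}$. Full faithfulness immediately transports the family $h$ for $d-1$ to a family of maps between the projective $A_n^d$-modules with properties (1)--(3) for $\Hom$ between indecomposable projectives, but this is only the "$i_0=1$" slice of what we want. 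The key point is then to understand $\Hom_{A_n^d}(\leftsub{A_n^d}{M}_{i_0,\dots,i_d},\leftsub{A_n^d}{M}_{j_0,\dots,j_d})$ for arbitrary summands of $\leftsub{A_n^d}{M}$, and here I would use that each $\leftsub{A_n^d}{M}_{i_0,\dots,i_d}$ is built from the simple $A_n^d$-modules $\leftsub{A_n^d}{S}_{k_0,\dots,k_{d-1}}$ in a "box-shaped" pattern (the remark after Theorem~\ref{theo.indexing}), so that a homomorphism between two such modules is determined, up to scalar, by where the top of the source lands in the socle-filtration of the target; combinatorially this forces the staircase inequality $i_0-1<j_0<i_1-1<\cdots<i_d-1<j_d$ and gives a one-dimensional space. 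I would define $h^{j_0,\dots,j_d}_{i_0,\dots,i_d}$ as the composite running down this box (projection onto a suitable subquotient, then inclusion), and verify that these composites are compatible by a direct bookkeeping argument using that all the relevant maps between simples/subquotients are the canonical ones.

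The main obstacle, and the step I would spend the most care on, is item (3): showing that the composite $h^{j}_{i}\,h^{\ell}_{j}$ is \emph{exactly} $h^{\ell}_i$ (not merely a nonzero scalar multiple) whenever both factors are nonzero. Nonvanishing is easy once one knows each $\Hom$-space is one-dimensional and the composite is supported correctly, but strict equality requires a consistent \emph{normalization} of the maps $h$. I would handle this by choosing the normalization inductively: fix once and for all a basis vector in each $\Hom$-space between indecomposable \emph{projective} $A_n^d$-modules coming (via the fully faithful functor above) from the already-normalized family for $A_n^{d-1}$, and then extend to arbitrary summands $\leftsub{A_n^d}{M}_{i_0,\dots,i_d}$ by declaring $h^{j_0,\dots,j_d}_{i_0,\dots,i_d}$ to be the unique map making the square with the projective covers (whose relevant component is a normalized $h$ between projectives) commute; associativity of composition in $\mod A_n^d$ together with the inductive associativity for the projective family then forces (3). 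One must check this is well defined — i.e. independent of the auxiliary choices of projective covers — which reduces to the uniqueness statement that a map between the box modules is determined by its restriction along any surjection from a projective; this in turn follows from $\leftsub{A_n^d}{M}_{i_0,\dots,i_d}$ having simple top (true because it is an indecomposable summand of a cluster tilting module over an algebra of global dimension $d$, or can be read off the box shape). With (3) in hand, (1) and (2) are immediate from the one-dimensionality of the $\Hom$-spaces and the combinatorial support condition established along the way, and Theorem~\ref{theo.indexing.props}(3) follows as the special case recording only nonvanishing and dimension.
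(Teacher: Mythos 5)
Your overall strategy --- inducting on $d$, transporting the already-normalized family for $A_n^{d-1}$ to maps between the projectives of $A_n^d$ via the equivalence $\add\leftsub{A_n^{d-1}}{M}\simeq\proj A_n^d$, defining $h^{j_0,\dots,j_d}_{i_0,\dots,i_d}$ as the unique map making the commuting square with the projective covers, and deducing (2) from the Yoneda lemma and (3) from the inherited compatibility of the projective-level maps --- is essentially the paper's own approach, so in that respect the proposal is on the right track. (A small indexing slip: the projective $A_n^d$-modules coming from $\leftsub{A_n^{d-1}}{M}$ are indexed by $d$-tuples, so the functor sends $\leftsub{A_n^{d-1}}{M}_{j_0,\dots,j_{d-1}}$ to $\leftsub{A_n^d}{P}_{j_0,\dots,j_{d-1}}$, not a $(d+1)$-indexed projective.)

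The genuine gap is in part (1). You assert that the nonvanishing condition is ``immediate'' from the box-shape picture and the claim that a morphism is ``determined, up to scalar, by where the top of the source lands in the socle-filtration of the target,'' but this is doing the real work of the proposition without proving it. Having the required composition factors available is a necessary condition for a nonzero map, but it does not on its own tell you that one exists, and it does not tell you exactly when the induced map on cokernels of the projective presentations is nonzero. The paper resolves this by a four-case analysis on $(i_0,j_0)$ relative to $1$: the case $i_0=1=j_0$ is Yoneda; the case $i_0>1,\ j_0=1$ requires showing the dashed map fails to exist, which comes down to the observation that $h^{j_1-2,\dots,j_d-2}_{i_1-2,\dots,i_d-2}\ne0$ forces $h^{j_1-2,\dots,j_d-2}_{i_0-1,i_2-2,\dots,i_d-2}\ne0$ so the square cannot be completed; and the two cases with $j_0>1$ need one to check both that the left square of the presentation-diagram can be made commutative and that the induced cokernel map is not zero (i.e.\ that $h^{j_1-2,\dots,j_d-2}_{i_1-2,\dots,i_d-2}$ does not factor through $h^{j_1-2,\dots,j_d-2}_{j_0-1,j_2-2,\dots,j_d-2}$). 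None of this is captured by your subquotient/socle heuristic, and the staircase inequality $i_0-1<j_0<i_1-1<\cdots<i_d-1<j_d$ only falls out once you run that inductive bookkeeping on the presentations. Your proposal would need to supply this case analysis (or an equivalent argument) for the proof of (1) to be complete; once (1) is in place, your treatment of (2) and (3) via the projective-cover normalization is sound and matches the paper's.
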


Note that (1) and (2) are just a reformulation of Theorem~\ref{theo.indexing.props}(3). The new statement is (3), which claims that the bases can be chosen in a compatible way.

We will also need the following proposition, describing how the (inverse) $d$-Auslander-Reiten translation acts on the summands $\leftsub{A_n^d}{M}$ in terms of the indexing of Theorem~\ref{theo.indexing}.

\begin{proposition} \label{prop.tau}
In the setup of Theorem~\ref{theo.indexing.props} we have
\begin{align*}
\tau_d (\leftsub{A_n^d}{M}_{i_0, \ldots, i_d}) & = \left\{ \begin{array}{ll} 0 & \text{if } i_0 = 1, \\ \leftsub{A_n^d}{M}_{i_0-1, \ldots, i_d-1} & \text{otherwise} \end{array} \right. \text{, and} \\
\tau_d^- (\leftsub{A_n^d}{M}_{i_0, \ldots, i_d}) & = \left\{ \begin{array}{ll} 0 & \text{if } i_d = n + 2d, \\ \leftsub{A_n^d}{M}_{i_0+1, \ldots, i_d+1} & \text{otherwise.} \end{array} \right.
\end{align*}
\end{proposition}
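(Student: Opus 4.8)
The plan is to argue by induction on $d$, using the inductive construction of $A_n^d$ and the labeling from Theorem~\ref{theo.indexing}, together with the $d$-Auslander-Reiten theory recalled above. First I would settle the base case $d=1$: here $A_n^1$ is the linearly oriented path algebra of $A_n$, the module $\leftsub{A_n^1}{M}_{i_0,i_1}$ is the interval module with composition factors $S_j$ for $i_0-1<j<i_1-1$, and the claim is just the classical description of the Auslander-Reiten translate on a Nakayama (in fact hereditary $A_n$) algebra: $\tau$ sends the interval $[i_0-1,i_1-2]$ to the interval $[i_0-2,i_1-3]$, i.e.\ shifts both endpoints down by one, and kills the projective $\leftsub{A_n^1}{P}_j = \leftsub{A_n^1}{M}_{1,j+2}$ precisely when $i_0=1$; dually for $\tau^-$ and the injectives $\leftsub{A_n^1}{I}_j = \leftsub{A_n^1}{M}_{j,n+2}$. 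Since for $d=1$ we have $\tau_1=\tau$, $\tau_1^-=\tau^-$, this is immediate.

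For the inductive step, I would use Theorem~\ref{theo.ncto_from_proj}, which says $\add \leftsub{A_n^d}{M} = \add\{\tau_d^{-i}\Lambda\}$, combined with Theorem~\ref{theo.indexing.props}(1)--(2): the projectives are the $\leftsub{A_n^d}{M}_{1,i_1,\dots,i_d}$ and the injectives the $\leftsub{A_n^d}{M}_{i_0,\dots,i_{d-1},n+2d}$. The key point is that the formula $\tau_d^-(\leftsub{A_n^d}{M}_{i_0,\dots,i_d}) = \leftsub{A_n^d}{M}_{i_0+1,\dots,i_d+1}$, read off starting from a projective $\leftsub{A_n^d}{M}_{1,i_1,\dots,i_d}$ and iterating $\tau_d^-$, produces exactly the list of all separated $(d+1)$-tuples, stopping precisely when the last coordinate would exceed $n+2d$ — and at that final step one lands on an injective $\leftsub{A_n^d}{M}_{i_0,\dots,i_{d-1},n+2d}$, on which $\tau_d^-$ must vanish. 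So I would show: (a) $\tau_d^-$ is defined (nonzero) on $\leftsub{A_n^d}{M}_{i_0,\dots,i_d}$ exactly when $i_d < n+2d$, by identifying when the module is injective; and (b) when nonzero, $\tau_d^-$ acts as the $+1$ shift on all coordinates. For (b) the cleanest route is the $d$-Auslander-Reiten formula (Observation~\ref{obs.ARformula}): $\Ext^d_{A_n^d}(\leftsub{A_n^d}{M}_{i_0,\dots,i_d}, N) = D\underline{\Hom}(\tau_d^- N, \leftsub{A_n^d}{M}_{i_0,\dots,i_d})$. Taking $N$ to range over the summands of $\leftsub{A_n^d}{M}$ and using Theorem~\ref{theo.indexing.props}(3)--(4), the left side is one-dimensional precisely when $(i_0,\dots,i_d)\wr(\text{index of }N)$, i.e.\ when $N = \leftsub{A_n^d}{M}_{k_0,\dots,k_d}$ with $i_x < k_x < i_{x+1}$; meanwhile the right side is nonzero precisely when $\tau_d^- N = \leftsub{A_n^d}{M}_{i_0,\dots,i_d}$ or there is a nonzero stable map. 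Matching these two conditions, via Proposition~\ref{prop.basichom} to control which homomorphisms survive in $\underline{\Hom}$, forces $\tau_d^-(\leftsub{A_n^d}{M}_{k_0,\dots,k_d}) = \leftsub{A_n^d}{M}_{k_0-1,\dots,k_d-1}$, which after reindexing is the claim. The statement for $\tau_d$ then follows by the dual argument (or by noting $\tau_d$ and $\tau_d^-$ are mutually inverse bijections on the non-projective-injective summands).

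The main obstacle I anticipate is controlling the stable/costable Hom spaces in Observation~\ref{obs.ARformula} — i.e.\ checking that projective summands (for $\underline{\Hom}$) or injective summands (for $\overline{\Hom}$) do not spoil the count, so that the one-dimensionality of $\Ext^d$ genuinely pins down a single $\tau_d^-$-value. This amounts to verifying that among the relevant $\leftsub{A_n^d}{M}_{\bullet}$, the maps appearing factor through a projective-injective exactly in the "boundary" cases $i_0=1$ or $i_d=n+2d$, which is precisely where we want the translate to vanish; Theorem~\ref{theo.indexing.props}(1)--(2) together with the compatibility of bases in Proposition~\ref{prop.basichom}(3) should make this bookkeeping manageable. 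An alternative, possibly shorter, route that sidesteps some of this: use that $A_n^{d+1} = \End_{A_n^d}(\leftsub{A_n^d}{M})$ and that the $d$-almost-split sequences in $\add\leftsub{A_n^d}{M}$ correspond under $\Hom_{A_n^d}(\leftsub{A_n^d}{M},-)$ to projective resolutions of simple $A_n^{d+1}$-modules; reading off the relevant minimal projective resolution from the quiver with relations of $A_n^{d+1}$ (as in Table~\ref{table.examples.quivers}) gives the $\pm 1$ coordinate shift directly. I would present the AR-formula argument as the main proof and mention the quiver-theoretic one as a remark.
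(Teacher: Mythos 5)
Your proposed route --- recovering $\tau_d^-$ by matching the two sides of the Auslander--Reiten formula in Observation~\ref{obs.ARformula} --- is genuinely different from the paper's, but as written it is circular and the matching step is not carried out correctly. The paper proves Theorem~\ref{theo.indexing.props}(4), the intertwining description of $\Ext^d$, \emph{from} Proposition~\ref{prop.tau} together with Observation~\ref{obs.ARformula}; so you cannot invoke (4) to deduce Proposition~\ref{prop.tau} unless you first establish (4) independently, say by applying $\Hom_{A_n^d}(-,\leftsub{A_n^d}{M}_{j_0,\dots,j_d})$ to the projective resolution of Proposition~\ref{prop.proj_resol} and invoking Proposition~\ref{prop.basichom}. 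That is doable but duplicates the cohomological work the paper wraps into the injective coresolution. There is also a direction error: Theorem~\ref{theo.indexing.props}(4) says $\Ext^d_{A_n^d}(\leftsub{A_n^d}{M}_{i_0,\dots,i_d},N)\neq 0$ exactly when the index of $N$ intertwines $(i_0,\dots,i_d)$, not the other way around, and comparing this with $\Hom_{A_n^d}(\leftsub{A_n^d}{M}_{\ell_0,\dots,\ell_d},\leftsub{A_n^d}{M}_{i_0,\dots,i_d})\neq 0\iff \ell_0-1<i_0<\ell_1-1<\cdots<\ell_d-1<i_d$ pins $\tau_d^-$ down as the $+1$ shift, consistent with the proposition, rather than the $-1$ shift you state and then wave away with ``after reindexing.'' With your orientation of the intertwining the two families of inequalities simply do not line up, so this crucial step would fail as written.

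The paper's actual proof is a short direct computation from the definition $\tau_d^-=\tau^-\Omega^{-(d-1)}$, with no use of $\Ext^d$ or the AR formula at all. After Theorem~\ref{theo.indexing.props}(2) identifies the injectives, Remark~\ref{rem.inj_resol} gives the minimal injective coresolution of $\leftsub{A_n^d}{M}_{i_0,\dots,i_d}$ explicitly; one takes its $d$-th map, transports it to a map of projectives via the inverse Nakayama functor, and recognizes the cokernel as $\leftsub{A_n^d}{M}_{i_0+1,\dots,i_d+1}$ from its defining presentation. For what it's worth, the obstacle you anticipate about stable Hom is actually harmless: by Theorem~\ref{theo.indexing.props}(3) any nonzero map $\leftsub{A_n^d}{M}_{k_0,\dots,k_d}\rightarrow\leftsub{A_n^d}{P}_{\ell_0,\dots,\ell_{d-1}}=\leftsub{A_n^d}{M}_{1,\ell_0+2,\dots,\ell_{d-1}+2}$ forces $k_0=1$, so a non-projective summand of $\leftsub{A_n^d}{M}$ has no nonzero map to a projective, and hence $\underline{\Hom}(\tau_d^-N,-)=\Hom(\tau_d^-N,-)$ on $\add\leftsub{A_n^d}{M}$ whenever $\tau_d^-N\neq 0$.
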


The entire proof of Theorems~\ref{theo.indexing} and \ref{theo.indexing.props} and Propositions~\ref{prop.basichom} and \ref{prop.tau} is built up as an induction on $d$. That is, we assume all the statements to already be known for $A_n^{d-1}$. In particular, the modules $\leftsub{A_n^{d-1}}{M}_{i_0, \ldots, i_{d-1}} \in \mod A_n^{d-1}$, and $\leftsub{A_n^d}{P}_{i_0, \ldots, i_{d-1}}, \leftsub{A_n^d}{S}_{i_0, \ldots, i_{d-1}}, \leftsub{A_n^d}{I}_{i_0, \ldots, i_{d-1}} \in \mod A_n^d$ are assumed to be constructed, and we assume that we have maps $h_{i_0, \ldots, i_{d-1}}^{j_0, \ldots, j_{d-1}}$.

We start by defining candidates for the modules $\leftsub{A_n^d}{M}_{i_0, \ldots, i_d}$. For $i_0 = 1$ we use Theorem~\ref{theo.indexing.props}(1) as definition for $\leftsub{A_n^d}{M}_{1, i_1, \ldots, i_d}$, that is we set $\leftsub{A_n^d}{M}_{1, i_1, \ldots, i_d} := \leftsub{A_n^d}{P}_{i_1-2, \ldots, i_d-2}$. For $i_0 \neq 1$ we define $\leftsub{A_n^d}{M}_{i_0, \ldots, i_d}$ to be the cokernel of the map
\begin{align*}
& \Hom_{A_n^{d-1}}(\leftsub{A_n^{d-1}}{M}_n^{d-1}, h_{i_0-1, i_2-2, \ldots, i_d-2}^{i_1-2, \ldots, i_d-2}) \colon \\ 
& \quad \underbrace{\Hom_{A_n^{d-1}}(\leftsub{A_n^{d-1}}{M}_n^{d-1}, \leftsub{A_n^{d-1}}{M}_{i_0-1, i_2-2, \ldots, i_d-2})}_{= \leftsub{A_n^d}{P}_{i_0-1, i_2-2, \ldots, i_d-2}} \to[30] \underbrace{\Hom_{A_n^{d-1}}(\leftsub{A_n^{d-1}}{M}_n^{d-1}, \leftsub{A_n^{d-1}}{M}_{i_1-2, \ldots, i_d-2})}_{= \leftsub{A_n^d}{P}_{i_1-2, \ldots, i_d-2}}.
\end{align*}

\begin{observation}
The modules $\leftsub{A_n^d}{M}_{i_0, \ldots, i_d}$ have an indecomposable projective cover by construction. Hence the $\leftsub{A_n^d}{M}_{i_0, \ldots, i_d}$ all have simple top. In particular they are indecomposable.
\end{observation}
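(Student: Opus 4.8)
The plan is to reduce the whole statement to one essentially formal point. Over a finite-dimensional algebra, a nonzero module $X$ with an \emph{indecomposable} projective cover automatically has simple top, because $\top X$ is isomorphic to the top of its projective cover and the top of an indecomposable projective is simple; and a nonzero module with simple top is automatically indecomposable, since a nontrivial decomposition $X = X_1 \oplus X_2$ would give $\top X \iso \top X_1 \oplus \top X_2$ with both summands nonzero by Nakayama's lemma, contradicting the simplicity of $\top X$. So everything comes down to verifying the first clause of the observation, namely that each $\leftsub{A_n^d}{M}_{i_0, \ldots, i_d}$ has an indecomposable projective cover, and I would do this by inspecting the two cases of the construction.

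For $i_0 = 1$ there is nothing to prove: $\leftsub{A_n^d}{M}_{1, i_1, \ldots, i_d}$ was defined to be the indecomposable projective $\leftsub{A_n^d}{P}_{i_1-2, \ldots, i_d-2}$, which is its own projective cover. For $i_0 \neq 1$, recall $\leftsub{A_n^d}{M}_{i_0, \ldots, i_d} = \cok f$ with $f = \Hom_{A_n^{d-1}}(\leftsub{A_n^{d-1}}{M}, h)$ and $h = h_{i_0-1, i_2-2, \ldots, i_d-2}^{i_1-2, \ldots, i_d-2}$. Since $\Hom_{A_n^{d-1}}(\leftsub{A_n^{d-1}}{M}, -)$ induces an equivalence $\add \leftsub{A_n^{d-1}}{M} \xrightarrow{\sim} \proj A_n^d$ (the defining property of $A_n^d$ as the $(d-1)$-Auslander algebra of $A_n^{d-1}$), the map $f$ is a morphism between the indecomposable projective $A_n^d$-modules $P' := \leftsub{A_n^d}{P}_{i_0-1, i_2-2, \ldots, i_d-2}$ and $P := \leftsub{A_n^d}{P}_{i_1-2, \ldots, i_d-2}$, so $\leftsub{A_n^d}{M}_{i_0, \ldots, i_d} = P/\Img f$ is a quotient of the indecomposable projective $P$.

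It then remains to check this quotient is nonzero, i.e.\ that $f$ is not onto. Here I would use that $(i_0, \ldots, i_d)$ is separated, so $i_0 - 1 \leq i_1 - 3 < i_1 - 2$; thus $P'$ and $P$ carry different labels and are therefore non-isomorphic. Were $f$ surjective it would split ($P$ being projective), exhibiting $P$ as a direct summand of the indecomposable $P'$ and forcing $P \iso P'$, a contradiction; hence $\Img f$ is a proper submodule of $P$. As $P$ has simple top, $\Rad P$ is its unique maximal submodule, so $\Img f \subseteq \Rad P$, the kernel of the canonical epimorphism $P \twoheadrightarrow \leftsub{A_n^d}{M}_{i_0, \ldots, i_d}$ is superfluous, and this epimorphism is a projective cover with indecomposable source $P$. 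This is the only step with any content, and I do not expect it to present a real obstacle: the only inputs beyond the construction itself are the separatedness of the index tuple (to get $P \niso P'$) and the standard equivalence $\add \leftsub{A_n^{d-1}}{M} \simeq \proj A_n^d$.
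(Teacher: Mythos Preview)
Your argument is correct and is exactly what the paper's phrase ``by construction'' is pointing at: the paper gives no written proof for this observation, and your two cases (trivial for $i_0=1$; for $i_0\neq 1$ the module is a proper quotient of the indecomposable projective $\leftsub{A_n^d}{P}_{i_1-2,\ldots,i_d-2}$, hence has that module as projective cover) are precisely the intended unpacking. Your verification that $f$ is not surjective via $P'\not\cong P$ is a clean way to justify the one nontrivial point.
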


\begin{lemma} \label{lemma.composition}
The composition factors of $\leftsub{A_n^d}{M}_{i_0, \ldots, i_d}$ are precisely
\[ \leftsub{A_n^d}{S}_{j_0, \ldots, j_{d-1}} \text{ with }  i_0-1 < j_0 < i_1-1 < j_1 < \cdots < i_{d-1}-1 <  j_{d-1} < i_d-1. \]
\end{lemma}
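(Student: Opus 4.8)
The plan is to continue the induction on $d$ carrying the proofs of Theorems~\ref{theo.indexing} and~\ref{theo.indexing.props} and Propositions~\ref{prop.basichom} and~\ref{prop.tau}, so that all of these and the present lemma may be assumed for $A_n^{d-1}$ while at level $d$ the modules $\leftsub{A_n^d}{M}_{i_0,\ldots,i_d}$ are the ones just constructed. I will use two standard facts: first, that for any $N\in\mod A_n^d$ one has $[N:\leftsub{A_n^d}{S}_{j_0,\ldots,j_{d-1}}]=\dim_k\Hom_{A_n^d}(\leftsub{A_n^d}{P}_{j_0,\ldots,j_{d-1}},N)$, since $\Hom_{A_n^d}(\leftsub{A_n^d}{P}_{j_0,\ldots,j_{d-1}},-)$ is exact and is one-dimensional on $\leftsub{A_n^d}{S}_{j_0,\ldots,j_{d-1}}$ and zero on the other simples; second, that $\Hom_{A_n^{d-1}}(\leftsub{A_n^{d-1}}{M},-)$ restricts to an equivalence $\add\leftsub{A_n^{d-1}}{M}\xrightarrow{\sim}\proj A_n^d$, so that Hom-spaces between indecomposable projective $A_n^d$-modules coincide, compatibly with composition, with the Hom-spaces between the corresponding summands of $\leftsub{A_n^{d-1}}{M}$.

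First I would dispose of the case $i_0=1$, where $\leftsub{A_n^d}{M}_{1,i_1,\ldots,i_d}$ is by construction the projective $\leftsub{A_n^d}{P}_{i_1-2,\ldots,i_d-2}$. Combining the two facts, $[\leftsub{A_n^d}{M}_{1,i_1,\ldots,i_d}:\leftsub{A_n^d}{S}_{j_0,\ldots,j_{d-1}}]=\dim_k\Hom_{A_n^{d-1}}(\leftsub{A_n^{d-1}}{M}_{j_0,\ldots,j_{d-1}},\leftsub{A_n^{d-1}}{M}_{i_1-2,\ldots,i_d-2})$, which by the inductive form of Theorem~\ref{theo.indexing.props}(3) equals $1$ exactly when $j_0<i_1-1<j_1<i_2-1<\cdots<i_{d-1}-1<j_{d-1}<i_d-1$ and $0$ otherwise; since $i_0-1=0<j_0$ always holds, this is the claimed condition.

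For $i_0\neq 1$, write $j=(j_0,\ldots,j_{d-1})$, $a=(i_1-2,\ldots,i_d-2)$, $a'=(i_0-1,i_2-2,\ldots,i_d-2)$; by construction $\leftsub{A_n^d}{M}_{i_0,\ldots,i_d}=\cok f$ with $f=\Hom_{A_n^{d-1}}(\leftsub{A_n^{d-1}}{M},h)$ and $h=h_{a'}^{a}\colon \leftsub{A_n^{d-1}}{M}_{a'}\rightarrow\leftsub{A_n^{d-1}}{M}_{a}$ the basis map of Proposition~\ref{prop.basichom} at level $d-1$. Applying the exact functor $\Hom_{A_n^d}(\leftsub{A_n^d}{P}_{j},-)$ to the right-exact sequence $\leftsub{A_n^d}{P}_{a'}\xrightarrow{f}\leftsub{A_n^d}{P}_{a}\rightarrow\cok f\rightarrow 0$ and transporting to level $d-1$, I get
\[
[\leftsub{A_n^d}{M}_{i_0,\ldots,i_d}:\leftsub{A_n^d}{S}_{j}]=\dim_k\Hom_{A_n^{d-1}}(\leftsub{A_n^{d-1}}{M}_{j},\leftsub{A_n^{d-1}}{M}_{a})-\rk\Hom_{A_n^{d-1}}(\leftsub{A_n^{d-1}}{M}_{j},h).
\]
The first term is handled exactly as in the case $i_0=1$. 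For the rank, I use that all these Hom-spaces are at most one-dimensional: if $\Hom_{A_n^{d-1}}(\leftsub{A_n^{d-1}}{M}_{j},\leftsub{A_n^{d-1}}{M}_{a'})\neq 0$ it is spanned by $h_j^{a'}$, whose image under post-composition with $h$ is $h_j^{a'}h_{a'}^{a}=h_j^{a}$ by Proposition~\ref{prop.basichom}(3), so the rank is $1$ precisely when $h_j^{a'}\neq 0$ and $h_j^{a}\neq 0$, i.e.\ when the Hom-conditions for $(j,a')$ and for $(j,a)$ both hold. Spelling these out — the $(j,a)$-condition is $j_0<i_1-1<j_1<i_2-1<\cdots<i_{d-1}-1<j_{d-1}<i_d-1$ and the $(j,a')$-condition is the same chain with the term $i_1-1$ between $j_0$ and $j_1$ replaced by $i_0$ — and using $i_0<i_1-1$, one finds that both hold simultaneously exactly when the $(j,a)$-condition holds together with $j_0<i_0$. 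Subtracting, $[\leftsub{A_n^d}{M}_{i_0,\ldots,i_d}:\leftsub{A_n^d}{S}_{j}]=1$ exactly when the $(j,a)$-condition holds but $j_0\geq i_0$, i.e.\ when $i_0-1<j_0<i_1-1<j_1<i_2-1<\cdots<i_{d-1}-1<j_{d-1}<i_d-1$, and $0$ otherwise; in particular each listed composition factor has multiplicity one.

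The hard part will be the rank computation in the last display, and it is exactly there that Proposition~\ref{prop.basichom}(3) is indispensable. One cannot shortcut it by writing $[\cok f]=[\leftsub{A_n^d}{P}_{a}]-[\leftsub{A_n^d}{P}_{a'}]$, because $h$ (hence $f$) is \emph{not} injective: the source $\leftsub{A_n^{d-1}}{M}_{a'}$ genuinely has more composition factors than the target $\leftsub{A_n^{d-1}}{M}_{a}$, since $i_0-1<i_1-2$. One must therefore identify $\Im f$, and knowing merely that the relevant Hom-spaces are one-dimensional does not tell us whether the induced map vanishes; the compatibility $h_j^{a'}h_{a'}^{a}=h_j^{a}$ of the chosen bases is what makes it effective. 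This is why Proposition~\ref{prop.basichom} must be carried through the same induction as this lemma. Everything else is the elementary manipulation of the separatedness inequalities sketched above.
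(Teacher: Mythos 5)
Your proof is correct and is essentially the same argument as the paper's: both compute $[\leftsub{A_n^d}{M}_{i_0,\ldots,i_d}:\leftsub{A_n^d}{S}_{j}]$ as $\dim\Hom_{A_n^d}(\leftsub{A_n^d}{P}_{j},\leftsub{A_n^d}{M}_{i_0,\ldots,i_d})$, handle $i_0=1$ via Yoneda plus the inductive Theorem~\ref{theo.indexing.props}(3), and for $i_0>1$ subtract the dimension of the space of maps factoring through $\leftsub{A_n^d}{P}_{i_0-1,i_2-2,\ldots,i_d-2}$, identified via Proposition~\ref{prop.basichom}(3) with the rank of post-composition by $h_{a'}^a$. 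Your formulation as "$\dim\Hom-\rk$" is the same quantity the paper calls "$\dim\Hom-\dim\{\text{maps factoring through}\}$", and your side remark about why $h$ is not injective (so a naive Euler-characteristic argument fails, and the basis-compatibility of Proposition~\ref{prop.basichom}(3) is genuinely needed) is a correct and useful observation, though not logically required.
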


\begin{proof}
We denote by $[\leftsub{A_n^d}{M}_{i_0, \ldots, i_d} : \leftsub{A_n^d}{S}_{j_0, \ldots, j_{d-1}}]$ the multiplicity of  $\leftsub{A^d_n}{S}_{j_0, \ldots, j_{d-1}}$ as a composition factor of $\leftsub{A_n^d}{M}_{i_0, \ldots, i_d}$. Then
\begin{align*}
[\leftsub{A_n^d}{M}_{i_0, \ldots, i_d} : \leftsub{A_n^d}{S}_{j_0, \ldots, j_{d-1}}] & = \dim \Hom_{A_n^d}(\leftsub{A_n^d}{P}_{j_0, \ldots, j_{d-1}}, \leftsub{A_n^d}{M}_{i_0, \ldots, i_d}). \\
\intertext{For $i_0 = 1$ we have }
[\leftsub{A_n^d}{M}_{i_0, \ldots, i_d} : \leftsub{A_n^d}{S}_{j_0, \ldots, j_{d-1}}] & = \dim \Hom_{A_n^d}(\leftsub{A_n^d}{P}_{j_0, \ldots, j_{d-1}}, \leftsub{A_n^d}{P}_{i_1-2, \ldots, i_d-2}) \\
& = \dim \Hom_{A_n^{d-1}}(\leftsub{A_n^{d-1}}{M}_{j_0, \ldots, j_{d-1}}, \leftsub{A_n^{d-1}}{M}_{i_1-2, \ldots, i_d-2}) \\
& = \left\{ \begin{array}{ll} 1 & \text{if } j_0 < i_1-1 < j_1 < i_2-1 < \cdots < j_{d-1} < i_d-1 \\ 0 & \text{otherwise,} \end{array} \right. \\
\intertext{where the second equality holds by the Yoneda Lemma, and the final one by Theorem~\ref{theo.indexing.props}(3) for $A_n^{d-1}$. Thus in this case the lemma holds. For $i_0 \neq 1$ we have}
[\leftsub{A_n^d}{M}_{i_0, \ldots, i_d} : \leftsub{A_n^d}{S}_{j_0, \ldots, j_{d-1}}] & = \dim \Hom_{A_n^d}(\leftsub{A_n^d}{P}_{j_0, \ldots, j_{d-1}}, \leftsub{A_n^d}{P}_{i_1-2, \ldots, i_d-2}) \\
& \qquad - \dim \{\text{maps factoring through } \leftsub{A_n^d}{P}_{i_0-1, i_2-2, \ldots, i_d-2} \} \\
 & = \dim \Hom_{A_n^{d-1}}(\leftsub{A_n^{d-1}}{M}_{j_0, \ldots, j_{d-1}}, \leftsub{A_n^{d-1}}{M}_{i_1-2, \ldots, i_d-2}) \\
& \qquad - \dim \{\text{maps factoring through } \leftsub{A_n^{d-1}}{M}_{i_0-1, i_2-2, \ldots, i_d-2} \} \\
& = \left\{ \begin{array}{ll} 1 & \text{if } \dim \Hom_{A_n^{d-1}}(\leftsub{A_n^{d-1}}{M}_{j_0, \ldots, j_{d-1}}, \leftsub{A_n^{d-1}}{M}_{i_1-2, \ldots, i_d-2}) = 1 \\ & \text{ and } \Hom_{A_n^{d-1}}(\leftsub{A_n^{d-1}}{M}_{j_0, \ldots, j_{d-1}}, \leftsub{A_n^{d-1}}{M}_{i_0-1, i_2-2, \ldots, i_d-2}) = 0 \\ 0 & \text{otherwise.} \end{array} \right.
\end{align*}
Here the final equality holds by Proposition~\ref{prop.basichom} for $A_n^{d-1}$. As before, we use Theorem~\ref{theo.indexing.props}(3) for $A_n^{d-1}$ to obtain the claim.
\end{proof}

We now define the maps $h_{i_0, \ldots, i_d}^{j_0, \ldots, j_d}$ and prove Proposition~\ref{prop.basichom}.

We define $h_{i_0, \ldots, i_d}^{j_0, \ldots, j_d}$ to be the right vertical map in the following diagram, if such a map exists, and to be $0$ otherwise.
\[ \begin{tikzpicture}[xscale=4,yscale=2]
 \node (A) at (0,1) {$\leftsub{A_n^d}{P}_{i_1-2, \ldots, i_d-2}$};
 \node (B) at (1,1) {$\leftsub{A_n^d}{M}_{i_0, \ldots, i_d}$};
 \node (C) at (0,0) {$\leftsub{A_n^d}{P}_{j_1-2, \ldots, j_d-2}$};
 \node (D) at (1,0) {$\leftsub{A_n^d}{M}_{j_0, \ldots, j_d}$};
 \draw [->>] (A) -- (B);
 \draw [->>] (C) -- (D);
 \draw [->] (A) -- node [left] {$(h_{i_1-2, \ldots, i_d-2}^{j_1-2, \ldots, j_d-2})_*$} (C);
 \draw [->, dashed] (B) -- (D);
\end{tikzpicture} \]
Here the left vertical map is $(h_{i_1-2, \ldots, i_d-2}^{j_1-2, \ldots, j_d-2})_* = \Hom_{A_n^{d-1}}(\leftsub{A_n^{d-1}}{M}, h_{i_1-2, \ldots, i_d-2}^{j_1-2, \ldots, j_d-2})$, and the horizontal maps are the projections coming from the construction of $M_{i_0, \ldots, i_d}$ and  $M_{j_0, \ldots, j_d}$, respectively.

\begin{observation}
\begin{enumerate}
\item Any map $\leftsub{A_n^d}{M}_{i_0, \ldots, i_d} \to \leftsub{A_n^d}{M}_{j_0, \ldots, j_d}$ induces a (not necessarily unique) map between the projective covers. Since $(h_{i_1-2, \ldots, i_d-2}^{j_1-2, \ldots, j_d-2})_*$ is the unique up to scalars map between the projective covers (by the Yoneda Lemma and Proposition~\ref{prop.basichom}(2) for $A_n^{d-1}$) it follows that
\[ \Hom_{A_n^d}(\leftsub{A_n^d}{M}_{i_0, \ldots, i_d}, \leftsub{A_n^d}{M}_{j_0, \ldots, j_d}) = k h_{i_0, \ldots, i_d}^{j_0, \ldots, j_d}. \]
That is, we have shown Proposition~\ref{prop.basichom}(2) for $A_n^d$.
\item Using Proposition~\ref{prop.basichom}(3) for $A_n^{d-1}$ it also follows immediately that, if $h_{i_0, \ldots, i_d}^{j_0, \ldots, j_d}$ and $h_{j_0, \ldots, j_d}^{\ell_0, \ldots, \ell_d}$ are both non-zero, then
\[ h_{i_0, \ldots, i_d}^{j_0, \ldots, j_d} h_{j_0, \ldots, j_d}^{\ell_0, \ldots, \ell_d} = h_{i_0, \ldots, i_d}^{\ell_0, \ldots, \ell_d}. \]
So Proposition~\ref{prop.basichom}(3) holds for $A_n^d$.
\end{enumerate}
\end{observation}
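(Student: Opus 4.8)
The plan is to derive Proposition~\ref{prop.basichom}(2) and (3) for $A_n^d$ from the full inductive hypothesis (Theorems~\ref{theo.indexing}, \ref{theo.indexing.props} and Propositions~\ref{prop.basichom}, \ref{prop.tau} for $A_n^{d-1}$) together with the construction of the modules $\leftsub{A_n^d}{M}_{i_0, \ldots, i_d}$ and the maps $h_{i_0, \ldots, i_d}^{j_0, \ldots, j_d}$ recalled above. The structural fact that drives everything is that the projective cover of $\leftsub{A_n^d}{M}_{i_0, \ldots, i_d}$ is $\leftsub{A_n^d}{P}_{i_1-2, \ldots, i_d-2}$; this is exactly the content of the preceding observation that the $\leftsub{A_n^d}{M}_{i_0, \ldots, i_d}$ have simple top. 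Write $\pi_{(i_0, \ldots, i_d)} \colon \leftsub{A_n^d}{P}_{i_1-2, \ldots, i_d-2} \twoheadrightarrow \leftsub{A_n^d}{M}_{i_0, \ldots, i_d}$ for the associated surjection (an isomorphism if $i_0 = 1$, otherwise the cokernel projection from the construction). By definition $h_{i_0, \ldots, i_d}^{j_0, \ldots, j_d}$ is the unique map $\leftsub{A_n^d}{M}_{i_0, \ldots, i_d} \to \leftsub{A_n^d}{M}_{j_0, \ldots, j_d}$ making the square with top row $\pi_{(i_0, \ldots, i_d)}$, bottom row $\pi_{(j_0, \ldots, j_d)}$ and left column $(h_{i_1-2, \ldots, i_d-2}^{j_1-2, \ldots, j_d-2})_* = \Hom_{A_n^{d-1}}(\leftsub{A_n^{d-1}}{M}, h_{i_1-2, \ldots, i_d-2}^{j_1-2, \ldots, j_d-2})$ commute, when such a descent exists, and $0$ otherwise.

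For part (2) I would first bound the dimension. Precomposition with $\pi_{(i_0, \ldots, i_d)}$ embeds $\Hom_{A_n^d}(\leftsub{A_n^d}{M}_{i_0, \ldots, i_d}, \leftsub{A_n^d}{M}_{j_0, \ldots, j_d})$ into $\Hom_{A_n^d}(\leftsub{A_n^d}{P}_{i_1-2, \ldots, i_d-2}, \leftsub{A_n^d}{M}_{j_0, \ldots, j_d})$; since $\leftsub{A_n^d}{P}_{i_1-2, \ldots, i_d-2}$ is projective and $\pi_{(j_0, \ldots, j_d)}$ is onto, every map in the latter space factors through $\pi_{(j_0, \ldots, j_d)}$, so it equals $\pi_{(j_0, \ldots, j_d)} \circ \Hom_{A_n^d}(\leftsub{A_n^d}{P}_{i_1-2, \ldots, i_d-2}, \leftsub{A_n^d}{P}_{j_1-2, \ldots, j_d-2})$. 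The Yoneda equivalence $\Hom_{A_n^{d-1}}(\leftsub{A_n^{d-1}}{M}, -)$ between $\add \leftsub{A_n^{d-1}}{M}$ and $\proj A_n^d$ identifies this last Hom-space with $\Hom_{A_n^{d-1}}(\leftsub{A_n^{d-1}}{M}_{i_1-2, \ldots, i_d-2}, \leftsub{A_n^{d-1}}{M}_{j_1-2, \ldots, j_d-2}) = k h_{i_1-2, \ldots, i_d-2}^{j_1-2, \ldots, j_d-2}$ by Proposition~\ref{prop.basichom}(2) for $A_n^{d-1}$, so the whole chain is at most one-dimensional. If it is zero there is nothing more to check, since $h_{i_0, \ldots, i_d}^{j_0, \ldots, j_d}$ lies in it. If it is one-dimensional, pick $0 \neq \varphi$ in it and write $\varphi \circ \pi_{(i_0, \ldots, i_d)} = \pi_{(j_0, \ldots, j_d)} \circ \psi$ with $\psi$ a nonzero scalar multiple of $(h_{i_1-2, \ldots, i_d-2}^{j_1-2, \ldots, j_d-2})_*$; this exhibits $(h_{i_1-2, \ldots, i_d-2}^{j_1-2, \ldots, j_d-2})_*$ as carrying $\ker \pi_{(i_0, \ldots, i_d)}$ into $\ker \pi_{(j_0, \ldots, j_d)}$, so the descent defining $h_{i_0, \ldots, i_d}^{j_0, \ldots, j_d}$ exists, and precomposing that descent with $\pi_{(i_0, \ldots, i_d)}$ returns a nonzero scalar multiple of $\varphi \circ \pi_{(i_0, \ldots, i_d)}$, whence $h_{i_0, \ldots, i_d}^{j_0, \ldots, j_d} \neq 0$ and spans.

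For part (3), assume $h_{i_0, \ldots, i_d}^{j_0, \ldots, j_d} \neq 0 \neq h_{j_0, \ldots, j_d}^{\ell_0, \ldots, \ell_d}$. A zero left column of a defining square always descends (to $0$), so both maps being nonzero forces $(h_{i_1-2, \ldots, i_d-2}^{j_1-2, \ldots, j_d-2})_* \neq 0$ and $(h_{j_1-2, \ldots, j_d-2}^{\ell_1-2, \ldots, \ell_d-2})_* \neq 0$, and since $\Hom_{A_n^{d-1}}(\leftsub{A_n^{d-1}}{M}, -)$ sends $0$ to $0$ we obtain $h_{i_1-2, \ldots, i_d-2}^{j_1-2, \ldots, j_d-2} \neq 0 \neq h_{j_1-2, \ldots, j_d-2}^{\ell_1-2, \ldots, \ell_d-2}$. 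Proposition~\ref{prop.basichom}(3) for $A_n^{d-1}$ then gives $h_{i_1-2, \ldots, i_d-2}^{j_1-2, \ldots, j_d-2} h_{j_1-2, \ldots, j_d-2}^{\ell_1-2, \ldots, \ell_d-2} = h_{i_1-2, \ldots, i_d-2}^{\ell_1-2, \ldots, \ell_d-2}$, and applying the functor transports this to the corresponding identity among the maps $(\,\cdot\,)_*$ of projective $A_n^d$-modules. Glueing the two commuting squares defining $h_{i_0, \ldots, i_d}^{j_0, \ldots, j_d}$ and $h_{j_0, \ldots, j_d}^{\ell_0, \ldots, \ell_d}$ vertically, the outer rectangle shows that $h_{i_0, \ldots, i_d}^{j_0, \ldots, j_d} h_{j_0, \ldots, j_d}^{\ell_0, \ldots, \ell_d}$ is a descent of $(h_{i_1-2, \ldots, i_d-2}^{\ell_1-2, \ldots, \ell_d-2})_*$ along $\pi_{(i_0, \ldots, i_d)}$ and $\pi_{(\ell_0, \ldots, \ell_d)}$; by uniqueness of such descents (the top map is onto) it must coincide with $h_{i_0, \ldots, i_d}^{\ell_0, \ldots, \ell_d}$, which is the claimed relation.

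I do not expect any real obstacle beyond bookkeeping: once the projective-cover description of $\leftsub{A_n^d}{M}_{i_0, \ldots, i_d}$ is granted (from the preceding observation) and Proposition~\ref{prop.basichom} for $A_n^{d-1}$ is in hand, both parts reduce to diagram chases through the defining squares. The one point that needs genuine care is the zero/nonzero dichotomy in part (2), namely checking that the descent $h_{i_0, \ldots, i_d}^{j_0, \ldots, j_d}$ is nonzero exactly when $\Hom_{A_n^d}(\leftsub{A_n^d}{M}_{i_0, \ldots, i_d}, \leftsub{A_n^d}{M}_{j_0, \ldots, j_d}) \neq 0$, which comes down to surjectivity of $\pi_{(i_0, \ldots, i_d)}$ together with faithfulness of $\Hom_{A_n^{d-1}}(\leftsub{A_n^{d-1}}{M}, -)$ on morphisms (immediate, as $A_n^{d-1}$ is a direct summand of $\leftsub{A_n^{d-1}}{M}$).
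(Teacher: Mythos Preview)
Your proposal is correct and follows essentially the same approach as the paper: both arguments use that any morphism between the $M$-modules lifts to a morphism between their projective covers, which by Yoneda and Proposition~\ref{prop.basichom}(2) for $A_n^{d-1}$ is unique up to scalar, yielding the one-dimensionality; and both deduce the composition formula by stacking the defining squares and invoking Proposition~\ref{prop.basichom}(3) for $A_n^{d-1}$. Your version is simply more explicit about the zero/nonzero dichotomy and the diagram chase, which the paper leaves as a one-line remark.
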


We now complete the proof of Proposition~\ref{prop.basichom} for $A_n^d$ by verifying Proposition~\ref{prop.basichom}(1). We have to consider the following four cases:

{\sc Case $i_0 = 1$ and $j_0 = 1$:} In this case the claim follows immediately from the Yoneda Lemma.

{\sc Case $i_0 > 1$ and $j_0 = 1$:} In this case we have to show that $h_{i_0, \ldots, i_d}^{j_0, \ldots, j_d} = 0$. Looking at projective resolutions we obtain the following diagram:
\[ \begin{tikzpicture}[xscale=4,yscale=2]
 \node (A0) at (-1.5,1) {$\leftsub{A_n^d}{P}_{i_0-1, i_2-2, \ldots, i_d-2}$};
 \node (A) at (0,1) {$\leftsub{A_n^d}{P}_{i_1-2, \ldots, i_d-2}$};
 \node (B) at (1,1) {$\leftsub{A_n^d}{M}_{i_0, \ldots, i_d}$};
 \node (C) at (0,0) {$\leftsub{A_n^d}{P}_{j_1-2, \ldots, j_d-2}$};
 \node (D) at (1,0) {$\leftsub{A_n^d}{M}_{j_0, \ldots, j_d}$};
 \draw [->] (A0) -- node [above] {$(h_{i_0-1, i_2-2, \ldots, i_d-2}^{i_1-2, \ldots, i_d-2})_*$} (A);
 \draw [->>] (A) -- (B);
 \draw [double distance=1.5pt] (C) -- (D);
 \draw [->] (A) -- node [left] {$(h_{i_1-2, \ldots, i_d-2}^{j_1-2, \ldots, j_d-2})_*$} (C);
 \draw [->, dashed] (B) -- node [right] {$\exists$?} (D);
\end{tikzpicture} \]
Clearly, if $h_{i_1-2, \ldots, i_d-2}^{j_1-2, \ldots, j_d-2} = 0$ then the dashed map $h_{i_0, \ldots, i_d}^{j_0, \ldots, j_d}$ will also be $0$. Hence we may assume $h_{i_1-2, \ldots, i_d-2}^{j_1-2, \ldots, j_d-2} \neq 0$. By Proposition~\ref{prop.basichom} for $A_n^{d-1}$ that means
\[ i_1-1 < j_1 < i_2-1 < j_2 < \cdots < i_d-1 < j_d. \]
Now the dashed map exists if and only if the composition $h_{i_0-1, i_2-2, \ldots, i_d-2}^{i_1-2, \ldots, i_d-2} h_{i_1-2, \ldots, i_d-2}^{j_1-2, \ldots, j_d-2} = h_{i_0-1, i_2-2, \ldots, i_d-2}^{j_1-2, \ldots, j_d-2}$ vanishes. By Proposition~\ref{prop.basichom} for $A_n^{d-1}$ this is equivalent to
\[ \text{not} (i_0 < j_1 < i_2-1 < j_2 < \cdots < i_d-1 < j_d). \]
The two conditions above clearly contradict each other (since $i_0 < i_1-1$), so the dashed map in the above diagram does not exist. Hence $h_{i_0, \ldots, i_d}^{j_0, \ldots, j_d} = 0$ by definition, as claimed.

{\sc Case $i_0 = 1$ and $j_0 > 1$:} Looking at the defining projective resolutions, one sees that
\begin{align*}
h_{i_0, \ldots, i_d}^{j_0, \ldots, j_d} \neq 0 \iff & h_{i_1-2, \ldots, i_d-2}^{j_1-2, \ldots, j_d-2} \neq 0 \\ & \quad \text{and } h_{i_1-2, \ldots, i_d-2}^{j_0-1, j_2-2, \ldots, j_d-2} = 0 \\
\iff & (i_1-1 < j_1 < i_2-1 < j_2 < \cdots < i_d-1 < j_d) \\ & \quad \text{and not} (i_1-1 < j_0+1 < i_2-1 < j_2 < \cdots < i_d-1 < j_d) \\
\iff & j_0 < i_1-1 < j_1 < i_2-1 < \cdots < i_d-1 < j_d
\end{align*}
Here the second equivalence holds by Proposition~\ref{prop.basichom} for $A_n^{d-1}$. Hence also in this case Proposition~\ref{prop.basichom}(1) is proven.

{\sc Case $i_0 > 1$ and $j_0 > 1$:} As before we look at the projective resolutions.
\[ \begin{tikzpicture}[xscale=4,yscale=2]
 \node (A0) at (-1.5,1) {$\leftsub{A_n^d}{P}_{i_0-1, i_2-2, \ldots, i_d-2}$};
 \node (A) at (0,1) {$\leftsub{A_n^d}{P}_{i_1-2, \ldots, i_d-2}$};
 \node (B) at (1,1) {$\leftsub{A_n^d}{M}_{i_0, \ldots, i_d}$};
 \node (C0) at (-1.5,0) {$\leftsub{A_n^d}{P}_{j_0-1, j_2-2, \ldots, j_d-2}$};
 \node (C) at (0,0) {$\leftsub{A_n^d}{P}_{j_1-2, \ldots, j_d-2}$};
 \node (D) at (1,0) {$\leftsub{A_n^d}{M}_{j_0, \ldots, j_d}$};
 \draw [->] (A0) -- node [above] {$(h_{i_0-1, i_2-2, \ldots, i_d-2}^{i_1-2, \ldots, i_d-2})_*$} (A);
 \draw [->>] (A) -- (B);
 \draw [->] (C0) -- node [above] {$(h_{j_0-1, j_2-2, \ldots, j_d-2}^{j_1-2, \ldots, j_d-2})_*$} (C);
 \draw [->>] (C) -- (D);
 \draw [->, dashed] (A0) -- (C0);
 \draw [->] (A) -- node [left,pos=.3] {$(h_{i_1-2, \ldots, i_d-2}^{j_1-2, \ldots, j_d-2})_*$} (C);
 \draw [->, dashed] (B) -- node [right] {$\exists$?} (D);
\end{tikzpicture} \]
First consider the case $h_{i_1-2, \ldots, i_d-2}^{j_1-2, \ldots, j_d-2} = 0$. Then we see from the diagram above that also $h_{i_0, \ldots, i_d}^{j_0, \ldots, j_d} = 0$.

Now we assume $h_{i_1-2, \ldots, i_d-2}^{j_1-2, \ldots, j_d-2} \neq 0$. By the discussion in the case $i_0 > 1$, $j_0 = 1$ above we know that this implies $h_{i_0-1, i_2-2, \ldots, i_d-2}^{i_1-2, \ldots, i_d-2} h_{i_1-2, \ldots, i_d-2}^{j_1-2, \ldots, j_d-2} \neq 0$. Thus the above diagram can only be completed if
\begin{align*}
0 \neq & \Hom_{A_n^d}(\leftsub{A_n^d}{P}_{i_0-1, i_2-2, \ldots, i_d-2}, \leftsub{A_n^d}{P}_{j_0-1, j_2-2, \ldots, j_d-2}) \\
= & \Hom_{A_n^{d-1}}(\leftsub{A_n^{d-1}}{M}_{i_0-1, i_2-2, \ldots, i_d-2}, \leftsub{A_n^{d-1}}{M}_{j_0-1, j_2-2, \ldots, j_d-2}),
\end{align*}
that is if $h_{i_0-1, i_2-2, \ldots, i_d-2}^{j_0-1, j_2-2, \ldots, j_d-2} \neq 0$. Conversely, if $h_{i_0-1, i_2-2, \ldots, i_d-2}^{j_0-1, j_2-2, \ldots, j_d-2} \neq 0$ then it follows from Proposition~\ref{prop.basichom} for $A_n^{d-1}$ that $(h_{i_0-1, i_2-2, \ldots, i_d-2}^{j_0-1, j_2-2, \ldots, j_d-2})_*$ makes the left square of the above diagram commutative. Finally note that, if the left square above is commutative, it induces a non-zero map on the cokernels if and only if $h_{i_1-2, \ldots, i_d-2}^{j_1-2, \ldots, j_d-2}$ does not factor through $h_{j_0-1, j_2-2, \ldots, j_d-2}^{j_1-2, \ldots, j_d-2}$.

Summing up we have
\begin{align*}
h_{i_0, \ldots, i_d}^{j_0, \ldots, j_d} \neq 0 \iff & h_{i_1-2, \ldots, i_d-2}^{j_1-2, \ldots, j_d-2} \neq 0 \\ & \quad \text{and } h_{i_0-1, i_2-2, \ldots, i_d-2}^{j_0-1, j_2-2, \ldots, j_d-2} \neq 0 \\ & \quad \text{and } h_{i_1-2, \ldots, i_d-2}^{j_0-1, j_2-2, \ldots, j_d-2} = 0 \\
\iff & (i_1-1 < j_1 < i_2-1 < j_2 < \cdots < i_d-1 < j_d) \\ & \quad \text{and } (i_0 < j_0+1 < i_2-1 < j_2 < \cdots < i_d-1 < j_d) \\ & \quad \text{and not} (i_1-1 < j_0+1 < i_2-1 < j_2 < \cdots < i_d-1 < j_d) \\
\iff & (i_0-1 < j_0 < i_1-1 < j_1 < \cdots < i_d-1 < j_d)
\end{align*}
That completes the proof of Proposition~\ref{prop.basichom}, and hence also of Theorem~\ref{theo.indexing.props}(3) for $A_n^d$.

We now determine projective resolutions of our modules $M_{i_0, \ldots, i_d}$.

\begin{proposition} \label{prop.proj_resol}
For $i_0 \neq 1$ the projective resolution of $\leftsub{A_n^d}{M}_{i_0, \ldots, i_d}$ is
\begin{align*}
& \leftsub{A_n^d}{M}_{i_0, \ldots, i_d} \arrow[30]{<<-} \underbrace{\leftsub{A_n^d}{M}_{1, i_1, \ldots, i_d}}_{= \leftsub{A_n^d}{P}_{i_1-2, \ldots i_d-2}} \arrow[30]{<-} \underbrace{\leftsub{A_n^d}{M}_{1, i_0+1, i_2, \ldots i_d}}_{\leftsub{A_n^d}{P}_{i_0-1, i_2-2, \ldots i_d-2}} \arrow[30]{<-} \underbrace{\leftsub{A_n^d}{M}_{1, i_0+1, i_1+1,i_3, \ldots, i_d}}_{\leftsub{A_n^d}{P}_{i_0-1, i_1-1,i_3-2, \ldots, i_d-2}} \arrow[30]{<-} \cdots \\ 
& \qquad \qquad \qquad \cdots \arrow[30]{<-} \underbrace{\leftsub{A_n^d}{M}_{1, i_0+1, \ldots, i_{d-2}+1, i_d}}_{\leftsub{A_n^d}{P}_{i_0-1, \ldots, i_{d-2}-1, i_d-2}} \arrow[30]{<-<} \underbrace{\leftsub{A_n^d}{M}_{1, i_0+1, \ldots, i_{d-1}+1}}_{\leftsub{A_n^d}{P}_{i_0-1, \ldots, i_{d-1}-1}}
\end{align*}
\end{proposition}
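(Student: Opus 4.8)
The plan is to prove Proposition~\ref{prop.proj_resol} as part of the same induction on $d$ that establishes Theorems~\ref{theo.indexing} and~\ref{theo.indexing.props} and Propositions~\ref{prop.basichom} and~\ref{prop.tau}; thus all of these, as well as Proposition~\ref{prop.proj_resol} itself, may be assumed for $A_n^{d-1}$. The base case $d=1$ is the familiar projective resolution $0\to \leftsub{A_n^1}{P}_{i_0-1}\to \leftsub{A_n^1}{P}_{i_1-2}\to \leftsub{A_n^1}{M}_{i_0,i_1}\to 0$ of an indecomposable module over the hereditary algebra $A_n^1$, read off from its composition factors (Theorem~\ref{theo.indexing}) together with Theorem~\ref{theo.indexing.props}(1).

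For the inductive step, write $F=\Hom_{A_n^{d-1}}(\leftsub{A_n^{d-1}}{M},-)$. By construction $\leftsub{A_n^d}{M}_{i_0,\dots,i_d}=\cok F(h)$ for the canonical map $h=h_{\,i_0-1,\,i_2-2,\dots,i_d-2}^{\,i_1-2,\,i_2-2,\dots,i_d-2}$, and $\leftsub{A_n^d}{M}_{i_0,\dots,i_d}$ has simple top, so $\leftsub{A_n^d}{P}_{i_1-2,\dots,i_d-2}$ is its projective cover. Since $F$ identifies $\add\leftsub{A_n^{d-1}}{M}$ with the projectives of $A_n^d$ and all $\Hom$-spaces between indecomposable projectives are at most one-dimensional, the complex in the statement is, up to rescaling, $F$ applied to the complex
\[ N_\bullet\colon\qquad 0\to[30] N_d\to[30] N_{d-1}\to[30]\cdots\to[30] N_1\xrightarrow{\;h\;}N_0 \]
of indecomposable summands of $\leftsub{A_n^{d-1}}{M}$, where $N_0=\leftsub{A_n^{d-1}}{M}_{i_1-2,i_2-2,\dots,i_d-2}$, $N_r=\leftsub{A_n^{d-1}}{M}_{i_0-1,\,i_1-1,\dots,i_{r-1}-1,\,i_{r+1}-2,\dots,i_d-2}$ for $1\le r\le d$, and the arrows are the canonical maps of Proposition~\ref{prop.basichom}. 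Two consecutive $N$'s differ in exactly one coordinate, so each arrow is non-zero, and a short computation with the $\Hom$-criterion of Proposition~\ref{prop.basichom}(1) shows the composite $N_{r+1}\to N_{r-1}$ vanishes; hence $N_\bullet$ is a complex.

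The heart of the argument is to show that $0\to N_d\to\cdots\to N_1\to N_0\to 0$ is exact, with $N_d\hookrightarrow N_{d-1}$ and $N_1\twoheadrightarrow N_0$. The case $i_0=2$ is the clean one: there $N_1,\dots,N_d$ are exactly the indecomposable projective $A_n^{d-1}$-modules appearing in the projective resolution of $N_0$ furnished by Proposition~\ref{prop.proj_resol} for $A_n^{d-1}$ (a direct comparison of index strings via Theorem~\ref{theo.indexing.props}(1) for $A_n^{d-1}$), so $N_\bullet$ is that resolution and exactness is known. For $i_0>2$ one reduces to $i_0=2$: by Proposition~\ref{prop.tau} the whole configuration $\{N_r\}$ is obtained from the configuration for $(i_0-1,i_1-1,\dots,i_d-1)$ by applying $\tau_{d-1}$ coordinatewise, and $\tau_{d-1}$ carries exact sequences of summands of $\leftsub{A_n^{d-1}}{M}$ to exact sequences; iterating down to $i_0=2$ finishes this point. (Alternatively, exactness can be checked directly from the composition-factor ``boxes'' of the $N_r$.)

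Finally one transfers exactness through the left-exact functor $F$. Writing $Z_r=\operatorname{im}(N_{r+1}\to N_r)$ for the syzygies of $N_\bullet$, so that $Z_1=\ker h$, $Z_{d-1}\cong N_d$, and there are short exact sequences $0\to Z_r\to N_r\to Z_{r-1}\to 0$, a chase of the long exact $\Ext_{A_n^{d-1}}$-sequences shows that $F(N_\bullet)$, augmented by $F(N_0)\twoheadrightarrow\cok F(h)=\leftsub{A_n^d}{M}_{i_0,\dots,i_d}$, is a resolution provided $\Ext^1_{A_n^{d-1}}(\leftsub{A_n^{d-1}}{M},Z_s)=0$ for $2\le s\le d-1$. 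The extreme case $s=d-1$ is immediate since $Z_{d-1}\cong N_d\in\add\leftsub{A_n^{d-1}}{M}$ and $\leftsub{A_n^{d-1}}{M}$ is $(d-1)$-cluster tilting, and for $d\le 3$ these are the only cases; the main obstacle is the vanishing for the genuinely intermediate syzygies $Z_2,\dots,Z_{d-2}$ (only relevant when $d\ge 4$), whose composition-factor boxes have too big a gap for them to lie in $\add\leftsub{A_n^{d-1}}{M}$ — I expect to get it by rewriting $\Ext^1_{A_n^{d-1}}(\leftsub{A_n^{d-1}}{M},Z_s)$ via the Auslander–Reiten formula and killing the resulting stable $\Hom$-space using the description of $\tau_{d-1}$ on summands of $\leftsub{A_n^{d-1}}{M}$ (Proposition~\ref{prop.tau}) and a composition-factor count. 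Minimality of the resolution is then automatic, since its terms are pairwise non-isomorphic indecomposable projectives and hence all differentials are radical maps.
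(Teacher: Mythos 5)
Your strategy is genuinely different from the paper's: the paper simply observes that, since Proposition~\ref{prop.basichom} gives a complete description of all morphisms between the modules $\leftsub{A_n^d}{M}_{j_0,\dots,j_d}$ together with their compositions, one can check exactness of the displayed complex directly in $\mod A_n^d$ by applying $\Hom_{A_n^d}(P_{\ell_0,\dots,\ell_{d-1}},-)$ for each indecomposable projective; the resulting complexes of $k$-vector spaces are then immediately seen to be exact. You instead descend to $A_n^{d-1}$, build the complex $N_\bullet$ of indecomposable summands of $\leftsub{A_n^{d-1}}{M}$, establish exactness of $N_\bullet$, and try to push exactness through $F=\Hom_{A_n^{d-1}}(\leftsub{A_n^{d-1}}{M},-)$.

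The issue is the step you yourself flag as ``the main obstacle'': the vanishing of $\Ext^1_{A_n^{d-1}}(\leftsub{A_n^{d-1}}{M},Z_s)$ for the intermediate syzygies. As written this is not a proof --- you say ``I expect to get it'' via the Auslander--Reiten formula and a composition-factor count, but you do not carry this out, and that argument as sketched would require nontrivial work. It is worth noting that the vanishing is in fact true and can be obtained much more cleanly than you anticipate: chaining the short exact sequences $0\to Z_{t+1}\to N_t\to Z_t\to 0$ and using $\Ext^i_{A_n^{d-1}}(\leftsub{A_n^{d-1}}{M},N_t)=0$ for $1\le i\le d-2$ (from $(d-1)$-cluster tilting) yields $\Ext^1(\leftsub{A_n^{d-1}}{M},Z_t)\cong\Ext^{d-t+1}(\leftsub{A_n^{d-1}}{M},N_d)$, which vanishes because $N_d\in\add\leftsub{A_n^{d-1}}{M}$ and $d-t+1$ lies in the rigidity range for the relevant $t$. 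So your route can be completed, but you have left the key step as an expectation rather than a proof. A smaller gap: your claim that ``$\tau_{d-1}$ carries exact sequences of summands of $\leftsub{A_n^{d-1}}{M}$ to exact sequences'' is not something that can be quoted --- $\tau_{d-1}$ is only defined on stable categories --- though your parenthetical fallback (check exactness of $N_\bullet$ directly from composition factors) is sound and is essentially the paper's own one-line argument applied one level down.

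In short: your approach is salvageable and instructive, but as written it has a real gap at the $\Ext^1$-vanishing, and even once filled it is considerably more circuitous than the paper's direct check over $A_n^d$, which never leaves the level $d$.
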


\begin{proof}
Since, by Proposition~\ref{prop.basichom} we know precisely what morphisms there are between the modules of the form $\leftsub{A_n^d}{M}_{j_0, \ldots, j_d}$ it is straightforward to calculate the resolution of $\leftsub{A_n^d}{M}_{i_0, \ldots, i_d}$ by modules of the form $\leftsub{A_n^d}{M}_{1, j_1, \ldots, j_d}$. These are the projective $A_n^d$-modules by construction.
\end{proof}

\begin{remark} \label{rem.inj_resol}
We also have the following opposite version of Proposition~\ref{prop.proj_resol}:

For $i_d \neq n+2d$ the coresolution of $\leftsub{A_n^d}{M}_{i_0, \ldots, i_d}$ by modules of the form $\leftsub{A_n^d}{M}_{j_0, \ldots j_{d-1}, n+2d}$ is
\begin{align*}
& \leftsub{A_n^d}{M}_{i_0, \ldots, i_d} \to[30] \leftsub{A_n^d}{M}_{i_0, \ldots i_{d-1}, n+2d} \to[30] \leftsub{A_n^d}{M}_{i_0, \ldots, i_{d-2}, i_d-1, n+2d} \to[30] \leftsub{A_n^d}{M}_{i_0, \ldots, i_{d-3}, i_{d-1}-1, i_d-1, n+2d} \to[30] \cdots \\
& \qquad \qquad \qquad \cdots \to[30] \leftsub{A_n^d}{M}_{i_0, i_2-1, \ldots, i_d-1, n+2d} \to[30] \leftsub{A_n^d}{M}_{i_1-1, \ldots, i_d-1, n+2d} \to 0.
\end{align*}
Once we have shown Theorem~\ref{theo.indexing.props}(2) it follows that the above is an injective resolution, and in particular that it is exact. (We remark that it is actually easy to verify directly that this sequence is exact.)
\end{remark}

We are now ready to determine the injective $A_n^d$-modules.

\begin{proof}[Proof of Theorem~\ref{theo.indexing.props}(2)]
By definition we have $\leftsub{A_n^d}{I}_{i_0, \ldots, i_{d-1}} = \nu \leftsub{A_n^d}{P}_{i_0, \ldots, i_{d-1}}$, where $\nu$ denotes the Nakayama functor, that is the functor taking every projective module to the corresponding injective module. Hence
\begin{align*}
\leftsub{A_n^d}{I}_{i_0, \ldots, i_{d-1}} & = \nu \Hom_{A_n^{d-1}}(\leftsub{A_n^{d-1}}{M}, \leftsub{A_n^{d-1}}{M}_{i_0, \ldots, i_{d-1}}) \\
& = D \Hom_{A_n^{d-1}}(\leftsub{A_n^{d-1}}{M}_{i_0, \ldots, i_{d-1}}, \leftsub{A_n^{d-1}}{M}).
\end{align*}
If $i_0 = 1$ we have
\begin{align*}
\leftsub{A_n^d}{I}_{i_0, \ldots, i_{d-1}} & = D \Hom_{A_n^{d-1}}(\leftsub{A_n^{d-1}}{P}_{i_1-2, \ldots, i_{d-1}-2}, \leftsub{A_n^{d-1}}{M}) \\
& = \Hom_{A_n^{d-1}}(\leftsub{A_n^{d-1}}{M}, \leftsub{A_n^{d-1}}{I}_{i_1-2, \ldots, i_{d-1}-2}).
\end{align*}
By Theorem~\ref{theo.indexing.props}(2) for $A_n^{d-1}$ we know that $\leftsub{A_n^{d-1}}{I}_{i_1-2, \ldots, i_{d-1}-2} =  \leftsub{A_n^{d-1}}{M}_{i_1-2, \ldots, i_{d-1}-2, n+2(d-1)}$, so we obtain $\leftsub{A_n^d}{I}_{1, i_1, \ldots, i_{d-1}} = \leftsub{A_n^d}{M}_{1, i_1, \ldots, i_{d-1}, n+2d}$ as claimed.

If $i_0 \neq 1$ we have
\begin{align*}
\leftsub{A_n^d}{I}_{i_0, \ldots, i_{d-1}} & = D \Hom_{A_n^{d-1}}(\leftsub{A_n^{d-1}}{M}_{i_0, \ldots, i_{d-1}}, \leftsub{A_n^{d-1}}{M}) \\
& = \Ext_{A_n^{d-1}}^{d-1}(\leftsub{A_n^{d-1}}{M}, \tau_{d-1} ( \leftsub{A_n^{d-1}}{M}_{i_0, \ldots, i_{d-1}})) && \text{(by Observation~\ref{obs.ARformula})} \\
& = \Ext_{A_n^{d-1}}^{d-1}(\leftsub{A_n^{d-1}}{M}, \leftsub{A_n^{d-1}}{M}_{i_0-1, \ldots, i_{d-1}-1}) && \text{(by Proposition~\ref{prop.tau} for $A_n^{d-1}$)}
\end{align*}
To compute this $\Ext$-space we make use of the following: By Remark~\ref{rem.inj_resol} we know that the coresolution of $\leftsub{A_n^{d-1}}{M}_{i_0-1, \ldots, i_{d-1}-1}$ by modules of the form $\leftsub{A_n^{d-1}}{M}_{j_0, \ldots, j_{d-2}, n+2(d-1)}$ is
\begin{align*}
& \leftsub{A_n^{d-1}}{M}_{i_0-1, \ldots, i_{d-1}-1} \to[30] \leftsub{A_n^{d-1}}{M}_{i_0-1, \ldots, i_{d-2}-1, n+2(d-1)} \to[30] \cdots \\
& \qquad \qquad \to[30] \leftsub{A_n^{d-1}}{M}_{i_0-1, i_2-2, \ldots, i_{d-1}-2, n+2(d-1)} \to[30] \leftsub{A_n^{d-1}}{M}_{i_1-2, \ldots, i_{d-1}-2, n+2(d-1)} \to[30] 0,
\end{align*}
and since we assume Theorem~\ref{theo.indexing.props}(2) to hold for $A_n^{d-1}$ this is an injective coresolution. Hence
\begin{align*}
& \Ext_{A_n^{d-1}}^{d-1}(\leftsub{A_n^{d-1}}{M}, \leftsub{A_n^{d-1}}{M}_{i_0-1, \ldots, i_{d-1}-1}) \\
& \quad = \Cok \left[ \Hom_{A_n^{d-1}}(\leftsub{A_n^{d-1}}{M}, \leftsub{A_n^{d-1}}{M}_{i_0-1, i_2-2, \ldots, i_{d-1}-2, n+2d-2}) \to \Hom_{A_n^{d-1}}(\leftsub{A_n^{d-1}}{M}, \leftsub{A_n^{d-1}}{M}_{i_1-2, \ldots, i_{d-1}-2, n+2d-2}) \right].
\end{align*}
Now note that $\leftsub{A_n^d}{M}_{i_0, \ldots, i_{d-1}, n+2d}$ is defined to be this cokernel. Hence we have shown
\[ \leftsub{A_n^d}{I}_{i_0, \ldots, i_{d-1}} = \leftsub{A_n^d}{M}_{i_0, \ldots, i_{d-1}, n+2d}. \qedhere \]
\end{proof}

Knowing the injective $A_n^d$-modules we can now calculate the $d$-Auslander-Reiten translation, that is, prove Proposition~\ref{prop.tau}.

\begin{proof}[Proof of Proposition~\ref{prop.tau}]
We prove the second equality; the proof of the first one is similar. If $i_d = n+2d$ then $\leftsub{A_n^d}{M}_{i_0, \ldots, i_d}$ is injective, so $\tau_d^{-} ( \leftsub{A_n^d}{M}_{i_0, \ldots, i_d} ) = 0$ as claimed. Therefore assume $i_d \neq n+2d$. The rule for calculating $\tau_d^{-} ( \leftsub{A_n^d}{M}_{i_0, \ldots, i_d} )$ is the following: Take the $d$-th map in the injective coresolution of $\leftsub{A_n^d}{M}_{i_0, \ldots, i_d}$ (not counting the inclusion of $\leftsub{A_n^d}{M}_{i_0, \ldots, i_d}$ into its injective envelope). By Remark~\ref{rem.inj_resol} this map is $h_{i_0, i_2-1, \ldots, i_d-1, n+2d}^{i_1-1, \ldots, i_d-1, n+2d}$. Replace it by the corresponding map between projective modules, that is by $h_{1, i_0+2, i_2+1, \ldots, i_d+1}^{1, i_1+1, \ldots, i_d+1}$. Now
\[ \tau_d^{-} ( \leftsub{A_n^d}{M}_{i_0, \ldots, i_d} ) = \Cok h_{1, i_0+2, i_2+1, \ldots, i_d+1}^{1, i_1+1, \ldots, i_d+1} = \leftsub{A_n^d}{M}_{i_0+1, \ldots, i_d+1}. \qedhere \]
\end{proof}

Now we are ready to complete the proofs of Theorems~\ref{theo.indexing} and \ref{theo.indexing.props}

\begin{proof}[Proof of Theorem~\ref{theo.indexing}]
We have seen in Lemma~\ref{lemma.composition} that our modules $\leftsub{A_n^d}{M}_{i_0, \ldots, i_d}$ have the desired composition factors. It remains to show that they are precisely the direct summands of $\leftsub{A_n^d}{M}$.

By construction the modules $\leftsub{A_n^d}{M}_{1, i_1, \ldots, i_d}$ are precisely the projective $A_n^d$-modules. Hence, by Theorem~\ref{theo.ncto_from_proj}, we only have to see that the modules of the form $\leftsub{A_n^d}{M}_{i_0, \ldots, i_d}$ are precisely the modules obtained by applying $\tau_d^-$-powers to modules of the form $\leftsub{A_n^d}{M}_{1, i_1, \ldots, i_d}$. This follows immediately from Proposition~\ref{prop.tau}.
\end{proof}

\begin{proof}[Proof of Theorem~\ref{theo.indexing.props}]
(1) holds by construction, and we have proven (2) above. (3) is a consequence of Proposition~\ref{prop.basichom}. It remains to deduce (4). This follows immediately from (3), Observation~\ref{obs.ARformula}, and Proposition~\ref{prop.tau}.
\end{proof}

Finally we show that this also completes the proof of Corollary~\ref{sumcor}.

\begin{proof}[Proof of Corollary~\ref{sumcor}]
The correspondence between (\ref{sumone}) and (\ref{sumtwo}) is given
by Theorems \ref{th1} and \ref{th2}.  The correspondence between 
(\ref{sumone}) and (\ref{sumthree}) is immediate from Theorem
\ref{theo.indexing.props}.
\end{proof}

\subsection{Exchanging tilting modules -- proof of 
Theorem~\texorpdfstring{\ref{theo.tilt-exchange}}{3.8}}

Throughout this subsection we work only over the algebra $A_n^d$. Hence we can omit the left indices $A_n^d$ without risking confusion. We start by proving the second part of Theorem~\ref{theo.tilt-exchange}.

\begin{proof}[Proof of Theorem~\ref{theo.tilt-exchange}(2)]
We have to exclude the cases that either $\Ext_{A_n^d}^d(M_{i_0, \ldots, i_d}, M_{j_0, \ldots, j_d})$ and $\Ext_{A_n^d}^d(M_{j_0, \ldots, j_d}, M_{i_0, \ldots, i_d})$ are both zero, or both non-zero.

It follows from Theorem~\ref{theo.indexing.props}(4) that they cannot both be non-zero. Hence assume they are both zero. Then $T \oplus M_{i_0, \ldots, i_d} \oplus M_{j_0, \ldots, j_d}$  has no self-extensions. This contradicts the fact that $T \oplus M_{i_0, \ldots, i_d}$ is a tilting module with $M_{j_0, \ldots, j_d} \not\in \add (T \oplus M_{i_0, \ldots, i_d})$.
\end{proof}

Since the two possibilities of Theorem~\ref{theo.tilt-exchange}(2) are dual, we may assume we are in the first mentioned situation, that is we have $\Ext_{A_n^d}^d(M_{j_0, \ldots, j_d}, M_{i_0, \ldots, i_d}) \neq 0$.

We now point out that Theorem~\ref{theo.tilt-exchange}(4) implies Theorem~\ref{theo.tilt-exchange}(1 and 3):

\begin{proof}[Proof of Theorem~\ref{theo.tilt-exchange}(1), assuming Theorem~\ref{theo.tilt-exchange}(4)]
By Theorem~\ref{theo.tilt-exchange}(4b) the monomorphism $M_{i_0, \ldots, i_d} \mono E_d$ is a left $T$-approximation. We denote its cokernel by $C$. By \cite{RieScho}, $T \oplus C$ is a tilting $A_n^d$-module. We again have a monomorphic left $T$-approximation $C \mono E_{d-1}$. Iterating this argument we see that $T \oplus M_{j_0, \ldots, j_d}$ is a tilting $A_n^d$-module.
\end{proof}

\begin{proof}[Proof of Theorem~\ref{theo.tilt-exchange}(3), assuming Theorem~\ref{theo.tilt-exchange}(4)]
This follows immediately from the fact that the left set in (3) is, by Theorem~\ref{theo.tilt-exchange}(4a), contained in \[ \add (M_{i_0, \ldots, i_d} \oplus E_d \oplus \cdots \oplus E_1 \oplus M_{j_0, \ldots, j_d}) \subseteq \add (T \oplus M_{i_0, \ldots, i_d} \oplus M_{j_0, \ldots, j_d}), \]
where the above inclusion follows from \ref{theo.tilt-exchange}(4b).
\end{proof}

Thus it only remains to prove Theorem~\ref{theo.tilt-exchange}(4). We start by constructing an exact sequence with the desired terms.

\begin{proposition} \label{prop.E_exact}
Let $(i_0, \ldots, i_d), (j_0, \ldots, j_d) \in \Index{n+2d}$ with $(i_0, \ldots, i_d)\wr(j_0, \ldots, j_d)$. Then there is an exact sequence
\[ \mathbb{E} \colon \underbrace{E_{d+1}}_{\mathclap{= M_{i_0, \ldots, i_d}}} \mono[30] E_d \to[30] \cdots \to[30] E_1 \epi[30] \underbrace{E_0}_{\mathclap{= M_{j_0, \ldots, j_d}}} \]
with
\[ E_r = \bigoplus_{\substack{X \subseteq \{0, \ldots, d\} \\ |X| = r}} M_{m_X}. \]
Here $m_X$ is short for $m_X((i_0, \ldots, i_d), (j_0, \ldots, j_d))$. We set $M_{m_X} = 0$ whenever $m_X \not\in \Index{n+2d}$.
\end{proposition}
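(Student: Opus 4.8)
\textit{Plan.} I would realise $\mathbb{E}$ as the total complex of the $(d+1)$-dimensional hypercube with one vertex for each subset $X\subseteq\{0,\dots,d\}$: put $M_{m_X}$ at the vertex $X$ (so $M_{m_X}=0$ when $m_X\notin\Index{n+2d}$), and along the edge $X\supseteq X\setminus\{k\}$ put $(-1)^{|\{\ell\in X:\ell<k\}|}\,h_{m_X}^{m_{X\setminus\{k\}}}$, using the maps of Proposition~\ref{prop.basichom}. Then $E_r=\bigoplus_{|X|=r}M_{m_X}$, with $E_{d+1}=M_{i_0,\dots,i_d}$ (the full subset) and $E_0=M_{j_0,\dots,j_d}$ (the empty subset), and $\partial=\sum(\text{signed edge maps})$. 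Two points make this a complex. First, because $(i_0,\dots,i_d)\wr(j_0,\dots,j_d)$, whenever $m_X$ and $m_{X\setminus\{k\}}$ both lie in $\Index{n+2d}$ the interleaving inequalities of Proposition~\ref{prop.basichom}(1) are automatically satisfied (they reduce to the separatedness of $m_X$ and of $m_{X\setminus\{k\}}$ at the two positions flanking $k$), so the edge map is the canonical nonzero map. Second, on each $2$-face $X\supseteq X\setminus\{k\},X\setminus\{\ell\}\supseteq X\setminus\{k,\ell\}$ the two composite maps either both equal $h_{m_X}^{m_{X\setminus\{k,\ell\}}}$ (Proposition~\ref{prop.basichom}(3)) and cancel by the sign rule, or both vanish; the latter is forced, again via Proposition~\ref{prop.basichom}(1), as soon as one intermediate vertex module is $0$ (the failure of separatedness producing that zero is exactly a failure of the interleaving condition for the pair $(m_X,m_{X\setminus\{k,\ell\}})$). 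Hence $\partial^2=0$ and everything reduces to proving exactness.

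\textit{Exactness, by induction on $d$.} The base case $d=1$ is the classical ``diamond'' short exact sequence $M_{i_0,i_1}\mono M_{i_0,j_1}\oplus M_{j_0,i_1}\epi M_{j_0,j_1}$ of interval modules over $kA_n$, checked by hand. For the step, filter the hypercube by membership of the coordinate $0$. The vertices $X$ with $0\notin X$ span a subcomplex $\mathbb{B}$ (the differential only deletes elements and so never reintroduces $0$), and $\mathbb{A}:=\mathbb{E}/\mathbb{B}$ is the quotient complex on the vertices containing $0$. Deleting the now-frozen first coordinate, $\mathbb{B}$ and $\mathbb{A}$ are precisely the hypercubes attached to the interleaving pair $(i_1,\dots,i_d)\wr(j_1,\dots,j_d)$, but with $j_0$, resp.\ $i_0$, prepended to every index; and the connecting homomorphism of $0\to\mathbb{B}\to\mathbb{E}\to\mathbb{A}\to0$ is induced exactly by the coordinate-$0$ flip maps $h_{i_0,\ast}^{j_0,\ast}$. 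Therefore $\mathbb{E}$ is exact if and only if this connecting map is an isomorphism $H_\bullet(\mathbb{A})\xrightarrow{\ \sim\ }H_{\bullet-1}(\mathbb{B})$.

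\textit{The obstacle.} The complexes $\mathbb{A}$ and $\mathbb{B}$ are \emph{not} exact (already for $d=1$), so the inductive hypothesis ``$\mathbb{E}^{(d-1)}$ is exact'' does not apply verbatim; one must instead compute $H_\bullet(\mathbb{A})$ and $H_\bullet(\mathbb{B})$ and check they are matched by the flip maps. I would do this by relating the ``prepend $c$'' operation on indices to the functor $\Hom_{A_n^{d-1}}(\leftsub{A_n^{d-1}}{M},-)$ together with the cokernel presentation of the $M_{c,i_1,\dots,i_d}$ used in the construction preceding Lemma~\ref{lemma.composition}: for $c=1$ the complex $\mathbb{A}$ is the image under this functor of the (exact, by induction) $A_n^{d-1}$-hypercube, for $c\neq1$ it is a cokernel of a map between two such images, and in either case its homology is concentrated in the bottom degree and is governed by the groups $\Ext_{A_n^{d-1}}^{\,\geq1}(\leftsub{A_n^{d-1}}{M},-)$ of the terminal syzygy — which are nonzero in general, since $\leftsub{A_n^{d-1}}{M}$ is only $(d-1)$-cluster tilting, not cluster tilting in the absolute sense. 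The decisive input is the vanishing $\Ext_{A_n^{d-1}}^{d-1}(M_{i_1,\dots,i_d},M_{j_1,\dots,j_d})=0$, which holds precisely because $(i_1,\dots,i_d)\wr(j_1,\dots,j_d)$ is the interleaving in the direction \emph{opposite} to the one detected by $\Ext^{d-1}$ in Theorem~\ref{theo.indexing.props}(4); it is what forces the bottom term $E_0=M_{j_0,\dots,j_d}$ to come out right. Verifying that these ``error terms'' cancel in the total complex — equivalently, that the connecting map is an isomorphism — is the hard part; once it is done, the long exact homology sequence gives exactness of $\mathbb{E}$. (Alternatively one could avoid homological algebra and verify exactness of $\mathbb{E}$ termwise from the explicit box-shaped composition factors of Lemma~\ref{lemma.composition} and the one-dimensionality of all Hom-spaces, but the combinatorial bookkeeping there is at least as heavy.)
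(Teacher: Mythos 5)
Your complex $\mathbb{E}$ and its signed edge maps are exactly the ones the paper uses, so the setup is right; but the exactness argument is not actually carried out, and the gap you flag yourself is a real one. The filtration by the coordinate $0$ does produce a short exact sequence of complexes $0\to\mathbb{B}\to\mathbb{E}\to\mathbb{A}\to0$, but as you note $\mathbb{A}$ and $\mathbb{B}$ are not exact (not even for $d=1$), so exactness of $\mathbb{E}$ is equivalent to the connecting map $H_\bullet(\mathbb{A})\to H_{\bullet-1}(\mathbb{B})$ being an isomorphism — and establishing that is not an application of an inductive hypothesis about the smaller hypercube but a fresh computation of the full homology of $\mathbb{A}$ and $\mathbb{B}$ together with a comparison of the two. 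The sketch you give for that computation (``prepend $c$'' is a $\Hom$ image or a cokernel of a map of $\Hom$ images, homology is concentrated in the bottom degree and governed by $\Ext^{\geq1}_{A_n^{d-1}}(M,-)$, the decisive input is $\Ext^{d-1}_{A_n^{d-1}}(M_{i_1,\dots,i_d},M_{j_1,\dots,j_d})=0$) is a list of plausible-sounding ingredients, but none of it is verified, and it is not clear it would actually close: for $c\neq1$ the module $M_{c,i_1,\dots,i_d}$ is not in the image of $\Hom_{A_n^{d-1}}(M,-)$, only a cokernel of a map of such images, so $\mathbb{A}$ is not literally ``the image of the exact $A_n^{d-1}$-hypercube'' and you would have to control the homology of a two-step extension of such complexes, which you have not done.

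The ``alternative'' you dismiss in your last sentence as ``at least as heavy'' is in fact what the paper does, and it turns out to be short. Since the $E_r$ are finitely generated, $\mathbb{E}$ is exact iff $\Hom_{A_n^d}(P_{\ell_0,\dots,\ell_{d-1}},\mathbb{E})$ is exact for every indecomposable projective $P_{\ell_0,\dots,\ell_{d-1}}$. By Theorem~\ref{theo.indexing.props}(3) the space $\Hom(P_{\ell_0,\dots,\ell_{d-1}},M_{m_X})$ is $k$ or $0$ according to an interval condition on $m_X$, and one checks directly (this is where $(i_0,\dots,i_d)\wr(j_0,\dots,j_d)$ enters) that the set of $X\subseteq\{0,\dots,d\}$ with nonzero Hom is an interval $[X_0,\,X_0\cup X_1]$ in the Boolean lattice with $X_1\neq\emptyset$. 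With the sign rule you specified, the Hom complex is then $[k\to k]^{\otimes|X_1|}$ up to shift, hence exact. No induction, no spectral sequence, and no separate base case are needed. I would abandon the filtration strategy and write out this projective test instead — the only nontrivial verification is that $X_1\neq\emptyset$, and that is a two-line argument from the intertwining hypothesis.
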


\begin{proof}
We describe the exact sequence
\[ \mathbb{E} \colon E_{d+1} \mono[30] E_d \to[30] \cdots \to[30] E_1 \epi[30] E_0 \]
by specifying that the component maps are
\[ M_{m_X} \to[30] M_{m_Y} \colon \left\{ \begin{array}{ll} (-1)^{|\{x \in X \mid x < y \}|} h_{m_X}^{m_Y} &  \text{if } m_X, m_Y \in \Index{n+2d} \text{ and } X = Y \cup \{y\} \text{ for some } y \not\in Y \\ 0 & \text{otherwise.} \end{array} \right. \]

To check that the sequence $\mathbb{E}$ with the maps as defined above is exact it suffices to check that for every indecomposable projective $A_n^d$-module $P_{\ell_0, \ldots, \ell_{d-1}}$ the sequence $\Hom_{A_n^d}(P_{\ell_0, \ldots, \ell_{d-1}}, \mathbb{E})$ is exact. If $\Hom_{A_n^d}(P_{\ell_0, \ldots, \ell_{d-1}}, M_{m_X}) = 0$ for all $X \subseteq \{0, \ldots, d\}$ this is clearly true. Otherwise, writing $m_X = (m_0, \ldots, m_d)$, Theorem~\ref{theo.indexing.props}(3) implies that, if $m_X \in \Index{n+2d}$, we have
\begin{equation}\label{doubleimp} 
\Hom_{A_n^d}(P_{\ell_0, \ldots, \ell_{d-1}}, M_{m_X}) \neq 0 \iff m_0 < \ell_0+1 < m_1 < \ell_1+1 < \cdots < \ell_{d-1}+1 < m_d. \end{equation}
Since, in addition, neither side of \eqref{doubleimp} holds if $m_X$ is not
separated, \eqref{doubleimp} holds for all $X$.
Thus, if we split the set $\{0, \ldots, d\}$ into the three parts
\begin{align*} 
X_0 & = \{x \mid \ell_{x-1}+1 < i_x < \ell_x+1 \text{ and not} (\ell_{x-1}+1 < j_x < \ell_x+1) \}, \\
X_1 & = \{x \mid \ell_{x-1}+1 < i_x < \ell_x+1 \text{ and } \ell_{x-1}+1 < j_x < \ell_x+1 \} \text{, and} \\
X_2 & = \{x \mid \text{not} (\ell_{x-1}+1 < i_x < \ell_x+1) \text{ and } \ell_{x-1}+1 < j_x < \ell_x+1 \}, 
\end{align*}
(where in all cases we assume the conditions $\ell_{-1}+1 < \, ?$ and $? < \ell_d+1$ to always be true), we have that
\begin{equation} \label{eq.hom_to_make_exact} \Hom_{A_n^d}(P_{\ell_0, \ldots, \ell_{d-1}}, M_{m_X}) = \left\{ \begin{array}{ll} k & \text{if } X_0 \subseteq X \subseteq X_0 \cup X_1 \\ 0 & \text{otherwise.} \end{array} \right. \end{equation}
We now claim that $X_1 \neq \emptyset$. Assume conversely that $X_1 = \emptyset$, and hence $X_0 \cup X_2 = \{0, \ldots, d\}$. The assumption $(i_0, \ldots, i_d) \wr (j_0, \ldots, j_d)$ implies $i_0 < j_0$ and $i_d < j_d$. These inequalities imply $0 \not\in X_2$ and $d \not\in X_0$, respectively. Hence $0 \in X_0$ and $d \in X_2$. Therefore there is $x$ with $x \in X_0$ and $x+1 \in X_2$. Since $i_x < j_x$ it follows that $j_x \not< \ell_x+1$, and since $i_{x+1} < j_{x+1}$ it follows that $\ell_x+1 \not< i_{x+1}$. Hence $i_{x+1} \leq \ell_x+1 \leq j_x$, contradicting the fact that $(i_0, \ldots, i_d) \wr (j_0, \ldots, j_d)$. This proves the claim that $X_1 \neq \emptyset$.

Now it follows from \eqref{eq.hom_to_make_exact} and the signs in the definition of the component maps that $\Hom_{A_n^d}(P_{\ell_0, \ldots, \ell_{d-1}}, \mathbb{E})$ is the Koszul complex $[k \to k]^{\otimes |X_1|}$, shifted to the appropriate position. In particular it is exact.
\end{proof}

\begin{proposition} \label{prop.E_nonsplit}
Let $(i_0, \ldots, i_d), (j_0, \ldots, j_d) \in \Index{n+2d}$ with $(i_0, \ldots, i_d) \wr (j_0, \ldots, j_d)$. Then the exact sequence $\mathbb{E}$ of Proposition~\ref{prop.E_exact} represents a non-zero element (and thus a $k$-basis) of $\Ext_{A_n^d}^d(M_{j_0, \ldots, j_d}, M_{i_0, \ldots, i_d})$.
\end{proposition}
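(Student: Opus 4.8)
The plan is to reduce the statement to the non‑vanishing of the class $[\mathbb{E}]$, and then to pin that class down using the explicit projective resolution from Proposition~\ref{prop.proj_resol}; throughout we work over $A_n^d$, so all $\Hom$ and $\Ext$ are over $A_n^d$. Since $(i_0,\ldots,i_d)\wr(j_0,\ldots,j_d)$, Theorem~\ref{theo.indexing.props}(4) gives $\dim_k\Ext^d(M_{j_0,\ldots,j_d},M_{i_0,\ldots,i_d})=1$, so it suffices to prove $[\mathbb{E}]\neq 0$ (the ``$k$‑basis'' clause follows). The intertwining condition forces $1\le i_0<j_0$, hence $j_0\ge 2$, so Proposition~\ref{prop.proj_resol} gives $M_{j_0,\ldots,j_d}$ a length‑$d$ projective resolution
\[ 0\longrightarrow P_d\longrightarrow P_{d-1}\longrightarrow\cdots\longrightarrow P_0\longrightarrow M_{j_0,\ldots,j_d}\longrightarrow 0 \]
with $P_d=M_{1,j_0+1,\ldots,j_{d-1}+1}$ and $P_{d-1}=M_{1,j_0+1,\ldots,j_{d-2}+1,j_d}$.

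Applying $\Hom(-,M_{i_0,\ldots,i_d})$ and reading off the $\Hom$‑spaces from Theorem~\ref{theo.indexing.props}(3), the crucial observation is that $\Hom(P_{d-1},M_{i_0,\ldots,i_d})=0$: a nonzero map would require $j_d-1<i_d$, i.e.\ $i_d\ge j_d$, contradicting $i_d<j_d$, which is part of $(i_0,\ldots,i_d)\wr(j_0,\ldots,j_d)$. On the other hand $\Hom(P_d,M_{i_0,\ldots,i_d})$ is one‑dimensional, because the chain $0<i_0<j_0<i_1<j_1<\cdots<j_{d-1}<i_d$ demanded by Theorem~\ref{theo.indexing.props}(3) is exactly what the intertwining (together with $i_0\ge 1$) supplies. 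Hence $\Ext^d(M_{j_0,\ldots,j_d},M_{i_0,\ldots,i_d})=\Cok\big(\Hom(P_{d-1},M_{i_0,\ldots,i_d})\to\Hom(P_d,M_{i_0,\ldots,i_d})\big)=\Hom(P_d,M_{i_0,\ldots,i_d})\cong k$, with no identifications made in forming the cokernel.

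Finally I would identify $[\mathbb{E}]$ in this group. Lifting $\mathrm{id}_{M_{j_0,\ldots,j_d}}$ to a comparison chain map $(f_r)$ from $P_\bullet$ to the exact complex $\mathbb{E}$, the class $[\mathbb{E}]$ is represented by the top component $f_d\colon P_d\to E_{d+1}=M_{i_0,\ldots,i_d}$; because $\Hom(P_{d-1},M_{i_0,\ldots,i_d})=0$ there is no coboundary freedom, so in fact $[\mathbb{E}]=f_d$ and the claim reduces to $f_d\neq 0$. To verify this I would build the lift $f_0,\ldots,f_d$ one degree at a time, using Proposition~\ref{prop.basichom} to write each relevant morphism among the modules $M_{\ell_0,\ldots,\ell_d}$ as a definite scalar multiple of a basis map $h$ and tracking the indices exactly as in the proof of Proposition~\ref{prop.E_exact} (where $\Hom(P_\ell,\mathbb{E})$ turned out to be a Koszul complex): at each stage the morphism being lifted is nonzero and the lift is forced to be nonzero, ending with $f_d\neq 0$. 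Alternatively, one may argue that $f_d=0$ would mean $[\mathbb{E}]=0$, which in the $d$‑cluster tilting (hence $d$‑abelian) subcategory $\add M$ would force the monomorphism $M_{i_0,\ldots,i_d}\to E_d$ to split; but $M_{i_0,\ldots,i_d}$ is indecomposable and, by Theorem~\ref{theo.indexing}, is not isomorphic to any summand $M_{m_X}$ ($|X|=d$) of $E_d=\bigoplus_{|X|=d}M_{m_X}$ -- a contradiction. (For $d=1$ this is simply the Krull--Schmidt observation that $E_1=M_{i_0,j_1}\oplus M_{j_0,i_1}\not\cong M_{i_0,i_1}\oplus M_{j_0,j_1}$.)

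The main obstacle is precisely this last step -- proving $[\mathbb{E}]\neq 0$, equivalently that the canonical lift is nonzero in top degree, equivalently that $\mathbb{E}$ does not split. Everything before it is bookkeeping with Theorems~\ref{theo.indexing} and~\ref{theo.indexing.props} and Proposition~\ref{prop.proj_resol}; the genuine content is the combinatorial verification that the specific sequence $\mathbb{E}$, whose intermediate terms $E_1,\ldots,E_d$ by construction contain neither $M_{i_0,\ldots,i_d}$ nor $M_{j_0,\ldots,j_d}$ as a direct summand, is really non‑split.
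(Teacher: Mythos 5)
Your primary line of attack is essentially the paper's own proof. Both you and the paper work with the minimal projective resolution of $M_{j_0,\ldots,j_d}$ from Proposition~\ref{prop.proj_resol}, lift $\mathrm{id}$ to a chain map into $\mathbb{E}$, and identify the Yoneda class of $\mathbb{E}$ with the top component of that lift. Your observation that $\Hom_{A_n^d}(P_{d-1},M_{i_0,\ldots,i_d})=0$ (since $j_d-1<i_d$ would force $i_d\geq j_d$, contradicting $i_d<j_d$) is a welcome explicitation of the paper's terser ``$f_{d+1}$ does not factor through $h_{1,j_0+1,\ldots,j_{d-1}+1}^{1,j_0+1,\ldots,j_{d-2}+1,j_d}$'': it shows there is literally no nonzero coboundary, so the task reduces cleanly to checking that the top lift is nonzero, which the paper does by exhibiting it explicitly as $\pm h_{1,j_0+1,\ldots,j_{d-1}+1}^{i_0,\ldots,i_d}$ using Proposition~\ref{prop.basichom}. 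Your inductive construction of the $f_r$ is the same content.

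The one place you should be careful is the ``alternative'' argument. The assertion that $[\mathbb{E}]=0$ in $\Ext^d_{A_n^d}(M_{j_0,\ldots,j_d},M_{i_0,\ldots,i_d})$ would force the monomorphism $M_{i_0,\ldots,i_d}\mono E_d$ to split is automatic only for $d=1$. For $d\geq 2$ a $d$-fold extension can represent the zero Yoneda class without its leftmost map being a split monomorphism; passing from ``Yoneda-trivial'' to ``the first map splits'' requires a genuine input from the $d$-abelian theory (uniqueness of minimal $\add M$-resolutions, or a comparison with the minimal $d$-exact representative of the zero class), which you do not supply and which the paper does not invoke. As written, that alternative is a gap; the primary approach is the correct and complete one.
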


\begin{proof}
By Proposition~\ref{prop.proj_resol} we know that the projective resolution of $M_{j_0, \ldots, j_d}$ is as given in the upper row of the following diagram.
\[ \begin{tikzpicture}[xscale=3.5,yscale=1.5]
 \node (P1) at (0,1) {$M_{1, j_0+1, \ldots, j_{d-1}+1}$};
 \node (P2) at (1,1) {$M_{1, j_0+1, \ldots, j_{d-2}+1, j_d}$};
 \node (P3) at (2,1) {$\cdots$};
 \node (P4) at (3,1) {$M_{1, j_1, \ldots, j_d}$};
 \node (P5) at (4,1) {$M_{j_0, \ldots, j_d}$};
 \node (E1) at (0,0) {$E_{d+1}$};
 \node (E2) at (1,0) {$E_d$};
 \node (E3) at (2,0) {$\cdots$};
 \node (E4) at (3,0) {$E_1$};
 \node (E5) at (4,0) {$E_0$};
 \draw [>->] (P1) -- (P2);
 \draw [->] (P2) -- (P3);
 \draw [->] (P3) -- (P4);
 \draw [->>] (P4) -- (P5);
 \draw [>->] (E1) -- (E2);
 \draw [->] (E2) -- (E3);
 \draw [->] (E3) -- (E4);
 \draw [->>] (E4) -- (E5);
 \draw [->, dashed] (P1) -- node [left] {$f_{d+1}$} (E1);
 \draw [->, dashed] (P2) -- node [left] {$f_d$} (E2);
 \draw [->, dashed] (P4) -- node [left] {$f_1$} (E4);
 \draw [double distance=1.5pt] (P5) -- (E5);
\end{tikzpicture} \]
Using Proposition~\ref{prop.basichom} it is easy to verify that we may choose the maps $f_r$ on components by
\begin{align*}
& M_{1, j_0+1, \ldots, j_{r-2}+1, j_r, \ldots, j_d} \to M_{m_X} \colon \\
& \qquad \left\{ \begin{array}{ll} \pm h_{1, j_0+1, \ldots, j_{r-2}+1, j_r, \ldots j_d}^{i_0, \ldots, i_{r-1}, j_r, \ldots, j_d} & \text{if } X = \{0, \ldots, r-1\} \\ 0 & \text{otherwise. } \end{array} \right.
\end{align*}
Since $f_{d+1}$ does not factor through $h_{1, j_0+1, \ldots, j_{d-1}+1}^{1, j_0+1, \ldots, j_{d-2}+1, j_d}$ it follows that the extension $\mathbb{E}$ is non-split.
\end{proof}

The rest of this section leads to a proof of Theorem~\ref{theo.tilt-exchange}(4). Therefore we assume that we are in the setup of Theorem~\ref{theo.tilt-exchange}, that is, we are given $T$, $M_{i_0, \ldots, i_d}$ and $M_{j_0, \ldots, j_d}$ such that $T \oplus M_{i_0, \ldots, i_d}$ is a tilting $A_n^d$-module and $T \oplus M_{j_0, \ldots, j_d}$ is a different rigid $A_n^d$-module, and they both lie in $\add M$. Moreover we assume that we are in the first case of Theorem~\ref{theo.tilt-exchange}(2), that is we assume $\Ext_{A_n^d}^d(M_{j_0, \ldots, j_d}, M_{i_0, \ldots, i_d}) \neq 0$. By Theorem~\ref{theo.indexing.props}(4) that means $(i_0, \ldots, i_d) \wr (j_0, \ldots, j_d)$, and hence by Propositions~\ref{prop.E_exact} and \ref{prop.E_nonsplit} we have a non-spilt $d$-extension $\mathbb{E}$ with middle terms as claimed in Theorem~\ref{theo.tilt-exchange}(4a).

Since $\widehat{T} := T \oplus M_{i_0, \ldots, i_d}$ is a tilting $A_n^d$-module, and $\Ext_{A_n^d}^i(\widehat{T}, M_{j_0, \ldots, j_d}) = 0 \, \forall i \geq 0$ there is an exact minimal $\widehat{T}$-resolution of $M_{j_0, \ldots, j_d}$:
\[ \widehat{\mathbb{T}} \colon \widehat{T}_{d+1} \mono[30] \widehat{T}_d \to[30] \cdots \to[30] \widehat{T}_1 \epi[30] M_{j_0, \ldots, j_d} \]
Our aim now is to show that $\widehat{\mathbb{T}} \iso \mathbb{E}$.

\begin{observation} \label{obs.Td_is_Mi}
Applying $\Hom_{A_n^d}(-, \widehat{T}_{d+1})$ to the exact sequence $\widehat{\mathbb{T}}$ we obtain an exact sequence
\begin{align*}
& \Hom_{A_n^d}(M_{j_0, \ldots, j_d}, \widehat{T}_{d+1}) \mono[30] \Hom_{A_n^d}(\widehat{T}_1, \widehat{T}_{d+1}) \to[30] \cdots \\
& \qquad \cdots \to[30] \Hom_{A_n^d}(\widehat{T}_d, \widehat{T}_{d+1}) \to[30] \Hom_{A_n^d}(\widehat{T}_{d+1}, \widehat{T}_{d+1}) \epi[30] \Ext_{A_n^d}^d(M_{j_0, \ldots, j_d}, \widehat{T}_{d+1})
\end{align*}
(here we use $M_{j_0, \ldots, j_d} \oplus \widehat{T}_{d+1} \in \add M$, and hence $\Ext_{A_n^d}^i(M_{j_0, \ldots, j_d}, \widehat{T}_{d+1}) = 0$ for $1 \leq i \leq d-1$). Since, by the minimality of $\widehat{\mathbb{T}}$ no non-radical maps in $\Hom_{A_n^d}(\widehat{T}_{d+1}, \widehat{T}_{d+1})$ factor through $\widehat{T}_d$, such maps are mapped to non-zero extensions in $\Ext_{A_n^d}^d(M_{j_0, \ldots, j_d}, \widehat{T}_{d+1})$. It follows that $\widehat{T}_{d+1} = M_{i_0, \ldots, i_d}$, since this is the only summand of $\widehat{T}$ which admits non-zero extensions with $M_{j_0, \ldots, j_d}$.
\end{observation}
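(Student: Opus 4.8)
\emph{Plan.} The plan is to read off the leftmost term $\widehat T_{d+1}$ from the complex $\Hom_{A_n^d}(-,\widehat T_{d+1})$ applied to the minimal $\widehat T$-resolution $\widehat{\mathbb T}$, exactly as indicated in the statement, and then to pin down the multiplicity of $M_{i_0,\ldots,i_d}$ in $\widehat T_{d+1}$ by a short dimension count.

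First I would carry out the dimension shift. Breaking $\widehat{\mathbb T}$ into short exact sequences and using that each $\widehat T_r$, as well as $\widehat T_{d+1}$, lies in $\add M$ --- so that $\Ext^j_{A_n^d}(\widehat T_r,\widehat T_{d+1})=0$ for $1\le j\le d-1$ by the $d$-cluster tilting property (Definition~\ref{def.d-repfin}), while $\Ext^d_{A_n^d}(\widehat T_r,\widehat T_{d+1})=0$ because $\widehat T_r,\widehat T_{d+1}\in\add\widehat T$ and the tilting module $\widehat T$ is rigid --- one obtains a natural isomorphism
\[ \Ext^d_{A_n^d}(M_{j_0,\ldots,j_d},\widehat T_{d+1})\;\cong\;\cok\big(\iota^{*}\colon\Hom_{A_n^d}(\widehat T_d,\widehat T_{d+1})\to\End_{A_n^d}(\widehat T_{d+1})\big), \]
where $\iota\colon\widehat T_{d+1}\hookrightarrow\widehat T_d$ is the leftmost map of $\widehat{\mathbb T}$. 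Next I would use minimality of $\widehat{\mathbb T}$ to show that $\operatorname{im}\iota^{*}$ is contained in $\operatorname{rad}\End_{A_n^d}(\widehat T_{d+1})$: the image of $\iota^{*}$ is a left ideal of $\End_{A_n^d}(\widehat T_{d+1})$, and if it were not contained in the radical it would contain a nonzero idempotent $e=g\iota$; then $e\widehat T_{d+1}$ would be a common direct summand of $\widehat T_{d+1}$ and $\widehat T_d$ on which $\iota$ restricts to an isomorphism, so $\widehat{\mathbb T}$ would split off a contractible two-term complex, which a minimal resolution cannot do. Hence the cokernel above surjects onto $\End_{A_n^d}(\widehat T_{d+1})/\operatorname{rad}$, and in particular $\dim_k\Ext^d_{A_n^d}(M_{j_0,\ldots,j_d},\widehat T_{d+1})\ge\dim_k\big(\End_{A_n^d}(\widehat T_{d+1})/\operatorname{rad}\big)$.

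Then I would compare the two sides of this inequality. Write $\widehat T_{d+1}=T'\oplus M_{i_0,\ldots,i_d}^{\oplus m}$ with $T'\in\add T$. Rigidity of $T\oplus M_{j_0,\ldots,j_d}$ gives $\Ext^d_{A_n^d}(M_{j_0,\ldots,j_d},T')=0$, so by Theorem~\ref{theo.indexing.props}(4) the left side equals $m$, whereas the right side is at least $m^{2}\dim_k\big(\End_{A_n^d}(M_{i_0,\ldots,i_d})/\operatorname{rad}\big)+\dim_k\big(\End_{A_n^d}(T')/\operatorname{rad}\big)\ge m^{2}$. For the strict positivity $m\ge 1$ I would invoke Proposition~\ref{prop.E_nonsplit}: $\Ext^d_{A_n^d}(M_{j_0,\ldots,j_d},M_{i_0,\ldots,i_d})\ne 0$, which by the same dimension shift (now with $M_{i_0,\ldots,i_d}$ as second argument) would be impossible if $\widehat T_{d+1}=0$, and once $\widehat T_{d+1}\ne 0$ the inequality above forces $\Ext^d_{A_n^d}(M_{j_0,\ldots,j_d},\widehat T_{d+1})\ne 0$; since $M_{i_0,\ldots,i_d}$ is the only summand of $\widehat T$ with non-vanishing $\Ext^d_{A_n^d}(M_{j_0,\ldots,j_d},-)$, this yields $m\ge 1$. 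From $1\le m$ and $m\ge m^{2}$ we get $m=1$, and then $1=m\ge m^{2}\dim_k\big(\End_{A_n^d}(M_{i_0,\ldots,i_d})/\operatorname{rad}\big)+\dim_k\big(\End_{A_n^d}(T')/\operatorname{rad}\big)$ forces $T'=0$, i.e.\ $\widehat T_{d+1}=M_{i_0,\ldots,i_d}$.

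The one genuinely delicate step is the minimality argument --- that $\operatorname{im}\iota^{*}\subseteq\operatorname{rad}\End_{A_n^d}(\widehat T_{d+1})$, equivalently that a minimal $\widehat T$-resolution carries no contractible direct summand --- together with the little extra needed to upgrade ``$M_{i_0,\ldots,i_d}$ is a summand of $\widehat T_{d+1}$'' to the equality via $m\ge m^{2}$. The dimension shifts themselves are routine once the $\Ext$-vanishing among objects of $\add M$ (respectively $\add\widehat T$) has been noted.
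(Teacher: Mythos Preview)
Your proposal is correct and follows essentially the same route as the paper: apply $\Hom(-,\widehat T_{d+1})$ to the minimal $\widehat T$-resolution, use minimality to see that $\operatorname{im}\iota^*\subseteq\operatorname{rad}\End(\widehat T_{d+1})$, and deduce the result from the surjection $\End(\widehat T_{d+1})\twoheadrightarrow\Ext^d(M_{j_0,\ldots,j_d},\widehat T_{d+1})$.

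The paper is rather terse at the final step---it asserts $\widehat T_{d+1}=M_{i_0,\ldots,i_d}$ once it knows that every non-radical endomorphism yields a non-zero extension---whereas you make explicit the dimension count $m\ge m^2$ (using the one-dimensionality of $\Ext^d(M_{j_0,\ldots,j_d},M_{i_0,\ldots,i_d})$ from Theorem~\ref{theo.indexing.props}(4)) that rules out multiplicities $m\ge 2$ and forces $T'=0$, as well as the check that $\widehat T_{d+1}\ne 0$. Your idempotent argument for $\operatorname{im}\iota^*\subseteq\operatorname{rad}$ is fine, though one could say more directly that minimality means $\iota$ itself is a radical morphism in $\add\widehat T$, whence every $g\iota$ is radical. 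Otherwise the arguments coincide.
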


\begin{lemma} \label{lemma.perp-approx}
Let
\begin{align*}
\perp = \add \{ M_{\ell_0, \ldots, \ell_d} \mid & \Ext_{A_n^d}^d(M_{i_0, \ldots, i_d} \oplus M_{j_0, \ldots, j_d}, M_{\ell_0, \ldots, \ell_d}) = 0 \\
& \text{ and } \Ext_{A_n^d}^d(M_{\ell_0, \ldots, \ell_d}, M_{i_0, \ldots, i_d} \oplus M_{j_0, \ldots, j_d}) = 0 \}.
\end{align*}
Then $E_1 \epi M_{j_0, \ldots, j_d}$ is a minimal right $\perp$-approximation of $M_{j_0, \ldots, j_d}$, and $M_{i_0, \ldots, i_d} \mono E_d$ is a minimal left $\perp$-approximation of $M_{i_0, \ldots, i_d}$.
\end{lemma}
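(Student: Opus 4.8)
The plan is to establish the three assertions packed into the statement: that the middle terms $E_r$ ($1\le r\le d$) of $\mathbb E$ lie in $\perp$, that $E_1\twoheadrightarrow M_{j_0,\dots,j_d}$ and $M_{i_0,\dots,i_d}\hookrightarrow E_d$ are $\perp$-approximations, and that they are minimal. I write $M_I=M_{i_0,\dots,i_d}$, $M_J=M_{j_0,\dots,j_d}$, and use that the standing hypothesis $\Ext_{A_n^d}^d(M_J,M_I)\ne 0$ forces $(i_0,\dots,i_d)\wr(j_0,\dots,j_d)$ by Theorem~\ref{theo.indexing.props}(4), so $i_k<j_k$ for every $k$; hence for any $X$ the entry $(m_X)_k$ equals $i_k$ when $k\in X$ and $j_k$ otherwise, and in all cases $i_k\le(m_X)_k\le j_k$. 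The first step is to check $E_r\in\perp$ for $1\le r\le d$. A nonzero summand of such an $E_r$ is $M_{m_X}$ with $\emptyset\ne X\ne\{0,\dots,d\}$ and $m_X\in\Index{n+2d}$; by Theorem~\ref{theo.indexing.props}(4) it would leave $\perp$ only if one of $m_X\wr(i)$, $(j)\wr m_X$, $m_X\wr(j)$, $(i)\wr m_X$ held, and $i_k\le(m_X)_k\le j_k$ makes the first two impossible for \emph{every} $X$, while $m_X\wr(j)$ forces $X=\{0,\dots,d\}$ and $(i)\wr m_X$ forces $X=\emptyset$, both excluded. So $\Ext_{A_n^d}^d(M_{m_X},M_I\oplus M_J)=0=\Ext_{A_n^d}^d(M_I\oplus M_J,M_{m_X})$, hence $E_r\in\perp$; and $E_1,E_d\ne 0$ because $m_{\{0\}}=(i_0,j_1,\dots,j_d)$ and $m_{\{0,\dots,d-1\}}=(i_0,\dots,i_{d-1},j_d)$ are automatically separated. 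Note also $\perp\subseteq\add M$, since its generators are summands of $M$.

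For the approximation property I would cut $\mathbb E$ into short exact sequences $0\rightarrow Z_{r+1}\rightarrow E_{r+1}\rightarrow Z_r\rightarrow 0$, where $Z_r=\Im(E_{r+1}\rightarrow E_r)$, so that $Z_d=M_I$, $Z_0=M_J$, $Z_1=\ker(E_1\rightarrow M_J)$. Since $M$ is $d$-cluster tilting and each $E_r\in\add M$, for any $Q\in\perp\subseteq\add M$ one has $\Ext_{A_n^d}^k(Q,E_r)=0=\Ext_{A_n^d}^k(E_r,Q)$ whenever $1\le k\le d-1$; dimension shifting along $0\rightarrow M_I\rightarrow E_d\rightarrow\cdots\rightarrow E_2\rightarrow Z_1\rightarrow 0$ then gives a monomorphism $\Ext_{A_n^d}^1(Q,Z_1)\hookrightarrow\Ext_{A_n^d}^d(Q,M_I)$, whose target vanishes because $Q\in\perp$. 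Feeding this into the long exact sequence of $\Hom_{A_n^d}(Q,-)$ applied to $0\rightarrow Z_1\rightarrow E_1\rightarrow M_J\rightarrow 0$ shows $\Hom_{A_n^d}(Q,E_1)\rightarrow\Hom_{A_n^d}(Q,M_J)$ is onto, i.e.\ $E_1\rightarrow M_J$ is a right $\perp$-approximation. The left statement is dual: with $C_d=\Cok(M_I\rightarrow E_d)=\Im(E_d\rightarrow E_{d-1})$, the coresolution $0\rightarrow C_d\rightarrow E_{d-1}\rightarrow\cdots\rightarrow E_1\rightarrow M_J\rightarrow 0$ yields $\Ext_{A_n^d}^1(C_d,Q)\hookrightarrow\Ext_{A_n^d}^d(M_J,Q)=0$ for $Q\in\perp$, so $\Hom_{A_n^d}(E_d,Q)\rightarrow\Hom_{A_n^d}(M_I,Q)$ is onto and $M_I\rightarrow E_d$ is a left $\perp$-approximation. (Since $\perp$ is the additive closure of a finite set of indecomposables in a $\Hom$-finite category it is functorially finite, so minimal such approximations exist.)

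Minimality I would derive from the fact that the indecomposable summands of $E_1$ and $E_d$ are not just pairwise non-isomorphic (distinct $X$ give distinct tuples $m_X$, since $i_k\ne j_k$) but pairwise $\Hom$-orthogonal. For $E_1$: if $x\ne y$ then $\Hom_{A_n^d}(M_{m_{\{x\}}},M_{m_{\{y\}}})=0$, since by Theorem~\ref{theo.indexing.props}(3) a nonzero map would require the interlacing inequality in position $y$, namely $(m_{\{x\}})_y-1<(m_{\{y\}})_y$, i.e.\ $j_y-1<i_y$, against $i_y<j_y$. Hence $\End_{A_n^d}(E_1)=\prod_x\End_{A_n^d}(M_{m_{\{x\}}})$ is a product of local rings. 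Moreover, whenever $m_{\{x\}}$ is separated, the component $p_x\colon M_{m_{\{x\}}}\rightarrow M_J$ of $p\colon E_1\twoheadrightarrow M_J$ is nonzero --- again by Theorem~\ref{theo.indexing.props}(3), the only interlacing inequality that is not automatic being $j_{x-1}+2\le i_x$, which is precisely the separatedness of $m_{\{x\}}$. Then any $\varphi\in\End_{A_n^d}(E_1)$ with $p\varphi=p$ satisfies $p_x\varphi_x=p_x$ componentwise; if $\varphi_x$ lay in the radical, $1-\varphi_x$ would be invertible and annihilate $p_x\ne 0$, impossible, so each $\varphi_x$, hence $\varphi$, is an automorphism and $p$ is right minimal. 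The left minimality of $M_I\hookrightarrow E_d$ is symmetric, using that the summands $M_{m_X}$ of $E_d$ (with $|X|=d$, say $X=\{0,\dots,d\}\setminus\{z\}$) are pairwise $\Hom$-orthogonal --- the position-$z$ inequality again forcing a $j$-value below an $i$-value --- and that the component $M_I\rightarrow M_{m_X}$ is nonzero for each.

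The main obstacle will be the combinatorial bookkeeping rather than any conceptual difficulty: one must track precisely which tuples $m_X$ are separated (so that $M_{m_X}$ is a genuine nonzero summand of the relevant $E_r$) while converting the $\Hom$- and $\Ext$-(non)vanishing criteria of Theorem~\ref{theo.indexing.props}(3),(4) into the interlacing inequalities among the $i_k$ and $j_k$. By contrast the homological core --- the two dimension shifts --- is routine once $\mathbb E$ has been sliced into short exact sequences, given that $M$ being $d$-cluster tilting makes $\Ext^1,\dots,\Ext^{d-1}$ vanish between objects of $\add M$.
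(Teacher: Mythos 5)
Your proof is correct, and for the approximation property it follows a genuinely different route from the paper's. The paper argues directly: it reads off from Theorem~\ref{theo.indexing.props}(3) that a summand $M_{\ell_0,\dots,\ell_d}$ of $\perp$ with a nonzero map to $M_{j_0,\dots,j_d}$ must have $\ell_x\le j_x$ for all $x$, and then from the vanishing $\Ext^d(M_\ell,M_I)=0$ that some $\ell_{x_0}\le i_{x_0}$; it then asserts that summing the $M_{m_{\{x\}}}$ gives the approximation, implicitly relying on Proposition~\ref{prop.basichom} to factor every such map through $E_1$. You instead slice the exact sequence $\mathbb{E}$ into short exact sequences and dimension-shift in $\Ext^\bullet(Q,-)$ using $d$-cluster-tiltingness of $M$ (so $\Ext^k(Q,E_r)=0$ for $1\le k\le d-1$) to obtain $\Ext^1(Q,Z_1)\hookrightarrow\Ext^d(Q,M_I)=0$, from which surjectivity of $\Hom(Q,E_1)\to\Hom(Q,M_J)$ is immediate; dually for the left approximation. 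This is a more homological argument that leans on Proposition~\ref{prop.E_exact} (exactness of $\mathbb{E}$) rather than on explicit factorization of maps, and it generalizes more readily beyond the combinatorics of $A_n^d$. Your verification that $E_r\in\perp$ (including the boundary cases $X=\emptyset$, $X=\{0,\dots,d\}$) and your $\Hom$-orthogonality computation for the minimality are correct and spell out points the paper leaves tacit --- in particular the paper's proof does not explicitly address minimality at all, so your argument is more complete on that front. One small wording nit: the chain of dimension shifts gives monomorphisms $\Ext^1(Q,Z_1)\hookrightarrow\Ext^2(Q,Z_2)\hookrightarrow\cdots\hookrightarrow\Ext^d(Q,M_I)$ (each from vanishing of $\Ext^k(Q,E_{k+1})$), not a single one; the conclusion is unaffected.
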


\begin{proof}
We only prove the first claim; the proof of the second is similar. By Theorem~\ref{theo.indexing.props}(3) there are only maps $M_{\ell_0, \ldots, \ell_d} \to M_{j_0, \ldots, j_d}$ if $\ell_x \leq j_x \, \forall x$. By our general assumption $\Ext_{A_n^d}^d(M_{j_0, \ldots, j_d}, M_{i_0, \ldots, i_d}) \neq 0$, so we have $(i_0, \ldots, i_d) \wr (j_0, \ldots, j_d)$, that is
\[ i_0 < j_0 < i_1 < j_1 < \cdots < i_d < j_d. \]
Since we want to approximate by those $M_{\ell_0, \ldots, \ell_d}$ that satisfy $\Ext_{A_n^d}^d(M_{\ell_0, \ldots, \ell_d}, M_{i_0, \ldots, i_d}) = 0$, that is $(i_0, \ldots, i_d) \nwr (\ell_0, \ldots, \ell_d)$, at least one of the above inequalities fails after replacing each $j_x$ with the corresponding $\ell_x$. That is, $\exists x \colon i_x \not< \ell_x$. In other words $\ell_x \leq i_x$. Hence we obtain the approximation by summing up all possibilities of replacing one index $j_x$ by the corresponding $i_x$.
\end{proof}

\begin{proposition}
With the above notation we have $\widehat{\mathbb{T}} \iso \mathbb{E}$.
\end{proposition}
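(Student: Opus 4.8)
The plan is to identify $\mathbb{E}$ with the minimal $\widehat{T}$-resolution $\widehat{\mathbb{T}}$ by showing that $\mathbb{E}$ is \emph{itself} a minimal $\widehat{T}$-resolution of $M_{j_0, \ldots, j_d}$. Indeed, any $\widehat{T}$-resolution decomposes as the direct sum of the minimal one and a contractible complex, and a minimal acyclic complex vanishes; together with Observation~\ref{obs.Td_is_Mi} (which already matches the leftmost terms $\widehat{T}_{d+1} = M_{i_0, \ldots, i_d} = E_{d+1}$) this will give $\widehat{\mathbb{T}} \iso \mathbb{E}$. Concretely one produces comparison chain maps $\mathbb{E} \to \widehat{\mathbb{T}}$ and $\widehat{\mathbb{T}} \to \mathbb{E}$ over $\mathrm{id}_{M_{j_0, \ldots, j_d}}$ (the first from $\Hom_{A_n^d}(\widehat{T},-)$-exactness of $\widehat{\mathbb{T}}$ once the terms of $\mathbb{E}$ lie in $\add \widehat{T}$, the second from $\Hom_{A_n^d}(\widehat{T},-)$-exactness of $\mathbb{E}$ and the fact that the terms of $\widehat{\mathbb{T}}$ lie in $\add\widehat{T}$ by definition); their composite on $\widehat{\mathbb{T}}$ lifts the identity, hence is an isomorphism by minimality, so $\widehat{\mathbb{T}}$ is a direct summand of $\mathbb{E}$ with acyclic, minimal complement, which must be zero.

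Two of the three ingredients are routine. \emph{Minimality of $\mathbb{E}$}: every component of the differential $E_{r+1} \to E_r$ is $\pm h_{m_X}^{m_Y}$ with $X = Y \cup \{y\}$, and since $(i_0, \ldots, i_d) \wr (j_0, \ldots, j_d)$ forces $i_x \neq j_x$ for every $x$, we have $m_X \neq m_Y$; thus each component is a non-isomorphism between indecomposables and the differential is a radical map, so $\mathbb{E}$ has no contractible direct summand. \emph{Exactness of $\Hom_{A_n^d}(\widehat{T}, \mathbb{E})$} (granting $E_r \in \add\widehat{T}$): since $\widehat{T}$ is a tilting module, $\Ext^{\ge 1}_{A_n^d}(\widehat{T}, E_r) = 0$, so splicing $\mathbb{E}$ into short exact sequences and dimension-shifting identifies the obstruction to exactness at the term $E_{r-1}$ with $\Ext^{d-r+1}_{A_n^d}(\widehat{T}, M_{i_0, \ldots, i_d})$. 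This vanishes for $1 \le d-r+1 \le d-1$ because $\leftsub{A_n^d}{M}$ is $d$-cluster tilting and $\widehat{T}, M_{i_0, \ldots, i_d} \in \add \leftsub{A_n^d}{M}$, and for $d-r+1 = d$ because $\widehat{T} = T \oplus M_{i_0, \ldots, i_d}$ is rigid; exactness at the two extreme terms is immediate.

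The heart of the proof is therefore the claim $E_r \in \add\widehat{T}$ for $1 \le r \le d$, i.e.\ that every summand $M_{m_X}$ with $\emptyset \neq X \subsetneq \{0, \ldots, d\}$ and $m_X \in \Index{n+2d}$ is a summand of $T$. Here one uses the explicit combinatorics of Theorem~\ref{theo.indexing.props}(4) together with Lemma~\ref{lemma.perp-approx}: among all the tuples $m_Z$ the only intertwining pair is $(i_0, \ldots, i_d) \wr (j_0, \ldots, j_d)$, so $m_X$ intertwines neither $(i_0, \ldots, i_d)$ nor $(j_0, \ldots, j_d)$; and one must show, using that $E_1 \epi M_{j_0, \ldots, j_d}$ and $M_{i_0, \ldots, i_d} \mono E_d$ are the $\perp$-approximations of Lemma~\ref{lemma.perp-approx}, that $M_{m_X}$ appears in the minimal $\widehat{T}$-resolution, equivalently that its index lies in the set $e(S)$ associated by Corollary~\ref{sumcor} to the triangulation of the tilting module $\widehat{T}$. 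The delicate point — and the main obstacle — is exactly this: one cannot merely invoke maximality of $T \oplus M_{i_0, \ldots, i_d}$ as a rigid object (there are maximal rigid summands of $\leftsub{A_n^d}{M}$ for which some $M_{m_X}$ genuinely fails to be a direct summand of the corresponding module); the argument must use in an essential way that $T \oplus M_{i_0, \ldots, i_d}$ is a \emph{tilting} module, and this is where the dictionary between the representation theory of $A_n^d$ and the triangulation combinatorics of Section~\ref{cyclic} is indispensable.
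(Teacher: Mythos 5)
Your proposal diverges from the paper's proof at its crux, and the divergence is where the gap lies. You reduce everything to the claim $E_r \in \add\widehat{T}$ for $1 \le r \le d$, observing (correctly) that once this holds, $\mathbb{E}$ is a minimal $\Hom(\widehat{T},-)$-exact complex with terms in $\add\widehat{T}$ resolving $M_{j_0,\ldots,j_d}$, and uniqueness of minimal $\widehat{T}$-resolutions finishes the proof. The preliminary ingredients — minimality of $\mathbb{E}$ because the differentials are radical, and $\Hom(\widehat{T},\mathbb{E})$-exactness by dimension shifting through $\Ext^{d-r+1}(\widehat{T}, M_{i_0,\ldots,i_d})$ — are both fine. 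But you never prove $E_r \in \add\widehat{T}$; you flag it as ``the heart of the proof'' and ``the main obstacle,'' sketch where the difficulty lies, and stop. That is not a proof. Moreover, one of the two phrasings you offer for what remains to be shown — ``that $M_{m_X}$ appears in the minimal $\widehat{T}$-resolution'' — is circular: it is precisely a consequence of $\widehat{\mathbb{T}} \iso \mathbb{E}$, the statement under proof. The other phrasing, via the triangulation dictionary of Section~\ref{cyclic}, is in principle viable (the relevant direction of Proposition~\ref{mixes} is purely combinatorial, resting only on Theorem~\ref{th3}, so there is no hidden circularity with Section~\ref{section.higher_Aus_An}), but you do not carry it out, and in particular you do not verify that $T \oplus M_{j_0,\ldots,j_d}$ actually corresponds to a triangulation so that Proposition~\ref{mixes} applies.

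The paper's own proof deliberately avoids needing $E_r \in \add\widehat{T}$ as input. It constructs a chain map $\widehat{\mathbb{T}} \to \mathbb{E}$ over $\mathrm{id}_{M_{j_0,\ldots,j_d}}$ using only that everything lies in $\add M$ (so the requisite $\Hom$-sequences are exact), not the stronger $\add\widehat{T}$. The map $f_1$ exists because $E_1 \twoheadrightarrow M_{j_0,\ldots,j_d}$ is a right $\perp$-approximation (Lemma~\ref{lemma.perp-approx}) and $\widehat{T}_1 \in \perp$. Then $f_{d+1}$ is an isomorphism because $\widehat{T}_{d+1} \iso E_{d+1} = M_{i_0,\ldots,i_d}$ (Observation~\ref{obs.Td_is_Mi}) and $\mathbb{E}$ is non-split; $f_d$ is an isomorphism because $E_{d+1} \rightarrowtail E_d$ is a \emph{left} $\perp$-approximation and $\widehat{T}_d \in \add T \subseteq \perp$; and the remaining $f_i$ are isomorphisms by comparing minimal $M$-coresolutions of the common cokernel of the leftmost maps. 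In this logic, the inclusion $E_r \in \add\widehat{T}$ is a \emph{corollary} of the proposition (it is exactly how the paper deduces Theorem~\ref{theo.tilt-exchange}(3) from (4)), not a prerequisite. Your approach takes the harder route of proving that inclusion first, and then does not prove it.
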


\begin{proof}
We consider the following diagram.
\[ \begin{tikzpicture}[xscale=2.5,yscale=1.5]
 \node at (-.5,1) {$\widehat{\mathbb{T}}$:};
 \node (T1) at (0,1) {$\widehat{T}_{d+1}$};
 \node (T2) at (1,1) {$\widehat{T}_d$};
 \node (T3) at (2,1) {$\cdots$};
 \node (T4) at (3,1) {$\widehat{T}_2$};
 \node (T5) at (4,1) {$\widehat{T}_1$};
 \node (T6) at (5,1) {$M_{j_0, \ldots, j_d}$};
 \node at (-.5,0) {$\mathbb{E}$:};
 \node (E1) at (0,0) {$E_{d+1}$};
 \node (E2) at (1,0) {$E_d$};
 \node (E3) at (2,0) {$\cdots$};
 \node (E4) at (3,0) {$E_2$};
 \node (E5) at (4,0) {$E_1$};
 \node (E6) at (5,0) {$E_0$};
 \draw [>->] (T1) -- (T2);
 \draw [->] (T2) -- (T3);
 \draw [->] (T3) -- (T4);
 \draw [->] (T4) -- (T5);
 \draw [->>] (T5) -- (T6);
 \draw [>->] (E1) -- (E2);
 \draw [->] (E2) -- (E3);
 \draw [->] (E3) -- (E4);
 \draw [->] (E4) -- (E5);
 \draw [->>] (E5) -- (E6);
 \draw [->, dashed] (T1) -- node [left] {$f_{d+1}$} (E1);
 \draw [->, dashed] (T2) -- node [left] {$f_d$} (E2);
 \draw [->, dashed] (T4) -- node [left] {$f_2$} (E4);
 \draw [->] (T5) -- node [left] {$f_1$} (E5);
 \draw [double distance=1.5pt] (T6) -- (E6);
\end{tikzpicture} \]
Here the map $f_1$ making the square to its right commutative exists by Lemma~\ref{lemma.perp-approx}, since $\widehat{T}_1 \in \perp$. Then the dashed maps can be constructed inductively from right to left since all the objects are in $\add M$ (as in Observation~\ref{obs.Td_is_Mi} we see that $\Hom_{A_n^d}(\widehat{T}_{i+1}, E_{i+1}) \to \Hom_{A_n^d}(\widehat{T}_{i+1}, E_i) \to \Hom_{A_n^d}(\widehat{T}_{i+1}, E_{i-1})$ is exact). Since $\mathbb{E}$ is non-split and $\widehat{T}_{d+1} \iso E_{d+1} = M_{i_0, \ldots, i_d}$ we know that $f_{d+1}$ is an isomorphism. Since, by Lemma~\ref{lemma.perp-approx} the map $E_{d+1} \mono E_d$ is a left $\perp$-approximation, and $\widehat{T}_d \in \add T \subseteq \perp$ it follows that $f_d$ is an isomorphism. 
The rest of $\widehat{\mathbb{T}}$ as well as $\mathbb{E}$ are minimal $M$-coresolutions of the cokernels of their left-most maps, respectively. Thus one can see iterately from left to right that all the vertical maps are isomorphisms.
\end{proof}

This also completes the proof of Theorem~\ref{theo.tilt-exchange}(4). Part (5) of Theorem~\ref{theo.tilt-exchange} is dual to part (4). Hence we have also completed the proof of Theorem~\ref{theo.tilt-exchange}.

\section{Local moves}\label{local}
There is an operation on triangulations called a \emph{bistellar flip}.
Given $e+2$ points in $\mathbb R^e$, 
no $e+1$ lying in any hyperplane,
there are two triangulations of
their convex hull.  \
A bistellar flip of a triangulation $T$ 
is given by specifying some $e+2$ vertices of the triangulation,
no $e+1$ lying in any hyperplane, 
such that $T$ restricts
to a triangulation of the convex hull $X$ 
of those $e+2$ vertices.  That bistellar
flip of $T$ is then obtained by replacing the part of $T$ inside $X$ by
the other triangulation using those vertices.  
(It is possible to weaken the assumption that no $e+1$ of the
vertices involved in the bistellar flip lie on a hyperplane, but we shall
not need to make use of  that here.)

\begin{theorem}\label{th3} For $S,T\in S(m,2d)$, we have that $S$ and $T$ are 
related
by a bistellar flip iff
 $e(S)$ and $e(T)$ have all but one (d+1)-tuple in 
common. \end{theorem}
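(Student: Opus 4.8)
The plan is to treat the two implications separately. The forward implication is essentially bookkeeping with results already established; the reverse implication requires actually exhibiting a bistellar flip, and is where the real work lies.

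\emph{Forward direction.} Suppose $S$ and $T$ are related by a bistellar flip supported on a set $V$ of $2d+2$ vertices, no $2d+1$ of which lie on a hyperplane, and put $X=\operatorname{conv}V$. Writing $V=\{v_0<\dots<v_{2d+1}\}$ along the moment curve, Lemma~\ref{affdep} shows that the even-index $d$-simplex $P=(v_0,v_2,\dots,v_{2d})$ and the odd-index $d$-simplex $Q=(v_1,v_3,\dots,v_{2d+1})$ intertwine, and applying Definition~\ref{def:index} and Lemma~\ref{dfaceclass} to $X\cong C(2d+2,2d)$ shows that $P$ and $Q$ are its only two internal $d$-simplices. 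Since such a polytope has exactly two triangulations, these are $S|_X$ and $T|_X$; as every facet of $X$ is a $(2d-1)$-simplex, both restrictions induce the trivial triangulation on $\partial X$, so the $d$-faces occurring in $S|_X$ and in $T|_X$ coincide except that one contains $P$ and the other $Q$, say $P$ occurs in $S|_X$. I would then compare with the ambient polytope: any simplex of $S$ not contained in $X$ is a simplex of $T$, and conversely, so the symmetric difference $e(S)\,\triangle\,e(T)$ is contained in the union of the $d$-faces of the simplices of $S|_X$ with those of $T|_X$; each such $d$-face either lies on a facet of $X$, hence is a face of both $S$ and $T$, or lies in the interior of $X$, hence equals $P$ or $Q$. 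Since $P$ and $Q$ skip a vertex of $V$ between consecutive entries, those entries differ by at least $2$, so by Lemma~\ref{dfaceclass} both lie on no lower boundary facet of $C(m,2d)$ and have relative interiors in the interior of $C(m,2d)$; thus $P$ is a $d$-face of $S$ lying on no lower facet, i.e.\ $P\in e(S)$, and likewise $Q\in e(T)$, while $e(S)$ and $e(T)$ being non-intertwining (Theorem~\ref{th1}) together with $P\wr Q$ forces $Q\notin e(S)$ and $P\notin e(T)$. Hence $e(S)=(e(T)\setminus\{Q\})\cup\{P\}$ with $P\ne Q$, which is the assertion.

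\emph{Reverse direction, set-up.} Suppose $e(S)=R\cup\{A\}$ and $e(T)=R\cup\{B\}$ with $A\ne B$, $A\notin e(T)$, $B\notin e(S)$. First I would show that $A$ and $B$ intertwine in one of the two orders: since $R\cup\{A\}$ and $R\cup\{B\}$ are non-intertwining, $A$ and $B$ each intertwine no element of $R$, so if $A$ and $B$ did not intertwine each other then $R\cup\{A,B\}$ would be a non-intertwining subset of $\Index{m}$ of cardinality $\binom{m-d-1}{d}+1$, contradicting Lemma~\ref{size}. Say $A\wr B$; then the $2d+2$ entries of $A$ and $B$ are distinct, and writing $V$ for their union and $X=\operatorname{conv}V\cong C(2d+2,2d)$, the simplices $A$ and $B$ are the two internal $d$-simplices of $X$. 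Moreover, by Proposition~\ref{propa} we have $A=e(\sigma_0)$ for some $\sigma_0\in S$, so $A$ is a $d$-face of $S$; and $B$ lies in $\Index{m}$ (hence is separated) but not in $e(S)$, so by Lemma~\ref{deylem} $B$ is not a face of $S$.

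\emph{Reverse direction, the crux.} The step I expect to be the main obstacle is to show that $S$ restricts to a triangulation of $X$. The key local fact is that the only $d$-face of $S$ whose relative interior meets that of $B$ is $A$ itself: any such $d$-face lies in $e(S)=R\cup\{A\}$, must intertwine $B$ (by the analysis of intersecting simplices used in the proof of Lemma~\ref{deylem}, together with the form of the affine dependency in Lemma~\ref{affdep}), and $B$ intertwines no element of $R$. Using this and examining the star in $S$ of the point $z$ at which the relative interiors of $A$ and $B$ meet — all of whose simplices have $A$ as a face, since $A$ is a face of $S$ and $z$ is interior to $A$ — one checks that every simplex of $S$ meeting $z$ has all its vertices in $V$ and that these simplices cover $X$; hence $S$ restricts to a triangulation of $X$. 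Since $A$ is a face of that restriction but is not a face of the triangulation $S_B$ of $X$ whose interior $d$-simplex is $B$ (Lemma~\ref{deylem} applied inside $X$), the restriction is the triangulation $S_A$ of $X$ whose interior $d$-simplex is $A$. Granting this, I would conclude as follows: performing the bistellar flip of $S$ supported on $V$ replaces $S_A$ by $S_B$ and produces a triangulation $S'$; by the forward direction, $e(S')=(e(S)\setminus\{A\})\cup\{B\}=R\cup\{B\}=e(T)$; and since a triangulation of $C(m,2d)$ is determined by its set of $d$-faces (Lemma~\ref{deylem}, i.e.\ Dey's theorem), $S'=T$. Hence $S$ and $T$ are related by a bistellar flip.
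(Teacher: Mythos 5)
Your argument is correct and is a fleshed-out version of the paper's: both identify the flip support $X\cong C(2d+2,2d)$, note it has exactly two internal $d$-simplices, and conclude that $e(S)$ and $e(T)$ differ in exactly one element. Your extra care in checking that $P,Q\in\Index{m}$ and that $P\notin e(T)$, $Q\notin e(S)$ is welcome but not conceptually different.

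\textbf{Reverse direction.} The set-up (deducing $A\wr B$ from a cardinality count via Lemma~\ref{size}) is fine, and your observation that $A$ is the unique $d$-face of $S$ meeting the relative interior of $B$ is correct and well-justified. But the declared crux --- that ``one checks that every simplex of $S$ meeting $z$ has all its vertices in $V$ and that these simplices cover $X$'' --- is asserted, not proved, and this is precisely the hard content of the theorem. I do not see how to verify either claim from what you have set up. In particular, nothing you have shown rules out a $2d$-simplex $\sigma\supseteq A$ of $S$ with a vertex outside $V$, and nothing addresses why the star of $z$ would cover all of $X=\operatorname{conv}(A\cup B)$ rather than just a neighbourhood of $z$.

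The paper sidesteps both difficulties by a different organization: it sets $Q$ to be the union of the $2d$-simplices of $S$ containing $A$ (so $Q$ is covered by construction), observes via Lemma~\ref{deylem} that $Q$ is equally the union of the $2d$-simplices of $T$ containing $B$, and then invokes the neighbourliness of cyclic polytopes (Theorem~\ref{neighbourly}: any $\le d$ vertices form a boundary face) to conclude that each facet $A_i=A\setminus\{a_i\}$ of $A$ lies on $\partial C(m,2d)$ and hence on $\partial Q$. From this, each $2d$-simplex of $T|_Q$ containing $A_i$ must be exactly $A_i\cup B$, which pins down $T|_Q$ as one of the two triangulations of $\operatorname{conv}(A\cup B)$, and therefore $Q=\operatorname{conv}(A\cup B)$. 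Your proposal never invokes neighbourliness or any substitute for it, and I don't see how to close the gap without some input of that kind. You should either supply the missing verification or switch to the paper's $Q$-first approach.
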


\begin{example}
When $d=1$ and the vertices are in convex position, 
a bistellar flip amounts to replacing one diagonal of
a quadrilateral with the other diagonal.  The vertices of $C(m,2)$ are 
always in convex position, so bistellar flips always amount to 
replacing one diagonal of a quadrilateral by the other one; clearly, 
if $S$ and $T$ are related in this way, then $e(S)$ and $e(T)$ differ
by one element, namely, the diagonal being flipped.
\end{example}

We recall the following result of Rambau:

\begin{theorem}[{\cite{R}}]\label{rambau} Any triangulation of a cyclic polytope can be transformed
into any other one by a sequence of bistellar flips.  
\end{theorem}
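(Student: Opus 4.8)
\emph{Overview.} Theorem~\ref{rambau} is Rambau's theorem \cite{R}, which the paper recalls rather than reproves; nonetheless, for the even-dimensional polytopes $C(m,2d)$ of interest here one can give a proof resting entirely on the combinatorics built up above. The plan is to pass through the dictionary $S\mapsto e(S)$, reduce to a purely combinatorial connectivity statement, and prove that statement by induction on $(d,m)$ using the operations $S/1$, $S\setminus\{1,2\}$ and the reconstruction Proposition~\ref{err}.

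\emph{Reduction.} By Theorems~\ref{th1} and \ref{th2}, $S\mapsto e(S)$ is a bijection from $S(m,2d)$ onto the family $\mathcal E(m,2d)$ of non-intertwining subsets of $\Index{m}$ of cardinality $\binom{m-d-1}{d}$, and by Theorem~\ref{th3} two triangulations differ by a bistellar flip precisely when their $e$-sets differ in a single $(d+1)$-tuple. So it suffices to prove that the graph on $\mathcal E(m,2d)$ with an edge between any two sets differing in exactly one element is connected.

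\emph{Induction.} I would induct on $d$, and for fixed $d$ on $m$; the base cases are $C(2d+1,2d)$ (a simplex, with its unique triangulation) and, at the bottom of the dimension recursion, $C(p,0)$ (a point). Also immediate is $C(2d+2,2d)$: there are only $d+2$ separated $(d+1)$-tuples in $\{1,\dots,2d+2\}$ and exactly one intertwining pair among them, so by Theorems~\ref{th1}--\ref{th3} this polytope has precisely two triangulations, joined by a single flip. For the inductive step, fix $S,T\in S(m,2d)$. By Lemma~\ref{equality} it is enough to connect $S$ by flips to some $S'$ with $S'/1=T/1$ and $S'\setminus\{1,2\}=T\setminus\{1,2\}$, since then $e(S')=e(T)$ and hence $S'=T$. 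Now $S/1,T/1\in S([2,m],2d)$ and $S\setminus\{1,2\},T\setminus\{1,2\}\in S([3,m],2d-2)$, so by the inductive hypothesis each pair is joined by a sequence of bistellar flips. It then remains to \emph{lift} these: a single flip of $S/1$ should be realizable by a bounded sequence of flips of $S$ (adjusting $S\setminus\{1,2\}$ in a controlled way), and a flip of $S\setminus\{1,2\}$ likewise by flips of $S$, all the while preserving the compatibility $(S\setminus\{1,2\})\prec(S/1)\setminus 2$ required by Proposition~\ref{err}, so that $S$ remains a triangulation of $C(m,2d)$. Composing the lifted sequences carries $S$ to the desired $S'$.

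\emph{Main obstacle.} The crux is exactly the lifting step: controlling how a flip ``upstairs'' in $C([2,m],2d)$ or ``downstairs'' in $C([3,m],2d-2)$ propagates through the explicit description of $S$ in terms of $S/1$ and $S\setminus\{1,2\}$ given in Proposition~\ref{err} --- in particular keeping the $\prec$-condition in force after the flip and bounding the number of flips required, so that no infinite cascade occurs. This is precisely the heart of the Rambau / Rambau--Santos analysis; in full generality, including odd-dimensional cyclic polytopes, it is the original result \cite{R}, and the reduction above only exhibits how, for the cyclic polytopes $C(m,2d)$, Theorem~\ref{rambau} follows from Theorems~\ref{th1}--\ref{th3}.
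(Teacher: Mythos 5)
The paper does not prove Theorem~\ref{rambau} at all: it is stated as a citation to Rambau \cite{R} and used as a black box in the proof of Theorem~\ref{central}. So there is no ``paper's own proof'' to compare against, and reproducing or reproving Rambau's result was not what the paper attempts.

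Your sketch correctly identifies the natural strategy within the paper's combinatorial framework (reduce flip-connectivity of $S(m,2d)$ to connectivity of the graph on $\mathcal E(m,2d)$, induct via $S/1$ and $S\setminus\{1,2\}$, reconstruct via Proposition~\ref{err}), and you are commendably honest that the ``lifting'' step is left open. But that step is not a technicality one can wave at --- it is the entire content of the theorem, and the naive form you state (``a single flip of $S/1$ should be realizable by a bounded sequence of flips of $S$'') is not obviously true and is not something Proposition~\ref{err} gives you. The difficulty is concrete: a flip of $T=S/1$ to $T'$ need not preserve $W\prec T'\setminus 2$, so one cannot simply lift the flip; one must simultaneously modify $W=S\setminus\{1,2\}$ to restore compatibility, and then show that this joint modification is itself realized by flips of $S$. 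Moreover, the two inductive sequences of flips --- one on $S/1$, one on $S\setminus\{1,2\}$ --- are produced independently and there is no a priori reason they can be interleaved into a sequence of compatible pairs. This is precisely the recursive ``deletion/contraction'' analysis that Rambau carries out (and that Rambau--Santos extend), and it requires careful bookkeeping that the proposal does not supply. So what you have is a reduction of Rambau's theorem to a lifting lemma that is, in essence, Rambau's theorem; as a proof it has a genuine gap at its core, and the appropriate disposition here, as in the paper, is to cite \cite{R}.
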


From this, we deduce the following central result of our paper:

\begin{theorem}\label{central} Triangulations of $C(n+2d,2d)$ correspond 
bijectively to
basic tilting modules for $A^d_n$ contained in $\leftsub{A^d_n}{M}$; two triangulations are related by
a bistellar flip iff the corresponding tilting objects are related by a single
mutation.  
\end{theorem}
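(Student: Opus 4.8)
The plan is to assemble the statement from Corollary~\ref{sumcor}, Theorem~\ref{theo.tilt-exchange}, Theorem~\ref{th3} and Rambau's Theorem~\ref{rambau}. Corollary~\ref{sumcor} already gives a bijection between triangulations of $C(n+2d,2d)$ and isomorphism classes of basic rigid summands of $\leftsub{A^d_n}{M}$ having $\binom{n+d-1}{d}$ indecomposable summands, so the substantive task is to show that this last class of modules coincides with the class of basic tilting $A^d_n$-modules contained in $\add\leftsub{A^d_n}{M}$. One inclusion is formal: a basic tilting module is rigid, has projective dimension at most $\gld A^d_n\le d$ (since $A^d_n$ is $d$-representation finite), and has as many non-isomorphic indecomposable summands as $A^d_n$ has simple modules, namely $\binom{n+d-1}{d}$ by the indexing of simples in Theorem~\ref{theo.indexing}; if such a module lies in $\add\leftsub{A^d_n}{M}$ it is a basic summand of $\leftsub{A^d_n}{M}$ of exactly the form occurring in Corollary~\ref{sumcor}.

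For the opposite inclusion I would induct along bistellar flips. The regular module $\leftsub{A^d_n}{A^d_n}=\bigoplus\leftsub{A^d_n}{P}_\ast$ is a tilting module, and by Theorem~\ref{theo.indexing.props}(1) each of its indecomposable summands is a summand of $\leftsub{A^d_n}{M}$; let $S_0$ be the triangulation attached to it by Corollary~\ref{sumcor}. Given any triangulation $S$, Rambau's Theorem~\ref{rambau} produces a chain of bistellar flips $S_0=S^{(0)},S^{(1)},\dots,S^{(k)}=S$. By Theorem~\ref{th3}, together with the constancy $|e(\,\cdot\,)|=\binom{n+d-1}{d}$ from Theorem~\ref{th1}, at each step $e(S^{(\ell)})=E\sqcup\{a\}$ and $e(S^{(\ell+1)})=E\sqcup\{b\}$ with $a\ne b$ and both sets non-intertwining; moreover $a$ and $b$ are internal $d$-simplices, since the $d$-simplices lying in an upper boundary facet belong to $e(T)$ for every triangulation $T$ and so are never flipped. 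Passing to modules via Corollary~\ref{sumcor} (writing $M_c$ for the indecomposable summand of $\leftsub{A^d_n}{M}$ indexed by a tuple $c$), the rigid summands attached to $S^{(\ell)}$ and $S^{(\ell+1)}$ are $T\oplus M_a$ and $T\oplus M_b$, where $T\in\add\leftsub{A^d_n}{M}$ is the summand indexed by $E$ (it contains all projective-injective summands), the modules $M_a,M_b$ are non-projective-injective, and $M_b\notin\add(T\oplus M_a)$. If $T\oplus M_a$ is a tilting module, Theorem~\ref{theo.tilt-exchange}(1) shows $T\oplus M_b$ is again a tilting module; starting from $S_0\leftrightarrow\leftsub{A^d_n}{A^d_n}$ and running along the flip chain, the rigid summand attached to $S$ is a tilting module. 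This proves the two classes agree, hence the first assertion.

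For the second assertion I chase the dictionary just set up: triangulations $S$ and $S'$ are related by a bistellar flip iff (Theorem~\ref{th3}) $e(S)$ and $e(S')$ agree in all but one $(d+1)$-tuple iff (Corollary~\ref{sumcor} and the previous paragraph) the corresponding basic tilting modules $T_S$ and $T_{S'}$ share all indecomposable summands but one. The latter is precisely the condition that $T_S$ and $T_{S'}$ differ by a single tilting mutation; which direction of the exchange occurs is governed by Theorem~\ref{theo.tilt-exchange}(2), and the exact exchange sequence between the two differing summands is the one produced in Theorem~\ref{theo.tilt-exchange}(4), or its dual (5) in the opposite case.

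I expect the step requiring the most care to be the verification that every bistellar flip genuinely fits the hypotheses of Theorem~\ref{theo.tilt-exchange}. One must confirm that only internal $d$-simplices are ever flipped, so that the unchanged part $T$ is a summand of $\leftsub{A^d_n}{M}$ containing all projective-injectives and the induction never leaves $\add\leftsub{A^d_n}{M}$; and one must see that non-intertwining-ness of $e(S^{(\ell+1)})$ is equivalent to rigidity of $T\oplus M_b$. The latter uses Theorem~\ref{theo.indexing.props}(4), which matches non-vanishing of $\Ext^d_{A^d_n}$ between summands of $\leftsub{A^d_n}{M}$ with the intertwining relation, together with the $d$-cluster-tilting vanishing $\Ext^i_{A^d_n}(M_a,M_b)=0$ for $0<i<d$ and $\gld A^d_n\le d$, so that rigidity of a direct sum of summands of $\leftsub{A^d_n}{M}$ is controlled entirely by $\Ext^d$. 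Once this translation between the combinatorics and the homological algebra is in place, Rambau's theorem supplies exactly the connectivity that makes the induction go through.
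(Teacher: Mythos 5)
Your proof is correct and takes essentially the same route as the paper: identify triangulations with maximal rigid summands of $\leftsub{A^d_n}{M}$ via Corollary~\ref{sumcor}, then propagate the tilting property from $A^d_n$ along chains of bistellar flips using Theorem~\ref{th3}, Theorem~\ref{theo.tilt-exchange}, and Rambau's connectivity result (Theorem~\ref{rambau}). The paper states the argument more tersely (leaving the inclusion ``tilting $\Rightarrow$ maximal rigid in $\add M$'' implicit), but the ingredients and their deployment are the same.
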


If $A\in \Index{m}$ 
and 
$R$ is a non-intertwining subset of $\Index{m}$ not containing $A$, 
we say that $A$ is a complement for
$R$ if $R \cup \{A\}$ corresponds to a triangulation (that is to say,
it is non-intertwining and has cardinality $m-d-1\choose d$).  

If $A$ and $B$ are distinct complements to some $R$, they are called
\emph{exchangeable}.  

\begin{proposition}\label{exiffint} $A$ and $B$ are exchangeable iff they 
intertwine
(in some order). \end{proposition}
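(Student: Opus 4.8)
The forward direction is immediate from Lemma~\ref{size}. If $A$ and $B$ are distinct complements of $R$, then $R\cup\{A\}$ and $R\cup\{B\}$ are non-intertwining of cardinality ${m-d-1\choose d}$, so no element of $R$ intertwines $A$ or $B$; if in addition $A$ and $B$ did not intertwine each other, $R\cup\{A,B\}$ would be non-intertwining of cardinality ${m-d-1\choose d}+1$, contradicting Lemma~\ref{size}. So $A$ and $B$ must intertwine.

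For the converse I plan to realize the exchange as a single bistellar flip. Assume (exchanging $A$ and $B$ if necessary) that $A\wr B$, and write $A=(i_0,\dots,i_d)$, $B=(j_0,\dots,j_d)$, so $i_0<j_0<i_1<j_1<\dots<i_d<j_d$. By Lemma~\ref{affdep} the points $p_{i_0},p_{j_0},\dots,p_{i_d},p_{j_d}$ are in convex position and span a subpolytope $P\subseteq C(m,2d)$ isomorphic to $C(2d+2,2d)$ whose Radon partition is exactly $\{A\}\sqcup\{B\}$. Such a polytope has precisely two triangulations: $T_A$, whose maximal simplices are obtained by deleting one of the vertices $p_{j_k}$, and $T_B$, whose maximal simplices are obtained by deleting one of the vertices $p_{i_k}$. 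One checks that for $X\subseteq\{0,\dots,d\}$ the tuple $m_X=m_X(A,B)$ occurs as a $d$-face of $T_A$ precisely when $X\neq\emptyset$, and as a $d$-face of $T_B$ precisely when $X\neq\{0,\dots,d\}$ (and, as a sanity check for the combinatorial variant below, the only intertwining pair among the $m_X$ is $(A,B)=(m_{\{0,\dots,d\}},m_\emptyset)$). Since the $d$-faces of a triangulation are pairwise non-intertwining --- this is the argument proving $e(S)$ is non-intertwining, combined with the geometric reading of Lemma~\ref{affdep} --- the set $\Sigma_A$ of \emph{separated} $d$-faces of maximal simplices of $T_A$ is a non-intertwining subset of $\Index{m}$; it contains $A$ and every separated $m_X$ with $X\neq\emptyset$, but it does not contain $B$ (as $B=m_\emptyset$ uses all of $j_0,\dots,j_d$, so it lies in no maximal simplex of $T_A$).

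Next I would build an ambient triangulation carrying this local picture. Extend $\Sigma_A$ to an inclusion-maximal non-intertwining subset of $\Index{m}$; by Lemma~\ref{size} this has cardinality ${m-d-1\choose d}$, so by Theorem~\ref{th2} it equals $e(S)$ for some $S\in S(m,2d)$. Every maximal simplex of $T_A$ has all of its $d$-faces either non-separated or in $\Sigma_A\subseteq e(S)$, so by Lemma~\ref{deylem} (Dey's reconstruction) it is a maximal simplex of $S$; hence $S$ restricts to $T_A$ on $P$. Now perform the bistellar flip of $S$ supported on the $2d+2$ vertices of $P$, replacing $T_A$ by $T_B$, and call the result $T$. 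Then $e(T)$ contains every separated $d$-face of $T_B$, in particular $B$; moreover $A\notin e(T)$ and $B\notin e(S)$, since otherwise one of the non-intertwining sets $e(S)$, $e(T)$ (Theorem~\ref{th1}) would contain the intertwining pair $(A,B)$. By Theorem~\ref{th3} the sets $e(S)$ and $e(T)$ agree except for one $(d+1)$-tuple each, so $e(S)=R\cup\{A\}$ and $e(T)=R\cup\{B\}$ with $R:=e(S)\cap e(T)$ and $A,B\notin R$. Thus $A$ and $B$ are distinct complements of $R$, i.e.\ exchangeable.

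The step I expect to be the main obstacle is the assertion that an inclusion-maximal non-intertwining subset of $\Index{m}$ (here one containing $\Sigma_A$) automatically has the full cardinality ${m-d-1\choose d}$ --- the ``matroid exchange'' property for non-intertwining families, which I would prove by the same induction on $(m,d)$ via the operations $X/1$ and $X\setminus\{1,2\}$ that underlie Lemma~\ref{size}. The bookkeeping identifying the two triangulations of $C(2d+2,2d)$ and their separated $d$-faces with the tuples $m_X(A,B)$ also needs to be done carefully, though it is routine. A fully combinatorial alternative that avoids the geometry altogether is to take $R$ to be a maximal non-intertwining subset of $\{C\in\Index{m}\mid C\text{ intertwines neither }A\text{ nor }B\}$ containing all separated $m_X(A,B)$, and to show $|R|={m-d-1\choose d}-1$ directly; again the case analysis of $R$, $A$, $B$ under $/1$ and $\setminus\{1,2\}$ is the technical heart.
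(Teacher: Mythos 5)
Your forward direction matches the paper's. For the converse, you replace the paper's geometric construction with a combinatorial one: extract the separated $d$-faces $\Sigma_A$ of one triangulation of the subpolytope on $A\cup B$, extend to an inclusion-maximal non-intertwining subset of $\Index{m}$, and invoke Lemma~\ref{size} to conclude the extension has cardinality $\binom{m-d-1}{d}$, hence equals $e(S)$ for some triangulation $S$.

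That step is a genuine gap, and the flaw is not just ``technical bookkeeping'' as you suggest: the assertion that an inclusion-maximal non-intertwining subset of $\Index{m}$ automatically has full cardinality $\binom{m-d-1}{d}$ is \emph{false}. Proposition~\ref{prop:higherd} of the paper exhibits, for every $d\ge 3$, maximal non-intertwining subsets of $\IndexC{2d+3}$ (and hence of $\Index{2d+3}$, after adjoining the boundary tuples) that are strictly smaller than the overall maximum. Lemma~\ref{size} only gives an upper bound on cardinality, not the exchange property. Your proposed fix --- running the same induction via $X/1$ and $X\setminus\{1,2\}$ that underlies Lemma~\ref{size} --- cannot rescue this, precisely because the induction proves an upper bound, not that every maximal set meets it; and indeed no argument of this kind can succeed, since the general statement is contradicted. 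It is conceivable that the \emph{specific} set $\Sigma_A$ always admits an extension of full cardinality (this is true, but only because of the paper's geometric argument), but you give no reason why $\Sigma_A$ would be special, and the ``matroid exchange'' heuristic you invoke to justify it fails. Your ``fully combinatorial alternative'' at the end has the same problem.

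The paper sidesteps all of this by constructing an ambient triangulation directly: take a height function on the vertices of $C(m,2d)$ in which the $2d+2$ vertices of $A\cup B$ are much lower than the others; the induced regular triangulation then provably restricts to a triangulation of the convex hull of $A\cup B$, which can be taken to be $T_A$. One then flips inside that subpolytope. The geometric existence statement is exactly what the combinatorial route is missing, and there is no known purely combinatorial surrogate for it.
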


\begin{proposition}\label{mixes}
Let $A$ and $B$ be exchangeable.  $A$ and $B$ are complements to 
$R\subset \Index{m}$ iff
$R$ is a non-intertwining subset of $\Index{m}\setminus\{A,B\}$ 
with cardinality ${m-d-1\choose d}-1$, 
which 
contains every separated $(d+1)$-tuple from $A\cup B$ other than
$A$ and $B$.
\end{proposition}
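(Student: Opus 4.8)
The plan is as follows. By Proposition~\ref{exiffint}, exchangeability of $A$ and $B$ means that they intertwine in one of the two orders; after relabelling I assume $A\wr B$, writing $A=(a_0,\dots,a_d)$ and $B=(b_0,\dots,b_d)$ with $a_0<b_0<a_1<b_1<\cdots<a_d<b_d$. Put $U=A\cup B$, a $(2d+2)$-element subset of $[1,m]$; on the moment curve it is the vertex set of a copy of $C(2d+2,2d)$, whose unique Radon partition, by Lemma~\ref{affdep}, is exactly $A\sqcup B$ (the even- versus odd-indexed vertices of $U$). Write $\mathcal F$ for the set of separated $(d+1)$-tuples whose entries all lie in $U$. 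I would first record the elementary observation that \emph{$\mathcal F$ is non-intertwining except for the single pair $\{A,B\}$}: if $C\in\mathcal F$ intertwined $A$ in either order, then $C$ and $A$ would be disjoint with $|C|+|A|=2d+2=|U|$, forcing $C=U\setminus A$ and making $\{C,A\}$ a Radon partition of $U$, whence $C=B$; symmetrically for $B$. Combined with Lemma~\ref{dfaceclass} this shows moreover that every $C\in\mathcal F\setminus\{A,B\}$ is a $d$-face of each of the two triangulations $\mathcal T_A,\mathcal T_B$ of $\operatorname{conv}(U)$, whereas $A$ is a $d$-face of $\mathcal T_A$ only and $B$ of $\mathcal T_B$ only.

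The technical core of the argument is the following statement, which I will call $(\ast)$: \emph{for every $r\in\Index{m}\setminus\{A,B\}$, the tuple $r$ intertwines $A$ or $B$ in some order if and only if $r$ intertwines some element of $\mathcal F\setminus\{A,B\}$.} I expect this to be the main obstacle. I would prove it using the geometric reading of Lemma~\ref{affdep} — two $d$-simplices in $C(m,2d)$ intertwine precisely when their relative interiors meet — together with the previous paragraph: if $\operatorname{conv}(r)$ meets the relative interior of a common $d$-face $\operatorname{conv}(C)$ of $\mathcal T_A$ and $\mathcal T_B$, then tracking an intersection point across the cells of $\mathcal T_A$, resp.\ $\mathcal T_B$, that contain $\operatorname{conv}(C)$ in their boundary produces an intersection with $\operatorname{conv}(A)$, resp.\ $\operatorname{conv}(B)$; conversely an intersection of $\operatorname{conv}(r)$ with $\operatorname{conv}(A)$ forces, again via $\mathcal T_A$, an intersection with one of its other $d$-faces, which lies in $\mathcal F\setminus\{B\}$ and is distinct from $A$ when $r\ne A$. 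Equivalently, $(\ast)$ can be proved purely order-theoretically by writing out the interleaving inequalities relative to the vertices of $U$; a third option is an induction on $(m,d)$ along the operations $S/1$ and $S\setminus\{1,2\}$ of Section~\ref{cyclic}, in the style of the proof of Theorem~\ref{th2}.

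Granting the observation of the first paragraph and $(\ast)$, the proposition follows quickly. For the forward implication, assume $A$ and $B$ are complements to $R$; then by Theorems~\ref{th1} and~\ref{th2} the sets $R\cup\{A\}$ and $R\cup\{B\}$ are triangulations, that is, non-intertwining of cardinality $\binom{m-d-1}{d}$, so $R$ is non-intertwining, lies in $\Index{m}\setminus\{A,B\}$, and has cardinality $\binom{m-d-1}{d}-1$. If some $C\in\mathcal F\setminus\{A,B\}$ were not in $R$, then $C\notin R\cup\{A\}$, and since $R\cup\{A\}$ has the maximal possible size for a non-intertwining subset of $\Index{m}$ (Lemma~\ref{size}) while $C$ does not intertwine $A$ (first paragraph), $C$ would intertwine some $r\in R$; by $(\ast)$ this $r$ intertwines $A$ or $B$, contradicting that $R\cup\{A\}$ and $R\cup\{B\}$ are non-intertwining. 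Hence $R$ contains every element of $\mathcal F\setminus\{A,B\}$, as required.

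For the converse, suppose $R\subseteq\Index{m}\setminus\{A,B\}$ is non-intertwining of cardinality $\binom{m-d-1}{d}-1$ and contains $\mathcal F\setminus\{A,B\}$. If some $C\in R$ intertwined $A$ or $B$ in some order, then $C\ne A,B$ (as $A,B\notin R$), so by $(\ast)$ it would intertwine some $D\in\mathcal F\setminus\{A,B\}\subseteq R$, contradicting that $R$ is non-intertwining. Thus $R\cup\{A\}$ and $R\cup\{B\}$ are non-intertwining; each has cardinality $\binom{m-d-1}{d}$, so by Theorem~\ref{th2} each equals $e(S)$ for a unique triangulation, i.e.\ $A$ and $B$ are complements to $R$.
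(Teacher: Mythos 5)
Your strategy is to reduce Proposition~\ref{mixes} to the observation that $\mathcal F$ is non-intertwining apart from the pair $\{A,B\}$, together with the two-sided auxiliary claim $(\ast)$. The observation in your first paragraph is correct, and I agree that $(\ast)$ ``is the main obstacle'' --- but that obstacle is not merely unproven in your write-up, it is in fact \emph{false} in the direction your forward implication needs. You need the implication: if $r\in\Index{m}\setminus\{A,B\}$ intertwines some $C\in\mathcal F\setminus\{A,B\}$, then $r$ intertwines $A$ or $B$. Take $d=2$, $m\ge 12$, $A=(1,5,9)$, $B=(3,7,11)$ (so $A\wr B$ and $U=\{1,3,5,7,9,11\}$), $C=(1,5,11)\in\mathcal F\setminus\{A,B\}$, and $r=(2,10,12)\in\Index{m}$. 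Then $C\wr r$ since $1<2<5<10<11<12$, yet $r$ intertwines neither $A$ (since $10>9$ blocks $A\wr r$, and $r\wr A$ fails at the first entry) nor $B$ (since $10>7$ blocks $r\wr B$, and $B\wr r$ fails at the first entry). Thus $(\ast)$ fails, $r$ is disjoint from $U$, and this is not an edge case about shared vertices. A smaller instance is $A=(1,3,5)$, $B=(2,4,6)$, $C=(1,3,6)$, $r=(2,5,7)$.

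The consequence is that the forward direction of your proof collapses at ``by $(\ast)$ this $r$ intertwines $A$ or $B$'': you cannot draw that conclusion. (And the paper indeed does not try to: for the forward direction it invokes Theorem~\ref{th3} to get a bistellar flip, and reads off the $d$-faces of $\operatorname{conv}(A\cup B)$ from the fact that the two triangulations restrict to triangulations of $\operatorname{conv}(A\cup B)$ --- an argument that bypasses any analogue of the $\Leftarrow$ direction entirely.) The other direction of $(\ast)$, which your converse implication uses, does appear to be true, but it is precisely the content of the case analysis the paper carries out, including the subtlety that tuples such as $(a_0,\dots,a_t,b_t,a_{t+2},\dots,a_d)$ need not be separated a priori and are only seen to be separated \emph{because} they intertwine $C$; your geometric sketch (``tracking an intersection point across the cells'') is also not valid as stated, since a $d$-simplex entering a $2d$-cell of $\mathcal T_A$ through a boundary $d$-face need not intertwine the internal face $A$ of that cell. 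So even restricting to the salvageable half, a genuine proof is missing. In short: the reduction is clean, but the pivot claim $(\ast)$ is half false and half unproven, and the false half is load-bearing.
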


\begin{remark} Suppose $A$ and $B$ are distinct complements to 
$R\subset \Index{n+2d}$.  
By Theorems~\ref{theo.tilt-exchange} and~\ref{central}, 
the indecomposable
$A^d_{n}$-modules $M_A$ and $M_B$ are related by an exact sequence.  
All the indecomposable summands of the terms of this exact sequence correspond
to certain $(d+1)$-tuples which are contained in $R$.  This implies a weaker version
of one direction of the previous proposition.
\end{remark}

\begin{example} In the $d=1$ case, it is clear that $A$ and $B$ are
exchangeable iff they cross in their interiors (as line segments) 
iff they intertwine in some order (as increasing ordered pairs from $\{1,\dots,m\}$).  
In this case, the triangulations in which $A$ can be exchanged
for $B$ are exactly those containing $A$ and the four edges of the quadrilateral
defined by the vertices of $A$ and $B$.
\end{example}

\subsection{Proofs for Section~\texorpdfstring{\ref{local}}{4}}

\begin
{proof}
[Proof of Proposition \ref{exiffint}]
Suppose that $A$ and $B$ are exchangeable; in other words, there
is some $R\subset \Index{m}$, such that $A$ and $B$ are complements
to $R$.

Since $\{A\}\cup \{B\} \cup R$ is too big to be a non-intertwining set of $d$-faces, but $\{A \} \cup R$ and $\{B\} \cup R$ are both non-intertwining, it must be that $A$ and 
$B$ intertwine in some order.  
This proves the first direction.  

Conversely, suppose that $A$ and $B$ intertwine.  We must show that 
there is some $R\subset \Index{m}$ such that 
$R\cup \{A\}$ and 
$R\cup \{B\}$ correspond to triangulations.

The
convex hull of $A\cup B$ is a cyclic polytope with $2d+2$ vertices.
By Lemma~\ref{dfaceclass}, it has exactly two internal $d$-simplices,
$A$ and $B$. A triangulation of $C(2d+2,2d)$ must use
exactly one of these internal $d$-simplices.  The two triangulations of
$C(2d+2,2d)$ are related by a bistellar flip.  
Therefore, it suffices to show that there is a triangulation $S$ of 
$C(m,2d)$
which restricts to a triangulation of the convex hull of $A\cup B$.  

Given a set of points $X$ in $\mathbb R^{e}$, one way to construct a 
triangulation of their convex hull $Q$ is to define a height function $f \colon X\rightarrow
\mathbb R$, and then to consider the points in $\mathbb R^{e+1}$ of 
the form $\hat x=(x,f(x))$.  Let $\widehat Q$ be the convex hull of the points
$\hat x$.  Take the lower facets of $\widehat Q$ and project them onto $Q$.  
This defines a subdivision of $Q$.  If $f$ is chosen generically, this
subdivision will be a triangulation.  Triangulations arising in this way
are called \emph{regular}.

Define a height function for the $m$ vertices of $C(m,2d)$, such that 
the vertices from $A\cup B$ are much lower than the
other vertices.  The above construction will result in a triangulation
which restricts to a triangulation of the convex hull of $A\cup B$.
\end{proof}

We recall an important property of cyclic polytopes:

\begin{theorem}[{\cite[Proposition VI.4.2]{bar}}]
\label{neighbourly} In $C(m,\delta)$, any set of $\lfloor \frac {\delta}2 \rfloor$ or
fewer vertices form the vertices of a boundary face.  \end{theorem}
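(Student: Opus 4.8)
The plan is to use the classical ``sum of even powers'' construction to exhibit an explicit supporting hyperplane. Fix $k \le \lfloor \delta/2\rfloor$ of the chosen parameters, say $t_{i_1} < \cdots < t_{i_k}$, corresponding to vertices $p_{t_{i_1}}, \dots, p_{t_{i_k}}$ of $C(m,\delta)$. Since $k \le \lfloor \delta/2\rfloor$ we have $2k \le \delta$, so the polynomial
\[ q(s) = \prod_{j=1}^{k} (s - t_{i_j})^2 \]
has degree $2k \le \delta$; write $q(s) = c_0 + c_1 s + \cdots + c_\delta s^\delta$ (allowing $c_\ell = 0$ for $\ell > 2k$). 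Observe that $q(s) \ge 0$ for all real $s$, with equality exactly when $s \in \{t_{i_1}, \dots, t_{i_k}\}$.

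Next I would turn this coefficient vector into an affine functional. Define $\varphi \colon \mathbb{R}^\delta \to \mathbb{R}$ by $\varphi(x_1, \dots, x_\delta) = c_0 + c_1 x_1 + \cdots + c_\delta x_\delta$. Evaluating on the moment curve gives $\varphi(p_s) = q(s) \ge 0$, so $\varphi \ge 0$ on every vertex of $C(m,\delta)$, hence on the whole polytope; thus $H = \{\varphi = 0\}$ is a supporting hyperplane. The vertices of $C(m,\delta)$ lying on $H$ are exactly those $p_{t_\ell}$ with $q(t_\ell) = 0$, that is, precisely $p_{t_{i_1}}, \dots, p_{t_{i_k}}$ (using that the $t_\ell$ are pairwise distinct). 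Therefore the face $C(m,\delta) \cap H$ has vertex set exactly $\{p_{t_{i_1}}, \dots, p_{t_{i_k}}\}$, which is the desired conclusion. (For $k = 0$ this is the trivial statement about the empty face.)

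To see that this face is genuinely a $(k-1)$-simplex on those vertices, one invokes the fact already used in the proof of Lemma~\ref{affdep} that the moment curve meets any hyperplane in at most $\delta$ points, so any $k \le \delta+1$ of the $p_{t_\ell}$ are affinely independent; in particular our $k \le \lfloor \delta/2\rfloor$ vertices span a simplex. There is essentially no serious obstacle here: the only points to watch are that $q$ genuinely has degree at most $\delta$ — which is exactly where the hypothesis $k \le \lfloor \delta/2\rfloor$ enters — and that the coefficients of $q$ are read as an \emph{affine} functional, so that the constant term $c_0$ is included.
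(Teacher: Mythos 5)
The paper does not prove this statement itself; it simply cites Barvinok's book, so there is no in-paper proof to compare against. Your argument is the classical sum-of-squares proof of neighborliness of cyclic polytopes (which is, in fact, the one given in Barvinok), and it is correct: the hypothesis $k\le\lfloor\delta/2\rfloor$ enters exactly through $\deg q=2k\le\delta$, which lets the coefficient vector of $q$ define an affine functional $\varphi$ on $\mathbb R^\delta$ restricting to $q$ on the moment curve, nonnegative there and vanishing precisely at $t_{i_1},\dots,t_{i_k}$; and you correctly invoke the affine independence of any $\le\delta$ points on the moment curve to conclude the resulting face is a $(k-1)$-simplex with exactly the chosen vertices. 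The only (minor, and implicitly clear) points one might add are that for $k\ge1$ the leading coefficient $c_{2k}\ne0$ guarantees $\varphi$ is not constant, so $H$ is a genuine hyperplane, and that since $\lfloor\delta/2\rfloor<m$ there are vertices off $H$, so the face is proper.
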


\begin{proof}[Proof of Theorem \ref{th3}]
If triangulations $S$ and $T$ of $C(m,2d)$ 
are related by a bistellar flip, then they coincide except inside the
convex hull of some $2d+2$ vertices.  We may think of this convex hull as a 
copy of $C(2d+2,2d)$.  This cyclic polytope 
has exactly two internal $d$-simplices.  
$S$ uses one and $T$ uses
the other.  The first direction of the theorem follows.  

For the other direction, suppose that we have triangulations $S$ and $T$
with $e(S)=R\cup\{A\}$ and $e(T)=R\cup \{B\}$.
Let $A=(a_0,\dots,a_d)$, 
$B=(b_0,\dots,b_d)$.

Let $Q$ be the region in $C(m,2d)$ 
formed by the union of the $2d$-simplices of $S$ 
containing
$A$.  This is also the 
union of the $2d$-simplices of $T$
containing $B$, since the remaining $2d$-simplices of $S$ and $T$ coincide by Lemma~\ref{deylem}.

Let $A_i=(a_0,\dots,\widehat a_i,\dots,a_d)$, that is
to say, $A$ with $a_i$ removed.  For each $i$, note that $A_i$ lies on
the boundary of $Q$, since, by Theorem~\ref{neighbourly}, it lies on
the boundary of $C(m,2d)$.  Thus, there must be some $2d$-simplex in $T|_Q$
which 
contains $A_i$.   This simplex contains the vertices $A_i\cup B$, which
amount to $2d+1$ vertices.  Thus this is the complete list of vertices in the
simplex.  So $T|_Q$ contains the simplices $A_i\cup B$ for all $i$.

These simplices form one of the two triangulations of the convex hull of 
$A\cup B$.  Thus $Q$ is the convex hull of $A\cup B$, and all the 
boundary $d$-faces of the convex hull of $A \cup B$ which are not
boundary faces of $C(m,2d)$ are in $R \cup \{B\}$. Since $B$ is not
a boundary face of $Q$,  the boundary $d$-faces of $Q$ are contained in
$R$, and therefore are also contained in $e(S)=R\cup\{A\}$.
Thus $S|_Q$ is the other triangulation of 
the convex hull of $A\cup B$, and $S$ and $T$ are related by a bistellar
flip, as desired.  
\end{proof}

\begin{proof}[Proof of Theorem \ref{central}]
Consider $A^d_n$ as an $A^d_n$-module.  It is the direct sum of 
$n+d-1\choose d$ indecomposable modules of $A^d_n$, so it corresponds
to a $n+d-1\choose d$-subset of $\Index{n+2d}$. This subset equals $e(S)$
for some triangulation $S$ of $C(n+2d,2d)$.  Let $T$ be a triangulation obtained by applying
a bistellar flip to $S$.  By Theorem \ref{th3}, $e(T)$ is an  
$n+d-1\choose d$-subset of $\Index{n+2d}$ which coincides with $e(S)$ except
that one element of $e(S)$ has been replaced by an element of $e(T)$.  Let
$X$ be the $A^d_n$-module corresponding to $e(T)$. 

Now note that $A^d_n$ is a tilting module.    
Since $\Ext_{A_n^d}^d(X,X)=0$, and $X$ is obtained by replacing one 
indecomposable summand of $A_n^d$ by another indecomposable summand of $\leftsub{A_n^d}{M}$, Theorem~\ref{theo.tilt-exchange} applies to tell us that $X$ is also 
a tilting module.   

The same argument can be iterated to show that the module 
corresponding to any triangulation which can be obtained
by a sequence of bistellar flips starting from the triangulation
corresponding to $A^d_n$, is a tilting module.  Theorem
\ref{rambau} tells us that any triangulation can be obtained
by a sequence of bistellar flips starting from any fixed triangulation,
so we are done.  

The second statement (relating bistellar flip to mutation) is immediate from the above discussion.
\end{proof}

\begin
{proof} 
[Proof of Proposition \ref{mixes}]
Suppose first that $A$ and $B$ are complements to $R$.  This implies
that there are triangulations $S$ and $T$ with $e(S)=R\cup \{A\}$ and 
$e(T)=R\cup\{B\}$.  By Theorem~\ref{th3}, $S$ and $T$ are related by
a bistellar flip.  Therefore $S$ and $T$ restrict to triangulations of
the convex hull of $A\cup B$.  
In particular, this implies that $R$ must contain all the separated
$(d+1)$-tuples which correspond to $d$-faces of the convex hull of $A\cup B$.

Conversely, suppose that $R$ is a non-intertwining subset of 
$\Index{m} \setminus \{A,B\}$, with cardinality ${m-d-1\choose d}-1$, and which 
contains all the separated 
$(d+1)$-tuples from $A\cup B$ other than $A$ and $B$.  Without loss of 
generality, let $A\wr B$, and let $A=(a_0,\dots,a_d), B=(b_0,\dots,b_d)$.    

We wish to show that $R\cup \{A\}$ and $R\cup \{B\}$ are non-intertwining.  
Suppose there is some $C \in R$ such that $C\wr A$.  Then 
$C\wr (a_0,\dots,a_{d-1},b_d)\in R$, which is contrary to our assumption.  

Suppose there is some $C =(c_0,\dots,c_d)\in R$ such that $A\wr C$.  If
$c_i>b_i$ for some $i$ then $R\ni(a_0,\dots,a_{i-1}, b_i,a_{i+1},\dots,a_d)\wr C$, which is contrary to our assumption.  
(Note that
the fact that $(a_0,\dots,a_{i-1},b_i,a_{i+1},\dots,a_d)\wr C$ implies that
$(a_0,\dots,a_{i-1},b_i,a_{i+1},\dots,a_d)$ is separated, without which we would not know
that it is in $R$.)
So $a_i<c_i\leq b_i$ for all $i$.  
Note that there is therefore some index $t$ for which $c_t<b_t$, since 
$C\ne B$.  
But now 
$(a_0,a_1,\dots,a_t,b_t,a_{t+2},\dots,a_d)\wr C$, which is contrary to our assumption.  Thus $R\cup\{A\}$ is non-intertwining, as
desired.  The same result for $R\cup\{B\}$ follows similarly.
\end{proof}

\pagestyle{headings}

\section{The \texorpdfstring{$(d+2)$}{(d+2)}-angulated cluster category} \label{section.cluster_cat}

In this section we generalize the construction of a (triangulated) cluster category for a hereditary representation finite algebra $\Lambda$. More precisely, we will construct a $(d+2)$-angulated cluster category for a $d$-representation finite algebra $\Lambda$. The $(d-1)$-st higher Auslander algebra of linear oriented $A_n$ is one example of such a $d$-representation finite algebra (which will be studied in greater detail in Section~\ref{section.cluster_cat_A}). However the construction and all the results presented in this section hold for $d$-representation finite algebras in general, and hence we obtain a generalization of arbitrary cluster categories of hereditary representation finite algebras (see \cite{BMRRT}), not just a generalization of cluster categories of type $A$.

Recall (see Definition~\ref{def.d-repfin}) that an algebra is called $d$-representation finite if it has global dimension at most $d$ and its module category contains a $d$-cluster tilting module. Throughout this section we will assume $\Lambda$ to be $d$-representation finite, and we will denote the basic $d$-cluster tilting module in $\mod \Lambda$ by $M$.

It should be noted that in this case the functors $\tau_d$ and $\tau_d^-$ on $\add M$ behave very similarly to the way the usual Auslander-Reiten translations $\tau$ and $\tau^-$ behave on the module category of a hereditary representation finite algebra. For details see \cite{Iy_n-Auslander,IO}.

We want to mimic the ``usual'' construction of the cluster category, but use $\add M$ instead of all of $\mod \Lambda$. That is, on objects we want our cluster category to be
\[ \add M \vee \proj \Lambda[d].\]

\begin{remark}
In what follows we will be talking about a standard $(d+2)$-angulated category $\mathscr{O}$. For a definition of what we mean precisely 
by a standard $(d+2)$-angulated category, see Definition~\ref{def.angulated}. For now, the reader may think of a category together with
\begin{itemize}
\item an autoequivalence called the $d$-suspension, denoted by $[d]$, and
\item a collection of sequences of $d+2$ morphisms, starting in some object and ending in its $d$-suspension, called $(d+2)$-angles.
\end{itemize}
\end{remark}

The first main result of this section is the existence of a $(d+2)$-angulated cluster category with the desired properties. See \cite{K_orbit} for the triangulated case.

\begin{theorem} \label{theo.clustercat_exists}
There is a standard $(d+2)$-angulated Krull-Schmidt $k$-category $\mathscr{O}_{\Lambda}$ such that
\begin{enumerate}
\item The isomorphism classes of indecomposable objects in $\mathscr{O}_{\Lambda}$ are in bijection with the indecomposable direct summands of $M \oplus \Lambda[d]$.
\item $\mathscr{O}_{\Lambda}$ is $2d$-Calabi-Yau, that is we have a natural isomorphism
\[ \Hom_{\mathscr{O}_{\Lambda}}(X, Y) = D \Hom_{\mathscr{O}_{\Lambda}}(Y, X[2d]) \]
for $X, Y \in \mathscr{O}_{\Lambda}$. It should be noted that $[2d]$ here is the square of the $d$-suspension, and not the $2d$-th power of a $1$-suspension (in fact, there is no $1$-suspension).
\item When identifying along the bijection in (1), we have
\[ \Hom_{\mathscr{O}_{\Lambda}}(X, Y[d]) = \Hom_{D^b(\mod \Lambda)}(X, Y[d]) \oplus D \Hom_{D^b(\mod \Lambda)}(Y, X[d]) \]
for any $X, Y \in \add (M \oplus \Lambda[d])$. In particular $\Hom_{\mathscr{O}_{\Lambda}}(X, Y[d]) = 0$ if and only if both $\Hom_{D^b(\mod \Lambda)}(X, Y[d]) = 0$ and $\Hom_{D^b(\mod \Lambda)}(Y, X[d]) = 0$.
\end{enumerate}
\end{theorem}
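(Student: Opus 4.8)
The plan is to realise $\mathscr{O}_{\Lambda}$ as a full subcategory of the $2d$-Amiot cluster category $\mathscr{C} := \mathscr{C}_{\Lambda}^{2d}$ and to transport the required structure across the canonical triangle functor $\pi \colon D^b(\mod \Lambda) \longrightarrow \mathscr{C}$. Since $\Lambda$ has finite global dimension, $\mathscr{C}$ is an algebraic, Hom-finite, Krull--Schmidt triangulated category which is $2d$-Calabi--Yau, the Serre functor $\nu$ of $D^b(\mod \Lambda)$ becomes isomorphic to the shift $[2d]$ on $\mathscr{C}$, and morphism spaces are computed by the orbit formula $\Hom_{\mathscr{C}}(\pi X, \pi Y) = \bigoplus_{i \in \mathbb{Z}} \Hom_{D^b(\mod \Lambda)}(X, \nu_{2d}^{i} Y)$ with $\nu_{2d} = \nu \circ [-2d]$, the sum being finite (see \cite{K_orbit} for the orbit-category formalism in the triangulated case, and its higher global dimension generalisation due to Guo). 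I would then define $\mathscr{O}_{\Lambda}$ to be the additive closure inside $\mathscr{C}$ of $\pi(M \oplus \Lambda[d])$, i.e.\ $\add M \vee \proj \Lambda[d]$; as a summand-closed subcategory of a Krull--Schmidt category it is automatically Krull--Schmidt.

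The technical heart is a Hom computation for objects $X, Y \in \add(M \oplus \Lambda[d]) \subseteq D^b(\mod \Lambda)$. Writing each such object as a direct sum of shifts by $0$ or $d$ of modules lying in $\add M$, respectively in $\proj \Lambda$, one estimates the cohomological supports of the complexes $\nu_{2d}^{i} Y[j]$ for $j \in \{0, d\}$ using $\gld \Lambda \le d$, and combines this with Iyama's description of $\add M$ as the $\tau_d$-orbit of the projectives (Theorem~\ref{theo.ncto_from_proj}) together with the vanishing of $\Ext_{\Lambda}^{k}$ for $0 < k < d$ between summands of $M$. This should show that the orbit sum has at most two nonzero terms. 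For $j = 0$ only the $i = 0$ term survives, so $\Hom_{\mathscr{C}}(\pi X, \pi Y) = \Hom_{D^b(\mod \Lambda)}(X, Y)$; hence $\pi$ is fully faithful on $\add(M \oplus \Lambda[d])$, which gives part~(1). For $j = d$ exactly two terms survive: the $i = 0$ term contributes $\Hom_{D^b(\mod \Lambda)}(X, Y[d])$, and the $i = 1$ term contributes $\Hom_{D^b(\mod \Lambda)}(X, \nu Y[-d]) \cong D \Hom_{D^b(\mod \Lambda)}(Y, X[d])$ by Serre duality in $D^b(\mod \Lambda)$, which is precisely part~(3).

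To obtain the standard $(d+2)$-angulated structure I would verify that $\mathscr{O}_{\Lambda}$ is a functorially finite $d$-cluster tilting subcategory of the $2d$-Calabi--Yau category $\mathscr{C}$, i.e.\ that $\mathscr{O}_{\Lambda} = \{X \in \mathscr{C} \mid \Hom_{\mathscr{C}}(X, \mathscr{O}_{\Lambda}[i]) = 0 \text{ for } 1 \le i \le d-1 \}$, the dual condition being equivalent by the $2d$-Calabi--Yau property. Rigidity of $\mathscr{O}_{\Lambda}$ follows from the Hom computation above, since all degree-$i$, $1 \le i \le d-1$, contributions vanish by the global dimension bound together with $d$-cluster tilting of $M$ in $\mod \Lambda$; the reverse inclusion and functorial finiteness are transported from the corresponding properties of $M$ in $\mod \Lambda$, using also that $\add M$ contains all injectives and that $\proj \Lambda[2d] \cong \nu\, \proj \Lambda = \inj \Lambda$ in $\mathscr{C}$ (Theorem~\ref{theo.ncto_from_proj}). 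A $d$-cluster tilting subcategory of a $2d$-Calabi--Yau triangulated category is automatically stable under $[d]$, so the theorem of Geiss, Keller and Oppermann on $(d+2)$-angulated categories applies to the algebraic category $\mathscr{C}$ and endows $\mathscr{O}_{\Lambda}$ with a standard $(d+2)$-angulated structure whose $d$-suspension is the restriction of $[d]$ (cf.\ Definition~\ref{def.angulated}); part~(2) is then inherited from the $2d$-Calabi--Yau property of $\mathscr{C}$ together with the stability of $\mathscr{O}_{\Lambda}$ under $[2d]$. I expect the main obstacle to be the orbit-sum collapse in the Hom computation: determining exactly which powers of $\nu_{2d}$ can contribute to $\Hom_{\mathscr{C}}(\pi X, \pi Y[j])$ requires a careful case analysis according to whether the indecomposable summands of $X$ and $Y$ lie in $\add M$ or in $\proj \Lambda[d]$, together with precise bookkeeping of cohomological degrees and of Iyama's higher Auslander--Reiten combinatorics; the closely related verification that $\mathscr{O}_{\Lambda}$ is genuinely $d$-cluster tilting in $\mathscr{C}$, rather than merely rigid, is the second point that will demand real work.
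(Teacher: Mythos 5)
Your overall architecture matches the paper's: realise $\mathscr{O}_{\Lambda}$ inside the $2d$-Amiot cluster category $\mathscr{C}_{\Lambda}^{2d}$ with object class $\add(M\oplus\Lambda[d])$ (equivalently, as the orbit category $\mathscr{U}/\mathbb{S}_{2d}$ embedded in $\mathscr{C}_{\Lambda}^{2d}$), compute morphism spaces by the orbit-sum formula, verify that $\mathscr{O}_{\Lambda}$ is $d$-cluster tilting in $\mathscr{C}_{\Lambda}^{2d}$, and invoke \cite{4ang} to obtain the standard $(d+2)$-angulated structure. Your Hom computation for parts (1)--(3) is exactly the one in the paper: only $i=0$ contributes in degree $0$ (giving full faithfulness on the fundamental domain, hence (1)), and only $i=0,1$ contribute in degree $d$, with the $i=1$ term converted via Serre duality to $D\Hom_{D^b(\mod\Lambda)}(Y,X[d])$, giving (3). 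Part (2) is automatic from orbiting out $\mathbb{S}_{2d}$.

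The genuine gap is in the verification that $\mathscr{O}_{\Lambda}$ is $d$-cluster tilting in $\mathscr{C}_{\Lambda}^{2d}$ (the content of Theorem~\ref{theo.is_d-ctsubcat}), and specifically in the ``reverse inclusion''. You say it is ``transported from the corresponding properties of $M$ in $\mod\Lambda$, using also that $\add M$ contains all injectives and that $\proj\Lambda[2d]\cong\inj\Lambda$.'' That does not work as a direct transfer: there is no functor taking $\mathscr{C}_{\Lambda}^{2d}$ to $\mod\Lambda$ along which the cluster tilting property of $M$ could be pulled back, and the subcategory in question is not obviously the image of $\add M$ under anything reasonable. The paper instead makes a detour through the module category of the cluster-tilted algebra $\Gamma=\End_{\mathscr{O}_{\Lambda}}(T)$ for a $2d$-cluster tilting object $T\in\mathscr{O}_{\Lambda}$ (whose existence comes from Theorem~\ref{theorem.amiot} applied to $T=\Lambda$). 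Concretely: one shows $\Hom(T,-)$ induces an equivalence $\widetilde{\mathscr{O}}_{\Lambda}/(T[d])\simeq\widetilde{\mathscr{M}}\subseteq\mod\Gamma$ (Proposition~\ref{prop.O_equiv_M}); one produces sink sequences for objects of $\mathscr{M}$ via almost split $(d+2)$-angles in $\mathscr{O}_{\Lambda}$ (Proposition~\ref{prop.sink_seq}), the existence of which in turn needs the fact that $\mathscr{U}\to\mathscr{O}_{\Lambda}$ is a $(d+2)$-angle functor (Proposition~\ref{prop.anglefunctor}); and one then applies Iyama's recognition criterion for $d$-cluster tilting subcategories of module categories (Theorem~\ref{theorem.M_is_ct}) to conclude $\mathscr{M}=\widetilde{\mathscr{M}}$, hence $\mathscr{O}_{\Lambda}=\widetilde{\mathscr{O}}_{\Lambda}$. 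None of this is visible in your sketch, and I do not see a way to shortcut it by direct transport from $\mod\Lambda$. You flag this step as ``demanding real work,'' which is honest, but the plan you outline for it is not the right plan, so the proposal as written has a hole precisely where the substance lies.
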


We have the following definition of cluster tilting objects in $\mathscr{O}_{\Lambda}$, which generalizes the classical definition:

\begin{definition} \label{def.cto}
An object $T \in \mathscr{O}_{\Lambda}$ is called \emph{cluster tilting} if
\begin{enumerate}
\item $\Hom_{\mathscr{O}_{\Lambda}}(T, T[d]) = 0$, and
\item Any $X \in \mathscr{O}_{\Lambda}$ occurs in a $(d+2)$-angle
\[ X[-d] \to[30] T_d \to[30] T_{d-1} \to[30] \cdots \to[30] T_1 \to[30] T_0 \to[30] X \]
with $T_i \in \add T$.
\end{enumerate}
\end{definition}

\begin{remark}
For $d=1$ the above is not the original definition of a cluster tilting object (see \cite{BMRRT}), but our definition is equivalent to the original definition (see \cite{BMR_cta}). Also see the discussion following Proposition~\ref{prop:higherd} on why we use this definition.
\end{remark}

The following nice connection between tilting modules in $\add M$ and cluster tilting objects in $\mathscr{O}_{\Lambda}$ remains true in this setup (see \cite{BMRRT} and \cite{ABS_trivext} for the triangulated case):

\begin{theorem} \label{theo.cta_is_trivext}
Let $T \in \add M$ be a tilting $\Lambda$-module. Then $T$ is a cluster tilting object in $\mathscr{O}_{\Lambda}$.

Moreover, if we set $\Gamma := \End_{\Lambda}(T)$, then we have
\[ \End_{\mathscr{O}_{\Lambda}}(T) = \Gamma \ltimes \Ext_{\Gamma}^{2d}(D\Gamma, \Gamma). \]
\end{theorem}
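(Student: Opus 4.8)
The plan is to deduce both statements from the construction of $\mathscr{O}_{\Lambda}$ as a subcategory of the $2d$-Amiot cluster category $\mathscr{C}_{\Lambda}^{2d}$, together with Theorem~\ref{theo.clustercat_exists}. For the first assertion, that a tilting module $T\in\add M$ is a cluster tilting object: condition (1) of Definition~\ref{def.cto} asks that $\Hom_{\mathscr{O}_{\Lambda}}(T,T[d])=0$. By Theorem~\ref{theo.clustercat_exists}(3) this is equivalent to $\Hom_{D^b(\mod\Lambda)}(T,T[d])=0$, i.e. $\Ext^d_{\Lambda}(T,T)=0$; since $\gld\Lambda\le d$ and $T$ is rigid as a tilting module, this vanishing holds (all $\Ext^i$ for $0<i<d$ vanish because $T\in\add M$ and $M$ is $d$-cluster tilting, and $\Ext^d$ vanishes by rigidity of $T$, while $\Ext^{>d}=0$ by global dimension). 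For condition (2) I would argue that any indecomposable $X\in\mathscr{O}_{\Lambda}$ — which by Theorem~\ref{theo.clustercat_exists}(1) is a summand of $M$ or of $\Lambda[d]$ — admits the required $(d+2)$-angle with terms in $\add T$. Here I would use that $T$ is a tilting module of projective dimension $\le d$, so every $\Lambda$-module, in particular every summand of $M$, has a finite $\add T$-coresolution of length $\le d$ (Auslander--Reiten / Bongartz); splicing this coresolution and using that the functor $\mod\Lambda\to\mathscr{O}_{\Lambda}$ sends exact sequences to $(d+2)$-angles (by the construction of the $(d+2)$-angulated structure, which lifts exact sequences in $\add M$ through the $2d$-Amiot category) yields the needed $(d+2)$-angle. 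For $X$ a summand of $\Lambda[d]$, one applies the same to $\Lambda$ and shifts, using that $\Lambda\in\add M$.

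For the computation of the endomorphism algebra, write $\Gamma=\End_{\Lambda}(T)$. The key input is Theorem~\ref{theo.clustercat_exists}(2) and (3): for $X,Y\in\add T$ one has
\[
\Hom_{\mathscr{O}_{\Lambda}}(X,Y)=\Hom_{\Lambda}(X,Y)\oplus D\Hom_{D^b(\mod\Lambda)}(Y,X[2d]),
\]
where the first summand comes from the degree-$0$ part and the second from the Calabi--Yau/degree-$2d$ part — here I would need to check that the "$\Hom_{\mathscr{O}_{\Lambda}}(X,Y[d])$ has two parts" statement of part (3) has a degree-$0$ analogue, namely that $\Hom_{\mathscr{O}_{\Lambda}}(X,Y)=\Hom_{D^b}(X,Y)\oplus D\Hom_{D^b}(Y,X[2d])$, which follows from the same Amiot-cluster-category decomposition (the orbit construction gives $\Hom_{\mathscr{O}}(X,Y)=\bigoplus_{i}\Hom_{D^b}(X,Y[2di])$ and only $i=0,1$ contribute in the relevant range). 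Summing over summands of $T$ gives an isomorphism of $k$-modules
\[
\End_{\mathscr{O}_{\Lambda}}(T)=\End_{\Lambda}(T)\oplus D\Hom_{D^b(\mod\Lambda)}(T,T[2d])=\Gamma\oplus D\Ext^{2d}_{\Gamma}(?,?).
\]
To identify $D\Hom_{D^b(\mod\Lambda)}(T,T[2d])$ with $\Ext^{2d}_{\Gamma}(D\Gamma,\Gamma)$ as a $\Gamma$-bimodule, I would use that $\RHom_{\Lambda}(T,-)$ is a derived equivalence $D^b(\mod\Lambda)\simeq D^b(\mod\Gamma)$ sending $T$ to $\Gamma$, and sending $D\Lambda$ to (a complex quasi-isomorphic to) $D\Gamma$ up to shift — more precisely, since $T$ is a tilting module, $\RHom_{\Lambda}(T,D\Lambda)\cong D\Gamma$ in $D^b(\mod\Gamma)$, and the Serre functor $-\otimes^{\mathbb L}_{\Lambda}D\Lambda[\,\cdot\,]$ is transported accordingly. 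Thus $\Hom_{D^b(\mod\Lambda)}(T,T[2d])\cong\Hom_{D^b(\mod\Gamma)}(\Gamma,T\otimes^{\mathbb L}_{\Gamma}?\,[2d])$, and chasing through the Serre duality / Nakayama functor identifies the dual with $\Ext^{2d}_{\Gamma}(D\Gamma,\Gamma)$. Finally, the multiplicative structure: I would check that the off-diagonal part squares to zero (it lands in $\bigoplus_{i=2}\Hom_{D^b}(T,T[2di])$ which vanishes in the relevant truncation, exactly as in the classical $d=1$ trivial-extension computation of \cite{ABS_trivext}), and that the $\Gamma$-bimodule action of the diagonal part on it is the natural one, so that $\End_{\mathscr{O}_{\Lambda}}(T)=\Gamma\ltimes\Ext^{2d}_{\Gamma}(D\Gamma,\Gamma)$ as algebras.

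The main obstacle I expect is the careful bookkeeping in the second paragraph: verifying that in the orbit description of $\mathscr{O}_{\Lambda}\subseteq\mathscr{C}_{\Lambda}^{2d}$ only the degree-$0$ and degree-$2d$ contributions survive when both arguments lie in $\add T$ (so that no "higher" copies $\Hom_{D^b}(T,T[2di])$, $i\ge 2$, appear and spoil the trivial-extension description), and that the resulting bimodule is genuinely $\Ext^{2d}_{\Gamma}(D\Gamma,\Gamma)$ rather than some shift or twist of it. This requires knowing that $\Gamma$ has the right finiteness (it is the $d$-Auslander algebra, of finite global dimension, so $\Ext^{2d}_{\Gamma}(D\Gamma,\Gamma)$ is finite-dimensional and the relevant $\Hom$'s in $D^b(\mod\Gamma)$ are controlled), and pinning down the Serre functor of $D^b(\mod\Lambda)$ under the tilting equivalence. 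The first paragraph and the "squares to zero" claim are routine adaptations of the known $d=1$ arguments of \cite{BMRRT} and \cite{ABS_trivext}.
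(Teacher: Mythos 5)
Your second half (the computation of $\End_{\mathscr{O}_\Lambda}(T)$) is in substance the route the paper takes, just redone by hand. The paper cites Amiot's result that $\End_{\mathscr{C}_\Lambda^{2d}}(T)$ is the tensor algebra $T_\Gamma\bigl(\Ext^{2d}_\Gamma(D\Gamma,\Gamma)\bigr)$ with $i$-th graded piece $\Hom_{D^b(\mod\Gamma)}(\Gamma,\mathbb{S}_{2d}^{-i}\Gamma)$, transports this along the derived equivalence $\RHom_\Lambda(T,-)$, and observes that only $i\in\{0,1\}$ contribute, so the tensor algebra collapses to a trivial extension. Your orbit-category decomposition of $\End_{\mathscr{O}_\Lambda}(T)$ and the ``squares to zero because the $i\ge 2$ pieces vanish'' step encode the same idea. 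The one thing you still owe is the vanishing for $i\ge 2$: you assert it but do not show it, whereas the paper settles it from $\gld\Gamma\le 2d$ via Amiot. (Also beware the notational slip $\Hom_{\mathscr{O}}(X,Y)=\bigoplus_i\Hom_{D^b}(X,Y[2di])$: the orbit is by $\mathbb{S}_{2d}=\mathbb{S}[-2d]$, not by $[2d]$; the formula you want has $\mathbb{S}_{2d}^{-i}Y$ inside.)

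For the first half there is a genuine gap. You attempt to verify Definition~\ref{def.cto}(2) directly by claiming that every summand $M'$ of $M$ has an exact $\add T$-(co)resolution of length at most $d$, which you then promote to a $(d+2)$-angle. This existence claim is not a free consequence of tilting theory. The Auslander--Buchweitz / Bongartz type approximations giving an $\add T$-resolution of a module $X$ require $\Ext^{>0}_\Lambda(T,X)=0$; for $X=M'\in\add M$ you only get $\Ext^i_\Lambda(T,M')=0$ for $0<i<d$ (from $d$-cluster tilting), and $\Ext^d_\Lambda(T,M')$ can certainly be nonzero when $T\oplus M'$ is not rigid. Moreover, even where such resolutions exist, the natural length bound is $\gld\Gamma\le 2d$, not $d$, so the splicing would not match the shape of a $(d+2)$-angle. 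The paper avoids this entirely: it records $\gld\End_\Lambda(T)\le\gld\Lambda+\pd_\Lambda T\le 2d$, applies Theorem~\ref{theorem.amiot}(3) to conclude that $T$ is a $2d$-cluster tilting object of $\mathscr{C}_\Lambda^{2d}$, and then invokes Theorem~\ref{theo.ct_is_2dct}, which identifies cluster tilting objects of $\mathscr{O}_\Lambda$ with $2d$-cluster tilting objects of $\mathscr{C}_\Lambda^{2d}$ lying in $\mathscr{O}_\Lambda$. The required $(d+2)$-angles then come from Lemma~\ref{lemma.approx_in_O}, built from iterated right $T$-approximations in the ambient triangulated category, with no exactness in $\mod\Lambda$ needed. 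If you want to keep a more hands-on proof you would have to replace your exact-sequence argument by this triangulated approximation argument.
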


Finally, we will see that we have the following connection from the $(d+2)$-angulated cluster category to the module category of a cluster tilted algebra.  The triangulated cluster category case of this result has been proven in \cite{BMR_cta}.

\begin{theorem} \label{theo.induces_ct_in_mod}
Let $T$ be a cluster tilting object in $\mathscr{O}_{\Lambda}$, and set $\Gamma := \End_{\mathscr{O}_{\Lambda}}(T)$. Then the functor
\[ \Hom_{\mathscr{O}_{\Lambda}}(T, -) \colon \mathscr{O}_{\Lambda} \to[30] \mod \Gamma \]
induces a full faithful embedding
\[ \frac{\mathscr{O}_{\Lambda}}{(T[d])} \mono[30] \mod \Gamma, \]
where $(T[d])$ denotes the ideal of all morphisms factoring through $\add T[d]$. The image of this functor (which is then equivalent to $\frac{\mathscr{O}_{\Lambda}}{(T[d])}$) is a $d$-cluster tilting subcategory of $\mod \Gamma$.

In particular $\Gamma$ is weakly $d$-representation finite in the sense of \cite{IO}, that is, it has a $d$-cluster tilting object in its module category.
\end{theorem}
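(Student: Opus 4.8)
The plan is to adapt the proof of the triangulated case (Buan--Marsh--Reiten \cite{BMR_cta}) to the $(d+2)$-angulated setting, and then to add a verification that the essential image is a $d$-cluster tilting subcategory (for $d=1$ this last point is vacuous, which is why \cite{BMR_cta} obtains all of $\mod\Gamma$). Write $F = \Hom_{\mathscr{O}_{\Lambda}}(T,-) \colon \mathscr{O}_{\Lambda} \to \mod \Gamma$. Since $T$ is cluster tilting, $F$ restricts to an equivalence $\add T \xrightarrow{\ \sim\ } \proj \Gamma$, and applying the cohomological functor $F$ to the defining $(d+2)$-angle $X[-d] \to T_d \to \cdots \to T_0 \to X$ of Definition~\ref{def.cto}(2), together with $\Hom_{\mathscr{O}_{\Lambda}}(T,T_i[d]) = 0$, produces an exact sequence
\[ F(X[-d]) \to F(T_d) \to \cdots \to F(T_0) \to F(X) \to 0 \]
whose truncation $F(T_d) \to \cdots \to F(T_0) \to F(X) \to 0$ supplies the first $d+1$ terms of a projective resolution of $F(X)$; in particular $F(T_1) \to F(T_0) \to F(X) \to 0$ is a projective presentation.

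I would first show that $\overline{F} \colon \mathscr{O}_{\Lambda}/(T[d]) \to \mod \Gamma$ is faithful, i.e.\ $\ker F = (T[d])$; the inclusion $\supseteq$ is immediate from $\Hom_{\mathscr{O}_{\Lambda}}(T,T[d]) = 0$. For $\subseteq$, one checks first that a morphism $g \colon T' \to Y$ with $T' \in \add T$ and $F(g) = 0$ must vanish: factor $g$ through a right $\add T$-approximation $T_0^Y \to Y$, and then --- using projectivity of $F(T')$, exactness of the displayed sequence for $Y$, and full faithfulness of $F$ on $\add T$ --- through the next map $T_1^Y \to T_0^Y$ of the $(d+2)$-angle; since consecutive maps of a $(d+2)$-angle compose to zero, $g=0$. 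For a general $f \colon X \to Y$ with $F(f) = 0$, composing with the approximation $T_0^X \to X$ reduces to this case, so $f$ vanishes after precomposition with $T_0^X \to X$, and exactness of $\Hom_{\mathscr{O}_{\Lambda}}(-,Y)$ on the rotated $(d+2)$-angle then exhibits $f$ as factoring through $T_d^X[d] \in \add T[d]$. Fullness of $\overline{F}$ is proved in the same spirit: a map $\phi \colon F(X) \to F(Y)$ is lifted stepwise along the projective presentations, using full faithfulness of $F$ on $\add T$, to compatible morphisms $T_i^X \to T_i^Y$; the composite $T_0^X \to T_0^Y \to Y$ then kills $T_1^X \to T_0^X$, hence factors as $f \circ (T_0^X \to X)$, and $F(f) = \phi$ because $F(T_0^X) \to F(X)$ is an epimorphism.

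Thus $\overline{F}$ is a fully faithful embedding; let $\mathcal{X} \subseteq \mod \Gamma$ be its essential image. Since $\mathscr{O}_{\Lambda}$ is Krull--Schmidt with only finitely many indecomposables (Theorem~\ref{theo.clustercat_exists}(1)), $\mathcal{X}$ has finitely many indecomposables, is closed under direct summands, and is functorially finite in $\mod \Gamma$; moreover $\proj \Gamma = F(\add T) \subseteq \mathcal{X}$, and $D\Gamma = \overline{F}(T[2d]) \in \mathcal{X}$ by the $2d$-Calabi--Yau property (Theorem~\ref{theo.clustercat_exists}(2)), so also $\inj \Gamma \subseteq \mathcal{X}$. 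Next, $\Ext_{\Gamma}^i(\mathcal{X},\mathcal{X}) = 0$ for $1 \leq i \leq d-1$: since we have the first $d+1$ terms of a projective resolution of $F(X)$ given by $F(T_d) \to \cdots \to F(T_0)$, the group $\Ext_{\Gamma}^i(F(X),F(Y))$ for $1 \leq i \leq d-1$ is computed as the $i$-th cohomology of
\[ \Hom_{\mathscr{O}_{\Lambda}}(T_0,Y) \to \Hom_{\mathscr{O}_{\Lambda}}(T_1,Y) \to \cdots \to \Hom_{\mathscr{O}_{\Lambda}}(T_d,Y), \]
which is $\Hom_{\mathscr{O}_{\Lambda}}(-,Y)$ applied to the $(d+2)$-angle $X[-d] \to T_d \to \cdots \to T_0 \to X$ and hence exact in these degrees.

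The remaining --- and, I expect, the main --- obstacle is the maximality statement: if $N \in \mod \Gamma$ satisfies $\Ext_{\Gamma}^i(N,\mathcal{X}) = 0$ (resp.\ $\Ext_{\Gamma}^i(\mathcal{X},N) = 0$) for $1 \leq i \leq d-1$, then $N \in \mathcal{X}$. The plan is to realise the needed $(d+2)$-angles inside the ambient triangulated $2d$-Calabi--Yau category $\mathscr{C}_{\Lambda}^{2d}$, in which $T$ is a $2d$-cluster tilting object: starting from a projective presentation of $N$, lift it to a morphism between objects of $\add T$, build from it (via the $(d+2)$-angulated structure of $\mathscr{O}_{\Lambda}$ induced from $\mathscr{C}_{\Lambda}^{2d}$) a $(d+2)$-angle, and use the $\Ext$-orthogonality hypothesis on $N$ together with the Calabi--Yau duality to locate the relevant (co)syzygies inside $\mathcal{X}$, so that $F$ of this $(d+2)$-angle recovers $N$ up to isomorphism; equivalently, one produces from each such $N$ a $d$-step $\mathcal{X}$-resolution and a $d$-step $\mathcal{X}$-coresolution and invokes Iyama's characterisation of $d$-cluster tilting subcategories. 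Once $\mathcal{X}$ is known to be $d$-cluster tilting, a finite additive generator $U$ of $\mathcal{X}$ is a $d$-cluster tilting object in $\mod \Gamma$, so $\Gamma$ is weakly $d$-representation finite.
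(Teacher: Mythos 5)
Your overall architecture matches the paper's: establish that $\overline{F}$ is full and faithful, show the essential image $\mathcal{X}$ is $d$-rigid, and then show it is actually a $d$-cluster tilting subcategory. The first two steps are essentially sound. Your full faithfulness argument, working directly with the defining $(d+2)$-angle and its rotation inside $\mathscr{O}_{\Lambda}$, is parallel to what the paper does in Proposition~\ref{prop.O_equiv_M}, where the same diagram chase is carried out in the ambient triangulated category $\mathscr{C}_{\Lambda}^{2d}$ using the approximation triangles of Lemma~\ref{lemma.approx_in_O}; the two formulations are compatible via Lemma~\ref{lemma.snake_seq}. Your $d$-rigidity computation via the first $d+1$ terms of a projective resolution reproduces Lemma~\ref{lemma.M_rigid}.

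The genuine gap is at the step you yourself flag as the main obstacle. What you offer for the maximality part is a plan, not a proof, and the plan is too loose to carry out as stated: ``lift a projective presentation of $N$, build a $(d+2)$-angle, use CY duality to locate (co)syzygies'' leaves entirely open how one sees that the constructed object of $\mathscr{O}_{\Lambda}$ actually maps to $N$ under $F$, i.e.\ it presupposes the conclusion. The paper avoids verifying the maximality condition for arbitrary $N$ altogether. It instead uses the existence of almost-split $(d+2)$-angles in $\mathscr{O}_{\Lambda}$ (Corollary~\ref{cor.O_has_ARangle}), obtained by pushing almost-split $(d+2)$-angles from $\mathscr{U}$ along the $(d+2)$-angle functor of Proposition~\ref{prop.anglefunctor}. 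Applying $\Hom_{\mathscr{O}_{\Lambda}}(T,-)$ to such an almost-split $(d+2)$-angle (and using that the image of $T_0[-d]$ in $\mathcal{X}$ is zero when $T_0$ has no summand in $\add T[d]$) yields, via Lemma~\ref{lemma.snake_seq}, an exact ``sink sequence'' ending in each indecomposable of $\mathcal{X}$ (Proposition~\ref{prop.sink_seq}). With $d$-rigidity in hand, one invokes Iyama's recognition theorem (cited as \cite[Theorem~2.2(b)]{Iy_n-Auslander}): a functorially finite $d$-rigid subcategory of $\mod\Gamma$ containing $\Gamma$ and admitting such sink sequences is automatically $d$-cluster tilting. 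You gesture at ``Iyama's characterisation'' in your final sentence, but you never identify the almost-split $(d+2)$-angles as the input that produces the required sink sequences; without that ingredient the argument does not close.
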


Finally we have the following description of exchange $(d+2)$-angles. (See \cite{BMRRT} for the exchange triangles in the case when $d=1$.)

\begin{theorem} \label{theo.exch_angles}
Let $T$ be a basic cluster tilting object in $\mathscr{O}_{\Lambda}$, $T_0$ an indecomposable direct summand of $T$, and $R$ the sum of the remaining summands of $T$. Let $T_0 \to R^1$ be a minimal left $R$-approximation of $T_0$, and $R_1 \to T_0$ be a minimal right $R$-approximation. We consider their minimal completions to $(d+2)$-angles (see Lemma \ref{lemma.angle_completion})
\begin{align*}
& T_0 \to[30] R^1 \to[30] Y^2 \to[30] \cdots \to[30] Y^{d+1} \to[30] T_0[d] \text{, and} \\
& T_0[-d] \to[30] X_{d+1} \to[30] \cdots \to[30] X_2 \to[30] R_1 \to[30] T_0 \text{, respectively.}
\end{align*}
Then the following are equivalent:
\begin{enumerate}
\item There is an indecomposable $T_0^*$, with $T_0^* \not\iso T_0$ such that $T_0^* \oplus R$ is cluster tilting in $\mathscr{O}_{\Lambda}$.
\item There is an indecomposable $T_0^*$, with $T_0^* \not\in \add T$ such that $\Hom_{\mathscr{O}_{\Lambda}}(R, T_0^*[d]) = 0$.
\item $Y^i \in \add R \, \forall i \in \{2, \ldots, d\}$.
\item $X_i \in \add R \, \forall i \in \{2, \ldots, d\}$.
\end{enumerate}
Moreover, in this case we have $T_0^* \iso Y^{d+1} \iso X_{d+1}$.
\end{theorem}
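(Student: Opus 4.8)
The plan is to prove the cycle of implications $(1)\Rightarrow(2)\Rightarrow(3)\Rightarrow(1)$ together with its dual $(1)\Rightarrow(2)\Rightarrow(4)\Rightarrow(1)$, exploiting the symmetry of the $2d$-Calabi-Yau property of $\mathscr{O}_\Lambda$ (Theorem~\ref{theo.clustercat_exists}(2)) to reduce the ``$X$''-half to the ``$Y$''-half. The implication $(1)\Rightarrow(2)$ is immediate: if $T_0^*\oplus R$ is cluster tilting, then $\Hom_{\mathscr{O}_\Lambda}(T_0^*\oplus R, (T_0^*\oplus R)[d])=0$, so in particular $\Hom_{\mathscr{O}_\Lambda}(R,T_0^*[d])=0$, and $T_0^*\not\iso T_0$ forces $T_0^*\notin\add T$ since $T$ is basic. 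For $(2)\Rightarrow(3)$, first I would analyze the $(d+2)$-angle
\[ T_0 \to[30] R^1 \to[30] Y^2 \to[30] \cdots \to[30] Y^{d+1} \to[30] T_0[d] \]
by applying $\Hom_{\mathscr{O}_\Lambda}(-,R[d])$ and $\Hom_{\mathscr{O}_\Lambda}(-,T_0^*[d])$. Since $R^1\to T_0[d]$ factors through the left $R$-approximation and $\Hom(R,R[d])=0$ together with $\Hom(R,R[2d])=0$ (Calabi-Yau, using $\Hom(R,R)$ and $\Hom(R,R[d])=0$), the long exact sequences in $(d+2)$-angulated homology let me peel off the $Y^i$ one at a time. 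The condition $\Hom_{\mathscr{O}_\Lambda}(R,Y^i[d])=0$ for intermediate $i$ will follow because $Y^i$ sits in a ``short'' $(d+2)$-angle built from $R$ on both sides after truncation, and the approximation property kills the relevant Hom and Ext groups; this forces each $Y^i$ (for $2\le i\le d$) to lie in $\add R$, as any indecomposable summand not in $\add R$ would have to be (a copy of) $T_0$ or of $T_0^*$, both of which are excluded by a parity/position count using that $T_0\to R^1$ is a \emph{minimal} left $R$-approximation.

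The key implication is $(3)\Rightarrow(1)$. Assuming $Y^i\in\add R$ for $2\le i\le d$, set $T_0^*:=Y^{d+1}$, so that the $(d+2)$-angle becomes
\[ T_0 \to[30] R^1 \to[30] R^2 \to[30] \cdots \to[30] R^d \to[30] T_0^* \to[30] T_0[d] \]
with $R^1,\dots,R^d\in\add R$. This is exactly a ``$(T_0^*\oplus R)$-coresolution'' of $T_0$ of the shape appearing in Definition~\ref{def.cto}(2) and in the exchange-sequence format of Theorem~\ref{theo.tilt-exchange}(4) — indeed it is the categorical analogue of the module-theoretic exchange sequence, now closing up into a $(d+2)$-angle because of the $d$-suspension. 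From this I would first check $T_0^*$ is indecomposable: since the angle is the minimal completion of a minimal approximation, any decomposition of $T_0^*$ would split off a trivial summand, contradicting minimality. Then I would verify $\Hom_{\mathscr{O}_\Lambda}(T_0^*\oplus R, (T_0^*\oplus R)[d])=0$ by applying the various Hom-functors to the above $(d+2)$-angle and the dual one, repeatedly using $\Hom(R,R[d])=0$, the approximation properties, and the $2d$-Calabi-Yau duality to match $\Hom(T_0^*,R[d])$ with $D\Hom(R,T_0^*[d])$ and $\Hom(T_0^*,T_0^*[d])$ with $D\Hom(T_0^*,T_0^*[d])$ (which forces it to vanish once one shows it has no nonzero elements, e.g.\ because a nonzero self-extension would have to factor through the approximation and hence through $\add R$, where it dies). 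For the generation condition (Definition~\ref{def.cto}(2)) I would use that $R$ already ``nearly'' generates, that $T_0$ is generated by the displayed angle, and that the number of indecomposable summands is preserved — equivalently, invoke the counting argument that a rigid object in $\mathscr{O}_\Lambda$ with the right number of non-isomorphic indecomposable summands is automatically cluster tilting (the higher analogue of the classical fact, provable here via Theorem~\ref{theo.clustercat_exists}(3) and a dimension count, or by lifting to the known situation for tilting modules through Theorem~\ref{theo.cta_is_trivext}). Finally $T_0^*\iso Y^{d+1}$ holds by construction, and $T_0^*\iso X_{d+1}$ follows by applying the dual argument to the right-approximation angle and then using uniqueness of the complement (two complements to $R$ that are both indecomposable and make $R$-plus-them cluster tilting must agree with $Y^{d+1}$, since the exchange $(d+2)$-angle is essentially unique).

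The main obstacle I anticipate is the bookkeeping in $(2)\Rightarrow(3)$ and in the self-orthogonality check of $(3)\Rightarrow(1)$: propagating the vanishing of $\Hom(R,Y^i[d])$ and of the self-extension groups along a length-$(d+2)$ angle requires carefully iterating the long exact sequences associated to a $(d+2)$-angulated category and correctly invoking the $2d$-Calabi-Yau isomorphism at each stage (note $[2d]$ is the square of $[d]$, so the duality pairs $\Hom(-,-[d])$ with itself only up to the shift bookkeeping). A secondary subtlety is ruling out that an intermediate $Y^i$ contains a copy of $T_0$ or $T_0^*$; here minimality of the approximations is essential and must be used with care, since without it the statement genuinely fails (the ``minimal completion'' hypothesis is doing real work). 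Everything else — indecomposability of $T_0^*$, the final identifications — is then formal once the vanishing statements are in hand.
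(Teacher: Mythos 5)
Your proposal has the right rough shape, and $(1)\Rightarrow(2)$ is correct and matches the paper, but the two nontrivial implications both have genuine gaps.

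For $(3)\Rightarrow(1)$, you try to verify the cluster-tilting conditions of Definition~\ref{def.cto} internally in $\mathscr{O}_\Lambda$, finishing with ``a rigid object with the right number of indecomposable summands is automatically cluster tilting.'' That criterion is not established anywhere in the paper, and Section~\ref{higherd} should make you wary of invoking it casually: the classical ``maximal rigid $=$ cluster tilting'' phenomenon breaks down for $d\geq 3$, which is exactly why a more careful argument is needed here. The paper avoids this entirely by unpacking the $(d+2)$-angle of (3)/(4) into the chain of genuine triangles in the ambient $2d$-Calabi-Yau triangulated category $\mathscr{C}_\Lambda^{2d}$ (cf.\ Definition~\ref{def.angulated}(3)): $T_0\oplus R$ is $2d$-cluster tilting in $\mathscr{C}_\Lambda^{2d}$ by Theorem~\ref{theo.ct_is_2dct}, and \emph{classical} mutation of cluster tilting objects in a $2d$-CY triangulated category, applied once per triangle along the zig-zag, yields $H_1\oplus R$, then $H_2\oplus R$, \dots, then $X_{d+1}\oplus R$, all $2d$-cluster tilting; since the last one sits in $\mathscr{O}_\Lambda$, Theorem~\ref{theo.ct_is_2dct} gives (1). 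This is both cleaner and actually correct; your direct verification of Definition~\ref{def.cto}(2) has no analogous mechanism.

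For $(2)\Rightarrow(3)$ your sketch is too vague to assess, and the framing is subtly off. You propose to apply Hom-functors to the $Y$-angle, i.e.\ the minimal completion of the approximation of $T_0$, and then rule out intermediate occurrences of $T_0$ and $T_0^*$ by a ``parity/position count.'' The paper instead starts from a \emph{$T$-resolution of $T_0^*$}, namely a $(d+2)$-angle $T_{d+1}\to\cdots\to T_1\to T_0^*\to T_{d+1}[d]$ with $T_i\in\add T$ and radical internal maps (such a resolution exists by Definition~\ref{def.cto}(2)), applies $\Hom_{\mathscr{O}_\Lambda}(T_0^*,-)$, and then uses two ingredients you do not mention and which are doing all the work: Lemma~\ref{lemma.everyone_rigid} (every indecomposable $X\in\mathscr{O}_\Lambda$ has $\Hom_{\mathscr{O}_\Lambda}(X,X[d])=0$ — not obvious, proved via the description of indecomposables in $\mathscr{U}$ and Theorem~\ref{theo.clustercat_exists}(3)), and Lemma~\ref{lemma.not_epi} (a map built from radical endomorphisms cannot be surjective). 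Only with the first lemma do you get the final term of the long exact sequence to vanish, and only with the second can you conclude that the multiplicities $r_i$ of $T_0$ in the middle terms must all be zero. The paper then identifies this $T$-resolution with the $Y$-angle after checking that $T_{d+1}\iso T_0$ and that $T_{d+1}\to T_d$ is a left $R$-approximation. Your proposed argument skips all of this, so while the conclusion is plausible, the substance of the proof is missing.

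The ``moreover'' identification $T_0^*\iso Y^{d+1}\iso X_{d+1}$ then falls out of the explicit form of the completed $(d+2)$-angle in $(2)\Rightarrow(3)$ (and its dual), as the paper notes at the end; your appeal to uniqueness of complements is a reasonable alternative but again presupposes facts (that complements to an almost complete cluster tilting object in this $(d+2)$-angulated setting are unique) that the paper derives rather than assumes.
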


\begin{remark}
Note that in the classical cluster category ($d=1$), if $T$ is a basic cluster tilting object and $T_0$ is an indecomposable summand of it, then there is always a $T_0^*$ satisfying the conditions of Theorem~\ref{theo.exch_angles} since (3) and (4) are vacuous.

For $d>1$ this is not the case; in other words, there are typically summands of a cluster tilting object which cannot be mutated.
\end{remark}

\subsection{Background and notation}

For triangulated categories, we will denote the suspension by $[1]$, and its powers by $[i] = [1]^i$. Moreover, for subcategories $\mathscr{A}$ and $\mathscr{B}$ we denote by $\mathscr{A} * \mathscr{B}$ the full subcategory of extensions of an object in $\mathscr{A}$ and an object in $\mathscr{B}$, that is
\[ \mathscr{A} * \mathscr{B} = \{ X \mid \exists A \to[30] X \to[30] B \to[30] A[1] \text{ with } A \in \mathscr{A}, B \in \mathscr{B} \}. \]

\begin{notation}
Let $\mathscr{T}$ be a triangulated category. If $\mathscr{T}$ has a Serre functor then it will be denoted by $\leftsub{\mathscr{T}}{\mathbb{S}}$ or just $\mathbb{S}$. Its $\delta$-th desuspension will be denoted by $\mathbb{S}_{\delta} = \mathbb{S}[-\delta]$. Finally we denote the inverses of these equivalences by $\mathbb{S}^-$ and $\mathbb{S}_{\delta}^-$, respectively.
\end{notation}

\begin{definition}
Let $\mathscr{T}$ be a triangulated category, $\mathscr{X}$ a full subcategory, and $\delta \in \mathbb{N}$. Then $\mathscr{X}$ is \emph{$\delta$-cluster tilting} if
\begin{itemize}
\item $\mathscr{X}$ is functorially finite in $\mathscr{T}$, and
\item the subcategory $\mathscr{X}$ of $\mathscr{T}$ coincides with both
\begin{align*}
& \{T \in \mathscr{T} \mid \Hom_{\mathscr{T}}(\mathscr{X}, T[i]) = 0 \, \forall i \in \{1, \ldots, \delta-1\}\}, \text{ and} \\
& \{T \in \mathscr{T} \mid \Hom_{\mathscr{T}}(T, \mathscr{X}[i]) = 0 \, \forall i \in \{1, \ldots, \delta-1\}\}.
\end{align*}
\end{itemize}
If $\add X$ is a $\delta$-cluster tilting subcategory of $\mathscr{T}$, then we call $X$ a \emph{$\delta$-cluster tilting object}.
\end{definition}

\subsubsection{\texorpdfstring{$d$}{d}-representation finiteness in the derived category}

\begin{theorem}[Iyama \cite{Iy_n-Auslander}] \label{theorem.ct_in_Db}
Let $\Lambda$ be $d$-representation finite, with $d$-cluster tilting object $M$. Then in $D^b(\mod \Lambda)$ we have
\[ \mathscr{U} = \add \{ \mathbb{S}_d^i \Lambda \mid i \in \mathbb{Z}\} = \add \{ M[id] \mid i \in \mathbb{Z} \} \]
is a $d$-cluster tilting subcategory.
\end{theorem}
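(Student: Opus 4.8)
The plan is to verify the two conditions in the definition of a $d$-cluster tilting subcategory (functorial finiteness, and the two bi-orthogonality descriptions) for $\mathscr{U}$, after first checking that the two stated descriptions of $\mathscr{U}$ really coincide. Since $\gld\Lambda\le d<\infty$, the category $D^b(\mod\Lambda)$ has a Serre functor $\mathbb{S}=-\otimes_{\Lambda}^{\mathbf{L}}D\Lambda$; put $\mathbb{S}_d=\mathbb{S}[-d]$ and $\mathbb{S}_d^-=\mathbb{S}^-[d]$. The key structural input, from Iyama's higher Auslander--Reiten theory (and visible already in Observation~\ref{obs.ARformula}), is that $\mathbb{S}_d$ refines $\tau_d$ on $\add M$: for an indecomposable $N\in\add M$ one has $\mathbb{S}_dN\cong\tau_dN$ when $N$ is non-projective and $\mathbb{S}_dN\cong(\nu N)[-d]$ when $N$ is projective (with $\nu$ the Nakayama functor), dually $\mathbb{S}_d^-N\cong\tau_d^-N$ when $N$ is non-injective and $\mathbb{S}_d^-N\cong(\nu^{-1}N)[d]$ when $N$ is injective, and $\tau_d^{\pm}$ preserve $\add M$ away from injective, resp.\ projective, summands. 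Granting this, $\mathbb{S}_d^{\pm1}$ send $\add M$ into $\add\{M[id]\mid i\in\mathbb{Z}\}$, which moreover contains $\Lambda\in\add M$ and is visibly stable under $[d]$; hence it is stable under $\mathbb{S}_d^{\pm1}$ and so contains $\add\{\mathbb{S}_d^i\Lambda\mid i\in\mathbb{Z}\}$. Conversely, Theorem~\ref{theo.ncto_from_proj} gives $\add M=\add\{\tau_d^{-i}\Lambda\mid i\ge0\}=\add\{\tau_d^iD\Lambda\mid i\ge0\}$; an induction on $i$, using the comparison above, shows that $\tau_d^{-i}\Lambda$ is a summand of $\mathbb{S}_d^{-i}\Lambda$ and that $\tau_d^iD\Lambda[-d]$ is a summand of $\mathbb{S}_d^i(D\Lambda[-d])$, in each case up to summands already in $\add\{\mathbb{S}_d^j\Lambda\}$; since $D\Lambda[-d]=\mathbb{S}_d\Lambda$, this yields $\add M\subseteq\add\{\mathbb{S}_d^j\Lambda\}$ and $(\add M)[-d]\subseteq\add\{\mathbb{S}_d^j\Lambda\}$, hence (together with the evident $[d]$-stability) $(\add M)[jd]\subseteq\add\{\mathbb{S}_d^j\Lambda\}$ for all $j\in\mathbb{Z}$. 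So the two descriptions agree; write $\mathscr{U}$ for the common subcategory.

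Next, the bi-orthogonality. The space $\Hom_{D^b}(M,M[\ell])=\Ext^\ell_\Lambda(M,M)$ vanishes for $\ell<0$, for $1\le\ell\le d-1$ (since $M$ is $d$-cluster tilting) and for $\ell>d=\gld\Lambda$, so it is concentrated in $\ell\in\{0,d\}$; shifting, $\Hom_{D^b}(M[id],M[jd+\ell])=0$ whenever $d\nmid\ell$, and therefore $\Hom_{D^b}(\mathscr{U},\mathscr{U}[\ell])=0$ for $1\le\ell\le d-1$. Now suppose $T\in D^b(\mod\Lambda)$ satisfies $\Hom_{D^b}(\mathscr{U},T[i])=0$ for $1\le i\le d-1$. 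Testing against the summands $\Lambda[jd]$ of $\mathscr{U}$ and using $\Hom_{D^b}(\Lambda,T[k])=H^k(T)$ shows $H^k(T)=0$ whenever $d\nmid k$, so the nonzero cohomology of $T$ lies in degrees that are multiples of $d$, hence are separated by gaps $\ge d$. An induction on the number of nonzero cohomology groups then gives $T\cong\bigoplus_jH^{jd}(T)[-jd]$: splitting the top cohomology $H^{d_r}(T)[-d_r]$ off the truncation $\tau_{<d_r}T$ is governed by a connecting morphism in $\Hom_{D^b}(H^{d_r}(T)[-d_r],(\tau_{<d_r}T)[1])$, which vanishes because $(\tau_{<d_r}T)[1]$ has cohomology in degrees $\le d_r-d-1$ while $\pd H^{d_r}(T)\le d$. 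Writing $N_j:=H^{jd}(T)\in\mod\Lambda$, each $N_j[-jd]$ is a direct summand of $T$, so $\Ext^i_\Lambda(M,N_j)=\Hom_{D^b}(M[-jd],N_j[-jd][i])$ is a summand of $\Hom_{D^b}(M[-jd],T[i])=0$ for $1\le i\le d-1$ (note $M[-jd]\in\mathscr{U}$); by Definition~\ref{def.d-repfin}(1) this forces $N_j\in\add M$, so $T\cong\bigoplus_jN_j[-jd]\in\mathscr{U}$. The dual characterization, with $\Hom_{D^b}(T,\mathscr{U}[i])=0$, is obtained identically, testing against the summands $D\Lambda[jd]$ of $\mathscr{U}$ (note $D\Lambda\in\add M$) and using Serre duality $\Hom_{D^b}(T,D\Lambda[jd+i])\cong DH^{-jd-i}(T)$ to pin down the cohomology of $T$, then invoking $\Ext^i_\Lambda(N_j,M)=0$ and the other equality in Definition~\ref{def.d-repfin}(1).

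Finally, $\mathscr{U}$ is functorially finite in $D^b(\mod\Lambda)$: for a fixed $X\in D^b(\mod\Lambda)$, only finitely many indecomposables of $\mathscr{U}$ have a nonzero morphism to or from $X$, because each layer $\add M$ has finitely many indecomposables ($\Lambda$ is representation finite) and, $M$ being a module over an algebra of finite global dimension, the complexes $\RHom_\Lambda(M,X[-id])$ and $\RHom_\Lambda(X[-id],M)$ have bounded amplitude, so their $H^0$ is nonzero only for $i$ in a bounded interval. In a $\Hom$-finite Krull--Schmidt triangulated category this makes $\Hom_{D^b}(-,X)|_{\mathscr{U}}$ and $\Hom_{D^b}(X,-)|_{\mathscr{U}}$ finitely generated, so $X$ has minimal right and left $\mathscr{U}$-approximations. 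Together these three steps show $\mathscr{U}=\add\{\mathbb{S}_d^i\Lambda\mid i\in\mathbb{Z}\}=\add\{M[id]\mid i\in\mathbb{Z}\}$ is a $d$-cluster tilting subcategory of $D^b(\mod\Lambda)$.

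The step I expect to be the main obstacle is the comparison $\mathbb{S}_d\leftrightarrow\tau_d$ on $\add M$ used at the start: $\mathbb{S}_dN$ is a priori a complex (a derived Nakayama twist), and identifying it with the module $\tau_dN=\tau\Omega^{d-1}N$ for non-projective $N\in\add M$ --- while tracking the genuine shift by $[-d]$, resp.\ $[d]$, that appears on the projective, resp.\ injective, summands --- is the technical core on which the agreement of the two descriptions of $\mathscr{U}$ (and hence the whole theorem) rests. Once that is in hand, the remaining content is the vanishing of $\Ext$ outside degrees $0$ and $d$, the splitting of complexes with cohomology concentrated in $d\mathbb{Z}$, and the two defining properties of a $d$-cluster tilting module.
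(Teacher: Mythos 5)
The paper does not prove this statement: it is recalled verbatim from Iyama's paper \cite{Iy_n-Auslander} and used as background, so there is no ``paper's own proof'' to compare against. Your proposal is, however, a correct and essentially self-contained reconstruction of Iyama's argument. The three-part structure you give --- (i) the identification of the two descriptions of $\mathscr{U}$ via the comparison of $\mathbb{S}_d$ with $\tau_d$ on $\add M$ (which the paper itself records without proof in Remark~\ref{rem.tau=S}, and which relies on Theorem~\ref{theo.ncto_from_proj} and Observation~\ref{obs.ARformula}); (ii) the rigidity of $\mathscr{U}$ and the converse inclusion by splitting any $\mathscr{U}$-perpendicular complex into shifted modules concentrated in degrees divisible by $d$, then invoking Definition~\ref{def.d-repfin}(1); and (iii) functorial finiteness from representation finiteness plus boundedness of $\RHom$ --- is precisely the route of Iyama's original proof. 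The step you flag as the technical core, identifying $\mathbb{S}_d$ with $\tau_d$ on non-projective summands of $M$ while keeping track of the extra shift on projective and injective summands, is indeed the nontrivial input and is correctly isolated; your degree bookkeeping in the splitting argument (projective resolution of $H^{d_r}(T)$ in degrees $[d_r-d,d_r]$ versus target cohomology in degrees $\le d_r-d-1$) and the use of $\Hom_{D^b}(\Lambda[jd],-)\cong H^{\bullet}$ and $\Hom_{D^b}(-,D\Lambda[jd+i])\cong DH^{-jd-i}(-)$ to pin down cohomology are all correct.
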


\begin{remark} \label{rem.tau=S}
It should be noted that $\mathbb{S}_d$ and $\mathbb{S}_d^-$ are just the $\mathscr{U}$-versions of $\tau_d$ and $\tau_d^-$. More precisely, for $M' \in \add M$ indecomposable we have
\begin{align*}
\mathbb{S}_d M' & = \tau_d M' \quad \text{ if } \tau_d M' \neq 0 \text{, and} \\
\mathbb{S}_d^- M' & = \tau_d^- M' \quad \text{ if } \tau_d^- M' \neq 0.
\end{align*}
\end{remark}

\subsubsection{\texorpdfstring{$\delta$}{delta}-Amiot cluster categories}

\begin{construction}[Amiot \cite{CC, C_PhD}]
Let $\Lambda$ be an algebra with $\gld \Lambda \leq \delta$. Then we denote by
\[ \mathscr{C}_{\Lambda}^{\delta} = \text{triangulated hull}(\underbrace{D^b(\mod \Lambda) / (\mathbb{S}_{\delta})}_{\text{orbit category}}) \]
the $\delta$-Amiot cluster category of $\Lambda$. We do not give a definition of triangulated hull. For the purpose of this paper, this is some triangulated category containing $D^b(\mod \Lambda) / (\mathbb{S}_{\delta})$, such that the following theorem holds.
\end{construction}

\begin{theorem}[Amiot \cite{CC, C_PhD}] \phantomsection \label{theorem.amiot}
\begin{enumerate}
\item If $\gld \Lambda < \delta$, then $\mathscr{C}_{\Lambda}^{\delta}$ is $\Hom$-finite.
\item If $\mathscr{C}_{\Lambda}^{\delta}$ is $\Hom$-finite then it is $\delta$-Calabi-Yau.
\item If $\mathscr{C}_{\Lambda}^{\delta}$ is $\Hom$-finite, and $T \in D^b(\mod \Lambda)$ is a tilting complex with $\gld \End(T) \leq \delta$, then $\overline{T} \in \mathscr{C}_{\Lambda}^{\delta}$ is a $\delta$-cluster tilting object.
\end{enumerate}
\end{theorem}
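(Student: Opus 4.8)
This is Amiot's theorem (the passage to general Calabi-Yau dimension $\delta$ is due to Guo), so the plan is to recall the shape of that proof. The essential step is to make the triangulated hull concrete: one realizes $\mathscr{C}_\Lambda^\delta$ as the Verdier quotient $\operatorname{per}\Pi / D^b(\Pi)$, where $\Pi = \Pi_{\delta+1}(\Lambda)$ is the $(\delta+1)$-Calabi-Yau completion of $\Lambda$, i.e. the derived tensor dg algebra $T_\Lambda(\Theta[\delta])$ of $\Lambda$ over the inverse dualizing bimodule $\Theta = \RHom_{\Lambda^e}(\Lambda,\Lambda^e)$. By Keller's theory of orbit categories, the orbit category $D^b(\mod\Lambda)/(\mathbb{S}_\delta)$ embeds as a full subcategory into the triangulated category $\operatorname{per}\Pi/D^b(\Pi)$, and this is exactly what ``triangulated hull'' refers to here; under the embedding, $\Lambda \in D^b(\mod\Lambda)$ is sent to $\Pi$.

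For part~(1), the hypothesis $\gld\Lambda<\delta$ is what makes $\Pi$ a well-behaved dg algebra: since $\Lambda$ is smooth of finite global dimension, $\Theta$ is perfect over $\Lambda^e$ and concentrated in a bounded range of cohomological degrees, and the strict inequality ensures that $\Theta[\delta]$ lies in degrees $\le -1$; hence $\Pi = T_\Lambda(\Theta[\delta])$ is homologically smooth, has $H^0\Pi = \Lambda$ finite-dimensional, and all its higher cohomology in negative degrees, each piece finite-dimensional. Guo's criterion (the higher analogue of the argument in \cite{CC}) then yields at once that $\operatorname{per}\Pi/D^b(\Pi)$ is $\Hom$-finite. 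For part~(2), once $\Hom$-finiteness is known the Calabi-Yau property is formal: the Serre functor of $D^b(\mod\Lambda)$ factors as $\mathbb{S} = \mathbb{S}_\delta \circ [\delta]$, so identifying $\mathbb{S}_\delta$ with the identity on the orbit turns $[\delta]$ into a Serre functor; one checks that the resulting bifunctorial isomorphism $\Hom(X,Y) \cong D\Hom(Y,X[\delta])$ survives both passage to the orbit category and the enlargement to $\operatorname{per}\Pi/D^b(\Pi)$ (where it is the bimodule $\delta$-Calabi-Yau property of $\Pi$).

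For part~(3), a tilting complex $T \in D^b(\mod\Lambda)$ with $\Gamma = \End(T)$ of global dimension $\le\delta$ induces a triangle equivalence $D^b(\mod\Lambda)\simeq D^b(\mod\Gamma)$; such an equivalence commutes with the Serre functors, hence with $\mathbb{S}_\delta$, and therefore descends to an equivalence $\mathscr{C}_\Lambda^\delta\simeq\mathscr{C}_\Gamma^\delta$ carrying $\overline T$ to $\overline\Gamma$. One is thus reduced to checking that $\overline\Gamma$ is a $\delta$-cluster tilting object of $\mathscr{C}_\Gamma^\delta$: the vanishing $\Hom_{\mathscr{C}_\Gamma^\delta}(\overline\Gamma,\overline\Gamma[i])=0$ for $1\le i\le\delta-1$ is read off from the non-positive grading of $\Pi_{\delta+1}(\Gamma)$, and the existence of the required resolutions of arbitrary objects by $\add\overline\Gamma$ follows from the standard t-structure on $\operatorname{per}\Pi_{\delta+1}(\Gamma)$ together with the description of the quotient. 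I expect part~(1) to be the real obstacle: part~(2) is formal and part~(3) is a transport of structure along a derived equivalence, whereas~(1) rests on the genuinely technical fact that the Calabi-Yau completion of an algebra of global dimension $<\delta$ is a smooth, non-positively graded dg algebra with finite-dimensional cohomology in each degree --- this is the heart of Amiot's and Guo's work, and the one place where the strict inequality $\gld\Lambda<\delta$ is genuinely used.
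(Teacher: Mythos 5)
The paper does not prove Theorem~\ref{theorem.amiot}; it is quoted as background from Amiot \cite{CC, C_PhD} (with the $\delta>2$ generalization due to Guo, whom you correctly credit), so there is no in-paper argument to compare against. Your sketch is a faithful pr\'ecis of the actual proof: realizing the triangulated hull as $\operatorname{per}\Pi/D^b(\Pi)$ for $\Pi$ the $(\delta+1)$-Calabi-Yau completion, noting that $\gld\Lambda<\delta$ forces $\Theta[\delta]$ into degrees $[-\delta,\gld\Lambda-\delta]\subseteq[-\delta,-1]$ so that $\Pi$ is a non-positively graded smooth dg algebra with degreewise finite-dimensional cohomology (hence $\Hom$-finiteness of the quotient); the Calabi-Yau property from the Serre functor factorization $\mathbb{S}=\mathbb{S}_\delta\circ[\delta]$; and cluster-tilting of $\overline T$ by transporting along the derived equivalence $D^b(\mod\Lambda)\simeq D^b(\mod\Gamma)$ and using the canonical $t$-structure on $\operatorname{per}\Pi_{\delta+1}(\Gamma)$.

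One attribution is worth sharpening. You write that ``by Keller's theory of orbit categories, the orbit category $D^b(\mod\Lambda)/(\mathbb{S}_\delta)$ embeds as a full subcategory into $\operatorname{per}\Pi/D^b(\Pi)$.'' Keller's orbit-category theorem gives a triangulated structure \emph{directly} on the orbit category only under hereditarity-type hypotheses on $\Lambda$; when $\gld\Lambda>1$ the orbit category is generally not triangulated, and constructing the hull $\operatorname{per}\Pi/D^b(\Pi)$ together with the full embedding of the orbit category into it is precisely Amiot's contribution, not a corollary of Keller's result. Beyond that, the substance of your outline is correct, and you are right that part~(1) is where the hypothesis $\gld\Lambda<\delta$ does genuine work --- it is the strict inequality that makes $\Pi$ non-positively graded and circumvents the extra nilpotence conditions that appear in Amiot's original $\delta=2$ treatment of algebras with $\gld\Lambda=2$.
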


\subsection{Standard \texorpdfstring{$(d+2)$}{(d+2)}-angulated categories}
${n}$-angulated categories have been introduced in \cite{4ang}. In particular \cite{4ang} gives a construction motivating our definition here.

Before we start, we need the following bit of notation: In diagrams, if an arrow is labeled by an integer $i$, then it denotes a morphism to the $i$-th suspension of the target. For instance
$\begin{tikzpicture}[baseline=(A.base)]
 \node (A) at (0,0) {$X$};
 \node (B) at (1.5,0) {$Y$};
 \draw [->] (A) -- node [pos=.4,fill=white,inner sep=1pt] {$\scriptstyle 1$} (B);
\end{tikzpicture}$
means the same thing as $X \to[30] Y[1]$.

\begin{definition} \label{def.angulated}
A $\Hom$-finite $k$-category $\mathscr{O}$, together with a $d$-suspension $[d]_{\mathscr{O}}$ and a collection of distinguished $(d+2)$-angles, is called \emph{standard $(d+2)$-angulated} if there is a full faithful $k$-embedding $\mathscr{O} \sub \mathscr{T}$ into a triangulated $k$-category $\mathscr{T}$ with Serre functor $\leftsub{\mathscr{T}}{\mathbb{S}}$ such that
\begin{enumerate}
\item $\mathscr{O}$ is a $d$-cluster tilting subcategory of $\mathscr{T}$, such that $\leftsub{\mathscr{T}}{\mathbb{S}} \mathscr{O} = \mathscr{O}$.
\item $[d]_{\mathscr{O}} = [d]_{\mathscr{T}}$.
\item A sequence of $(d+2)$ morphisms
\[ \begin{tikzpicture}[xscale=1.5]
 \node (A) at (180:2) {$X_0$};
 \node (B) at (150:2) {$X_1$};
 \node (C) at (120:2) {$X_2$};
 \node (D) at (60:2) {$X_{d-1}$};
 \node (E) at (30:2) {$X_d$};
 \node (F) at (0:2) {$X_{d+1}$};
 \draw [->] (A) -- (B);
 \draw [->] (B) -- (C);
 \draw [dotted,thick,out=20,in=160] (C) to (D);
 \draw [->] (D) -- (E);
 \draw [->] (E) -- (F);
 \draw [->] (F) -- node [pos=.3,fill=white,inner sep=1pt] {$\scriptstyle d$} (A);
\end{tikzpicture} \]
in $\mathscr{O}$ is a distinguished $(d+2)$-angle if and only if this diagram can be completed to a diagram
\[ \begin{tikzpicture}[xscale=3,yscale=1.5]
 \node (A) at (180:2) {$X_0$};
 \node (B) at (150:2) {$X_1$};
 \node (C) at (120:2) {$X_2$};
 \node (D) at (60:2) {$X_{d-1}$};
 \node (E) at (30:2) {$X_d$};
 \node (F) at (0:2) {$X_{d+1}$};
 \draw [->] (A) -- (B);
 \draw [->] (B) -- (C);
 \draw [dotted,thick,out=20,in=160] (C) to (D);
 \draw [->] (D) -- (E);
 \draw [->] (E) -- (F);
 \draw [->] (F) -- node [pos=.3,fill=white,inner sep=1pt] {$\scriptstyle d$} (A);
 \node (L) at (150:1.2) {$T_1$};
 \node (M) at (110:1.2) {$T_2$};
 \node (N) at (70:1.2) {$T_{d-2}$};
 \node (O) at (30:1.2) {$T_{d-1}$};
 \draw [->] (F) -- node [pos=.3,fill=white,inner sep=1pt] {$\scriptstyle 1$} (O);
 \draw [->] (O) -- node [pos=.3,fill=white,inner sep=1pt] {$\scriptstyle 1$} (N);
 \draw [dotted,thick,out=170,in=10] (N) to (M);
 \draw [->] (M) -- node [pos=.3,fill=white,inner sep=1pt] {$\scriptstyle 1$} (L);
 \draw [->] (L) -- node [pos=.3,fill=white,inner sep=1pt] {$\scriptstyle 1$} (A);
 \draw [->] (B) -- (L);
 \draw [->] (L) -- (C);
 \draw [->] (C) -- (M);
 \draw [->] (N) -- (D);
 \draw [->] (D) -- (O);
 \draw [->] (O) -- (E);
\end{tikzpicture} \]
in $\mathscr{T}$, such that all oriented triangles are distinguished triangles, and all non-oriented triangles and the lower shape commute.
\end{enumerate}
\end{definition}

\begin{remark}
In \cite{4ang} a definition of ${n}$-angulated categories in terms of axioms similar to those for triangulated categories is given. Then a standard construction is given, which is shown to always yield ${n}$-angulated categories. Standard ${n}$-angulated categories in the sense of our Definition~\ref{def.angulated} are precisely those ${n}$-angulated categories in the sense of \cite{4ang} which result from this standard construction.
\end{remark}

\begin{example} \label{example.U_angulated}
Let $\Lambda$ be a $d$-representation finite algebra. Then the category $\mathscr{U}$ of Theorem~\ref{theorem.ct_in_Db} is standard $(d+2)$-angulated. Any exact sequence
\[ M_0 \mono[30] M_1 \to[30] \cdots \to[30] M_d \epi[30] M_{d+1} \]
in $\add M$ turns into a $(d+2)$-angle
\[ M_0 \to[30] M_1 \to[30] \cdots \to[30] M_d \to[30] M_{d+1} \to[30] M_0[d] \]
in $\mathscr{U}$. (This follows from the fact that any short exact sequence in $\mod \Lambda$ turns into a triangle in $D^b(\mod \Lambda)$.)
\end{example}

\begin{lemma} \label{lemma.angle_completion}
Let $\mathscr{O}$ be a standard $(d+2)$-angulated category.
\begin{enumerate}
\item Any map $f \colon X_0 \to X_1$ in $\mathscr{O}$ can be completed to a $(d+2)$-angle $X_0 \to X_1 \to \cdots \to X_{d+1} \to X_0[d]$.
\item If we additionally require the maps $X_2 \to X_3$, $X_3 \to X_4$, $\ldots$, $X_d \to X_{d+1}$ to be radical morphisms, then the $(d+2)$-angle is determined up to (non-unique) isomorphism by the map $f$. In this case we call the $(d+2)$-angle a \emph{minimal completion} of $f$ to a $(d+2)$-angle.
\end{enumerate}
\end{lemma}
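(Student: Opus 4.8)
<br>

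The statement to prove is Lemma~\ref{lemma.angle_completion}: any map in a standard $(d+2)$-angulated category $\mathscr{O}$ completes to a $(d+2)$-angle, and the completion is unique up to isomorphism once the later maps are required to be radical. Let me think about how to prove this using the definition of standard $(d+2)$-angulated category given (embedding into a triangulated category $\mathscr{T}$ with $\mathscr{O}$ a $d$-cluster tilting subcategory, $\mathbb{S}\mathscr{O}=\mathscr{O}$).

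For existence: given $f\colon X_0\to X_1$ in $\mathscr{O}\subseteq\mathscr{T}$, I want to build the diagram in the definition. Complete $f$ to a triangle $X_0\to X_1\to C_1\to X_0[1]$ in $\mathscr{T}$. Now $C_1$ need not be in $\mathscr{O}$. Since $\mathscr{O}$ is $d$-cluster tilting, I can take a right $\mathscr{O}$-approximation or build up a resolution. Actually the standard trick: since $\mathscr{O}$ is $d$-cluster tilting, every object of $\mathscr{T}$ sits in iterated triangles with objects of $\mathscr{O}$. More precisely, I'd want to resolve $C_1$: take $T_1\to C_1$ a right $\mathscr{O}$-approximation (exists by functorial finiteness), cone it, getting $C_2$ with $\Hom(\mathscr{O},C_2[i])$ vanishing for a range; iterate. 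After $d-1$ steps one lands back in $\mathscr{O}$ because of the cluster-tilting/vanishing conditions. Composing these triangles gives exactly the "octahedral staircase" picture required in condition (3) of Definition~\ref{def.angulated}, with $X_2,\dots,X_{d+1}$ in $\mathscr{O}$ and the connecting map $X_{d+1}\to X_0[d]$. So the completed diagram witnesses that $X_0\to X_1\to\cdots\to X_{d+1}\to X_0[d]$ is a distinguished $(d+2)$-angle. This is essentially the construction from \cite{4ang}; I'd cite that the iterated cones land in $\mathscr{O}$ using the $d$-cluster tilting axiom (vanishing of $\Hom_{\mathscr{T}}(\mathscr{O},-[i])$ for $1\le i\le d-1$ forces the $(d-1)$st syzygy-type object to lie in $\mathscr{O}$).

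For the minimal/uniqueness part: suppose we have two completions with the later maps radical. First, using the long exact sequences obtained by applying $\Hom_{\mathscr{O}}(-,X_i)$ and $\Hom_{\mathscr{O}}(X_i,-)$ to a $(d+2)$-angle — these are exact because the underlying triangles in $\mathscr{T}$ are exact and $\mathscr{O}$ is $d$-cluster tilting (so the relevant $\Ext$-type terms vanish) — one gets that any two completions of the same $f$ are related by a morphism of $(d+2)$-angles which is the identity on $X_0$ and $X_1$. Then one shows that if the maps $X_2\to X_3,\dots,X_d\to X_{d+1}$ are radical, such a morphism of $(d+2)$-angles is automatically an isomorphism: the standard argument is that a morphism of complexes over a Krull–Schmidt category which is an isomorphism in two adjacent spots and where the "new" direct summands are killed by radicality must be an isomorphism — more carefully, one decomposes off a contractible (split) $(d+2)$-angle, and radicality of the tail maps forces the split part to be zero. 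This is the higher analogue of the classical fact that the minimal completion of a map to a triangle is unique, and of minimality of projective resolutions.

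The main obstacle is the uniqueness/minimality statement, specifically establishing the right notion of "minimal" $(d+2)$-angle and the cancellation of split summands. In the triangulated case this rests on the fact that any triangle decomposes as a direct sum of an indecomposable-ish part and split triangles $X\xrightarrow{1}X\to 0$; for $(d+2)$-angles one needs the analogous direct-sum decomposition (a Krull–Schmidt statement for $(d+2)$-angles, available since $\mathscr{O}$ is Krull–Schmidt and the angles come from triangles), and then one must check that a split $(d+2)$-angle with all tail maps radical is necessarily zero — which follows because in a split $(d+2)$-angle some tail map is a split epimorphism (or split mono), contradicting radicality unless the objects vanish. I would first nail down this decomposition lemma, then everything else is bookkeeping with the long exact sequences from condition (3) of the definition; existence I'd dispatch quickly by the iterated-cone construction and a pointer to \cite{4ang}.
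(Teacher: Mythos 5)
Your approach matches the paper's own (very terse) proof: both reduce to embedding $\mathscr{O}$ into the ambient triangulated category $\mathscr{T}$, forming the cone $T_1$ of $f$ there, and then resolving $T_1$ by objects of the $d$-cluster tilting subcategory $\mathscr{O}$ in $d-1$ steps, with existence and essential uniqueness both coming from the general theory of such resolutions (minimality/Krull--Schmidt giving the uniqueness). One small slip worth noting: to build the octahedral staircase forward from $f$ you need left $\mathscr{O}$-approximations ($\mathscr{O}$-preenvelopes) $T_i\to X_{i+1}$ of the successive cones, not right approximations as written, though this does not change the substance of the argument.
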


\begin{proof}
Both claims follow from the fact that, given $f \colon X_0 \to X_1$, we can determine $T_1 \in \mathcal{T}$. Then we have to resolve $T_1$ by objects in the $d$-cluster tilting subcategory $\mathscr{O}$ of $\mathscr{T}$. By general theory of cluster tilting subcategories this is possible in $d-1$ steps in an essentially unique way.
\end{proof}

\begin{lemma} \label{lemma.angle_rotation}
The $(d+2)$-angles in a standard $(d+2)$-angulated category $\mathscr{O}$ are invariant under rotation. More precisely, if
\[ X_0 \tol[30]{\scriptstyle f_0} \cdots \tol[30]{\scriptstyle f_d} X_{d+1} \tol[30]{\scriptstyle f_{d+1}} X_0[d] \]
is a $(d+2)$-angle in $\mathscr{O}$, then so is 
\[ X_1 \tol[30]{\scriptstyle f_1} \cdots \tol[30]{\scriptstyle f_d} X_{d+1} \tol[30]{\scriptstyle f_{d+1}} X_0[d] \tol[50]{\scriptstyle (-1)^d f_0[d]} X_1[d]. \]
\end{lemma}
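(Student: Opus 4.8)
The statement is exactly axiom (N2) in the definition of $(d+2)$-angulated categories, so one clean option is to invoke \cite{4ang}: by the remark following Definition~\ref{def.angulated}, standard $(d+2)$-angulated categories are precisely the $(d+2)$-angulated categories arising from the standard construction of \cite{4ang}, which is proved there to satisfy all of the axioms, rotation included. For a direct argument from Definition~\ref{def.angulated}(3), I would proceed as follows.

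Fix a $(d+2)$-angle $X_0 \to[30] X_1 \to[30] \cdots \to[30] X_{d+1} \to[30] X_0[d]$ with structure maps $f_0, \dots, f_{d+1}$, and choose a completion diagram for it in $\mathscr{T}$, with intermediate objects $T_1, \dots, T_{d-1}$ and distinguished triangles
\[ X_0 \to[30] X_1 \to[30] T_1 \to[30] X_0[1], \qquad T_{i-1} \to[30] X_{i+1} \to[30] T_i \to[30] T_{i-1}[1] \ \ (2 \le i \le d-1), \qquad T_{d-1} \to[30] X_d \to[30] X_{d+1} \to[30] T_{d-1}[1], \]
in which $X_i \to[30] T_i \to[30] X_{i+1}$ composes to $f_i$ for $1 \le i \le d-1$, and $f_{d+1}$ is the composite of the appropriately shifted connecting morphisms. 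The plan is to peel off the first triangle and reassemble a completion diagram for the rotated sequence. Rotating the first triangle exhibits $X_0[1]$ as the cone of the map $X_1 \to[30] T_1$; applying the octahedral axiom to the factorisation $X_1 \to[30] T_1 \to[30] X_2$ of $f_1$ then produces an object $T_1' := \Cone(f_1)$, a distinguished triangle $X_1 \to[30] X_2 \to[30] T_1' \to[30] X_1[1]$ with first map $f_1$, and a distinguished triangle $X_0[1] \to[30] T_1' \to[30] T_2 \to[30] X_0[2]$, with maps compatible with those already present. Feeding the last of these (rotated, so that $X_0[2]$ is the cone of $T_1' \to[30] T_2$) together with $T_2 \to[30] X_3 \to[30] T_3 \to[30] T_2[1]$ back into the octahedral axiom yields $T_2'$, a triangle $T_1' \to[30] X_3 \to[30] T_2' \to[30] T_1'[1]$, and a triangle $X_0[2] \to[30] T_2' \to[30] T_3 \to[30] X_0[3]$. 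Iterating $d-1$ times — the last step using that $X_{d+1}$ is the cone of $T_{d-1} \to[30] X_d$, in the role that $T_{i+1}$ played previously — produces objects $T_1', \dots, T_{d-1}'$ together with distinguished triangles $X_1 \to[30] X_2 \to[30] T_1' \to[30] X_1[1]$, then $T_{i-1}' \to[30] X_{i+1} \to[30] T_i' \to[30] T_{i-1}'[1]$ for $2 \le i \le d-1$, and finally $T_{d-1}' \to[30] X_{d+1} \to[30] X_0[d] \to[30] T_{d-1}'[1]$, this last being the rotation of the triangle $X_0[d-1] \to[30] T_{d-1}' \to[30] X_{d+1} \to[30] X_0[d]$ produced by the final octahedron.

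It then remains to verify that this collection of triangles is a legitimate completion diagram for $X_1 \to[30] \cdots \to[30] X_{d+1} \to[30] X_0[d] \to[30] X_1[d]$ in the sense of Definition~\ref{def.angulated}(3): that the middle composites of the new triangles are $f_2, \dots, f_d$, that the non-oriented triangles of the diagram commute, and that the composite of the shifted connecting morphisms of the new triangles equals $(-1)^d f_0[d]$. Each of these is forced by the compatibility clauses of the octahedral axiom invoked at the corresponding step. I expect the one genuinely delicate point to be the sign: one has to follow the connecting morphism $-f_0[1]$ of the rotated first triangle through the $d-1$ successive shifts and octahedra and confirm that the accumulated sign is exactly $(-1)^d$. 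Everything else is a routine, if somewhat lengthy, diagram chase.
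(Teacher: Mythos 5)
Your direct argument is essentially the paper's proof: both set $H_1 = \Cone(f_1)$ (your $T_1'$), apply the octahedral axiom to the factorisation of $f_1$ through $T_1$ to produce a triangle on $H_1,\,T_2,\,X_0[2]$, and iterate $d-1$ times to arrive at a triangle $H_{d-1}\to X_{d+1}\to X_0[d]\to H_{d-1}[1]$, which reassembles into the rotated $(d+2)$-angle; the paper is even terser than you are about the sign and about verifying the commutativity clauses. Your opening observation — that one can instead simply cite \cite{4ang}, since by the remark after Definition~\ref{def.angulated} every standard $(d+2)$-angulated category is an $(d+2)$-angulated category in the sense of \cite{4ang}, and rotation is one of their axioms — is a legitimate shortcut that the paper does not take.
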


\begin{proof}
In the notation of Definition~\ref{def.angulated} we set $H_1 = \Cone [X_1 \to T_1 \to X_2] \in \mathscr{T}$. By the octahedral axiom (for $\mathscr{T}$) we have a triangle $H_1 \to T_2 \to X_0[2] \to H_1[1]$ in $\mathscr{T}$. We set $H_2 = \Cone [H_1 \to T_2 \to X_3]$, and, by the octahedral axiom we have a triangle $H_2 \to T_3 \to X_0[3] \to H_2[1]$. Iterating this we end up with a triangle $H_{d-1} \to X_{d+1} \to X_0[d] \to H_{d+1}[1]$. Putting these triangles together we obtain the desired $(d+2)$-angle.
\end{proof}

\begin{remark}
Let $\mathscr{O}$ be a standard $(d+2)$-angulated category, and $\mathscr{T}$ as in Definition~\ref{def.angulated}. Then the Serre functor $\leftsub{\mathscr{T}}{\mathbb{S}}$ of $\mathscr{T}$ restricts to a Serre functor $\leftsub{\mathscr{O}}{\mathbb{S}}$.
\end{remark}

\begin{lemma} \label{lemma.snake_seq}
Let $\mathscr{O}$ be a standard $(d+2)$-angulated category, and $Y \in \mathscr{O}$. Then a $(d+2)$-angle $X_0 \to \cdots \to X_{d+1} \to X_0[d]$ gives rise to a long exact sequence
\[ \begin{tikzpicture}[xscale=3,yscale=-1]
 \node (C0) at (2,0) {$\cdots$};
 \node (D0) at (3,0) {$\leftsub{\mathscr{O}}(Y, X_{d+1}[-d])$};
 \node (A1) at (0,1) {$\leftsub{\mathscr{O}}(Y, X_0)$};
 \node (B1) at (1,1) {$\leftsub{\mathscr{O}}(Y, X_1)$};
 \node (C1) at (2,1) {$\cdots$};
 \node (D1) at (3,1) {$\leftsub{\mathscr{O}}(Y, X_{d+1})$};
 \node (A2) at (0,2) {$\leftsub{\mathscr{O}}(Y, X_0[d])$};
 \node (B2) at (1,2) {$\leftsub{\mathscr{O}}(Y, X_1[d])$};
 \node (C2) at (2,2) {$\cdots$};
 \draw [->] (C0) -- (D0);
 \draw [->] (D0) -- (3.5,0) arc (-90:90:.25) -- (-.5,.5) arc (270:90:.25) -- (A1);
 \draw [->] (A1) -- (B1);
 \draw [->] (B1) -- (C1);
 \draw [->] (C1) -- (D1);
 \draw [->] (D1) -- (3.5,1) arc (-90:90:.25) -- (-.5,1.5) arc (270:90:.25) -- (A2);
 \draw [->] (A2) -- (B2);
 \draw [->] (B2) -- (C2);
\end{tikzpicture} \]
and a similar long exact sequence for the contravariant functor $\Hom_{\mathscr{O}}(-, Y)$.
\end{lemma}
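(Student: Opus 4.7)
The plan is to work in the ambient triangulated category $\mathscr{T}$ from Definition~\ref{def.angulated}. Unpacking the definition, a $(d+2)$-angle comes equipped with auxiliary objects $T_1,\ldots,T_{d-1}\in\mathscr{T}$, and setting $T_0:=X_0$, $T_d:=X_{d+1}$, there are $d$ distinguished triangles in $\mathscr{T}$
\[ T_{i-1}\to[30] X_i\to[30] T_i\to[30] T_{i-1}[1],\qquad i=1,\ldots,d, \]
whose composed connecting morphisms $X_{d+1}\to T_{d-1}[1]\to T_{d-2}[2]\to\cdots\to T_1[d-1]\to X_0[d]$ agree with the closing arrow of the $(d+2)$-angle by the commutativity of the lower shape.

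The first step will be to apply $\Hom_{\mathscr{T}}(Y,-)$ to each of these $d$ triangles, obtaining $d$ long exact sequences. Writing $A_i^k:=\Hom_{\mathscr{T}}(Y,X_i[k])$ and $B_i^k:=\Hom_{\mathscr{T}}(Y,T_i[k])$, these take the form
\[ \cdots\to[30] B_{i-1}^k\to[30] A_i^k\to[30] B_i^k\to[30] B_{i-1}^{k+1}\to[30]\cdots. \]
Next I would verify that consecutive structure maps in the $(d+2)$-angle compose to zero: this is immediate from the factorization $X_{i-1}\to X_i\to X_{i+1}$ through $T_i$ (resp.\ through both $T_{i-1}$ and $T_i$) combined with the fact that consecutive maps in a distinguished triangle in $\mathscr{T}$ compose to zero. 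Hence after applying $\Hom_{\mathscr{T}}(Y,-)$ one at least obtains a complex.

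To upgrade this complex to an exact sequence, the key input is the $d$-cluster tilting property of $\mathscr{O}$: since $Y\in\mathscr{O}$, one has $\Hom_{\mathscr{T}}(Y,Z[j])=0$ for every $Z\in\mathscr{O}$ and $j\in\{1,\ldots,d-1\}$. Iterating the triangles from either end, each $T_i$ admits two filtrations, $T_i\in\mathscr{O}*\mathscr{O}[1]*\cdots*\mathscr{O}[i]$ (built up from $(\alpha_1)$) and $T_i\in\mathscr{O}[i-d]*\mathscr{O}[i-d+1]*\cdots*\mathscr{O}[0]$ (built up from $(\alpha_d)$ using the rotated triangles). Applying the vanishing to each graded piece of these two filtrations yields the vanishings of $B_i^k$ that are needed to splice the $d$ long exact sequences into a single long exact sequence involving only the $A_i^k$; the connecting morphism $A_{d+1}^{kd}\to A_0^{(k+1)d}$ is then identified with $\Hom_{\mathscr{T}}(Y,-)$ applied to the closing morphism $X_{d+1}\to X_0[d]$ via its factorisation through the $T_j[\bullet]$'s.

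The contravariant long exact sequence for $\Hom_{\mathscr{O}}(-,Y)$ is obtained by exactly the same argument applied to $\Hom_{\mathscr{T}}(-,Y)$, invoking the symmetric characterisation of $d$-cluster tilting. The main obstacle will be the bookkeeping in the splicing step: one must verify at each position that the vanishing of the intervening $B_i^k$'s, together with the exactness of each triangle's long exact sequence, forces exactness in the spliced sequence rather than merely leaving a complex; once the two filtrations are in hand, however, this reduces to a diagram chase.
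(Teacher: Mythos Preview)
Your plan is the natural one and is presumably what underlies \cite[Lemma~4.3]{IO2}, which is all the paper actually invokes for this lemma. One point needs tightening, though. When you splice at $A_i^0$ you need not only the surjectivity of $A_{i-1}^0\to B_{i-1}^0$ (which your first filtration handles, via $B_{i-2}^1=0$) but also the injectivity of $B_i^0\to A_{i+1}^0$, i.e.\ $B_{i+1}^{-1}=\Hom_{\mathscr{T}}(Y,T_{i+1}[-1])=0$. Your second filtration places $T_{i+1}[-1]$ in $\mathscr{O}[i-d]*\cdots*\mathscr{O}[-1]$, so the graded pieces sit at \emph{negative} shifts, and the vanishing you stated, $\Hom_{\mathscr{T}}(Y,\mathscr{O}[j])=0$ for $j\in\{1,\ldots,d-1\}$, does not literally cover them.

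The fix is immediate once you remember that $[d]_{\mathscr{O}}=[d]_{\mathscr{T}}$ is an autoequivalence of $\mathscr{O}$ (Definition~\ref{def.angulated}(2)), so $\mathscr{O}[d]=\mathscr{O}$ as subcategories of $\mathscr{T}$. Hence $\Hom_{\mathscr{T}}(Y,\mathscr{O}[j])=\Hom_{\mathscr{T}}(Y,\mathscr{O}[j+d])$ for all $j$, and the $d$-rigidity upgrades to $\Hom_{\mathscr{T}}(Y,\mathscr{O}[j])=0$ for every $j\not\equiv 0\pmod d$. With this in hand both filtrations do their job and the splicing goes through exactly as you outlined. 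You should make this step explicit; otherwise the argument is correct.
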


\begin{proof}
This is a special case of \cite[Lemma~4.3]{IO2}.
\end{proof}

\subsection{Definition of the \texorpdfstring{$(d+2)$}{(d+2)}-angulated cluster category}

By the discussion at the beginning of the section, we want our cluster category to be $\add M \vee \proj \Lambda[d]$. Hence, in $\mathscr{U}$ of Theorem~\ref{theorem.ct_in_Db} we identify objects $U$ with $\mathbb{S}_d[-d] U$.

\begin{definition} \label{def.clustercat}
Let $\Lambda$ be a $d$-representation finite algebra. The \emph{$(d+2)$-angulated cluster category} of $\Lambda$ is defined to be the orbit category
\[ \mathscr{O}_{\Lambda} = \mathscr{U} / (\mathbb{S}_d[-d]) = \mathscr{U} / (\mathbb{S}_{2d}). \]
\end{definition}

\begin{remark} \label{rem.closed_shift}
Note that $\mathscr{O}_{\Lambda}$ comes with an inclusion into $D^b(\mod \Lambda) / (\mathbb{S}_{2d}) \sub \mathscr{C}_{\Lambda}^{2d}$.

Since the subcategory $\mathscr{U}$ of $D^b(\mod \Lambda)$ is closed under $[d]$ (by Theorem~\ref{theorem.ct_in_Db}) it follows that also $\mathscr{O}_{\Lambda}$ is closed under $[d]$ in $\mathscr{C}_{\Lambda}^{2d}$.
\end{remark}

\begin{observation} \label{obs.fund_domain}
With this definition, objects on $\mathscr{O}_{\Lambda}$ have a unique preimage in $\add M \vee \proj \Lambda[d]$.
\end{observation}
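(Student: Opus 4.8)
The plan is to exhibit $\add M\vee\proj\Lambda[d]$ as a fundamental domain for the $\langle\mathbb{S}_d[-d]\rangle$-action on the indecomposable objects of $\mathscr{U}$, and then to invoke the usual orbit-category generalities: $\mathscr{U}$ is Krull--Schmidt, and for indecomposable $X,X'\in\mathscr{U}$ one has $\Hom_{\mathscr{U}}(X,(\mathbb{S}_d[-d])^jX')=0$ for all but finitely many $j$ (the $(\mathbb{S}_d[-d])^jX'$ are bounded complexes whose homology moves off to $\pm\infty$), so that an isomorphism $\pi(X)\iso\pi(X')$ in $\mathscr{O}_{\Lambda}$ has a component giving $X\iso(\mathbb{S}_d[-d])^jX'$ in $\mathscr{U}$ for some $j$; combined with Krull--Schmidt of $\mathscr{O}_{\Lambda}$ this upgrades the ``fundamental domain'' statement to the Observation.

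First I would set up bookkeeping. By Theorem~\ref{theorem.ct_in_Db} every indecomposable of $\mathscr{U}$ is uniquely of the form $N[kd]$ with $N$ an indecomposable summand of $M$ and $k\in\mathbb{Z}$ (the $N$ are modules, hence determined together with $k$); set $\deg(N[kd])=k$, so that $\add M$ is the degree-$0$ part of $\mathscr{U}$ and $\proj\Lambda[d]$ the degree-$1$ objects whose module part is projective. The crucial computation is the effect of $\mathbb{S}_d[-d]$ on the degree: for $U=N[kd]$ one has $\mathbb{S}_d[-d]\,U=(\mathbb{S}_dN)[(k-1)d]$, and by Remark~\ref{rem.tau=S} together with the identity $\mathbb{S}_dP=(\nu P)[-d]$ for projective $P$ (the Serre functor of $D^b(\mod\Lambda)$ is the derived Nakayama functor, which is exact on projectives) this equals $(\tau_dN)[(k-1)d]$ if $N$ is not projective and $(\nu N)[(k-2)d]$ if $N$ is projective. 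Hence $\mathbb{S}_d[-d]$ strictly lowers $\deg$: by $1$ when the module part is non-projective, by $2$ when it is projective. Consequently, along any orbit the degree is a strictly decreasing integer sequence with consecutive gaps in $\{1,2\}$; in particular the orbit is infinite, the degree is unbounded in both directions, and since two consecutive integers cannot both be skipped, the orbit meets degree $0$ or degree $1$. If it meets degree $0$, that object lies in $\add M$ and is the orbit's unique degree-$0$ object; the next object of larger degree in the orbit has degree $1$ or $2$, and in the degree-$1$ case the step down to the degree-$0$ object lowers the degree by $1$, so that object's module part is non-projective and it does not lie in $\proj\Lambda[d]$ --- so the degree-$0$ object is the unique representative of the orbit in $\add M\vee\proj\Lambda[d]$. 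If the orbit misses degree $0$, it must jump from degree $1$ to degree $-1$; applying $\mathbb{S}_d[-d]$ to the degree-$1$ object then lowers the degree by $2$, so its module part is projective, it lies in $\proj\Lambda[d]$, and it is the unique representative since the orbit has no degree-$0$ object and only one of degree $1$. Either way each orbit has exactly one representative in $\add M\vee\proj\Lambda[d]$.

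The structural input beyond the excerpt is Iyama's fact (contained in the setup of Theorem~\ref{theo.ncto_from_proj}) that $\tau_d$ and $\tau_d^-$ are mutually inverse bijections between the non-projective and the non-injective indecomposable summands of $M$ --- in particular $\tau_dN=0$ exactly when $N$ is projective, which is precisely what makes the ``gap $1$ versus gap $2$'' dichotomy well defined --- plus the elementary identity $\mathbb{S}_dP=(\nu P)[-d]$. I expect the only genuinely delicate point to be the case distinction in the previous paragraph, matching ``orbit hits degree $0$'' with ``representative in $\add M$'' and ``orbit skips degree $0$'' with ``representative in $\proj\Lambda[d]$'', and making sure the stray degree-$1$ objects with non-projective module part are accounted for (they always sit one step above a genuine degree-$0$ representative); everything else is formal. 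To conclude I would assemble: an arbitrary object of $\mathscr{O}_{\Lambda}$ decomposes in $\mathscr{U}$ into indecomposables, each isomorphic in $\mathscr{O}_{\Lambda}$ to its unique orbit representative in $\add M\vee\proj\Lambda[d]$, which gives existence of a preimage; and if two objects of $\add M\vee\proj\Lambda[d]$ become isomorphic in $\mathscr{O}_{\Lambda}$, then by the orbit-category generalities their indecomposable summands lie in common $\langle\mathbb{S}_d[-d]\rangle$-orbits, hence agree by the uniqueness of orbit representatives, giving uniqueness of the preimage.
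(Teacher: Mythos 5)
The paper gives no proof of this Observation, treating it as immediate from the definition of $\mathscr{O}_\Lambda$ together with Theorem~\ref{theorem.ct_in_Db} and Remark~\ref{rem.tau=S}; your degree-based analysis is a correct and sensible way to make that precise. Your computation that $\mathbb{S}_d[-d]$ drops the degree of $N[kd]$ by $1$ when the module part $N$ is non-projective (via $\tau_d$) and by $2$ when $N$ is projective (via $\nu N[-d]$) is exactly right, and the ensuing combinatorics --- skipped degrees are isolated, so each orbit hits $\{0\}$ or $\{1\}$ but never both in a way that would produce two representatives in $\add M \vee \proj\Lambda[d]$ --- is clean and complete, including the key observation that a degree-$1$ object has projective module part precisely when the orbit skips degree $0$. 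The appeal to Krull--Schmidt of $\mathscr{O}_\Lambda$ for the uniqueness half is appropriate and is justified by the $\Hom$-finiteness argument you sketch.
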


The following theorem shows that our Definition~\ref{def.clustercat} makes sense. It says that $\mathscr{O}_{\Lambda}$ is indeed $(d+2)$-angulated.

\begin{theorem} \label{theo.is_d-ctsubcat}
The subcategory $\mathscr{O}_{\Lambda} \subseteq \mathscr{C}_{\Lambda}^{2d}$ is $d$-cluster tilting. It follows that the category $\mathscr{O}_{\Lambda}$ is standard $(d+2)$-angulated (see Definition~\ref{def.angulated}).
\end{theorem}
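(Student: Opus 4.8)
The plan is to transport the $d$-cluster tilting property of $\mathscr{U}$ inside $D^b(\mod\Lambda)$ to the orbit category $\mathscr{O}_{\Lambda} = \mathscr{U}/(\mathbb{S}_{2d})$ inside $\mathscr{C}_{\Lambda}^{2d}$, using the standard description of morphisms in an Amiot cluster category. Since $\gld\Lambda \le d < 2d$, Theorem~\ref{theorem.amiot} tells us $\mathscr{C}_{\Lambda}^{2d}$ is $\Hom$-finite, hence Krull--Schmidt, and $2d$-Calabi--Yau, so it has a Serre functor $\mathbb{S}$, and on the image of $D^b(\mod\Lambda)$ the functor $\mathbb{S}_{2d}$ becomes the identity, i.e.\ $\mathbb{S} \simeq [2d]$ there. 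The key computational input is that for $U,U' \in \mathscr{U}$ the morphism spaces are $\Hom_{\mathscr{C}_{\Lambda}^{2d}}(U,U') = \bigoplus_{j \in \mathbb{Z}} \Hom_{D^b(\mod\Lambda)}(U, \mathbb{S}_{2d}^{\,j} U')$, a finite sum by boundedness; and, since $\mathscr{U}$ is closed under $\mathbb{S}_d$ (Theorem~\ref{theorem.ct_in_Db}) and under $[d]$ (Remark~\ref{rem.closed_shift}) while $\mathbb{S}_{2d} = \mathbb{S}_d[-d]$, each $\mathbb{S}_{2d}^{\,j}U'$ again lies in $\mathscr{U}$.

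With this in hand the easy parts go as follows. Functorial finiteness of $\mathscr{O}_{\Lambda}$ in $\mathscr{C}_{\Lambda}^{2d}$ is automatic because $\mathscr{O}_{\Lambda}$ has only finitely many indecomposable objects (the summands of $M \oplus \Lambda[d]$, cf.\ Observation~\ref{obs.fund_domain}) in a $\Hom$-finite Krull--Schmidt category. For the two orthogonality inclusions, take $U,U' \in \mathscr{U}$ and $1 \le i \le d-1$; then $\Hom_{\mathscr{C}_{\Lambda}^{2d}}(U,U'[i]) = \bigoplus_{j}\Hom_{D^b}(U,(\mathbb{S}_{2d}^{\,j}U')[i])$, and each summand vanishes since $\mathbb{S}_{2d}^{\,j}U' \in \mathscr{U}$ and $\mathscr{U}$ is $d$-cluster tilting in $D^b(\mod\Lambda)$. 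Running the same computation with the two arguments interchanged shows the vanishing also in the other direction, so $\mathscr{O}_{\Lambda}$ is contained in both the right- and the left-orthogonal subcategory appearing in the definition of a $d$-cluster tilting subcategory.

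The substance of the proof is the reverse inclusion: if $T \in \mathscr{C}_{\Lambda}^{2d}$ satisfies $\Hom_{\mathscr{C}_{\Lambda}^{2d}}(\mathscr{O}_{\Lambda}, T[i]) = 0$ for $1 \le i \le d-1$ (or the left-hand variant), then $T \in \mathscr{O}_{\Lambda}$. Here I would first lift $T$ to an object $\widetilde T \in D^b(\mod\Lambda)$ — invoking the structure of the Amiot cluster category, namely that $\mathscr{C}_{\Lambda}^{2d}$ is the triangulated hull of the orbit category and that, in this $\Hom$-finite situation, every object is (up to isomorphism) the image of an object of $D^b(\mod\Lambda)$. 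Feeding the hypothesis through the $\Hom$-formula gives $0 = \bigoplus_{j}\Hom_{D^b}(\mathscr{U}, \mathbb{S}_d^{\,j}\widetilde T[i-jd])$ for each $i \in \{1,\dots,d-1\}$; using that $\mathscr{U}$ is $\mathbb{S}_d$-stable each summand equals $\Hom_{D^b}(\mathscr{U}, \widetilde T[i-jd])$, and as $i$ runs over $\{1,\dots,d-1\}$ and $j$ over $\mathbb{Z}$ the exponents $i-jd$ exhaust all integers outside $d\mathbb{Z}$, in particular $1,\dots,d-1$. By the right-orthogonal characterization of the $d$-cluster tilting subcategory $\mathscr{U}$ this forces $\widetilde T \in \mathscr{U}$, hence $T \in \mathscr{O}_{\Lambda}$; the left-hand variant is identical. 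I expect this lifting step, together with the precise justification of the $\Hom$-formula at the level of the triangulated hull, to be the main obstacle — it is exactly where one must cite or re-prove the foundational facts about $\mathscr{C}_{\Lambda}^{2d}$ (alternatively one may quote a general transport result that the image of an $\mathbb{S}_{2d}$-stable $d$-cluster tilting subcategory of $D^b$ is $d$-cluster tilting).

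Finally, the assertion that $\mathscr{O}_{\Lambda}$ is standard $(d+2)$-angulated is just an unwinding of Definition~\ref{def.angulated}: take $\mathscr{T} = \mathscr{C}_{\Lambda}^{2d}$, which is triangulated with Serre functor $\mathbb{S} \simeq [2d]$; we have just shown $\mathscr{O}_{\Lambda}$ is a $d$-cluster tilting subcategory, and it satisfies $\mathbb{S}\,\mathscr{O}_{\Lambda} = [2d]\,\mathscr{O}_{\Lambda} = \mathscr{O}_{\Lambda}$ because $\mathscr{U}$, hence $\mathscr{O}_{\Lambda}$, is closed under $[d]$ and therefore under $[2d]$; moreover $[d]_{\mathscr{O}_{\Lambda}} = [d]_{\mathscr{C}_{\Lambda}^{2d}}$ by construction, and the distinguished $(d+2)$-angles of $\mathscr{O}_{\Lambda}$ are taken to be exactly those coming from the completion diagrams of Definition~\ref{def.angulated}. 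This verifies all the conditions, completing the proof.
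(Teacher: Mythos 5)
The plan is sound for the easy half but does not actually prove the hard half, and you have put your finger on exactly the spot where it fails: the lifting step. You assert that ``every object [of $\mathscr{C}_{\Lambda}^{2d}$] is (up to isomorphism) the image of an object of $D^b(\mod\Lambda)$,'' but this is not true in general for Amiot cluster categories when $\gld\Lambda>1$. The whole reason Amiot introduces the triangulated hull is that the orbit category $D^b(\mod\Lambda)/(\mathbb{S}_{2d})$ is typically not itself triangulated, and the hull is strictly larger as an additive category; the natural functor $D^b(\mod\Lambda)\to\mathscr{C}_{\Lambda}^{2d}$ is fully faithful on the orbit-category part but is not essentially surjective. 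Consequently, given an arbitrary $T\in\mathscr{C}_{\Lambda}^{2d}$ satisfying the orthogonality conditions, there is no $\widetilde T$ to lift it to, and even if there were, the $\Hom$-formula $\Hom_{\mathscr{C}_{\Lambda}^{2d}}(X,Y)=\bigoplus_j\Hom_{D^b}(X,\mathbb{S}_{2d}^{\,j}Y)$ is only guaranteed on the image of $D^b(\mod\Lambda)$, not on the whole triangulated hull. You flag this as ``the main obstacle,'' but the obstacle is not resolved, and the suggested fallback (``quote a general transport result that the image of an $\mathbb{S}_{2d}$-stable $d$-cluster tilting subcategory of $D^b$ is $d$-cluster tilting'') is not available in the literature in the form needed — precisely because of the same density issue.

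The paper's proof takes a genuinely different route to get around this. Instead of lifting objects back to $D^b(\mod\Lambda)$, it picks one concrete $2d$-cluster tilting object $T\in\mathscr{O}_{\Lambda}\subseteq\mathscr{C}_{\Lambda}^{2d}$ (coming from Amiot's Theorem applied to $\Lambda$ itself), sets $\Gamma=\End_{\mathscr{O}_{\Lambda}}(T)$, and uses the functor $\Hom_{\mathscr{C}_{\Lambda}^{2d}}(T,-)\colon\mathscr{C}_{\Lambda}^{2d}\to\mod\Gamma$ to transport the question to a module category, where it can invoke Iyama's criterion \cite[Theorem~2.2(b)]{Iy_n-Auslander}. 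Concretely, it introduces the ``candidate'' perp subcategory $\widetilde{\mathscr{O}}_{\Lambda}$, proves $\Hom(T,-)$ induces an equivalence $\widetilde{\mathscr{O}}_{\Lambda}/(T[d])\simeq\widetilde{\mathscr{M}}$, shows the image $\mathscr{M}=\Hom(T,\mathscr{O}_{\Lambda})$ is $d$-rigid with sink sequences, concludes $\mathscr{M}=\widetilde{\mathscr{M}}$ is $d$-cluster tilting in $\mod\Gamma$, and only then deduces $\mathscr{O}_{\Lambda}=\widetilde{\mathscr{O}}_{\Lambda}$. This machinery is exactly what replaces the unavailable lifting. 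Your rigidity computation, functorial-finiteness observation, and the final unwinding of Definition~\ref{def.angulated} are all fine; it is the middle, which you yourself identified as the substance, that needs the paper's detour through $\mod\Gamma$.
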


We will give a proof of this theorem in the next subsection. Here we point out that Theorem~\ref{theo.is_d-ctsubcat} implies Theorem~\ref{theo.clustercat_exists}.

\begin{proof}[Proof of Theorem~\ref{theo.clustercat_exists}, given Theorem~\ref{theo.is_d-ctsubcat}]
Theorem~\ref{theo.is_d-ctsubcat} says that the category $\mathscr{O}_{\Lambda}$ defined above is standard $(d+2)$-angulated. Now (1) is Observation~\ref{obs.fund_domain}. (2) follows from the definition of $\mathscr{O}_{\Lambda}$ (we forced $[2d]$ to become a Serre functor). For (3) assume $X, Y \in \add (M \oplus \Lambda[d])$. Then
\begin{align*}
\Hom_{\mathscr{O}_{\Lambda}}(X, Y[d]) & = \bigoplus_{i \in \mathbb{Z}} \Hom_{\mathscr{U}}(X, \mathbb{S}_{2d}^i Y[d]) && \text{(by definition of $\mathscr{O}_{\Lambda}$)} \\
& = \Hom_{\mathscr{U}}(X, Y[d]) \oplus \Hom_{\mathscr{U}}(X, \mathbb{S}_{2d} Y[d]) && \text{(since the other summands vanish)} \\
& = \Hom_{\mathscr{U}}(X, Y[d]) \oplus \Hom_{\mathscr{U}}(X, \mathbb{S} Y[-d]) && \text{(by definition of $\mathbb{S}_{2d}$)} \\
& = \Hom_{\mathscr{U}}(X, Y[d]) \oplus D \Hom_{\mathscr{U}}(Y, X[d]) && \text{(since $\mathbb{S}$ is a Serre functor)} \qedhere
\end{align*}
\end{proof}

Finally we state the following theorem, which gives us a handle on understanding cluster tilting objects in $\mathscr{O}_{\Lambda}$. It will be shown in the Subsection~\ref{subsect.ctos}.

\begin{theorem} \label{theo.ct_is_2dct}
An object $T \in \mathscr{O}_{\Lambda}$ is cluster tilting (see Definition~\ref{def.cto}), if and only if it is $2d$-cluster tilting when seen as an object in $\mathscr{C}_{\Lambda}^{2d}$.
\end{theorem}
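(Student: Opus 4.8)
The plan is to compare the two cluster tilting conditions directly, using the structural fact (Remark~\ref{rem.closed_shift}) that $\mathscr{O}_{\Lambda}$ is closed under $[d]$ inside $\mathscr{C}_{\Lambda}^{2d}$, so that $[d]$ plays the role of a ``half-shift''. First I would settle the self-orthogonality condition. For $T \in \mathscr{O}_{\Lambda}$, the condition that $T$ is $2d$-cluster tilting in $\mathscr{C}_{\Lambda}^{2d}$ requires $\Hom_{\mathscr{C}_{\Lambda}^{2d}}(T, T[i]) = 0$ for $i \in \{1, \dots, 2d-1\}$. Since $\mathscr{O}_{\Lambda}$ is a $d$-cluster tilting subcategory of $\mathscr{C}_{\Lambda}^{2d}$ (Theorem~\ref{theo.is_d-ctsubcat}) and is closed under $[d]$, for $i \in \{1, \dots, d-1\}$ we have $\Hom_{\mathscr{C}_{\Lambda}^{2d}}(T, T[i]) = 0$ automatically (as $T, T[i] \in \mathscr{O}_{\Lambda}$ and $T[i] = (T)[i]$ with $1 \le i \le d-1$, using that $\mathscr{O}_{\Lambda}$ is $d$-cluster tilting in $\mathscr{C}^{2d}_{\Lambda}$). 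Similarly, by the $2d$-Calabi-Yau property, $\Hom(T, T[i]) \cong D\Hom(T, T[2d-i])$, so the vanishing for $i \in \{d+1, \dots, 2d-1\}$ is equivalent to that for $i \in \{1,\dots,d-1\}$, hence also automatic. Thus the only nontrivial requirement among $i \in \{1,\dots,2d-1\}$ is $i = d$, i.e.\ $\Hom_{\mathscr{C}_{\Lambda}^{2d}}(T, T[d]) = 0$; and this equals $\Hom_{\mathscr{O}_{\Lambda}}(T, T[d])$ since $\mathscr{O}_{\Lambda} \hookrightarrow \mathscr{C}_{\Lambda}^{2d}$ is full and faithful. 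This is exactly condition (1) of Definition~\ref{def.cto}.

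Next I would treat the resolution/approximation condition. The $2d$-cluster tilting property of $T$ in $\mathscr{C}_{\Lambda}^{2d}$ is equivalent (via the definition together with the vanishing just established) to: every $X \in \mathscr{C}_{\Lambda}^{2d}$ lying in the relevant subcategory admits a resolution of length $2d$ by $\add T$. But I only need this for $X \in \mathscr{O}_{\Lambda}$. The key point is that $\mathscr{O}_{\Lambda}$ itself is $d$-cluster tilting in $\mathscr{C}_{\Lambda}^{2d}$, so each $X \in \mathscr{O}_{\Lambda}$ is already ``$d$-resolved'' — more precisely, by the general theory of $d$-cluster tilting subcategories of triangulated categories (as used in Lemma~\ref{lemma.angle_completion}), any morphism into or out of $X$ can be completed within $\mathscr{O}_{\Lambda}$ in $d-1$ essentially unique steps. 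The plan is to show: a $(d+2)$-angle $X[-d] \to T_d \to \cdots \to T_0 \to X$ with $T_i \in \add T$ in the $(d+2)$-angulated structure of $\mathscr{O}_{\Lambda}$ (condition (2) of Definition~\ref{def.cto}) ``unfolds'', via Definition~\ref{def.angulated} (the embedding $\mathscr{O}_{\Lambda} \hookrightarrow \mathscr{C}_{\Lambda}^{2d}$ together with its auxiliary triangles through the intermediate objects $T_1, \dots, T_{d-1}$ of that definition), into a sequence of $d$ triangles in $\mathscr{C}_{\Lambda}^{2d}$ whose total effect is a resolution of $X$ of length $2d$ by objects of $\mathscr{O}_{\Lambda}$, and then these intermediate objects (which are cones, hence lie in $\mathscr{O}_{\Lambda}$ by $d$-cluster tilting-ness) can themselves be resolved by $\add T$ using the $(d+2)$-angle condition again. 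Conversely, a length-$2d$ $\add T$-resolution of $X$ in $\mathscr{C}_{\Lambda}^{2d}$ can be ``folded up'': grouping it into $d$-step chunks and using that cones of maps between objects of $\mathscr{O}_{\Lambda}$ lie in $\mathscr{O}_{\Lambda}$ (again by $d$-cluster tilting-ness plus closure under $[d]$), one recovers a $(d+2)$-angle over $X$ in $\mathscr{O}_{\Lambda}$ with all terms in $\add T$.

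I expect the main obstacle to be the bookkeeping in the folding/unfolding correspondence between length-$2d$ resolutions in $\mathscr{C}_{\Lambda}^{2d}$ and $(d+2)$-angles in $\mathscr{O}_{\Lambda}$ with $\add T$-terms: one has to check that the intermediate cones really land in $\mathscr{O}_{\Lambda}$ and that the octahedral-type gluing (as in the proof of Lemma~\ref{lemma.angle_rotation}) assembles the pieces correctly, respecting the defining diagram of a standard $(d+2)$-angulated category. Here it is crucial to use that $\mathscr{O}_{\Lambda}$ is $d$-cluster tilting in $\mathscr{C}^{2d}_{\Lambda}$ (so $\Hom_{\mathscr{C}^{2d}_{\Lambda}}(\mathscr{O}_{\Lambda}, \mathscr{O}_{\Lambda}[i]) = 0$ for $1 \le i \le d-1$) precisely to guarantee that, after resolving $X$ by $\add T$ for $d$ steps, the $d$-th syzygy object has no obstruction to being continued, i.e.\ that the process ``wraps around'' after exactly $2d$ steps back to $X[-2d] \cong X$ in $\mathscr{O}_{\Lambda}$. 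A clean way to organize this is to first record the equivalence of condition (2) of Definition~\ref{def.cto} with the statement ``$\Hom_{\mathscr{O}_{\Lambda}}(T, X[i]) = 0$ for all $i \notin d\mathbb{Z}$ implies $X \in \add T$'' — but this is exactly the functorial-finiteness-plus-vanishing reformulation of $2d$-cluster tilting restricted to $\mathscr{O}_{\Lambda}$, combined with the Calabi-Yau symmetry to pass between $\Hom(T, X[i])$ and $\Hom(X, T[2d-i])$, at which point both directions become formal.
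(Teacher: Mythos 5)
Your handling of the self-orthogonality condition is fine, and for the direction ``$T$ is $2d$-cluster tilting in $\mathscr{C}_{\Lambda}^{2d}$ $\Rightarrow$ $T$ is cluster tilting in $\mathscr{O}_{\Lambda}$'' your iterated-approximation ``unfolding/folding'' idea is essentially the content of the paper's Lemma~\ref{lemma.approx_in_O}, which is exactly what the paper cites here. So that half is sound in outline.

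The gap is in the converse direction, ``$T$ cluster tilting in $\mathscr{O}_{\Lambda}$ $\Rightarrow$ $T$ is $2d$-cluster tilting in $\mathscr{C}_{\Lambda}^{2d}$.'' Everything you do — unfolding a $(d+2)$-angle into $2d$ triangles, or folding a length-$2d$ $\add T$-resolution into a $(d+2)$-angle — stays inside $\mathscr{O}_{\Lambda}$: condition~(2) of Definition~\ref{def.cto} only gives you resolutions of objects $X$ that already lie in $\mathscr{O}_{\Lambda}$, and all your intermediate cones land in $\mathscr{O}_{\Lambda}$ by $d$-cluster-tilting-ness. But $2d$-cluster tilting in $\mathscr{C}_{\Lambda}^{2d}$ is a statement about \emph{all} objects of $\mathscr{C}_{\Lambda}^{2d}$, and you never explain how to reach objects outside $\mathscr{O}_{\Lambda}$. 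The paper closes this gap by the crucial decomposition $\mathscr{C}_{\Lambda}^{2d} = \mathscr{O}_{\Lambda} * \mathscr{O}_{\Lambda}[1] * \cdots * \mathscr{O}_{\Lambda}[d-1]$, which holds precisely because $\mathscr{O}_{\Lambda}$ is $d$-cluster tilting in $\mathscr{C}_{\Lambda}^{2d}$; combined with $\mathscr{O}_{\Lambda} \subseteq (\add T) * (\add T[1]) * \cdots * (\add T[d])$ from Definition~\ref{def.cto}(2) and the absorption rule $(\add T[i]) * (\add T) = \add(T\oplus T[i])$ (valid thanks to rigidity), this yields $\mathscr{C}_{\Lambda}^{2d} = (\add T) * \cdots * (\add T[2d-1])$, which is the $2d$-cluster-tilting characterization. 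Without this $*$-decomposition step your argument only establishes the restriction of $2d$-cluster-tilting-ness to $\mathscr{O}_{\Lambda}$, which is strictly weaker.

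Your final ``clean way'' paragraph also contains an error: condition~(2) of Definition~\ref{def.cto} is \emph{not} equivalent to the statement ``$\Hom_{\mathscr{O}_{\Lambda}}(T, X[i]) = 0$ for all $i \notin d\mathbb{Z}$ implies $X \in \add T$.'' For $T, X \in \mathscr{O}_{\Lambda}$ and $i \notin d\mathbb{Z}$, that vanishing is automatic (since $\mathscr{O}_{\Lambda}$ is $d$-rigid and closed under $[d]$), so the stated implication would force every $X \in \mathscr{O}_{\Lambda}$ to lie in $\add T$. More to the point, a reformulation of condition~(2) as a ``vanishing implies membership'' (maximal-rigidity) statement cannot hold in general: Section~\ref{higherd} of the paper exhibits maximal rigid objects in $\mathscr{O}_{A_2^d}$ that are not cluster tilting for $d \geq 3$, so that equivalence is genuinely false.
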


\subsection{Well-definedness of the \texorpdfstring{$(d+2)$}{(d+2)}-angulated cluster category -- proof of 
Theorem~\texorpdfstring{\ref{theo.is_d-ctsubcat}}{5.7}} \label{subsect.welldef}

The aim of this subsection is to verify that $\mathscr{O}_{\Lambda}$ is indeed a standard $(d+2)$-angulated category. To this end we prove Theorem~\ref{theo.is_d-ctsubcat}, saying that it is a $d$-cluster tilting subcategory of $\mathscr{C}_{\Lambda}^{2d}$. Many of the results we obtain along the way will also be helpful in studying further properties of $\mathscr{O}_{\Lambda}$ in the following subsections.

We start by verifying that $\mathscr{O}_{\Lambda}$ satisfies the first property of $d$-cluster tilting subcategories:

\begin{lemma}
The subcategory $\mathscr{O}_{\Lambda} \subseteq \mathscr{C}_{\Lambda}^{2d}$ is $d$-rigid, that is $\Hom(\mathscr{O}_{\Lambda}, \mathscr{O}_{\Lambda}[i]) = 0$ for $0 < i < d$.
\end{lemma}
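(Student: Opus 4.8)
The plan is to show directly that $\Hom_{\mathscr{C}_{\Lambda}^{2d}}(X, Y[i]) = 0$ for all $X, Y \in \mathscr{O}_{\Lambda}$ and $0 < i < d$. Since $\mathscr{O}_{\Lambda}$ is the orbit category $\mathscr{U}/(\mathbb{S}_{2d})$ sitting inside $\mathscr{C}_{\Lambda}^{2d}$, and every object of $\mathscr{O}_{\Lambda}$ has a preferred preimage in $\add(M \oplus \Lambda[d]) \subseteq \mathscr{U}$, it suffices to take $X, Y \in \add(M \oplus \Lambda[d])$ and compute the orbit-category Hom as $\bigoplus_{n \in \mathbb{Z}} \Hom_{\mathscr{U}}(X, \mathbb{S}_{2d}^n Y[i])$, exactly as in the proof of Theorem~\ref{theo.clustercat_exists}(3) given above. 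So the task reduces to showing that each summand $\Hom_{D^b(\mod \Lambda)}(X, \mathbb{S}_{2d}^n Y[i])$ vanishes.

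First I would reduce to the two generating classes of objects: it is enough to treat $X, Y \in \{M, \Lambda[d]\}$, i.e. to consider $X, Y$ each either in $\add M$ (sitting in cohomological degree $0$) or of the form $\Lambda[d]$ (a shift of a projective module by $d$). Then $\mathbb{S}_{2d}^n Y[i]$ lives, as a complex, concentrated in cohomological degrees in a controlled range: applying $\mathbb{S}_{2d} = \mathbb{S}[-2d]$ once shifts cohomological degree down by an amount between $0$ and $d$ (since, by Theorem~\ref{theorem.ct_in_Db} and Remark~\ref{rem.tau=S}, $\mathbb{S}_d$ preserves $\add M$ up to the finitely many projective/injective exceptions, and $\mathbb{S}_{2d} = \mathbb{S}_d[-d]$), so iterating $\mathbb{S}_{2d}^n$ on an object supported in degrees $[0,d]$ keeps us, for each fixed $n$, in a complex supported in a window of width $d$. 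For the Hom from $X$ (supported in degrees $0$ or $d$) into such a target shifted further by $i$ with $0 < i < d$ to be nonzero, one needs the cohomological supports to overlap appropriately, and I would rule this out using the $d$-cluster tilting property of $\mathscr{U}$ in $D^b(\mod \Lambda)$ (Theorem~\ref{theorem.ct_in_Db}): namely $\Hom_{\mathscr{U}}(\mathscr{U}, \mathscr{U}[j]) = 0$ for $0 < j < d$, together with $\Hom_{D^b}(M, M[j]) = 0$ for $j \notin d\mathbb{Z}_{\geq 0}$ in the relevant range, which follows since $\mathscr{U} = \add\{M[id]\}$ and $\gld \Lambda \le d$ forces Hom's between modules to vanish in degrees $> d$.

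Concretely, I would organize the casework on $n$. For $n = 0$: $\Hom_{\mathscr{U}}(X, Y[i])$ with $0 < i < d$ is zero directly because $\mathscr{U}$ is $d$-rigid in $D^b(\mod \Lambda)$ (if $X, Y \in \add M$ this is the $d$-cluster tilting property; if one of them is $\Lambda[d]$, shift and use the same, staying within the window $0 < i < d$ so no multiple of $d$ is hit). For $n \neq 0$: write $\mathbb{S}_{2d}^n = \mathbb{S}^n[-2dn]$ and use that $\mathbb{S}$ is the Serre functor to rewrite $\Hom_{D^b}(X, \mathbb{S}^n[-2dn] Y[i])$ via Serre duality as $D\Hom_{D^b}(\mathbb{S}^{n-1}[-2dn]Y[i], X)$ and so on, reducing $|n|$; alternatively, use the known action of $\mathbb{S}_d$ on $\add M$ from Remark~\ref{rem.tau=S} to see that $\mathbb{S}_{2d}^n Y$ is, up to the projective-injective boundary summands, again (a shift by a multiple of $d$ of) something in $\add M$, landing in a cohomological degree of the form $-dn + (\text{something in }[0,d])$, and for $|n| \geq 1$ this degree window is disjoint from degree $0$ and degree $d$ after an extra shift by $i \in (0,d)$, forcing the Hom to vanish.

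The main obstacle I anticipate is bookkeeping the cohomological degrees precisely when the projective-injective summands of $M$ are involved — these are the summands $M'$ with $\tau_d M' = 0$ or $\tau_d^- M' = 0$, where $\mathbb{S}_d$ does not simply restrict to $\tau_d$ but genuinely moves the complex into another cohomological degree — so the clean statement ``$\mathbb{S}_{2d}$ shifts degree by something in $[0,d]$'' needs to be justified carefully rather than waved at. I would handle this by working with the decomposition $\mathscr{U} = \add\{M[id] : i \in \mathbb{Z}\}$ and tracking, for a given indecomposable summand $U = M'[id']$ of a target, exactly which power of $\mathbb{S}_d$ (equivalently, which shift) is needed; the finiteness of $\add M$ makes this a finite check in principle, and the key input is simply that all the relevant Hom-spaces in $D^b(\mod \Lambda)$ between shifts of $M$ vanish outside the degrees dictated by $\mathscr{U}$ being $d$-cluster tilting. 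This is essentially the same degree-counting already implicit in the proof of Theorem~\ref{theo.clustercat_exists}(3), so no new idea is required beyond a slightly finer version of it.
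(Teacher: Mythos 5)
Your overall strategy is the paper's: represent objects by their canonical preimages in $\add(M\oplus\Lambda[d])\subseteq\mathscr{U}$, expand $\Hom_{\mathscr{O}_\Lambda}(X,Y[i])$ as $\bigoplus_j \Hom_{\mathscr{U}}(\overleftarrow{X},\mathbb{S}_{2d}^j\overleftarrow{Y}[i])$, kill the $j=0$ summand by $d$-rigidity of $\mathscr{U}$, and kill the $j\neq 0$ summands by degree considerations. But the step you explicitly flag as ``the main obstacle'' and propose to ``handle by a finite check in principle'' is precisely the substance of the proof, and you do not carry it out. Worse, the particular inductive device you suggest first --- Serre duality to ``reduce $|n|$'' --- does not actually reduce $|n|$: rewriting $\Hom(X,\mathbb{S}^n[-2dn]Y[i])$ as $D\Hom(\mathbb{S}^{n-1}[-2dn]Y[i],X)$ strips one copy of $\mathbb{S}$ but leaves the full shift $[-2dn]$ in place, so the result is not a Hom group of the same shape with $n$ replaced by $n-1$, and the proposed induction does not close.

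The paper avoids the delicate tracking of exact degree shifts under $\mathbb{S}_{2d}$ (and thus the worry about projective--injective summands) by using two cheap one-sided bounds, split by the sign of $j$. For $j>0$, one only needs that $\mathbb{S}_{2d}^j\overleftarrow{Y}[i]$ has all homology in strictly positive cohomological degree while $\overleftarrow{X}$ has homology in degrees $\leq 0$, so the Hom space vanishes; no exact degree is required. For $j<0$, one writes $\mathbb{S}_{2d}^j\overleftarrow{Y}[i]=\mathbb{S}_d^j\overleftarrow{Y}[-dj+i]$, observes $-dj+i>d$ so that $\overleftarrow{Y}[-dj+i]$ lies in $D^{<-d}$ (homology concentrated in degrees $<-d$), and then uses only that $D^{<-d}$ is stable under negative powers of $\mathbb{S}_d$ --- again a one-sided containment that is insensitive to whether individual summands are projective-injective. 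Since $\overleftarrow{X}$ has a projective resolution in degrees $\geq -d$ (as $\gld\Lambda\leq d$), Hom into $D^{<-d}$ vanishes. Your proposal would become a complete proof if you replaced the ``window of width $d$'' picture and the ``finite check'' with these two directional bounds; as written it identifies the right strategy but leaves the load-bearing estimate unproved.
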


\begin{proof}
Let $X, Y \in \mathscr{O}_{\Lambda}$. We may assume they are the images of $\overleftarrow{X}, \overleftarrow{Y} \in \add M \vee \proj \Lambda[d]$. Then
\[ \Hom_{\mathscr{O}_{\Lambda}}(X, Y[i]) = \oplus_j \Hom_{\mathscr{U}}(\overleftarrow{X}, \mathbb{S}_{2d}^j \overleftarrow{Y} [i]) \]
For $j > 0$ we have $\mathbb{S}_{2d}^j \overleftarrow{Y} [i]$ lies in positive degree for any $i \in \{1, \ldots, d-1\}$, and hence $\Hom_{\mathscr{U}}(\overleftarrow{X}, \mathbb{S}_{2d}^j \overleftarrow{Y} [i]) = 0$.

For $j = 0$ and $i \in \{1, \ldots, d-1\}$ we have $\Hom_{\mathscr{U}}(\overleftarrow{X}, \overleftarrow{Y} [i]) = 0$ since $\mathscr{U}$ is $d$-rigid.

For $j < 0$ and $i > 0$ we have $\Hom_{\mathscr{U}}(\overleftarrow{X}, \mathbb{S}_{2d}^j \overleftarrow{Y} [i]) = \Hom_{\mathscr{U}}(\overleftarrow{X}, \mathbb{S}_d^j \overleftarrow{Y} [-dj+i])$. Let $D^{< -d}$ denote the subcategory of $D^b(\mod \Lambda)$ of complexes whose homology is concentrated in degrees $< -d$. Since $-dj+i > d$ we have $\mathbb{S}_d^j \overleftarrow{Y}[-dj+i] \in \mathbb{S}_d^j D^{< -d}$, and since $j < 0$ we have $\mathbb{S}_d^j D^{<-d} \subseteq D^{< -d}$. Therefore $\Hom_{\mathscr{U}}(\overleftarrow{X}, \mathbb{S}_{2d}^j \overleftarrow{Y} [i]) = 0$.
\end{proof}

\begin{remark}
We will speak about $(d+2)$-angles in $\mathscr{O}_{\Lambda}$, meaning sequences of morphisms as in Definition~\ref{def.angulated}(3), even though for the moment we do not know that $\mathscr{O}_{\Lambda}$ is standard $(d+2)$-angulated.

In particular it is not yet clear that any morphism can be completed to a $(d+2)$-angle.
\end{remark}

\begin{proposition} \label{prop.anglefunctor}
The functor $\mathscr{U} \to \mathscr{O}_{\Lambda}$ is a $(d+2)$-angle functor in the sense that it
\begin{itemize}
\item commutes with the $d$-suspension up to a natural isomorphism, and
\item sends $(d+2)$-angles to $(d+2)$-angles.
\end{itemize}
Moreover it commutes with the respective Serre functors.
\end{proposition}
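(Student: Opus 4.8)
The plan is to exhibit the functor $\mathscr{U} \to \mathscr{O}_\Lambda$ as the restriction of the canonical projection functor $\pi\colon D^b(\mod \Lambda) \to D^b(\mod \Lambda)/(\mathbb S_{2d})$ (followed by the inclusion into $\mathscr C_\Lambda^{2d}$) to the subcategory $\mathscr U$, and then to transport the relevant structural properties through this restriction. First I would recall that the orbit category functor $\pi$ is by construction a triangle functor on orbit categories in the sense appropriate here (it commutes with suspension strictly, since the orbit is taken with respect to the commuting autoequivalence $\mathbb S_{2d}$ which itself commutes with $[1]$), and that $\mathscr U \subseteq D^b(\mod \Lambda)$ is closed under $\mathbb S_{2d}$ by Theorem~\ref{theorem.ct_in_Db} together with Remark~\ref{rem.closed_shift}; hence $\pi$ restricts to a functor $\mathscr U \to \mathscr O_\Lambda$ whose essential image is all of $\mathscr O_\Lambda$ by Observation~\ref{obs.fund_domain}.

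The second step is to verify commutation with the $d$-suspension. Since $\mathscr U$ is closed under $[d]$ inside $D^b(\mod \Lambda)$, the $d$-suspension $[d]_{\mathscr O}$ on $\mathscr O_\Lambda$ is, by definition (Definition~\ref{def.angulated}(2) and the construction in Definition~\ref{def.clustercat}), simply the functor induced by $[d]$ on $D^b(\mod \Lambda)$; thus the square relating $[d]_{\mathscr U}$ and $[d]_{\mathscr O}$ via $\pi$ commutes up to the canonical natural isomorphism coming from the fact that $\pi$ commutes with $[1]$ (and hence with $[d]$). The third step is to check that $(d+2)$-angles go to $(d+2)$-angles. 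Here I would unwind the definition of distinguished $(d+2)$-angle (Definition~\ref{def.angulated}(3)) on both sides: a $(d+2)$-angle in $\mathscr U$ is one that can be completed, inside $D^b(\mod \Lambda)$, to the interlocking web of distinguished triangles of Definition~\ref{def.angulated}(3) with the $T_i$ lying in $\mathscr U$. Applying the triangle functor $\pi$ to that entire diagram produces the analogous diagram in $\mathscr C_\Lambda^{2d}$ with the $\pi(T_i)$ lying in $\mathscr O_\Lambda$, and all the triangles remain distinguished because $\pi$ is a triangle functor; hence the image is a distinguished $(d+2)$-angle in $\mathscr O_\Lambda$. (One should note the mild subtlety that $\mathscr O_\Lambda$ being standard $(d+2)$-angulated is exactly the content of Theorem~\ref{theo.is_d-ctsubcat}, which is assumed available here, so its $(d+2)$-angles are precisely those arising this way.)

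Finally, for the statement about Serre functors, I would invoke the remark preceding this proposition that the Serre functor $\leftsub{\mathscr T}{\mathbb S}$ of the ambient triangulated category restricts to a Serre functor on any $\mathbb S$-stable $d$-cluster tilting subcategory; applying this to $\mathscr U \subseteq D^b(\mod \Lambda)$ and to $\mathscr O_\Lambda \subseteq \mathscr C_\Lambda^{2d}$, together with the fact that $\pi$ intertwines the Serre functors of $D^b(\mod \Lambda)$ and $\mathscr C_\Lambda^{2d}$ (which is how $\mathscr C_\Lambda^{2d}$ was set up, with $[2d]$ becoming the Serre functor, cf.\ the proof of Theorem~\ref{theo.clustercat_exists}), gives that the restricted functor $\mathscr U \to \mathscr O_\Lambda$ commutes with the respective Serre functors. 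I expect the main obstacle to be purely bookkeeping: making sure that the identification $[d]_{\mathscr O} = [d]_{\mathscr T}$ and the identification of $\mathbb S_{\mathscr O}$ with $[2d]$ are compatible with the various natural isomorphisms produced by $\pi$, so that the claimed commutativities are genuine natural isomorphisms of functors and not merely objectwise; this is routine but requires care in tracking the orbit-category identifications.
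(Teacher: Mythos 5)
Your proposal is essentially the same as the paper's proof, just spelled out in much more detail: the paper simply observes that $\mathscr{U} \to \mathscr{O}_{\Lambda}$ is the restriction of $D^b(\mod\Lambda)\to\mathscr{C}_{\Lambda}^{2d}$, which by Amiot's results is a triangle functor commuting with Serre functors, and the claim follows. One small caution on logical ordering: your parenthetical appeal to Theorem~\ref{theo.is_d-ctsubcat} is premature, since that theorem is established \emph{after} this proposition; but it is also unnecessary, because the remark immediately preceding the proposition declares that, for now, a ``$(d+2)$-angle in $\mathscr{O}_{\Lambda}$'' just means a sequence satisfying the diagram condition of Definition~\ref{def.angulated}(3). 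Your main argument---applying the triangle functor $\pi$ to the entire interlocking web of distinguished triangles in $D^b(\mod\Lambda)$---already shows that the image satisfies that diagram condition in $\mathscr{C}_{\Lambda}^{2d}$, so no appeal to the standard $(d+2)$-angulated structure of $\mathscr{O}_{\Lambda}$ is needed.
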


\begin{proof}
This follows from the fact that $\mathscr{U} \to \mathscr{O}_{\Lambda}$ is the restriction of the functor $D^b(\mod \Lambda) \to \mathscr{C}_{\Lambda}^{2d}$, which, by \cite{CC, C_PhD}, is a triangle functor and commutes with the Serre functors.
\end{proof}

\begin{corollary}
Any map in the image of the functor $\mathscr{U} \to \mathscr{O}_{\Lambda}$ can be completed to a $(d+2)$-angle in $\mathscr{O}_{\Lambda}$.
\end{corollary}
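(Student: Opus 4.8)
The statement to prove is the Corollary: any map in the image of the functor $\mathscr{U} \to \mathscr{O}_{\Lambda}$ can be completed to a $(d+2)$-angle in $\mathscr{O}_{\Lambda}$.

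The plan is as follows. First I would recall from Example~\ref{example.U_angulated} that $\mathscr{U}$ is itself standard $(d+2)$-angulated, so by Lemma~\ref{lemma.angle_completion}(1) any map $f \colon X_0 \to X_1$ in $\mathscr{U}$ can be completed to a $(d+2)$-angle
\[ X_0 \to[30] X_1 \to[30] \cdots \to[30] X_{d+1} \to[30] X_0[d] \]
in $\mathscr{U}$. Now suppose $g \colon Y_0 \to Y_1$ is a map in $\mathscr{O}_{\Lambda}$ that lies in the image of the functor $F \colon \mathscr{U} \to \mathscr{O}_{\Lambda}$; that is, there is a map $f \colon X_0 \to X_1$ in $\mathscr{U}$ with $F(f)$ isomorphic to $g$ (more precisely $F(X_0) \cong Y_0$, $F(X_1) \cong Y_1$ compatibly with $f$ and $g$). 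Complete $f$ to a $(d+2)$-angle in $\mathscr{U}$ as above, and then apply $F$.

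The key point is Proposition~\ref{prop.anglefunctor}, which tells us that $F$ is a $(d+2)$-angle functor: it commutes with the $d$-suspension up to natural isomorphism and sends $(d+2)$-angles to $(d+2)$-angles. Applying $F$ to the $(d+2)$-angle completing $f$ therefore yields a $(d+2)$-angle
\[ F(X_0) \to[30] F(X_1) \to[30] \cdots \to[30] F(X_{d+1}) \to[30] F(X_0)[d] \]
in $\mathscr{O}_{\Lambda}$, where the last map uses the natural isomorphism $F(X_0[d]) \cong F(X_0)[d]$. Transporting along the isomorphisms $F(X_0) \cong Y_0$ and $F(X_1) \cong Y_1$ identifies the first morphism of this $(d+2)$-angle with $g$ up to isomorphism, and since the distinguished $(d+2)$-angles of a standard $(d+2)$-angulated category are closed under isomorphism (being pulled back from distinguished triangles in the ambient triangulated category via Definition~\ref{def.angulated}(3)), this exhibits the desired completion of $g$.

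Honestly, this corollary is essentially immediate once Proposition~\ref{prop.anglefunctor} and Lemma~\ref{lemma.angle_completion} are in hand; there is no real obstacle. The only small point requiring a word of care is the identification of the first map of the image $(d+2)$-angle with the given map $g$ up to isomorphism, and the fact that the class of distinguished $(d+2)$-angles is invariant under isomorphism of diagrams — but this is built into the definition of a standard $(d+2)$-angulated category, since such angles are precisely those completing to the diagram of Definition~\ref{def.angulated}(3), and isomorphisms of the top row extend to isomorphisms of the whole diagram. So the proof is just: complete in $\mathscr{U}$, push forward along $F$, and transport along isomorphisms.
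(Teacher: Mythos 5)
Your proof is correct and takes essentially the same approach as the paper: complete a preimage of the map to a $(d+2)$-angle in $\mathscr{U}$ (using that $\mathscr{U}$ is standard $(d+2)$-angulated, via Example~\ref{example.U_angulated} and Lemma~\ref{lemma.angle_completion}), then apply the $(d+2)$-angle functor of Proposition~\ref{prop.anglefunctor}. The extra remarks about transporting along isomorphisms are fine but not needed beyond what the paper states.
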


\begin{proof}
Since $\mathscr{U}$ is standard $(d+2)$-angulated (Example~\ref{example.U_angulated}), a preimage of our map in $\mathscr{U}$ can be completed to a $(d+2)$-angle in $\mathscr{U}$. Now the claim follows from Proposition~\ref{prop.anglefunctor}.
\end{proof}

\begin{corollary} \label{cor.O_has_ARangle}
Let $X \in \mathscr{O}_{\Lambda}$. Then any map $\varphi \in \Soc_{\mathscr{O}_{\Lambda}} \Hom_{\mathscr{O}_{\Lambda}}(X, \leftsub{\mathscr{O}}{\mathbb{S}} X) \setminus \{0\}$ can be completed to a $(d+2)$-angle in $\mathscr{O}_{\Lambda}$. In this case we call this $(d+2)$-angle an \emph{almost split $(d+2)$-angle}, and say that $X$ has an almost split $(d+2)$-angle.
\end{corollary}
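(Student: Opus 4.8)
The plan is to deduce the statement from the preceding corollary, which completes to a $(d+2)$-angle any morphism of $\mathscr{O}_{\Lambda}$ lying in the image of the quotient functor $F \colon \mathscr{U} \rightarrow \mathscr{O}_{\Lambda}$ (that is, any morphism which is $F$ of a \emph{single} morphism of $\mathscr{U}$). Writing $X = F(\overleftarrow{X})$ with $\overleftarrow{X} \in \add M \vee \proj \Lambda[d]$, the task thus reduces to producing a nonzero element of $\Soc_{\mathscr{O}_{\Lambda}} \Hom_{\mathscr{O}_{\Lambda}}(X, \leftsub{\mathscr{O}}{\mathbb{S}} X)$ lying in the image of $F$; simplicity of this socle will then carry the conclusion to all nonzero socle elements. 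First I would record the background: $\mathscr{O}_{\Lambda}$ is $\Hom$-finite (being a full subcategory of the $\Hom$-finite category $\mathscr{C}_{\Lambda}^{2d}$) and Krull--Schmidt (objects lift uniquely to the Krull--Schmidt category $\add M \vee \proj \Lambda[d]$), so $\End_{\mathscr{O}_{\Lambda}}(X)$ is local and the above socle is a simple $\End_{\mathscr{O}_{\Lambda}}(X)$-module; moreover, $\mathscr{O}_{\Lambda}$ being closed under $[2d]$ inside the $2d$-Calabi--Yau category $\mathscr{C}_{\Lambda}^{2d}$, the functor $\leftsub{\mathscr{O}}{\mathbb{S}} = [2d]$ restricts to a Serre functor for $\mathscr{O}_{\Lambda}$.

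The main step is the construction. Since $D^b(\mod \Lambda)$ is $\Hom$-finite, Krull--Schmidt, and has a Serre functor, it has almost split triangles; the connecting morphism of the one ending at $\overleftarrow{X}$ is a morphism $g \colon \overleftarrow{X} \rightarrow \leftsub{\mathscr{U}}{\mathbb{S}} \overleftarrow{X}$ of the full, $[d]$- and $\mathbb{S}$-stable subcategory $\mathscr{U}$, spanning the socle of $\Hom_{\mathscr{U}}(\overleftarrow{X}, \leftsub{\mathscr{U}}{\mathbb{S}} \overleftarrow{X})$; since $\mathscr{U}$ is standard $(d+2)$-angulated (Example~\ref{example.U_angulated}), Lemma~\ref{lemma.angle_completion}(1) completes $g$ to a $(d+2)$-angle of $\mathscr{U}$ enjoying the almost split property. (One could alternatively feed Iyama's $d$-almost split sequences in $\add M$ into Example~\ref{example.U_angulated}.) By Proposition~\ref{prop.anglefunctor}, $F$ sends $(d+2)$-angles to $(d+2)$-angles and commutes with the Serre functors, so $F$ of this $(d+2)$-angle is a $(d+2)$-angle of $\mathscr{O}_{\Lambda}$ whose connecting morphism is $F(g) \colon X \rightarrow \leftsub{\mathscr{O}}{\mathbb{S}} X$, manifestly in the image of $F$. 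Assuming --- as shown below --- that $F(g)$ spans $\Soc_{\mathscr{O}_{\Lambda}} \Hom_{\mathscr{O}_{\Lambda}}(X, \leftsub{\mathscr{O}}{\mathbb{S}} X)$, every nonzero element of this simple socle equals $F(g) \circ F(\rho)$ for some automorphism $\rho$ of $\overleftarrow{X}$ in $\mathscr{U}$ (precomposition with radical endomorphisms kills $F(g)$), hence equals $F(g \circ \rho)$ and is therefore in the image of $F$; the preceding corollary then completes it to a $(d+2)$-angle in $\mathscr{O}_{\Lambda}$, finishing the proof.

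The point demanding care is the claim $F(g) \circ \operatorname{rad} \End_{\mathscr{O}_{\Lambda}}(X) = 0$. In the orbit-category decomposition $\End_{\mathscr{O}_{\Lambda}}(X) = \bigoplus_{n \in \mathbb{Z}} \Hom_{\mathscr{U}}(\overleftarrow{X}, \mathbb{S}_{2d}^{n} \overleftarrow{X})$, the objects $\mathbb{S}_{2d}^{n} \overleftarrow{X}$ for $n \neq 0$ are non-isomorphic to $\overleftarrow{X}$ (the $\mathbb{S}_{2d}$-orbit of a nonzero object of $D^b(\mod \Lambda)$ is infinite, as otherwise $\Hom_{\mathscr{C}_{\Lambda}^{2d}}$ would not be finite-dimensional), so all summands of nonzero degree consist of non-isomorphisms; combined with locality of $\End_{\mathscr{U}}(\overleftarrow{X})$ this gives $\operatorname{rad} \End_{\mathscr{O}_{\Lambda}}(X) = F(\operatorname{rad} \End_{\mathscr{U}}(\overleftarrow{X})) \oplus \bigoplus_{n \neq 0} \Hom_{\mathscr{U}}(\overleftarrow{X}, \mathbb{S}_{2d}^{n} \overleftarrow{X})$. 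On the degree-$0$ radical the vanishing holds because $g$ spans the socle in $\mathscr{U}$. For a degree-$n$ summand ($n \neq 0$) and $a \in \Hom_{\mathscr{U}}(\overleftarrow{X}, \mathbb{S}_{2d}^{n} \overleftarrow{X})$, the composite $F(g) \circ F(a)$ is represented in $\mathscr{U}$ by $(\mathbb{S}_{2d}^{n} g) \circ a$; since $\mathbb{S}_{2d}^{n}$ is an exact autoequivalence of $\mathscr{U}$, the morphism $\mathbb{S}_{2d}^{n} g$ is the connecting morphism of the almost split $(d+2)$-angle ending at $\mathbb{S}_{2d}^{n} \overleftarrow{X}$, and since $a$ has indecomposable target not isomorphic to its source, it is not a split epimorphism and so factors through the penultimate term of that $(d+2)$-angle, whence $(\mathbb{S}_{2d}^{n} g) \circ a = 0$. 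So the argument uses only the almost split property of $(d+2)$-angles in $\mathscr{U}$, the freeness of the $\mathbb{S}_{2d}$-action on indecomposables, and the compatibility of composition in the orbit category with the $\mathbb{Z}$-grading --- this last being the one piece of bookkeeping to get exactly right.
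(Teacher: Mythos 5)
Your argument correctly supplies the proof that the paper leaves implicit: the paper deduces this corollary in one line from the preceding one (any morphism in the image of $F\colon\mathscr{U}\to\mathscr{O}_{\Lambda}$ can be completed), and the content you need to add is precisely that every nonzero socle element lies in that image. Your route — lift the Auslander--Reiten connecting morphism $g\colon\overleftarrow{X}\to\mathbb{S}\overleftarrow{X}$ from $D^b(\mod\Lambda)$, observe $F(g)$ lies in the socle by the graded computation $F(g)\circ\operatorname{rad}\End_{\mathscr{O}_{\Lambda}}(X)=0$, and use simplicity of the socle to reach every other nonzero element by a degree-zero unit — is exactly the natural argument, and the detailed verification of the degree-$n$ vanishing (via freeness of the $\mathbb{S}_{2d}$-action on indecomposables, which follows from $\Hom$-finiteness of $\mathscr{C}_{\Lambda}^{2d}$) is correct.

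Two small points worth tightening. First, you should say explicitly that $F(g)\neq 0$: this holds because $F$ is injective on the homogeneous summand $\Hom_{\mathscr{U}}(\overleftarrow{X},\mathbb{S}\overleftarrow{X})\subseteq\Hom_{\mathscr{O}_{\Lambda}}(X,\leftsub{\mathscr{O}}{\mathbb{S}}X)=\bigoplus_n\Hom_{\mathscr{U}}(\overleftarrow{X},\mathbb{S}_{2d}^n\mathbb{S}\overleftarrow{X})$. Second, the appeal to ``the almost split $(d+2)$-angle'' in $\mathscr{U}$ and its lifting property is an unnecessary detour, since Lemma~\ref{lemma.angle_completion} by itself only yields a completion, not the almost split property; it is cleaner (and closer to what you already invoke) to use the almost split \emph{triangle} in $D^b(\mod\Lambda)$ directly: $\mathbb{S}_{2d}^n g$ is the connecting morphism of the almost split triangle at $\mathbb{S}_{2d}^n\overleftarrow{X}$, and the non-split-epi $a$ factors through its middle term, so $(\mathbb{S}_{2d}^n g)\circ a=0$. (As you note, one may also obtain the almost split $(d+2)$-angle from Iyama's $d$-almost split sequences via Example~\ref{example.U_angulated}; either way the needed vanishing holds.) Finally, as with the paper's statement itself, the argument assumes $X$ indecomposable, which you should state.
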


The strategy for the remainder of this subsection is as follows: We show at the same time that $\mathscr{O}_{\Lambda}$ is $d$-cluster tilting in $\mathscr{C}_{\Lambda}^{2d}$, and that its image in the module category of a cluster tilted algebra is $d$-cluster tilting (Theorem~\ref{theo.induces_ct_in_mod}). However, since Theorem~\ref{theo.ct_is_2dct} is not proven yet, we assume that we are given a fixed $T \in \mathscr{O}_{\Lambda}$ which is $2d$-cluster tilting in $\mathscr{C}_{\Lambda}^{2d}$. We set $\Gamma = \End_{\mathscr{O}_{\Lambda}}(T)$. We denote by $\mathscr{M} = \Hom_{\mathscr{O}_{\Lambda}}(T, \mathscr{O}_{\Lambda})$ the image of $\mathscr{O}_{\Lambda}$ under the functor $\Hom_{\mathscr{C}_{\Lambda}^{2d}}(T,-) \colon \mathscr{C}_{\Lambda}^{2d} \to \mod \Gamma$.

\begin{proposition} \label{prop.sink_seq}
For any $M_0 \in \mathscr{M}$ there is an exact sequence
\[ M_{d+1} \mono[30] M_d \to[30] \cdots \to[30] M_1 \to[30] M_0, \]
with $M_i \in \mathscr{M}$, where the rightmost map is a radical approximation. 
\end{proposition}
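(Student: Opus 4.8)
The plan is to transport a single $(d+2)$-angle of $\mathscr{O}_{\Lambda}$ through the functor $\Hom_{\mathscr{O}_{\Lambda}}(T,-)$. First write $M_0 = \Hom_{\mathscr{O}_{\Lambda}}(T,X_0)$ for some $X_0 \in \mathscr{O}_{\Lambda}$, which is possible since $\mathscr{M} = \Hom_{\mathscr{O}_{\Lambda}}(T,\mathscr{O}_{\Lambda})$. Because $\mathscr{O}_{\Lambda}$ is $\Hom$-finite and Krull--Schmidt and $\add T$ has only finitely many indecomposables, choose a minimal right $\add T$-approximation $g \colon T_1 \to[30] X_0$; right minimality forces $g$ to be a radical morphism, and $\Hom_{\mathscr{O}_{\Lambda}}(T,g)$ is then a surjection onto $M_0$ whose source $\Hom_{\mathscr{O}_{\Lambda}}(T,T_1)$ is projective in $\mod \Gamma$. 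This will be the ``radical approximation'' demanded at the right-hand end of the sequence.

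Next I would complete $g$ to a $(d+2)$-angle $X_1 \to[30] T_1' \to[30] \cdots \to[30] T_1 \to[30] X_0$ in $\mathscr{O}_{\Lambda}$, taken to be a minimal completion in the sense of Lemma~\ref{lemma.angle_completion} so that its interior maps are radical. All of its terms lie in $\mathscr{O}_{\Lambda}$, so applying $\Hom_{\mathscr{O}_{\Lambda}}(T,-)$ produces a sequence of $\Gamma$-modules each of which lies in $\mathscr{M}$ by definition. Since $T \in \mathscr{O}_{\Lambda}$ and $\mathscr{O}_{\Lambda}$ is $d$-rigid in $\mathscr{C}_{\Lambda}^{2d}$, the functor $\Hom_{\mathscr{O}_{\Lambda}}(T,-)$ turns $(d+2)$-angles into long exact sequences (Lemma~\ref{lemma.snake_seq}), so our angle yields
\[ \cdots \to[30] \Hom_{\mathscr{O}_{\Lambda}}(T,X_0[-d]) \to[30] \Hom_{\mathscr{O}_{\Lambda}}(T,X_1) \to[30] \cdots \to[30] \Hom_{\mathscr{O}_{\Lambda}}(T,T_1) \to[30] \Hom_{\mathscr{O}_{\Lambda}}(T,X_0) \to[30] \Hom_{\mathscr{O}_{\Lambda}}(T,X_1[d]) \to[30] \cdots . \]
As $g$ is a right $\add T$-approximation, $\Hom_{\mathscr{O}_{\Lambda}}(T,g)$ is onto, so the portion of this sequence running from $\Hom_{\mathscr{O}_{\Lambda}}(T,X_1)$ to $\Hom_{\mathscr{O}_{\Lambda}}(T,X_0)$ is exact on the right; setting $M_{d+1} = \Hom_{\mathscr{O}_{\Lambda}}(T,X_1)$, then the $d-1$ interior terms, then $M_1 = \Hom_{\mathscr{O}_{\Lambda}}(T,T_1)$ gives the claimed complex with all terms in $\mathscr{M}$.

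The hard part will be to see that this portion is genuinely \emph{exact} and of exactly length $d+1$. Exactness in the middle is automatic from the long exact sequence, and I would read off injectivity of $M_{d+1} \to[30] M_d$ from the vanishing of the connecting map into $\Hom_{\mathscr{O}_{\Lambda}}(T,X_1)$ together with right minimality of $g$; a crucial auxiliary input is that the intermediate $\Ext$-type terms $\Hom_{\mathscr{O}_{\Lambda}}(T,W[k])$ with $W \in \mathscr{O}_{\Lambda}$ and $0 < |k| < d$ all vanish --- for $0 < k < d$ this is $d$-rigidity, and for $-d < k < 0$ it follows by combining $d$-rigidity with the $2d$-Calabi--Yau property and the closedness of $\mathscr{O}_{\Lambda}$ under $[2d]$ in $\mathscr{C}_{\Lambda}^{2d}$, since then $\Hom_{\mathscr{O}_{\Lambda}}(T,W[k]) \iso D\Hom_{\mathscr{O}_{\Lambda}}(W,(T[2d])[-k]) = 0$. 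What really requires care is the control of the length: one must know that the minimal $(d+2)$-angle completing $g$ has precisely $d-1$ interior terms, equivalently that the cone of $g$ in $\mathscr{C}_{\Lambda}^{2d}$ is resolved by objects of $\mathscr{O}_{\Lambda}$ in $d-1$ steps --- which is essentially the $d$-cluster tilting property of $\mathscr{O}_{\Lambda}$ that this subsection is in the process of establishing --- and that the leftmost term $M_{d+1}$ genuinely lies in $\mathscr{M}$ rather than being merely a subquotient of something in $\mathscr{M}$. Once these two points are settled, exactness, the $\mathscr{M}$-membership of every term, and the radical-approximation property at the right end are all formal.
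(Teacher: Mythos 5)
Your construction uses a minimal right $\add T$-approximation $g\colon T_1 \to X_0$ and observes that $\Hom_{\mathscr{O}_{\Lambda}}(T,g)$ is a projective cover of $M_0$ in $\mod\Gamma$. But a projective cover is not what ``radical approximation'' means here, and this is a genuine gap, not a detail. The label \texttt{prop.sink\_seq} and the way the proposition is invoked in the proof of Theorem~\ref{theorem.M_is_ct} (``the exact sequence of Proposition~\ref{prop.sink_seq} is a sink sequence,'' feeding into Iyama's criterion \cite[Theorem~2.2(b)]{Iy_n-Auslander}) make clear that the rightmost map $M_1\to M_0$ must be a \emph{sink map}: every radical morphism $N\to M_0$ with $N\in\mathscr{M}$ must factor through it, not merely every morphism from $\add\Hom(T,T)$. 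A projective cover has the latter lifting property but not the former; if it did, every $M_0\in\mathscr{M}$ with a simple top would already be projective. So the sequence you build does not verify the statement, and more importantly it would be useless for the purpose it serves downstream.

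The paper's route is different in exactly this respect: after arranging $X_0$ to have no summands in $\add T[d]$, it takes the \emph{almost split $(d+2)$-angle} ending in $X_0$ (Corollary~\ref{cor.O_has_ARangle}), whose right-hand map $X_1\to X_0$ is right almost split in $\mathscr{O}_{\Lambda}$, then applies $\Hom_{\mathscr{O}_{\Lambda}}(T,-)$ and Lemma~\ref{lemma.snake_seq}. This automatically makes the image of $X_1\to X_0$ a sink map in $\mathscr{M}$. The length issue you flag as the ``hard part'' is then a non-issue, because the almost split $(d+2)$-angle already has the right number of terms, and injectivity on the left comes not from any delicate vanishing but from the observation that $X_0[-d]$ has no summands in $\add T$, so $\Hom_{\mathscr{O}_{\Lambda}}(T,X_0[-d])\to\Hom_{\mathscr{O}_{\Lambda}}(T,\mathbb{S}_d X_0)$ is zero. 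Your worry about controlling the length of the minimal completion of $g$ and whether $M_{d+1}$ lies in $\mathscr{M}$ is a symptom of having started from the wrong morphism; with the almost split angle both points are immediate. In short: replace the $\add T$-approximation by the almost split $(d+2)$-angle, and the proof closes; as written, the statement about the rightmost map is simply not established.
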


\begin{proof}
By definition of $\mathscr{M}$ we know that $M_0 = \Hom_{\mathscr{O}_{\Lambda}}(T, X_0)$ for some $X_0 \in \mathscr{O}_{\Lambda}$. Since $\Hom_{\mathscr{O}_{\Lambda}}(T, T[d]) = 0$ we may assume that $X_0$ has no direct summands in $\add T[d]$. By Corollary~\ref{cor.O_has_ARangle} $X_0$ has an almost split $(d+2)$-angle
\[ \mathbb{S}_d X_0 \to[30] X_d \to[30] \cdots \to[30] X_1 \to[30] X_0 \to[30] \mathbb{S}_d X_0[d]. \]
Applying $\Hom_{\mathscr{O}_{\Lambda}}(T,-)$ to this, by Lemma~\ref{lemma.snake_seq}, we obtain an exact sequence where the rightmost map is a radical approximation.
\[ \leftsub{\mathscr{O}_{\Lambda}}(T, X_0[-d]) \to[30] \leftsub{\mathscr{O}_{\Lambda}}(T, \mathbb{S}_dX_0) \to[30] \leftsub{\mathscr{O}_{\Lambda}}(T, X_d) \to[30] \cdots \to[30] \leftsub{\mathscr{O}_{\Lambda}}(T, X_1) \to[30] \leftsub{\mathscr{O}_{\Lambda}}(T, X_0) = M_0. \]
Now, since $X_0$ has no direct summands in $\add T[d]$, we know that $X_0[-d]$ has no direct summands in $\add T$. Hence the leftmost map above vanishes, and we have the desired sequence
\[ \leftsub{\mathscr{O}_{\Lambda}}(T, \mathbb{S}_dX_0) \mono[30] \leftsub{\mathscr{O}_{\Lambda}}(T, X_d) \to[30] \cdots \to[30] \leftsub{\mathscr{O}_{\Lambda}}(T, X_1) \to[30] \leftsub{\mathscr{O}_{\Lambda}}(T, X_0) = M_0. \qedhere \]
\end{proof}

\begin{notation}
We now set
\[ \widetilde{\mathscr{O}}_{\Lambda} = \{X \in \mathscr{C}_{\Lambda}^{2d} \mid \Hom_{\mathscr{C}_{\Lambda}^{2d}}(\mathscr{O}_{\Lambda}, X[i]) = 0 \, \forall i \in \{1, \ldots, d-1\} \}, \]
and $\widetilde{\mathscr{M}} = \Hom_{\mathscr{C}_{\Lambda}^{2d}}(T, \widetilde{\mathscr{O}}_{\Lambda})$.
\end{notation}

Note that since $\mathscr{O}_{\Lambda}$ is closed under $[d]$ by Remark~\ref{rem.closed_shift}, it follows that $\Hom_{\mathscr{C}_{\Lambda}^{2d}}(\mathscr{O}_{\Lambda}, \widetilde{\mathscr{O}}_{\Lambda}[i]) = 0$ for all $i$ not divisible by $d$. Moreover, since $[2d]$ is the Serre functor on $\mathscr{C}_{\Lambda}^{2d}$, for $i$ nor divisible by $d$ we also have $\Hom_{\mathscr{C}_{\Lambda}^{2d}}(\widetilde{\mathscr{O}}_{\Lambda}, \mathscr{O}_{\Lambda}[i]) = 0$.

We will show that $\mathscr{O}_{\Lambda} \subseteq \mathscr{C}_{\Lambda}^{2d}$ is $d$-cluster tilting by showing (see Corollary~\ref{cor.O_is_ct}) that $\widetilde{\mathscr{O}}_{\Lambda} = \mathscr{O}_{\Lambda}$.

\begin{proposition} \label{prop.O_equiv_M}
The functor $\Hom_{\mathscr{C}_{\Lambda}^{2d}}(T,-)\colon \widetilde{\mathscr{O}}_{\Lambda} \to \widetilde{\mathscr{M}}$ induces an equivalence $\widetilde{\mathscr{O}}_{\Lambda} / (T[d]) \to \widetilde{\mathscr{M}}$.
\end{proposition}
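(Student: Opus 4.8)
The plan is to recognize this as an instance of the standard "cluster tilting object kills the coshift ideal" argument, already used implicitly in the proof of Proposition~\ref{prop.sink_seq}, and to push it through in the enlarged category $\widetilde{\mathscr{O}}_{\Lambda}$. First I would observe that since $T$ is $2d$-cluster tilting in $\mathscr{C}_{\Lambda}^{2d}$ and $\mathscr{C}_{\Lambda}^{2d}$ is $2d$-Calabi-Yau, for every $X \in \mathscr{C}_{\Lambda}^{2d}$ there is a $(2d+2)$-angle (equivalently, a "right $\add T$-resolution") with $2d$ middle terms in $\add T$; specializing to $X \in \widetilde{\mathscr{O}}_{\Lambda}$ and using the defining vanishing $\Hom_{\mathscr{C}_{\Lambda}^{2d}}(\mathscr{O}_{\Lambda}, X[i]) = 0$ for $0 < i < d$ — together with the observation recorded just before the statement, that this vanishing actually holds for all $i$ not divisible by $d$ — one sees that $\Hom_{\mathscr{C}_{\Lambda}^{2d}}(T, -)$ applied to the resolution stays exact. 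This gives projective presentations of objects of $\widetilde{\mathscr{M}}$ over $\Gamma$ and, crucially, shows that the functor is \emph{dense} onto $\widetilde{\mathscr{M}}$ by construction (that is how $\widetilde{\mathscr{M}}$ was defined).

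Next I would prove fullness and faithfulness of the induced functor $\widetilde{\mathscr{O}}_{\Lambda}/(T[d]) \to \widetilde{\mathscr{M}}$. Faithfulness: if $f \colon X \to Y$ in $\widetilde{\mathscr{O}}_{\Lambda}$ becomes zero after applying $\Hom_{\mathscr{C}_{\Lambda}^{2d}}(T,-)$, then $f$ factors through an object $Z$ with $\Hom_{\mathscr{C}_{\Lambda}^{2d}}(T, Z) = 0$; by the $2d$-cluster tilting property of $T$ and $2d$-Calabi-Yau duality, such $Z$ lies in $\add T[j]$ for $j$ ranging over $\{1, \ldots, 2d-1\}$, and using that $X, Y \in \widetilde{\mathscr{O}}_{\Lambda}$ kills all the intermediate shifts, leaving only the factorization through $\add T[d]$; hence $f \in (T[d])$. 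Fullness: given a $\Gamma$-morphism $\bar g \colon \Hom_{\mathscr{C}_{\Lambda}^{2d}}(T, X) \to \Hom_{\mathscr{C}_{\Lambda}^{2d}}(T, Y)$, lift it along a right $\add T$-resolution of $X$ (available by the first paragraph) to a genuine morphism $X \to Y$ in $\mathscr{C}_{\Lambda}^{2d}$, using that $\Hom_{\mathscr{C}_{\Lambda}^{2d}}(T, Y[i])$-vanishing for the relevant $i$ makes the obstruction cochain vanish; the lifted morphism maps to $\bar g$ up to an element of the ideal.

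The main obstacle I anticipate is bookkeeping the degree shifts carefully: $\mathscr{C}_{\Lambda}^{2d}$ is $2d$-Calabi-Yau with \emph{no} $1$-suspension internal to $\mathscr{O}_{\Lambda}$, so all the "$Z \in \add T[j]$" and "obstruction lives in $\Hom(T, -[i])$" statements must be tracked with $1 \le j, i \le 2d-1$, and one has to invoke both the Calabi-Yau pairing and the fact (from the remark preceding the proposition) that $\Hom_{\mathscr{C}_{\Lambda}^{2d}}(\mathscr{O}_{\Lambda}, \widetilde{\mathscr{O}}_{\Lambda}[i]) = 0$ and $\Hom_{\mathscr{C}_{\Lambda}^{2d}}(\widetilde{\mathscr{O}}_{\Lambda}, \mathscr{O}_{\Lambda}[i]) = 0$ for $i \not\equiv 0 \pmod d$ to eliminate all but the single surviving ideal $(T[d])$. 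Once the indices are pinned down, the argument is the familiar one of Buan--Marsh--Reiten adapted to the $(d+2)$-angulated/$2d$-Calabi-Yau setting (cf.\ \cite{BMR_cta}, \cite{IO2}), so I would keep the exposition at the level of "factor-through" lemmas and cite \cite[Lemma~4.3]{IO2} for the long exact sequences as in Lemma~\ref{lemma.snake_seq}.
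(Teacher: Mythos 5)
Your high-level strategy matches the paper's: both adapt the Buan--Marsh--Reiten argument via right $\add T$-approximation triangles, with density immediate from the definition of $\widetilde{\mathscr{M}}$ and exactness of the induced $\Gamma$-module sequence controlled via \cite[Lemma~4.3]{IO2}. However, the faithfulness step as you state it does not constitute a proof. You assert that if $\Hom_{\mathscr{C}_{\Lambda}^{2d}}(T,f)=0$ then $f$ factors through an object $Z$ with $\Hom_{\mathscr{C}_{\Lambda}^{2d}}(T,Z)=0$, and that such $Z$ lies in $\add T[j]$ for some $j\in\{1,\dots,2d-1\}$. The second claim is false: the vanishing $\Hom(T,Z)=0$ only forces $Z$ into the iterated extension category $(\add T[1]) * \cdots * (\add T[2d-1])$, not into any single shift $\add T[j]$; and once $Z$ is merely an iterated extension, the phrase ``kills all the intermediate shifts, leaving only the factorization through $\add T[d]$'' is not an argument. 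What the paper actually does is an explicit iterative descent: the approximation triangle $X_1\to T_0\to X\to X_1[1]$ together with $\Hom(T,f)=0$ forces $f$ to factor through $X\to X_1[1]$, say via $\varphi_1$; then the vanishing $\Hom_{\mathscr{C}_{\Lambda}^{2d}}(\mathscr{O}_{\Lambda},\widetilde{\mathscr{O}}_{\Lambda}[j])=0$ for $0<j<d$ kills $T_1[1]\to X_1[1]\to Y$, so $\varphi_1$ factors through $X_1[1]\to X_2[2]$; iterating $d$ times one arrives at a factorization through $T_d[d]\in\add T[d]$. You need this step-by-step argument, not the one-shot factorization you sketch.

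A related imprecision: you describe the $T$-approximation resolution as a $(2d+2)$-angle with $2d$ middle terms, which holds for arbitrary objects of $\mathscr{C}_{\Lambda}^{2d}$, but the paper's Lemma~\ref{lemma.approx_in_O} shows that for $X\in\widetilde{\mathscr{O}}_{\Lambda}$ the resolution stops after $d$ steps: the initial vanishing $\Hom(T,X[i])=0$ for $1\le i\le d-1$ propagates up one degree with each cocone, so $\Hom(T,X_d[j])=0$ for $1\le j\le 2d-1$ and hence $X_d\in\add T$. This shortened resolution is the structural reason the ideal is exactly $(T[d])$, and the paper's fullness proof is built on it: one lifts a $\Gamma$-morphism to a compatible ladder $t_i\colon T_i\to T_i'$ and then constructs the cone morphisms $f_i\colon X_i\to Y_i$ inductively from left to right, starting at $T_d=X_d$, using that $X_{i+1}[1]\in(\add T[1])*\cdots*(\add T[d-i])$ admits no nonzero maps to $T_{i-1}'$ for $i>0$. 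Without making the length-$d$ truncation explicit, the ``obstruction cochain'' you invoke is not pinned down, and it is not clear why the ambiguity in the lift sits precisely in $(T[d])$.
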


For the proof we need the following observation.

\begin{lemma} \label{lemma.approx_in_O}
Let $X \in \widetilde{\mathscr{O}}_{\Lambda}$. Then there are 
triangles induced from right $T$-approximations $T_i \to X_i$:
\[ \begin{tikzpicture}
 \node (A8) at (1,1) {$T_{d-1}$};
 \node (A7) at (3,1) {$T_{d-2}$};
 \node (A2) at (7,1) {$T_1$};
 \node (A1) at (9,1) {$T_0$};
 \node (B9) at (0,0) {$T_d$};
 \node (B8) at (2,0) {$X_{d-1}$};
 \node (B7) at (4,0) {$X_{d-2}$};
 \node (B1) at (8,0) {$X_1$};
 \node (B0) at (10,0) {$X_0$};
 \node (B0+) at (11,0) {$X$};
 \node at (5,1) {$\cdots$};
 \node at (6,0) {$\cdots$};
 \draw [->] (A8) -- (A7);
 \draw [->] (A2) -- (A1);
 \draw [->] (B9) -- (A8);
 \draw [->] (A8) -- (B8);
 \draw [->] (B8) -- (A7);
 \draw [->] (A7) -- (B7);
 \draw [->] (A2) -- (B1);
 \draw [->] (B1) -- (A1);
 \draw [->] (A1) -- (B0);
 \draw [->] (B8) -- node [pos=.4,fill=white,inner sep=1pt] {$\scriptstyle 1$} (B9);
 \draw [->] (B7) -- node [pos=.4,fill=white,inner sep=1pt] {$\scriptstyle 1$} (B8);
 \draw [->] (B0) -- node [pos=.4,fill=white,inner sep=1pt] {$\scriptstyle 1$} (B1);
 \draw [double distance=1.5pt] (B0) -- (B0+);
\end{tikzpicture} \]
such that the sequence
\[ \Hom_{\mathscr{C}_{\Lambda}^{2d}}(T, T_d) \to[30] \Hom_{\mathscr{C}_{\Lambda}^{2d}}(T, T_{d-1}) \to[30] \cdots \to[30] \Hom_{\mathscr{C}_{\Lambda}^{2d}}(T, T_0) \epi[30] \Hom_{\mathscr{C}_{\Lambda}^{2d}}(T, X_0) \]
is exact.
\end{lemma}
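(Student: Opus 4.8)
\textbf{Proof proposal for Lemma~\ref{lemma.approx_in_O}.}

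The plan is to build the diagram by repeatedly resolving, from the right, by right $T$-approximations inside $\mathscr{C}_{\Lambda}^{2d}$, and then to check that the induced sequence on $\Hom_{\mathscr{C}_{\Lambda}^{2d}}(T,-)$ is exact by exploiting the hypothesis $X \in \widetilde{\mathscr{O}}_{\Lambda}$ together with the fact that $T$ is $2d$-cluster tilting. First I would set $X_0 = X$, choose a minimal right $T$-approximation $T_0 \to X_0$, and complete it to a triangle $X_1[-1] \to T_0 \to X_0 \to X_1$ in $\mathscr{C}_{\Lambda}^{2d}$; relabel so that the connecting map is written $X_0 \to X_1$ with a degree shift, matching the picture. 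Then I would iterate: given $X_i$, take a minimal right $T$-approximation $T_i \to X_i$ and form the triangle $T_i \to X_i \to X_{i+1} \to T_i[1]$. After $d$ steps this produces the displayed diagram, with $T_d \to T_{d-1}$ obtained as the composite $T_d \to X_d \to$ (the relevant term), i.e.\ the last triangle's approximation map composed into the previous octahedron, exactly as drawn.

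The key point is exactness of
\[ \Hom(T, T_d) \to \Hom(T, T_{d-1}) \to \cdots \to \Hom(T, T_0) \to \Hom(T, X_0) \to 0. \]
Surjectivity on the right is immediate since $T_0 \to X_0$ is a right $T$-approximation. For the interior: applying $\Hom_{\mathscr{C}_{\Lambda}^{2d}}(T,-)$ to each triangle $T_i \to X_i \to X_{i+1} \to T_i[1]$, and using that $T_i \to X_i$ is a right $T$-approximation (so $\Hom(T,T_i) \to \Hom(T,X_i)$ is surjective), one gets short exact sequences
\[ 0 \to \Hom(T, X_{i+1}[-1]) \to \Hom(T, T_i) \to \Hom(T, X_i) \to 0, \]
provided $\Hom(T, X_i[-1]) = 0$, i.e.\ $\Hom(T, X_i) $ has no contribution spilling over. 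Here is where $X \in \widetilde{\mathscr{O}}_{\Lambda}$ enters: since $\mathscr{O}_{\Lambda}$ is closed under $[d]$ (Remark~\ref{rem.closed_shift}) and $[2d]$ is the Serre functor, the vanishing $\Hom(\mathscr{O}_{\Lambda}, X[i]) = 0$ for $1 \le i \le d-1$ propagates along the triangles to give $\Hom(T, X_i[j]) = 0$ for $1 \le j \le d-1-i$ (roughly), and in particular kills the obstruction terms needed at each stage. Splicing the short exact sequences $0 \to \Hom(T,X_{i+1}[-1]) \to \Hom(T,T_i) \to \Hom(T,X_i) \to 0$ together then yields the asserted exact sequence, with the leftmost term $\Hom(T,T_d)$ mapping onto $\Hom(T, X_d)$ and hence the composite $\Hom(T,T_d)\to\Hom(T,T_{d-1})$ having the correct image.

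The main obstacle I anticipate is bookkeeping the degree shifts and the precise range of vanishing: one must verify that at step $i$ the relevant $\Hom(T, X_i[-1])$ (equivalently, by Serre duality, a $\Hom$ into a shift of $T$) really does vanish, and this requires tracking how the defining vanishing of $\widetilde{\mathscr{O}}_{\Lambda}$ degrades as one passes through each triangle. A clean way to organize this is to prove by downward induction on $i$ that $X_i$ lies in $\widetilde{\mathscr{O}}_{\Lambda}[\,\text{something}\,] * (\text{shifts of } \add T)$, so that the needed Hom-vanishing is available at each stage; alternatively, one can cite the general machinery of cluster tilting subcategories (as in the proof of Lemma~\ref{lemma.angle_completion}) to resolve objects of $\widetilde{\mathscr{O}}_{\Lambda}$ by $\add T$ in $d$ steps and read off exactness from Lemma~\ref{lemma.snake_seq}. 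Modulo this indexing, the argument is routine.
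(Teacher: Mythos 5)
Your overall strategy (iterated right $T$-approximations, then deduce exactness of the sequence obtained by applying $\Hom(T,-)$) is the right one, but there are two concrete gaps, and one of your two routes to exactness is circular.

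First, you never verify that the process terminates in $\add T$. The diagram in the statement has $T_d$ at the left end, i.e.\ the $d$-th cocone $X_d$ must itself lie in $\add T$ (so that $T_d := X_d$); the paper proves this by showing, by induction along the triangles, that $\Hom_{\mathscr{C}_\Lambda^{2d}}(T, X_i[j])=0$ for $j\in\{1,\dots,d-1+i\}$ (using $\Hom(T,X_0[j])=0$ for $1\le j\le d-1$ from $X_0\in\widetilde{\mathscr{O}}_\Lambda$, and rigidity of $T$), so that $\Hom(T, X_d[j])=0$ for $j\in\{1,\dots,2d-1\}$, whence $X_d\in\add T$ since $T$ is $2d$-cluster tilting. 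This step is not optional bookkeeping: without it the displayed sequence does not even have the claimed form. Your proposal simply says ``after $d$ steps this produces the displayed diagram,'' which skips this entirely.

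Second, your splicing argument does need care that is not quite present. With the paper's convention (triangles $X_{i+1}\to T_i\to X_i\to X_{i+1}[1]$, so $X_{i+1}$ is the \emph{cocone}, not the cone you form), the short exact sequences $0\to\Hom(T,X_{i+1})\to\Hom(T,T_i)\to\Hom(T,X_i)\to 0$ require the connecting map $\Hom(T,X_i[-1])\to\Hom(T,X_{i+1})$ to vanish, which, since $\Hom(T,T_i[-1])=D\Hom(T_i,T[2d-1])=0$, forces $\Hom(T,X_i[-1])=0$ for $i=0,\dots,d-2$. This vanishing does hold — by Serre duality $\Hom(T,X_0[-k])=D\Hom(X_0,T[2d+k])=0$ for $1\le k\le d-1$ (using that $\mathscr{O}_\Lambda$ is closed under $[d]$ and $\Hom(\widetilde{\mathscr{O}}_\Lambda,\mathscr{O}_\Lambda[r])=0$ for $r\not\equiv 0\pmod d$), and then the triangles propagate $\Hom(T,X_i[-1])$ as a quotient of $\Hom(T,X_0[-1-i])$ — but your stated vanishing range ``$\Hom(T,X_i[j])=0$ for $1\le j\le d-1-i$'' is incorrect (the degree of vanishing should \emph{grow} with $i$, not shrink), and your sequence of cones rather than cocones shifts all these indices, which you have not tracked. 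If you fix the convention and prove the above vanishing honestly, this route works and is a legitimate alternative to the paper's, which instead simply cites a general exactness statement for resolutions by cluster tilting subcategories \cite[Lemma~4.3]{IO2}.

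Finally, your suggested alternative — resolving via cluster tilting machinery and ``reading off exactness from Lemma~\ref{lemma.snake_seq}'' — does not work here: that lemma concerns $(d+2)$-angles in a standard $(d+2)$-angulated category, which is precisely the structure Theorem~\ref{theo.is_d-ctsubcat} is still in the process of establishing, and in any case $X$ is only assumed to lie in $\widetilde{\mathscr{O}}_\Lambda$, not in $\mathscr{O}_\Lambda$, so no $(d+2)$-angle involving $X$ is available.
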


\begin{proof}
Let $T_i \to X_i$ be a $T$-approximation, $X_{i+1}$ its cocone. We have
\begin{align*}
\Hom_{\mathscr{C}_{\Lambda}^{2d}}(T, X_0[j]) & = 0 && \forall j \in \{1, \ldots, d-1\} \\
\intertext{and hence}
\Hom_{\mathscr{C}_{\Lambda}^{2d}}(T, X_1[j]) & = 0 && \forall j \in \{1, \ldots, d\} \\
& \vdots \\
\Hom_{\mathscr{C}_{\Lambda}^{2d}}(T, X_i[j]) & = 0 && \forall j \in \{1, \ldots, d-1+i\}
\end{align*}
In particular $\Hom_{\mathscr{C}_{\Lambda}^{2d}}(T, X_d[j]) = 0 \, \forall j \in \{1, \ldots, 2d - 1\}$, so $X_d \in \add T$ and therefore we rename $X_d$ to $T_d$. Hence we have constructed the approximation triangles of the lemma. The exactness of the sequence now follows from \cite[Lemma~4.3]{IO2}.
\end{proof}

\begin{proof}[Proof of Proposition~\ref{prop.O_equiv_M}]
Since $\Hom_{\mathscr{O}_{\Lambda}}(T, T[d]) = 0$ the functor $\Hom_{\mathscr{C}_{\Lambda}^{2d}}(T,-)\colon \widetilde{\mathscr{O}}_{\Lambda} \to \widetilde{\mathscr{M}}$ factors through $\widetilde{\mathscr{O}}_{\Lambda} / (T[d])$. It is clear from the definition of $\widetilde{\mathscr{M}}$ that the induced functor $\widetilde{\mathscr{O}}_{\Lambda} / (T[d]) \to \widetilde{\mathscr{M}}$ is dense. Thus we only have to see that it is full and faithful.

Let $\varphi \colon X \to Y$ with $X, Y \in \widetilde{\mathscr{O}}_{\Lambda}$ be such that $\Hom_{\mathscr{C}_{\Lambda}^{2d}}(T, \varphi) = 0$. In the notation of Lemma~\ref{lemma.approx_in_O} this means that the composition $[T_0 \to X_0] \varphi$ vanishes. Hence $\varphi$ factors through the map $X_0 \to X_1[1]$, say via $\varphi_1\colon X_1[1] \to Y$. Since $\Hom_{\mathscr{C}_{\Lambda}^{2d}}(T, \widetilde{\mathscr{O}}_{\Lambda}[j]) \subseteq \Hom_{\mathscr{C}_{\Lambda}^{2d}}(\mathscr{O}, \widetilde{\mathscr{O}}_{\Lambda}[j]) = 0$ for $0 < j < d$, the composition $[T_1[1] \to X_1[1]] \varphi_1$ also vanishes, and hence $\varphi_1$ factors through the map $X_1[1] \to X_2[2]$, say via $\varphi_2\colon X_2[2] \to Y$. Iterating this argument we see that $\varphi$ factors through $T_d[d]$. Hence the functor $\Hom_{\mathscr{C}_{\Lambda}^{2d}}(T,-) \colon \widetilde{\mathscr{O}}_{\Lambda} / (T[d]) \to \widetilde{\mathscr{M}}$ is faithful.

Finally let $\varphi$ be a map from $\Hom_{\mathscr{C}_{\Lambda}^{2d}}(T,X)$ to $\Hom_{\mathscr{C}_{\Lambda}^{2d}}(T,Y)$. We take approximation triangles as in Lemma~\ref{lemma.approx_in_O} for $X$, and similarly (with $T_i'$ instead of $T_i$) for $Y$. Clearly the map $\varphi$ induces maps on the projective resolutions. Using Lemma~\ref{lemma.approx_in_O} and the fact that maps between modules of the form $\Hom_{\mathscr{C}_{\Lambda}^{2d}}(T,T')$ with $T' \in \add T$ are representable we find maps $t_i \colon T_i \to T_i'$ as in the following commutative diagram.
\[ \begin{tikzpicture}
 \node (A8) at (1,1) {$T_{d-1}$};
 \node (A7) at (3,1) {$T_{d-2}$};
 \node (A2) at (7,1) {$T_1$};
 \node (A1) at (9,1) {$T_0$};
 \node (B9) at (0,0) {$T_d$};
 \node (B8) at (2,0) {$X_{d-1}$};
 \node (B7) at (4,0) {$X_{d-2}$};
 \node (B1) at (8,0) {$X_1$};
 \node (B0) at (10,0) {$X_0$};
 \node (B0+) at (11,0) {$X$};
 \node at (5,1) {$\cdots$};
 \node at (6,0) {$\cdots$};
 \draw [->] (A8) -- (A7);
 \draw [->] (A2) -- (A1);
 \draw [->] (B9) -- (A8);
 \draw [->] (A8) -- (B8);
 \draw [->] (B8) -- (A7);
 \draw [->] (A7) -- (B7);
 \draw [->] (A2) -- (B1);
 \draw [->] (B1) -- (A1);
 \draw [->] (A1) -- (B0);
 \draw [->] (B8) -- node [pos=.2,fill=white,inner sep=1pt] {$\scriptstyle 1$} (B9);
 \draw [->] (B7) -- node [pos=.2,fill=white,inner sep=1pt] {$\scriptstyle 1$} (B8);
 \draw [->] (B0) -- node [pos=.2,fill=white,inner sep=1pt] {$\scriptstyle 1$} (B1);
 \draw [double distance=1.5pt] (B0) -- (B0+);
 \node (A8') at (1,-2) {$T_{d-1}'$};
 \node (A7') at (3,-2) {$T_{d-2}'$};
 \node (A2') at (7,-2) {$T_1'$};
 \node (A1') at (9,-2) {$T_0'$};
 \node (B9') at (0,-3) {$T_d'$};
 \node (B8') at (2,-3) {$Y_{d-1}$};
 \node (B7') at (4,-3) {$Y_{d-2}$};
 \node (B1') at (8,-3) {$Y_1$};
 \node (B0') at (10,-3) {$Y_0$};
 \node (B0+') at (11,-3) {$Y$};
 \node at (5,-2) {$\cdots$};
 \node at (6,-3) {$\cdots$};
 \draw [->] (A8') -- (A7');
 \draw [->] (A2') -- (A1');
 \draw [->] (B9') -- (A8');
 \draw [->] (A8') -- (B8');
 \draw [->] (B8') -- (A7');
 \draw [->] (A7') -- (B7');
 \draw [->] (A2') -- (B1');
 \draw [->] (B1') -- (A1');
 \draw [->] (A1') -- (B0');
 \draw [->] (B8') -- node [pos=.4,fill=white,inner sep=1pt] {$\scriptstyle 1$} (B9');
 \draw [->] (B7') -- node [pos=.4,fill=white,inner sep=1pt] {$\scriptstyle 1$} (B8');
 \draw [->] (B0') -- node [pos=.4,fill=white,inner sep=1pt] {$\scriptstyle 1$} (B1');
 \draw [double distance=1.5pt] (B0') -- (B0+');
 \draw [->] (B9) -- node [pos=.25,left] {$t_d$} (B9');
 \draw [->] (A8) -- node [pos=.7,gap] {$t_{d-1}$} (A8');
 \draw [->] (A7) -- node [pos=.7,right] {$t_{d-2}$} (A7');
 \draw [->] (A2) -- node [pos=.7,left] {$t_1$} (A2');
 \draw [->] (A1) -- node [pos=.7,left] {$t_0$} (A1');
\end{tikzpicture} \]
We now construct maps $f_i \colon X_i \to Y_i$ from left to right such that the resulting diagram is still commutative. Assume the maps $f_{d-1}, \ldots, f_{i+1}$ have already been constructed, and make all the squares they are involved in commutative. We choose $f_i$ to be a cone morphism of the triangles to its left. We only have to show that the square
\[ \begin{tikzpicture}[xscale=2,yscale=1.5]
 \node (A) at (0,1) {$X_i$};
 \node (B) at (1,1) {$T_{i-1}$};
 \node (C) at (0,0) {$Y_i$};
 \node (D) at (1,0) {$T_{i-1}'$};
 \draw [->] (A) -- (B);
 \draw [->] (C) -- (D);
 \draw [->] (A) -- node [left] {$f_i$} (C);
 \draw [->] (B) -- node [left] {$t_{i-1}$} (D);
\end{tikzpicture} \]
commutes. Since the square involving $t_i$ and $t_{i-1}$ commutes it follows that our square commutes up to a map factoring through $X_i \to X_{i+1}[1]$. But $X_{i+1}[1] \in (\add T[1]) * \cdots * (\add T[d-i])$, so for $i > 0$ there are no non-zero maps $X_{i+1}[1] \to T_{i-1}'$.

Hence we can complete the diagram. Now $\varphi$ is the image of $f_0$ by construction.
\end{proof}

\begin{lemma} \label{lemma.M_rigid}
We have $\Ext^i_{\Gamma}(\mathscr{M}, \widetilde{\mathscr{M}}\;) = 0$ for $0 < i < d$.
\end{lemma}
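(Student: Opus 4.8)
The plan is to compute $\Ext^i_\Gamma(\mathscr M, \widetilde{\mathscr M})$ using the projective resolutions of objects of $\mathscr M$ provided by Lemma~\ref{lemma.approx_in_O}. Recall that for $X \in \mathscr O_\Lambda \subseteq \widetilde{\mathscr O}_\Lambda$ that lemma gives an exact sequence
\[ \Hom_{\mathscr C_\Lambda^{2d}}(T, T_d) \to \Hom_{\mathscr C_\Lambda^{2d}}(T, T_{d-1}) \to \cdots \to \Hom_{\mathscr C_\Lambda^{2d}}(T, T_0) \twoheadrightarrow \Hom_{\mathscr C_\Lambda^{2d}}(T, X), \]
with each $T_i \in \add T$, so that $\Hom_{\mathscr C_\Lambda^{2d}}(T, T_i)$ is a projective $\Gamma$-module; this is a projective resolution of $M_0 = \Hom(T,X) \in \mathscr M$ of length $\leq d$. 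The associated approximation triangles $T_i \to X_i \to X_{i+1}[1]$ live in $\mathscr C_\Lambda^{2d}$, with $X_0 = X$ and $X_d = T_d \in \add T$.

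To get at $\Ext^i_\Gamma(M_0, N)$ for $N = \Hom(T, Y) \in \widetilde{\mathscr M}$, $Y \in \widetilde{\mathscr O}_\Lambda$, I apply $\Hom_\Gamma(-, N)$ to this projective resolution and take cohomology. By Proposition~\ref{prop.O_equiv_M} the functor $\Hom_{\mathscr C_\Lambda^{2d}}(T,-)$ induces $\widetilde{\mathscr O}_\Lambda/(T[d]) \xrightarrow{\sim} \widetilde{\mathscr M}$, so $\Hom_\Gamma(\Hom(T,T_i), N) = \Hom_\Gamma(\Hom(T,T_i), \Hom(T,Y)) = \Hom_{\mathscr C_\Lambda^{2d}}(T_i, Y)$ (projectives are representable, and $T_i \in \add T$ has no issue with the $(T[d])$-quotient here since $\Hom(T, T[d]) = 0$ forces any map $T_i \to Y$ that dies in the quotient to already be zero — alternatively one argues via adjunction directly). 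So the complex computing $\Ext^*_\Gamma(M_0,N)$ is
\[ \Hom_{\mathscr C_\Lambda^{2d}}(T_0, Y) \to \Hom_{\mathscr C_\Lambda^{2d}}(T_1, Y) \to \cdots \to \Hom_{\mathscr C_\Lambda^{2d}}(T_d, Y), \]
and I need its cohomology in degrees $1, \ldots, d-1$ to vanish. Splicing the approximation triangles and applying $\Hom_{\mathscr C_\Lambda^{2d}}(-, Y)$, the long exact sequences reduce the cohomology of this complex at spot $i$ to a subquotient of $\Hom_{\mathscr C_\Lambda^{2d}}(X_i, Y[1])$, and more generally one shows by induction along the triangles that $\Ext^i_\Gamma(M_0, N)$ is a subquotient of $\Hom_{\mathscr C_\Lambda^{2d}}(X, Y[i])$ for $0 < i < d$ — precisely the standard dimension-shift argument using that $\Hom(T, X_j[\ell]) = 0$ for $\ell$ in the appropriate range (the vanishing displayed in the proof of Lemma~\ref{lemma.approx_in_O}). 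Since $Y \in \widetilde{\mathscr O}_\Lambda$ and $X \in \mathscr O_\Lambda$, by definition of $\widetilde{\mathscr O}_\Lambda$ we have $\Hom_{\mathscr C_\Lambda^{2d}}(X, Y[i]) = \Hom_{\mathscr C_\Lambda^{2d}}(\mathscr O_\Lambda, Y[i]) = 0$ for $i \in \{1, \ldots, d-1\}$, which gives the claim.

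The main obstacle I anticipate is bookkeeping the homological shifts carefully: one must check that when splicing the triangles $T_i \to X_i \to X_{i+1}[1]$ and feeding into $\Hom(-, Y)$, the error terms $\Hom(X_{i+1}[1], Y[\cdot])$ that appear genuinely sit in a range where they vanish by $d$-rigidity of $\mathscr O_\Lambda$ (using $X_{i+1}[1] \in (\add T[1]) * \cdots * (\add T[d-i])$ as in the proof of Proposition~\ref{prop.O_equiv_M}) or are absorbed into the final $\Hom(\mathscr O_\Lambda, Y[i])$ term. The cleanest way to organize this is probably to prove by descending induction on $i$ the identification of $\ker/\im$ at spot $i$ of the complex with a subquotient of $\Hom_{\mathscr C_\Lambda^{2d}}(X_0, Y[i])$, rather than trying to write a single diagram; this keeps each step a routine application of the long exact sequence attached to one triangle.
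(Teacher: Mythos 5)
Your proof is correct and follows essentially the same route as the paper: both use the projective resolution $\Hom(T,T_\bullet)$ from Lemma~\ref{lemma.approx_in_O}, shift dimension along the approximation triangles $X_{i+1}\to T_i\to X_i$, and conclude from the vanishing of $\Hom_{\mathscr C_\Lambda^{2d}}(X,Y[i])$ for $0<i<d$ given $X\in\mathscr O_\Lambda$ and $Y\in\widetilde{\mathscr O}_\Lambda$. The only cosmetic difference is that you compute $H^i$ of $\Hom_\Gamma(\Hom(T,T_\bullet),N)\cong\Hom(T_\bullet,Y)$, needing representability only for the projectives $\Hom(T,T_i)$ (a direct Yoneda/additivity argument), whereas the paper invokes the syzygy formula $\Ext^i=\Hom_\Gamma(\Hom(T,X_i),N)/(\text{factoring through }P_{i-1})$ and therefore re-runs the fullness argument of Proposition~\ref{prop.O_equiv_M} to represent maps out of $\Hom(T,X_i)$; both variants then carry out the identical chain of inclusions $\Hom(X_i,Y)\hookrightarrow\Hom(X_{i-1}[-1],Y)\hookrightarrow\cdots\hookrightarrow\Hom(X_0[-i],Y)$ using $\Hom(T_j[-k],Y)=0$ for $1\leq k\leq d-1$.
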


\begin{proof}
Let $X \in \mathscr{O}_{\Lambda}$ and $Y \in \widetilde{\mathscr{O}}_{\Lambda}$. Let $T_j, X_j$ as in Lemma~\ref{lemma.approx_in_O}. We have
\begin{align*}
& \Ext^i_{\Gamma}(\Hom_{\mathscr{C}_{\Lambda}^{2d}}(T, X), \Hom_{\mathscr{C}_{\Lambda}^{2d}}(T, Y)) \\ = & \Hom_{\Gamma}( \underbrace{\Hom_{\mathscr{C}_{\Lambda}^{2d}}(T, X_i)}_{= \Omega^i \Hom_{\mathscr{C}_{\Lambda}^{2d}} (T,X)}, \Hom_{\mathscr{C}_{\Lambda}^{2d}}(T, Y)) / (\text{maps factoring through } \Hom_{\mathscr{C}_{\Lambda}^{2d}}(T, X_i) \to \Hom_{\mathscr{C}_{\Lambda}^{2d}}(T, T_{i-1})).
\end{align*}
As in the proof of Proposition~\ref{prop.O_equiv_M} one sees that any map $\Hom_{\mathscr{C}_{\Lambda}^{2d}}(T,X_i) \to \Hom_{\mathscr{C}_{\Lambda}^{2d}}(T,Y)$ is representable, and hence $\Ext^i_{\Gamma}(\Hom_{\mathscr{C}_{\Lambda}^{2d}}(T, X), \Hom_{\mathscr{C}_{\Lambda}^{2d}}(T, Y))$ is a quotient of
\[ \Hom_{\mathscr{C}_{\Lambda}^{2d}}(X_i, Y) / (\text{maps factoring through } X_i \to T_{i-1}). \]
Using the triangle $X_i \to T_{i-1} \to X_{i-1} \to X_i[1]$ of Lemma~\ref{lemma.approx_in_O}, we see that the above space is a subspace of
\[ \Hom_{\mathscr{C}_{\Lambda}^{2d}}(X_{i-1}[-1], Y). \]
Since there are no maps from $T_{i-2}[-1]$ to $Y$, this is a subspace of $\Hom_{\mathscr{C}_{\Lambda}^{2d}}(X_{i-2}[-2], Y)$. Iterating this argument we see that $\Ext^i_{\Gamma}(\Hom_{\mathscr{C}_{\Lambda}^{2d}}(T, X), \Hom_{\mathscr{C}_{\Lambda}^{2d}}(T, Y))$ is a subquotient of $\Hom_{\mathscr{C}_{\Lambda}^{2d}}(X[-i],Y)$, and this $\Hom$-space vanishes since $X \in \mathscr{O}_{\Lambda}$ and $Y \in \widetilde{\mathscr{O}}_{\Lambda}$.
\end{proof}

\begin{theorem} \label{theorem.M_is_ct}
The subcategory $\mathscr{M} = \widetilde{\mathscr{M}}$ is $d$-cluster tilting in $\mod \Gamma$.
\end{theorem}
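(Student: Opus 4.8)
The plan is to verify the two defining properties of a $d$-cluster tilting subcategory of $\mod \Gamma$: functorial finiteness and the double orthogonality condition identifying $\mathscr{M}$ with its left and right $d$-perpendicular categories. First I would establish that $\mathscr{M} = \widetilde{\mathscr{M}}$. The inclusion $\mathscr{M} \subseteq \widetilde{\mathscr{M}}$ is clear since $\mathscr{O}_{\Lambda}$ is $d$-rigid in $\mathscr{C}_{\Lambda}^{2d}$ and closed under $[d]$ (Remark~\ref{rem.closed_shift}), so objects of $\mathscr{O}_{\Lambda}$ lie in $\widetilde{\mathscr{O}}_{\Lambda}$. For the reverse inclusion I would use Proposition~\ref{prop.O_equiv_M}: the functor $\Hom_{\mathscr{C}_{\Lambda}^{2d}}(T,-)$ induces an equivalence $\widetilde{\mathscr{O}}_{\Lambda}/(T[d]) \to \widetilde{\mathscr{M}}$, and the analogous statement (which one extracts from the same argument restricted to $\mathscr{O}_{\Lambda}$) gives $\mathscr{O}_{\Lambda}/(T[d]) \xrightarrow{\sim} \mathscr{M}$. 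So it suffices to show $\widetilde{\mathscr{O}}_{\Lambda} = \mathscr{O}_{\Lambda}$, i.e.\ Corollary~\ref{cor.O_is_ct}, which is exactly the assertion that $\mathscr{O}_{\Lambda}$ is $d$-cluster tilting in $\mathscr{C}_{\Lambda}^{2d}$ — so in fact the proof of the present theorem and of Theorem~\ref{theo.is_d-ctsubcat} will be intertwined, as the text announces.

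The heart of the argument is therefore to show $\widetilde{\mathscr{O}}_{\Lambda} \subseteq \mathscr{O}_{\Lambda}$. Let $X \in \widetilde{\mathscr{O}}_{\Lambda}$; I would apply Lemma~\ref{lemma.approx_in_O} to obtain the approximation triangles $T_i \to X_i$ with $T_d \in \add T$ and with $\Hom_{\mathscr{C}_{\Lambda}^{2d}}(T,T_\bullet) \to \Hom_{\mathscr{C}_{\Lambda}^{2d}}(T,X_0)$ exact. Now $\Hom_{\mathscr{C}_{\Lambda}^{2d}}(T, X_0) = M_0 \in \widetilde{\mathscr{M}}$, and by the equivalence of Proposition~\ref{prop.O_equiv_M} this module lifts to $\widetilde{\mathscr{O}}_{\Lambda}/(T[d])$; combined with Proposition~\ref{prop.sink_seq}, which produces a sink sequence inside $\mathscr{M}$ for any module of $\mathscr{M}$, one compares the two $\Gamma$-projective resolutions and deduces that $M_0$ lies in $\mathscr{M}$, hence that $X$ (modulo $\add T[d]$, which is inside $\mathscr{O}_{\Lambda}$) lies in $\mathscr{O}_{\Lambda}$. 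Concretely: $\mathscr{M}$ is functorially finite in $\mod \Gamma$ (Proposition~\ref{prop.sink_seq} gives the covers; the duals give envelopes, using the $2d$-Calabi--Yau duality of $\mathscr{C}_{\Lambda}^{2d}$ to turn sink sequences into source sequences), $\Ext^i_\Gamma(\mathscr{M},\widetilde{\mathscr{M}}) = 0$ for $0 < i < d$ by Lemma~\ref{lemma.M_rigid}, and a dual vanishing $\Ext^i_\Gamma(\widetilde{\mathscr{M}},\mathscr{M}) = 0$ follows symmetrically; then the standard characterisation of $d$-cluster tilting subcategories (as in \cite{Iy_n-Auslander}) forces $\widetilde{\mathscr{M}} = \mathscr{M}$, which pulls back to $\widetilde{\mathscr{O}}_{\Lambda} = \mathscr{O}_{\Lambda}$.

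Once $\mathscr{M} = \widetilde{\mathscr{M}}$ is known, I would finish by assembling the pieces: functorial finiteness of $\mathscr{M}$ in $\mod \Gamma$ comes from Proposition~\ref{prop.sink_seq} and its dual; the identity
\[ \mathscr{M} = \{ N \in \mod \Gamma \mid \Ext^i_\Gamma(\mathscr{M}, N) = 0 \ \forall\, 0<i<d \} = \{ N \mid \Ext^i_\Gamma(N, \mathscr{M}) = 0 \ \forall\, 0<i<d \} \]
is precisely $\mathscr{M} = \widetilde{\mathscr{M}}$ together with the dual statement, using Lemma~\ref{lemma.M_rigid} for one containment in each direction and the definition of $\widetilde{\mathscr{M}}$ (transported through the equivalence of Proposition~\ref{prop.O_equiv_M}) for the other. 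The main obstacle I anticipate is the bookkeeping in showing $\widetilde{\mathscr{O}}_{\Lambda} \subseteq \mathscr{O}_{\Lambda}$ without circularity — one must be careful that Lemma~\ref{lemma.approx_in_O}, Proposition~\ref{prop.O_equiv_M} and Lemma~\ref{lemma.M_rigid} were all proved only using that $T$ is $2d$-cluster tilting in $\mathscr{C}_{\Lambda}^{2d}$ (which is granted as a standing hypothesis of this subsection) and not using the conclusion we are after; granting that, the degree-shift estimates of the type appearing in Lemma~\ref{lemma.approx_in_O} (pushing $\Hom$-vanishing across $2d-1$ consecutive degrees) do the real work, and the rest is formal manipulation of cluster-tilting axioms.
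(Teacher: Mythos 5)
Your final paragraph recovers the paper's argument: use Lemma~\ref{lemma.M_rigid} to make the sequences of Proposition~\ref{prop.sink_seq} into sink sequences, invoke Iyama's characterisation (Theorem~2.2(b) of \cite{Iy_n-Auslander}) to conclude that $\mathscr{M}$ is $d$-cluster tilting in $\mod\Gamma$, and then use the orthogonality description of $d$-cluster tilting together with Lemma~\ref{lemma.M_rigid} once more to deduce $\widetilde{\mathscr{M}}\subseteq\mathscr{M}$, hence equality. This is the paper's route, and it is sound.

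However, your middle paragraph contains a genuine gap, and it also reverses the paper's order of deduction. You claim to show $\widetilde{\mathscr{O}}_{\Lambda}\subseteq\mathscr{O}_{\Lambda}$ directly, by taking $X\in\widetilde{\mathscr{O}}_{\Lambda}$, producing the approximation triangles of Lemma~\ref{lemma.approx_in_O}, and then ``comparing the two $\Gamma$-projective resolutions'' against the sink sequence of Proposition~\ref{prop.sink_seq} to conclude $M_0\in\mathscr{M}$. This cannot work as stated: Proposition~\ref{prop.sink_seq} only produces a sink sequence \emph{for objects already known to be in $\mathscr{M}$}, so there is no second resolution of $M_0$ to compare with, and having a length-$d$ projective resolution of $M_0$ tells you nothing about membership in $\mathscr{M}$. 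In the paper the logical order is the opposite of what you sketch: one first proves $\widetilde{\mathscr{M}}=\mathscr{M}$ on the module-category side (the content of the present theorem), and only afterwards pulls this back through the equivalence of Proposition~\ref{prop.O_equiv_M} to obtain $\widetilde{\mathscr{O}}_{\Lambda}=\mathscr{O}_{\Lambda}$ (Corollary~\ref{cor.O_is_ct}). Your detour is therefore circular as well as unnecessary; since your ``Concretely'' paragraph then supplies the correct argument, you should simply delete the detour. One further small point: you assert that $\Ext^i_\Gamma(\widetilde{\mathscr{M}},\mathscr{M})=0$ ``follows symmetrically'' from Lemma~\ref{lemma.M_rigid}. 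The proof of that lemma treats $\mathscr{M}$ and $\widetilde{\mathscr{M}}$ asymmetrically (it resolves the first argument by $T$-approximation triangles), so the dual vanishing is not automatic; fortunately it is also not needed, since the one-sided vanishing together with Iyama's theorem already yields $\widetilde{\mathscr{M}}\subseteq\mathscr{M}$.
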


\begin{proof}
By Lemma~\ref{lemma.M_rigid} the subcategory $\mathscr{M}$ is $d$-rigid. Hence the exact sequence of Proposition~\ref{prop.sink_seq} is a sink sequence. Now $\mathscr{M}$ is $d$-cluster tilting in $\mod \Gamma$ by \cite[Theorem~2.2(b)]{Iy_n-Auslander}. Since, by Lemma~\ref{lemma.M_rigid}, we have
\[ \widetilde{\mathscr{M}} \subseteq \{X \in \mod \Gamma \mid \Ext^i_{\Gamma}(\mathscr{M}, X) = 0 \, \forall i \in \{1, \ldots, d-1\}\} = \mathscr{M} \]
it follows that $\widetilde{\mathscr{M}} = \mathscr{M}$.
\end{proof}

\begin{corollary} \label{cor.O_is_ct}
The subcategory $\mathscr{O}_{\Lambda} = \widetilde{\mathscr{O}}_{\Lambda}$ is $d$-cluster tilting in $\mathscr{C}_{\Lambda}^{2d}$.
\end{corollary}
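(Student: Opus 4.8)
The plan is to deduce the statement from the results already established for the cluster-tilted algebra, by comparing $\mathscr{O}_{\Lambda}$ with $\widetilde{\mathscr{O}}_{\Lambda}$ directly rather than re-deriving everything from scratch. Recall that $T \in \mathscr{O}_{\Lambda}$ is a fixed object which is $2d$-cluster tilting in $\mathscr{C}_{\Lambda}^{2d}$ (such a $T$ exists, e.g.\ $T = \overline{\Lambda}$, by Theorem~\ref{theorem.amiot}(3) together with $\gld \Lambda \leq d < 2d$), that $\Gamma = \End_{\mathscr{O}_{\Lambda}}(T)$, and that $\mathscr{M} = \Hom_{\mathscr{C}_{\Lambda}^{2d}}(T, \mathscr{O}_{\Lambda})$, $\widetilde{\mathscr{M}} = \Hom_{\mathscr{C}_{\Lambda}^{2d}}(T, \widetilde{\mathscr{O}}_{\Lambda})$. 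The $d$-rigidity of $\mathscr{O}_{\Lambda}$ proven above already gives $\mathscr{O}_{\Lambda} \subseteq \widetilde{\mathscr{O}}_{\Lambda}$, so the only thing that needs an argument is the reverse inclusion $\widetilde{\mathscr{O}}_{\Lambda} \subseteq \mathscr{O}_{\Lambda}$.

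To obtain that inclusion I would feed Theorem~\ref{theorem.M_is_ct} into Proposition~\ref{prop.O_equiv_M}. The functor $F = \Hom_{\mathscr{C}_{\Lambda}^{2d}}(T,-)$ induces an equivalence $\widetilde{\mathscr{O}}_{\Lambda}/(T[d]) \simeq \widetilde{\mathscr{M}}$; restricting $F$ along $\mathscr{O}_{\Lambda} \subseteq \widetilde{\mathscr{O}}_{\Lambda}$ (which makes sense since $T \in \mathscr{O}_{\Lambda}$) yields a fully faithful functor $\mathscr{O}_{\Lambda}/(T[d]) \rightarrow \widetilde{\mathscr{M}}$ whose essential image is $F(\mathscr{O}_{\Lambda}) = \mathscr{M}$. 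Since $\mathscr{M} = \widetilde{\mathscr{M}}$ by Theorem~\ref{theorem.M_is_ct}, the inclusion $\mathscr{O}_{\Lambda}/(T[d]) \hookrightarrow \widetilde{\mathscr{O}}_{\Lambda}/(T[d])$ is an equivalence, in particular it is dense. It then remains to lift this density statement to $\mathscr{C}_{\Lambda}^{2d}$. Given $X \in \widetilde{\mathscr{O}}_{\Lambda}$, split off its summands lying in $\add T[d] \subseteq \mathscr{O}_{\Lambda}$ and assume $X$ has none; by density there is $X' \in \mathscr{O}_{\Lambda}$, which (removing its $\add T[d]$-summands, which do not affect $F$) we may likewise assume has no summand in $\add T[d]$, together with maps $f \colon X \rightarrow X'$ and $g \colon X' \rightarrow X$ in $\mathscr{C}_{\Lambda}^{2d}$ such that $gf - \id_X$ and $fg - \id_{X'}$ factor through $\add T[d]$. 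Since $\mathscr{C}_{\Lambda}^{2d}$ is Krull--Schmidt and neither $X$ nor $X'$ has an indecomposable summand in $\add T[d]$, the maps factoring through $\add T[d]$ form an ideal contained in the Jacobson radical of $\End_{\mathscr{C}_{\Lambda}^{2d}}(X)$ and of $\End_{\mathscr{C}_{\Lambda}^{2d}}(X')$; hence $gf$ and $fg$ are invertible, $f$ is an isomorphism, and $X \iso X' \in \mathscr{O}_{\Lambda}$. As both $\widetilde{\mathscr{O}}_{\Lambda}$ and $\mathscr{O}_{\Lambda}$ are closed under direct summands, this gives $\widetilde{\mathscr{O}}_{\Lambda} \subseteq \mathscr{O}_{\Lambda}$, hence $\mathscr{O}_{\Lambda} = \widetilde{\mathscr{O}}_{\Lambda}$.

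This is one of the two defining equalities of a $d$-cluster tilting subcategory of $\mathscr{C}_{\Lambda}^{2d}$. For the other, that $\mathscr{O}_{\Lambda}$ also equals $\{X \mid \Hom_{\mathscr{C}_{\Lambda}^{2d}}(X, \mathscr{O}_{\Lambda}[i]) = 0 \text{ for } 1 \leq i \leq d-1\}$, I would use the $2d$-Calabi--Yau property of $\mathscr{C}_{\Lambda}^{2d}$ (Theorem~\ref{theorem.amiot}(2)) and the fact that $\mathscr{O}_{\Lambda}$ is closed under $[d]$ (Remark~\ref{rem.closed_shift}), hence, having only finitely many indecomposables, also under $[-d]$: for $Y \in \mathscr{O}_{\Lambda}$ and $1 \leq i \leq d-1$ there is an isomorphism $\Hom(X, Y[i]) \iso D\Hom(Y[-d], X[d-i])$ with $Y[-d] \in \mathscr{O}_{\Lambda}$ and $d-i \in \{1, \ldots, d-1\}$, so the two one-sided $\Hom$-vanishing conditions are interchanged by $i \leftrightarrow d-i$ and therefore cut out the same subcategory, namely $\widetilde{\mathscr{O}}_{\Lambda} = \mathscr{O}_{\Lambda}$. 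Functorial finiteness of $\mathscr{O}_{\Lambda} = \add(M \oplus \Lambda[d])$ in $\mathscr{C}_{\Lambda}^{2d}$ is then automatic, since $\mathscr{C}_{\Lambda}^{2d}$ is $\Hom$-finite (Theorem~\ref{theorem.amiot}(1), again using $\gld \Lambda < 2d$) and $\add$ of a single object in a $\Hom$-finite Krull--Schmidt category is always functorially finite. The step I expect to be the real obstacle is the middle paragraph: correctly packaging Proposition~\ref{prop.O_equiv_M} and Theorem~\ref{theorem.M_is_ct} into the density statement, and then lifting ``isomorphic modulo $\add T[d]$'' to an honest isomorphism in $\mathscr{C}_{\Lambda}^{2d}$; the Calabi--Yau symmetry and the functorial-finiteness remark are then routine bookkeeping.
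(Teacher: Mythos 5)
Your proof is correct and takes essentially the same route as the paper's, which simply cites Proposition~\ref{prop.O_equiv_M} and Theorem~\ref{theorem.M_is_ct} and declares the equality $\mathscr{O}_{\Lambda} = \widetilde{\mathscr{O}}_{\Lambda}$ (and hence $d$-cluster-tiltingness) immediate. You have merely supplied the routine verifications the paper suppresses: lifting the equivalence of $(T[d])$-quotient categories to an honest equality of subcategories via the Krull--Schmidt radical argument, and deducing the second $\Hom$-vanishing condition from the $2d$-Calabi--Yau property together with closure of $\mathscr{O}_{\Lambda}$ under $[\pm d]$.
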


\begin{proof}
The equality follows immediately from Proposition~\ref{prop.O_equiv_M} and Theorem~\ref{theorem.M_is_ct}. It then follows that $\mathscr{O}_{\Lambda}$ is $d$-cluster tilting.
\end{proof}

\begin{proof}[Proof of Theorem~\ref{theo.is_d-ctsubcat}]
The first statement is contained in Corollary~\ref{cor.O_is_ct}. For the second statement it only remains to show that $\mathscr{O}_{\Lambda}$ is closed under $\leftsub{\mathscr{C}_{\Lambda}^{2d}}{\mathbb{S}}$. This follows from the facts that $\mathscr{U}$ is closed under $\leftsub{D^b(\mod \Lambda)}{\mathbb{S}}$ by Theorem~\ref{theorem.ct_in_Db}, and the projection $D^b(\mod \Lambda) \to \mathscr{C}_{\Lambda}^{2d}$ commutes with the respective Serre functors by \cite{CC, C_PhD}.
\end{proof}

\subsection{Cluster tilting objects} \label{subsect.ctos}

The first aim of this section is to prove Theorem~\ref{theo.ct_is_2dct}, saying that cluster tilting objects in $\mathscr{O}_{\Lambda}$ are precisely the $2d$-cluster tilting objects in $\mathscr{C}_{\Lambda}^{2d}$ which lie in $\mathscr{O}_{\Lambda}$.

\begin{proof}[Proof of Theorem~\ref{theo.ct_is_2dct}]
Assume $T$ is a $2d$-cluster tilting object in $\mathscr{C}_{\Lambda}^{2d}$, such that $T \in \mathscr{O}_{\Lambda}$. It is part of the definition that $\Hom_{\mathscr{O}_{\Lambda}}(T, T[d]) = \Hom_{\mathscr{C}_{\Lambda}^{2d}}(T, T[d]) = 0$, hence we only have to check (2) of Definition~\ref{def.cto} in order to show that $T$ is cluster tilting. This however is an immediate consequence of Lemma~\ref{lemma.approx_in_O} (and the definition of $(d+2)$-angles).

Now assume conversely that $T$ is cluster tilting in $\mathscr{O}_{\Lambda}$ in the sense of Definition~\ref{def.cto}. It follows that $\Hom_{\mathscr{C}_{\Lambda}^{2d}}(T, T[i]) = 0 \, \forall i \in \{1, \ldots, 2d-1\}$: For $i = d$ this is Definition~\ref{def.cto}(1), for $i \neq d$ it follows since $T \in \mathscr{O}_{\Lambda}$. It follows that $(\add T[i]) * (\add T) = \add (T \oplus T[i])$ for $i \in \{0, \ldots, 2d-2\}$. Now note that
\begin{align*}
 \mathscr{C}_{\Lambda}^{2d} & = \mathscr{O}_{\Lambda} * \mathscr{O}_{\Lambda}[1] * \cdots * \mathscr{O}_{\Lambda}[d-1] \\
& = ((\add T) * \cdots * (\add T)[d]) * \cdots * ((\add T) * \cdots * (\add T)[d])[d-1]
\end{align*}
where the first equality comes from the fact that $\mathscr{O}_{\Lambda}$ is $d$-cluster tilting in $\mathscr{C}_{\Lambda}^{2d}$, and the second one follows from Definition~\ref{def.cto}(2). Repeatedly applying the above observation we see that this is
\[ (\add T) * \cdots * (\add T[2d-1]). \]
Hence $T$ is $2d$-cluster tilting in $\mathscr{C}_{\Lambda}^{2d}$.
\end{proof}

\begin{corollary}
Let $T \in \add M$ be a tilting $\Lambda$-module. Then $\overline{T} \in \mathscr{O}_{\Lambda}$ is cluster tilting.
\end{corollary}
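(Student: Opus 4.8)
The plan is to combine Amiot's criterion (Theorem~\ref{theorem.amiot}(3)) with the characterization of cluster tilting objects in Theorem~\ref{theo.ct_is_2dct}. First I would note that $T \in \add M$ lies in the fundamental domain $\add M \vee \proj \Lambda[d]$ of Observation~\ref{obs.fund_domain}, so $\overline{T}$ is genuinely an object of $\mathscr{O}_{\Lambda}$; by Theorem~\ref{theo.ct_is_2dct} it therefore suffices to prove that $\overline{T}$ is a $2d$-cluster tilting object of the $2d$-Amiot cluster category $\mathscr{C}_{\Lambda}^{2d}$.

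To apply Theorem~\ref{theorem.amiot}(3) with $\delta = 2d$ I need two inputs. One, that $\mathscr{C}_{\Lambda}^{2d}$ is $\Hom$-finite: this holds by Theorem~\ref{theorem.amiot}(1) because $\gld \Lambda \leq d < 2d$. Two, that $T$, viewed as a stalk complex in $D^b(\mod \Lambda)$, is a tilting complex with $\gld \End_{D^b(\mod \Lambda)}(T) \leq 2d$. That a classical tilting module is a tilting complex in the derived category, with $\End_{D^b(\mod \Lambda)}(T) = \End_{\Lambda}(T) =: \Gamma$, is standard (Happel); the only substantive point is the global-dimension bound. For this I would invoke the classical inequality $\gld \End_{\Lambda}(T) \leq \gld \Lambda + \pd_{\Lambda} T$ for a tilting module $T$, and combine it with $\pd_{\Lambda} T \leq \gld \Lambda \leq d$ to conclude $\gld \Gamma \leq 2d$.

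With these inputs, Theorem~\ref{theorem.amiot}(3) yields that $\overline{T}$ is a $2d$-cluster tilting object of $\mathscr{C}_{\Lambda}^{2d}$ lying in $\mathscr{O}_{\Lambda}$, and Theorem~\ref{theo.ct_is_2dct} then gives that $\overline{T}$ is cluster tilting in $\mathscr{O}_{\Lambda}$, as claimed. The main (and essentially only) obstacle is recording the bound $\gld \Gamma \leq 2d$ cleanly; everything else is a matter of assembling results already established. I would expect the sharper statement of Theorem~\ref{theo.cta_is_trivext}, identifying $\End_{\mathscr{O}_{\Lambda}}(T)$ with the trivial extension $\Gamma \ltimes \Ext_{\Gamma}^{2d}(D\Gamma, \Gamma)$, to require an extra computation exploiting the $2d$-Calabi-Yau property of $\mathscr{O}_{\Lambda}$ (Theorem~\ref{theo.clustercat_exists}(2)), but that is not needed for the corollary as stated.
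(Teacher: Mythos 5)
Your proof follows essentially the same route as the paper's: reduce via Theorem~\ref{theo.ct_is_2dct} to showing $\overline{T}$ is $2d$-cluster tilting in $\mathscr{C}_{\Lambda}^{2d}$, then apply Theorem~\ref{theorem.amiot}(3), with the key observation being $\gld \End_{\Lambda}(T) \leq 2d$. The paper states this bound tersely; you make it explicit via the standard inequality $\gld \End_{\Lambda}(T) \leq \gld \Lambda + \pd_{\Lambda} T$, which is a correct and clean way to justify it.
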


\begin{proof}
Since $\gld \Lambda \leq d$ we have $\gld \End(\overline{T}) \leq 2d$. Hence it follows from Theorem~\ref{theorem.amiot} that $\overline{T} \in \mathscr{C}_{\Lambda}^{2d}$ is $2d$-cluster tilting. Now the claim follows from Theorem~\ref{theo.ct_is_2dct}.
\end{proof}

Note that the above Corollary is the first statement of Theorem~\ref{theo.cta_is_trivext}. We now complete its proof.

\begin{proof}[Proof of Theorem~\ref{theo.cta_is_trivext}]
We have just seen that $\overline{T}$ is cluster tilting in $\mathscr{O}_{\Lambda}$. Now, by \cite{CC,C_PhD} we know that $\End_{\mathscr{C}_{\Lambda}^{2d}}(T)$ is the tensor algebra of $\Ext_{\Gamma}^{2d}(D \Gamma, \Gamma)$ over $\Gamma$, and moreover that
\begin{align*}
\Ext_{\Gamma}^{2d}(D \Gamma, \Gamma)^{\otimes i} & = \Hom_{D^b(\mod \Gamma)}(\Gamma, \mathbb{S}_{2d}^{-i} \Gamma). \\
\intertext{Since $T$ is a tilting $\Lambda$-module we may identify along the equivalence $R\Hom_{\Lambda}(T,-)\colon D^b(\mod \Lambda) \to D^b(\mod \Gamma)$ and obtain}
\Ext_{\Gamma}^{2d}(D \Gamma, \Gamma)^{\otimes i} & = \Hom_{D^b(\mod \Lambda)}(T, \mathbb{S}_{2d}^{-i} T).
\end{align*}
The claim now follows since this space vanishes for $i \not\in \{0,1\}$.
\end{proof}

We conclude this subsection by summing up that we have also proven Theorem~\ref{theo.induces_ct_in_mod}.

\begin{proof}[Proof of Theorem~\ref{theo.induces_ct_in_mod}]
By Theorem~\ref{theo.ct_is_2dct} a cluster tilting object in $\mathscr{O}_{\Lambda}$ is a $2d$-cluster tilting object in $\mathscr{C}_{\Lambda}^{2d}$. Hence we are in the situation of Subsection~\ref{subsect.welldef}. Now the claim follows from Theorem~\ref{theorem.M_is_ct}.
\end{proof}

\subsection{Exchange \texorpdfstring{$(d+2)$}{(d+2)}-angles -- proof of 
Theorem~\texorpdfstring{\ref{theo.exch_angles}}{5.7}}

Throughout this section we assume $T$ to be a cluster tilting object in $\mathscr{O}_{\Lambda}$. We start by proving the easy implications of Theorem~\ref{theo.exch_angles}.

First note that the implication (1) \then{} (2) follows immediately from the definition of cluster tilting.

\begin{proof}[Proof of Theorem~\ref{theo.exch_angles}, (3) \then{} (1) or (4) \then{} (1)]
Since (1) has no preference for left or right, the proofs of (3) \then{} (1) and (4) \then{} (1) are the same. Hence we may restrict to the latter situation here. So assume we have a $(d+2)$-angle as in (4). By definition this comes from a diagram
\[ \begin{tikzpicture}[xscale=3,yscale=1.5]
 \node (A) at (180:2) {$X_{d+1}$};
 \node (B) at (150:2) {$X_d$};
 \node (C) at (120:2) {$X_{d-1}$};
 \node (D) at (60:2) {$X_2$};
 \node (E) at (30:2) {$R_1$};
 \node (F) at (0:2) {$T_0$};
 \draw [->] (A) -- (B);
 \draw [->] (B) -- (C);
 \draw [dotted,thick,out=20,in=160] (C) to (D);
 \draw [->] (D) -- (E);
 \draw [->] (E) -- (F);
 \draw [->] (F) -- node [pos=.3,fill=white,inner sep=1pt] {$\scriptstyle d$} (A);
 \node (L) at (150:1.2) {$H_{d-1}$};
 \node (M) at (110:1.2) {$H_{d-2}$};
 \node (N) at (70:1.2) {$H_2$};
 \node (O) at (30:1.2) {$H_1$};
 \draw [->] (F) -- node [pos=.3,fill=white,inner sep=1pt] {$\scriptstyle 1$} (O);
 \draw [->] (O) -- node [pos=.3,fill=white,inner sep=1pt] {$\scriptstyle 1$} (N);
 \draw [dotted,thick,out=170,in=10] (N) to (M);
 \draw [->] (M) -- node [pos=.3,fill=white,inner sep=1pt] {$\scriptstyle 1$} (L);
 \draw [->] (L) -- node [pos=.3,fill=white,inner sep=1pt] {$\scriptstyle 1$} (A);
 \draw [->] (B) -- (L);
 \draw [->] (L) -- (C);
 \draw [->] (C) -- (M);
 \draw [->] (N) -- (D);
 \draw [->] (D) -- (O);
 \draw [->] (O) -- (E);
\end{tikzpicture} \]
in $\mathscr{C}_{\Lambda}^{2d}$. By Theorem~\ref{theo.ct_is_2dct} we know that $T_0 \oplus R$ is basic $2d$-cluster tilting in $\mathscr{C}_{\Lambda}^{2d}$. Then it follows from classical mutation of cluster tilting objects iteratively that $H_1 \oplus R$ is basic $2d$-cluster tilting in $\mathscr{C}_{\Lambda}^{2d}$, then that $H_2 \oplus R$ is basic $2d$-cluster tilting in $\mathscr{C}_{\Lambda}^{2d}$, and so on. We conclude that $X_{d+1} \oplus R$ is a basic $2d$-cluster tilting object in $\mathscr{C}_{\Lambda}^{2d}$. Since it lies in $\mathscr{O}_{\Lambda}$, it follows from Theorem~\ref{theo.ct_is_2dct} that $X_{d+1} \oplus R$ is cluster tilting in $\mathscr{O}_{\Lambda}$. Finally note that, since $T_0 \not\in \add R$, the map $R_1 \to T_0$ is not split epi. Hence the map $T_0 \to X_{d+1}[d]$ in the above $(d+2)$-angle does not vanish, and in particular $T_0 \not\iso X_{d+1}$.
\end{proof}

For the proof of the final implications of Theorem~\ref{theo.exch_angles} we need the following observations.

\begin{lemma} \label{lemma.not_epi}
Let $\mathscr{K}$ be a category, $X, Y \in \mathscr{K}$ such that 
$\Hom_{\mathscr{K}}(X,Y)\ne 0$ and $\Rad^n \End_{\mathscr{K}}(Y) = 0$ for some $n$. Let $r \in \mathbb{N}$ and $f_1, \ldots, f_r \in \Rad \End_{\mathscr{K}}(Y)$. Then the map
\[ ((f_1)_*, \ldots, (f_r)_*) \colon \Hom_{\mathscr{K}}(X, Y)^r \to[30] \Hom_{\mathscr{K}}(X,Y) \]
is not onto.
\end{lemma}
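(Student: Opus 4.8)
The plan is to run the classical Nakayama-type argument inside the ring $\End_{\mathscr{K}}(Y)$. Set $I = \Rad\End_{\mathscr{K}}(Y)$, and regard $\Hom_{\mathscr{K}}(X,Y)$ as a module over $\End_{\mathscr{K}}(Y)$ via composition. First I would identify the image of the map $((f_1)_*, \ldots, (f_r)_*)$: since it is a homomorphism of abelian groups, its image is exactly the subgroup $\sum_{i=1}^r (f_i)_*\bigl(\Hom_{\mathscr{K}}(X,Y)\bigr)$ of $\Hom_{\mathscr{K}}(X,Y)$, and because each $f_i$ lies in $I$, this subgroup is contained in $I \cdot \Hom_{\mathscr{K}}(X,Y)$.

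Next I would argue by contradiction. Suppose the map is onto. Then $\Hom_{\mathscr{K}}(X,Y)$, being contained in the image, satisfies $\Hom_{\mathscr{K}}(X,Y) \subseteq I \cdot \Hom_{\mathscr{K}}(X,Y) \subseteq \Hom_{\mathscr{K}}(X,Y)$, so that $\Hom_{\mathscr{K}}(X,Y) = I \cdot \Hom_{\mathscr{K}}(X,Y)$. From $I \cdot \Hom_{\mathscr{K}}(X,Y) = \Hom_{\mathscr{K}}(X,Y)$ one deduces $I^{k} \cdot \Hom_{\mathscr{K}}(X,Y) = \Hom_{\mathscr{K}}(X,Y)$ for all $k \geq 1$, since $I^{k+1} \cdot \Hom_{\mathscr{K}}(X,Y) = I^{k} \cdot \bigl(I \cdot \Hom_{\mathscr{K}}(X,Y)\bigr) = I^{k} \cdot \Hom_{\mathscr{K}}(X,Y)$. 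Taking $k = n$ and using $I^{n} = \Rad^{n}\End_{\mathscr{K}}(Y) = 0$ by hypothesis, we get $\Hom_{\mathscr{K}}(X,Y) = 0$, contradicting the assumption $\Hom_{\mathscr{K}}(X,Y) \neq 0$. Hence the map is not onto.

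I do not expect any genuine obstacle here: the argument is purely formal and uses only the nilpotency of $I$, and in particular no Krull--Schmidt or finiteness hypothesis on $\mathscr{K}$. The only point worth spelling out carefully is the iteration in the previous paragraph (the passage from $I \cdot \Hom_{\mathscr{K}}(X,Y) = \Hom_{\mathscr{K}}(X,Y)$ to $I^{n} \cdot \Hom_{\mathscr{K}}(X,Y) = \Hom_{\mathscr{K}}(X,Y)$), which is the standard bookkeeping underlying Nakayama's lemma.
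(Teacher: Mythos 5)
Your proof is correct and is essentially the same Nakayama-type argument the paper gives: both iterate the assumed surjection $n$ times and invoke the nilpotency $\Rad^n\End_{\mathscr{K}}(Y)=0$ to force $\Hom_{\mathscr{K}}(X,Y)=0$, a contradiction. You phrase it cleanly in terms of the $\End_{\mathscr{K}}(Y)$-module structure, while the paper phrases it as a chain of surjections $\Hom_{\mathscr{K}}(X,Y)^{r^n}\twoheadrightarrow\cdots\twoheadrightarrow\Hom_{\mathscr{K}}(X,Y)$ whose composite vanishes, but the content is identical.
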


\begin{proof}
Assume the map of the lemma is surjective. Then, adding up such maps, we obtain a sequence of surjective maps
\[ \Hom_{\mathscr{K}}(X, Y)^{r^n} \epi[30] \Hom_{\mathscr{K}}(X, Y)^{r^{n-1}} \epi[30] \cdots \epi[30] \Hom_{\mathscr{K}}(X, Y). \]
Hence also their composition is onto, contradicting the assumption that $\Rad^n \End_{\mathscr{K}}(Y) = 0$. 
\end{proof}

\begin{lemma}\label{lemma.everyone_rigid}
Let $X \in \mathscr{O}_{\Lambda}$ be indecomposable. Then $\Hom_{\mathscr{O}_{\Lambda}}(X, X[d]) = 0$.
\end{lemma}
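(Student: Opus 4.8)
The plan is to reduce the statement to the corresponding statement about summands of the $d$-cluster tilting module, which is already understood via the indexing results of Section~\ref{section.higher_Aus_An} (or, in the general $d$-representation finite case, via the $d$-rigidity of $\mathscr{U}$ in $D^b(\mod\Lambda)$). By Observation~\ref{obs.fund_domain} every indecomposable $X \in \mathscr{O}_\Lambda$ has a unique preimage $\overleftarrow{X} \in \add M \vee \proj\Lambda[d]$, so it suffices to treat the two cases $\overleftarrow{X} \in \add M$ and $\overleftarrow{X} \in \proj\Lambda[d]$ separately.

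First I would unwind the definition of the orbit category. For $X,Y\in\mathscr{O}_\Lambda$ with preimages $\overleftarrow{X},\overleftarrow{Y}$ one has, exactly as in the proof of Theorem~\ref{theo.clustercat_exists}(3),
\[ \Hom_{\mathscr{O}_{\Lambda}}(X, X[d]) = \bigoplus_{i\in\mathbb Z} \Hom_{\mathscr{U}}(\overleftarrow X, \mathbb{S}_{2d}^i\,\overleftarrow X[d]) = \Hom_{\mathscr{U}}(\overleftarrow X, \overleftarrow X[d]) \oplus D\Hom_{\mathscr{U}}(\overleftarrow X, \overleftarrow X[d]), \]
where the degree reasons of that proof kill all summands except $i=0$ and $i=1$, and the $i=1$ summand is rewritten using that $\mathbb{S}$ is a Serre functor. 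Thus it is enough to show $\Hom_{\mathscr{U}}(\overleftarrow X, \overleftarrow X[d]) = 0$, i.e. that $\overleftarrow X$ is a rigid object of $\mathscr{U}$ in the relevant degree. Since $\mathscr U$ is $d$-cluster tilting in $D^b(\mod\Lambda)$ by Theorem~\ref{theorem.ct_in_Db}, we have $\Hom_{\mathscr U}(\overleftarrow X, \overleftarrow X[i]) = 0$ for $1\le i\le d-1$ automatically; the point is the degree-$d$ vanishing, which is not part of $d$-cluster tilting and genuinely uses indecomposability.

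For $\overleftarrow X \in \add M$ I would invoke Theorem~\ref{theo.indexing.props}(4) together with Remark~\ref{rem.tau=S}: writing $\overleftarrow X = \leftsub{A_n^d}{M}_{i_0,\dots,i_d}$, we have $\Hom_{\mathscr U}(\overleftarrow X, \overleftarrow X[d]) = \Ext^d_{A_n^d}(\overleftarrow X, \overleftarrow X)$, which is nonzero iff $(i_0,\dots,i_d)\wr(i_0,\dots,i_d)$; but no tuple intertwines itself (the relation $x_0<y_0<x_1<\cdots$ with $x=y$ is impossible), so the $\Ext$ vanishes. For a general $d$-representation finite $\Lambda$ the same conclusion follows from $\Ext^d_\Lambda(M',M')=0$ for an indecomposable summand $M'$ of $M$, which holds because $M'$ has no self-extension of degree $d$ by the Auslander--Reiten formula (Observation~\ref{obs.ARformula}) and $\tau_d M' \not\iso M'$ — indeed $\tau_d$ has no fixed points on the stable category of a $d$-representation finite algebra. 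For $\overleftarrow X = P[d]$ with $P$ an indecomposable projective, $\Hom_{\mathscr U}(P[d], P[2d]) = \Hom_{D^b(\mod\Lambda)}(P, P[d]) = \Ext^d_\Lambda(P,P) = 0$ since $P$ is projective, so this case is immediate. The main obstacle is the bookkeeping in the first case — being careful that the orbit-category identification really does isolate only the degrees $i=0,1$ and that the Serre-duality rewriting is applied correctly — but no new idea beyond what already appears in the proof of Theorem~\ref{theo.clustercat_exists} is needed.
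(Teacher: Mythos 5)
Your reduction to $\Hom_{\mathscr{U}}(\overleftarrow X,\overleftarrow X[d])=0$ via Theorem~\ref{theo.clustercat_exists}(3) is exactly the paper's first step, and your treatment of the cases $\overleftarrow X=P[d]$ and (for $\Lambda=A_n^d$) $\overleftarrow X\in\add M$ via Theorem~\ref{theo.indexing.props}(4) is fine. However, the general-$\Lambda$ branch has a genuine gap. You write that $\Ext^d_\Lambda(M',M')=D\,\overline{\Hom}_\Lambda(M',\tau_d M')=0$ ``because $\tau_d M'\not\iso M'$'', but non-isomorphism of $M'$ and $\tau_d M'$ does not force $\overline{\Hom}_\Lambda(M',\tau_d M')$ to vanish --- there can certainly be nonzero stable maps between non-isomorphic modules. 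The deduction you need is circular as stated: plugging $X=Y=M'$ into Observation~\ref{obs.ARformula} just identifies $\overline{\Hom}(M',\tau_d M')$ with $D\Ext^d(M',M')$, which is the thing you are trying to show vanishes. (Moreover, even the assertion that $\tau_d$ has no fixed points, while true, is left unjustified.)

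The paper's argument avoids this entirely and is both shorter and uniform. By Theorem~\ref{theorem.ct_in_Db}, every indecomposable object of $\mathscr{U}$ is of the form $\mathbb{S}_d^{\,i}P$ for an indecomposable projective $P$ and some $i\in\mathbb{Z}$; since $\mathbb{S}_d$ is an autoequivalence of $\mathscr{U}$ commuting with $[d]$, one gets
\[
\Hom_{\mathscr{U}}(X,X[d]) \;=\; \Hom_{\mathscr{U}}(\mathbb{S}_d^{\,i}P,\mathbb{S}_d^{\,i}P[d]) \;=\; \Hom_{\mathscr{U}}(P,P[d]) \;=\; \Ext^d_\Lambda(P,P)\;=\;0,
\]
with no case split and no appeal to the AR formula. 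Your two-case structure can be rescued by the same observation --- for $M'=\tau_d^{-j}P$ apply $\mathbb{S}_d^{\,j}$ and reduce to $P$ --- but as written the key vanishing for general $\Lambda$ is not established.
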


\begin{proof}
We first show that $\Hom_{\mathscr{U}}(X, X[d]) = 0$ for any indecomposable $X \in \mathscr{U}$. By Theorem~\ref{theorem.ct_in_Db} an indecomposable object in $\mathscr{U}$ is of the form $\mathbb{S}_d^i P$, for some indecomposable projective $\Lambda$-module $P$ and $i \in \mathbb{Z}$. Hence
\[ \Hom_{\mathscr{U}}(X, X[d]) = \Hom_{\mathscr{U}}(\mathbb{S}_d^i P, \mathbb{S}_d^i P[d]) = \Hom_{\mathscr{U}}(P, P[d]) = 0. \]
Now the claim of the lemma follows with Theorem~\ref{theo.clustercat_exists}(3).
\end{proof}

To complete the proof of Theorem~\ref{theo.exch_angles}, we still have to show (2) \then{} (3) and (2) \then{} (4). We prove (2) \then{} (3); the proof of (2) \then{} (4) is similar.

\begin{proof}[Proof of Theorem~\ref{theo.exch_angles}, (2) \then{} (3)]
Assume $T_0, T_0^*$ and $R$ are as in (2). Since $T = T_0 \oplus R$ is cluster tilting in $\mathscr{O}_{\Lambda}$ there is a $(d+2)$-angle
\[ \tag{$\star$} T_{d+1} \to[30] T_d \to[30] \cdots \to[30] T_1 \to[30] T_0^* \to[30] T_{d+1}[d] \]
with $T_{d+1}, \ldots, T_1 \in \add T$. Since $T_0^* \not\in \add T$ we may moreover assume that all maps in the $(d+2)$-angle, except for possibly the rightmost one, are radical morphisms. Applying $\Hom_{\mathscr{O}}(T_0^*, -)$ to the $(d+2)$-angle ($\star$), by Lemma~\ref{lemma.snake_seq} we obtain an exact sequence
\[ \leftsub{\mathscr{O}_{\Lambda}}(T_0^*, T_{d+1}[d]) \to[30] \leftsub{\mathscr{O}_{\Lambda}}(T_0^*, T_d[d]) \to[30] \cdots \to[30] \leftsub{\mathscr{O}_{\Lambda}}(T_0^*, T_1[d]) \to[30] \leftsub{\mathscr{O}_{\Lambda}}(T_0^*, T_0^*[d]). \]
By Lemma~\ref{lemma.everyone_rigid} the last term vanishes. For $i \in \{1, \ldots, d+1\}$ we split up $T_i = R_i \oplus T_0^{r_i}$ with $R_i \in \add R$. Since $\Hom_{\mathscr{O}_{\Lambda}}(T_0^*, R[d]) = 0$ the above sequence is isomorphic to
\[ \leftsub{\mathscr{O}_{\Lambda}}(T_0^*, T_0[d])^{r_{d+1}} \to[30] \leftsub{\mathscr{O}_{\Lambda}}(T_0^*, T_0[d])^{r_d} \to[30] \cdots \to[30] \leftsub{\mathscr{O}_{\Lambda}}(T_0^*, T_0[d])^{r_1} \to[30] 0. \]
By Lemma~\ref{lemma.not_epi} none of the maps is onto, unless its target space vanishes. Since $\Hom_{\mathscr{O}_{\Lambda}}(T_0^*, T_0[d]) \neq 0$ (otherwise $\Hom_{\mathscr{O}_{\Lambda}}(T_0^*, T[d]) = 0$, contradicting the fact that $T_0^*$ is not a summand of the cluster tilting object $T$) this implies that $r_i = 0$ for $i \neq d+1$, and hence $T_i \in \add R$ for these $i$.

Next we apply $\Hom_{\mathscr{O}_{\Lambda}}(-, R)$ to the $(d+2)$-angle ($\star$) and obtain the exact sequence
\[ \Hom_{\mathscr{O}_{\Lambda}}(T_d, R) \to[30] \Hom_{\mathscr{O}_{\Lambda}}(T_{d+1}, R) \to[30] \underbrace{\Hom_{\mathscr{O}_{\Lambda}}(T_0^*[-d], R)}_{=0}. \]
Hence the map $T_{d+1} \to T_d$ is a left $R$-approximation. Since the map $T_{d+1} \to T_d$ lies in the radical it follows that $T_{d+1} \in \add T_0$. Moreover, by the uniqueness of Lemma~\ref{lemma.angle_completion}(2) it follows that $T_{d+1}$ is indecomposable (otherwise the entire $(d+2)$-angle would decompose into a direct sum of several $(d+2)$-angles). Hence $T_{d+1} \iso T_0$. Summing up we have shown that ($\star$) is precisely the first $(d+2)$-angle in Theorem~\ref{theo.exch_angles}.
\end{proof}

We conclude this subsection by noting that the ``moreover'' part of Theorem~\ref{theo.exch_angles} also follows from the explicit description of the $(d+2)$-angle ($\star$) in the proof of (2) \then{} (3) above.

\section{\texorpdfstring{$(d+2)$}{(d+2)}-angulated cluster categories of type \texorpdfstring{$A$}{A}} \label{section.cluster_cat_A}

In this section we study the $(d+2)$-angulated cluster categories (see Section~\ref{section.cluster_cat}) of the iterated Auslander algebras of linearly oriented $A_n$ (see Section~\ref{section.higher_Aus_An}) more explicitly.

We will index the indecomposable objects in the $(d+2)$-angulated cluster category $\mathscr{O}_{A_n^d}$ by $\IndexC{n+2d+1}$ (see Definition~\ref{def:index}), as we make precise in Subsection~\ref{proofsix}. We write $O_{i_0, \ldots, i_d}$ for the indecomposable object corresponding to $(i_0, \ldots, i_d) \in \IndexC{n+2d+1}$.
With this notation we have the following.

\begin{proposition} \label{prop.cluster_ext=intertw}
Let $(i_0, \ldots, i_d), (j_0, \ldots, j_d) \in \IndexC{n+2d+1}$. Then
\[ \Hom_{\mathscr{O}_{A_n^d}}(O_{i_0, \ldots, i_d}, O_{j_0, \ldots, j_d}[d]) \neq 0 \iff \big[ (i_0, \ldots, i_d) \wr (j_0, \ldots, j_d) \text{ or } (j_0, \ldots, j_d) \wr (i_0, \ldots, i_d) \big], \]
and in this case the $\Hom$-space is one-dimensional.
\end{proposition}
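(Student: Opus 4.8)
The plan is to reduce the statement to a computation inside the $2d$-Amiot cluster category $\mathscr{C}_{A_n^d}^{2d}$ using the key structural fact (Theorem~\ref{theo.clustercat_exists}(3)) that for $X,Y \in \mathscr{O}_{A_n^d}$ one has
\[ \Hom_{\mathscr{O}_{A_n^d}}(X, Y[d]) = \Hom_{D^b(\mod A_n^d)}(X,Y[d]) \oplus D\Hom_{D^b(\mod A_n^d)}(Y,X[d]). \]
So it suffices to understand, for each indecomposable $O_{i_0,\ldots,i_d}$, which object of $\add M \vee \proj A_n^d[d]$ in $D^b(\mod A_n^d)$ it corresponds to, and then to compute the relevant $\Hom$-spaces over the derived category. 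The first thing I would do, therefore, is pin down the indexing promised in Subsection~\ref{proofsix}: namely that $O_{i_0,\ldots,i_d}$ is (the image of) $M_{i_0,\ldots,i_d}$ when $(i_0,\ldots,i_d) \in \IndexC{n+2d}$, while the ``new'' indices — those with $i_d = n+2d$ but $i_0 = 1$ fails, or more precisely those in $\IndexC{n+2d+1} \setminus$ (a shifted copy of $\IndexC{n+2d}$) — correspond to the shifted projectives $P[d]$. This bookkeeping, matching elements of $\IndexC{n+2d+1}$ to objects in the fundamental domain $\add M \vee \proj A_n^d[d]$, is the combinatorial heart of the argument, and I expect it to be already set up in Proposition~\ref{prop.indexing_O}(1); I would simply invoke it.

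With the indexing in hand, the computation splits into cases according to whether each of $O_{i_0,\ldots,i_d}$ and $O_{j_0,\ldots,j_d}$ lands in $\add M$ or in $\proj A_n^d[d]$. In the case where both lie in $\add M$, the term $\Hom_{D^b}(M_I, M_J[d])$ is just $\Ext^d_{A_n^d}(M_I, M_J)$, which by Theorem~\ref{theo.indexing.props}(4) is nonzero (and one-dimensional) exactly when $J \wr I$; symmetrically the other summand is nonzero exactly when $I \wr J$. Since $I$ and $J$ cannot intertwine in both orders, at most one summand survives, and each is at most one-dimensional, giving the claim in this case. For the mixed and projective-projective cases I would translate back along the cluster-category identification: $\Hom_{D^b}(P[d], M_J[d]) = \Hom_{D^b}(P, M_J) = \Hom_{A_n^d}(P, M_J)$, which counts a composition multiplicity in $M_J$, and similar reductions handle $\Hom_{D^b}(M_I, P[2d]) = D\Hom(P, \mathbb{S}_{d}^{-1}\cdots)$-type terms via the Calabi--Yau/Serre duality already forced on $\mathscr{O}_{A_n^d}$. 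The point is that in every case the relevant $\Hom$ or $\Ext$ is governed, via Theorem~\ref{theo.indexing.props}, by a separatedness/intertwining condition on $(d+1)$-tuples, and re-normalizing the index range from $n+2d$ to $n+2d+1$ (to account for the extra ``cluster'' indices) converts ``$\Hom \neq 0$ in degree $d$ in $\mathscr{O}$'' into ``the two tuples intertwine in one order or the other'' in $\IndexC{n+2d+1}$.

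The main obstacle I anticipate is precisely the careful case analysis and the normalization of indices: one has to check that the boundary behaviour (tuples involving $1$ or $n+2d+1$, degenerate intertwinings, the interaction between the $[d]$-shift and the Serre functor $\mathbb{S}_{2d} = \mathbb{S}[-2d]$ defining $\mathscr{O}_{A_n^d}$) all line up so that the disjunction ``$I \wr J$ or $J \wr I$'' is exactly the union of the conditions coming from the two summands, with no overlap and no gap, and so that one-dimensionality is preserved rather than becoming two-dimensional. A clean way to organize this, which I would adopt, is: first prove the $\add M$--$\add M$ case directly from Theorem~\ref{theo.indexing.props}(4) and Theorem~\ref{theo.clustercat_exists}(3); then observe that the $d$-suspension $[d]$ acts on indices by $\tau_d^{-}$-type shifts (Remark~\ref{rem.tau=S}, Proposition~\ref{prop.tau}), so that every indecomposable of $\mathscr{O}_{A_n^d}$ is $[d]^k$ of one in $\add M$ for suitable $k$, and every $\Hom$ in degree $d$ can be transported by this suspension to the already-settled case; finally read off one-dimensionality from the fact that each contributing $\Ext^d$ or $\Hom$ space is one-dimensional and at most one contributes. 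This reduces the whole proposition to the combinatorics of how intertwining and separatedness transform under the cyclic shift $(i_0,\ldots,i_d) \mapsto (i_1,\ldots,i_d, i_0 + (n+2d+1))$, which is the content of the definition of $\IndexC{n+2d+1}$.
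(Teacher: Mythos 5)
Your strategy is essentially the paper's: start from the decomposition in Theorem~\ref{theo.clustercat_exists}(3), pin down the indexing of indecomposables in $\mathscr{O}_{A_n^d}$, and reduce each summand to a $\Hom$ or $\Ext^d$ computation over $A_n^d$ that is decided by Theorem~\ref{theo.indexing.props}. The paper carries this out slightly more uniformly than your first sketch: rather than a case analysis over where $O_I$ and $O_J$ sit in the fundamental domain $\add M \vee \proj A_n^d[d]$, it works with the extended $U$-indexing of all indecomposables of $\mathscr{U}$ (Construction~\ref{const.indexing.U}, Lemmas~\ref{lemma.indexing.U}--\ref{lemma.S2d}), applies $\mathbb{S}_d^{1-i_0}$ to bring the source to a projective $P_{i_1-i_0-1,\ldots,i_d-i_0-1}$, and then reads off $\Hom_{A_n^d}(P_{\ldots},M_{\ldots})$ as a composition-multiplicity condition — with the caveat, which your flagged ``obstacle'' would need to address, that the normalized target index must land in $\Index{n+2d}$ (otherwise the $\Hom$-space vanishes, and one must check the intertwining condition also fails). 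Your ``clean organization'' via $[d]$-transport is the same idea, since $[d]=\mathbb{S}_d$ in $\mathscr{O}_{A_n^d}$; the only thing missing in your sketch is the actual index arithmetic in that normalization step, which you correctly identify as the crux.
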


Note that the condition on the right-hand side of Proposition~\ref{prop.cluster_ext=intertw} is equivalent to the $d$-simplices in the cyclic polytope $C(n+2d+1,2d)$ corresponding to $(i_0, \ldots, i_d)$ and $(j_0, \ldots, j_d)$ intersecting in their interior.

To give an explicit description of the exchange $(d+2)$-angles of Theorem~\ref{theo.exch_angles} for the specific $d$-representation finite algebras $A_n^d$, we need the following notation:

\begin{definition} \label{def.m_n}
For $(i_0, \ldots, i_d), (j_0, \ldots, j_d) \in \IndexC{n+2d+1}$ with $(i_0, \ldots, i_d) \wr (j_0, \ldots, j_d)$, and $X \subseteq \{0, \ldots, d\}$, we set
\begin{align*}
m_X((i_0, \ldots, i_d), (j_0, \ldots, j_d)) &= \sort(\{i_x \mid x \in X\}\cup \{j_x \mid x \not\in X\}) \\
n_X((i_0, \ldots, i_d), (j_0, \ldots, j_d))&= \sort(\{i_x\mid x \in X\} \cup \{j_{x-1}\mid x\not\in X \})
\end{align*}
Here we write $\sort(K)$ for the tuple consisting of the elements of the set $K$ in increasing order. In the definition of $n_X$, we interpret $j_{-1}$ as $j_d$.

Note that $m_X$ has already been introduced in Section~\ref{section.higher_Aus_An}, above Theorem~\ref{theo.tilt-exchange}.
\end{definition}

\begin{theorem} \label{theo.exch_angles_A}
Let $T \oplus O_{i_0, \ldots, i_d}$ be a cluster tilting object in $\mathscr{O}_{A_n^d}$. Assume there is some $O_{j_0, \ldots, j_d} \not\in \add T \oplus O_{i_0, \ldots, i_d}$ such that $\Hom_{\mathscr{O}_{A_n^d}}(T, O_{j_0, \ldots, j_d}[d]) = 0$. Then
\begin{enumerate}
\item $T \oplus O_{j_0, \ldots, j_d}$ is a cluster tilting object in $\mathscr{O}_{A_n^d}$.
\item Either $(i_0, \ldots, i_d) \wr (j_0, \ldots, j_d)$ or $(j_0, \ldots, j_d) \wr (i_0, \ldots, i_d)$.
\item Assume $(i_0, \ldots, i_d) \wr (j_0, \ldots, j_d)$. Then
\begin{enumerate}
\item There is up to scalars one map $O_{j_0, \ldots, j_d} \to O_{i_0, \ldots, i_d}[d]$. Its minimal completion to a $(d+2)$-angle is
\[ O_{i_0, \ldots, i_d} \to[30] E_d \to[30] \cdots \to[30] E_1 \to[30] O_{j_0, \ldots, j_d} \to[30] O_{i_0, \ldots, i_d}[d], \]
with
\[ E_r = \bigoplus_{\substack{X \subseteq \{0, \ldots, d\} \\ |X| = r \\ m_X \in \IndexC{n+2d+1}}} O_{m_X} \]
for $r \in \{1, \ldots, d\}$. (Here $m_X$ is short for $m_X((i_0, \ldots, i_d), (j_0, \ldots, j_d))$.)

Moreover $\oplus_{r=1}^d E_r \in \add T$, and the maps $O_{i_0, \ldots, i_d} \to E_d$ and $E_1 \to O_{j_0, \ldots, j_d}$ are a left $T$-approximation of $O_{i_0, \ldots, i_d}$ and a right $T$-approximation of $O_{j_0, \ldots, j_d}$, respectively.
\item There is up to scalars one map $O_{i_0, \ldots, i_d} \to O_{j_0, \ldots, j_d}[d]$. Its minimal completion to a $(d+2)$-angle is
\[ O_{j_0, \ldots, j_d} \to[30] F_1 \to[30] \cdots \to[30] F_d \to[30] O_{i_0, \ldots, i_d} \to[30] O_{j_0, \ldots, j_d}[d], \]
with
\[ F_r = \bigoplus_{\substack{X \subseteq \{0, \ldots, d\} \\ |X| = r \\ n_X \in \IndexC{n+2d+1}}} O_{n_X} \]
for $r \in \{1, \ldots, d\}$ (Here $n_X$ is short for $n_X((i_0, \ldots, i_d), (j_0, \ldots, j_d))$.)

Moreover $\oplus_{r=1}^d F_r \in \add T$, and the maps $O_{j_0, \ldots, j_d} \to F_1$ and $F_d \to O_{i_0, \ldots, i_d}$ are a left $T$-approximation of $O_{j_0, \ldots, j_d}$ and a right $T$-approximation of $O_{i_0, \ldots, i_d}$, respectively.
\end{enumerate}
\item For $(j_0, \ldots, j_d)\wr(i_0,\ldots,i_d)$ we have the same result as in (3), with $(i_0, \ldots, i_d)$ and $(j_0, \ldots, j_d)$ interchanged throughout (including in the 
definitions of $m_X$ and $n_X$).
\end{enumerate}
\end{theorem}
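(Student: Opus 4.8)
The plan is to deduce Theorem~\ref{theo.exch_angles_A} by combining the general machinery of Section~\ref{section.cluster_cat} (in particular Theorem~\ref{theo.exch_angles} and Theorem~\ref{theo.ct_is_2dct}) with the explicit combinatorial description of $\mathscr{O}_{A_n^d}$ established in Subsection~\ref{proofsix} (the indexing $O_{i_0,\ldots,i_d}$ and Proposition~\ref{prop.cluster_ext=intertw}), in close analogy with how Theorem~\ref{theo.tilt-exchange} was obtained from Theorem~\ref{theo.indexing.props} and Propositions~\ref{prop.E_exact} and \ref{prop.E_nonsplit} for the module category. First I would dispose of parts (1) and (2): statement (2) is immediate from Proposition~\ref{prop.cluster_ext=intertw} together with the hypothesis that $\Hom_{\mathscr{O}_{A_n^d}}(T,O_{j_0,\ldots,j_d}[d])=0$ while $T\oplus O_{i_0,\ldots,i_d}$ is cluster tilting, since $O_{i_0,\ldots,i_d}$ must then have a nonzero $[d]$-extension with $O_{j_0,\ldots,j_d}$, forcing the two $d$-simplices to intertwine in one order; and given (3)/(4), statement (1) follows from the equivalence (2)$\Leftrightarrow$(1) in Theorem~\ref{theo.exch_angles} (with $T_0=O_{i_0,\ldots,i_d}$, $T_0^*=O_{j_0,\ldots,j_d}$), once we have verified condition (3) or (4) of that theorem by exhibiting the claimed $(d+2)$-angles with middle terms in $\add T$.

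The core of the work is part (3) (part (4) being literally symmetric, obtained by swapping the roles of the two tuples, as the statement says). By Theorem~\ref{theo.exch_angles}, the exchange $(d+2)$-angle completing the minimal left $R$-approximation of $O_{i_0,\ldots,i_d}$ (where $R$ is the complement of $O_{i_0,\ldots,i_d}$ in $T\oplus O_{i_0,\ldots,i_d}$) is unique up to isomorphism, so it suffices to construct \emph{some} $(d+2)$-angle
\[ O_{i_0,\ldots,i_d}\to[30] E_d\to[30]\cdots\to[30] E_1\to[30] O_{j_0,\ldots,j_d}\to[30] O_{i_0,\ldots,i_d}[d] \]
with $E_r=\bigoplus_{|X|=r,\,m_X\in\IndexC{n+2d+1}} O_{m_X}$, whose nonidentity maps are radical and whose outer terms are as prescribed, and to check that $E_1\to O_{j_0,\ldots,j_d}$ is a right $T$-approximation and $O_{i_0,\ldots,i_d}\to E_d$ a left $T$-approximation; uniqueness then identifies this with the exchange $(d+2)$-angle, and matching it against Theorem~\ref{theo.exch_angles} gives $\oplus E_r\in\add T$ and the moreover-clause $O_{j_0,\ldots,j_d}\iso Y^{d+1}\iso X_{d+1}$. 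To build the $(d+2)$-angle I would work one level up, in $\mathscr{U}\subset D^b(\mod A_n^{d-1})$, or rather directly in $\mathscr{O}_{A_n^d}\subset\mathscr{C}_{A_n^d}^{2d}$: the point is that the Koszul-type exact sequence of Proposition~\ref{prop.E_exact}, built from the basic homomorphisms $h$ between the $M_{m_X}$, lifts to a $(d+2)$-angle in $\mathscr{O}_{A_n^d}$ via the $(d+2)$-angle functor, and the only new phenomenon is that the boundary simplices $m_X\notin\IndexC{n+2d+1}$ now contribute zero objects (exactly as $M_{m_X}=0$ when $m_X\notin\Index{n+2d}$ in Proposition~\ref{prop.E_exact}). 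The combinatorial identity $m_X((i_0,\ldots,i_d),(j_0,\ldots,j_d))=\sort(\{i_x\mid x\in X\}\cup\{j_x\mid x\notin X\})$ agrees with the earlier $m_X$ precisely because $(i_0,\ldots,i_d)\wr(j_0,\ldots,j_d)$ forces the merged multiset to be sorted with the entries interleaving in the pattern dictated by $X$; this is a routine check. For the second $(d+2)$-angle in (3b), the tuples $n_X$ appear in place of $m_X$ because passing from the ``$Y$''-side to the ``$X$''-side of Theorem~\ref{theo.exch_angles} corresponds, under the $2d$-Calabi–Yau duality $\Hom(O_{i},O_{j}[d])\cong D\Hom(O_{j},O_{i}[d])$, to shifting the $j$-indices by one position (the role of $\mathbb{S}_d$, i.e.\ $\tau_d$, shifting indices as in Proposition~\ref{prop.tau}); concretely I would show the $n_X$-complex is exact by the same projective-resolution argument, now resolving by objects $O_{1,j_1,\ldots}$-type projectives shifted appropriately, and show it is nonsplit by the analogue of Proposition~\ref{prop.E_nonsplit}.

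Verifying the approximation properties is the step I expect to be the main obstacle, though it is more bookkeeping than genuine difficulty. One must show that $O_{i_0,\ldots,i_d}\to E_d$ is a \emph{left} $T$-approximation — equivalently, by Theorem~\ref{theo.exch_angles}(3)/(4), that the intermediate terms $Y^i$ (resp.\ $X_i$) of the minimal completion lie in $\add R$ — and this requires knowing that every $O_{m_X}$ with $m_X\in\IndexC{n+2d+1}$, $1\le |X|\le d$, is a summand of $R$. The argument should parallel Lemma~\ref{lemma.perp-approx}: using Proposition~\ref{prop.cluster_ext=intertw}, one checks that each such $O_{m_X}$ has vanishing $[d]$-extensions with both $O_{i_0,\ldots,i_d}$ and $O_{j_0,\ldots,j_d}$ (because $m_X$ interleaves strictly between the two intertwining tuples and hence intertwines neither), so $O_{m_X}$ lies in the relevant $[d]$-perpendicular category; since $T\oplus O_{i_0,\ldots,i_d}$ is cluster tilting and $O_{m_X}\notin\{O_{i_0,\ldots,i_d}\}$, and $O_{m_X}\neq O_{j_0,\ldots,j_d}$ by the same interleaving, it must be a summand of $R$. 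The genericity of the intertwining condition (every strict inequality in $i_0<j_0<i_1<\cdots<i_d<j_d$ is used) is exactly what makes all the $m_X$ and $n_X$ land in the right place, and checking no degeneracies occur — and that the Koszul differential has the right ranks so the sequence really is exact with the stated (possibly zero) terms — is where care is needed. Finally, minimality of the completion (all nonidentity maps radical) follows because the $h$-maps between distinct indecomposables are automatically radical, so the constructed $(d+2)$-angle is the minimal one and the identification with the exchange $(d+2)$-angle of Theorem~\ref{theo.exch_angles} is complete.
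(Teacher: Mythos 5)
Your overall strategy — construct the explicit Koszul-type $(d+2)$-angle and identify it, via uniqueness of minimal completions, with the abstract exchange $(d+2)$-angle of Theorem~\ref{theo.exch_angles} — is indeed the route the paper takes (the explicit construction being Proposition~\ref{prop.target_is_angle}), and parts (1) and (2) are dispatched exactly as you say. But the construction step has a genuine gap. Lifting the exact sequence of Proposition~\ref{prop.E_exact} through the $(d+2)$-angle functor $\mathscr{U}\to\mathscr{O}_{A_n^d}$ works only when $j_d\le n+2d$, i.e.\ when both endpoints lie in $\Index{n+2d}$ and hence correspond to honest $A_n^d$-modules. The indexing set $\IndexC{n+2d+1}$ for $\mathscr{O}_{A_n^d}$ is strictly larger: tuples with $j_d=n+2d+1$ correspond to shifted projectives $\Lambda[d]$, for which there is no module-theoretic sequence to lift, and the problem is not merely that some middle $m_X$ drop out — the endpoint itself falls outside the image. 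The paper's proof of Proposition~\ref{prop.target_is_angle}(a) therefore runs a case analysis: a degenerate case $O_{i_0,\dots,i_d}=O_{j_0,\dots,j_d}[-d]$ in which the angle reduces to an isomorphism; the case $j_d\le n+2d$ (your argument); and the general case, where one applies a power of $\leftsub{\mathscr{O}}{\mathbb{S}}_d$ to both tuples to reduce to a special case, using that $\mathbb{S}_d$ is a $(d+2)$-angle autoequivalence acting on indices as in Proposition~\ref{prop.indexing_O}. You invoke such a shift only to derive the $n_X$-angle from the $m_X$-angle — which is indeed how Proposition~\ref{prop.target_is_angle}(b) follows from (a) — but the shift is needed already to construct the $m_X$-angle in general.

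A secondary issue: your plan to independently verify the $T$-approximation properties via a Lemma~\ref{lemma.perp-approx}-style argument does not work as stated, since vanishing of $[d]$-extensions of $O_{m_X}$ against $O_{i_0,\dots,i_d}$ and $O_{j_0,\dots,j_d}$ alone does not force $O_{m_X}\in\add R$ — you would also need vanishing against all of $T$. That step is in fact unnecessary: once the explicit $(d+2)$-angle is exhibited with radical internal maps, Lemma~\ref{lemma.angle_completion}(2) identifies it as the minimal completion of the essentially unique map $O_{j_0,\dots,j_d}\to O_{i_0,\dots,i_d}[d]$, which by the final ``moreover'' statement of Theorem~\ref{theo.exch_angles} coincides with the exchange $(d+2)$-angle; Theorem~\ref{theo.exch_angles}(3) and (4) then directly give $\bigoplus_r E_r\in\add T$ and the approximation properties. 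This is how the paper proceeds.
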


Together with Sections~\ref{cyclic} and \ref{local} we obtain the following classification of cluster tilting objects in $\mathscr{O}_{A_n^d}$. This is a cluster category version of Theorem~\ref{central}. The proof is obtained from the proof of Theorem~\ref{central} by replacing the reference to Theorem~\ref{theo.tilt-exchange} by a reference to Theorem~\ref{theo.exch_angles_A}.

\begin{theorem} \label{central_cluster}
Triangulations of $C(n+2d+1,2d)$ correspond bijectively to basic cluster tilting objects in $\mathscr{O}_{A^d_n}$; two triangulations are related by a bistellar flip iff the corresponding tilting objects are related by a single mutation.  
\end{theorem}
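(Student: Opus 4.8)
The plan is to mirror exactly the proof of Theorem~\ref{central} given in Section~\ref{local}, with the tilting-module machinery replaced by the corresponding cluster-tilting-object machinery developed in Sections~\ref{section.cluster_cat} and \ref{section.cluster_cat_A}. First I would invoke Proposition~\ref{prop.indexing_O}(1) (equivalently, the indexing set $\IndexC{n+2d+1}$ of Subsection~\ref{proofsix}) to identify the indecomposable objects of $\mathscr{O}_{A_n^d}$ with the internal $d$-simplices of $C(n+2d+1,2d)$, i.e.\ with the elements of $\IndexC{n+2d+1}$. Then Corollary~\ref{sumcor}, applied with $m = n+2d+1$ in place of $n+2d$, gives a bijection between non-intertwining subsets of $\IndexC{n+2d+1}$ of the appropriate cardinality and triangulations in $S(n+2d+1,2d)$; what remains is to match up the non-intertwining maximal subsets with the \emph{cluster tilting} objects of $\mathscr{O}_{A_n^d}$ (not merely the rigid ones), and to match bistellar flips with mutations.

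The key steps, in order, are as follows. (1)~By Proposition~\ref{prop.cluster_ext=intertw}, $\Hom_{\mathscr{O}_{A_n^d}}(O_{i_0,\ldots,i_d},O_{j_0,\ldots,j_d}[d])$ is nonzero precisely when the two $d$-simplices intertwine in one order or the other; hence a basic object $\bigoplus_{A} O_A$ satisfies the rigidity condition (1) of Definition~\ref{def.cto} if and only if the index set is non-intertwining. (2)~Observe, exactly as in the proof of Theorem~\ref{central}, that there is at least one genuine cluster tilting object whose index set is a triangulation: for instance, by Theorem~\ref{theo.cta_is_trivext} the image $\overline{A_n^d}$ of the tilting module $A_n^d$ is cluster tilting in $\mathscr{O}_{A_n^d}$, and its index set is the triangulation $e(S)$ guaranteed by Corollary~\ref{sumcor}. (This requires that the summands of $\leftsub{A_n^d}{M}$ sitting inside $\mathscr{O}_{A_n^d}$ be indexed compatibly with the $\IndexC{n+2d}\subset\IndexC{n+2d+1}$ inclusion, which is part of Proposition~\ref{prop.indexing_O}.) (3)~Starting from this cluster tilting object, perform a bistellar flip of the corresponding triangulation; by Theorem~\ref{th3} this changes $e(S)$ in exactly one $(d+1)$-tuple, and by Proposition~\ref{exiffint} the old and new $d$-simplices intertwine. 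Now apply Theorem~\ref{theo.exch_angles_A}: since the complement $\overline T$ satisfies $\Hom_{\mathscr{O}_{A_n^d}}(\overline T, O_{j_0,\ldots,j_d}[d]) = 0$ (because the new collection is non-intertwining, using Proposition~\ref{prop.cluster_ext=intertw} again), part~(1) of that theorem tells us $\overline T\oplus O_{j_0,\ldots,j_d}$ is again cluster tilting, and the exchange $(d+2)$-angles of part~(3) realize the mutation. (4)~Iterate, using Rambau's Theorem~\ref{rambau} that any triangulation of a cyclic polytope is reachable from any fixed one by a sequence of bistellar flips, to conclude that \emph{every} triangulation of $C(n+2d+1,2d)$ gives a cluster tilting object. (5)~For the converse inclusion — every basic cluster tilting object arises this way — note that a cluster tilting object is in particular rigid, hence by step~(1) its index set is non-intertwining; it also has the maximal number ${n+d\choose d}$ of indecomposable summands (which one sees, as in the $d=1$ case, by counting, or by noting that a cluster tilting object must be mutation-connected to $\overline{A_n^d}$ and mutation preserves the number of summands), so by Corollary~\ref{sumcor} it corresponds to a triangulation. (6)~The statement about flips and mutations is then immediate from the discussion in step~(3): a single flip corresponds to replacing one summand via an exchange $(d+2)$-angle, which is precisely a single mutation in the sense of Theorem~\ref{theo.exch_angles}.

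The main obstacle I anticipate is step~(5), specifically showing that a cluster tilting object necessarily has exactly ${n+d\choose d}$ indecomposable summands and that its index set is maximal non-intertwining (so that Corollary~\ref{sumcor}(\ref{sumthree})$\leftrightarrow$(\ref{sumtwo}) applies). In the tilting-module setting (Theorem~\ref{central}) this was handled implicitly because a basic tilting module over $A_n^d$ has exactly as many summands as there are simples, and Theorem~\ref{theo.tilt-exchange} kept one inside $\add\leftsub{A_n^d}{M}$; here one must argue instead that the orbit category construction does not introduce cluster tilting objects of the "wrong size." The cleanest route is probably to use mutation-connectedness: show that any cluster tilting object $T$ in $\mathscr{O}_{A_n^d}$ can be reached from $\overline{A_n^d}$ by a finite sequence of single mutations (each of which exists by Theorem~\ref{theo.exch_angles_A} once one checks the rigidity hypothesis is met along the way, via Proposition~\ref{prop.cluster_ext=intertw}), and that each mutation preserves the number of summands; the size claim then follows from the size of $\overline{A_n^d}$. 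Alternatively, since $\mathscr{O}_{A_n^d}$ has only finitely many indecomposables and every cluster tilting object is $2d$-cluster tilting in $\mathscr{C}_{A_n^d}^{2d}$ by Theorem~\ref{theo.ct_is_2dct}, one can appeal to the fact that all $2d$-cluster tilting objects in a $2d$-Calabi--Yau category with a cluster tilting object of a given size have that same size. Either way, once the size and non-intertwining properties are secured, Corollary~\ref{sumcor} closes the loop and the theorem follows verbatim from the proof of Theorem~\ref{central}.
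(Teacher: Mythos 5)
Your proposal follows exactly the same route as the paper's proof, which is literally described there as ``obtained from the proof of Theorem~\ref{central} by replacing the reference to Theorem~\ref{theo.tilt-exchange} by a reference to Theorem~\ref{theo.exch_angles_A}'': start from the cluster tilting object $\overline{A^d_n}$, match it with a triangulation, iterate bistellar flips via Rambau's connectedness and Theorem~\ref{theo.exch_angles_A}, and close the loop with the combinatorial classification of non-intertwining sets. Your steps (1)--(4) and (6) reproduce this accurately. Two remarks on step~(5), which is where the paper itself is terse. First, a small arithmetic slip: a cluster tilting object in $\mathscr{O}_{A^d_n}$ has $\binom{n+d-1}{d}$ indecomposable summands, not $\binom{n+d}{d}$; the latter is the cardinality of $e(S)$ for a triangulation of $C(n+2d+1,2d)$, but that includes the boundary simplices in $\Index{n+2d+1}\setminus\IndexC{n+2d+1}$ which have no counterpart in $\mathscr{O}_{A^d_n}$, and subtracting them leaves $\binom{n+d-1}{d}$ internal ones. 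Second, of the two routes you sketch for pinning down the number of summands, the mutation-connectedness route is circular as stated: the Remark following Theorem~\ref{theo.exch_angles} points out that for $d>1$ a cluster tilting object typically has summands that cannot be mutated, so connectedness under mutation is not available until after the theorem is proven (indeed it is a corollary of the theorem together with Rambau's result). Your alternative route --- that $2d$-cluster tilting objects in the $2d$-Calabi--Yau category $\mathscr{C}_{A^d_n}^{2d}$ all have the same number of indecomposable summands, combined with Theorem~\ref{theo.ct_is_2dct} and the explicit cluster tilting object $\overline{A^d_n}$ --- is the correct way to close this gap.
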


\subsection{Indexing the indecomposable objects in \texorpdfstring{$\mathscr{O}_{A_n^d}$}{O\_\{A\_n\^{}d\}} -- Proof of 
Proposition~\texorpdfstring{\ref{prop.cluster_ext=intertw}}{6.1}}\label{proofsix}

Our first aim is to make explicit the indexing of indecomposable objects in $\mathscr{O}_{A_n^d}$ by $\IndexC{n+2d+1}$, that is by interior $d$-simplices of the $2d$-dimensional cyclic polytope with $n+2d+1$ vertices.

\begin{construction} \label{const.indexing.U}
For $(i_0, \ldots, i_d) \in \mathbb{Z}^{d+1}$ such that $i_x + 2 \leq i_{x+1}$ for $0 \leq x < d$ and $i_d + 2 \leq i_0 + n + 2d + 1$ we set
\[ U_{i_0, \ldots, i_d} := \leftsub{\mathscr{U}}{\mathbb{S}}_d^{1 - i_0} \leftsub{A_n^d}{P}_{i_1 - i_0 - 1, \ldots, i_d - i_0 - 1}, \]
where $\leftsub{A_n^d}{P}_{i_1 - i_0 - 1, \ldots, i_d - i_0 - 1}$ is the projective $A_n^d$-module as defined in Theorem~\ref{theo.indexing}.
\end{construction}

\begin{lemma} \phantomsection \label{lemma.indexing.U}
\begin{enumerate}
\item The indecomposable objects in $\mathscr{U}$ are precisely
\[ \{ U_{i_0, \ldots, i_d} \mid \forall x \in \{0, \ldots, d-1\} \colon i_x + 2 \leq i_{x+1} \text{ and } i_d + 2 < i_0 + n + 2d + 1 \}. \]
\item We have
\begin{align*}
\leftsub{\mathscr{U}}{\mathbb{S}}_d U_{i_0, \ldots, i_d} & = U_{i_0-1, \ldots, i_d-1} \text{, and} \\
\leftsub{\mathscr{U}}{\mathbb{S}}_d^- U_{i_0, \ldots, i_d} & = U_{i_0+1, \ldots, i_d+1}.
\end{align*}
\item For $1 \leq i_0$ and $i_d \leq n + 2d$ the $U_{i_0, \ldots, i_d}$ defined in Construction~\ref{const.indexing.U} above coincide with the modules $M_{i_0, \ldots, i_d}$ constructed in Theorem~\ref{theo.indexing}.
\end{enumerate}
\end{lemma}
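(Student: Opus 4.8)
\textbf{Proof plan for Lemma~\ref{lemma.indexing.U}.}

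The plan is to verify the three statements in order, using that everything reduces to known facts about $\mathscr{U}$ from Theorem~\ref{theorem.ct_in_Db} together with the indexing of $\add M = \add\{M_{i_0,\ldots,i_d}\}$ from Theorem~\ref{theo.indexing} and the action of $\tau_d,\tau_d^-$ from Proposition~\ref{prop.tau}. First I would recall that by Theorem~\ref{theorem.ct_in_Db} the indecomposable objects of $\mathscr{U}$ are exactly the $\leftsub{\mathscr{U}}{\mathbb{S}}_d^{\,j} P$ for $P$ an indecomposable projective $A_n^d$-module (equivalently a summand $M_{1,i_1,\ldots,i_d}$ of $M$, by Theorem~\ref{theo.indexing.props}(1)) and $j\in\mathbb{Z}$, and these are pairwise non-isomorphic (an indecomposable $\Lambda$-module and a shift of it are distinct in the bounded derived category unless the shift is trivial, and $\leftsub{\mathscr{U}}{\mathbb{S}}_d$ has infinite order on $\mathscr{U}$). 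So every indecomposable of $\mathscr{U}$ is of the form $U_{i_0,\ldots,i_d}$ for a unique choice of exponent $1-i_0$ and a unique projective; tracking through Construction~\ref{const.indexing.U}, the constraint $i_x+2\le i_{x+1}$ just encodes that $(i_1-i_0-1,\ldots,i_d-i_0-1)$ is a legitimate index in $\Index[d-1]{\cdot}$ for a projective $A_n^d$-module in the sense of Theorem~\ref{theo.indexing}, and the (strict) constraint $i_d+2< i_0+n+2d+1$ encodes that this projective is actually of the form $P_{i_1-i_0-1,\ldots,i_d-i_0-1}$ with all indices in the allowed range $[1,n+2d]$ after the normalization. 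This gives (1).

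For (2): by Remark~\ref{rem.tau=S}, on indecomposable objects of $\add M$ the functor $\leftsub{\mathscr{U}}{\mathbb{S}}_d$ agrees with $\tau_d$ whenever the latter is nonzero, and more to the point $\leftsub{\mathscr{U}}{\mathbb{S}}_d$ is literally the functor by which we built $U_{i_0,\ldots,i_d}$ in Construction~\ref{const.indexing.U}. So $\leftsub{\mathscr{U}}{\mathbb{S}}_d U_{i_0,\ldots,i_d} = \leftsub{\mathscr{U}}{\mathbb{S}}_d^{\,1-(i_0-1)}P_{(i_1-1)-(i_0-1)-1,\ldots} = \leftsub{\mathscr{U}}{\mathbb{S}}_d^{\,2-i_0}P_{i_1-i_0-1,\ldots,i_d-i_0-1}$, which by definition is $U_{i_0-1,\ldots,i_d-1}$, since the subscripts $i_x-i_0-1$ are unchanged under $(i_0,\ldots,i_d)\mapsto(i_0-1,\ldots,i_d-1)$; this is purely a matter of unwinding the exponent bookkeeping. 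The statement for $\leftsub{\mathscr{U}}{\mathbb{S}}_d^-$ is the inverse.

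For (3): when $1\le i_0$ and $i_d\le n+2d$, I want $U_{i_0,\ldots,i_d}=M_{i_0,\ldots,i_d}$. The clean argument is induction on $i_0$. The base case $i_0=1$ is immediate: $U_{1,i_1,\ldots,i_d}=\leftsub{\mathscr{U}}{\mathbb{S}}_d^{\,0}P_{i_1-2,\ldots,i_d-2}=P_{i_1-2,\ldots,i_d-2}=M_{1,i_1,\ldots,i_d}$ by Theorem~\ref{theo.indexing.props}(1). For the inductive step with $i_0>1$ (so $i_0-1\ge 1$), part~(2) gives $U_{i_0,\ldots,i_d}=\leftsub{\mathscr{U}}{\mathbb{S}}_d^- U_{i_0-1,\ldots,i_d-1}$; by induction $U_{i_0-1,\ldots,i_d-1}=M_{i_0-1,\ldots,i_d-1}$, and since $i_d\le n+2d$ means $i_d-1\ne n+2d$, Proposition~\ref{prop.tau} together with Remark~\ref{rem.tau=S} gives $\leftsub{\mathscr{U}}{\mathbb{S}}_d^- M_{i_0-1,\ldots,i_d-1}=\tau_d^- M_{i_0-1,\ldots,i_d-1}=M_{i_0,\ldots,i_d}$, as desired. (One must check $M_{i_0-1,\ldots,i_d-1}$ is not injective so that $\tau_d^-$ of it is nonzero, which is exactly the condition $i_d-1\ne n+2d$ via Theorem~\ref{theo.indexing.props}(2); this is where the hypothesis $i_d\le n+2d$ is used.) The only mild subtlety — and the one place where I would be careful — is to confirm that $\leftsub{\mathscr{U}}{\mathbb{S}}_d^-$ applied to a non-injective summand of $M$, computed inside $\mathscr{U}\subseteq D^b(\mod\Lambda)$, really lands back in $\mod\Lambda$ and equals $\tau_d^-$ there; this is precisely the content of Remark~\ref{rem.tau=S}, so it is a citation rather than a new computation.
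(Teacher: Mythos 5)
Your overall approach coincides with the paper's: unwind Construction~\ref{const.indexing.U} and cite Theorem~\ref{theorem.ct_in_Db}, Theorem~\ref{theo.indexing.props}(1), Remark~\ref{rem.tau=S}, and Proposition~\ref{prop.tau}. Parts (2) and (3) are correct: (2) is exactly the bookkeeping you describe, and (3) is the induction on $i_0$, with the needed check that $M_{i_0-1,\ldots,i_d-1}$ is not injective before applying $\tau_d^-$.

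For part (1), however, your unwinding of the inequalities is not right, and the imprecision masks a real issue. The projectives $P_{j_0,\ldots,j_{d-1}}$ of $A_n^d$ are indexed by $(j_0,\ldots,j_{d-1})\in\Index[d-1]{n+2(d-1)}$, so their entries lie in $\{1,\ldots,n+2d-2\}$, not $[1,n+2d]$ as you state. Setting $j_x=i_{x+1}-i_0-1$, the condition $(j_0,\ldots,j_{d-1})\in\Index[d-1]{n+2(d-1)}$ translates to $i_x+2\le i_{x+1}$ for $0\le x<d$ together with $i_d+2\le i_0+n+2d+1$, i.e.\ the \emph{non-strict} inequality of Construction~\ref{const.indexing.U}. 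The \emph{strict} inequality in the lemma statement instead forces $j_{d-1}\le n+2d-3$, hence excludes exactly the projective-injective $P$'s (those with $j_{d-1}=n+2d-2$). Now distinct pairs $(P,i)$ do give distinct objects $\mathbb{S}_d^i P$ of $\mathscr{U}$ — here your ``infinite order'' remark needs a touch more care: for $i\neq 0$ the object $\mathbb{S}_d^i P$ is either a non-projective summand of $M$ concentrated in degree $0$ or lives in nonzero cohomological degree, so is never an indecomposable projective module — and therefore omitting those $P$ omits the entire $\mathbb{S}_d$-orbits of the projective-injectives, which are nevertheless indecomposable objects of $\mathscr{U}$. (Already $A_2^1$, where one of the two projectives is injective, shows the discrepancy.) So either the lemma intends the non-strict inequality, or one must further argue that the boundary tuples yield duplicates of strict ones; your account, by misstating the index range, does not surface this, and that is the step you need to nail down.
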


\begin{proof}
(1) follows from the definition of $\mathscr{U}$ in Theorem~\ref{theorem.ct_in_Db}. (2) is immediate from the definition. (3) follows from Theorem~\ref{theo.indexing.props}(1) for $i_0 = 1$, and then from Remark~\ref{rem.tau=S} and Proposition~\ref{prop.tau}.
\end{proof}

We next describe the functor $\leftsub{\mathscr{U}}{\mathbb{S}}_{2d}$.

\begin{lemma} \label{lemma.S2d}
\begin{align*}
\leftsub{\mathscr{U}}{\mathbb{S}}_{2d} U_{i_0, \ldots, i_d} & = U_{i_d-(n+2d+1), i_0, \ldots, i_{d-1}} \text{, and} \\
\leftsub{\mathscr{U}}{\mathbb{S}}_{2d}^- U_{i_0, \ldots, i_d} & = U_{i_1, \ldots, i_d, i_0+n+2d+1}.
\end{align*}
\end{lemma}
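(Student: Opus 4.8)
The plan is this. The two displayed identities are mutually inverse, so it suffices to prove the formula for $\mathbb{S}_{2d}^-$. Recall from the notation that $\mathbb{S}_{2d}^- = \mathbb{S}^{-1}[2d] = \mathbb{S}_d^- \circ [d]$, and note that $\mathbb{S}_d \circ [d] = \mathbb{S}$ (the Serre functor of $D^b(\mod A_n^d)$), since $[d]$ commutes with $\mathbb{S}$. First I would reduce to projectives: the index assignment $\sigma\colon (i_0,\dots,i_d) \mapsto (i_1,\dots,i_d,\,i_0+n+2d+1)$ commutes with the operation of subtracting $1$ from every coordinate, which by Lemma~\ref{lemma.indexing.U}(2) is exactly how $\mathbb{S}_d$ acts on the indices of indecomposables of $\mathscr{U}$; since $\mathbb{S}_{2d}^-$ also commutes with $\mathbb{S}_d$, and every $\mathbb{S}_d$-orbit of indecomposables of $\mathscr{U}$ contains a unique member with first index $1$, namely the projective $\leftsub{A_n^d}{P}_{j_0,\dots,j_{d-1}} = U_{1,\,j_0+2,\,\dots,\,j_{d-1}+2}$ (Construction~\ref{const.indexing.U} and Theorem~\ref{theo.indexing.props}(1)), it is enough to verify the formula on these. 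Unwinding the definitions (using again $\mathbb{S}_d^- U_{a_0,\dots,a_d} = U_{a_0+1,\dots,a_d+1}$), the whole Lemma is then equivalent to the single identity
\[ \mathbb{S}_d^{\,j_0}\big(\leftsub{A_n^d}{P}_{j_0,\dots,j_{d-1}}[d]\big) \;=\; \leftsub{A_n^d}{P}_{\,j_1-j_0-1,\ \dots,\ j_{d-1}-j_0-1,\ n+2d-j_0-1} \qquad\text{in }\mathscr{U}. \]

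To prove this I would peel off the powers of $\mathbb{S}_d$ one at a time. The first application uses $\mathbb{S}_d\circ[d] = \mathbb{S} = \nu$ on projectives, giving
\[ \mathbb{S}_d\big(\leftsub{A_n^d}{P}_{j_0,\dots,j_{d-1}}[d]\big) = \nu\,\leftsub{A_n^d}{P}_{j_0,\dots,j_{d-1}} = \leftsub{A_n^d}{I}_{j_0,\dots,j_{d-1}} = \leftsub{A_n^d}{M}_{\,j_0,\,j_1,\,\dots,\,j_{d-1},\,n+2d} \]
by Theorem~\ref{theo.indexing.props}(2); this module lies in $\add M$. The remaining $j_0-1$ applications of $\mathbb{S}_d$ are all applied to modules in $\add M$ whose first index is $\geq 2$, hence by Remark~\ref{rem.tau=S} they act as $\tau_d$, i.e.\ by decreasing every index by $1$ (Proposition~\ref{prop.tau}); all intermediate tuples $(j_0-k,\dots,j_{d-1}-k,\,n+2d-k)$ stay in $\Index{n+2d}$, hence in $\add M$. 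After these $j_0$ steps one arrives at $\leftsub{A_n^d}{M}_{\,1,\,j_1-j_0+1,\,\dots,\,j_{d-1}-j_0+1,\,n+2d-j_0+1}$, which by Theorem~\ref{theo.indexing.props}(1) is precisely $\leftsub{A_n^d}{P}_{\,j_1-j_0-1,\dots,j_{d-1}-j_0-1,\,n+2d-j_0-1}$, as required. Feeding this back through the reduction yields $\mathbb{S}_{2d}^-\leftsub{A_n^d}{P}_{j_0,\dots,j_{d-1}} = U_{\,j_0+2,\,j_1+2,\,\dots,\,j_{d-1}+2,\,n+2d+2} = U_{\sigma(1,j_0+2,\dots,j_{d-1}+2)}$, and $\mathbb{S}_d$-equivariance spreads this over all indecomposables of $\mathscr{U}$; the $\mathbb{S}_{2d}$-formula is then the inverse assignment.

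Since every step is a short explicit computation, there is no serious obstacle. The one point that needs attention is the first application of $\mathbb{S}_d$: here $\mathbb{S}_d$ does \emph{not} act as $\tau_d$ (which annihilates the projective) but genuinely as the Serre functor, and one must track the cohomological shift $[d]$ carefully so that the output is again an honest module in degree $0$. Beyond that, it remains only to check the two bookkeeping claims used in the reduction — that $\sigma$ commutes with the all-coordinates shift, and that the orbit representative with first index $1$ is a projective — and to note that the degenerate cases ($j_0=1$, giving no $\tau_d$-steps, and ``maximally spread'' tuples, for which $\leftsub{A_n^d}{P}_{j_0,\dots,j_{d-1}}$ is itself projective-injective) are subsumed in the same argument with a shorter chain.
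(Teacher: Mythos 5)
Your argument is correct and in essence identical to the paper's: unwind to a projective, apply the Serre functor (which acts as the Nakayama functor on projectives) to obtain an injective, identify that injective as a $U$-object via Theorem~\ref{theo.indexing.props}(2), and shift the indices back. Where you differ is in packaging: the paper simply expands the definition $U_{i_0,\ldots,i_d}=\mathbb{S}_d^{1-i_0}P_{i_1-i_0-1,\ldots,i_d-i_0-1}$ and does a single five-line computation, whereas you reroute through an ``orbit representative with first index $1$'' argument (which is really just that definition in disguise) and then peel off powers of $\mathbb{S}_d$ one at a time, invoking Remark~\ref{rem.tau=S} and Proposition~\ref{prop.tau} and checking that each intermediate tuple stays in $\Index{n+2d}$. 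That last care is unnecessary: Lemma~\ref{lemma.indexing.U}(2), which you already cite, gives $\mathbb{S}_d^{-k}U_{a_0,\ldots,a_d}=U_{a_0+k,\ldots,a_d+k}$ uniformly on all of $\mathscr{U}$ for any $k\in\mathbb{Z}$, so one never needs to restrict to the module-category regime where $\mathbb{S}_d$ coincides with $\tau_d$. Adopting the paper's more direct expansion would shorten the proof considerably while relying on the same two real inputs (Serre on projectives gives injectives; $U$-indices shift under $\mathbb{S}_d$).
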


\begin{proof}
We only prove the second formula; the proof of the first one is similar. We have
\begin{align*}
\leftsub{\mathscr{U}}{\mathbb{S}}_{2d}^- U_{i_0, \ldots, i_d} & = \leftsub{\mathscr{U}}{\mathbb{S}}_{2d}^- \leftsub{\mathscr{U}}{\mathbb{S}}_d^{1 - i_0} P_{i_1 - i_0 - 1, \ldots, i_d - i_0 - 1} \\
& = \leftsub{\mathscr{U}}{\mathbb{S}}_d^{-(1+i_0)} \leftsub{\mathscr{U}}{\mathbb{S}} P_{i_1 - i_0 - 1, \ldots, i_d - i_0 - 1} \\
& = \leftsub{\mathscr{U}}{\mathbb{S}}_d^{-(1+i_0)} I_{i_1 - i_0 - 1, \ldots, i_d - i_0 - 1} \\
& = \leftsub{\mathscr{U}}{\mathbb{S}}_d^{-(1+i_0)} U_{i_1 - i_0 - 1, \ldots, i_d - i_0 - 1, n+2d} \\
& = U_{i_1, \ldots, i_d, i_0 + n + 2d + 1} \qedhere
\end{align*}
\end{proof}

\begin{definition}
For $(i_0, \ldots, i_d) \in \IndexC{n+2d+1}$ we denote by $O_{i_0, \ldots, i_d}$ the image of $U_{i_0, \ldots, i_d}$ in the $(d+2)$-angulated cluster category $\mathscr{O}_{A_n^d}$.
\end{definition}

Note that for $(i_0, \ldots, i_d) \in \IndexC{n+2d+1}$ the object $U_{i_0, \ldots, i_d}$ is the preimage of $O_{i_0, \ldots, i_d}$ in $\add (\leftsub{A_n^d}{M} \oplus A_n^d[d])$.

\begin{definition}
Define a permutation $\mathbb{S}_d$ of $\IndexC{n+2d+1}$ by:
$$\mathbb{S}_d(i_0,\dots,i_d)=\left\{\begin{array}{ll} (i_0-1,\dots,i_d-1) &\text{if } 
i_0>1,\\
(i_1-1,\dots,i_d-1,n+2d+1)&\text{if } i_0=1.\end{array}\right.$$
Its inverse permutation $\mathbb{S}_d^-$ is given by:
$$\mathbb{S}_d^-(i_0,\dots,i_d)=\left\{\begin{array}{ll} (i_0+1,\dots,i_d+1) &\text{if } 
i_d<n+2d+1,\\
(1,i_0+1,\dots,i_d+1)&\text{if } i_d=n+2d+1.\end{array}\right.$$
This notation is motivated by the second part of the following proposition.
\end{definition}

\begin{proposition} \phantomsection \label{prop.indexing_O}
\begin{enumerate}
\item The indecomposable objects in $\mathscr{O}_{A_n^d}$ are precisely
\[ \{ O_{i_0, \ldots, i_d} \mid (i_0, \ldots, i_d) \in \IndexC{n+2d+1} \}. \]
\item We have
$$\leftsub{\mathscr{O}}{\mathbb{S}}_d O_{i_0, \ldots, i_d} = O_{\mathbb{S}_d(i_0,\dots,i_d)}
\text{ and } 
\leftsub{\mathscr{O}}{\mathbb{S}}_d^- O_{i_0, \ldots, i_d}  =O_{\mathbb{S}_d^-(i_0,\dots,i_d)}.$$ 
\end{enumerate}
\end{proposition}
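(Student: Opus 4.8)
The plan is to derive Proposition~\ref{prop.indexing_O} directly from the already-established description of $\mathscr{U}$ and the definition $\mathscr{O}_{A_n^d} = \mathscr{U}/(\mathbb{S}_{2d})$. Part (1) is essentially bookkeeping: by Observation~\ref{obs.fund_domain}, the indecomposable objects of $\mathscr{O}_{A_n^d}$ are in bijection with the indecomposable summands of $\leftsub{A_n^d}{M} \oplus A_n^d[d]$, and by Lemma~\ref{lemma.indexing.U}(3) the summands of $\leftsub{A_n^d}{M}$ are exactly the $U_{i_0,\ldots,i_d}$ with $1 \leq i_0$ and $i_d \leq n+2d$, i.e. $M_{i_0,\ldots,i_d}$. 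The remaining summands, those of $A_n^d[d]$, are the indecomposable projectives shifted by $[d]$; I would check that applying $\mathbb{S}_{2d}^{-}$ (Lemma~\ref{lemma.S2d}) to a projective $P_{i_1-2,\ldots,i_d-2} = U_{1,i_1,\ldots,i_d}$ lands on an object $U_{i_1,\ldots,i_d,n+2d+2}$, which does not lie in $\add(\leftsub{A_n^d}{M})$, and conversely every $U_{i_0,\ldots,i_d}$ with $(i_0,\ldots,i_d) \in \IndexC{n+2d+1}$ but $i_d = n+2d+1$ is $\mathbb{S}_{2d}^{-}$ of a projective (hence lies in $\proj A_n^d [d]$ after moving one $\mathbb{S}_d$ across). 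Combining, the fundamental domain $\add(\leftsub{A_n^d}{M}) \vee \proj A_n^d[d]$ consists precisely of the $U_{i_0,\ldots,i_d}$ indexed by $\IndexC{n+2d+1}$ — the condition $i_d + 2 \leq i_0 + n+2d+1$ from Definition~\ref{def:index} being exactly what excludes the objects that get identified down by $\mathbb{S}_{2d}$.

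For part (2), the point is that $\leftsub{\mathscr{O}}{\mathbb{S}}_d$ is induced by $\leftsub{\mathscr{U}}{\mathbb{S}}_d$ (the Serre functor of $\mathscr{T}$ restricts, as noted in the remark after Lemma~\ref{lemma.angle_rotation}, and the projection functor $\mathscr{U} \to \mathscr{O}_{A_n^d}$ commutes with Serre functors by Proposition~\ref{prop.anglefunctor}). So I need to compute, for $(i_0,\ldots,i_d) \in \IndexC{n+2d+1}$, which representative in the fundamental domain the object $\leftsub{\mathscr{U}}{\mathbb{S}}_d U_{i_0,\ldots,i_d} = U_{i_0-1,\ldots,i_d-1}$ is identified with. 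If $i_0 > 1$, then $(i_0-1,\ldots,i_d-1)$ is still a legitimate index in $\IndexC{n+2d+1}$ (separatedness is preserved, and $i_d - 1 + 2 \leq i_0 - 1 + n+2d+1$ still holds), so the answer is $O_{i_0-1,\ldots,i_d-1}$. If $i_0 = 1$, then $U_{0,i_1-1,\ldots,i_d-1}$ is out of range on the left; applying $\mathbb{S}_{2d}^{-}$ once (Lemma~\ref{lemma.S2d}) gives $U_{i_1-1,\ldots,i_d-1, \, n+2d+1}$, which is back in the fundamental domain — here I use that $i_1 - 1 \geq 2$ since the original tuple was separated, so $i_1 - 1 \geq 1$, and the separatedness and cyclic conditions for the new tuple follow from those of the original. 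This matches exactly the definition of the permutation $\mathbb{S}_d$ on $\IndexC{n+2d+1}$. The formula for $\leftsub{\mathscr{O}}{\mathbb{S}}_d^{-}$ is obtained identically using the second formula of Lemma~\ref{lemma.S2d}, or simply by observing it is the inverse permutation.

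I do not anticipate a serious obstacle here — the work is entirely in matching the two index conventions (the $U$'s are indexed by arbitrary representatives, the $O$'s by the canonical fundamental-domain representatives in $\IndexC{n+2d+1}$) and checking the boundary case $i_0 = 1$ carefully. The one place that needs a little care is verifying that the fundamental-domain condition in Observation~\ref{obs.fund_domain} (``unique preimage in $\add M \vee \proj \Lambda[d]$'') translates precisely to the inequality $i_d + 2 \leq i_0 + n + 2d + 1$ defining $\IndexC{n+2d+1}$: concretely, one shows that $U_{i_0,\ldots,i_d}$ with $i_d = i_0 + n + 2d + 1$ (the forbidden equality, or worse) is $\mathbb{S}_{2d}^{-}$ of $U_{1, i_0+1,\ldots}$-type object, hence already represented, so the fundamental domain is a strict inequality, which is the $\IndexC{}$ condition after shifting. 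Once that identification is pinned down, both parts of the proposition are immediate.

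\begin{proof}[Proof of Proposition~\ref{prop.indexing_O}]
(1) By Observation~\ref{obs.fund_domain} the indecomposable objects of $\mathscr{O}_{A_n^d} = \mathscr{U}/(\mathbb{S}_{2d})$ correspond bijectively to the indecomposable summands of $\leftsub{A_n^d}{M} \oplus A_n^d[d]$, equivalently to the $\leftsub{\mathscr{U}}{\mathbb{S}}_{2d}$-orbits of indecomposables in $\mathscr{U}$. By Lemma~\ref{lemma.indexing.U}(1) the latter are the $U_{i_0,\ldots,i_d}$ with $i_x + 2 \leq i_{x+1}$ for all $x$, and by Lemma~\ref{lemma.S2d} the orbit of $U_{i_0,\ldots,i_d}$ under $\leftsub{\mathscr{U}}{\mathbb{S}}_{2d}$ is $\{U_{i_0 - s(n+2d+1), \ldots}\}$ obtained by cyclically rotating the index and shifting by multiples of $n+2d+1$. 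Each such orbit contains exactly one representative $(i_0,\ldots,i_d)$ with $i_d + 2 \leq i_0 + n + 2d + 1$, namely the one in $\IndexC{n+2d+1}$; indeed, within an orbit the quantity $i_d - i_0$ strictly decreases each time one rotates the largest entry to the front (it drops by $n+2d+1$ minus the gap), so there is a unique rotation making it at most $n+2d-1$, i.e. satisfying the $\IndexC{}$ inequality. Hence the indecomposable objects of $\mathscr{O}_{A_n^d}$ are precisely the $O_{i_0,\ldots,i_d}$ with $(i_0,\ldots,i_d) \in \IndexC{n+2d+1}$.

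(2) By Proposition~\ref{prop.anglefunctor} the projection functor $\mathscr{U} \to \mathscr{O}_{A_n^d}$ commutes with the Serre functors, so $\leftsub{\mathscr{O}}{\mathbb{S}}_d O_{i_0,\ldots,i_d}$ is the image of $\leftsub{\mathscr{U}}{\mathbb{S}}_d U_{i_0,\ldots,i_d} = U_{i_0-1,\ldots,i_d-1}$ by Lemma~\ref{lemma.indexing.U}(2). If $i_0 > 1$, then $(i_0-1,\ldots,i_d-1)$ is separated and satisfies $i_d - 1 + 2 \leq i_0 - 1 + n+2d+1$, so it lies in $\IndexC{n+2d+1}$, and its image is $O_{i_0-1,\ldots,i_d-1} = O_{\mathbb{S}_d(i_0,\ldots,i_d)}$. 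If $i_0 = 1$, then $(0, i_1-1,\ldots,i_d-1)$ is out of range; applying $\leftsub{\mathscr{U}}{\mathbb{S}}_{2d}^{-}$ and using Lemma~\ref{lemma.S2d} gives
\[ U_{0, i_1-1, \ldots, i_d-1} \longmapsto U_{i_1-1, \ldots, i_d-1, \, n+2d+1}, \]
and since the original tuple was separated with $i_0 = 1$ we have $i_1 - 1 \geq 2$, so $(i_1-1,\ldots,i_d-1,n+2d+1)$ is separated; it also satisfies the cyclic inequality because $n+2d+1 + 2 \leq (i_1 - 1) + n + 2d + 1$ amounts to $i_1 \geq 2$. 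Hence $(i_1-1,\ldots,i_d-1,n+2d+1) \in \IndexC{n+2d+1}$ and $\leftsub{\mathscr{O}}{\mathbb{S}}_d O_{1,i_1,\ldots,i_d} = O_{i_1-1,\ldots,i_d-1,n+2d+1} = O_{\mathbb{S}_d(1,i_1,\ldots,i_d)}$. This proves the first formula; the second follows by the same argument using the second formula of Lemma~\ref{lemma.S2d}, or simply because $\mathbb{S}_d^-$ is the inverse permutation of $\mathbb{S}_d$ and $\leftsub{\mathscr{O}}{\mathbb{S}}_d^-$ is the inverse of $\leftsub{\mathscr{O}}{\mathbb{S}}_d$.
\end{proof}
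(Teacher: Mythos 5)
Your overall approach matches the paper's, which just cites Lemma~\ref{lemma.indexing.U}, Lemma~\ref{lemma.S2d}, and the definition $\mathscr{O}_{A_n^d} = \mathscr{U}/(\mathbb{S}_{2d})$; and your argument for part (2) is correct. However, the central step of your argument for part (1) — that each $\mathbb{S}_{2d}$-orbit of indecomposables in $\mathscr{U}$ meets $\{U_A : A \in \IndexC{n+2d+1}\}$ exactly once — is justified incorrectly. You assert that the uniquifying constraint is the cyclic inequality $i_d + 2 \leq i_0 + n + 2d + 1$ and that $i_d - i_0$ ``strictly decreases each time one rotates the largest entry to the front.'' Neither claim holds. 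The cyclic inequality is already part of the indexing constraints in Construction~\ref{const.indexing.U}, so it is satisfied by \emph{every} tuple indexing a well-defined $U$, and one checks easily that $\mathbb{S}_{2d}$ preserves these constraints; hence the cyclic inequality holds for \emph{every} tuple in the orbit and cannot isolate a unique representative. Moreover $i_d - i_0$ in fact \emph{increases} in both directions from the canonical representative: taking $(1,3,5) \in \IndexC{7}$ (so $d=2$, $n=2$), one has $\mathbb{S}_{2d}(1,3,5) = (-2,1,3)$ and $\mathbb{S}_{2d}^{-}(1,3,5) = (3,5,8)$, both with $i_d - i_0 = 5$, while the original has $i_d - i_0 = 4$.

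The constraint that actually does the work is that all entries lie in $\{1,\ldots,n+2d+1\}$ (that is, $i_0 \geq 1$ and $i_d \leq n+2d+1$). Existence of such a representative follows by translating so that $i_0 \in \{1,\ldots,n+2d+1\}$ and then applying $\mathbb{S}_{2d}$ once for each remaining entry exceeding $n+2d+1$. Uniqueness follows because applying $\mathbb{S}_{2d}$ to a tuple in $\IndexC{n+2d+1}$ produces a first entry $i_d - (n+2d+1) \leq 0$, and further applications only push entries lower (dually for $\mathbb{S}_{2d}^-$); since the cyclic inequality holds automatically by Construction~\ref{const.indexing.U}, this representative is in $\IndexC{n+2d+1}$. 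With that correction your proof of part (1) goes through. (The computation of $\mathbb{S}_{2d}^{-}$ of a projective in your preliminary plan also mixes up $\mathbb{S}_{2d}^{-}$ with $[d]$ — note $[d] = \mathbb{S}_{2d}^{-}\mathbb{S}_d$ — but your formal proof does not rely on that passage.)
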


\begin{proof}
Both statements follow from the corresponding statements in Lemma~\ref{lemma.indexing.U}, the fact that $\mathscr{O}_{A_n^d} = \mathscr{U} / (\mathbb{S}_{2d})$, and the explicit description of $\mathbb{S}_{2d}$ in Lemma~\ref{lemma.S2d}.
\end{proof}

We now prove Proposition~\ref{prop.cluster_ext=intertw}.

\begin{proof}[Proof of Proposition~\ref{prop.cluster_ext=intertw}]
For $(i_0, \ldots, i_d), (j_0, \ldots, j_d) \in \IndexC{n+2d+1}$ we have
\[ \Hom_{\mathscr{O}_{A_n^d}}(O_{i_0, \ldots, i_d}, O_{j_0, \ldots, j_d}[d]) = \Hom_{\mathscr{U}}(U_{i_0, \ldots, i_d}, U_{j_0, \ldots, j_d}[d]) \oplus D \Hom_{\mathscr{U}}(U_{j_0, \ldots, j_d}, U_{i_0, \ldots, i_d}[d]) \]
by Theorem~\ref{theo.clustercat_exists}(3). Since $[d] = \mathbb{S}_d \mathbb{S}_{2d}^-$ we have $U_{j_0, \ldots, j_d}[d] = U_{j_1-1, \ldots, j_d-1, j_0+n+2d}$ by Lemmas~\ref{lemma.indexing.U} and \ref{lemma.S2d}. Hence we have
\begin{align*}
& \Hom_{\mathscr{U}}(U_{i_0, \ldots, i_d}, U_{j_0, \ldots, j_d}[d]) \neq 0 \\
\iff & \Hom_{\mathscr{U}}(U_{i_0, \ldots, i_d}, U_{j_1-1, \ldots, j_d-1, j_0+n+2d}) \neq 0 \\
\iff & \Hom_{\mathscr{U}}(\mathbb{S}_d^{1-i_0} P_{i_1-i_0-1, \ldots, i_d-i_0-1}, \mathbb{S}_d^{1-i_0} U_{j_1-i_0, \ldots, j_d-i_0, j_0-i_0+n+2d+1}) \neq 0 \\
\iff & \Hom_{\mathscr{U}}(P_{i_1-i_0-1, \ldots, i_d-i_0-1}, U_{j_1-i_0, \ldots, j_d-i_0, j_0-i_0+n+2d+1}) \neq 0 \\
\iff & (j_1-i_0,\dots,j_d-i_0,j_0-i_0+n+2d+1)\in \Index{n+2d} \\
& \text{ and } \Hom_{A_n^d}(P_{i_1-i_0-1, \ldots, i_d-i_0-1}, M_{j_1-i_0, \ldots, j_d-i_0, j_0-i_0+n+2d+1}) \neq 0 \hspace{0.9cm} \text{(by Lemma \ref{lemma.indexing.U})} \\
\iff & (j_1-i_0,\dots,j_d-i_0,j_0-i_0+n+2d+1)\in \Index{n+2d} \\
 & \text{ and } M_{j_1-i_0, \ldots, j_d-i_0, j_0-i_0+n+2d+1} \text{ has } S_{i_1-i_0-1, \ldots, i_d-i_0-1} \text{ as a composition factor} \\
\iff & 0 < j_1-i_0 < i_1-i_0 < j_2-i_0 < i_2-i_0 < \cdots \\ & \qquad \cdots < j_d-i_0 < i_d-i_0 < j_0-i_0+n+2d+1 \\ & \text{ and } j_0 - i_0 + n + 2d + 1 \leq n + 2d \hspace{5.8cm} (\text{by Theorem \ref{theo.indexing}}) \\
\iff & j_0 < i_0 < j_1 < i_1 < \cdots < j_d < i_d < j_0+n+2d+1 \\
\iff & (j_0, \ldots, j_d) \wr (i_0, \ldots, i_d),
\end{align*}
where the last equivalence holds, since $i_d < j_0+n+2d+1$ holds automatically for $(i_0, \ldots, i_d), (j_0, \ldots, j_d) \in \IndexC{n+2d+1}$.

Similarly one sees that, in the case that the above equivalent statements are true, one has $\dim \Hom_{\mathscr{U}}(U_{i_0, \ldots, i_d}, U_{j_0, \ldots, j_d}[d]) = 1$. 

Summing up, and noting that the cases $(i_0, \ldots, i_d) \wr (j_0, \ldots. j_d)$ and $(j_0, \ldots, j_d) \wr (i_0, \ldots. i_d)$ are mutually exclusive, we obtain the statement of the proposition.
\end{proof}

\subsection{Proof of Theorem~\texorpdfstring{\ref{theo.exch_angles_A}}{6.3}}

We first prove Theorem~\ref{theo.exch_angles_A} except for the description of the terms of the $(d+2)$-angles in (3). This gap will be filled by Proposition~\ref{prop.target_is_angle} below.

\begin{proof}[Proof of Theorem~\ref{theo.exch_angles_A}]
By assumption, Condition~(2) of Theorem~\ref{theo.exch_angles} is satisfied. Thus all the equivalent statements of that theorem hold.

Claim (1) now is just Theorem~\ref{theo.exch_angles}(1).

(2) Clearly we cannot have $(i_0, \ldots, i_d) \wr (j_0, \ldots, j_d)$ and $(j_0, \ldots, j_d) \wr (i_0, \ldots, i_d)$. If neither $(i_0, \ldots, i_d) \wr (j_0, \ldots, j_d)$ nor $(j_0, \ldots, j_d) \wr (i_0, \ldots, i_d)$ then $T \oplus O_{i_0, \ldots, i_d} \oplus O_{j_0, \ldots, j_d}$ has no $d$-self-extensions, contradicting the fact that $T \oplus O_{i_0, \ldots, i_d}$ is a cluster tilting object.

For (3) note that the existence of an essentially unique map $O_{j_0, \ldots, j_d} \to O_{i_0, \ldots, i_d}[d]$ (for (a)) and $O_{i_0, \ldots, i_d} \to O_{j_0, \ldots, j_d}[d]$ (for (b)) follows from Proposition~\ref{prop.cluster_ext=intertw}. By Lemma~\ref{lemma.angle_completion} these maps can be minimally completed to $(d+2)$-angles in an essentially unique way. We postpone the proof that these essentially unique minimal completions have the form described in the theorem to Proposition~\ref{prop.target_is_angle}. By the final statement of Theorem~\ref{theo.exch_angles} we know that in the $(d+2)$-angles of that theorem we have $O_{j_0, \ldots, j_d} \iso Y^{d+1} \iso X_{d+1}$. Thus these $(d+2)$-angles are precisely the minimal completions of $O_{j_0, \ldots, j_d} \to O_{i_0, \ldots, i_d}[d]$ and $O_{i_0, \ldots, i_d} \to O_{j_0, \ldots, j_d}[d]$ to $(d+2)$-angles, respectively. Now by Theorem~\ref{theo.exch_angles}(3 and 4) all the middle terms $E_r$ and $F_r$ of these $(d+2)$-angles lie in $\add T$. It follows from Theorem~\ref{theo.exch_angles} that the maps $O_{i_0, \ldots, i_d} \to E_d$ and $F_d \to O_{i_0, \ldots, i_d}$ are left and right $T$-approximations, respectively. The fact that the maps $O_{j_0, \ldots, j_d} \to F_1$ and $E_1 \to O_{j_0, \ldots, j_d}$ are left and right $T$-approximations, respectively, follows by interchanging the roles of $O_{i_0, \ldots, i_d}$ and $O_{j_0, \ldots, j_d}$.
\end{proof}

\begin{proposition} \label{prop.target_is_angle}
Assume $(i_0, \ldots, i_d), (j_0, \ldots, j_d) \in \IndexC{n+2d+1}$ with $(i_0, \ldots, i_d) \wr (j_0, \ldots, j_d)$.
\begin{enumerate} \renewcommand{\labelenumi}{(\alph{enumi})}
\item There is a $(d+2)$-angle
\[ O_{i_0, \ldots, i_d} \to[30] E_d \to[30] \cdots \to[30] E_1 \to[30] O_{j_0, \ldots, j_d} \to[30] O_{i_0, \ldots, i_d}[d] \]
with
\[ E_r = \bigoplus_{\substack{X \subseteq \{0, \ldots, d\} \\ |X| = r \\ m_X((i_0, \ldots, i_d), (j_0, \ldots, j_d)) \in \IndexC{n+2d+1}}} O_{ m_X((i_0, \ldots, i_d), (j_0, \ldots, j_d))} \]
in $\mathscr{O}_{A_n^d}$.
\item There is a $(d+2)$-angle
\[ O_{j_0, \ldots, j_d} \to[30] F_1 \to[30] \cdots \to[30] F_d \to[30] O_{i_0, \ldots, i_d} \to[30] O_{j_0, \ldots, j_d}[d] \]
with
\[ F_r = \bigoplus_{\substack{X \subseteq \{0, \ldots, d\} \\ |X| = r \\ n_X((i_0, \ldots, i_d), (j_0, \ldots, j_d)) \in \IndexC{n+2d+1}}} O_{ n_X((i_0, \ldots, i_d), (j_0, \ldots, j_d))} \]
in $\mathscr{O}_{A_n^d}$.
\end{enumerate}
\end{proposition}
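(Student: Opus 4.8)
The strategy is to reduce Proposition~\ref{prop.target_is_angle} to the already-proved module-theoretic Proposition~\ref{prop.E_exact} (and its dual) via the functor $\mathscr{U} \to \mathscr{O}_{A_n^d}$, which by Proposition~\ref{prop.anglefunctor} takes $(d+2)$-angles to $(d+2)$-angles and commutes with $[d]$. Concretely, I first handle part (a) and then derive (b) by the same argument with the roles of $(i_0,\dots,i_d)$ and $(j_0,\dots,j_d)$ swapped (after applying a suitable power of $\mathbb{S}_d$), so the bulk of the work is in (a).

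\textbf{Step 1: lift to $\mathscr{U}$.} Using Proposition~\ref{prop.indexing_O}, I translate the desired $(d+2)$-angle in $\mathscr{O}_{A_n^d}$ into the existence of a $(d+2)$-angle in $\mathscr{U}$ whose image under $\mathscr{U}\to\mathscr{O}_{A_n^d}$ is the claimed one. Since $[d] = \mathbb{S}_d\mathbb{S}_{2d}^-$ on $\mathscr{U}$ and $\mathbb{S}_d,\mathbb{S}_{2d}$ act by the explicit index shifts of Lemmas~\ref{lemma.indexing.U} and \ref{lemma.S2d}, I can apply a power of $\mathbb{S}_d$ to move $(i_0,\dots,i_d)$ into a range where $i_0 = 1$, i.e.\ where $U_{i_0,\dots,i_d}$ is a projective $A_n^d$-module; simultaneously I must check where the other indices land. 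The point is that after this normalization the $(d+2)$-angle I want is the image of an \emph{exact sequence of modules} of the kind produced by Proposition~\ref{prop.E_exact}: by Example~\ref{example.U_angulated} any such exact sequence in $\add M$ becomes a $(d+2)$-angle in $\mathscr{U}$, and then maps down to a $(d+2)$-angle in $\mathscr{O}_{A_n^d}$.

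\textbf{Step 2: match the terms.} Here is where the combinatorial identity must be pinned down: I need to verify that, under the index dictionary of Construction~\ref{const.indexing.U} and Lemma~\ref{lemma.S2d}, the module $E_r = \bigoplus_{|X|=r} M_{m_X}$ of Proposition~\ref{prop.E_exact} (computed for the shifted tuples, which lie in $\Index{n+2d}$) maps under $\mathscr{U}\to\mathscr{O}_{A_n^d}$ exactly to $\bigoplus_{|X|=r,\ m_X\in\IndexC{n+2d+1}} O_{m_X}$ with $m_X = m_X((i_0,\dots,i_d),(j_0,\dots,j_d))$ as in Definition~\ref{def.m_n}. Two subtleties: first, the $\sort(\cdot)$ description of $m_X$ in Definition~\ref{def.m_n} must be reconciled with the entrywise $m_X$ of Section~\ref{section.higher_Aus_An} --- this works precisely because $(i_0,\dots,i_d)\wr(j_0,\dots,j_d)$ forces $i_x$ and $j_x$ to interleave, so the sorted union is the entrywise choice; second, summands $M_{m_X}$ that die in Proposition~\ref{prop.E_exact} because $m_X\notin\Index{n+2d}$ correspond exactly (after the shift) to $m_X\notin\IndexC{n+2d+1}$, so the two ``vanishing'' conditions agree. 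For (b) the analogous bookkeeping produces the $n_X$-description, with $j_{-1}$ interpreted as $j_d$ because the cyclic shift $\mathbb{S}_{2d}$ wraps the last index around to the front with an offset of $n+2d+1$; this is exactly the dual sequence of Remark~\ref{rem.inj_resol}/the ``dual version of (4)'' in Theorem~\ref{theo.tilt-exchange}(5).

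\textbf{Main obstacle.} The genuinely delicate point is Step 2 in the boundary cases --- keeping track of which summands survive after the cyclic shift, and checking that an index of the form $(i_1-i_0,\dots,i_d-i_0,j_0-i_0+n+2d+1)$ or its $n_X$-analogue lies in $\Index{n+2d}$ exactly when the corresponding cyclic $d$-simplex is internal in $C(n+2d+1,2d)$. Once the index translation is set up carefully (mirroring the computation already carried out in the proof of Proposition~\ref{prop.cluster_ext=intertw}), the rest is a direct application of Proposition~\ref{prop.E_exact}, Example~\ref{example.U_angulated}, and Proposition~\ref{prop.anglefunctor}; I would organize the write-up so that the combinatorial lemma identifying $m_X$ (resp.\ $n_X$) with the shifted entrywise mixing is isolated and proved first, and the angle then drops out formally.
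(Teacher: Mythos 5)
Your high-level strategy — pass through $\mathscr{U}$, normalize by a power of $\mathbb{S}_d$ so that the module-theoretic Proposition~\ref{prop.E_exact} applies, invoke Example~\ref{example.U_angulated} and Proposition~\ref{prop.anglefunctor}, then translate indices — is the right one and matches the paper's in outline. However, your chosen normalization ``shift so that $i_0=1$'' is not the correct one and leaves a genuine gap. What you need is a shift under which \emph{both} tuples land in $\Index{n+2d}$, and the binding constraint is $j_d' \leq n+2d$, not $i_0'=1$. If $i_0>1$, shifting by $\mathbb{S}_d^{i_0-1}$ does achieve both $i_0'=1$ and $j_d'\le n+2d$, so your plan works there. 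But if $i_0=1$ already and $j_d = n+2d+1$, your normalization produces nothing new, the tuple $(j_0,\ldots,j_d)$ lies outside $\Index{n+2d}$, and there is no module $M_{j_0,\ldots,j_d}$ for Proposition~\ref{prop.E_exact} to act on.

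There are two sub-cases you are silently skipping. First, if additionally $i_t = j_{t-1}+1$ for all $1\le t\le d$, then $O_{i_0,\ldots,i_d} = O_{j_0,\ldots,j_d}[-d]$ (i.e.\ $\mathbb{S}_d^- O_{j_0,\ldots,j_d}$), and \emph{no} cyclic shift ever makes both tuples land in $\Index{n+2d}$; the $(d+2)$-angle in question is the degenerate one $O \xrightarrow{\ \sim\ } O[d]$ with all middle terms zero, and one must verify separately that the combinatorial formula for $E_r$ does indeed collapse to zero for $1\le r\le d$ (only $X=\emptyset$ and $X=\{0,\ldots,d\}$ give $m_X\in\IndexC{n+2d+1}$). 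Second, if instead there is some $t$ with $i_t > j_{t-1}+1$, the right move is to shift by $\mathbb{S}_d^{i_t-1}$ — this wraps the tuples around so that $j_d' < n+2d+1$ — rather than by anything tied to $i_0$. Your write-up should isolate these two cases explicitly; otherwise the claim that ``after this normalization the $(d+2)$-angle I want is the image of an exact sequence of modules'' is simply false when $i_0=1$ and $j_d=n+2d+1$. Once you patch the case analysis, the rest of your Step 2 (matching $m_X$ and $n_X$ through the dictionary, and matching the vanishing conditions $m_X\notin\Index{n+2d}$ versus $m_X\notin\IndexC{n+2d+1}$ for the shifted tuples) goes through as you describe.
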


\begin{proof}
(a) \textsc{Special case I:} $O_{i_0, \ldots, i_d} = O_{j_0, \ldots, j_d}[-d]$. In this case any non-zero map $O_{j_0, \ldots, j_d} \to O_{i_0, \ldots, i_d}[d]$ is an isomorphism, and hence the other terms of the $(d+2)$-angle should vanish. To see this we first note that 
\[ O_{i_0, \ldots, i_d} = O_{j_0, \ldots, j_d}[-d] = \mathbb{S}_d^- O_{j_0, \ldots, j_d} = \left\{ \begin{array}{ll} O_{j_0+1, \ldots, j_d+1} & \text{if } j_d < n+2d+1 \\ O_{1, j_0+1, \ldots, j_{d-1}+1} & \text{if } j_d = n+2d+1 \end{array} \right. . \]
Since we assume $(i_0, \ldots, i_d) \wr (j_0, \ldots, j_d)$, and hence $i_0 < j_0$, we can only be in the latter case above. Now
\[ m_X((i_0, \ldots, i_d), (j_0, \ldots, j_d)) = m_X((1, j_0+1, \ldots, j_{d-1}+1), (j_0, \ldots, j_{d-1}, n+2d+1)), \]
and hence $m_X((i_0, \ldots, i_d), (j_0, \ldots, j_d)) \in \IndexC{n+2d+1}$ only for $X \in \{ \emptyset, \{0, \ldots, d\}\}$.
Thus (a) holds for $O_{i_0, \ldots, i_d} = O_{j_0, \ldots, j_d}[-d]$.

\textsc{Special case II:} $j_d \leq n+2d$ (and hence also $i_d \leq n+2d$). Then $(i_0, \ldots, i_d), (j_0, \ldots, j_d) \in \Index{n+2d}$, and by Proposition~\ref{prop.E_exact} there is an exact sequence
\[ M_{i_0, \ldots, i_d} \mono[30] \overleftarrow{E}_d \to[30] \cdots \to[30] \overleftarrow{E}_1 \epi[30] M_{j_0, \ldots, j_d} \]
in $\mod A_n^d$, with
\[ \overleftarrow{E}_r = \bigoplus_{\substack{X \subseteq \{0, \ldots, d\} \\ |X| = r \\ m_X((i_0, \ldots, i_d), (j_0, \ldots, j_d)) \in \Index{n+2d}}} M_{m_X((i_0, \ldots, i_d), (j_0, \ldots, j_d))}. \]
By Example~\ref{example.U_angulated} this turns into a $(d+2)$-angle in $\mathscr{U}$, and hence, by Proposition~\ref{prop.anglefunctor}, also into a $(d+2)$-angle in $\mathscr{O}_{A_n^d}$. Since during this transfer $M_{m_X((i_0, \ldots, i_d), (j_0, \ldots, j_d))}$ turns into $O_{m_X((i_0, \ldots, i_d), (j_0, \ldots, j_d))}$, the claim follows.

\textsc{General case:} 
If $j_d\leq n+2d$, then we can apply special case II.
So suppose otherwise, namely, that $j_d=n+2d+1$.  

If $i_0 = 1$ and $i_t = j_{t-1} + 1$ for all $1 \leq t \leq d$ then we can appy special case I. Thus we may assume that either $i_0 > 1$ or that there is $t$ such that $i_t > j_{t-1}+1$.

Assume first that $i_0>1$.  Let $(i_0',\dots,i_d')=(1,i_1-i_0+1,i_2-i_0+1,\dots,
i_d-i_0+1)$, and let $(j_0',\dots,j_d')=(j_0-i_0+1,j_1-i_0+1,\dots,
j_d-i_0+1)$.  Special case II applies to $(i_0',\dots,i_d')$ and 
$(j_0',\dots,j_d')$, resulting in a $(d+2)$-angle in $\mathscr{O}_{A^d_n}$.  
The desired $(d+2)$-angle is obtained from that one by applying 
$\leftsub{\mathscr{O}}{\mathbb{S}}_d^{-(i_0-1)}$.

Assume now that we have $t$ such that $i_t > j_{t-1}+1$.  Set 
\[ (i_0',\dots,i_d') = \mathbb{S}_d^{i_{t}-1}(i_0,\dots,i_d) \qquad \text{and} \qquad (j_0',\dots,j_d') = \mathbb{S}_d^{i_{t}-1}(j_0,\dots,j_d). \]
We have that $(i_0',\dots,i_d') \wr (j_0',\dots,j_d')$, and $j_d' = j_{t-1} - i_{t} + 1 + n + 2d + 1 < n + 2d + 1$, so special case II applies,
resulting in a $(d+2)$-angle in $\mathscr{O}_{A^d_n}$.  The desired 
$(d+2)$-angle is obtained from that one by applying 
$\leftsub{\mathscr{O}}{\mathbb{S}}_d^{-(i_{t}-1)}$.

(b) Let
\begin{align*}
(i_0',\ldots, i_d') & = \mathbb{S}_d^{i_0}(i_0,\dots,i_d) = (i_1 - i_0, \ldots, i_d - i_0, n+2d+1) \text{ and} \\ 
(j_0',\ldots, j_d') & = \mathbb{S}_d^{i_0}(j_0,\dots,j_d) = (j_0 - i_0, \ldots, j_d - i_0).
\end{align*}
Note that 
$(j_0',\dots,j_d')\wr (i_0',\dots,i_d')$.  We can therefore apply part
(a) to construct a $(d+2)$-angle 
$$O_{j_0',\dots,j_d'} \to[30] \cdots \to[30] O_{i_0',\dots,i_d'}
\to[30] O_{j_0',\dots,j_d'}[d].$$
One then applies $\leftsub{\mathscr{O}}{\mathbb S}_d^{-i_0}$, and checks
that this is the desired $(d+2)$-angle by applying the definition of
$m_X$ and $n_X$.   
\end{proof}

\section{Tropical cluster exchange relations} \label{sect.tropical}

Define a \emph{generalized lamination} to be a finite collection of 
increasing $(d+1)$-tuples from $\mathbb R\setminus \{1, \ldots, m\}$, 
such that no two intertwine.  We can also think of a generalized lamination as a collection of $d$-simplices in $\mathbb R^{2d}$ with vertices on the moment curve, which do not intersect in their interiors; the increasing $(d+1)$-tuple $(b_0,\dots,b_d)$ corresponds to the convex hull of the points 
$p_{b_i}$. We denote by $\L$ the set of all generalized laminations.

For each increasing $(d+1)$-tuple $A$ from $\{1, \ldots, m\}$ we define a function $I_A \colon \L \to \mathbb{N}$ by setting $I_A(L)$ to be the number of elements of $L$ which intertwine with $A$ (in some order).
This is also equal to the number of intersections of the simplex $A$
with the simplices defined by the lamination.
In this section we show that these functions satisfy certain tropical exchange relations which we shall define, and in which the functions $I_A$ for $A \not\in \IndexC{m}$ function as frozen variables (in other words, they cannot be mutated).

In the case that $d=1$, this was shown by Gekhtman, Shapiro, and Vainshtein \cite{GSV}. (\cite{GSV} considers more 
general situations, where the polygon is replaced by other surfaces.  
See also the work of
Fomin and Thurston \cite{FT} for another perspective and further extensions
of this.)

The next theorem gives the tropical exchange
relation between $I_A$ and $I_B$ where $A$ and $B$ are
exchangeable.

\begin{theorem}\label{th4} Let $A, B \in \IndexC{m}$ such that $A \wr B$. Then we have the following equality of functions $\L \to \mathbb{Z}$:
\begin{align}\label{tropex}
I_A &= \max \Big( \sum_{X \subsetneq \{0,\dots,d\}} 
(-1)^{|X|+d+1} I_{m_X(A,B)},\\ \notag &\qquad\qquad\qquad \sum_{X\subsetneq \{0,\dots,d\}}
 (-1)^{|X|+d+1} I_{n_X(A,B)} \Big) \end{align}
(See Definition~\ref{def.m_n} for $m_X$ and $n_X$.)
\end{theorem}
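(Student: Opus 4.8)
The plan is to reduce the identity of functions $\L \to \mathbb Z$ to a purely combinatorial statement about how a single $(d+1)$-tuple $C = (c_0,\dots,c_d) \in \L$ (an element of a generalized lamination, with all $c_i \in \mathbb R \setminus \{1,\dots,m\}$) intertwines with the various $d$-simplices appearing in \eqref{tropex}. Since $I_A(L) = \sum_{C \in L} [\,C \text{ intertwines } A\,]$ and all the functions in \eqref{tropex} are sums over the elements of $L$, it suffices to prove \eqref{tropex} when $\L$ is replaced by a single $(d+1)$-tuple $C$; that is, writing $\iota(A) = 1$ if $A \wr C$ or $C \wr A$ and $\iota(A) = 0$ otherwise, I must show
\[ \iota(A) = \max\Big( \sum_{X \subsetneq \{0,\dots,d\}} (-1)^{|X|+d+1}\iota(m_X(A,B)),\ \sum_{X\subsetneq\{0,\dots,d\}} (-1)^{|X|+d+1}\iota(n_X(A,B)) \Big). \]
First I would record the basic bookkeeping: $A \wr B$ means $a_0 < b_0 < a_1 < b_1 < \cdots < a_d < b_d$, so the $m_X(A,B)$ and $n_X(A,B)$ are exactly the sorted tuples obtained by choosing, in each "slot", either the $a$-entry or the $b$-entry (respectively the $a$-entry or the shifted $b$-entry), and these choices land in the alternating pattern that makes sorting trivial. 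The real content is to understand, for a fixed $C$, which of these mixed tuples intertwine $C$.

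The key step is the following local analysis. Interleave the $2d+2$ numbers $a_0 < b_0 < \cdots < a_d < b_d$ with the $d+1$ numbers $c_0 < \cdots < c_d$ of $C$, and look at where each $c_k$ falls relative to the interval $(a_k, b_k)$ and the gaps $(b_{k-1}, a_k)$. A mixed tuple $Z$ (built slot-by-slot from $A$ and $B$) intertwines $C$ iff in every slot the chosen entry is positioned correctly with respect to the two relevant $c$'s; because $A$ and $B$ interleave so tightly, "correctly positioned" is a condition that depends only, slot by slot, on which of $a_k, b_k$ is chosen, together with the position of $c_{k-1}$ and $c_k$. One then splits $\{0,\dots,d\}$ into slots that are "forced to choose $a$", slots "forced to choose $b$", and "free" slots (those where both $a_k$ and $b_k$ work), exactly in the spirit of the sets $X_0, X_1, X_2$ used in the proof of Proposition~\ref{prop.E_exact}. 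If there are $\ell \ge 1$ free slots, then $\sum_{X} (-1)^{|X|}\iota(m_X(A,B))$ over the full power set is an alternating sum of a Koszul-type complex $[\,\mathbb Z \to \mathbb Z\,]^{\otimes \ell}$ and hence vanishes; restricting to $X \subsetneq \{0,\dots,d\}$ (i.e. deleting the top term $X = \{0,\dots,d\}$, which is $\iota(A)$) therefore gives $(-1)^{d+1}\iota(A) \cdot (\pm 1)$ up to the sign $(-1)^{d+1}$ already inserted, so the $m$-side sum equals $\iota(A)$. If there are no free slots on the $m$-side, the sum degenerates: then either $C$ is "separated off" from the whole configuration, in which case $\iota(A) = 0$ and both sums are $0$, or the obstruction is on the $n$-side rather than the $m$-side, and one checks that then the $m$-sum equals $\iota(A)$ while the $n$-sum is $\le \iota(A)$ (or vice versa), so the $\max$ picks out $\iota(A)$. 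The asymmetry between $m_X$ (using $j_x$) and $n_X$ (using the shifted $j_{x-1}$, with $j_{-1}=j_d$) is precisely what guarantees that at least one of the two sides has a free slot whenever $\iota(A)=1$, which is the cyclic analogue of the $d=1$ observation that one of the two diagonals of a quadrilateral is crossed by a given lamination line exactly when the other quadrilateral-structure forces it.

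I expect the main obstacle to be the careful case analysis at the boundary, where some $c_k$ coincides in position with an endpoint pattern or where the "free slot" count drops to zero on one or both sides: one must verify that in every such degenerate configuration the two alternating sums and $\iota(A)$ line up so that the $\max$ is correct, including the subtle point (paralleling the "$X_1 \ne \emptyset$" argument in Proposition~\ref{prop.E_exact}, which used $i_0 < j_0$ and $i_d < j_d$) that the hypothesis $A, B \in \IndexC{m}$ together with $A \wr B$ forbids the simultaneous failure of both sides. Once that combinatorial lemma is in hand, the theorem follows by summing over $C \in L$ and over $L \in \L$; I would also remark that specializing to $d=1$ recovers the relation $I_A + I_B = \max(I_E+I_G, I_F+I_H)$ of Gekhtman--Shapiro--Vainshtein, as a sanity check.
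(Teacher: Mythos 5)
Your proposal takes essentially the same route as the paper's proof. The paper also reduces \eqref{tropex} to a single tuple $\ell$ of a lamination, determines slot by slot which $m_X(A,B)$ and $n_X(A,B)$ a given $\ell$ intertwines, observes that the valid $X$'s form a boolean interval $[X_{\min},X_{\max}]$ on which $\sum_{X}(-1)^{|X|}I_{m_X}(\ell)$ vanishes unless the interval is a single point (this is the content of Propositions~\ref{prop2} and \ref{prop3} and is your Koszul-vanishing observation, phrased there as ``if $X_{\min}\neq X_{\max}$ the sum is zero''), and finishes by noting that a single $\ell$ cannot be simultaneously $m$-special and $n$-special, so for a whole lamination all the special contributions land on one side of the $\max$. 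Your invocation of the $X_0,X_1,X_2$ partition and the Koszul complex from Proposition~\ref{prop.E_exact} is a faithful mirror of the paper's $X_{\min},X_{\max}$ argument.

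Two cautions about the bookkeeping in your third paragraph, which you should make precise rather than hide inside a ``$\pm 1$''. First, when the full alternating sum vanishes, dropping the $X=\{0,\dots,d\}$ term (of weight $(-1)^{d+1}$) leaves $(-1)^{d}\iota(A)$, and multiplying by the prefactor $(-1)^{d+1}$ written in \eqref{tropex} produces $-\iota(A)$, \emph{not} $\iota(A)$; as stated, \eqref{tropex} is off by a global sign, and the exponent ought to read $|X|+d$ (this also matches the $d=2$ worked example and, via moving the $X=\emptyset$ term, the classical $d=1$ relation $I_A+I_B=\max(I_E+I_G,I_F+I_H)$). Once the sign is taken as $(-1)^{|X|+d}$, your conclusion ``the $m$-side sum equals $\iota(A)$'' in the generic (Koszul-vanishing) case is correct. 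Second, the parity dichotomy should be stated explicitly: for $d$ even both sides already return $\iota(A)$ for every $\ell$ and the $\max$ is vacuous; for $d$ odd the proof needs the further fact that a non-special $\ell$ contributes equally to both sides while a special $\ell$ contributes $+\iota(A)$ to one side and $-\iota(A)$ to the other, with the $m$- and $n$-special configurations mutually exclusive inside a single lamination. This is exactly the final paragraph of the paper's proof, and it is the part of your sketch that most needs to be written out.
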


The relation (\ref{tropex}) is ``tropical'' because it uses the operations $\max(\cdot,\cdot)$ and $+$, rather than $+$ and $\times$.  In the $d=1$ case,
if one replaces $(\max,+)$ in (\ref{tropex})
with $(+,\times)$, one obtains the type $A$ cluster algebra exchange relation.
We do not know how to obtain a meaningful analogue of this for
$d>1$.  

Note that for $d>1$, 
(\ref{tropex}) is not a tropical cluster algebra relation, because of
the signs.  When $d=2$, we get, for example, the following exchange 
relation:
\begin{align*} I_{024}-I_{135} & = 
\max(I_{124}+I_{034}+I_{025} 
-I_{134}-I_{125}-I_{035},\\
 &\qquad \qquad I_{245}+I_{014}+ I_{023} 
-I_{013}-I_{145}-I_{235})\end{align*}

This is not a normal tropical cluster algebra relation because the exchanged variables appear with opposite signs on the lefthand side, and the two tropical monomials on the righthand side each include a mixture of signs.

There is a term in (\ref{tropex}) for each summand of each term of the 
exchange $(d+2)$-angles for $O_A$ and $O_B$ in $\mathscr O_{A^d_{m-2d-1}}$ (see Theorem~\ref{theo.exch_angles_A}), but
(\ref{tropex}) also includes terms corresponding to $(d+1)$-tuples which are not separated.  

The statement of Theorem~\ref{th4} was chosen for maximum uniformity.  
It follows from the proof that, if $d$ is even, then the two terms inside
the $\max(\cdot,\cdot)$ are equal, so the theorem could be stated more simply in this
case.

\subsection{Proof of Theorem~\texorpdfstring{\ref{th4}}{7.1}}

Let $\ell$ be an increasing
$(d+1)$-tuple
of non-integers, $\ell=(\ell_0,\dots,\ell_d)$.  
We will also write $\ell$ for the generalized lamination
consisting only of $\ell$.
We begin by considering (\ref{tropex}) on generalized laminations of the form $\ell$.  

\begin{proposition}\label{prop2}
Let $A$ and $B$ be exchangeable $(d+1)$-tuples such that $A\wr B$, and let $\ell$ be as above. Then  exactly one of the following happens:
\begin{enumerate}
\item $\displaystyle \sum_{X\subseteq \{0,\dots,d\}} (-1)^{|X|}I_{m_X(A,B)}(\ell)=0$, or
\item $d$ is odd, and $a_i<\ell_i<b_{i}$ for all $i$.
\end{enumerate}
\end{proposition}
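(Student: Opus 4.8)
The plan is to reduce the statement to a purely combinatorial count about the single $(d+1)$-tuple $\ell$. Fix $A = (a_0,\dots,a_d)$, $B=(b_0,\dots,b_d)$ with $A \wr B$, so that $a_0 < b_0 < a_1 < b_1 < \cdots < a_d < b_d$. For a fixed $\ell = (\ell_0,\dots,\ell_d)$ with all $\ell_i \notin \{1,\dots,m\}$, the number $I_C(\ell)$ is $1$ if $C$ intertwines $\ell$ (in either order) and $0$ otherwise, for any increasing $(d+1)$-tuple $C$. So the whole alternating sum $\sum_{X \subseteq \{0,\dots,d\}} (-1)^{|X|} I_{m_X(A,B)}(\ell)$ is an integer combination of indicator functions, and the first thing I would do is set up the right encoding: for each coordinate slot, record how $\ell$ sits relative to the relevant entries of $A$ and $B$. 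Concretely, for $m_X(A,B) = \sort(\{a_x : x \in X\} \cup \{b_x : x \notin X\})$, since $a_x < b_x < a_{x+1}$, the tuple $m_X$ in increasing order has $x$-th entry equal to $a_x$ if $x \in X$ and $b_x$ if $x \notin X$. (When $m_X$ collides — i.e. is not separated/strictly increasing — this cannot actually happen here because $a_x < b_x < a_{x+1}$ guarantees strict increase regardless of $X$; but one should note that $m_X$ need not be separated in the sense of $\Index{m}$, which is fine since $I_C$ is defined for all increasing $C$.) Thus $m_X(A,B)$ intertwines $\ell$ iff for every $i$, $m_X(A,B)_i < \ell_i < m_X(A,B)_{i+1}$ OR the reverse pattern $\ell_i < m_X(A,B)_i < \ell_{i+1}$ holds globally; I would pin down which of the two intertwining directions is even possible given $A \wr B$.

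The key step is then a product/Koszul-style factorization of the alternating sum, entirely analogous to the argument in the proof of Proposition~\ref{prop.E_exact}. For the ``$A \wr B$ direction'' (i.e.\ counting $X$ with $m_X(A,B)_i < \ell_i < m_X(A,B)_{i+1}$ for all $i$), the condition at slot $i$ depends only on whether we chose $a_i$ or $b_i$ in position $i$ and on $\ell_{i-1}, \ell_i$ in adjacent positions; because $a_i < b_i$, replacing $a_i$ by $b_i$ in position $i$ weakly tightens the lower bound at slot $i$ and weakly tightens the upper bound at slot $i-1$. Following the bookkeeping of that earlier proof, I would partition $\{0,\dots,d\}$ into three sets $X_0, X_1, X_2$ according to whether the interleaving condition at the $i$-th comparison is satisfied by $a_i$ only, by both $a_i$ and $b_i$, or by $b_i$ only; then $m_X(A,B) \wr \ell$ holds iff $X_0 \subseteq X \subseteq X_0 \cup X_1$, so $\sum_X (-1)^{|X|} [m_X(A,B) \wr \ell] = (-1)^{|X_0|}(1-1)^{|X_1|}$, which is $0$ unless $X_1 = \emptyset$. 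The same must be done for the ``reverse'' intertwining $\ell \wr m_X(A,B)$, yielding a second three-set partition and a second vanishing-unless-empty conclusion. The final arithmetic is: the alternating sum vanishes unless the relevant ``$X_1$'' is empty, and one then reads off that $X_1 = \emptyset$ forces exactly the configuration $a_i < \ell_i < b_i$ for all $i$ (the only way every slot is ``decided both ways'' collapses to a single decisive interval, sandwiched between the $a$'s and $b$'s), and one checks this configuration actually makes $m_X(A,B) \wr \ell$ true for exactly the $X$ between $X_0$ and $X_0\cup X_1 = X_0$... at which point I need to track the parity: the surviving contribution is a single sign $(-1)^{|X_0|}$, and I must show $|X_0|$ has the parity of $d+1$ exactly when $d$ is odd, i.e.\ that in case (2) one gets a genuinely nonzero sum and in every other case a zero sum.

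The main obstacle I anticipate is getting the two intertwining directions to interact correctly: $I_C(\ell)$ counts intertwining in \emph{either} order, so the alternating sum is really $\sum_X (-1)^{|X|}\bigl([m_X \wr \ell] + [\ell \wr m_X]\bigr)$, and I need to argue these two families of events are disjoint for each fixed $X$ (clear, since a tuple can't intertwine $\ell$ in both orders) and then show that the two alternating sub-sums either both vanish or combine to give the stated dichotomy — in particular ruling out a spurious cancellation between a nonzero ``$\wr$'' contribution and a nonzero ``reverse'' contribution. The hypothesis $A \wr B$ (hence $a_0 < b_0$ and $a_d < b_d$, the boundary inequalities that were crucial in Proposition~\ref{prop.E_exact}) is exactly what forces at most one of the two directions to be realizable for a given $\ell$, paralleling the argument there that $X_1 \neq \emptyset$; here I expect instead that when neither alternating sub-sum trivially telescopes to zero, one lands in case (2), and otherwise everything cancels. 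Once the combinatorial lemma is in this form, the parity computation pinning down ``$d$ odd'' is a short finish: in the configuration $a_i < \ell_i < b_i$ the relevant $X_0$ is either $\emptyset$ or $\{0,\dots,d\}$ (depending on orientation), of size $0$ or $d+1$, so the residual sign is $1$ when $d+1$ is even, i.e.\ $d$ odd, recovering alternative (2), and the mutual exclusivity with (1) follows because in case (2) the sum is $\pm 1 \neq 0$. I would then remark, as the paper itself flags, that for $d$ even the two terms in the $\max$ of (\ref{tropex}) coincide, which is a byproduct of this same computation applied symmetrically to $m_X$ and $n_X$.
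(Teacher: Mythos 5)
Your proposal is essentially the paper's proof reorganized: the paper cases first on the position of $\ell$ (some $\ell_k<a_k$, some $\ell_k>b_k$, or the remaining case $a_i<\ell_i<b_i$ for all $i$), so that within each of the first two cases only one intertwining direction can occur, while you split $I_{m_X(A,B)}(\ell)$ into the two directions up front and telescope each sub-sum via the $X_0,X_1,X_2$ partition; the core Koszul-style cancellation and the chain argument (that $X_1=\emptyset$ together with a surviving term forces $a_i<\ell_i<b_i$ for all $i$) are identical in substance. Your flagged worry about spurious cancellation between the two sub-sums resolves exactly as you guess, since each sub-sum is nonzero only in the configuration $a_i<\ell_i<b_i$ for all $i$, contributing $1$ and $(-1)^{d+1}$ respectively --- though note two small slips: the sum in case (2) is $1+(-1)^{d+1}=2$, not $\pm 1$, and your aside that $A\wr B$ forces ``at most one of the two directions to be realizable for a given $\ell$'' is false (both directions are realized in case (2), by $X=\emptyset$ and $X=\{0,\dots,d\}$), but harmless since what you actually use is disjointness for each fixed $X$.
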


\begin{proof}
Suppose first that $\ell_k<a_k$ for some $k$.  
It follows that if
$\ell$ intertwines $m_X(A,B)$ in either order, it must be that
$\ell \wr m_X(A,B)$ (rather than the reverse).  If, for any $i$ we have
$\ell_i>b_i$, then
$\ell_i>a_i$ as well, so all the terms in (1) are zero.  
Similarly, if $\ell_i<a_{i-1}$ for any $i$, all the terms in
(1) are zero.  Hence we may disregard these cases.

So, for each $i\ne k$, there are three 
possibilities: \begin{itemize}
\item $a_{i-1}<\ell_i<b_{i-1}$
\item $b_{i-1}<\ell_i<a_i$
\item $a_i<\ell_i<b_{i}$
\end{itemize}
In the first case, in order for $I_{m_X(A,B)}(\ell)$ to be nonzero, we must 
have $i-1\in X$.  In the third case, we must have $i\not\in X$.  

If $k=0$, then the above conditions are sufficient.  If $k\ne 0$, 
we also have two possibilities regarding $\ell_k$:
\begin{itemize}
\item $a_{k-1}<\ell_k<b_{k-1}$
\item $b_{k-1}<\ell_k<a_k$
\end{itemize}
In the first case, we must have $k-1\in X$.

If $X$ satisfies all the above criteria, then $\ell \wr m_X(A,B)$.  
This implies that the non-zero values for $I_{m_X(A,B)}(\ell)$ are precisely
those such that $X$ satisfies $X_{\min}\subseteq X \subseteq X_{\max}$,
for certain specific $X_{\min}$ and $X_{\max}$.  If $X_{\min}\ne 
X_{\max}$, then the sum will be zero.  So suppose otherwise.  The above
conditions must therefore have specified $X$ precisely.  We must therefore
have $k\ne 0$, and we must have $k-1\in X$. 
Consider $i=k-1$.  It must have contributed some condition, which cannot
contradict the previous condition, so it must have imposed $k-2\in X$.  
Proceeding similarly, we see that $i=1$ must impose the condition
that $0\in X$, and then there is no further (non-contradictory) 
condition which can be imposed by $i=0$.  Thus, $X_{\min}\ne X_{\max}$, and
the sum is zero.  

The case that there is some $k$ with $\ell_k>b_{k}$ is dealt with
similarly.  

The remaining case is when $a_k<\ell_k<b_k$.  In this case, the only
two nonzero terms in (1) are $I_A(\ell)$ and $I_B(\ell)$.  
If $d$ is even, they have opposite signs and cancel out;
otherwise, they do not cancel, and we are in the situation of (2).
\end{proof}

The following proposition is proved the same way:

\begin{proposition}\label{prop3}
Let $A$ and $B$ be exchangeable $(d+1)$-tuples such that $A\wr B$, and let $\ell$ be as above. Then  exactly one of the following happens:
\begin{enumerate}
\item $\displaystyle \sum_{X\subseteq \{0,\dots,d\}} (-1)^{|X|}I_{n_X(A,B)}(\ell)=0$, or
\item $d$ is odd, and $b_{i-1}<\ell_i<a_i$ for all $i$ (where the condition that
$b_{-1}<\ell_0$ is considered to be vacuously true).
\end{enumerate}
\end{proposition}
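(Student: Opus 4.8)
\textbf{Proof proposal for Proposition~\ref{prop3}.}
The plan is to mimic the proof of Proposition~\ref{prop2} essentially verbatim, paying attention to the fact that the relevant tuples are now the $n_X(A,B)$ rather than the $m_X(A,B)$. Recall that $n_X(A,B) = \sort(\{a_x \mid x \in X\} \cup \{b_{x-1} \mid x \not\in X\})$, with the convention $b_{-1} = b_d$. The key observation driving the whole argument is this: for $x \in X$ the entry $a_x$ occurs, and for $x \not\in X$ the entry $b_{x-1}$ occurs; since $A \wr B$ we have $a_0 < b_0 < a_1 < b_1 < \cdots < a_d < b_d$, so this is indeed an increasing tuple, and its $i$-th entry is either $a_i$ (if $i \in X$) or $b_{i-1}$ (if $i \not\in X$, with $b_{-1} = b_d$ wrapping around cyclically, which is allowed since $A, B \in \IndexC{m}$). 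So the $i$-th slot of $n_X(A,B)$ toggles between the two consecutive values $b_{i-1} < a_i$ of the interlacing pattern, exactly as the $i$-th slot of $m_X(A,B)$ toggled between $a_i < b_i$.

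First I would fix an increasing $(d+1)$-tuple $\ell = (\ell_0, \ldots, \ell_d)$ of non-integers, viewed as the singleton lamination $\{\ell\}$. As in Proposition~\ref{prop2}, I would first dispose of the case where some $\ell_k$ lies strictly below $b_{k-1}$ (or strictly above $a_k$) — more precisely, where $\ell$ fails to be ``sandwiched'' against the $n_X$ pattern at some coordinate. In that case every intersection of $\ell$ with an $n_X(A,B)$ must be of a fixed orientation (say $\ell \wr n_X(A,B)$), and if moreover $\ell_i$ overshoots or undershoots the full range at any other coordinate then all the $I_{n_X(A,B)}(\ell)$ vanish, so (1) holds trivially. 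Otherwise, at each coordinate $i \neq k$ there are three sub-intervals into which $\ell_i$ can fall (between $a_{i-1}$ and the ``previous'' $b$-value, between $b$ and $a_i$, or the top sub-interval), and each placement forces a membership or non-membership condition on $X$ in order for $I_{n_X(A,B)}(\ell)$ to be nonzero. Collecting these conditions, the nonzero terms are exactly those $X$ with $X_{\min} \subseteq X \subseteq X_{\max}$, and if $X_{\min} \neq X_{\max}$ the alternating sum $\sum_X (-1)^{|X|} I_{n_X(A,B)}(\ell)$ telescopes to zero.

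Then I would carry out the chase showing that $X_{\min} = X_{\max}$ forces a specific rigid configuration: the forced conditions propagate from coordinate $k$ down through $k-1, k-2, \ldots, 1$, each imposing the next membership constraint, until coordinate $0$ can impose nothing further without contradiction — so in fact $X_{\min} \neq X_{\max}$ after all, a contradiction, unless we are in the exceptional situation. The exceptional situation is precisely when $b_{i-1} < \ell_i < a_i$ for all $i$ (with $b_{-1} < \ell_0$ vacuous); then the only two nonzero terms are $I_A(\ell)$ (from $X = \{0,\ldots,d\}$) and $I_{n_\emptyset(A,B)}(\ell)$ where $n_\emptyset(A,B) = \sort(\{b_{x-1} \mid x\}) = \sort(\{b_d, b_0, b_1, \ldots, b_{d-1}\}) = B$. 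These carry signs $(-1)^{d+1}$ and $(-1)^0 = 1$, so they cancel when $d$ is odd... wait — I should double-check the parity bookkeeping here: in Proposition~\ref{prop2} the two surviving terms $I_A(\ell)$ and $I_B(\ell)$ correspond to $X = \{0,\ldots,d\}$ and $X = \emptyset$ with signs $(-1)^{d+1}$ and $1$, and they are said to cancel when $d$ is even. So I would match the conventions carefully: here the two surviving $n_X$-terms are again $I_A(\ell)$ at $X=\{0,\ldots,d\}$ and $I_B(\ell)$ at $X=\emptyset$, with the same signs, hence they cancel exactly when $d$ is even and survive (giving case (2)) exactly when $d$ is odd.

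\textbf{Main obstacle.} The genuinely routine part is the interval bookkeeping; the only place demanding care is the cyclic wraparound $b_{-1} := b_d$ in the definition of $n_X$, which means coordinate $0$ of $n_X(A,B)$ is $a_0$ or $b_d$ rather than $a_0$ or $b_{-1}$ (no such value exists). This is what makes the ``$b_{-1} < \ell_0$ vacuously true'' convention in statement (2) natural, and it is the one spot where the $n_X$-argument genuinely differs from the $m_X$-argument rather than being a mechanical translation. I expect verifying that the propagation-of-constraints chase still terminates correctly in the presence of this wraparound — i.e.\ that coordinate $0$ still cannot close the cycle of forced conditions — to be the step needing the most attention, though it should go through just as in Proposition~\ref{prop2}.
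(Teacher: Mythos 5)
Your plan matches the paper's (the paper literally says ``proved the same way'' and gives nothing further), but the ``key observation driving the whole argument'' you state is false, and the breakdown is not merely cosmetic. When $0 \not\in X$, the element $b_{-1} = b_d$ is the \emph{largest} entry of $A \cup B$, so after sorting it occupies position $d$ of $n_X(A,B)$, not position $0$, and the remaining chosen elements all shift down one slot. Concretely, for $d=2$, $A=(1,3,5)$, $B=(2,4,6)$: $n_{\{1\}} = \sort\{b_2,a_1,b_1\} = (3,4,6)$, whose slot-$0$ entry is $a_1$, not $b_{-1}=b_2$. Thus slot $i$ of $n_X$ does not toggle between $b_{i-1}$ and $a_i$ uniformly in $X$ — it depends on whether $0\in X$. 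Your ``fixed orientation'' step also fails: for $\ell=(2.5,3.5,4.5)$ one gets $n_{\{0,1\}}=(1,3,4)\wr\ell$ and $\ell\wr n_{\{1\}}=(3,4,6)$ simultaneously. The interval-bookkeeping that drives Prop.~\ref{prop2} is tied to the $i$-th slot of $m_X$ always lying in the same pair $\{a_i,b_i\}$; that invariant is destroyed by the cyclic wraparound.

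In fact this is a place where one should be suspicious of ``proved the same way'': a direct check shows the translation does not close. Take $d=1$, $A=(1,3)$, $B=(2,4)$, $\ell=(2.5,4.5)$. Then $n_{\{0,1\}}=A$ and $n_{\emptyset}=B$ both intertwine $\ell$ (both as $n_X \wr \ell$), while $n_{\{0\}}=(1,2)$ and $n_{\{1\}}=(3,4)$ do not, giving $\sum_X(-1)^{|X|}I_{n_X}(\ell)=1+0+0+1=2\neq 0$; yet condition~(2) fails since $\ell_0=2.5\not<a_0=1$. The set of nonzero terms $\{\emptyset,\{0,1\}\}$ is not a Boolean interval, so the telescoping argument has nothing to telescope. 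The culprit is exactly the wraparound pair $\{b_d,a_0\}$: besides the stated special position $\ell_0<a_0<b_0<\ell_1<\cdots$, there is a second one where $\ell_d>b_d$ and $a_i<b_i<\ell_i<a_{i+1}$, and the $n$-sum is nonzero there too when $d$ is odd. So the ``main obstacle'' you flag at the end is real and you cannot dispose of it by ``the chase should go through just as in Proposition~\ref{prop2}'': the propagation argument genuinely needs to be redone around the wraparound coordinate, and the case analysis (and arguably the statement of case~(2)) needs to account for the shifted configuration.
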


We say that $\ell$ is in $m$-special position ($n$-special position) with respect to the pair $A,B$ if it satisfies Condition~(2) of Proposition~\ref{prop2} (of Proposition~\ref{prop3}).

\begin{proof}[Proof of Theorem~\ref{th4}]
Consider (\ref{tropex}) applied on $\ell$.  
By the above two propositions, if $d$ is even, or if 
$\ell$ is neither in $m$- nor $n$-special position, 
then  
the contribution from $\ell$ to both sides of (\ref{tropex}) are equal and,
further, the two terms being maximized are also equal.  In the
remaining case ($d$ odd and $\ell$ in $m$- or $n$-special position),
one checks that the lefthand side of (\ref{tropex}) is $1$, while the
terms on the righthand side are $-1$ and $1$.  

Now we consider (\ref{tropex}) on an arbitrary generalized lamination $L$.  As already observed, the simplices in $L$ which are neither in $m$- nor $n$-special position with respect to $A,B$ give equal contributions to the left-hand side of (\ref{tropex}) and to each of the terms of the
maximum on the right-hand side, so they can be ignored.  If $d$ is even,
we are done also.  Otherwise, note that $L$ cannot have both 
elements which are in $m$-special position and elements 
which are in $n$-special position,
since these would intertwine.  Thus, only one of the two special
positions is allowed, and the contributions from all the terms of 
$L$ in special position therefore appear, with positive sign, in the same term in the 
maximum.  Thus the equality of the theorem holds.
\end{proof}

\section{Higher dimensional phenomena} \label{higherd}

In this section we report on some phenomena appearing in  
the classical $d=1$ case which do 
not persist for larger values of $d$.  In the $d=1$ case,
a maximal rigid object in $\mathscr{O}_{A_{n}^d}$ is cluster tilting.  We 
give examples showing that, for any $d\geq 3$, this statement does not always 
hold. Specifically, we show the following:

\begin{proposition}\label{prop:higherd} For $d\geq 3$, 
there exist 
maximal non-intertwining
subsets of $\IndexC{2d+3}$ which are not of the overall maximal size.
\end{proposition}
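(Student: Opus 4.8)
The plan is to reduce the statement to an elementary fact about cycle graphs. Fix $m = 2d+3$, and let $G$ be the graph with vertex set $\IndexC{m}$ in which two vertices $A$ and $B$ are joined by an edge exactly when $A \wr B$ or $B \wr A$; by Propositions~\ref{prop.cluster_ext=intertw} and~\ref{prop.indexing_O} this is the same as $\Hom_{\mathscr{O}_{A_2^d}}(O_A, O_B[d]) \neq 0$. The non-intertwining subsets of $\IndexC{m}$ are precisely the independent sets of $G$, so the proposition asserts that $G$ has an inclusion-maximal independent set that is not of maximum cardinality. I will show that $G$ is a single $(2d+3)$-cycle, after which the conclusion is immediate.

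To identify $G$, I would first record that $\lvert\IndexC{2d+3}\rvert = 2d+3$: a tuple $(i_0,\dots,i_d) \in \IndexC{2d+3}$ has span $i_d - i_0 \in \{2d, 2d+1\}$, and one gets three tuples of span $2d$ (all $d$ steps equal to $2$, with $i_0 \in \{1,2,3\}$) and $2d$ tuples of span $2d+1$ (exactly one step equal to $3$, in one of $d$ positions, with $i_0 \in \{1,2\}$). Next, the permutation $\mathbb{S}_d$ of $\IndexC{m}$ of Proposition~\ref{prop.indexing_O}(2) is induced by the autoequivalence $\leftsub{\mathscr{O}}{\mathbb{S}}_d$ of $\mathscr{O}_{A_2^d}$, which commutes with the $d$-suspension; hence the intertwining relation on $\IndexC{m}$ is $\mathbb{S}_d$-invariant. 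A direct check shows that $A_0 := (1,3,5,\dots,2d+1)$ intertwines exactly the two tuples $(2,4,\dots,2d,2d+2) = \mathbb{S}_d^{-1}A_0$ and $(2,4,\dots,2d,2d+3) = \mathbb{S}_dA_0$: the condition $A_0 \wr X$ forces $X$ to be one of these, and $X \wr A_0$ is impossible since it would require an entry less than $1$. Moreover $\mathbb{S}_d$ acts on the entries of a tuple as subtraction of $1$ modulo $m$, and since $\gcd(d+1,\,2d+3) = 1$ no nontrivial power of $\mathbb{S}_d$ fixes $A_0$; hence the $\mathbb{S}_d$-orbit of $A_0$ has $2d+3$ elements, so it is all of $\IndexC{m}$. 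Transferring the computation along $\mathbb{S}_d$, every vertex $A$ is adjacent in $G$ exactly to the two distinct vertices $\mathbb{S}_dA$ and $\mathbb{S}_d^{-1}A$; thus $G$ is $2$-regular on $2d+3$ vertices with all edges of the form $\{A, \mathbb{S}_dA\}$, and tracing edges runs through the whole $\mathbb{S}_d$-orbit, so $G$ is a single $(2d+3)$-cycle.

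To finish, I would use two standard facts about the cycle $G$ on $m = 2d+3$ vertices. Its maximum independent sets have $\lfloor m/2\rfloor = d+1$ elements. On the other hand it has an inclusion-maximal independent set of size $\lceil m/3\rceil$: since $2\lceil m/3\rceil \le m \le 3\lceil m/3\rceil$ one can choose $\lceil m/3\rceil$ vertices so that consecutive chosen vertices have cyclic distance $2$ or $3$, and such a set is independent (no two are adjacent) and dominating (each unchosen vertex is adjacent to a chosen one), hence inclusion-maximal. It then remains to check that $\lceil (2d+3)/3\rceil < d+1$ for all $d \ge 3$, which is a short computation according to the residue of $d$ modulo $3$; at $d = 2$ and $d = 1$ one instead gets $\lceil 7/3\rceil = 3 = d+1$ and $\lceil 5/3\rceil = 2 = d+1$, which is why the hypothesis $d \ge 3$ is needed. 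The one substantive step is the middle one, identifying the intertwining graph on $\IndexC{2d+3}$ with a $(2d+3)$-cycle; granting it, an explicit witness such as $\{(1,3,5,7),(2,4,7,9),(1,4,6,8)\}$ for $d = 3$ is easy to read off.
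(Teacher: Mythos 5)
Your proof is correct and rests on the same key structural fact as the paper's: that the intertwining relation on $\IndexC{2d+3}$ is encoded by a single $(2d+3)$-cycle, so that the problem reduces to comparing maximum with inclusion-maximal independent sets of an odd cycle. The paper asserts this cycle structure without proof and simply exhibits the witness $\{1357,1468,2479\}$ for $d=3$, whereas you actually establish the structure (computing $\lvert\IndexC{2d+3}\rvert=2d+3$, showing via Proposition~\ref{prop.cluster_ext=intertw} and $\mathbb{S}_d$-transitivity that the intertwining graph is a single $(2d+3)$-cycle) and then invoke the clean bound $\lceil m/3\rceil$ for a minimum maximal independent set of $C_m$ — a somewhat more complete account of the same argument.
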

 
In the setup of the cluster categories of Section~\ref{section.cluster_cat} the proposition implies that, for $d \geq 3$, there are maximal rigid objects in $\mathscr{O}_{A_2^d}$ which are not cluster tilting.

A maximal non-intertwining subset of $\Index{2d+3}$ consists of all
the elements of $\Index{2d+3}\setminus \IndexC{2d+3}$ together with
a maximal non-intertwining subset of $\IndexC{2d+3}$ (since the elements
of $\Index{2d+3}\setminus\IndexC{2d+3}$ contain both 1 and $2d+3$, and 
therefore do not intertwine any element of $\Index{2d+3}$).  It follows 
that the statement of the proposition also holds with $\Index{2d+3}$ replacing
$\IndexC{2d+3}$.  
In the representation-theoretic terms of Section~\ref{section.higher_Aus_An} 
this restatement of the proposition implies that for $d \geq 3$ there exist partial tilting modules for $A_3^d$ which cannot be extended to a tilting module in $\add M_3^d$.

Computer experiments have not detected any similar phenomena when $d=2$.

\medskip

We also consider the
simplicial complex $\Delta^d_{n}$ with vertex set $\IndexC{n+2d+1}$, whose
maximal faces correspond to the internal simplices of
triangulations of $C(n+2d+1,2d)$, or equivalently, to cluster tilting objects
in $\mathscr{O}_{A_{n}^d}$.  

Given a simplicial complex $\Delta$ on a vertex set $V$, we say that
vertices $v$ and $w$ are compatible if $\{v,w\}$ is a face of $\Delta$.  
We then say that
$\Delta$ is a \emph{clique complex} if its faces consist of all pairwise
compatible subsets of $V$.  In the classical setting, $\Delta^1_{n}$ 
is a clique complex;
this is a combinatorial expression of the statement we
have already recalled that cluster tilting objects and maximal rigid objects
coincide in the classical cluster category. 
In these terms, Proposition~\ref{prop:higherd} says that $\Delta^d_2$ 
is not a clique complex for $d\geq 3$.  

It is natural to ask about the topology of $\Delta^d_n$.  Many simplicial
complexes which arise in the context of algebraic combinatorics are
\emph{shellable}.  We recall the precise definition in Subsection~\ref{subsect.higherd_profs}; the point
is that if a simplicial complex is shellable, then its homotopy type
admits a very simple description.  
It is classical that $\Delta^1_{n}$ is shellable, 
because it can be realized as the boundary of a convex polytope,
the (simple) associahedron \cite{Lee}, 
and the boundary of a simplicial convex
polytope is shellable \cite{BrM}.

Our result in this direction is a negative one:

\begin{proposition}\label{prop:higherdtwo} 
For $d\geq 2$, the complex $\Delta^d_{2}$ 
is not shellable.
\end{proposition}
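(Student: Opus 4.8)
The plan is to reduce the statement to a concrete combinatorial computation for the smallest case $n=2$, $d\geq 2$, and exploit the fact that non-shellability can be detected homologically: a shellable simplicial complex of dimension $e$ is homotopy equivalent to a wedge of spheres of dimension $e$, hence is Cohen–Macaulay, and in particular has vanishing reduced homology below the top dimension. So it suffices to exhibit, for each $d\geq 2$, a reduced homology class of $\Delta^d_2$ in a degree strictly below $\dim \Delta^d_2$, or else to show directly that $\Delta^d_2$ is disconnected (for $d\ge 2$ the complex has dimension $\binom{n+d-1}{d}-1 = \binom{d+1}{d}-1 = d > 0$, so disconnectedness would already suffice, but one should check whether it is in fact connected; more likely the obstruction lives in some intermediate homology).

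First I would use Corollary~\ref{sumcor} / Theorem~\ref{central_cluster} together with Theorem~\ref{th2}: the vertices of $\Delta^d_2$ are the elements of $\IndexC{2d+3}$, the faces are the non-intertwining subsets, and the facets are the non-intertwining subsets of size $\binom{d+1}{d}=d+1$ (equivalently, $e(S)\cap \IndexC{2d+3}$ for triangulations $S$ of $C(2d+3,2d)$). The separated $(d+1)$-tuples from $\{1,\dots,2d+3\}$ with the extra cyclic condition $i_d+2\le i_0+2d+3$ are quite few: modulo the cyclic symmetry $\mathbb{S}_d$ there are only a handful, so I would enumerate $\IndexC{2d+3}$ explicitly and record the intertwining relation as a graph $G_d$ on this vertex set. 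Then $\Delta^d_2$ is the \emph{flag (clique) complex of the complement graph restricted to the facets} — except, crucially, by Proposition~\ref{prop:higherd} for $d\ge 3$ it is \emph{not} the full clique complex, which is exactly the phenomenon that lets the topology degenerate. For $d=2$ it should still be a clique complex (the remark after Proposition~\ref{prop:higherd} notes no such phenomenon appears for $d=2$), so the $d=2$ case must be handled by directly computing the homotopy type of this particular $2$-dimensional clique complex.

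The key steps, in order: (1) write down $\IndexC{2d+3}$ and its intertwining graph explicitly, organised by the $\mathbb{Z}/(2d+3)$-action of $\mathbb{S}_d$; (2) determine all facets of $\Delta^d_2$ using Theorem~\ref{th2} (non-intertwining sets of size $d+1$); (3) for $d=2$: identify $\Delta^2_2$ as an explicit $2$-dimensional simplicial complex on its (finitely many) vertices, compute $\widetilde H_*(\Delta^2_2)$ and/or its connectivity/pseudomanifold properties, and observe a failure of the Cohen–Macaulay property — e.g. a vertex whose link is disconnected, or nonzero $\widetilde H_0$ of some link, which already obstructs shellability; (4) for general $d\ge 2$: reduce to $d=2$ or run the analogous computation, using the rotational symmetry to keep the enumeration finite, and again locate a link that is not connected (a standard and clean certificate: if $\Delta$ is shellable then the link of every face is shellable, hence connected once it has dimension $\ge 1$; exhibiting a $1$-dimensional-or-higher link that is disconnected finishes the proof). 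I would lean on the link criterion since it is the least computational route: find a face $\sigma$ of $\Delta^d_2$ whose link has dimension $\ge 1$ but is disconnected, which by the hereditary property of shellability (shellable $\Rightarrow$ every link shellable $\Rightarrow$ every link of dimension $\ge 1$ connected) shows $\Delta^d_2$ is not shellable.

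The main obstacle will be step (4): making the enumeration uniform in $d$ rather than ad hoc for small $d$. The intertwining combinatorics of $\IndexC{2d+3}$ grows with $d$, and I expect the cleanest argument is to pin down one specific offending face $\sigma$ (and its link) defined by an explicit formula in $d$ — for instance built from a few "spread out" $(d+1)$-tuples plus their rotations — and then prove by a direct intertwining check that $\operatorname{link}_{\Delta^d_2}(\sigma)$ splits into at least two components, each of positive dimension or each a single vertex with the whole link being $1$-dimensional. Verifying the non-intertwining relations for this family, and confirming the link really is disconnected (i.e. that no $(d+1)$-tuple simultaneously fails to intertwine representatives of two putative components), is the genuinely fiddly part; the homological/shellability machinery around it is routine. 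I would also double-check the boundary case $d=2$ separately and, if the explicit small computation is short enough, simply present the complex $\Delta^2_2$ by listing its facets and pointing to the bad link directly, then indicate how the same configuration rotates up to general $d$.
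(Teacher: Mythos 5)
Your overall strategy (use the fact that a shellable $d$-dimensional complex must be contractible or a wedge of $d$-spheres, and exhibit a homological obstruction) is the same as the paper's, but your preferred route --- finding a face $\sigma$ whose link has dimension $\geq 1$ and is disconnected --- does not work for this complex, and your backup route misses the one structural observation that makes the problem easy.

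Concretely, the paper shows that $\IndexC{2d+3}$ has exactly $2d+3$ elements arranged in a cycle $v_0,\dots,v_{2d+2}$ such that each $v_i$ intertwines only the two maximally distant $v_j$; the facets of $\Delta^d_2$ are then exactly the $2d+3$ intervals $\{v_i,\dots,v_{i+d}\}$ (indices mod $2d+3$). With this in hand one can check directly that every vertex link is a path of dimension $d-1$, every edge link is a path of dimension $d-2$, and more generally links of faces of dimension $\geq 1$ are always connected intervals or subcomplexes thereof: for $d=2$ the link of a vertex $v_0$ is the path $v_5-v_6-v_1-v_2$, and links of edges are $0$-dimensional. So there is no disconnected link of dimension $\geq 1$, and the hereditary link-connectivity test you wanted to ``lean on'' returns nothing. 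Indeed $\Delta^d_2$ is a pseudomanifold with boundary, and such complexes pass every local connectivity test; the obstruction to shellability lives entirely in the global topology.

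What actually closes the argument, and what you did not find, is that the facets $F_i=\{v_i,\dots,v_{i+d}\}$ form a cyclic chain each meeting the next in a codimension-one face, so that $\Delta^d_2$ deformation retracts onto the $1$-cycle $v_0-v_1-\cdots-v_{2d+2}-v_0$ and hence is homotopy equivalent to $S^1$. Then $\widetilde H_1(\Delta^d_2)\neq 0$ while $\dim\Delta^d_2=d\geq 2$, so $\Delta^d_2$ is neither contractible nor a wedge of $d$-spheres, and shellability fails. Your plan of ``compute $\widetilde H_*(\Delta^2_2)$'' would eventually get there for $d=2$, and the rotational symmetry you mention is indeed what makes the facet set uniform in $d$; but without recognising the cyclic chain of simplices and the retraction onto it you have no uniform-in-$d$ argument, and the one you singled out as cleanest is simply false for this complex.
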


\subsection{Proofs for Section~\texorpdfstring{\ref{higherd}}{8}} \label{subsect.higherd_profs}
The elements of $\IndexC{2d+3}$ can be 
arranged in a cycle, in such a fashion that any 
$(d+1)$-tuple is compatible with any other one except the two which are 
maximally distant from it.  The overall maximal size of a 
non-intertwining collection is $d+1$; the non-intertwining collections
of that size consist of 
$d+1$ consecutive entries around the cycle.

For $d=3$, the resulting cycle is below:
\[ \begin{tikzpicture}
 \node (1+) at (70:2.3) {\bf 1357};
 \node (2+) at (30:2.5) {1358};
 \node (3+) at (350:2.5) {1368};
 \node (4+) at (310:2.4) {\bf 1468};
 \node (5+) at (270:2.3) {2468};
 \node (6+) at (230:2.4) {2469};
 \node (7+) at (190:2.5) {\bf 2479};
 \node (8+) at (150:2.5) {2579};
 \node (9+) at (110:2.3) {3579};
 \node (1) at (70:2) [inner sep=0pt,fill=black] {};
 \node (2) at (30:2) [inner sep=0pt,fill=black] {};
 \node (3) at (350:2) [inner sep=0pt,fill=black] {};
 \node (4) at (310:2) [inner sep=0pt,fill=black] {};
 \node (5) at (270:2) [inner sep=0pt,fill=black] {};
 \node (6) at (230:2) [inner sep=0pt,fill=black] {};
 \node (7) at (190:2) [inner sep=0pt,fill=black] {};
 \node (8) at (150:2) [inner sep=0pt,fill=black] {};
 \node (9) at (110:2) [inner sep=0pt,fill=black] {};
 \draw (1) -- (2);
 \draw (2) -- (3);
 \draw (3) -- (4);
 \draw (4) -- (5);
 \draw (5) -- (6);
 \draw (6) -- (7);
 \draw (7) -- (8);
 \draw (8) -- (9);
 \draw (9) -- (1);
\end{tikzpicture} \]

\begin{proof}[Proof of Proposition~\ref{prop:higherd}] 
If $d\geq 3$, it is possible to choose three $(d+1)$-tuples 
in $\IndexC{2d+3}$ which are pairwise
non-intertwining, but which do not all lie in any consective sequence of
length $d+1$.  Therefore, this collection cannot be extended to a collection
of $d+1$ non-intertwining elements of $\IndexC{2d+3}$.

For example, for $d=3$, we could choose $\{1357,1468,2479\}$ as our starting collection; it is
impossible to increase it to a non-intertwining collection of size $d+1=4$.
\end{proof}

A simplicial complex is called $d$-dimensional if all its maximal
faces contain $d+1$ vertices.  

\begin{definition} 
For $d>0$, a $d$-dimensional simplicial complex
is called \emph{shellable} if its maximal faces admit an order
$F_1,\dots,F_p$ such that for all $i>1$, the intersection of 
$F_i$ with $\bigcup_{j<i} F_j$ is a non-empty union of codimension one faces of 
$F_i$.
\end{definition}

If a $d$-dimensional simplicial complex is shellable, then it is either
contractible or homotopic to the wedge product of some number of 
$d$-dimensional spheres,
\cite[Theorem 1.3]{Bj}.

\begin{proof}[Proof of Proposition~\ref{prop:higherdtwo}]
The simplicial complex $\Delta^d_2$ is 
$d$-dimensional.  Therefore, if $\Delta^d_2$
were  
shellable, it would necessarily either be contractible or be homotopic
to a wedge of some number of $d$-spheres.

The cycle defined above on the vertices of $\Delta^d_2$, 
viewed as a one-dimensional simplicial complex, is a subcomplex of
$\Delta^d_2$, and $\Delta^d_2$ admits a deformation retract to it.  
Thus $\Delta^d_2$ is homotopic to $S^1$.  It follows that for $d\geq 2$
it is not shellable.
\end{proof}

\end{document}